\newtheorem{thm}{Theorem}[section]
\newtheorem{cor}[thm]{Corollary}
\newtheorem{lem}[thm]{Lemma}
\theoremstyle{definition}
\newtheorem{defn}[thm]{Definition}
\theoremstyle{remark}
\newtheorem{rem}[thm]{Remark}
\DeclareMathOperator{\ind}{index}
\DeclareMathOperator{\len}{len}
\DeclareMathOperator{\wind}{wind}
\DeclareMathOperator{\Hom}{Hom}
\DeclareMathOperator{\w}{weight}
\DeclareMathOperator{\rev}{rev}
\DeclareMathOperator{\carr}{carr}
\DeclareMathOperator{\dual}{dual}
\DeclareMathOperator{\DR}{d_R}
\renewcommand\expandafter\section\expandafter{%
    \expandafter\@fb@secFB\section
  }%
\begin{document}


\thesistitlecolourpage           

\pagenumbering{roman} 

\tableofcontents                     
\listoffigures                     
\newpage
\phantomsection
\addcontentsline{toc}{chapter}{List of Algorithms}
\listofalgorithms

\begin{thesisacknowledgments}        


Firstly, I would like to thank my supervisors Saul Schleimer and Brian Bowditch. Saul, for his kind and patient guidance throughout my PhD, for being such an open, curious and engaging person, for always knowing when it was time to make a silly joke, and for using a GriGri when his eagerness to help was once again too great. Brian, for making sure that everything was going smoothly and for offering his help on innumerable occasions.

Secondly, I would like to thank the geometry and topology community in and beyond Warwick. The last four years have been full of wonderful people, stimulating conversations, and exciting adventures. My special thanks goes to Beatrice Pozzetti for her continuous guidance and support, for all the nice evenings spent together and for never allowing me to take myself too seriously.
I also like to thank Yoav Moriah, Thomas Vogel, and Stefan Friedl for their invitations and great hospitality. 

Thirdly, I am highly indebted to the University of Warwick and the mathematics department for providing such a welcoming and stimulating environment. The PhD community as well as the administrative staff have always offered a smile and comforting word and have helped whenever possible.

I am grateful to the Engineering and Physical Sciences Research
Council for funding my studies and for giving me the chance to live in the UK. It has been a pleasure and invaluable experience.

I am very lucky to have spent my time at Warwick in the company of
such amazing people. My special thanks goes to my housemate, Simon, and all my climbing partners for the many hours that were filled with laughter and great conversations. No words can describe how grateful I am for spending the last years with Esmee and Alex and knowing that they would always have my back.

Last, but certainly not least, I would like to express my gratitude for the unlimited support I have received from Bernhard and my parents. I can not thank you enough for all your love, trust, and encouragement.

\end{thesisacknowledgments}

\begin{thesisdeclaration}        
I declare that the material in this thesis is, to the best of my knowledge, my own work except where otherwise indicated in the text, or where the material is widely known. No part has been submitted by me for any other degree.

This thesis was typeset in LaTeX2e using the style package \textit{warwickthesis}. All figures were created by the author using TikZ.

\end{thesisdeclaration}

\begin{thesisabstract}               

Suppose that $S$ is a surface of positive complexity and $N\subset S$ a tie neighbourhood of a large train track $\tau$ in $S$. We present a polynomial-time algorithm that, given a properly immersed, essential, and non-peripheral arc or curve $\alpha \subset S$, homotopes $\alpha$ into efficient position with respect to the tie neighbourhood $N$.

Proofs for the existence of efficient position were previously given in \cite{Taka} and \cite{carrySaul}. In \cite{Taka}, a constructive proof for the existence of efficient position is given for immersed curves on closed surfaces of genus greater than or equal to two. There is no discussion of the complexity of the implied algorithm. In \cite{carrySaul}, the existence of efficient position is proved for embedded curves with respect to birecurrent train tracks on surfaces of positive complexity. The implied algorithm operates via an exhaustive search. No time bounds can be deduced.

We note that the algorithm presented in this thesis and the algorithm suggested by a careful reading of \cite{Taka} coincide in the case of closed surfaces. However, this thesis constitutes more than a time-complexity analysis of Takarajima's constructive proof. Firstly, we are more general as we allow surfaces with boundary, whereas Takarajima only considers closed surfaces. Secondly, our combinatorial set-up uses arcs and curves with transverse self-intersection, whereas the barycentric subdivision of complementary regions carried out in \cite{Taka} forces non-transverse self-intersection even for curves which are initially embedded. Thirdly, the algorithm in this thesis is formulated purely in terms of local homotopies, whereas \cite{Taka} requires semi-local arguments. Thus, we can, and do, give pseudocode for our algorithm as well as prove its correctness.


\end{thesisabstract}

\begin{thesisabbreviations}       
\begin{center}
\begin{tabularx}{\linewidth}{lX}
\textbf{Standard notation} & \textbf{Description} \\
$S$ & A compact, connected, oriented surface\\
$\partial S$ & The boundary of $S$\\
$\widetilde{S}$ & The universal cover of $S$\\
$\chi(S)$ & The Euler characteristic of $S$\\
$\xi(S)$ & The complexity of $S$\\
$\alpha$ & An immersed arc or curve\\
$S^1$&The circle, identified with $[0,1]/_\sim$\\
$\square$&An indicator that the proof is complete, is omitted, or is given elsewhere\\
&\\
\textbf{Further notation} & \textbf{Description} (with page references) \\
$\tau$ & A train track (\pageref{Page:train track})\\
$N(\tau)$ & A tie neighbourhood of a train track $\tau$ (\pageref{Page:tie neighbourhood})\\
$T$ & A subsurface with corners (\pageref{Page:Index and Corners})\\
$\partial^2 T$ & The set of corners of $T$ (\pageref{Page:Index and Corners})\\
$\ind(T)$&The index of $T$ (\pageref{Page:Index and Corners})\\
$R$&A branch or switch rectangle or a complementary region (\pageref{Page: Rectangle}) \\
$\partial_h R$&The horizontal boundary of $R$ (\pageref{Page: Rectangle})\\
$\partial_v R$&The vertical boundary of $R$ (\pageref{Page: Rectangle})\\
$\mathcal{R}_\textrm{tie}$&The set of branch and switch rectangles of a tie neighbourhood (\pageref{Page: Rtie})\\
$\mathcal{R}_\textrm{comp}$&The set of complementary regions of a tie neighbourhood (\pageref{Page: Rcomp})\\
$\mathcal{R}$&The set of all branch and switch rectangles and complementary regions of a tie neighbourhood (\pageref{Page:R})\\
$\partial \mathcal{R}$&The union of all boundaries of elements in $\mathcal{R}$ (\pageref{Page: boundary R})\\
$\partial^2 \mathcal{R}$&The union of all corners of elements in $\mathcal{R}$ (\pageref{Page: boundary R})\\
$\wind(a)$&The winding number of a snippet $a$ inside a complementary region (\pageref{Page: winding number})\\
$\mathbb{B}(\bullet,\bullet)$&The type of a bad snippet inside a branch rectangle (\pageref{Page: Branch type})\\
$\mathbb{S}(\bullet,\bullet, \bullet)$&The type of a bad snippet inside a switch rectangle (\pageref{Page: Switch type})\\
$\mathbb{R}(\bullet,\bullet)$&The type of a bad snippet inside a complementary region (\pageref{Page: Comp type})\\
$\len(\alpha)$&The snippet length of $\alpha$ (\pageref{Page: Snippet length})\\
$\alpha[i+\epsilon:i+\delta]$&The subarc of the $i$-th snippet of $\alpha$ which is the image of the interval $[\epsilon,\delta]$ under the map $\alpha[i]$ (\pageref{Page: alpha subsnippet})\\
$\alpha_\textrm{trim}$&The inside of $\alpha$ (\pageref{Page: alpha trim})\\
$s_N$&The maximal side length of $N$ (\pageref{maximal side length})\\
$\len_\textrm{corn}(\alpha)$&The corner length of $\alpha$ (\pageref{Page: corner length})\\
$\len_\textrm{block}(\alpha)$&The number of blockers of $\alpha$ (\pageref{Page: reduced corner length and reduced corner length})\\
$\len_\textrm{red}(\alpha)$&The reduced corner length of $\alpha$ (\pageref{Page: reduced corner length and reduced corner length})\\
$\carr(\alpha)$&The number of carried snippets of $\alpha$ (\pageref{Page: carried and dual})\\
$\dual_\textrm{R}(\alpha)$&The number of right duals of $\alpha$ (\pageref{Page: carried and dual})\\
$\dual_\textrm{L}(\alpha)$&The number of left duals of $\alpha$ (\pageref{Page: carried and dual})\\
\end{tabularx}
\end{center}

\end{thesisabbreviations}
\pagenumbering{arabic} 

%
%

\chapter{Introduction}

The compactification of Teichmüller space by the space of projective measured laminations, $\mathcal{PML}$, lies at the heart of Thurston's pioneering approach to geometric topology. Studying curves and their limits under iterations of a mapping class is essential to much of Thurston's work, including the Nielsen-Thurston classification of surface homeomorphisms, the topological characterization of rational maps, and the geometric classification of mapping tori \cite[Preface]{Hubbard}.

By introducing train tracks as a combinatorial tool to coordinatise $\mathcal{PML}$, Thurston opened up new ways of answering questions in topology, one-dimensional complex dynamics, and geometry. His ``tangential train track coordinate system'' provides a canonical piecewise integral projective structure for $\mathcal{PML}$ and is perfectly adapted to studying curves that are carried by the stable train track of a pseudo-Anosov map (\cite[Chapter~8.9]{Thurston},\cite[Chapter~15]{Primer}). However, this coordinate system of $\mathcal{PML}$ is not global. Therefore, comparing curves that lie in different charts, that is, curves that are not carried by the same maximal train track, is difficult.

A first step towards defining a global coordinate system for $\mathcal{PML}$ involving train tracks was made by Penner, a student of Thurston. He introduced the terminology of two train tracks ``hitting efficiently'' if they intersect transversely and do not bound any bigon between them \cite[Chapter~1.3]{Penner}. This leads to a dual concept of tangential train track coordinates called ``transverse train track coordinates'' \cite[Section~9]{ThurstonStretch}. Instead of counting carried arcs of the curve, we now count transverse intersections of the curve with the train track. However, this coordinate system for curves is not only coarser than the ``tangential train track coordinate system'', but suffers, by duality, from the same problem as tangential train track coordinates: it is not global.

Yet, there are global coordinate systems for curves on surfaces, with the two most prominent of the last century being normal coordinates and Dehn-Thurston coordinates.

\textit{Normal coordinates} count intersections of curves with the edges of an ideal triangulation of a punctured surface. For a formal definition and an extensive list of their applications, we refer the reader to \cite{Erickson}. To determine normal coordinates, curves are assumed to be in ``normal form'' with respect to the ideal triangulation. That is, they intersect the ideal triangles in standardized arcs, none of which cuts off a bigon. The existence of this normal form for every homotopy class of curves follows from a step-by-step elimination of bigons, each reducing by two the number of intersections between the curve and the edges of the triangulation. Normal coordinates are very well-suited to compute images of iterations of a mapping class on curves: mapping classes can be represented as paths in the flip graph of the surface \cite[Section~1.2]{BellWebb}, and computing the change of coordinates under a single flip is straightforward \cite[Theorem~2.2.1]{MarkThesis}. However, this global coordinate system has one significant drawback: the introduction of an artificial marked point on closed surfaces leads to a non-finite ambiguity. This requires special care when working with closed surfaces and aiming to apply the same algorithms as in the non-closed case \cite[Section~1.1]{BellWebb}.

Introduced by Dehn in 1922 \cite[Paper~7, Section~2]{Dehn} and later revived by Thurston \cite{ThurstonBulletin}, \textit{Dehn-Thurston coordinates} use a pants decomposition of the surface and a set of twisting parameters to provide a global coordinate system to the set of curves and laminations \cite[Theorem~3.1.1]{Penner}. Here, closed and non-closed surfaces can be treated alike. As in the case of normal coordinates, the ``normal position'' of a curve with respect to the pants decomposition can be achieved via a straightforward argument. Furthermore, explicit piecewise integral linear formulas for the change of coordinates under a single pants move are given by Penner in \cite[Section~3]{PennerCorrected}. Yet, these are quite complex and applications of Dehn-Thurston coordinates for computational purposes have been rather sparse \cite[Section~3]{PennerCorrected}.

In 1998, Takarajima combined the two versions of train track coordinates to prepare the ground for a new global coordinate system for curves: ``efficient coordinates''. He introduced a third normal position for curves with respect to a train track, called ``quasi-transversality'' \cite[Definition~3.2]{Taka}. This constitutes a simultaneous generalization of ``carried'' curves and ``dual'' curves. An almost identical concept was defined independently by Masur, Mosher, and Schleimer in 2010 under the name of ``efficient position'' \cite[Definition~2.3]{carrySaul}.
The existence of efficient position is much more subtle than that of normal position required for normal coordinates: a subdivision of the tie neighbourhood of the train track induces a tiling of the surface by polygons. If a curve is not in efficient position, a subarc of the curve cuts off a region of positive index inside a tile. Pushing the subarc across this region can increase the number of intersections of the curves with the boundary of the tiles. Furthermore, it might not reduce the overall amount of positive index cut off by subarcs in the tiles (see page \pageref{General Trigon Example 2}, Figure \ref{General Trigon Example 2}).

In 2000, Takarajima proved the existence of efficient position for immersed curves on closed surfaces using explicit semi-local homotopies that transform a given curve into efficient position \cite[Proposition~4.1]{Taka}. There is no discussion of the complexity of the implied algorithm in \cite{Taka}. In 2010, Masur, Mosher, and Schleimer gave an existence proof for embedded curves on surfaces of positive complexity with respect to birecurrent train tracks \cite[Theorem~4.1]{carrySaul}. We note that their proof is constructive but relies on an exhaustive search. Thus, no time bounds can be deduced from the work in \cite{carrySaul}. The results in this thesis imply that their algorithm halts in exponential time. We further remark that the existence of efficient position with respect to Reebless bigon tracks on the torus is proved in \cite[Lemma~14]{gueritaud}. The proof is constructive but no time bounds are discussed.

\section{Main result}

We give an algorithmic proof of the existence of efficient position for immersed curves on surfaces of positive complexity. The presented algorithm halts in quadratic-time in the length of the input curve. More precisely, suppose that $S$ is a surface of positive complexity. Fix a large train track $\tau \subset S$. A subdivision of a tie neighbourhood of $\tau$ into ``branch rectangles'' and ``switch rectangles'' provides a tiling of $S$ by polygons and peripheral annuli. We assume that curves are given as cutting sequences with respect to the one-skeleton of this tiling. Such curves are said to be \textit{snippet-decomposed}, and we refer to subarcs of the curve that lie inside a single tile as \textit{snippets}.

\setcounter{chapter}{7}
\setcounter{section}{0}
\setcounter{thm}{1}

\begin{thm}
There is an algorithm that takes as input
\begin{itemize}
\item a surface $S=S_{g,b}$ satisfying $\xi(S)=3g-3+b \geq 1$,
\item a tie neighbourhood $N=N(\tau)$ of a large train track $\tau$ in $S$, and
\item a properly immersed arc or curve $\alpha$ given via its snippet decomposition with respect to $N$,
\end{itemize}
and outputs an arc or curve $\alpha'$ homotopic to $\alpha$, relative to endpoints in the case of arcs, such that
\begin{itemize}
\item $\alpha'$ is in efficient position with respect to $N$ or
\item $\alpha'$ has snippet length one.
\end{itemize}
This algorithm terminates in $\mathit{O}(\chi(S)^4  \cdot \len(\alpha)^2)$ time. Moreover, the output $\alpha'$ has snippet length one if and only if $\alpha$ is inessential or peripheral. If $\alpha$ is peripheral, the boundary component that $\alpha$ is homotopic to, as well as the corresponding power, can be read off from the snippet-decomposition of $\alpha'$.
\end{thm}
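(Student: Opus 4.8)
The plan is to reduce the theorem to the analysis of a short, explicit list of \emph{local homotopy moves}. Each move is a homotopy of $\alpha$ (relative to endpoints, for arcs) supported inside a single tile of $\mathcal{R}$, or inside a small union of adjacent tiles, and each move is designed to destroy one ``bad'' snippet. Alongside the moves I would fix a priority order on them together with an integer-valued \emph{complexity} $\Phi(\alpha)$, bounded above by a polynomial in $\len(\alpha)$ and $\chi(S)$, with the property that applying any move strictly decreases $\Phi$ in the chosen well-order. The algorithm is then: scan the snippet decomposition for an applicable move of highest priority, apply it, repeat until none applies. Termination is immediate from monotonicity of $\Phi$, that $\alpha'$ is homotopic to $\alpha$ is immediate because every move is a homotopy, and the remaining content is (a) correctness of the terminal state and (b) the time bound.

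\textbf{Locating and resolving the obstructions.} First I would use the index (combinatorial Gauss--Bonnet) identity for subsurfaces with corners to show that if $\alpha$ fails to be in efficient position then some tile $R\in\mathcal{R}$ contains a snippet which, together with a piece of $\partial R$, cuts off a subsurface $T\subseteq R$ with $\ind(T)>0$; up to combinatorial type these \emph{bad} snippets are exactly the finite families $\mathbb{B}(\bullet,\bullet)$ in branch rectangles, $\mathbb{S}(\bullet,\bullet,\bullet)$ in switch rectangles, and $\mathbb{R}(\bullet,\bullet)$ in complementary regions. For each type I would prescribe a move: push the bad subarc across the positive-index region it cuts off and, where the move introduces corners, tighten it onto the appropriate switch. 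The essential local lemma is a ``no side effects'' statement: each move is a homotopy, preserves the property of being snippet-decomposed, and creates no bad snippet of priority higher than the one it removed outside a controlled neighbourhood of where it acts. Because the train track is \emph{large}, its complementary regions are only disks and peripheral annuli, so this local analysis is genuinely finite.

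\textbf{The complexity and the $\len(\alpha)^2$ bound.} This is the crux and, I expect, the main obstacle: a single move may \emph{increase} $\len(\alpha)$ and need not decrease the total positive index carried by bad snippets (see the trigon example on page~\pageref{General Trigon Example 2}), so none of the obvious scalar quantities is monotone. My remedy is to take $\Phi$ to be a lexicographically ordered tuple assembled from the finer statistics of the thesis --- the number of blockers $\len_\textrm{block}(\alpha)$, the reduced corner length $\len_\textrm{red}(\alpha)$, the corner length $\len_\textrm{corn}(\alpha)$, the carried count $\carr(\alpha)$, the dual counts $\dual_\textrm{R}(\alpha)$ and $\dual_\textrm{L}(\alpha)$, and finally $\len(\alpha)$ itself --- arranged so that every move which raises a later coordinate strictly lowers an earlier one. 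Proving this is a finite but delicate case check over the move types, and it is precisely where the tension between ``carried'' and ``dual'' behaviour has to be balanced. Granted monotonicity, I would bound $\Phi(\alpha)$ at the start: since $|\mathcal{R}|$ and the maximal side length $s_N$ are each $O(\chi(S))$, every coordinate begins below $O(\chi(S)^2\cdot\len(\alpha))$, so the number of applied moves is $O(\chi(S)^2\cdot\len(\alpha))$; locating the next move, performing the local homotopy, and updating the snippet decomposition each take $O(\chi(S)^2\cdot\len(\alpha))$ time, and multiplying gives the stated $O(\chi(S)^4\cdot\len(\alpha)^2)$ bound.

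\textbf{The terminal dichotomy.} Finally I would read off the ``moreover'' clause. When the algorithm halts, either no bad snippet remains --- which is exactly efficient position --- or the only candidate moves were blocked because $\alpha'$ has snippet length one. For the equivalence: a snippet-length-one curve lies in a single tile, hence inside a disk (a branch or switch rectangle, or a polygonal complementary region) or inside a peripheral annulus, so it is inessential or peripheral; as every move is a homotopy, an essential non-peripheral $\alpha$ can therefore never be driven to length one and must end in efficient position. Conversely I would invoke the structural fact, to be established earlier, that a curve in efficient position with snippet length at least two is essential and non-peripheral; hence an inessential or peripheral $\alpha$ cannot have an efficient-position output of length $\geq 2$ and must be reduced to length one. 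In the peripheral case, the unique complementary region containing the length-one snippet is the peripheral annulus cobounding the relevant component of $\partial S$, and $\wind$ of that snippet records the power, so both are immediate from the snippet decomposition of $\alpha'$.
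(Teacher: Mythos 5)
The central claim of your proposal---that there is a priority order on local moves and a lexicographic tuple $\Phi$ built from $\len_\textrm{block}$, $\len_\textrm{red}$, $\len_\textrm{corn}$, $\carr$, $\dual_\textrm{R}$, $\dual_\textrm{L}$, $\len$ that strictly decreases under every applicable move---is precisely what the paper's author reports having tried and failed to establish. Section 1.2 states: ``All attempts by the author to find a simple linear or lexicographic measure decreasing for all types of local homotopies were unsuccessful.'' The paper instead obtains a \emph{partial} lexicographic control that only applies to arcs (or subarcs) with a single bad snippet of trigon type with a fixed turning direction: there the pair $(\dual_\textrm{R},\carr)$ essentially decreases, except at homotopies of type $\mathbb{R}(h,v)$ whose number is bounded by the initial value of $\dual_\textrm{R}$ (Lemma~\ref{Omnibus Trigons}). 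That localized argument drives the subroutine \texttt{TrigArc}, but it is not a global potential: once \texttt{TrigArc} achieves efficient position for one subarc, $\len_\textrm{red}$ can \emph{increase} by up to $2s$, and the paper's algorithm has to absorb this through an entirely different mechanism---an explicit sweep along the snippet decomposition (\texttt{ReduceToTwoBadSnippets}), followed by a gluing step (\texttt{ReduceToOneBadSnippet}) and a final, bigon-type-specific case analysis for curves with a single remaining bad snippet (\texttt{SingleBadSnippet} and Chapter~\ref{Chapter last snippet}). A single ``scan for highest-priority move, apply, repeat'' loop would not reproduce this structure, and the paper's Chapter~\ref{Chapter last snippet} exists precisely because in that regime ``the number of bad snippets alternates between one and two \dots\ these snippets might change their turning direction, so monitoring the number of right or left duals does not work.''

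Your running-time derivation is also not justified as stated. Even granting a seven-coordinate lexicographic measure with each coordinate initially $O(\chi(S)^2\cdot\len(\alpha))$, lexicographic termination only bounds the step count by the \emph{product} of the coordinate ranges (each later coordinate may reset to its maximum whenever an earlier coordinate drops), which would be polynomial of very high degree, not the $O(\chi(S)^2\cdot\len(\alpha))$ moves you claim. The paper gets the quadratic bound by a much more careful bookkeeping: each call to \texttt{BigArc} increases $\len_\textrm{red}$ by at most an additive $4s=O(|\chi(S)|)$, \texttt{ReduceToTwoBadSnippets} makes only $O(\len(\alpha))$ such calls so that the reduced corner length of the intermediate curve grows only linearly, and the final phase \texttt{SingleBadSnippet} strictly \emph{decreases} $\len_\textrm{red}$ at each iteration so that it makes $O(\len_\textrm{red})$ iterations each costing $O(\chi^2\cdot\len_\textrm{red})$ time. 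Recovering this additive control from a generic lex argument would require exactly the kind of fine-grained case check over all move interactions that the multi-phase organization is designed to avoid. Your terminal-dichotomy paragraph is correct and matches Lemmas~\ref{arcs or curves of length one}, \ref{Efficient position for curves implies essential and non-peripheral}, and \ref{Efficient position for arcs implies essential}, but the core of the termination and complexity argument needs to be rebuilt along the paper's multi-phase lines rather than a single monotone potential.
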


As we work purely combinatorially, the notion of efficient position is defined with respect to the tie neighbourhood of a train track (see page \pageref{Def Efficient position}, Definition \ref{Def Efficient position}). We note that an arc or curve is in efficient position with respect to a tie neighbourhood of a large train track $\tau$ as defined here if and only if it is in efficient position with respect to $\tau$ following the definition given in \cite[Definition~2.3]{carrySaul}.
We remark that there already exist polynomial-time algorithms that decide if arcs or curves are essential and non-peripheral: the word problem for fundamental groups can be decided in linear time \cite{Dehn}, and normal coordinates allow the detection of peripheral curves.

The mathematical content of Theorem \ref{Polynomial time algorithm} can be rephrased as follows.

\begin{cor}
Suppose that $S=S_{g,b}$ is a surface satisfying $\xi(S)=3g-3+b \geq 1$. Suppose that $\tau \subset S$ is a large train track and $N=N(\tau)$ is a tie neighbourhood of $\tau$ in $S$. Let $\alpha \subset S$ be an essential, non-peripheral, properly immersed multiarc or multicurve given via its snippet decomposition. Then efficient position for $\alpha$ with respect to $N$ exists and can be obtained in $\mathit{O}(\chi(S)^4  \cdot \len(\alpha)^2)$ time.
\end{cor}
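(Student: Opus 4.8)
The plan is to obtain the corollary as an immediate consequence of Theorem~\ref{Polynomial time algorithm}. Write $\alpha = \alpha_1 \cup \dots \cup \alpha_k$ for the connected components of the given multiarc or multicurve. The snippet decomposition of $\alpha$ restricts to a snippet decomposition of each $\alpha_i$, so each $\alpha_i$ is a legal input to the algorithm of Theorem~\ref{Polynomial time algorithm}. Running that algorithm on each $\alpha_i$ produces an arc or curve $\alpha_i'$, homotopic to $\alpha_i$ relative to endpoints in the arc case, that is either in efficient position with respect to $N$ or has snippet length one.

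Next I would eliminate the snippet-length-one alternative using the final clause of Theorem~\ref{Polynomial time algorithm}: the output $\alpha_i'$ has snippet length one precisely when $\alpha_i$ is inessential or peripheral. Since $\alpha$ is essential and non-peripheral, each component $\alpha_i$ is essential and non-peripheral, so each $\alpha_i'$ is in efficient position with respect to $N$. Taking $\alpha' = \alpha_1' \cup \dots \cup \alpha_k'$ then yields a multiarc or multicurve homotopic to $\alpha$ whose every component is in efficient position; here one must check that ``efficient position'' for a union is exactly the conjunction of efficient position for its components, which holds because the defining condition (no subarc inside a tile cuts off a region of positive index) is imposed snippet by snippet and is therefore insensitive to which component a snippet belongs to.

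Finally, for the time bound, running the algorithm on $\alpha_i$ costs $O(\chi(S)^4 \cdot \len(\alpha_i)^2)$; summing over $i$ and using $\sum_i \len(\alpha_i)^2 \le \bigl(\sum_i \len(\alpha_i)\bigr)^2 = \len(\alpha)^2$ gives the stated bound $O(\chi(S)^4 \cdot \len(\alpha)^2)$. The only step demanding genuine care — and hence the main obstacle — is the reduction from the multi-object setting to the single-component setting, i.e.\ verifying that componentwise efficient position really is efficient position for the whole object and that the componentwise homotopies assemble into one homotopy of $\alpha$; the running-time estimate and the removal of the degenerate output are then routine.
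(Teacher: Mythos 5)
Your proposal is correct and matches the paper's (implicit) approach: the paper offers no explicit proof, simply presenting the corollary as a rephrasing of Theorem~\ref{Polynomial time algorithm}, whereas you supply the natural component-by-component reduction, the observation that efficient position is a snippet-by-snippet condition, and the elementary estimate $\sum_i \len(\alpha_i)^2 \le \len(\alpha)^2$ for the running time. These are exactly the details the paper leaves to the reader, so your argument is a faithful and slightly more careful version of the intended one.
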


\setcounter{chapter}{1}
\setcounter{section}{1}
\setcounter{thm}{1}

\section{Comparison with previous work}

Even though the language and combinatorial set-up differs greatly from the one in \cite{Taka}, the algorithm presented in this thesis and the algorithm suggested by a careful reading of \cite{Taka} coincide in the case of closed surfaces. However, this thesis constitutes more than a time-complexity analysis of Takarajima's constructive proof. Firstly, we are more general as we allow surfaces with boundary, whereas Takarajima only considers closed surfaces. Secondly, our combinatorial set-up uses arcs and curves with transverse self-intersection, whereas the barycentric subdivision of complementary regions carried out in \cite{Taka} forces non-transverse self-intersection even for curves which are initially embedded. Thus, one cannot easily recognize embedded arcs and curves or compute intersection numbers in the set-up of \cite{Taka}. This is contrary to the aim of using efficient position to provide a coordinate system for embedded curves on surfaces. Thirdly, the algorithm in this thesis is formulated purely in terms of local homotopies, whereas \cite{Taka} requires semi-local arguments. Thus, we can, and do, give pseudocode for our algorithm as well as prove its correctness.

Studying Takarajima's proof for the existence of efficient position, one is tempted to look for a lexicographic complexity involving the length or curvature of a curve which decreases when applying local or semi-local homotopies to the arc or curve. All attempts by the author to find a simple linear or lexicographic measure decreasing for all types of local homotopies were unsuccessful. When considering arcs containing a single bad snippet of trigon type, the search for a lexicographic order is partially successful: the sum of \emph{right} or \emph{left} \textit{duals} and carried snippets decreases under all but one type of local trigon homotopies. The number of times this ``exceptional'' type of homotopy is applied when eliminating one trigon snippet from a subarc can be bounded. However, the number of snippets of the underlying arc might be multiplied by a constant in these cases. For an illustration, we refer the reader to Figure \ref{Bad growth of curve}.
Thus, if we apply such homotopies repeatedly to achieve efficient position for longer and longer subarcs of the given arc or curve, we could see an exponential growth of the curve. We also note that this discussion would not be sufficient for curves containing a single bad snippet.

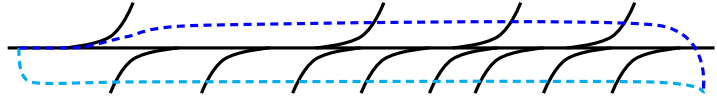
\begin{figure}[htbp]
	\centering
\begin{tikzpicture}[scale=0.6]
\draw [very thick](-12.75,-0.5) node (v1) {} -- (2.75,-0.5);
\draw [very thick] plot[smooth, tension=.7] coordinates {(-11.5,-0.5) (-10.5,-0.25) (-10,0.5)};
\draw [very thick] plot[smooth, tension=.7] coordinates {(-6,-0.5) (-5,-0.25) (-4.5,0.5)};
\draw [very thick] plot[smooth, tension=.7] coordinates {(-3,-0.5) (-2,-0.25) (-1.5,0.5)};
\draw [very thick] plot[smooth, tension=.7] coordinates {(-0.5,-0.5) (0.5,-0.25) (1,0.5)};
\draw [very thick] plot[smooth, tension=.7] coordinates {(-7,-0.5) (-8,-0.75) (-8.5,-1.5)};
\draw [very thick] plot[smooth, tension=.7] coordinates {(-5,-0.5) (-6,-0.75) (-6.5,-1.5)};
\draw [very thick] plot[smooth, tension=.7] coordinates {(-3.5,-0.5) (-4.5,-0.75) (-5,-1.5)};
\draw [very thick] plot[smooth, tension=.7] coordinates {(-2,-0.5) (-3,-0.75) (-3.5,-1.5)};
\draw [very thick] plot[smooth, tension=.7] coordinates {(-1,-0.5) (-2,-0.75) (-2.5,-1.5)};
\draw [very thick] plot[smooth, tension=.7] coordinates {(-9,-0.5) (-10,-0.75) (-10.5,-1.5)};
\draw [very thick] plot[smooth, tension=.7] coordinates {(0.5,-0.5) (-0.5,-0.75) (-1,-1.5)};
\draw [very thick] plot[smooth, tension=.7] coordinates {(2,-0.5) (1,-0.75) (0.5,-1.5)};
\draw [very thick, densely dashed, cyan] plot[smooth, tension=.4] coordinates {(-12.5,-0.5)(-12.25,-1.25) (-9.75,-1.25) (1.25,-1.25) (2.5,-1.5)};
\draw [very thick, densely dashed, blue] plot[smooth, tension=.4] coordinates { (-12.5,-0.5) (-11.142,-0.4865) (-10.125,-0.25) (-8.5,0)  (1.5,0) (2.5,-1.5)};
\end{tikzpicture}
	\caption{An arc whose snippet length is growing under a semi-local homotopy.}
	\label{Bad growth of curve}
\end{figure}

The crucial observation is that this multiplicative growth is caused by snippets cutting off index zero regions in complementary regions of the tie neighbourhood. However, subarcs created during such expanding homotopies cut off regions of index zero on one side, thus are ``blocked'' by a significant amount of index on the other side. Hence, their ``preferred'' way of moving under subsequent homotopies is back into their original ``shorter'' state. An index-argument shows that if we want to turn even a single snippet of these subarcs into a ``potentially growing'' snippet again, we first have to eliminate one bad snippet of the underlying curve. As the number of bad snippets is finite, the growth of the arc or curve under these trigon homotopies can thus be bounded. This interplay of index and length is captured in two combinatorial notions of length, the \emph{corner length} and the \emph{reduced corner length}. A careful analysis shows that the reduced corner length indeed only increases under the trigon homotopies if the trigon is ``eliminated''. As bigon homotopies naturally reduce to trigon homotopies, this will be sufficient to yield a proof of Theorem \ref{Polynomial time algorithm}.

\section{Structure of the thesis}

The layout of this thesis is as follows. In Chapter 2 we introduce the relevant notions needed for the proof of Theorem \ref{Polynomial time algorithm}. In particular, this includes the decomposition of the curve into \emph{snippets}, our first combinatorial notion of length, the \emph{snippet length} of an arc or curve, as well as the classification of snippets which are not in efficient position.
Chapter 3 provides two further combinatorial notions of length, the \emph{corner length} and the \emph{reduced corner length}. We define a family of local homotopies and study their effects on the type of bad snippets and combinatorial lengths of almost efficient arcs and curves of \textit{trigon type}. We end this chapter by presenting the algorithms \texttt{TrigArc} and \texttt{TrigCurve}, which can be used to achieve efficient position for the ``inside'' of almost efficient arcs and curves of trigon type. Both algorithms run in polynomial time in the reduced corner length and increase the latter only by an additive constant. 

In Chapter 4 we discuss how to reduce the case of almost efficient arcs of \textit{bigon type} to the case of almost efficient arcs of trigon type. We then present the algorithm \texttt{BigArc}, which achieves efficient position for the inside of almost efficient arcs of bigon and trigon type. Again, this algorithm runs in polynomial time in the reduced corner length of the arc and increases the latter only by a constant. We note that we restrict ourselves to the case of almost efficient arcs which have their bad snippets at the penultimate position. This simplifies subsequent running time analyses.

In Chapter 5 we give an algorithm that yields efficient position or reduces the number of bad snippets to at most one if $\alpha$ is an essential arc or an essential and non-peripheral curve respectively. Since the reduced corner length of an almost efficient arc or curve $\alpha$ differs from the snippet length $\len(\alpha)$ by at most a constant, the resulting algorithm halts in polynomial time in the length of $\alpha$.

In Chapter 6, we discuss how to achieve efficient position for an essential and non-peripheral curve that contains a unique bad snippet. We present a series of algorithms, each designed to deal with a certain type of bigon snippet, whose application lowers the reduced corner length of the underlying curve by at least one. Each of these algorithms terminates in polynomial time in the reduced corner length of the curve. 

In Chapter 7 we combine the algorithms presented in Chapter 5 and 6 to give a proof of Theorem \ref{Polynomial time algorithm} and Corollary \ref{Existence}.

\section{Future research directions}

Given the existence of efficient position for embedded curves, one can study ``efficient coordinates'' for curves on surfaces. We note that this yields, in particular, a global coordinate system for curves on closed surfaces. However, efficient position is only unique up to rectangle and annulus swaps (\cite[Proposition~5.4]{Taka}, \cite[Theorem~4.1]{carrySaul}). This implies a similar, though finite, ambiguity as in the case of normal coordinates on closed surfaces. 

First of all, one might ask how the graph of efficient positions of a homotopy class of curves looks like. We note that Takarajima suggests a preferred efficient position \cite[Theorem~6.7]{Taka}, which might be used to obtain a linear subspace or quotient space of efficient coordinates. This approach could provide a set of global, injective coordinates for curves on closed surfaces. 

In the context of studying mapping classes, efficient coordinates might prove useful in the following way: mapping classes can be encoded by paths in the train track graph \cite[Section~3]{Hamenstadt}. Two vertices in the train track graph are connected if one train track can be obtained from the other by a single split. Since efficient position can be achieved in polynomial time with the algorithm presented in this thesis, one can compute the change of efficient coordinates under a split in polynomial-time as well. This raises hopes that there is a formula for the coordinate change which yields a piecewise-linear map on $\mathcal{PML}$. However, this seems to require a delicate analysis of carried curves of weight at most two that are combed on one side.
Going further one can study how coordinates change under splits and folds if the curves are encoded in terms of weighted train tracks which are in efficient position with respect to a fixed train track. The latter could give a new way of finding minimal positions of curves that are given in normal coordinates, which might give an alternative approach to calculating distances in the curve graph \cite{BellWebb}. We remark that any algorithm computing intersection numbers and minimal positions of curves that builds directly on the work presented in this thesis will be dependent on the genus of the underlying surface and must require at least a quadratic number of operations in the lengths of the input curves. Using coordinates of curves that stem from surfaces decomposed into quadrilaterals, such an algorithm with quadratic running time was recently presented in \cite{DespreLazarus}.

With a detailed understanding of the changes of efficient coordinates under splits and folds, the proof of the polynomial-time recognition of mapping class types in \cite{BellWebb} could be reformulated in terms of (compressed) efficient coordinates. The efficient coordinate approach might eliminate exponential dependencies on the Euler characteristic in the running time and the need for introducing an artificial puncture in the closed case. Furthermore, an implementation of these algorithms in efficient coordinates could give a similar computer programme to ``Flipper'', which computes the action of mapping classes on laminations on a punctured surface using normal coordinates \cite{Flipper}.

\chapter{Background}
In this section, we provide the definitions needed for the proofs of Theorem \ref{Polynomial time algorithm} and Corollary \ref{Existence}.

\section{Surfaces, curves, and arcs}
Let $S = S_{g,b}$ be a compact, connected, oriented surface of genus $g$ with $b$ boundary components. The boundary components of $S$ receive the induced orientation. The \textit{complexity} of $S$ is defined as $\xi(S)=3g-3+b$. A \textit{curve} in $S$ is a proper immersion of the circle $S^1$ into $S$. We require curves to be in \textit{general position}; that is, we only consider smooth curves with transverse self-intersections. We say that an immersed curve $\alpha \subset S$ is \textit{essential} if it represents a non-trivial conjugacy class in the fundamental group of $S$. We further say that an immersed curve is \emph{peripheral} if it is homotopic to a power of a boundary component of $S$.


An \textit{arc} in $S$ is an immersion of the interval $[0,1]$ into $S$. As for curves, we require arcs to be in general position. In particular, they intersect the boundary of $S$ transversely. We say that an arc in $S$ is \emph{proper} if it is a proper immersion of the interval $[0,1]$ into $S$. We say that a proper arc is \emph{essential} if it is not homotopic, relative its endpoints, into the boundary of $S$. An arc or curve $\alpha \subset S$ that does not have any self-intersections is said to be \textit{embedded}.

\section{Train tracks}

This brief introduction to train tracks is based on \cite[Chapter~1]{Penner}, \cite[Section~2.3]{carrySaul},\cite[Chapter~3]{Mosher_traintrack}, and \cite[Section~3.4]{SaulTieNeighbourhood}. 

\begin{defn} \label{Page:train track}
A \textit{pretrack} $\tau \subset S$ is a non-trivial, properly embedded, locally finite graph in $S$ with the following properties: for each vertex $v$ of $\tau$ there is a unique tangent line $L \subset T_v(S)$ such that for some neighbourhood $U$ of $v$ the intersection $\tau \cap U$ is a union of smooth open arcs in S, all of which are tangent to $L$ at $v$ (see Figure \ref{Branch ends}). The edges of $\tau$ are called \textit{branches} and are smoothly embedded in $S$. The vertices of $\tau$ are called \textit{switches}. We require every switch to have valence three. 
A smooth immersion $\rho: \mathbb{R} \longrightarrow S$ is a \textit{train-route} if
\begin{itemize}
\item $\rho (\mathbb{R}) \subset \tau$, and
\item $\rho (n)$ is a switch if and only if $n \in \mathbb{Z}$.
\end{itemize}
Every branch $b$ of a pretrack is required to have a train-route travelling along $b$.
\end{defn}

If a train-route is periodic, it is called a \textit{closed train-route}. If $b$ is a branch of a pretrack $\tau$ and $p \in b$, a component of $b - p$ is called a \textit{half-branch} of $\tau$. If the intersection of two half-branches of a branch $b$ is again a half-branch of $b$, the two half-branches are regarded as being equivalent. An equivalence class of half-branches of $b$ is called an \textit{end} of $b$. An end $e$ of a branch $b$ is said to be $incident$ to a switch $v$ of $\tau$ if $v$ lies in the closure of a half-branch representing $e$.
For each switch $v$ of $\tau$, fix a direction in the tangent line $L \subset T_v(S)$ at $v$. We orient branch ends towards the switches they are incident to. A branch end $e$ incident to a switch $v$ is called \emph{incoming} if its orientation coincides with the chosen direction of the tangent line $L$ at $v$, \emph{outgoing} otherwise. Hence, the collection of branch ends incident to a switch can be split into two non-empty sets. If one of these sets consists of one branch end only, the corresponding branch end is called \textit{large}, otherwise \textit{small}. A branch with large ends on both sides is called \textit{large}, with a large end on one side only \textit{mixed}, and \textit{small} if both of its branch ends are small (see Figure \ref{Branch ends}).

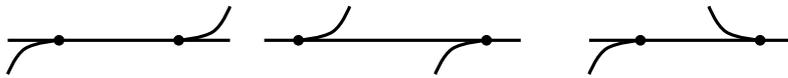
\begin{figure}[htbp]
	\centering
\begin{tikzpicture}[scale=0.45]
\draw  [very thick]plot[smooth, tension=.7] coordinates {(8.5,-0.5) (16,-0.5)};
\draw  [very thick]plot[smooth, tension=.7] coordinates {(9.5,-0.5) (10.5,-0.25) (11,0.5)};
\draw  [very thick]plot[smooth, tension=.7] coordinates {(15,-0.5) (14,-0.75) (13.5,-1.5)};
\draw [very thick](19.5,-0.5) -- (23,-0.5) node (v1) {};
\draw  [very thick]plot[smooth, tension=.7] coordinates {(18,-0.5) (18.5,-0.5) (19.5,-0.5)};
\draw [very thick] plot[smooth, tension=.7] coordinates {(18,-1.5) (18.5,-0.75) (19.5,-0.5)};
\draw [very thick] plot[smooth, tension=.7] coordinates {(v1) (22,-0.25) (21.5,0.5)};
\draw  [very thick]plot[smooth, tension=.7] coordinates {(23,-0.5) (23.5,-0.5) (24,-0.5)};
\draw  [very thick]plot[smooth, tension=.7] coordinates {(23,-0.5)  };
\draw [very thick](6,-0.5) node (v2) {} -- (2.5,-0.5);
\draw [very thick] plot[smooth, tension=.7] coordinates {(v2) (6.5,-0.5) (7.5,-0.5)};
\draw [very thick] plot[smooth, tension=.7] coordinates {(6,-0.5) (7,-0.25) (7.5,0.5)};
\draw [very thick] plot[smooth, tension=.7] coordinates {(2.5,-0.5) (2,-0.5) (1,-0.5)};
\draw [very thick] plot[smooth, tension=.7] coordinates {(2.5,-0.5) (1.5,-0.75) (1,-1.5)};
\filldraw [color=black] (2.5,-0.5) circle(4pt);
\filldraw [color=black] (6,-0.5) circle(4pt);
\filldraw [color=black]  (9.5,-0.5)circle(4pt);
\filldraw [color=black]  (15,-0.5) circle(4pt);
\filldraw [color=black]  (19.5,-0.5)circle(4pt);
\filldraw [color=black]  (23,-0.5) circle(4pt);
\end{tikzpicture}
	\caption{Examples of a large, small, and mixed branch.}
	\label{Branch ends}
\end{figure}

From a pretrack $\tau \subset S$ we build a \emph{tie neighbourhood} $N=N(\tau)$ as follows: Let $\mathcal{B} = \mathcal{B}(\tau)$ be the set of branches of $\tau$ and $\mathcal{S}= \mathcal{S}(\tau)$ its set of switches. We take one rectangle $R_b$ for each branch $b \in \mathcal{B}$ and one rectangle $R_s$ for each switch $s \in \mathcal{S}$. All rectangles are foliated by vertical arcs (the \emph{ties}). The boundary of each such rectangle $R$ consists of four edges: two edges that are parallel to the ties, and two that are perpendicular to the ties. We refer to the union of the first two edges as the \emph{vertical boundary} of $R$ and denote it by $\partial_v R$. The union of the perpendicular edges is called the \emph{horizontal boundary} of $R$ and is denoted by $\partial_h R$. For an illustration, we point the reader to Figure \ref{A tie neighbourhood rectangle}. \label{Page: Rectangle}
Suppose that $e$ is a branch end of $b \in \mathcal{B}$ that is incident to a switch $s \in \mathcal{S}$. We then glue the edge of $\partial_v R_b$ corresponding to $e$ to a subset of $\partial_v R_s$ as determined by the combinatorics of the train track (see Figure \ref{Tie neighbourhood}). The resulting surface $N=N(\tau)$ can be embedded into $S$ in such a way that it is disjoint from $\partial S$ and that the train track $\tau$ is properly embedded in it. We call this surface $N=N(\tau)$ the \emph{tie neighbourhood} of the train track $\tau$. \label{Page:tie neighbourhood}

\begin{rem}
In the literature, there are three common models for neighbourhoods of train tracks and branched surfaces: the \textit{cusp model} (\cite[page~90]{Penner}), the \textit{corner model} (introduced above, see also \cite[page ~11]{SaulTieNeighbourhood}), and the smooth model (\cite[page~28]{Mosher96laminationsand}). The combinatorial setting of this thesis asks for generic intersections of arcs and curves with themselves and the boundary of the tie neighbourhood. Thus, the corner model is the natural choice.
\end{rem}

\begin{figure}[htbp] 
  \begin{minipage}[b]{0.40\linewidth}
    \centering
\begin{tikzpicture}[scale=0.35]
\draw [very thick] (-3,1.5) rectangle (4,-3);
\node at (0.5,2.25) {\tiny{$\partial_h R$}};
\node at (-4,-0.75) {\tiny{$\partial_v R$}};
\node at (0.5,-3.75) {\tiny{$\partial_h R$}};
\node at (5,-0.75) {\tiny{$\partial_v R$}};
\draw [very thin, densely dotted] (-2.75,1.5) rectangle (3.75,-3);
\draw [very thin, densely dotted] (-2.5,1.5) rectangle (3.5,-3);
\draw [very thin, densely dotted] (-2.25,1.5) rectangle (3.25,-3);
\draw [very thin, densely dotted] (-2,1.5) rectangle (3,-3);
\draw [very thin, densely dotted] (-1.75,1.5) rectangle (2.75,-3);
\draw [very thin, densely dotted] (-1.5,1.5) rectangle (2.5,-3);
\draw [very thin, densely dotted] (-1.25,1.5) rectangle (2.25,-3);
\draw [very thin, densely dotted] (-1,1.5) rectangle (2,-3);
\draw [very thin, densely dotted] (-0.75,1.5) rectangle (1.75,-3);
\draw [very thin, densely dotted] (-0.5,1.5) rectangle (1.5,-3);
\draw [very thin, densely dotted] (-0.25,1.5) rectangle (1.25,-3);
\draw [very thin, densely dotted] (0,1.5) rectangle (1,-3);
\draw [very thin, densely dotted] (0.25,1.5) rectangle (0.75,-3);
\draw [very thin, densely dotted](0.5,1.5) -- (0.5,-3);
\end{tikzpicture}
    \caption{A tie neighbourhood rectangle.} 
    \label{A tie neighbourhood rectangle}
  \end{minipage} 
\begin{minipage}[b]{0.59\linewidth}
\centering
\begin{tikzpicture}[scale=0.45]
\draw [very thick] (-3.5,2) rectangle (0,0.5);
\draw [very thick] (-3.5,-1) rectangle (0,-2.5);
\draw [very thick] (0,2) rectangle (4.5,-2.5) node (v1) {};
\draw [very thin, densely dotted] (-3.25,2) rectangle (-0.25,0.5);
\draw [very thin, densely dotted] (-3,2) rectangle (-0.5,0.5);
\draw [very thin, densely dotted] (-2.75,2) rectangle (-0.75,0.5);
\draw [very thin, densely dotted] (-2.5,2) rectangle (-1,0.5);
\draw [very thin, densely dotted] (-2.25,2) rectangle (-1.25,0.5);
\draw [very thin, densely dotted] (-2,2) rectangle (-1.5,0.5);
\draw [very thin, densely dotted](-1.75,2) -- (-1.75,0.5);
\draw [very thin, densely dotted](-1.75,-1) -- (-1.75,-2.5);
\draw [very thin, densely dotted](-3.25,-1) rectangle (-0.25,-2.5);
\draw [very thin, densely dotted] (-0.25,-2.5) rectangle (-0.25,-2.5);
\draw [very thin, densely dotted](-3,-1) rectangle (-0.5,-2.5);
\draw [very thin, densely dotted] (-2.75,-1) rectangle (-0.75,-2.5);
\draw [very thin, densely dotted] (-2.5,-1) rectangle (-1,-2.5);
\draw [very thin, densely dotted] (-2.25,-1) rectangle (-1.25,-2.5);
\draw [very thin, densely dotted] (-2,-1) rectangle (-1.5,-2.5);
\draw [very thin, densely dotted] (0.25,2) rectangle (4.25,-2.5);
\draw [very thin, densely dotted] (0.5,2) rectangle (4,-2.5);
\draw [very thin, densely dotted] (0.75,2) rectangle (3.75,-2.5);
\draw [very thin, densely dotted](1,2) rectangle (3.5,-2.5);
\draw [very thin, densely dotted] (1.25,2) rectangle (3.25,-2.5);
\draw [very thin, densely dotted] (1.5,2) rectangle (3,-2.5);
\draw [very thin, densely dotted] (1.75,2) rectangle (2.75,-2.5);
\draw [very thin, densely dotted](2,2) rectangle (2.5,-2.5);
\draw [very thin, densely dotted] (2.25,2) rectangle (v1);
\draw [very thick] plot[smooth, tension=.7] coordinates {(9,-0.25) (3.25,-0.25)};
\draw [very thick,fill] (3,-0.25) ellipse (0.125 and 0.125);
\draw [very thick] (4.5,2) rectangle (9,-2.5);
\draw [very thin, densely dotted] (4.75,2) rectangle (8.75,-2.5);
\draw [very thin, densely dotted] (5,2) rectangle (8.5,-2.5);
\draw [very thin, densely dotted] (5.25,2) rectangle (8.25,-2.5);
\draw [very thin, densely dotted] (5.5,2) rectangle (8,-2.5);
\draw [very thin, densely dotted] (5.75,2) rectangle (7.75,-2.5);
\draw [very thin, densely dotted] (6,2) rectangle (7.5,-2.5);
\draw [very thin, densely dotted] (6.25,2) rectangle (7.25,-2.5);
\draw [very thin, densely dotted] (6.5,2) rectangle (7,-2.5);
\draw [very thin, densely dotted](6.75,2) -- (6.75,-2.5);
\draw [very thick] plot[smooth, tension=.7] coordinates {(3.25,-0.25) (2.5,-0.45) (2,-1.5) (0.75,-1.75) (-3.5,-1.8)};
\draw [very thick] plot[smooth, tension=.7] coordinates {(3.25,-0.25) (2.5,-0.050) (2,1) (0.75,1.25) (-3.5,1.26)};
\end{tikzpicture}
\caption{Identifications of the vertical boundaries of branch and switch rectangles.}
	\label{Tie neighbourhood}
\end{minipage}
\end{figure}

The images of the rectangles $R_b$ for $b \in \mathcal{B}$ are called \emph{branch rectangles}, and, in an abuse of notation, are again denoted by $R_b$. In a similar abuse of notation we have \emph{switch rectangles} $R_s$ for $s \in \mathcal{S}$ and ties and vertical and horizontal boundaries of the embedded rectangles in $S$.
The collection of branch and switch rectangles of a tie neighbourhood $N=N(\tau)$ is denoted by $\mathcal{R}_\textrm{tie}=\mathcal{R}_\textrm{tie}(N)$. \label{Page: Rtie}
The \textit{horizontal boundary} $\partial_h N$ is the union of $\partial_h R$ for $R \in \mathcal{R}_\textrm{tie}$, while the \textit{vertical boundary} is $\partial_v N = \overline{\partial N - \partial_h N}$.

Let $N = N (\tau)$ be a tie neighbourhood of a pretrack $\tau$. Let $R$ be a \textit{complementary region} of $N$, that is, a component of $\overline{S - N}$. The collection of complementary regions of $N$ is denoted by $\mathcal{R}_\textrm{comp}=\mathcal{R}_\textrm{comp}(N)$. \label{Page: Rcomp} We set $\mathcal{R}=\mathcal{R}(N)=\mathcal{R}_\textrm{tie}(N) \cup \mathcal{R}_\textrm{comp}(N)$.  \label{Page:R} For each complementary component $R \in \mathcal{R}_\textrm{comp}$, the \textit{horizontal} and \textit{vertical boundary} of $R$ are defined as $\partial_h R = \partial R \cap \partial_h N$ and $\partial_v R = \partial R \cap \partial_v N$. So, for any $R \in \mathcal{R}_\textrm{comp}$ there is a, possibly empty, subset $B \subset \partial S$ such that $\partial R = \partial_h R \cup \partial_v R \cup B$. Connected subsets of the horizontal and vertical boundary of each complementary region $R \in \mathcal{R}_\textrm{comp}$ meet perpendicularly at their boundaries.

Suppose that $T \subset S$ is a subsurface with $\partial T$ being a finite union of one-dimensional smooth submanifolds meeting perpendicularly at their boundaries.\label{Page:Index and Corners} We say that $c$ is a \emph{corner} of $T$ if $c$ lies in the perpendicular intersection of two one-dimensional smooth submanifolds of $\partial T$. The set of all corners of $T$ is denoted by $\partial^2 T$.
Following \cite[Section~2.3]{carrySaul}, we set
\begin{align*}
\ind (T) = \chi (T) - \dfrac{\partial^2 T^-}{4} + \dfrac{\partial^2 T^+}{4},
\end{align*} 
where $\partial^2 T^-$ and $\partial^2 T^+$ denote the number of outward- and inward-pointing corners of $T$ respectively. We note that the index is additive under unions if the interiors of the two subsurfaces in question are disjoint.


Suppose that $N = N (\tau)$ is a tie neighbourhood of a pretrack $\tau$ and $R \in \mathcal{R}_\textrm{comp}$ is a finite-sided complementary region of $N$. Then all corners of $R$ are pointing outwards and the index simplifies to
\begin{align*}
\ind (R) = \chi (R) - \frac{|\partial^2 R|}{4}.
\end{align*} 

Since the tie neighbourhood of a train track is a union of rectangles whose interiors are disjoint, we know that $\ind(N)=0$.

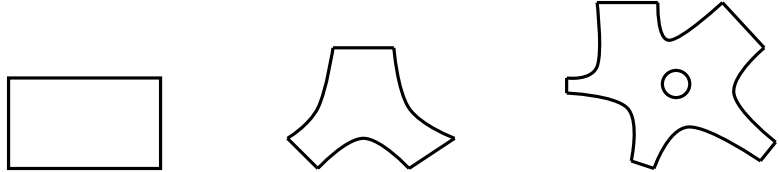
\begin{figure}[htbp]
	\centering
	\begin{subfigure}[t]{1in}
		\centering
		\begin{tikzpicture}[scale=0.4]
\draw [very thick](-0.5,1) node (v1) {} -- (-0.5,-2) -- (4.5,-2) -- (4.5,1) -- (-0.5,1) -- cycle;
\end{tikzpicture}	
	\end{subfigure}
\qquad \quad
	\begin{subfigure}[t]{1in}
		\centering
		\begin{tikzpicture}[scale=0.4]
\draw [very thick] plot[smooth, tension=.7] coordinates {(0.5,1.5) (0,-0.5) (-1,-1.5)};
\draw [very thick] plot[smooth, tension=.7] coordinates {(0,-2.5) (1.5,-1.5) (3,-2.5)};
\draw [very thick] plot[smooth, tension=.7] coordinates {(4.5,-1.5) (3,-0.5) (2.5,1.5)};
\draw [very thick](0.5,1.5) -- (2.5,1.5);
\draw [very thick](-1,-1.5) -- (0,-2.5);
\draw [very thick](3,-2.5) -- (4.5,-1.5);
\end{tikzpicture}
	\end{subfigure}
\qquad \quad
	\begin{subfigure}[t]{1in}
		\centering
		\begin{tikzpicture}[scale=0.4]
\draw [very thick] plot[smooth, tension=.7] coordinates {(-2.5,0) (-0.5,-0.5) (-0.375,-2.25)};
\draw [very thick] plot[smooth, tension=.7] coordinates {(0.375,-2.5) (1.5,-1.125) (3.875,-2.25)};
\draw [very thick] plot[smooth, tension=.7] coordinates {(4.375,-1.625) (3,0) (4,1.5)};
\draw [very thick] plot[smooth, tension=.7] coordinates {(2.625,3) (0.875,1.75) (0.5,3)};
\draw [very thick] plot[smooth, tension=.7] coordinates {(-1.5,3) (-1.5,0.875) (-2.5,0.5)};
\draw [very thick] (1.1,0.3) ellipse (0.45 and 0.45);
\draw [very thick, fill](-2.5,0.5) -- (-2.5,0);
\draw [very thick](-0.375,-2.25) -- (0.375,-2.5);
\draw [very thick] plot[smooth, tension=.7] coordinates {(-1.5,3) (0.5,3)};
\draw [very thick] plot[smooth, tension=.7] coordinates {(2.625,3) (4,1.5)};
\draw [very thick] plot[smooth, tension=.7] coordinates {(4.375,-1.625) (3.875,-2.25)};
\end{tikzpicture}
	\end{subfigure}
	\caption{These subsurfaces have indices $0$, $-2/4$, and $-10/4$ respectively.}
\end{figure}

\begin{defn}
Suppose that $\tau \subset S $ is a pretrack and $N = N(\tau)$ is a tie neighbourhood of $\tau$ in $S$. We say that $\tau$ is a \textit{train track} if $\tau$
is compact and every complementary region $R$ of $N$ has negative index.
\end{defn}

Following \cite[Section~3.1]{MasurMinskyI}, we say that a train track $\tau \subset S$ is \textit{large} if all complementary regions of $N(\tau)$ are discs or peripheral annuli. For the remainder of this thesis, we restrict ourselves to large train tracks.

\section{Snippets and winding numbers}

Let $S = S_{g,b}$ be a surface satisfying $\xi(S)=3g-3+b \geq 1$. Let $\tau \subset S$ be a large train track and $N=N(\tau)$ be a tie neighbourhood of $\tau$ in $S$.

\subsection{Snippets}
As seen in the previous section, $N$ is the union of branch and switch rectangles $R \in \mathcal{R}_\textrm{tie}$. Each rectangle is modelled on a rectangular box. Boxes for switch rectangles are equipped with two short horizontal dashes on their vertical boundary indicating the gluings with the adjacent branch rectangles. As $\tau$ is a large train track, each complementary region can be modelled on a, perhaps peripheral, polygon. Hence, $\mathcal{R}$ provides a tiling of the surface $S$ by rectangles, polygons, and peripheral annuli. For any region $R \in \mathcal{R}$, we label subsets of $\partial R$ by $h$ or $v$ if they are part of the horizontal or vertical boundary of $R$ respectively. All subsets of $\partial R$ that are (identified with) a tie of a branch rectangle are labelled by $t$. For an illustration see Figures \ref{branch rectangle} and \ref{switch rectangle}.

\begin{figure}[htbp] 
  \begin{minipage}[b]{0.49\linewidth}
    \centering
\begin{tikzpicture}[scale=0.35]
\draw [very thick] (-3,1.5) rectangle (4,-3);
\node at (0.5,2.25) {\tiny{$h$}};
\node at (-3.75,-0.75) {\tiny{$t$}};
\node at (0.5,-3.75) {\tiny{$h$}};
\node at (4.75,-0.75) {\tiny{$t$}};
\draw [very thin, densely dotted] (-2.75,1.5) rectangle (3.75,-3);
\draw [very thin, densely dotted] (-2.5,1.5) rectangle (3.5,-3);
\draw [very thin, densely dotted] (-2.25,1.5) rectangle (3.25,-3);
\draw [very thin, densely dotted] (-2,1.5) rectangle (3,-3);
\draw [very thin, densely dotted] (-1.75,1.5) rectangle (2.75,-3);
\draw [very thin, densely dotted] (-1.5,1.5) rectangle (2.5,-3);
\draw [very thin, densely dotted] (-1.25,1.5) rectangle (2.25,-3);
\draw [very thin, densely dotted] (-1,1.5) rectangle (2,-3);
\draw [very thin, densely dotted] (-0.75,1.5) rectangle (1.75,-3);
\draw [very thin, densely dotted] (-0.5,1.5) rectangle (1.5,-3);
\draw [very thin, densely dotted] (-0.25,1.5) rectangle (1.25,-3);
\draw [very thin, densely dotted] (0,1.5) rectangle (1,-3);
\draw [very thin, densely dotted] (0.25,1.5) rectangle (0.75,-3);
\draw [very thin, densely dotted](0.5,1.5) -- (0.5,-3);
\end{tikzpicture}
    \caption{A branch rectangle} 
    \label{branch rectangle}
  \end{minipage} 
\begin{minipage}[b]{0.49\linewidth}
\centering
\begin{tikzpicture}[scale=0.35]
\draw [very thick] (-3,1.5) rectangle (4,-3);
\node at (0.5,2.25) {\tiny{$h$}};
\node at (-3.75,-0.75) {\tiny{$t$}};
\node at (0.5,-3.75) {\tiny{$h$}};
\node at (4.75,-2.25) {\tiny{$t$}};
\draw [very thick](3.5,0) -- (4.5,0);
\draw [very thick](3.5,-1.5) -- (4.5,-1.5);
\node at (4.75,-0.75) {\tiny{$v$}};
\node at (4.75,0.75) {\tiny{$t$}};
\draw [very thin, densely dotted] (-2.75,1.5) rectangle (3.75,-3);
\draw [very thin, densely dotted] (-2.5,1.5) rectangle (3.5,-3);
\draw [very thin, densely dotted] (-2.25,1.5) rectangle (3.25,-3);
\draw [very thin, densely dotted] (-2,1.5) rectangle (3,-3);
\draw [very thin, densely dotted] (-1.75,1.5) rectangle (2.75,-3);
\draw [very thin, densely dotted] (-1.5,1.5) rectangle (2.5,-3);
\draw [very thin, densely dotted] (-1.25,1.5) rectangle (2.25,-3);
\draw [very thin, densely dotted] (-1,1.5) rectangle (2,-3);
\draw [very thin, densely dotted] (-0.75,1.5) rectangle (1.75,-3);
\draw [very thin, densely dotted] (-0.5,1.5) rectangle (1.5,-3);
\draw [very thin, densely dotted] (-0.25,1.5) rectangle (1.25,-3);
\draw [very thin, densely dotted] (0,1.5) rectangle (1,-3);
\draw [very thin, densely dotted] (0.25,1.5) rectangle (0.75,-3);
\draw [very thin, densely dotted](0.5,1.5) -- (0.5,-3);
\end{tikzpicture}
 \caption{A switch rectangle} 
    \label{switch rectangle}
\end{minipage}
\end{figure}

We define $\partial \mathcal{R}$ \label{Page: boundary R} to be the union of all boundaries of the regions $R \in \mathcal{R}$. That is, $\partial \mathcal{R} = \bigcup_{R \in \mathcal{R}} \partial R$ is a trivalent graph whose edges meet at an angle of $\pi$ or $\pi/2$. We further set $\partial^2 \mathcal{R}$ to be the union of all points in $\partial^2 R$ for $R \in \mathcal{R}$. Suppose that $R \in \mathcal{R}$ is a rectangle or polygon. In the following, we refer to the closure of a component of $\partial R - \partial^2 R$ as a \emph{side} of $R$. Hence, for $R \in \mathcal{R}_\textrm{comp}$, a side of $R$ can contain more than two points of $\partial^2 \mathcal{R}$ if it is part of the horizontal boundary of $R$ (see Figure \ref{Long horizontal boundary side}). If $R \in \mathcal{R}_\textrm{tie}$ is a switch rectangle, then one vertical boundary side of $R$ contains two points of $\partial^2 \mathcal{R}$ in its interior. In our figures of switch rectangles, these two extra corners are indicated by short horizontal dashes.

\begin{figure}[htbp] 
\begin{minipage}[b]{0.99\linewidth}
    \centering
\begin{tikzpicture}[scale=0.55]
\draw [very thick](-4,-1.5) {} -- (-4,-3) {};
\draw [very thick] plot[smooth, tension=.7] coordinates { (-4,-3) (1,-2.5) (6,-3)};
\draw [very thick](6,-1.5) node (v3) {} -- (6,-3);
\draw [very thick] plot[smooth, tension=.7] coordinates {(-4,-1.5) (-1,-0.5) (0,2)};
\draw [very thick] plot[smooth, tension=.7] coordinates {(v3) (3,-0.5) (2,2)};
\draw [very thick] (-4,-4) node (v1) {} rectangle (-6,-0.5);
\draw [very thick] plot[smooth, tension=.7] coordinates {(v1)  (-2.4,-3.8)};
\draw [very thick] plot[smooth, tension=.7] coordinates {(-4,-0.5) (-3,-0.25) (-2.5,0)};
\draw [very thick](-2.5,0) -- (-2.1915,-1.0747);
\node at (-6.5,-2) {{\tiny{$t$}}};
\node at (-5,0) {{\tiny{$h$}}};
\draw [very thick] (0.9752,-0.8739) ellipse (0.35 and 0.35);
\draw [very thick](-2.4,-2.78) -- (-2.4,-5.4) -- (-0.6,-5.4) -- (-0.6,-2.58);
\draw [very thick](-0.6,-5.4)-- (0.9,-5.4)  -- (0.9,-2.5);
\draw [very thick](0.9,-5.4) -- (2.4,-5.4) -- (2.4,-2.56);
\draw [very thick](-2.6,-4.588) -- (-2.2,-4.588);
\draw [very thick](2.2,-3.686) -- (2.6,-3.686);
\draw [very thick](2.2,-4.588) -- (2.6,-4.588);
\node at (2.94,-4.16) {{\tiny{$v$}}};
\node at (2.86,-3.1) {{\tiny{$t$}}};
\node at (0.2,-5.9) {{\tiny{$h$}}};
\node at (0.08,-1.4) {{\tiny{$\partial S$}}};
\draw [very thick](0,2) -- (2,2);
\node at (0.947,2.496) {{\tiny{$v$}}};
\node at (6.5175,-2.3319) {{\tiny{$v$}}};
\end{tikzpicture}
    \caption{A horizontal boundary component of $R \in \mathcal{R}_\textrm{comp}$ containing more than two points of $\partial^2 \mathcal{R}$.}
    \label{Long horizontal boundary side}
  \end{minipage}
\end{figure}
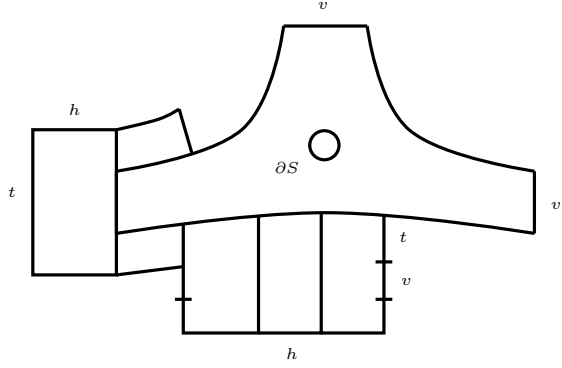

\begin{defn}
Suppose that $R \in \mathcal{R}$. Suppose that $D=(I,\partial I)$ or $D=(S^1,\emptyset)$. We say that $a:D \rightarrow (R,\partial R)$ is a \emph{snippet} if $a$ is a transverse and self-transverse immersion of pairs.
\end{defn}

We note that the transversality of the map $D \rightarrow (R,\partial R)$ implies that $\partial D$ misses the corners of $R$.
In an abuse of notation, we denote by $a$ the snippet as well as its image in the surface $S$. For some examples of snippets we refer the reader to Figure \ref{Examples of snippets}.

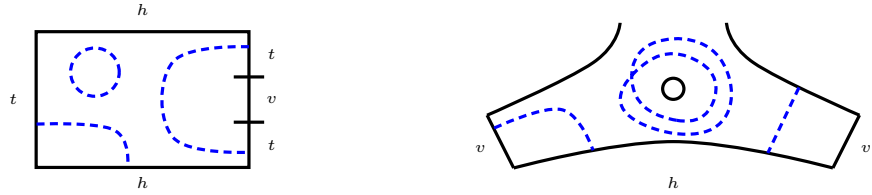
\begin{figure}[htbp] 
\begin{minipage}[b]{0.49\linewidth}
    \centering
\begin{tikzpicture}[scale=0.4]
\draw [very thick] (-3,1.5) rectangle (4,-3);
\node at (0.5,2.25) {\tiny{$h$}};
\node at (-3.75,-0.75) {\tiny{$t$}};
\node at (0.5,-3.5) {\tiny{$h$}};
\node at (4.75,-2.25) {\tiny{$t$}};
\draw [very thick](3.5,0) -- (4.5,0);
\draw [very thick](3.5,-1.5) -- (4.5,-1.5);
\node at (4.75,-0.75) {\tiny{$v$}};
\node at (4.75,0.75) {\tiny{$t$}};
\draw [densely dashed, blue, very thick] plot[smooth, tension=.7] coordinates {(4,1) (1.5,0.5) (1.5,-2) (4,-2.5)};
\draw [very thick, densely dashed, blue] (-1.0666,0.1589) ellipse (0.8 and 0.8);
\draw [very thick, densely dashed, blue] plot[smooth, tension=.7] coordinates {(-2.9873,-1.5637) (-0.4588,-1.7504) (0.0093,-2.9745)};
\end{tikzpicture}
  \end{minipage}
\begin{minipage}[b]{0.49\linewidth}
    \centering
\begin{tikzpicture}[scale=0.35]
\draw [very thick](-6,-3) node (v1) {} -- (-5,-5) node (v2) {};
\draw [very thick] plot[smooth, tension=.7] coordinates {(v1) (-2,-1) (-1,0.5)};
\draw [very thick] plot[smooth, tension=.7] coordinates {(v2) (1,-4) (7,-5)};
\draw [very thick](7,-5) -- (8,-3) node (v3) {};
\draw [very thick] plot[smooth, tension=.7] coordinates {(v3) (4,-1) (3,0.5)};
\node at (1,-5.55) {\tiny{$h$}};
\node at (-6.25,-4.25) {\tiny{$v$}};
\node at (8.25,-4.25) {\tiny{$v$}};
\draw [very thick] (1,-2) ellipse (0.4 and 0.4);
\draw [very thick, densely dashed, blue] plot[smooth cycle, tension=.9] coordinates {(-0.5,-2) (1.1295,-3.1939) (2.5,-2.5) (2,-1) (0.0096,-0.9903) (-0.7489,-2.9427) (2.5,-3.5) (2.7999,-0.9149) (0.3619,-0.2246) };
\draw [very thick, densely dashed, blue] plot[smooth, tension=.7] coordinates {(-5.7345,-3.4896) (-3.2004,-2.7892) (-2.0162,-4.33)};
\draw [very thick, densely dashed, blue] plot[smooth, tension=.7] coordinates {(5.726,-1.9488) (4.5545,-4.4319)};
\end{tikzpicture}
\end{minipage}
\caption{Snippets in a switch rectangle and complementary region.}
\label{Examples of snippets}
\end{figure}

For the remainder of this thesis, we assume that $S^1$ is parametrized as $[0,1]/_\sim$. Hence, every snippet carries a canonical orientation induced by its parametrization.

\begin{defn}
Suppose that $a_1,a_2:D \rightarrow R$ are two snippets contained in the same region $R \in \mathcal{R}$. We say that $a_1$ and $a_2$ are \emph{strongly snippet homotopic} if $a_1$ and $a_2$ are homotopic via a transverse homotopy of pairs $D \rightarrow (R,\partial R)$ which keeps the $k$-skeleton of $[0,1]$ or $S^1$ inside the $(k+1)$-skeleton of $\mathcal{R}$ at all times. The strong snippet homotopy class of a snippet $a \subset R \in \mathcal{R}$ will be denoted by $[a]$.
\end{defn}

We note that strong snippet homotopies preserve the orientation of the snippets.

\begin{rem}
In the further course of this thesis, a coarser equivalence relation on snippets will turn out to be important, too. This will be called \emph{weak snippet homotopy}. Homotopies of this kind are only required to avoid the corners of the region that contains the snippet, and not all corners of $\partial^2 \mathcal{R}$. For a formal statement, we point the reader to Definition \ref{Weak snippet homotopy} on page \pageref{Weak snippet homotopy}.
\end{rem}

\begin{lem}\label{classification of snippets in simply connected regions}
Suppose that $a \subset R$ is a snippet that lies inside a simply connected region $R \in \mathcal{R}$. Then $[a]$ is, up to orientation, uniquely determined by the components of $\partial \mathcal{R}-\partial^2 \mathcal{R}$ containing $\partial a$. \qed
\end{lem}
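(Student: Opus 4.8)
The plan is to set up a normal form for a snippet $a$ inside a simply connected region $R$ and argue that strong snippet homotopy allows us to reach this normal form in a way that only depends on the two components of $\partial\mathcal{R}-\partial^2\mathcal{R}$ (the ``sides'', in the generalized sense, meeting the endpoints of $a$), together with a choice of orientation. First I would observe that $R$, being simply connected, is either a rectangle, a polygon, or a disc, and in each case $R$ is homeomorphic to a closed disc; lift or straighten the picture so that $\partial R$ is a circle carrying the finitely many points of $\partial^2 R$. An arc snippet $a\colon(I,\partial I)\to(R,\partial R)$ then has its two endpoints on two (not necessarily distinct) arcs of $\partial R - \partial^2 R$; a closed snippet $a\colon(S^1,\emptyset)\to R$ has no endpoints and lands in the interior. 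The claim is that $[a]$ is determined by this endpoint data up to orientation.

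The key steps, in order, are: (i) reduce to the embedded case by removing self-intersections — any snippet can be strongly snippet-homotoped to an embedded one, since a transverse self-intersection of an immersed arc in a disc can be removed by a local homotopy of pairs that never touches $\partial R$ and hence never leaves the relevant skeleton constraints; (ii) for a closed embedded snippet, use that a simple closed curve in a disc bounds a disc, so it is null-homotopic and can be contracted to a point through a transverse homotopy staying in the interior of $R$, giving a single strong snippet homotopy class (the ``trivial'' one) — so for closed snippets the statement is that there is exactly one class, matching the (empty) endpoint data; (iii) for an embedded arc snippet, use that an embedded arc in a disc with endpoints on two prescribed boundary arcs is unique up to isotopy of pairs (this is the standard fact that properly embedded arcs in a disc are determined by their endpoints, an Alexander-type argument); (iv) check that the isotopy witnessing uniqueness can be taken to respect the skeleton condition defining strong snippet homotopy — it keeps endpoints moving within the $1$-skeleton components they started in and the interior of the arc in the $2$-skeleton (the interior of $R$), which is automatic. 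Finally, the orientation ambiguity is exactly the ambiguity of traversing $I$ forwards or backwards, accounting for the ``up to orientation'' clause.

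I expect the main obstacle to be step (iv): one has to be careful that the straightening/isotopy producing the normal form is itself a \emph{transverse} homotopy of pairs that keeps the $k$-skeleton of $I$ (or $S^1$) inside the $(k+1)$-skeleton of $\mathcal{R}$ at all times, as demanded by the definition of strong snippet homotopy. The subtlety is that the endpoints must never cross a point of $\partial^2\mathcal{R}$ during the homotopy, so the usual ``push the arc to a standard position'' move must be organized to keep each endpoint strictly inside its own component of $\partial\mathcal{R}-\partial^2\mathcal{R}$; since that component is a single open arc and embedded arcs with endpoints in fixed boundary arcs of a disc form a connected (indeed contractible) space of configurations, this can be done, but writing the homotopy explicitly — especially in the polygonal and long-horizontal-side cases of Figure \ref{Long horizontal boundary side} where a side may contain several corners of $\partial^2\mathcal{R}$ — requires some care. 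The remaining steps are essentially the classical classification of arcs and curves in a disc and should be routine.
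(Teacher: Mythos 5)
Your proposal is correct. The paper states this lemma without proof and treats it as a routine consequence of the classification of embedded arcs in a disc; your outline (reduce to the embedded case, null-homotopy for closed snippets, connectedness of the space of embedded arcs in a disc with endpoints in two prescribed components of $\partial\mathcal{R}-\partial^2\mathcal{R}$, and the observation that the witnessing isotopy can be organized to keep each endpoint within its component) supplies exactly the standard argument that justifies the omission.
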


For peripheral regions $R \in \mathcal{R}$, recording the intersection with the components of $\partial \mathcal{R}-\partial^2 \mathcal{R}$ is not sufficient to determine a strong snippet homotopy class. In addition to the intersection data we have to provide information about the winding behaviour of the snippet around the boundary component of $S$.

\subsection{Winding number}

Traditionally, the winding number of an arc or curve around a point is an integer representing the total number of times that the arc or curve travels around that point. However, the combinatorial set-up of this thesis makes it convenient to take a different approach: suppose that $R \in \mathcal{R}_\textrm{comp}$ is a peripheral annulus and $a \subset R$ is a snippet. We set the winding number of $a$ to equal the number of outward-pointing corners of $R$ that $a$ passes while travelling around the component of $\partial S$. Hence, winding numbers in this thesis are integers but do not count an integer number of winding.

Suppose that $R \in \mathcal{R}$. Recall that we refer to the closure of a component of $\partial R - \partial^2 R$ as a side of $R$. By construction of the tie neighbourhood, the sides of $R$ are smoothly embedded arcs meeting perpendicularly at their endpoints. For any snippet $a \subset R$ we may assume that $a$ meets $\partial R \cup \partial S$ perpendicularly.

\begin{defn}\label{Page: winding number}
Suppose that $R \in \mathcal{R}_\textrm{comp}$ is a $(2n+1)$-sided peripheral annulus. Suppose that $a \subset R$ is a snippet of minimal self-intersection. We assign a \emph{winding number} to $a$ as follows:
\begin{itemize}
\item  If $\partial a \cap \partial S \neq \emptyset$, we set $\wind(a)=0$.
\item If $\partial a \neq \emptyset$ and $\partial a \subset \partial_h R \cup \partial_v R$, we consider a lift $\widetilde{a}$ of $a$ to the universal cover $\widetilde{R}$ of $R$. This lift bounds a region $Q \subset \widetilde{R}$ of finite index. In this case, the winding number of $a$ is defined as 
\begin{align*}
\wind (a) \ = \ -4 \cdot \ind(Q) + 2.
\end{align*}
\item If $\partial a = \emptyset$, then $a$ is a curve inside a peripheral annulus. As ${\pi}_1(R)= \mathbb{Z}$, $a$ is homotopic to the $k$-th power of a component of $\partial S$, where $k \in \mathbb{Z}$, and we say that $\wind(a)=k \cdot 2n$.
\end{itemize}
We further decorate $\wind (a)$ with a sign obtained as follows: The winding number is equipped with a positive sign if $\widetilde{a} \subset \widetilde{R}$ with its induced orientation from $a$ bounds the finite index region $Q$ or a strip with the lift of $\partial R$ on its right-hand side. Else, we equip $\wind (a)$ with a negative sign.
\end{defn}

For examples of snippets and their winding numbers we refer the reader to Figures \ref{Example of winding number I}-\ref{Example of winding number II}.

\begin{figure}[htbp] 
  \begin{minipage}[b]{0.46\linewidth}
    \centering
\begin{tikzpicture}[scale=0.4]
\draw [very thick] plot[smooth, tension=.7] coordinates {(-2.5,0) (-0.5,-0.5) (-0.375,-2.25)};
\draw [very thick] plot[smooth, tension=.7] coordinates {(0.375,-2.5) (1.5,-1.125) (3.875,-2.25)};
\draw [very thick] plot[smooth, tension=.7] coordinates {(4.375,-1.625) (3,0) (4,1.5)};
\draw [very thick] plot[smooth, tension=.7] coordinates {(2.625,3) (0.875,1.75) (0.5,3)};
\draw [very thick] plot[smooth, tension=.7] coordinates {(-1.5,3) (-1.5,0.875) (-2.5,0.5)};
\draw [very thick] (1.25,0.375) ellipse (0.3 and 0.3);
\draw [very thick, fill](-2.5,0.5) -- (-2.5,0);
\draw [very thick](-0.375,-2.25) -- (0.375,-2.5);
\draw [very thick] plot[smooth, tension=.7] coordinates {(-1.5,3) (0.5,3)};
\draw [very thick] plot[smooth, tension=.7] coordinates {(2.625,3) (4,1.5)};
\draw [very thick] plot[smooth, tension=.7] coordinates {(4.375,-1.625) (3.875,-2.25)};
\draw [densely dashed, blue, very thick] plot[smooth, tension=.7] coordinates {(1.5,-1.125) (2,-0.375) (2.375,0.875) (1.5,1.5) (0,1.5) (-0.375,3)};
\draw [fill, blue, very thick](2,-0.375) -- (1.75,-0.375) -- (2.125,-0.625) -- cycle;
\end{tikzpicture}
    \caption{Winding number $5$.} 
    \label{Example of winding number I}
    \vspace{2ex}
  \end{minipage} 
\begin{minipage}[b]{0.53\linewidth}
\centering
\begin{tikzpicture}[scale=0.4]
\draw [very thick] plot[smooth, tension=.7] coordinates {(-2.5,0) (-0.5,-0.5) (-0.375,-2.25)};
\draw [very thick] plot[smooth, tension=.7] coordinates {(0.375,-2.5) (1.5,-1.125) (3.875,-2.25)};
\draw [very thick] plot[smooth, tension=.7] coordinates {(4.375,-1.625) (3,0) (4,1.5)};
\draw [very thick] plot[smooth, tension=.7] coordinates {(2.625,3) (0.875,1.75) (0.5,3)};
\draw [very thick] plot[smooth, tension=.7] coordinates {(-1.5,3) (-1.5,0.875) (-2.5,0.5)};
\draw [very thick] (1.25,0.375) ellipse (0.3 and 0.3);
\draw [very thick, fill](-2.5,0.5) -- (-2.5,0);
\draw [very thick](-0.375,-2.25) -- (0.375,-2.5);
\draw [very thick] plot[smooth, tension=.7] coordinates {(-1.5,3) (0.5,3)};
\draw [very thick] plot[smooth, tension=.7] coordinates {(2.625,3) (4,1.5)};
\draw [very thick] plot[smooth, tension=.7] coordinates {(4.375,-1.625) (3.875,-2.25)};
\draw [densely dashed, blue, very thick] plot[smooth, tension=.7] coordinates {(1.375,-1.125) (0.75,-0.625) (0.375,0.125) (0.875,0.875) (1.625,1) (2.125,0.375) (1.875,-0.25) (1.125,-0.625) (0.25,-0.5) (0,0.375) (0.625,1) (1.5,1.25) (2.15,1.125) (2.475,0.6625) (2.5,0.125) (2.375,-0.75) (2.25,-1.25)};
\draw [fill, blue, very thick](2.2125,-0.8375) -- (2.3375,-0.975) -- (2.5,-0.875) -- cycle;
\end{tikzpicture}
 \caption{Winding number $-20$.} 
\vspace{2ex}
\end{minipage}
  \begin{minipage}[b]{0.49\linewidth}
    \centering
\begin{tikzpicture}[scale=0.4]
\draw [very thick] plot[smooth, tension=.7] coordinates {(-2.5,0) (-0.5,-0.5) (-0.375,-2.25)};
\draw [very thick] plot[smooth, tension=.7] coordinates {(0.375,-2.5) (1.5,-1.125) (3.875,-2.25)};
\draw [very thick] plot[smooth, tension=.7] coordinates {(4.375,-1.625) (3,0) (4,1.5)};
\draw [very thick] plot[smooth, tension=.7] coordinates {(2.625,3) (0.875,1.75) (0.5,3)};
\draw [very thick] plot[smooth, tension=.7] coordinates {(-1.5,3) (-1.5,0.875) (-2.5,0.5)};
\draw [very thick] (1.25,0.375) ellipse (0.3 and 0.3);
\draw [very thick, fill](-2.5,0.5) -- (-2.5,0);
\draw [very thick](-0.375,-2.25) -- (0.375,-2.5);
\draw [very thick] plot[smooth, tension=.7] coordinates {(-1.5,3) (0.5,3)};
\draw [very thick] plot[smooth, tension=.7] coordinates {(2.625,3) (4,1.5)};
\draw [very thick] plot[smooth, tension=.7] coordinates {(4.375,-1.625) (3.875,-2.25)};
\draw [densely dashed, blue, very thick] plot[smooth, tension=.7] coordinates {(1.1559,0.0858) (0.6223,-0.3665) (-0.3274,0.2305) (0.1701,1.3249) (1.2193,1.1259) (1.3278,0.6556)};
\draw [densely dashed, blue, very thick] plot[smooth, tension=.7] coordinates {(1.4815,0.1491) (2.0785,-0.7282) (2.1328,-1.2709)};
\draw [densely dashed, blue, very thick] plot[smooth, tension=.7] coordinates {(-1.2,3) (-1,2) (0,2) (0.2,3)};
\draw [very thick, densely dashed, blue] (2.5,1.5) ellipse (0.5 and 0.5);
\end{tikzpicture}
    \caption{Winding number $0$.} 
  \end{minipage}
  \begin{minipage}[b]{0.49\linewidth}
    \centering
\begin{tikzpicture}[scale=0.4]
\draw [very thick] plot[smooth, tension=.7] coordinates {(-2.5,0) (-0.5,-0.5) (-0.375,-2.25)};
\draw [very thick] plot[smooth, tension=.7] coordinates {(0.375,-2.5) (1.5,-1.125) (3.875,-2.25)};
\draw [very thick] plot[smooth, tension=.7] coordinates {(4.375,-1.625) (3,0) (4,1.5)};
\draw [very thick] plot[smooth, tension=.7] coordinates {(2.625,3) (0.875,1.75) (0.5,3)};
\draw [very thick] plot[smooth, tension=.7] coordinates {(-1.5,3) (-1.5,0.875) (-2.5,0.5)};
\draw [very thick] (1.25,0.375) ellipse (0.3 and 0.3);
\draw [very thick, fill](-2.5,0.5) -- (-2.5,0);
\draw [very thick](-0.375,-2.25) -- (0.375,-2.5);
\draw [very thick] plot[smooth, tension=.7] coordinates {(-1.5,3) (0.5,3)};
\draw [very thick] plot[smooth, tension=.7] coordinates {(2.625,3) (4,1.5)};
\draw [very thick] plot[smooth, tension=.7] coordinates {(4.375,-1.625) (3.875,-2.25)};
\draw [very thick, densely dashed, blue] (1.25,0.375) ellipse (1 and 1);
\draw [very thick, fill, blue](2.25,0.375) -- (2.125,0.25) -- (2.375,0.25) -- cycle;
\end{tikzpicture}
    \caption{Winding number $10$.} 
    \label{Example of winding number II}
  \end{minipage} 
\end{figure}

Any two snippets $a_1, a_2 \in \mathcal{R}_\textrm{comp}$ that have minimal self-intersection and lie in the same snippet homotopy class satisfy $\wind(a_1)=\wind(a_2)$. Hence, we can set $\wind([a])=\wind(a)$ where $a \in [a]$ is any snippet of minimal self-intersection inside the peripheral annulus $R$. According to our set-up, $\wind (a)$ counts the minimal number of corners of $R$ that $a$ passes as it travels within the region $R$, possibly winding around $\partial S$. Here, winding around the component of $\partial S$ in accordance with its orientation induced by the orientation of $S$, results in a positive winding number.

\begin{lem}
Suppose that $a \subset R$ is a snippet that lies inside a peripheral annulus $R \in \mathcal{R}$. Then $[a]$ is uniquely determined by 
\begin{itemize}
\item the sides of $R$ (in order) containing $a(0)$ and $a(1)$ and
\item its winding number.
\end{itemize}
\end{lem}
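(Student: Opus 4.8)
The plan is to lift to the universal cover $\widetilde R$ of the peripheral annulus and invoke Lemma \ref{classification of snippets in simply connected regions}, with the winding number serving to record which lifts of the sides of $R$ are met. Recall that $\pi_1(R)\cong\mathbb Z$, generated by the component $\partial_0 S:=R\cap\partial S$, so $\widetilde R$ is an infinite strip; it is simply connected, and its cell structure is the (deck‑periodic) pullback of that of $R$. I would argue according to the three cases of Definition \ref{Page: winding number}.

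If $\partial a=\emptyset$ then $a$ is a curve, its class $[a]$ is its free homotopy class, and since $\pi_1(R)\cong\mathbb Z$ this is the integer $k$ with $a\simeq(\partial_0 S)^k$; as $\wind(a)=2nk$ recovers $k$ and the side data is empty, we are done. If $\partial a\cap\partial S\neq\emptyset$ then $\wind(a)=0$, and one must see that the sides of $R$ carrying $a(0)$ and $a(1)$ alone determine $[a]$; here the key point is that $\partial_0 S$ is a single corner‑free side of $R$, so in a strong snippet homotopy an endpoint on it may be driven all the way around. Lifting to $\widetilde R$, the corner‑free bottom line lets one bring any prescribed lift of the remaining foot's side into register, after which Lemma \ref{classification of snippets in simply connected regions} applied in $\widetilde R$ pins down $[a]$ (if both feet lie on $\partial_0 S$, $a$ is just the innermost boundary‑parallel arc).

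The substantive case is $\partial a\neq\emptyset$ with $\partial a\subset\partial_h R\cup\partial_v R$. Fix a lift $\ell\subset\widetilde R$ of the outer boundary circle of $R$; it is a line carrying $2n$ corners of $R$ per fundamental domain of the deck action, cutting it into sides $\dots,\sigma_{i-1},\sigma_i,\sigma_{i+1},\dots$, and lifting $a$ puts $\widetilde a(0)\in\sigma_i$ and $\widetilde a(1)\in\sigma_j$. I would then compute $\wind(a)$ directly from the index formula: the finite‑index region $Q\subset\widetilde R$ cut off by $\widetilde a$ is a disc whose corners are the two right angles at the feet of $\widetilde a$ together with the $j-i$ outward corners of $R$ lying on $\ell$ between the feet, so $\ind(Q)=1-\tfrac{(j-i)+2}{4}$ and $\wind(a)=-4\ind(Q)+2=j-i$, the decorating sign recording on which side of $\widetilde a$ the region $Q$ sits. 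Hence the ordered pair of sides of $R$ at $a(0),a(1)$ together with the signed winding number determines the pair $(i,j)$ up to the diagonal $\mathbb Z$‑action of the deck group, i.e.\ it determines, up to deck transformations and orientation, which components of $\widetilde{\partial\mathcal R}-\widetilde{\partial^2\mathcal R}$ contain $\widetilde a(0)$ and $\widetilde a(1)$. Lemma \ref{classification of snippets in simply connected regions} applied to the simply connected surface $\widetilde R$ then determines $[\widetilde a]$ up to orientation, the sign of $\wind(a)$ selects the orientation, and projecting the resulting strong snippet homotopy back down shows $[a]$ is determined.

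I expect the real work to be in this last case. Besides getting the sign convention and the $+2$ normalisation in the identification $\wind(a)=j-i$ exactly right, one must carefully check that Lemma \ref{classification of snippets in simply connected regions}, which is phrased for regions $R\in\mathcal R$ and in terms of components of $\partial\mathcal R-\partial^2\mathcal R$ rather than sides of $R$, transfers to the infinite strip $\widetilde R$ — and, correspondingly, that the coarse ``side of $R$'' datum together with the winding number genuinely recovers the finer combinatorial position of the feet needed to apply it, and that minimal‑self‑intersection representatives lift and project compatibly so that the simply connected classification descends to the annulus.
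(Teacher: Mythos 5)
Your argument is correct, and it supplies considerably more than the paper does: the paper's proof is a single sentence asserting that strong snippet homotopies cannot move the points of $\partial a$ out of their containing sides (together with a remark that the winding number is redundant when $\partial a$ meets $\partial S$). That sentence addresses only the \emph{invariance} direction — that side data and winding number are well-defined on classes — and leaves the substantive \emph{completeness} claim (that these data actually pin down $[a]$) as an assertion. Your passage to the universal cover $\widetilde R$, identification $\wind(a)=j-i$ via the index formula, and reduction to Lemma \ref{classification of snippets in simply connected regions} is a genuine proof of completeness, and it is the natural argument given that Definition \ref{Page: winding number} already computes the winding number in $\widetilde R$. The only loose end, which you correctly flag, is that Lemma \ref{classification of snippets in simply connected regions} is stated for regions $R\in\mathcal R$, whereas you apply it to the infinite strip $\widetilde R$; this is easily repaired, since the (omitted) proof of that lemma uses nothing beyond simple connectivity and a corner structure on $\partial\widetilde R$, but strictly speaking you need to say so rather than cite the lemma verbatim. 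With that caveat, your argument is a correct and more rigorous version of what the paper leaves implicit.
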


\begin{proof}
This follows from the fact that strong snippet homotopies cannot move the points of $\partial a$ out of their containing sides. We remark that the winding number is redundant in case of snippets that have non-empty intersection with $\partial S$.
\end{proof}

\section{Efficient position and classification of bad snippets}
Let $S = S_{g,b}$ be a surface satisfying $\xi(S)=3g-3+b \geq 1$. Let $\tau \subset S$ be a large train track and $N=N(\tau)$ be a tie neighbourhood of $\tau$ in $S$. From this point on we no longer distinguish between snippets and their strong homotopy classes unless otherwise stated. For any snippet $a \subset R \in \mathcal{R}$, we always assume that $a$ has minimal self-intersection, is perpendicular to $\partial R$, and misses $\partial^2 \mathcal{R}$. Thus, snippets contained in simply connected regions are assumed to be embedded.

\begin{defn}
Suppose that $a \subset R \in \mathcal{R}_\textrm{tie}$ is a snippet. We say that $a$ is \textit{carried} by $N$ if $a$ is transverse to the ties of $R$.
\end{defn}

\begin{defn}\label{Def Efficient position}
Suppose that $\tau \subset S$ is a large train track and $N=N(\tau)$ is a tie neighbourhood of $\tau$ in $S$. Suppose that $a \subset R$ is a snippet, where $R \in \mathcal{R}(N)=\mathcal{R}$. We say that $a$ is in \textit{efficient position with respect to $N$} if exactly one of the following conditions holds:
\begin{itemize}
\item $[$\textit{Track}$]$: $R \in \mathcal{R}_\textrm{tie}$ and $a$ is a tie of $N$ or $a$ is carried by $N$.
\item $[$\textit{Disc}$]$: $R \in \mathcal{R}_\textrm{comp}$ is a disc and $a$ divides $R$ into two regions each of which has non-positive index.
\item $[$\textit{Passing through peripheral annulus}$]$: $R \in \mathcal{R}_\textrm{comp}$ is a peripheral annulus, $\emptyset \neq \partial a \subset \partial \mathcal{R}-\partial S$ and $| \wind(a) | \geq 2$.
\item $[$\textit{Start or end in peripheral annulus}$]$: $R \in \mathcal{R}_\textrm{comp}$ is a peripheral annulus, $\partial a \neq \emptyset$ and exactly one of the points in $\partial a$ lies on the boundary of $S$.
\end{itemize}
Otherwise, we say that $a$ is a \emph{bad snippet with respect to $N$}.
\end{defn}

Suppose that $a \subset R$ is a snippet, where $R \in \mathcal{R}$. We say that $a$ is \textit{dual} to $N$ if $a$ is in efficient position with respect to $N$ but not carried by $N$. In other words, snippets in efficient position are either carried or dual. 

For examples of snippets in efficient position we refer the reader to Figures \ref{Examples of carried snippets in switch}-\ref{Examples of efficient snippets in complementary snippets}.

\begin{figure}[htbp]
\begin{minipage}[b]{0.49\linewidth}
\centering
\begin{tikzpicture}[scale=0.5]
\draw [very thick] (-3,1.5) rectangle (4,-3);
\node at (0.5,2.25) {\tiny{$h$}};
\node at (-3.75,-0.75) {\tiny{$t$}};
\node at (0.5,-3.75) {\tiny{$h$}};
\node at (4.75,-0.75) {\tiny{$v$}};
\draw [very thick](3.5,0) -- (4.5,0);
\draw [very thick](3.5,-1.5) -- (4.5,-1.5);
\draw [very thick, densely dashed, blue](-3,0.75) -- (4,0.75);
\draw [very thick, densely dashed, blue](-3,-0.75) -- (4,-0.75);
\draw [very thick, densely dashed, blue](-3,-2.25) -- (4,-2.25);
\end{tikzpicture}
 \caption{Examples of carried snippets in a switch rectangle.} 
 \label{Examples of carried snippets in switch}
  \vspace{12pt}
\end{minipage}
\begin{minipage}[b]{0.49\linewidth}
\centering
\begin{tikzpicture}[scale=0.5]
\draw [very thick] (-3,1.5) rectangle (4,-3);
\node at (0.5,2.25) {\tiny{$h$}};
\node at (-3.75,-0.75) {\tiny{$t$}};
\node at (0.5,-3.75) {\tiny{$h$}};
\node at (4.75,-0.75) {\tiny{$t$}};
\draw [very thick, densely dashed, blue](0.5,1.5) -- (0.5,-3);
\end{tikzpicture}
 \caption{A dual snippet inside a branch rectangle.} 
 \vspace{12pt}
\end{minipage}
\begin{minipage}[b]{0.49\linewidth}
    \centering
\begin{tikzpicture}[scale=0.6]

\draw [very thick](-4,-1.5) {} -- (-4,-3) {};
\draw [very thick] plot[smooth, tension=.7] coordinates { (-4,-3) (1,-2.5) (6,-3)};
\draw [very thick](6,-1.5) node (v3) {} -- (6,-3);
\draw [very thick] plot[smooth, tension=.7] coordinates {(-4,-1.5) (-1,-0.5) (-0.25,0.75)};
\draw [very thick] plot[smooth, tension=.7] coordinates {(v3) (3,-0.5) (2.25,0.75)};
\node at (-4.625,-2.25) {{\tiny{$v$}}};
\node at (1,-3.5) {{\tiny{$h$}}};
\node at (6.5175,-2.3319) {{\tiny{$v$}}};
\node at (-1.3541,0.0119) {{\tiny{$h$}}};
\node at (3.4897,-0.0798) {{\tiny{$h$}}};
\draw [very thick](-0.25,0.75) -- (2.25,0.75);
\draw [very thick, densely dashed, blue] plot[smooth, tension=.7] coordinates {(-4,-2.5) (1,-1.75) (6,-2.5)};
\draw [very thick, densely dashed, blue] plot[smooth, tension=.7] coordinates {(-3.9729,-2.1538) (0.25,-1) (1.0866,0.7459)};
\end{tikzpicture}
    \caption{Examples of snippets in efficient position that lie inside a simply connected complementary region.}
  \end{minipage}
\begin{minipage}[b]{0.49\linewidth}
    \centering
\begin{tikzpicture}[scale=0.6]
\draw [very thick](-4,-1.5) {} -- (-4,-3) {};
\draw [very thick] plot[smooth, tension=.7] coordinates { (-4,-3) (1,-2.5) (6,-3)};
\draw [very thick](6,-1.5) node (v3) {} -- (6,-3);
\draw [very thick] plot[smooth, tension=.7] coordinates {(-4,-1.5) (-1,-0.5) (-0.25,0.75)};
\draw [very thick] plot[smooth, tension=.7] coordinates {(v3) (3,-0.5) (2.25,0.75)};
\node at (-4.625,-2.25) {{\tiny{$v$}}};
\node at (1,-3.5) {{\tiny{$h$}}};
\node at (6.5175,-2.3319) {{\tiny{$v$}}};
\draw [very thick] (1,-0.75) ellipse (0.5 and 0.5);
\node at (-1.3541,0.0119) {{\tiny{$h$}}};
\node at (3.4897,-0.0798) {{\tiny{$h$}}};
\node at (-0.5,-1.25) {{\tiny{$\partial S$}}};
\draw [very thick, densely dashed, blue](5.25,-1.3099) -- (5.2579,-2.8905);
\draw [very thick, densely dashed, blue] plot[smooth, tension=.7] coordinates {(2.8237,-0.3819) (1.0951,-1.7859) (-0.0622,-0.3821) (2.25,0.5)};
\draw [very thick, densely dashed, blue] plot[smooth, tension=.7] coordinates {(4.5429,-1.1002) (3.1732,-2.0847) (-1.6551,-1.8964) (-3.975,-2.273)};
\draw [very thick, densely dashed, blue] plot[smooth, tension=.7] coordinates {(1.4439,-0.5266) (2.4369,0.0555)};
\draw [very thick](-0.25,0.75) -- (2.25,0.75);
\end{tikzpicture}
    \caption{Examples of snippets in efficient position that lie inside a peripheral complementary region.}
    \label{Examples of efficient snippets in complementary snippets}
  \end{minipage}
\end{figure}

Rather than considering examples of bad snippets, we provide a complete classification of them up to symmetries and orientation in the next section. We close the current section by pointing out two observations following the definition of efficient position.

\begin{lem} \label{Bad snippets in the tie neighbourhood}
Suppose that $a \subset R \in \mathcal{R}_\textrm{tie}$ is a snippet. Then $a$ is a bad snippet with respect to $N$ if and only if $a$ cuts off a region in $R$ that has positive index. \qed
\end{lem}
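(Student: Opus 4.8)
The plan is to exploit that a branch or switch rectangle $R\in\mathcal{R}_\textrm{tie}$ is a simply connected region. By the standing conventions and Lemma~\ref{classification of snippets in simply connected regions}, $a$ is embedded, its strong homotopy class is determined up to orientation by the components of $\partial\mathcal{R}-\partial^2\mathcal{R}$ containing its endpoints, and $R$ is a disc with exactly four corners, all outward-pointing --- for a switch rectangle the two extra points of $\partial^2\mathcal{R}$ lie in the interior of one vertical side and are not corners of $R$ --- so that $\ind(R)=\chi(R)-1=0$. I would first dispose of the case in which $a$ is a closed curve: then $a$ bounds a subdisc of $R$ of index $\chi=1>0$, so $a$ cuts off a region of positive index, while $a$ is neither a tie nor carried, since a circle transverse to the ties would have strictly monotone tie-coordinate, which is impossible on $S^1$. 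Thus $a$ is a bad snippet and both directions hold; from here on $a$ is an arc.

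Next I would cut $R$ along $a$ into two discs $R_0,R_1$, meeting along $a$ with disjoint interiors, so that $\ind(R_0)+\ind(R_1)=\ind(R)=0$ by additivity of the index; hence $\ind(R_1)=-\ind(R_0)$. Each $R_i$ has only outward-pointing corners --- the corners of $R$ that fall on its side of $a$, together with one new right-angled corner at each endpoint of $a$ --- so $\ind(R_i)=1-\tfrac14|\partial^2 R_i|$, where $|\partial^2 R_i|$ equals $2$ plus the number of corners of $R$ on the $R_i$-side of $a$. Therefore $\ind(R_0)=\ind(R_1)=0$ exactly when $a$ splits the four corners of $R$ evenly, two on each side, and in every other case one of $R_0,R_1$ has positive index. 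So $a$ cuts off a region of positive index if and only if $a$ does \emph{not} split the corners of $R$ two-and-two.

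It then remains to match ``splitting the corners two-and-two'' with efficiency. An embedded arc in the $4$-gon $R$ splits the corners two-and-two precisely when its endpoints lie on opposite sides of $R$, that is, one endpoint on the top and one on the bottom horizontal side, or one on each of the two vertical sides; endpoints on the same side give a $0/4$ split and on adjacent sides a $1/3$ split. On the other hand, since the $[$\textit{Disc}$]$ and peripheral-annulus clauses of Definition~\ref{Def Efficient position} require $R\in\mathcal{R}_\textrm{comp}$, the snippet $a\subset R\in\mathcal{R}_\textrm{tie}$ is in efficient position if and only if it satisfies $[$\textit{Track}$]$: $a$ is a tie of $N$ or $a$ is carried. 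Up to strong homotopy an arc joining the two horizontal sides is a tie, and an arc joining the two vertical sides is isotopic to a monotone arc transverse to the ties, hence carried; conversely a tie has its endpoints on the two horizontal sides, and a carried arc, being transverse to the ties, cannot meet $\partial_h R$ (where it would run parallel to a tie) and so joins the two vertical sides. Hence $a$ is in efficient position iff its endpoints lie on opposite sides of $R$ iff $\ind(R_0)=\ind(R_1)=0$ iff $a$ does not cut off a region of positive index; equivalently, $a$ is a bad snippet iff it cuts off a region of positive index.

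I expect the main obstacle to be bookkeeping rather than anything conceptual: one must verify that the corners produced along $a$ are outward-pointing, so that each contributes $-\tfrac14$ to the index, and one must check that the geometric alternative ``tie or carried'' matches the combinatorial alternative ``endpoints on opposite sides'' in every configuration --- in particular that the two interior points of $\partial^2\mathcal{R}$ on a switch rectangle leave intact the fact that $R$ has four sides forming two opposite pairs. Once these points are pinned down, the statement follows from a short chain of equivalences.
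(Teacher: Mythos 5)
Your argument is correct and is exactly the natural fill-in for the proof the paper omits (the lemma is marked $\qed$): you use additivity of index to reduce to a four-corner count, check all corners created along $a$ are outward-pointing, and match the $2$-$2$ split of corners against the [\textit{Track}] alternatives ``tie or carried''. The one subtlety worth flagging --- that the two extra points of $\partial^2 \mathcal{R}$ in the interior of a switch rectangle's vertical side are not corners of $R$ and hence do not enter the index --- you handle correctly, and it is the same observation the paper relies on in the neighbouring Lemma~\ref{Positive index snippet in tie neighbourhood}.
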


\begin{lem} \label{Bad snippets in complementary regions}
Suppose that $a \subset R \in \mathcal{R}_\textrm{comp}$ is a snippet. Then $a$ is a bad snippet with respect to $N$ if and only if $a$ is a peripheral curve or $a$ is embedded and cuts off a region of $R$ that has positive index.
\end{lem}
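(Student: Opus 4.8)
The plan is to test the possibilities for $R$ and $a$ against the clauses of Definition~\ref{Def Efficient position}. Since $\tau$ is large, $R$ is either a disc or a peripheral annulus, and in both cases $\ind(R) < 0$ by definition of a train track. The technical input is additivity of the index under cutting: if $a$ is an embedded arc meeting $\partial R$ perpendicularly away from $\partial^2\mathcal{R}$, then cutting $R$ along $a$ raises the total Euler characteristic by one and produces exactly four new outward-pointing right-angle corners (two at each endpoint, one on each local side), so the closures $R_1, R_2$ of the components of $R \setminus a$ satisfy $\ind(R_1) + \ind(R_2) = \ind(R)$; and cutting along an embedded closed curve creates no corners and preserves $\chi$, so the pieces' indices again sum to $\ind(R)$. (If $a$ is a spanning arc of an annulus there is a single piece, whose index the same count shows to be $\ind(R)$.) I would record this computation first, then split into cases.

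Closed curves. No clause of Definition~\ref{Def Efficient position} can hold: the Disc clause fails because a closed curve in a disc bounds a subdisc of index $1$, the two peripheral clauses require $\partial a \neq \emptyset$, and the Track clause needs $R \in \mathcal{R}_\textrm{tie}$. So $a$ is bad. If $a$ is essential, then $\pi_1(R) \cong \mathbb{Z}$ is generated by the boundary component of $S$ adjacent to $R$, so $a$ is a peripheral curve. If $a$ is inessential, minimality of self-intersection makes $a$ embedded, and the subdisc it bounds has index $1 > 0$. In either case the right-hand side holds, matching ``bad''.

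Arcs. Here $a$ is embedded: this is part of the standing assumption when $R$ is simply connected, and when $R$ is a peripheral annulus it follows from the elementary fact that a properly immersed arc in an annulus is isotopic, without its endpoints leaving their sides, to an embedded arc (each boundary circle is a deformation retract); so the hypothesis ``embedded'' costs nothing. If $R$ is a disc, $a$ splits it into two discs with indices summing to $\ind(R) < 0$, hence at most one of positive index, and by the Disc clause $a$ is in efficient position exactly when both are non-positive; so $a$ is bad exactly when one of them has positive index, i.e.\ exactly when $a$ cuts off a region of positive index. Now let $R$ be a $(2n+1)$-sided peripheral annulus with smooth boundary circle $B = \partial R \cap \partial S$ and combinatorial boundary circle $B'' = \partial_h R \cup \partial_v R$. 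If exactly one endpoint of $a$ is on $\partial S$, the start-or-end clause holds so $a$ is not bad, and, $a$ being a non-separating spanning arc, cutting along it gives a single disc of index $1 - (2n+5)/4 < 0$, so no positive-index region is cut off; both sides fail. If both endpoints are on $\partial S$, no clause applies so $a$ is bad, and being embedded and $\partial$-parallel it cuts off a disc bounded by $a$ and a subarc of the smooth circle $B$, carrying exactly two corners, of index $1/2 > 0$; both sides hold. If both endpoints are on $B''$, then $a$ is $\partial$-parallel and cuts off a disc $D'$ bounded by $a$ and a subarc of $B''$ containing, say, $c$ corners of $R$, so $\ind(D') = 1 - (c+2)/4$; on the other hand $|\wind(a)|$ equals the number of corners of $R$ that $a$ passes, namely $c$, since the lift of $a$ defining $\wind(a)$ in Definition~\ref{Page: winding number} projects homeomorphically onto $D'$ and $|\wind(a)| = -4\ind(D') + 2$; thus $\ind(D') = (2 - |\wind(a)|)/4$. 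By the passing-through clause $a$ is in efficient position exactly when $|\wind(a)| \ge 2$, i.e.\ exactly when $\ind(D') \le 0$, so $a$ is bad exactly when $\ind(D') > 0$, i.e.\ exactly when $a$ cuts off a region of positive index. With the closed-curve case this establishes the lemma.

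I expect the main obstacle to lie in the peripheral-annulus arc case, specifically in matching the combinatorial winding number $\wind(a)$ --- which is defined through the index of a region in the universal cover of $R$ --- with the number of corners of $R$ enclosed by the $\partial$-parallel disc that $a$ cuts off; this needs a careful check that the relevant lift of $a$ projects homeomorphically and that corners are neither missed nor double-counted. A smaller but genuinely necessary point is the verification that every arc-snippet in a complementary region is embedded, so that restricting the statement to embedded arcs discards nothing.
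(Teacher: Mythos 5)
Your plan is sound and your index bookkeeping for the $\partial$-parallel case in a peripheral annulus is correct (in particular the check that $|\wind(a)|$ equals the number of corners of $R$ on the disc side, and hence that $\ind(D') = (2-|\wind(a)|)/4$, goes through exactly as you anticipate). However, the claim you lean on to dispose of the embeddedness issue --- ``a properly immersed arc in an annulus is isotopic, without its endpoints leaving their sides, to an embedded arc (each boundary circle is a deformation retract)'' --- is false when the sides in question are proper subarcs of the boundary, and this is precisely the situation for $\partial a \subset \partial_h R \cup \partial_v R$. A side on the combinatorial circle is an arc bounded by two points of $\partial^2\mathcal{R}$, so the endpoints of $a$ cannot slide past corners under a (strong or weak) snippet homotopy; the winding number is therefore a genuine invariant, and an arc with $|\wind(a)| \geq 2n$ (i.e.\ one that wraps around $R$) is neither embedded nor homotopic, within its snippet class, to an embedded arc --- by the Jordan curve argument you yourself invoke, an embedded arc with $\partial a$ on $B''$ always cuts off a disc and is $\partial$-parallel. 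The deformation-retract heuristic only shows that the relative homotopy class becomes trivial when an endpoint is allowed to slide around a full boundary circle, which is what makes your two cases touching $\partial S$ work, but that latitude is not available on $B''$.

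The concrete consequence is that your third peripheral-annulus case (``both endpoints on $B''$'') silently discards the non-embedded snippets. For such $a$ the biconditional of the lemma is still true --- the right-hand side fails because $a$ is not embedded, and the left-hand side fails because the lifted region $Q$ contains at least one full fundamental domain's worth of corners so $|\wind(a)| \geq 2n \geq 2$ and the passing-through clause applies --- but your argument never establishes this, since it begins by assuming $a$ is $\partial$-parallel. The paper's proof avoids the trap by running the chain ``$a$ bad $\iff |\wind(a)| < 2$ $\iff$ $a$ cuts off a region with at most one corner of $R$ $\iff$ $a$ cuts off a region of index $1/4$ or $1/2$'' directly, so that $|\wind(a)| \geq 2$ covers both the embedded arcs cutting off large regions and the winding, non-embedded ones in a single stroke. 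To repair your version, replace the universal embeddedness claim with: embeddedness holds automatically when at least one endpoint lies on $\partial S$ (there the endpoint slides freely and the relative class is trivial), and in the remaining case split off the non-$\partial$-parallel subcase and dispose of it by the winding-number bound above. (A minor slip, unrelated to the gap: the one-endpoint-on-$\partial S$ cut produces $2n+4$ new-plus-old corners, not $2n+5$, giving index $1-(2n+4)/4 = -n/2 = \ind(R)$; the sign conclusion is unaffected.)
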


\begin{proof}
If $R$ is homeomorphic to a disc, the statement follows directly from the definition. Hence, let us assume that $R$ is a peripheral annulus. As $a$ is a snippet, it is an immersion of $S^1$ or $[0,1]$ into $R$. 

If $a$ is an immersion of the circle into $R$ it follows from the definition that $a$ must be bad as its boundary is empty. We note that any immersion of $S^1$ into a peripheral annulus is an inessential or peripheral curve. Since we assume that our snippets have minimal self-intersection, any inessential curve cuts off a region of index one. Hence, snippets that are an immersion of $S^1$ into $R$ are bad if and only if they are a peripheral curve or cut off a region of $R$ of positive index.

On the other hand, assume that $a$ is a proper immersion of $[0,1]$ into $R$. Hence, $|\partial a|=2$, and one of the following three statements holds:
\begin{itemize}
\item $\partial a \subset \partial S$.
\item $\partial a \subset \partial R - \partial S$.
\item Exactly one of the points of $\partial a$ lies in $\partial S$.
\end{itemize}

If $a$ is a snippet satisfying $\partial a \subset \partial S$, then the definition of efficient position implies that $a$ is a bad snippet.
Since we assume that snippets have minimal self-intersection, are smooth, and intersect $\partial S \subset \partial \mathcal{R}$ transversely, $a$ must be embedded and divides $R$ into two regions, one of which has exactly two outward-pointing corners occurring at the points $a(0)$ and $a(1)$. Hence, this region has index $1/2$. Thus, if $\partial a \subset \partial S$, then $a$ is a bad snippet if and only if it is embedded and cuts off a region of $R$ of positive index.

If $a$ is a snippet satisfying $\partial a \subset \partial R - \partial S$, then $a$ is bad if and only if $|\wind(a)|< 2$. This is the case if and only if $a$ cuts off a region of $R$ that contains at most one point of $\partial R$. This is true if and only if $a$ cuts off a region of index $1/4$ or $1/2$. The last equivalence follows from the fact that the region cut off by $a$ must be a disc with at least two outward-pointing corners occurring at the points $a(0)$ and $a(1)$. Thus, if $\partial a \subset \partial R - \partial S$, then we also see that $a$ is a bad snippet if and only if it is embedded and cuts off a region of positive index in $R$.

Lastly, if exactly one of the points of $\partial a$ lies in $\partial S$, then the snippet $a$ is in efficient position according to the definition. We remark that $a$ does not cut off a region of positive index in this case as complementary regions of train tracks have negative index.
Thus, a snippet $a \subset R \in \mathcal{R}_\textrm{comp}$ is bad if and only if it is a peripheral curve or cuts off a region of $R$ of positive index.
\end{proof}

\section{Classification of bad snippets} \label{Section classification of bad snippets}

\begin{defn}
Suppose that $R \in \mathcal{R}$ is a region. We say that a self-homeomorphism $\phi:R \rightarrow R$ is a \textit{symmetry} of $R$ if $\phi$ preserves $\partial_h R$ and $\partial_v N \cap R$ setwise.
\end{defn}

\begin{rem}
It may seem more natural to define a symmetry of a region as a self-homeomorphism that preserves $\partial_h R$ and $\partial_v R$ setwise. However, we need to remember if the boundary of a snippet lies inside $\partial_v N$ or on a tie of an adjacent branch rectangle, as adjacent snippets in these two cases are of very different types. Thus, we require symmetries to preserve $\partial_h R$ and $\partial_v N \cap R$ setwise.
\end{rem}

In the following, we are classifying bad snippets up to symmetry and orientation.

\subsection{Classification of bad snippets in the tie neighbourhood}

According to Lemma \ref{Bad snippets in the tie neighbourhood}, a snippet inside a branch or switch rectangle is bad if and only if it cuts off a region $T$ of positive index. We claim that the index of $T$ must be $1/4$, 1/2, or 1. If the index of $T$ is $1/4$ (respectively $1/2$ or 1), we say that $T$ is a \textit{trigon} (respectively a \textit{bigon} or a \textit{disk}).

\begin{lem}\label{Positive index snippet in tie neighbourhood}
Suppose that $R \in \mathcal{R}_\textrm{tie}$ is a branch or switch rectangle. Suppose further that $a \subset R$ is a snippet that cuts off a region $T$ of $R$ of strictly positive index. Then $T$ is a trigon, bigon, or disk.
\end{lem}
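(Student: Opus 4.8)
The plan is to exploit the fact that a branch or switch rectangle $R$ is a disc, that the side structure of $R$ is very restricted, and that the region $T$ cut off by the snippet $a$ inherits all its corners from two sources: the two endpoints $a(0),a(1)$ of $a$ (which contribute outward-pointing corners of angle $\pi/2$) and the corners of $R$ itself (the four true corners $\partial^2 R$, plus, in the switch-rectangle case, the two extra corners in the interior of one vertical side). Since $a$ is embedded (snippets in simply connected regions are assumed embedded and of minimal self-intersection), $T$ is a disc, so $\chi(T)=1$ and the formula for the index reduces to $\ind(T)=1-|\partial^2 T|/4$ once we check that all corners of $T$ point outwards. Positivity of $\ind(T)$ then forces $|\partial^2 T|\in\{0,1,2,3\}$, and $|\partial^2 T|\in\{1,3\}$ gives index $1/4$ (a trigon), $|\partial^2 T|=2$ gives $1/2$ (a bigon), and $|\partial^2 T|=0$ gives $1$ (a disk); I will then rule out the remaining numerology, i.e.\ show $|\partial^2 T|=0$ is only possible in degenerate situations that do not actually arise for a genuine snippet (or simply note $|\partial^2 T|\le 3$ already yields the claimed list). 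So the core of the argument is a corner count.

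First I would set up the corner bookkeeping: $\partial T$ consists of the arc $a$ together with one or more subarcs of $\partial R$. Every corner of $T$ is either one of the two points $a(0),a(1)$, where $a$ meets $\partial R$ perpendicularly (these are outward-pointing, contributing $1/4$ each to the subtracted term), or a point of $\partial^2\mathcal{R}$ lying on the portion of $\partial R$ inside $T$; since $R\in\mathcal R_\textrm{tie}$ is convex-cornered (all its corners point outward into $R$'s exterior), any such corner is also outward-pointing for $T$. Hence all corners of $T$ point outward and $\ind(T)=1-|\partial^2 T|/4$. Then $\ind(T)>0 \iff |\partial^2 T|\le 3$.

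Next I would enumerate how small $|\partial^2 T|$ can be. Because $a$ is a \emph{snippet} and not a tie or a trivially inessential loop, its two endpoints lie on $\partial R$; $T$ is cut off \emph{by} $a$, so $\partial T$ genuinely contains $a$ and at least one arc of $\partial R$, and $a(0),a(1)$ are honest corners of $T$. Thus $|\partial^2 T|\ge 2$ unless $a$ is a curve bounding a disc, which would contradict minimal self-intersection inside a disc region; so in the tie-neighbourhood case $|\partial^2 T|\ge 2$. Combined with $|\partial^2 T|\le 3$ this gives $|\partial^2 T|\in\{2,3\}$, i.e.\ $\ind(T)\in\{1/2,1/4\}$ — bigon or trigon — and I would remark that the ``disk'' case ($|\partial^2 T|=0$, $\ind(T)=1$) is included in the statement for uniformity with the general definition (and can occur formally if one allows $\partial a$ to be absorbed, but not for an embedded proper snippet). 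In fact, to match the lemma's phrasing I would simply conclude: $\ind(T)\in\{1/4,1/2,1\}$ with the three values realized exactly when $|\partial^2 T|\in\{3\text{ or }1,\;2,\;0\}$ respectively, hence $T$ is a trigon, bigon, or disk.

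The main obstacle I anticipate is the careful treatment of the switch rectangle, whose vertical boundary has a side containing two extra corners of $\partial^2\mathcal R$ in its interior: I must make sure that when $T$'s boundary runs along that special side, those corners are correctly counted as outward-pointing for $T$ (they are, since they are corners of the adjacent complementary region, i.e.\ they point out of $R$), and that an endpoint $a(i)$ landing on a $t$-labelled sub-side versus a $v$-labelled sub-side does not change the corner contribution — only the ``type'' of the neighbouring snippet, which is irrelevant here. A secondary subtlety is confirming that the region $T$ really is a disc (so $\chi(T)=1$): this follows because $R$ is a disc and $a$ is a properly embedded arc in it, so $R\setminus a$ has exactly two components, each a disc; if instead $a$ is an embedded curve it bounds a disc in $R$ and again $T$ is a disc, but then $T$ has no corners and $\ind(T)=1$, which is the ``disk'' case. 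Once these two points are nailed down, the lemma is just the inequality $1-|\partial^2 T|/4>0$.
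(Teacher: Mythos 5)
Your argument is essentially the paper's proof: $T$ is a disc (so $\chi(T)=1$), all corners of $T$ point outward, and the index formula $\ind(T)=1-\left|\partial^2 T\right|/4$ together with the observation that a properly embedded arc contributes at least two corners pins $\ind(T)\in\{1/4,1/2,1\}$. One slip in your middle paragraph: you assert that a closed snippet bounding a disc ``would contradict minimal self-intersection inside a disc region,'' but that is false --- an embedded closed curve has zero self-intersection, is a perfectly valid snippet, and is exactly what realizes the disk case; your final paragraph handles this correctly, so the slip does not propagate, but you should delete the spurious claim so the proof does not contradict itself.
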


\begin{proof}
Since $a \subset R$ is a snippet, we know that $a$ is a properly embedded arc in $R$ or an embedded curve in $R$. As $R$ is a rectangle, any embedded curve in $R$ is trivial and cuts off a disk in $R$. If $a$ is a properly embedded arc, then $a$ intersects $\partial R$ transversely. Thus, the region $T$ of positive index cut off by $a$ of $R$ must have at least two outward-pointing corners. As $T$ must be homeomorphic to a disk, it has Euler characteristic one, so $0 \leq \ind(T) \leq 1/2$ and the claim follows.
\end{proof}

\begin{lem}\label{Classification of snippets in branch}
Suppose that $R \in \mathcal{R}_\textrm{tie}$ is a branch rectangle. Up to symmetries of $R$ and orientation of the snippet, there are four strong snippet homotopy classes of bad snippets in $R$.
\end{lem}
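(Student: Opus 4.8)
The plan is to turn the statement into a short finite enumeration using two facts already established: Lemma \ref{classification of snippets in simply connected regions}, which says that inside a simply connected region the strong snippet homotopy class of an (embedded) snippet is determined, up to orientation, by which components of $\partial\mathcal{R}-\partial^2\mathcal{R}$ contain its endpoints, and Lemma \ref{Bad snippets in the tie neighbourhood} (refined by Lemma \ref{Positive index snippet in tie neighbourhood}), which says that a snippet in a tie rectangle is bad exactly when it cuts off a trigon, a bigon, or a disk.

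First I would pin down the combinatorics of $\partial R_b$ inside the tiling. By construction of the tie neighbourhood, $\partial R_b$ meets $\partial^2\mathcal{R}$ exactly in the four corners of the rectangle, so it decomposes into four components of $\partial\mathcal{R}-\partial^2\mathcal{R}$: the two horizontal sides, labelled $h$, which lie on $\partial_h N$ and abut complementary regions, and the two vertical sides, labelled $t$, which are ties glued onto switch rectangles. The key sub-points here are that no side of a \emph{branch} rectangle is subdivided by additional corners — a corner of $\partial\mathcal{R}$ lying on $\partial_h N$ occurs only where $\partial_h N$ meets $\partial_v N$, i.e.\ at a rectangle corner, and each vertical side of $R_b$ is identified with a single edge on the vertical boundary of a switch rectangle (cf.\ Figures \ref{A tie neighbourhood rectangle}, \ref{branch rectangle}, \ref{switch rectangle}, and contrast with Figure \ref{Long horizontal boundary side}) — and that $\partial_v N\cap R_b=\emptyset$, so that a symmetry of $R_b$ is simply a self-homeomorphism preserving $\partial_h R_b$ setwise. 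The group of such maps is the Klein four-group; it acts transitively on the four corners, swaps the two $h$-sides and swaps the two $t$-sides, but never carries an $h$-side to a $t$-side.

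Next I would enumerate. Let $a\subset R_b$ be a bad snippet; since $R_b$ is simply connected $a$ is embedded, and $\partial a$ is either empty or two points, each lying on one of the four edges above. By Lemma \ref{classification of snippets in simply connected regions}, $[a]$ is determined up to orientation by this edge data, hence up to orientation and symmetry by one of six possibilities: $\partial a=\emptyset$; both endpoints on one $h$-side; both on one $t$-side; one endpoint on each $h$-side; one on each $t$-side; one on an $h$-side and one on an adjacent $t$-side. For each I would compute the index of the region(s) cut off using $\ind(T)=\chi(T)-|\partial^2 T|/4$ (all the corners that arise point outward): the empty case gives a disk ($\ind 1$); the two ``same side'' cases give bigons ($\ind 1/2$); the $h$--$t$ case gives a trigon ($\ind 1/4$, containing one corner of $R_b$) together with a complementary $(-1/4)$-region; and the two ``opposite side'' cases give only index-zero regions. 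By Lemma \ref{Bad snippets in the tie neighbourhood} exactly the first four are bad, and they are pairwise distinct even after quotienting by symmetry and orientation, since a symmetry preserves the edge-label pattern ($h\leftrightarrow h$, $t\leftrightarrow t$) and orientation-reversal leaves it unchanged. This yields the four classes: a trivial closed curve, an $h$-bigon snippet, a $t$-bigon snippet, and a trigon snippet.

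The step I expect to be the main obstacle is the first one: correctly identifying the edges of $\partial\mathcal{R}-\partial^2\mathcal{R}$ along $\partial R_b$ and the action of the symmetry group on them — in particular verifying that, unlike a switch rectangle (or a complementary region with a long horizontal side), a branch rectangle has none of its four sides subdivided, and that no symmetry mixes the $h$- and $t$-labelled sides. Everything after that is the routine enumeration and the elementary index arithmetic just sketched.
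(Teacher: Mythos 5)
Your proof is correct and takes essentially the same route as the paper: reduce to Lemmas \ref{Bad snippets in the tie neighbourhood} and \ref{Positive index snippet in tie neighbourhood} to see that bad snippets cut off a trigon, bigon, or disk, then classify by which sides of $R$ contain the endpoints using Lemma \ref{classification of snippets in simply connected regions}, and observe that the symmetry group preserves the $h$/$t$ labelling. Your version is a bit more explicit than the paper's in two places it leaves implicit — that $\partial_v N\cap R_b=\emptyset$ so the symmetry group is the Klein four-group acting transitively on corners, and that the two ``opposite-side'' patterns cut off only index-zero regions — but the underlying argument is identical.
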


\begin{proof}
Suppose that $a \subset R$ is a bad snippet inside a branch rectangle $R$. Following Lemma \ref{Bad snippets in the tie neighbourhood} and Lemma \ref{Positive index snippet in tie neighbourhood}, $a$ cuts off a region $T \subset R$ which is a trigon, bigon, or disk.
If $T$ is a disk, then $a$ is a trivial curve in $R$. If $T$ is a bigon, then $a(0)$ and $a(1)$ must lie on the same side of $R$. Up to symmetries of $R$, there a two different kinds of snippets of this type: snippets where $a(0)$ and $a(1)$ lie on a horizontal boundary side of $R$ and snippets where $a(0)$ and $a(1)$ lie on a vertical boundary side of $R$.
If $T$ is a trigon, then $\partial T$ contains exactly one corner of $R$. Hence, $a(0)$ and $a(1)$ lie on adjacent sides of $R$, that is, one lies on a horizontal boundary side of $R$ and one on a vertical boundary side of $R$. All such snippets are equivalent up to symmetries of $R$ and a choice of orientation of the snippets.

As snippets of these four kinds have a distinct intersection pattern with $\partial R$, they are non-equivalent under symmetries of $R$ which concludes the proof of the lemma.
\end{proof}

For examples of bad snippets inside branch rectangles, we refer the reader to Figure \ref{Branch snippets I}.

\begin{figure}[htbp] 
\begin{minipage}[b]{0.99\linewidth}
\centering
\begin{tikzpicture}[scale=0.5]
\draw [very thick] (-3,1.5) rectangle (4,-3);
\node at (0.5,2.25) {\tiny{$h$}};
\node at (-3.75,-0.75) {\tiny{$t$}};
\node at (0.5,-3.75) {\tiny{$h$}};
\node at (4.75,-0.75) {\tiny{$t$}};
\draw [very thick, densely dashed, blue] plot[smooth, tension=.7] coordinates {(-3,1) (-1.25,0.75) (-1.25,-0.75) (-3,-1)};
\draw [very thick, densely dashed, blue] (-0.375,-1.75) ellipse (0.6 and 0.6);
\draw [very thick, densely dashed, blue] plot[smooth, tension=.7] coordinates {(1.375,-3) (1.75,-1.25) (4,-0.875)};
\draw [very thick, densely dashed, blue] plot[smooth, tension=.7] coordinates {(0.125,1.5) (0.5,0) (2.25,0) (2.625,1.5)};
\end{tikzpicture}
 \caption{All possible types of bad snippets inside a branch rectangle.} 
 \label{Branch snippets I}
\end{minipage} 
\end{figure}
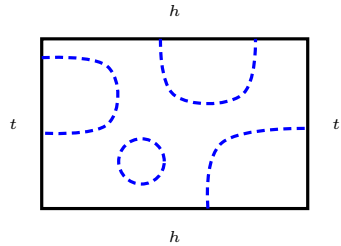

\begin{defn}\label{Page: Branch type}
Suppose that $a \subset R$ is a bad snippet inside a branch rectangle $R \in \mathcal{R}$. We say that $a$ is \emph{of type $\mathbb{B}(x,y)$} where
\begin{itemize}
\item $(x,y)=(\mathring{R},\mathring{R})$ if $\partial a = \emptyset$.
\item $x,y \in \{h,t\}$ if $\partial a \neq \emptyset$, with $x=y=h$ or $x=y=t$ if and only if $\partial a \subset \partial_h R$ or $\partial_v R$ respectively. Else, $(x,y)=(h,t)$.
\end{itemize}
\end{defn}

\begin{cor}
Suppose that $R\in \mathcal{R}_\textrm{tie}$ is a branch rectangle. Suppose further that $a \subset R$ is a bad snippet. Then $a$ is of type $\mathbb{B}(\mathring{R},\mathring{R})$, $\mathbb{B}(h,h)$, $\mathbb{B}(t,t)$, or $\mathbb{B}(h,t)$. Two bad snippets in $R$ are of the same type if and only if they are equivalent up to symmetries of $R$ and orientation. Hence, if two bad snippets $a_1, a_2 \subset R$ are strongly snippet homotopic, then they are of the same type. Moreover, if two bad snippets $a_1, a_2 \subset R$ are of the same type and intersect the same components of $\partial R - \partial^2 R$, they must be strongly snippet homotopic up to orientation. \qed
\end{cor}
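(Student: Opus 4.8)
The plan is to package the classification already established in Lemma~\ref{Classification of snippets in branch} together with the general principle of Lemma~\ref{classification of snippets in simply connected regions} and the combinatorics of Definition~\ref{Page: Branch type}. First I would enumerate the possible types. By Lemma~\ref{Classification of snippets in branch}, any bad snippet $a\subset R$ in a branch rectangle is, up to a symmetry of $R$ and a choice of orientation, one of the four model snippets exhibited there: a trivial closed curve cutting off a disk, a bigon with both endpoints on a horizontal side, a bigon with both endpoints on a vertical side, and a trigon with one endpoint on a horizontal side and one on a vertical side. Since every vertical side of a branch rectangle is identified with a tie and hence carries the label $t$ (cf.\ Figure~\ref{branch rectangle}), reading off Definition~\ref{Page: Branch type} in each of the four cases gives precisely $\mathbb{B}(\mathring R,\mathring R)$, $\mathbb{B}(h,h)$, $\mathbb{B}(t,t)$, and $\mathbb{B}(h,t)$. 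I would then record the observation that these four intersection patterns of $\partial a$ with $\partial R$ are pairwise distinct, so the four model snippets receive pairwise distinct types.

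Next I would establish the equivalence ``same type $\Leftrightarrow$ equivalent up to symmetries and orientation''. For the easy direction, a symmetry $\phi$ of $R$ preserves $\partial_h R$ setwise and therefore also preserves $\partial_v R = \overline{\partial R - \partial_h R}$ setwise, so $\phi$ carries $h$-labelled sides to $h$-labelled sides and $t$-labelled sides to $t$-labelled sides; combined with the evident invariance under orientation reversal, this shows that the label $\mathbb{B}(x,y)$ depends only on the symmetry-and-orientation class of $a$. For the converse, if two bad snippets $a_1,a_2\subset R$ have the same type, Lemma~\ref{Classification of snippets in branch} identifies each of them, up to a symmetry and orientation, with one of the four model snippets; since distinct models have distinct types, both are identified with the \emph{same} model, hence with one another. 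The assertion that strongly snippet homotopic bad snippets have the same type is then immediate from the forward direction, but I would also give the direct argument: a strong snippet homotopy keeps $\partial a$ inside $\partial\mathcal{R} - \partial^2\mathcal{R}$ throughout, hence within fixed components, so it preserves the sides of $R$ containing $\partial a$, and by Definition~\ref{Page: Branch type} this datum already determines the type.

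Finally I would dispatch the ``Moreover''. A branch rectangle is a disk, hence simply connected, so Lemma~\ref{classification of snippets in simply connected regions} applies and tells us that the strong snippet homotopy class of a snippet in $R$ is determined, up to orientation, by the components of $\partial\mathcal{R} - \partial^2\mathcal{R}$ containing its boundary. Consequently, two bad snippets meeting the same components of $\partial R - \partial^2\mathcal{R}$ are strongly snippet homotopic up to orientation; the degenerate case $\partial a_1 = \partial a_2 = \emptyset$ (type $\mathbb{B}(\mathring R,\mathring R)$) is covered either directly by that lemma in the minimal-self-intersection regime of the present section, or by noting that under that assumption both snippets are embedded inessential curves in a disk and hence strongly snippet homotopic up to orientation. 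I do not expect a serious obstacle here: the only steps that need genuine care are verifying that the four model snippets of Lemma~\ref{Classification of snippets in branch} really do receive pairwise distinct $\mathbb{B}$-labels and that symmetries of a branch rectangle respect the $h/t$ labelling of the sides — once these are checked, the remainder is purely formal bookkeeping.
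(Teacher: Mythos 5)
Your proposal is correct and fills in precisely the intended (omitted) argument: the corollary is the formal bookkeeping consequence of Lemma~\ref{Classification of snippets in branch} together with Lemma~\ref{classification of snippets in simply connected regions}, which is exactly what you invoke. The two checks you flag as needing care — that the four models receive distinct $\mathbb{B}$-labels and that symmetries preserve the $h/t$ labelling (using that $\partial_v N \cap R = \emptyset$ for a branch rectangle, so $\partial_v R$ is preserved as well) — are indeed the only non-vacuous points, and your treatment of them, together with the observation that $\partial R - \partial^2 R$ and the components of $\partial\mathcal{R} - \partial^2\mathcal{R}$ meeting $\partial R$ coincide for branch rectangles, is right.
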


Thus, up to orientation and symmetries of $R$, there are four different types of bad snippets inside a branch rectangle.

We now proceed by classifying bad snippets inside switch rectangles. As one vertical side of each switch rectangle contains a component of $\partial_v N$ in its interior, not all snippets intersecting the horizontal and vertical boundary of the switch rectangle turn out to be equivalent under symmetries of $R$. To aid the classification of bad snippets in switch rectangles, we introduce the notion of \emph{weight} for such a bad snippet.

\begin{defn}
Suppose that $R \in\mathcal{R}_\textrm{tie}$ is a switch rectangle. Suppose further that $a \subset R$ is a snippet that cuts off a region $T$ of $R$ of positive index. We set $\w(a)=|\partial^2 \mathcal{R} \cap \partial T|$ and say that $\w(a)$ is the \emph{weight} of the snippet $a \subset R$.
\end{defn}
For examples of snippets and their weight, we refer the reader to Figure \ref{Weight examples I}.

\begin{figure}[htbp] 
\begin{minipage}[b]{0.99\linewidth}
\centering
\begin{tikzpicture}[scale=0.6]
\draw [very thick] (-3,1.5) rectangle (4,-3);
\node at (0.5,2.25) {\tiny{$h$}};
\node at (-3.75,-0.75) {\tiny{$t$}};
\node at (0.5,-3.5) {\tiny{$h$}};
\node at (4.75,-2.25) {\tiny{$t$}};
\draw [very thick](3.5,0) -- (4.5,0);
\draw [very thick](3.5,-1.5) -- (4.5,-1.5);
\node at (4.75,-0.75) {\tiny{$v$}};
\node at (4.75,0.75) {\tiny{$t$}};
\draw [very thick, densely dashed, blue] (-0.75,-0.125) ellipse (0.6 and 0.6);
\draw [very thick, densely dashed, blue] plot[smooth, tension=.7] coordinates {(4,0.375) (2,0.125) (2,-1.625) (4,-1.875)};
\draw [very thick, blue, densely dashed] plot[smooth, tension=.7] coordinates {(-3,-1) (-1.75,-1.25) (-1.5,-3)};
\draw [very thick, blue, densely dashed] plot[smooth, tension=.7] coordinates {(4,1) (1.25,0.375) (0.625,-3)};
\end{tikzpicture}
 \caption{Snippets inside a switch rectangle of weight zero, one, two, and three respectively.} 
 \label{Weight examples I}
\end{minipage}
\end{figure}
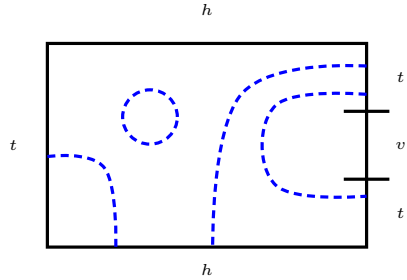

\begin{rem} \label{Rem on bounds on geom length}
As switch rectangles are simply connected, any two bad snippets belonging to the same strong snippet homotopy class have equal weight. We further remark that the weight of any bad snippet inside a switch rectangle is bounded by three.
\end{rem}

\begin{lem}\label{classification of snippets in switch}
Suppose that $R \in \mathcal{R}_\textrm{tie}$ is a switch rectangle. Up to symmetries of $R$ and orientation of the snippet, there are eleven strong snippet homotopy classes of bad snippets in $R$. Each class $[a]$ is uniquely determined by 
\begin{itemize}
\item its weight and
\item the information whether $a(0)$ and $a(1)$ each are contained in a horizontal boundary side of $R$ that has empty or non-empty intersection with $\partial_v N$, a vertical boundary side of $R$, or a component of $\partial_v N$.
\end{itemize}
\end{lem}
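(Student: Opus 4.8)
The plan is to reduce the classification to that of embedded arcs (and the trivial curve) in a disk, where Lemma~\ref{classification of snippets in simply connected regions} already identifies the strong snippet homotopy class, up to orientation, with the (unordered) pair of components of $\partial\mathcal{R}-\partial^2\mathcal{R}$ meeting the endpoints — recall that snippets in a simply connected region are assumed embedded — and then to carve out the ``bad'' sublist and match it against the two stated invariants. First I would invoke Lemma~\ref{Bad snippets in the tie neighbourhood} together with Lemma~\ref{Positive index snippet in tie neighbourhood}: a bad snippet $a\subset R$ cuts off a region $T$ that is a disk (so $a$ is a trivial curve, $\partial a=\emptyset$, of weight $0$ --- one class), a bigon, or a trigon. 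An arc $a$ of this kind has $\partial a$ on two sides $s_0,s_1$ of $R$ (possibly equal) and divides $R$ into two disks whose indices sum to $\ind(R)=0$; since each of $a(0),a(1)$ contributes an outward $\pi/2$-corner to both pieces while the four genuine corners of the box get distributed between them, the piece $T$ has $\ind(T)=(2-k)/4$, where $k$ is the number of box corners on $\partial T$. Hence $a$ is bad exactly when one piece carries at most one box corner: $k=0$ gives a bigon, $k=1$ a trigon. The two ``dash'' corners of $R$ bounding the exposed vertical boundary are $\pi$-bends of $R$, so they do not enter this index count, but they will enter the weight.

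Next I would fix the cyclic model of $\partial R$: six sides in cyclic order $h,t,v,t,h,t$, with box corners separating the outer four sides and the two dash corners bounding the $v$-side; the symmetry group of $R$ is generated by the single reflection exchanging the two $h$-sides (hence also the two short $t$-pieces and the two dash corners) and fixing the long $t$-side and the $v$-side. I would then run the finite case analysis of admissible side-pairs $\{s_0,s_1\}$ up to this symmetry, using the criterion above: same-side pairs, which are always bad and give four classes, all of weight $0$; pairs on sides sharing a corner, giving three classes of weight $1$; and non-adjacent pairs for which one complementary boundary arc remains corner-poor enough, giving three further classes, of weights $2$, $2$, and $3$ --- checking in each case exactly when a complementary region has index $0$. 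This yields $1+4+3+3=11$ classes, and along the way it records, for each, its weight $\w(a)=|\partial^2\mathcal{R}\cap\partial T|$ (now also counting any dash corners lying on $\partial T$) and the placement of $a(0)$ and $a(1)$ in the form required by the statement.

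To finish the ``uniquely determined by'' clause, I would tabulate the eleven classes against the pair (weight, boundary data) and check that no two collide: two classes with the same boundary data are always separated by weight --- for instance, the two classes with one endpoint on a horizontal side and one on a short vertical tie-piece have weights $1$ and $3$, and the two classes with both endpoints on short tie-pieces have weights $0$ and $2$ --- while two classes of equal weight are separated by their boundary data. Combined with Lemma~\ref{classification of snippets in simply connected regions}, which guarantees that this data does pin down the strong snippet homotopy class whenever it is realised by a snippet, this gives the lemma.

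I expect the main obstacle to be the cusp bookkeeping. A bad snippet can be routed past the exposed $v$-side in several distinct ways, and one must verify both that each routing is a genuinely new strong homotopy class --- not a symmetric image or a reparametrisation of an earlier one --- and that its weight comes out as claimed. The fact that the dash corners contribute to the weight but not to the index means a single coarse ``bigon'' or ``trigon'' silhouette can occur with several different weights; this is precisely why the weight is needed as an invariant alongside the boundary data, and keeping the enumeration simultaneously exhaustive and free of duplicates is where the real work lies.
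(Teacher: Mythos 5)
Your proof is correct and follows essentially the same route as the paper's: reduce to the disk/bigon/trigon trichotomy via Lemmas~\ref{Bad snippets in the tie neighbourhood} and \ref{Positive index snippet in tie neighbourhood}, then enumerate admissible side-pairs up to the $\mathbb{Z}/2$ symmetry of the switch rectangle and read off weights. The paper organises the case analysis by the index of $T$ (disk, then bigon, then trigon), whereas you organise by combinatorial distance between the two sides (same, adjacent, non-adjacent); the two filtrations slice the same eleven classes differently but land in the same place. Two small features of your write-up go beyond the paper's exposition and are welcome: the explicit index identity $\ind(T)=(2-k)/4$ in terms of the number $k$ of box corners on $\partial T$, which makes the badness criterion $k\le 1$ transparent, and the concluding tabulation that isolates the two potential collisions (the two $(h,\text{short-}t)$ classes of weights $1$ and $3$, and the two short-$t$/short-$t$ classes of weights $0$ and $2$) and verifies that weight breaks them; the paper leaves the ``uniquely determined'' clause largely implicit. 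One caveat worth fixing: the phrase ``checking in each case exactly when a complementary region has index $0$'' is imprecise — the complementary disk to a bad bigon or trigon has index $-1/2$ or $-1/4$, never $0$; you mean checking when the cut-off region has strictly positive index, i.e.\ $k\le 1$.
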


\begin{proof}
Suppose that $a \subset R$ is a bad snippet inside a switch rectangle $R$. Following Lemma \ref{Bad snippets in the tie neighbourhood} and Lemma \ref{Positive index snippet in tie neighbourhood}, $a$ cuts off a region $T$ of $R$ that is a trigon, bigon, or disk.
If $T$ is a disk, then $a$ is a trivial curve in $R$.

If $T$ is a bigon, then $a(0)$ and $a(1)$ lie on the same side of $R$. Recalling that symmetries of $R$ preserve $\partial_v N$ and $\partial_h N$ setwise, we see that they also preserve the single vertical boundary sides of $R$ setwise. Hence, up to symmetries of $R$, we obtain the following possibilities for a snippet $a$ to cut off a bigon:
\begin{itemize}
\item $a(0)$ and $a(1)$ lie on a horizontal boundary side of $R$.
\item $a(0)$ and $a(1)$ lie on a vertical boundary side of $R$ which has empty intersection with $\partial_v N$.
\item $a(0)$ and $a(1)$ lie on a vertical boundary side of $R$ which has non-empty intersection with $\partial_v N$. In this case, the region $T$ can contain up to two points of $\partial^2 \mathcal{R}$ in its boundary. As we want to classify snippets up to strong snippet homotopy, we further distinguish snippets according to their weight:
\begin{itemize}
\item $\w(a)=0$. Then $a(0)$ and $a(1)$ both either lie in $\partial_v N$ or in the same component of $\partial_v R -\partial_v N$.
\item $\w(a)=1$. Then one point of $\partial a$ lies in $\partial_v N$ and one point of $\partial a$ lies in $\partial_v R -\partial_v N$.
\item $\w(a)=2$. Then $a(0)$ and $a(1)$ lie in different components of $\partial_v R -\partial_v N$.
\end{itemize}
\end{itemize}
Thus, up to symmetries of $R$ and the orientation of the snippet, there are six different strong snippet homotopy classes of bad snippets that cut off a bigon of $R$. For an illustration we refer the reader to Figure \ref{Switch bigons}.

If $T$ is a trigon, then $a(0)$ and $a(1)$ lie on different sides of $R$. Up to orientation of the snippet, we may assume that $a(0)$ lies on a horizontal side of $R$. Thus, $a(1)$ either lies on the vertical side of $R$ that has empty intersection with $\partial_v N$ or on the vertical side of $R$ that has non-empty intersection with $\partial_v N$. In the latter case, the snippet is uniquely defined by its weight $w(a) \in \{1,2,3\}$. Thus, up to symmetries of $R$ and the orientation of the snippet, there are four different strong snippet homotopy classes of bad snippets that cut off a trigon of $R$. For an illustration we refer the reader to Figure \ref{Switch trigons}.

\begin{figure}[htbp] 
   \begin{minipage}[b]{0.49\linewidth}
    \centering
\begin{tikzpicture}[scale=0.65]
\draw [very thick] (-3,1.5) rectangle (4,-3);
\node at (0.5,2.25) {\tiny{$h$}};
\node at (-3.75,-0.75) {\tiny{$t$}};
\node at (0.5,-3.5) {\tiny{$h$}};
\node at (4.75,-2.25) {\tiny{$t$}};
\draw [very thick](3.5,0) -- (4.5,0);
\draw [very thick](3.5,-1.5) -- (4.5,-1.5);
\node at (4.75,-0.75) {\tiny{$v$}};
\node at (4.75,0.75) {\tiny{$t$}};
\draw [very thick, densely dashed, blue] plot[smooth, tension=.7] coordinates {(4,1.25) (1,0.875) (1,-1.375) (4,-1.75)};
\draw [very thick, densely dashed, blue] plot[smooth, tension=.7] coordinates {(4,-2) (2.5,-2.125) (2.5,-2.75) (4,-2.875)};
\draw [very thick, densely dashed, blue] plot[smooth, tension=.7] coordinates {(-3,-1) (-1,-1.25) (-1,-2.375) (-3,-2.625)};
\draw [very thick, densely dashed, blue] plot[smooth, tension=.7] coordinates {(-2.125,1.5) (-1.875,0.25) (-0.75,0.25) (-0.5,1.5)};
\draw [very thick, densely dashed, blue] plot[smooth, tension=.7] coordinates {(4,-0.5) (2.5,-0.625) (2.5,-1.125) (4,-1.25)};
\draw [very thick, densely dashed, blue] plot[smooth, tension=.7] coordinates {(4,1) (2.5,0.875) (2.5,-0.125) (4,-0.25)};
\end{tikzpicture}
    \caption{The six different strong snippet homotopy classes of bigon snippets inside a switch rectangle, up to symmetry.}
    \label{Switch bigons} 
    \vspace{2ex}
  \end{minipage}
  \begin{minipage}[b]{0.49\linewidth}
    \centering
\begin{tikzpicture}[scale=0.65]
\draw [very thick] (-3,1.5) rectangle (4,-3);
\node at (0.5,2.25) {\tiny{$h$}};
\node at (-3.75,-0.75) {\tiny{$t$}};
\node at (0.5,-3.5) {\tiny{$h$}};
\node at (4.75,-2.25) {\tiny{$t$}};
\draw [very thick](3.5,0) -- (4.5,0);
\draw [very thick](3.5,-1.5) -- (4.5,-1.5);
\node at (4.75,-0.75) {\tiny{$v$}};
\node at (4.75,0.75) {\tiny{$t$}};
\draw [very thick, densely dashed, blue] plot[smooth, tension=.7] coordinates {(-1.25,1.5) (-1.5,-0.25) (-3,-0.5)};
\draw [very thick, densely dashed, blue] plot[smooth, tension=.7] coordinates {(0.25,1.5) (1,-1.75) (4,-2.25)};
\draw [very thick, densely dashed, blue] plot[smooth, tension=.7] coordinates {(1.25,1.5) (1.75,-0.375) (4,-0.75)};
\draw [very thick, densely dashed, blue] plot[smooth, tension=.7] coordinates {(2.125,1.5) (2.5,0.625) (4,0.375)};
\end{tikzpicture}
    \caption{The four different strong snippet homotopy classes of trigon snippets inside a switch rectangle, up to symmetry.}
    \label{Switch trigons} 
    \vspace{2ex}
  \end{minipage} 
\end{figure}
\end{proof}

\begin{defn} \label{Page: Switch type}
Suppose that $a \subset R$ is a bad snippet inside a switch rectangle $R \in \mathcal{R}$. We say that $a$ is \emph{of type $\mathbb{S}(x,y,k)$} where
\begin{itemize}
\item $(x,y)=(\mathring{R},\mathring{R})$ if $\partial a = \emptyset$.
\item $x=h$ (respectively $x=v$ or $x=t$) if $a(0) \subset \partial_h R$ (respectively $a(0) \subset \partial_v N$ or $a(0) \subset (\partial_v R - \partial_v N)$).
\item $y=h$ (respectively $x=v$ or $x=t$) if $a(1) \subset \partial_h R$ (respectively $a(1) \subset \partial_v N$ or $a(1) \subset (\partial_v R - \partial_v N)$).
\item $k=\w(a) \in \{0,1,2,3\}$.
\end{itemize}
In the following, we will not distinguish between the types $\mathbb{S}(x,y,k)$ and $\mathbb{S}(y,x,k)$.
\end{defn}

\begin{cor}
Suppose that $R\in \mathcal{R}_\textrm{tie}$ is a switch rectangle. Suppose further that $a \subset R$ is a bad snippet. Then $a$ is of one of the following nine types: $\mathbb{S}(\mathring{R},\mathring{R},0)$, $\mathbb{S}(h,h,0)$, $\mathbb{S}(t,t,0)$, $\mathbb{S}(v,v,0)$, $\mathbb{S}(t,v,1)$, $\mathbb{S}(t,t,2)$, $\mathbb{S}(h,t,1)$, $\mathbb{S}(h,v,2)$, or $\mathbb{S}(h,t,3)$. If two snippets in $R$ are not of type $\mathbb{S}(t,t,0)$ or $\mathbb{S}(h,t,1)$, then they are equivalent up to symmetries of $R$ and orientation of the snippets if and only if they are of the same type. If two bad snippets $a_1, a_2 \subset R$ are strongly snippet homotopic, then they are of the same type. Moreover, if two bad snippets $a_1, a_2 \subset R$ are of the same type and intersect the same components of $\partial R - \partial^2 R$, then they must be strongly snippet homotopic up to orientation. \qed
\end{cor}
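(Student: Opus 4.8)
The plan is to read the corollary off from Lemma~\ref{classification of snippets in switch} together with Definition~\ref{Page: Switch type}, so that almost all the work is a careful translation of the former into the language of types. The first step is to go through the eleven strong snippet homotopy classes of bad snippets in $R$ supplied by Lemma~\ref{classification of snippets in switch} and assign to each its type $\mathbb S(x,y,k)$. The disk class is $\mathbb S(\mathring R,\mathring R,0)$. Of the six bigon classes (Figure~\ref{Switch bigons}): $\partial a$ on a horizontal side gives $\mathbb S(h,h,0)$; $\partial a$ on the vertical side disjoint from $\partial_v N$ gives $\mathbb S(t,t,0)$; and on the vertical side meeting $\partial_v N$ one finds $\mathbb S(v,v,0)$ (both endpoints in $\partial_v N$), $\mathbb S(t,t,0)$ again (both endpoints in one component of $\partial_v R-\partial_v N$), $\mathbb S(t,v,1)$, and $\mathbb S(t,t,2)$. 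Of the four trigon classes (Figure~\ref{Switch trigons}), after normalising $a(0)$ onto a horizontal side: $a(1)$ on the vertical side disjoint from $\partial_v N$ gives $\mathbb S(h,t,1)$, while the three with $a(1)$ on the vertical side meeting $\partial_v N$ are distinguished by $\w(a)\in\{1,2,3\}$ and become $\mathbb S(h,t,1)$, $\mathbb S(h,v,2)$, $\mathbb S(h,t,3)$ respectively. This exhibits every bad snippet in $R$ as having one of the nine listed types, and shows that the map ``class $\mapsto$ type'' is two-to-one over $\mathbb S(t,t,0)$ and over $\mathbb S(h,t,1)$ and injective over the other seven types.

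Next I would observe that the type is invariant under symmetries of $R$ and under reversal of orientation. A symmetry $\phi$ of $R$ preserves $\partial_h R$ and $\partial_v N\cap R$ setwise by definition; being a homeomorphism of the pair $(R,\partial R)$ it then also preserves $\partial_v R$, hence $\partial_v R-\partial_v N$, setwise, and it preserves the finite set $\partial^2\mathcal R\cap\partial R$ (the corners of $R$ together with the two endpoints of $\partial_v N\cap R$). Thus $\phi$ preserves both the weight $\w(a)=|\partial^2\mathcal R\cap\partial T|$ and the record of which of $\partial_h R$, $\partial_v N$, $\partial_v R-\partial_v N$ contains $a(0)$ and which contains $a(1)$; reversing orientation merely swaps $a(0)$ with $a(1)$, which Definition~\ref{Page: Switch type} already disregards. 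Combined with the counting of the previous paragraph this yields the ``if and only if'' clause: for a type other than $\mathbb S(t,t,0)$ or $\mathbb S(h,t,1)$ there is exactly one class of that type up to symmetry and orientation, and conversely equivalent bad snippets have equal type.

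For the last two assertions I would use that switch rectangles are simply connected. If $a_1$ and $a_2$ are strongly snippet homotopic, then by Lemma~\ref{classification of snippets in simply connected regions} the components of $\partial\mathcal R-\partial^2\mathcal R$ meeting $\partial a_1$ and $\partial a_2$ agree; since each such component lies in exactly one of $\partial_h R$, $\partial_v N$, $\partial_v R-\partial_v N$, and since the weight is a strong-homotopy invariant (Remark~\ref{Rem on bounds on geom length}), $a_1$ and $a_2$ have the same type. Conversely, if $a_1$ and $a_2$ meet the same components of $\partial R-\partial^2\mathcal R$, then Lemma~\ref{classification of snippets in simply connected regions} gives directly that they are strongly snippet homotopic up to orientation --- and in this situation the hypothesis of equal type is automatic, so it may simply be carried along.

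The argument is overwhelmingly bookkeeping, so the one step that demands genuine care is the weight computation for the three trigon classes in the first paragraph: one must check that as $a(1)$ is moved from the component of $\partial_v R-\partial_v N$ adjacent to $a(0)$, across $\partial_v N$, to the non-adjacent component, the boundary of the cut-off region $T$ successively acquires the relevant corner of $R$, then one of the two extra points of $\partial^2\mathcal R$ on that vertical side, then both --- so that $\w(a)$ runs through $1,2,3$ while $T$ itself stays an index-$1/4$ region (the vertical side being smooth inside $R$, those extra points are corners of the neighbouring complementary region rather than of $T$). The subordinate point to watch is the difference between the four ``sides'' of a switch rectangle and the six components of $\partial\mathcal R-\partial^2\mathcal R$ inside $\partial R$: it is precisely the splitting of the vertical side meeting $\partial_v N$ into three of the latter that forces $\mathbb S(t,t,0)$ and $\mathbb S(h,t,1)$ to be realised by two distinct classes each.
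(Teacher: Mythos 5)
Your proposal is essentially the paper's own (omitted) proof: the corollary is read off Lemma~\ref{classification of snippets in switch} and Definition~\ref{Page: Switch type}, and the bookkeeping in your first two paragraphs --- the type assignment of all eleven classes, the two-to-one-ness of the map over $\mathbb{S}(t,t,0)$ and $\mathbb{S}(h,t,1)$, and the invariance of type under symmetries and orientation reversal --- is careful and correct, including the weight calculation you rightly flag as the nontrivial step.

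There is, however, a mismatch in your last paragraph. The corollary's final assertion uses $\partial R - \partial^2 R$ (the four sides of $R$), but your converse argument applies to the finer partition $\partial R - \partial^2\mathcal{R}$: you show ``same components of $\partial R - \partial^2\mathcal{R}$ implies strongly snippet homotopic'' and observe that equality of type is then automatic. To recover the corollary as literally stated you would need ``same type and same components of $\partial R - \partial^2 R$'' to force ``same components of $\partial R - \partial^2\mathcal{R}$'', and this fails: the vertical side of $R$ meeting $\partial_v N$ is a single component of $\partial R - \partial^2 R$ but splits into three components of $\partial R - \partial^2\mathcal{R}$, as you yourself note. Two bad snippets of type $\mathbb{S}(t,t,0)$, one with both endpoints in the upper tie segment of that side and one with both endpoints in the lower tie segment, are of the same type and meet the same component of $\partial R - \partial^2 R$, yet bound disjoint bigons and hence are not strongly snippet homotopic even up to orientation (an endpoint cannot be dragged past the two interior points of $\partial^2\mathcal{R}$ on that side, by the same reasoning that underlies Lemma~\ref{classification of snippets in simply connected regions}); the same upper-versus-lower ambiguity occurs for $\mathbb{S}(t,v,1)$. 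So the corollary's last sentence appears to require $\partial^2\mathcal{R}$ in place of $\partial^2 R$ --- this is exactly the version your argument establishes --- and the discrepancy should be noted explicitly rather than silently substituting the corrected hypothesis, since your proof as written does not in fact cover the stated (and, it seems, slightly too strong) claim.
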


Summarizing, we see that up to symmetries of the switch rectangle and orientation of the snippet, there are eleven different strong snippet homotopy classes of bad snippets inside switch rectangles. However, our combinatorial set-up requires us to only distinguish between nine different types of these: two bad snippets of type $\mathbb{S}(t,t,0)$ or $\mathbb{S}(h,t,1)$ might not be related by a symmetry of $R$, but turn out to behave very similarly in all relevant situations.

\subsection{Bad snippets in complementary regions}

\begin{defn}\label{Page: Comp type}
Suppose that $a \subset R$ is a non-peripheral bad snippet inside a complementary region $R  \in \mathcal{R}_\textrm{comp}$. We say that $a$ is \emph{of type $\mathbb{R}(x,y)$} where
\begin{itemize}
\item $(x,y)=(\mathring{R},\mathring{R})$ if $\partial a = \emptyset$.
\item $(x,y)=(\partial S,\partial S)$ if $\partial a \subset \partial S$.
\item $x,y \in \{h,v\}$ with $x=y=h$ or $x=y=v$ if and only if $\partial a \subset \partial_h R$ or $\partial_v R$ respectively. Else, $(x,y)=(h,v)$.
\end{itemize}
\end{defn}

For an illustration of the different types of non-peripheral snippets inside complementary regions we point the reader to Figure \ref{Complementary snippets}.

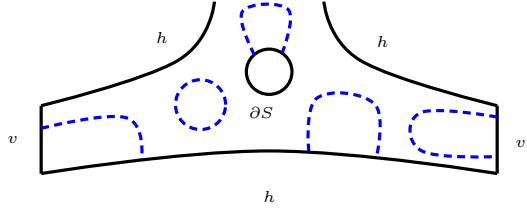
\begin{figure}[htbp] 
\begin{minipage}[b]{0.99\linewidth}
    \centering
\begin{tikzpicture}[scale=0.6]
\draw [very thick](-4,-1.5) {} -- (-4,-3) {};
\draw [very thick] plot[smooth, tension=.7] coordinates { (-4,-3) (1,-2.5) (6,-3)};
\draw [very thick](6,-1.5) node (v3) {} -- (6,-3);
\draw [very thick] plot[smooth, tension=.7] coordinates {(-4,-1.5) (-1,-0.5) (-0.2,0.8)};
\draw [very thick] plot[smooth, tension=.7] coordinates {(v3) (3,-0.5) (2.2,0.8)};
\node at (-4.625,-2.25) {{\tiny{$v$}}};
\node at (1,-3.5) {{\tiny{$h$}}};
\node at (6.5175,-2.3319) {{\tiny{$v$}}};
\draw [very thick] (1,-0.75) ellipse (0.5 and 0.5);
\draw [very thick, blue, densely dashed] plot[smooth, tension=.7] coordinates {(6,-1.75) (4.3249,-1.6525) (4.3351,-2.4934) (6,-2.625)};
\draw [very thick, blue, densely dashed] plot[smooth, tension=.7] coordinates {(1.8689,-2.5138) (2.0498,-1.3315) (3.3058,-1.4588) (3.3594,-2.6131)};
\draw [very thick, blue, densely dashed] plot[smooth, tension=.7] coordinates {(0.6759,-0.3803) (0.4172,0.5847) (1.4184,0.622) (1.2832,-0.3395)};
\draw [very thick, blue, densely dashed] plot[smooth, tension=.7] coordinates {(-4,-2) (-2.1332,-1.7673) (-1.797,-2.6845)};
\draw [very thick, blue, densely dashed] (-0.5052,-1.4769) ellipse (0.55 and 0.55);
\node at (-1.3541,0.0119) {{\tiny{$h$}}};
\node at (3.4897,-0.0798) {{\tiny{$h$}}};
\node at (0.8282,-1.6605) {{\tiny{$\partial S$}}};
\end{tikzpicture}
    \caption{The five types of non-peripheral bad snippets inside a complementary region.}
    \label{Complementary snippets}
  \end{minipage}
\end{figure}

As in the previous section, we see that every non-peripheral bad snippet inside a complementary region is of one of the just defined five types:

\begin{lem}\label{classification of snippets in complementary}
Suppose that $a \subset R$ is a non-peripheral bad snippet inside a complementary region $R \in \mathcal{R}_\textrm{tie}$. Then $a$ is of type $\mathbb{S}(\mathring{R},\mathring{R})$, $\mathbb{S}(\partial S,\partial S)$, $\mathbb{S}(h,h)$, $\mathbb{S}(v,v)$, or $\mathbb{S}(h,v)$. Disregarding orientation, two non-peripheral bad snippets in $R$ are of the same type if and only if they are equivalent up to symmetries of $R$.
\end{lem}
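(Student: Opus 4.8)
The plan is to follow the pattern of Lemma~\ref{Classification of snippets in branch} and Lemma~\ref{classification of snippets in switch}, taking the description of non-peripheral bad snippets obtained in Lemma~\ref{Bad snippets in complementary regions} (and in its proof) as the starting point, and then promoting the resulting ambient-isotopy statements to statements about symmetries of $R$.

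First I would recall from Lemma~\ref{Bad snippets in complementary regions} that a non-peripheral bad snippet $a \subset R \in \mathcal{R}_\textrm{comp}$ is either a trivial curve in $R$, or an embedded arc cutting off a region $T \subset R$ of strictly positive index, where, by the proof of that lemma, $T$ is a disc whose only corners are the two outward-pointing corners at $a(0)$ and $a(1)$ together with the (outward-pointing) corners of $R$ lying on $\partial T$. Hence $\ind(T) = 1 - |\partial^2 T|/4$, and $\ind(T) > 0$ forces $|\partial^2 T| \in \{2,3\}$, that is $\ind(T) \in \{1/2, 1/4\}$ (the curve case being $\ind(T) = 1$). As in the proof of Lemma~\ref{Bad snippets in complementary regions}, the arc case splits into three: $\partial a \subset \partial S$, where $T$ is a bigon bounded by $a$ and an arc of $\partial S$; $\partial a \subset \partial_h R \cup \partial_v R$ with $\ind(T)=1/2$, where the arc of $\partial R$ in $\partial T$ contains no corner of $R$, so $a(0)$ and $a(1)$ lie on a common side of $R$; and $\partial a \subset \partial_h R \cup \partial_v R$ with $\ind(T)=1/4$, where that arc contains exactly one corner of $R$, so $a(0)$ and $a(1)$ lie on the two sides of $R$ meeting at that corner. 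Since the sides of a complementary region alternate between subarcs of $\partial_h N$ and subarcs of $\partial_v N$, every corner of $R$ being a point where these two parts of $\partial N$ meet, a common side is either an h-side or a v-side, while the two sides at a corner consist of exactly one h-side and one v-side. This yields exactly the five types $\mathbb{R}(\mathring{R},\mathring{R})$, $\mathbb{R}(\partial S,\partial S)$, $\mathbb{R}(h,h)$, $\mathbb{R}(v,v)$, $\mathbb{R}(h,v)$, which proves the first assertion.

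For the equivalence statement, one direction is immediate: a symmetry of $R$ preserves $\partial_h R$ and $\partial_v N \cap R = \partial_v R$ setwise, hence also preserves $\partial^2 R = \partial_h R \cap \partial_v R$ and $\partial R \cap \partial S$, so it carries a non-peripheral bad snippet to one of the same type. For the converse I would first show that the symmetry group of $R$ acts transitively on the h-sides of $R$, on the v-sides of $R$, and on the corners of $R$. Using the alternating h/v structure of $\partial R$, a ``rotation by two sides'' --- coned off over the disc, or, for a peripheral annulus, interpolated across the annulus while fixing its $\partial S$-boundary --- is a symmetry which cyclically permutes the h-sides and, simultaneously, the v-sides, so the rotation subgroup already acts transitively on each of these; a reflection fixing the midpoint of an h-side is an orientation-reversing, label-preserving symmetry interchanging the two rotation-orbits of corners, giving transitivity on corners. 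Then, given two non-peripheral bad snippets $a_1, a_2$ of the same type, I would apply such a symmetry to make their distinguished side (for types $\mathbb{R}(h,h)$ and $\mathbb{R}(v,v)$) or distinguished corner (for type $\mathbb{R}(h,v)$) coincide, and then slide $a_1$ onto $a_2$ by a further symmetry supported in a small neighbourhood of that side or corner; for type $\mathbb{R}(\partial S,\partial S)$ the analogous slide is performed in a collar of the $\partial S$-boundary component, and for type $\mathbb{R}(\mathring{R},\mathring{R})$ the two snippets are trivial curves bounding discs in $R$ and are interchanged by an ambient isotopy supported in the interior of $R$ whose time-one map is a symmetry.

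The step I expect to be the main obstacle is establishing this transitivity --- in particular, checking that the rotation and reflection homeomorphisms described above genuinely preserve $\partial_h R$ and $\partial_v N \cap R$ setwise, and treating the peripheral-annulus case, where one boundary component of $R$ is $\partial S$, with equal care. Once transitivity is in place the remaining ``sliding'' manoeuvres are routine, exactly as in the branch- and switch-rectangle cases handled in Lemma~\ref{Classification of snippets in branch} and Lemma~\ref{classification of snippets in switch}.
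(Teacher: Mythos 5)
Your proof is correct and takes essentially the same route as the paper: classify $T$ by its index and the location of $\partial a$, which yields exactly the five types, and then argue that snippets of the same type are exchanged by a symmetry of $R$. The only difference is one of detail — the paper simply asserts the equivalence-under-symmetries direction case by case, whereas you flesh it out by exhibiting the rotation and reflection symmetries and arguing transitivity on sides and corners; this is a worthwhile elaboration but not a different method.
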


\begin{proof}
Suppose that $a \subset R$ is a non-peripheral bad snippet inside a complementary region $R$. By Lemma \ref{Bad snippets in complementary regions} we know that $a$ is embedded and cuts off a region $T$ of $R$ of positive index. If $a:S^1 \rightarrow R$, it, therefore, bounds a disc inside $R$ and is of type $\mathbb{S}(\mathring{R},\mathring{R})$. Any two discs in $R$ are homeomorphic to each other. Thus, two non-peripheral bad snippets $a:S^1 \rightarrow R$ of the same type are equivalent up to symmetries of $R$.

If $a:[0,1]\rightarrow R$ is a proper immersion of the interval into $R$, then either $\partial a \subset \partial S$ or $\partial a \subset \partial R - \partial S$. This follows from the fact that snippets with exactly one of their boundary points in $\partial S$ are in efficient position.

A snippet satisfying $\partial a \subset \partial S$ is, by definition, of type $\mathbb{S}(\partial S,\partial S)$. As any two such snippets equivalent under symmetries of $R$, the claim of the lemma follows in this case.

Suppose that $a:[0,1]\rightarrow R$ is a bad snippet satisfying $\partial a \subset \partial R - \partial S$. As $a$ meets $\partial R$ perpendicularly, the region $T$ cut off by $a$ in $R$ must have at least two outward-pointing corners. Since $a$ is smooth, $T$ has positive index and $R$ has outward-pointing corners only, $T$ must be a disk whose boundary contains either none or exactly one corner of $R$. If $T$ contains no corner of $R$, then $a(0)$ and $a(1)$ lie on the same component of $\partial R - \partial^2 R$. Hence, $\ind(T)=1/2$ and $a$ is of type $\mathbb{S}(h,h)$ or $\mathbb{S}(v,v)$. Up to orientation, any two snippets of type $\mathbb{S}(h,h)$ (respectively $\mathbb{S}(v,v)$) are equivalent under symmetries of $R$. If $T$ contains one corner of $R$, then $a(0)$ and $a(1)$ lie on different components of $\partial R - \partial^2 R$. Hence, $\ind(T)=1/4$ and $a$ is of type $\mathbb{S}(h,v)$. Up to orientation, any two snippets of type $\mathbb{S}(h,v)$ are equivalent under symmetries of $R$.
\end{proof}

We remind ourselves that horizontal sides of complementary regions can contain several points of $\partial^2 \mathcal{R}$ in their interiors. Thus, in contrary to the case of branch and switch rectangles, two non-peripheral bad snippets inside a complementary region $R$ that are of the same type and intersect the same sides of $R$ need not be strongly snippet homotopic. For an example of two snippets that intersect the same sides of a complementary region but are not strongly snippet homotopic we refer the reader to Figure \ref{Weakly but not strongly homotopic}.

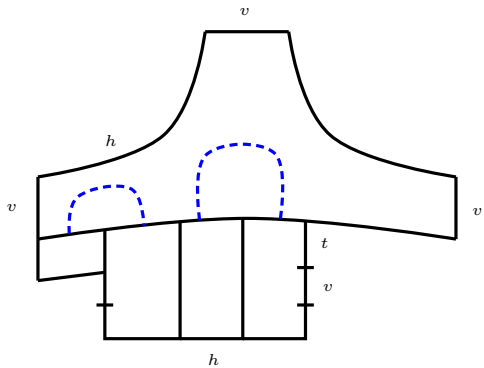
\begin{figure}[htbp] 
\begin{minipage}[b]{0.99\linewidth}
    \centering
\begin{tikzpicture}[scale=0.55]
\draw [very thick](-4,-1.5) {} -- (-4,-3) {};
\draw [very thick] plot[smooth, tension=.7] coordinates { (-4,-3) (1,-2.5) (6,-3)};
\draw [very thick](6,-1.5) node (v3) {} -- (6,-3);
\draw [very thick] plot[smooth, tension=.7] coordinates {(-4,-1.5) (-1,-0.5) (0,2)};
\draw [very thick] plot[smooth, tension=.7] coordinates {(v3) (3,-0.5) (2,2)};
\draw [very thick] plot[smooth, tension=.7] coordinates {(-4,-4)  (-2.4,-3.8)};
\node at (-4.625,-2.25) {{\tiny{$v$}}};
\node at (-2.25,-0.625) {{\tiny{$h$}}};
\draw [very thick](-2.4,-2.78) -- (-2.4,-5.4) -- (-0.6,-5.4) -- (-0.6,-2.58);
\draw [very thick](-0.6,-5.4)-- (0.9,-5.4)  -- (0.9,-2.5);
\draw [very thick](0.9,-5.4) -- (2.4,-5.4) -- (2.4,-2.56);
\draw [very thick](-2.6,-4.588) -- (-2.2,-4.588);
\draw [very thick](2.2,-3.686) -- (2.6,-3.686);
\draw [very thick](2.2,-4.588) -- (2.6,-4.588);
\node at (2.94,-4.16) {{\tiny{$v$}}};
\node at (2.86,-3.1) {{\tiny{$t$}}};
\node at (0.2,-5.9) {{\tiny{$h$}}};
\draw [very thick](0,2) -- (2,2);
\node at (0.947,2.496) {{\tiny{$v$}}};
\node at (6.5175,-2.3319) {{\tiny{$v$}}};
\draw [very thick, densely dashed, blue] plot[smooth, tension=.7] coordinates {(-3.25,-2.875) (-3.0348,-1.9713) (-1.7894,-1.7806) (-1.4663,-2.6789)};
\draw [very thick, densely dashed, blue] plot[smooth, tension=.7] coordinates {(-0.1322,-2.5439) (0.0263,-0.9656) (1.6526,-0.9236) (1.7977,-2.5227)};
\draw [very thick](-4,-3) -- (-4,-4);
\end{tikzpicture}
    \caption{Weakly but not strongly snippet homotopic snippets.}
    \label{Weakly but not strongly homotopic}
  \end{minipage}
\end{figure}

However, snippets in complementary regions that are homotopic up to ``moving their boundary points'' along the sides of the region play an important role in this thesis. We, therefore, close this section by introducing the following coarser equivalence relation on snippets:

\begin{defn}\label{Weak snippet homotopy}
Fix a region $R \in \mathcal{R}$. Suppose that $a_1,a_2:D \rightarrow R$ are two snippets in $R$. We say that $a_1$ and $a_2$ are \emph{weakly snippet homotopic} if $a_1$ and $a_2$ are homotopic via a transverse homotopy of pairs $D \rightarrow (R,\partial R)$  which keeps the $k$-skeleton of $[0,1]$ or $S^1$ inside the $(k+1)$-skeleton of $R$ at all time.
\end{defn}

\begin{lem}\label{Weakly snippet homotopic implies same type}
Suppose that $a_1,a_2:D \rightarrow R$ are two snippets inside a region $R \in \mathcal{R}$. If $a_1$ and $a_2$ are weakly snippet homotopic and $a_1$ is in efficient position, then $a_2$ is in efficient position, too. \qed
\end{lem}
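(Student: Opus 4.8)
The plan is to verify that being in efficient position is a property of the \emph{weak} snippet homotopy class of $a$, by going through the three types of region $R\in\mathcal{R}$ and, in each case, reducing the defining conditions of Definition \ref{Def Efficient position} to combinatorial data that a weak snippet homotopy manifestly cannot alter. The first step is to pin down what such a homotopy $H$ from $a_1$ to $a_2$ can do to the endpoints: since each intermediate map is a snippet, it is transverse to $\partial R$ and misses $\partial^2 R$, so each point of $\partial a$ stays in a single component of $\partial R-\partial^2 R$ throughout $H$; in particular it never crosses a corner of $R$ and never moves between distinct components of $\partial R$ (so it cannot pass between $\partial S$ and the rest of $\partial R$ when $R$ is a peripheral annulus). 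What $H$ \emph{may} do — and this is where it is genuinely weaker than a strong homotopy — is slide an endpoint across a point of $\partial^2\mathcal{R}$ sitting in the interior of a side of $R$, for instance along a long horizontal side of a complementary region (Figure \ref{Long horizontal boundary side}) or across one of the two extra corners on the distinguished vertical side of a switch rectangle. So the data that $H$ preserves is precisely: whether $a$ is an arc or a curve, and which side of $R$ (and which boundary component of $S$) contains each point of $\partial a$.

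For $R\in\mathcal{R}_\textrm{tie}$ I would invoke Lemma \ref{Bad snippets in the tie neighbourhood}: $a$ is bad if and only if it cuts off a region of positive index. A snippet in a rectangle is an embedded arc or embedded curve, so it divides $R$ into disks, each a polygon whose outward-pointing corners are the two points of $\partial a$ together with the corners of $R$ it contains — the points of $\partial^2\mathcal{R}$ lying in the interiors of sides of $R$ are smooth points of those sides and contribute nothing. A short count then gives that $a$ is in efficient position exactly when it is an arc whose two endpoints lie on opposite sides of $R$: any other configuration produces a piece with at most three outward-pointing corners, hence of index at least $1/4$. Since ``arc with endpoints on two opposite sides of $R$'' is by the previous paragraph invariant under $H$, so is efficient position. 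The disc case $R\in\mathcal{R}_\textrm{comp}$ is identical in spirit: by Lemma \ref{Bad snippets in complementary regions}, $a$ is in efficient position iff it is an embedded arc dividing $R$ into two pieces each with at least two corners of $R$ on its boundary, and the induced partition of $\partial^2 R$ — like the pair of sides of $R$ meeting $\partial a$ — cannot be changed by $H$.

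The peripheral-annulus case is the one I expect to cause trouble, since there efficient position involves the winding number, which is more delicate than a mere incidence pattern. By Lemma \ref{Bad snippets in complementary regions} a curve in a peripheral annulus is always bad, so efficient position forces $a$ to be an arc, and then Definition \ref{Def Efficient position} shows that efficient position is governed by how many of the two points of $\partial a$ lie on $\partial S$ — a weak invariant by the first paragraph — and, in the remaining case where neither does, by whether $|\wind(a)|\ge 2$. So the heart of the matter is that $\wind(a)$ itself is a weak invariant. For this I would return to the definition $\wind(a)=-4\,\ind(Q)+2$, where $Q\subset\widetilde{R}$ is the finite-index region bounded by a lift $\widetilde{a}$: the corners of $Q$ are the two endpoints of $\widetilde{a}$ together with the corners of $\widetilde{R}$ on $\partial Q$, the latter being lifts of corners of $R$ and not of the interior points of $\partial^2\mathcal{R}$ on sides of $R$; sliding the endpoints of $a$ within their sides moves the endpoints of $\widetilde{a}$ without letting them cross a corner of $\widetilde{R}$, so neither $\chi(Q)$ nor $\partial^2 Q$ changes, whence $\ind(Q)$, and with it $\wind(a)$ and its sign, is unchanged. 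The subtle point to get exactly right is this: an endpoint sliding the length of a long horizontal side, past however many points of $\partial^2\mathcal{R}$, never perturbs the index of the bounding region, precisely because those points are not corners of $R$. Granting this, the three cases together show that a snippet weakly homotopic to one in efficient position carries, in its region, exactly the data that characterises efficient position, which proves the lemma.
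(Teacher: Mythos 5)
The paper omits this proof; the ``$\square$'' directly after the statement signals that the author regards it as immediate from the definitions. Your case analysis correctly supplies the argument being elided: in every type of region, efficient position is detected by data that depends only on which sides of $R$ (as opposed to which components of $\partial\mathcal{R}-\partial^2\mathcal{R}$) carry $\partial a$ — together, in the peripheral-annulus case, with $|\wind(a)|$ — and a weak snippet homotopy preserves all of this because its endpoints are required to avoid $\partial^2 R$ but not $\partial^2\mathcal{R}$. Your key observation, that the extra points of $\partial^2\mathcal{R}$ lying in the interiors of sides of $R$ are smooth points of $\partial R$ and therefore contribute no corners to the subregions of $R$ whose indices govern efficiency, nor to the region $Q\subset\widetilde{R}$ whose index defines the winding number, is exactly the reason the lemma is true.

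One technical remark on the peripheral-annulus paragraph: the formula $\wind(a)=-4\ind(Q)+2$ is stated only for snippets of minimal self-intersection, and the intermediate maps $a_t$ of a weak homotopy need not remain minimal, so speaking of $\ind(Q_t)$ varying continuously is not quite licensed by the definition. The cleaner phrasing of the same argument is to lift the whole homotopy to $\widetilde{R}$, observe that the endpoints of each $\widetilde{a_t}$ stay in fixed sides of $\widetilde{R}$ and never cross a lift of a corner of $R$, and only then replace $a_2$ by a representative of minimal self-intersection with the same endpoint data; comparing $Q_1$ with the resulting $Q_2$ then gives the equality of winding numbers directly. Also, since efficient position only involves $|\wind(a)|\geq 2$, the invariance of the sign is a bonus rather than a necessity. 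These are small refinements; the substance of your argument is sound.
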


\subsection{Classification of bad snippets - an overview}

The following corollary is an immediate consequence of the discussion in the previous section:

\begin{cor}\label{classification of bad snippets}
Suppose that $a\subset R \in \mathcal{R}$ is a bad snippet. Then $a$ is either a peripheral curve or it is of type
\begin{itemize}
\item $\mathbb{B}(\mathring{R},\mathring{R})$, $\mathbb{S}(\mathring{R},\mathring{R},0)$, or $\mathbb{R}(\mathring{R},\mathring{R})$, that is, $a$ is an inessential curve. In this case we say that $a$ is a \textit{trivial snippet}.
\item $\mathbb{R}(\partial S,\partial S)$. In this case we say that $a$ is an \emph{inessential bigon snippet}.
\item $\mathbb{B}(h,h)$, $\mathbb{B}(t,t)$, $\mathbb{S}(h,h,0)$, $\mathbb{S}(t,t,0)$, $\mathbb{S}(v,v,0)$, $\mathbb{S}(t,v,1)$, $\mathbb{S}(t,t,2)$, $\mathbb{R}(h,h)$, or $\mathbb{R}(v,v)$. That is, $a$ cuts off a bigon of $R$. In this case we say that $a$ is a \textit{bigon snippet}.
\item $\mathbb{B}(h,t)$, $\mathbb{S}(h,t,1)$, $\mathbb{S}(h,v,2)$, $\mathbb{S}(h,t,3)$, or $\mathbb{R}(h,v)$. That is, $a$ cuts off a trigon of $R$. In this case we say that $a$ is a \textit{trigon snippet}.
\end{itemize}
Hence, there are four and nine types of bad snippets inside branch and switch rectangles respectively. There are five types of non-peripheral bad snippets inside complementary regions. \qed
\end{cor}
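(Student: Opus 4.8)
The plan is simply to assemble this statement from the three classifications already established, organised by which kind of region contains the bad snippet $a \subset R \in \mathcal{R}$. First I would split into the cases that $R$ is a branch rectangle, a switch rectangle, or a complementary region. When $R \in \mathcal{R}_\textrm{tie}$ there is nothing to do beyond citation: Lemma \ref{Classification of snippets in branch} and the corollary after it give the four branch types $\mathbb{B}(\mathring{R},\mathring{R})$, $\mathbb{B}(h,h)$, $\mathbb{B}(t,t)$, $\mathbb{B}(h,t)$, while Lemma \ref{classification of snippets in switch} and the corollary after it give the nine switch types $\mathbb{S}(\mathring{R},\mathring{R},0)$, $\mathbb{S}(h,h,0)$, $\mathbb{S}(t,t,0)$, $\mathbb{S}(v,v,0)$, $\mathbb{S}(t,v,1)$, $\mathbb{S}(t,t,2)$, $\mathbb{S}(h,t,1)$, $\mathbb{S}(h,v,2)$, $\mathbb{S}(h,t,3)$. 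When $R \in \mathcal{R}_\textrm{comp}$ I would first invoke Lemma \ref{Bad snippets in complementary regions} to reduce to the dichotomy that $a$ is a peripheral curve, or $a$ is embedded and cuts off a region of positive index; in the second alternative Lemma \ref{classification of snippets in complementary} supplies the five non-peripheral types $\mathbb{R}(\mathring{R},\mathring{R})$, $\mathbb{R}(\partial S,\partial S)$, $\mathbb{R}(h,h)$, $\mathbb{R}(v,v)$, $\mathbb{R}(h,v)$. This immediately gives the final tally of four, nine, and five types.

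Second, I would sort these types into the four named families by reading off the index of the region $T$ that $a$ cuts off. For $R \in \mathcal{R}_\textrm{tie}$ this index is $1$, $1/2$, or $1/4$ by Lemmas \ref{Bad snippets in the tie neighbourhood} and \ref{Positive index snippet in tie neighbourhood}, and $T$ is correspondingly a disk, a bigon, or a trigon; here the index-$1$ case forces $a$ to be a trivial curve in the simply connected region $R$, hence an inessential curve. The same trichotomy of indices appears inside complementary regions in the proof of Lemma \ref{classification of snippets in complementary}: $\mathbb{R}(\mathring{R},\mathring{R})$ bounds a disk and is a trivial, hence inessential, curve; $\mathbb{R}(\partial S,\partial S)$, $\mathbb{R}(h,h)$ and $\mathbb{R}(v,v)$ cut off index-$1/2$ regions; and $\mathbb{R}(h,v)$ cuts off an index-$1/4$ region. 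The only piece of bookkeeping that is not purely mechanical is that I would deliberately separate $\mathbb{R}(\partial S,\partial S)$ from the other index-$1/2$ types and call it the \emph{inessential bigon snippet}, since its endpoints lie on $\partial S$ and it is treated separately in later chapters; the remaining index-$1/2$ types form the \emph{bigon snippet} family.

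Collecting the buckets then gives exactly the statement: a bad snippet is a peripheral curve, a trivial snippet ($\mathbb{B}(\mathring{R},\mathring{R})$, $\mathbb{S}(\mathring{R},\mathring{R},0)$, $\mathbb{R}(\mathring{R},\mathring{R})$), the inessential bigon snippet $\mathbb{R}(\partial S,\partial S)$, a bigon snippet (the seven rectangle types $\mathbb{B}(h,h)$, $\mathbb{B}(t,t)$, $\mathbb{S}(h,h,0)$, $\mathbb{S}(t,t,0)$, $\mathbb{S}(v,v,0)$, $\mathbb{S}(t,v,1)$, $\mathbb{S}(t,t,2)$ together with $\mathbb{R}(h,h)$, $\mathbb{R}(v,v)$), or a trigon snippet ($\mathbb{B}(h,t)$, $\mathbb{S}(h,t,1)$, $\mathbb{S}(h,v,2)$, $\mathbb{S}(h,t,3)$, $\mathbb{R}(h,v)$). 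I do not expect a genuine obstacle: all of the real work -- enumerating the snippet classes up to symmetry and orientation in each kind of region, and the bound $0 \leq \ind(T) \leq 1/2$ restricting $T$ to a trigon, bigon, or disk -- has been carried out in the preceding sections, so the corollary is a pure assembly step. The single place where one could slip is in keeping the correspondence between the value of $\ind(T)$ and the disk/bigon/trigon dichotomy consistent, and in remembering to peel off $\mathbb{R}(\partial S,\partial S)$ as its own case rather than lumping it with the ordinary bigon snippets.
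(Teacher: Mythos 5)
Your proposal is correct and follows exactly the paper's route: the paper presents this corollary with no separate proof beyond the remark that it is ``an immediate consequence of the discussion in the previous section,'' and your assembly of Lemmas \ref{Bad snippets in the tie neighbourhood}, \ref{Bad snippets in complementary regions}, \ref{Positive index snippet in tie neighbourhood}, \ref{Classification of snippets in branch}, \ref{classification of snippets in switch}, and \ref{classification of snippets in complementary}, sorted by the index of the cut-off region, is precisely that discussion spelled out. The one small point worth noting is that you (correctly) write the complementary-region types as $\mathbb{R}(\cdot,\cdot)$ rather than the $\mathbb{S}(\cdot,\cdot)$ that appears in the statement of Lemma \ref{classification of snippets in complementary}, which is a typo in the paper.
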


\section{Efficient position for arcs and curves}
Let $S = S_{g,b}$ be a surface satisfying $\xi(S)=3g-3+b \geq 1$. Let $\tau \subset S$ be a large train track and $N=N(\tau)$ be a tie neighbourhood of $\tau$ in $S$.

Suppose that $\alpha \subset S$ is an immersed arc or curve in $S$. For the remainder of this thesis, we always assume that $\alpha$ is self-transverse and transverse to $\partial \mathcal{R}$. Moreover, if $\alpha$ is an arc, we assume that $\alpha:(I,\partial I)  \rightarrow (S, \partial \mathcal{R})$ is an immersion of pairs. Hence, $\alpha(0)$ and $\alpha(1)$ lie in $\partial \mathcal{R}$ and $\alpha$ admits a canonical decomposition into snippets.

\begin{defn}\label{Page: Snippet length}
Suppose that $\alpha \subset S$ is a snippet-decomposed immersed arc or curve in $S$. The number of snippets occurring in the snippet decomposition of $\alpha$ is denoted by $\len(\alpha)$ and is called the \emph{snippet length of $\alpha$}.
\end{defn}

A snippet-decomposed arc or curve $\alpha \subset S$ is said to be \emph{carried} by $N$ or \emph{dual} to $N$ if all its snippets are carried or dual respectively.

\begin{defn}
Suppose that $\tau \subset S$ is a large train track and $N=N(\tau)$ is a tie neighbourhood of $\tau$ in $S$. Suppose that $\alpha \subset S$ is an immersed arc or curve in $S$. We say that $\alpha$ is in \textit{efficient position with respect to $N$} if all snippets in its snippet-decomposition are in efficient position with respect to $N$.
\end{defn}

We remark that the intersection of carried and dual subarcs of an arc or curve in efficient position must lie in $\partial_v N$ (see Figure \ref{Transition from carried to dual}).

\begin{figure}[htbp]
	\centering
	\begin{tikzpicture}[scale=0.6]
\draw [very thick](10,2) -- (10,-1);
\draw [very thick](7.5,0) -- (7.5,-1) node (v1) {};
\draw [very thick] plot[smooth, tension=.7] coordinates {(v1)  (12.5,-1)};
\draw [very thick] plot[smooth, tension=.7] coordinates {(12.5,-1) (12.5,-1)};
\draw [very thick](12.5,-1) -- (12.5,1) -- (12.5,2) -- (7.5,2) -- (7.5,1) -- (10,1) node (v2) {} -- (10,0) node (v3) {} -- (7.5,0);
\draw [blue, very thick,densely dashed] plot[smooth, tension=.7] coordinates {(7.5,0.5) (12.5,0.5)};
\draw [very thin, densely dotted] (7.75,2) rectangle (9.75,1);
\draw [very thin, densely dotted] (8,2) rectangle (9.5,1);
\draw [very thin, densely dotted] (8.25,2) rectangle (9.25,1);
\draw [very thin, densely dotted] (8.5,2) rectangle (9,1);
\draw [very thin, densely dotted] (10.25,2) rectangle (12.25,-1);
\draw [very thin, densely dotted] (10.5,2) rectangle (12,-1);
\draw [very thin, densely dotted] (10.75,2) rectangle (11.75,-1);
\draw [very thin, densely dotted] (11,2) rectangle (11.5,-1);
\draw [very thin, densely dotted] (7.75,0) rectangle (9.75,-1);
\draw [very thin, densely dotted] (8,0) rectangle (9.5,-1);
\draw [very thin, densely dotted] (8.25,0) rectangle (9.25,-1);
\draw [very thin, densely dotted] (8.5,0) rectangle (9,-1);
\draw [very thin, densely dotted](8.75,2) -- (8.75,1);
\draw [very thin, densely dotted](8.75,0) -- (8.75,-1);
\draw [very thin, densely dotted](11.25,2) -- (11.25,-1);
\end{tikzpicture}
	\caption{Adjacent snippets of different types in an arc or curve in efficient position.}
	\label{Transition from carried to dual}
\end{figure}
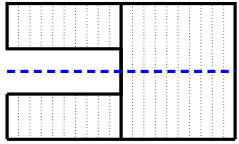

The following observations on short properly immersed arcs and curves follow directly from the classification of snippets in the previous sections:

\begin{lem}\label{arcs or curves of length one}
Suppose that $\alpha \subset S$ is a properly immersed, snippet-decomposed arc or curve in $S$. If $\len(\alpha)=1$, then $\alpha$ is inessential or peripheral.
\end{lem}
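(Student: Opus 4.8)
The statement follows by unwinding the definitions once one observes that $\len(\alpha)=1$ forces $\alpha$ to be a single snippet $a\subset R$ contained in one region $R\in\mathcal{R}$; the proof is then a walk through the short list of possibilities for $R$. By our standing conventions we may take $a$ to have minimal self-intersection, so $a$ is embedded whenever $R$ is simply connected (a branch or switch rectangle, or a disc complementary region). The one point worth isolating is that a \emph{proper} arc of snippet length one cannot be housed in a rectangle or in an interior disc complementary region: its endpoints lie on $\partial S$, which $N$ avoids, so $R$ must be a complementary region meeting $\partial S$; since $\overline{S-N}$ is the disjoint union of its (closed) complementary regions, the component of $\partial S$ in question lies entirely in $R$, and a complementary region containing a whole boundary circle of $S$ is a peripheral annulus, not a disc.

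Suppose first that $D=S^1$, so $\alpha$ is a curve. If $R$ is simply connected then $a$ is null-homotopic inside $R\subseteq S$, hence $\alpha$ is inessential. If $R$ is a peripheral annulus then $\pi_1(R)\cong\mathbb{Z}$, and under $R\hookrightarrow S$ a generator maps to a conjugate of the class of the component $c$ of $\partial S$ surrounded by $R$; thus $\alpha$ is freely homotopic to $c^{k}$ for some $k\in\mathbb{Z}$, hence inessential if $k=0$ and peripheral otherwise. The pair $(c,k)$ is exactly the data carried by the winding number of $a$, which is how this information is recovered in Theorem~\ref{Polynomial time algorithm}.

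Suppose next that $D=(I,\partial I)$, so $\alpha$ is a proper arc with $\alpha(0),\alpha(1)\in\partial S$. By the first paragraph, $R$ is a peripheral annulus, with one boundary circle the surrounded component $c\subset\partial S$ and the other contained in $N$. The embedded arc $a$ has both endpoints on $c$, so it cuts off from $R$ a disc $T$ with $\partial T=a\cup\beta$, where $\beta$ is a subarc of $c\subseteq\partial S$; pushing $a$ across $T$ gives a homotopy rel endpoints from $a$ to $\beta$, so $\alpha$ is inessential. Equivalently, $a$ is a bad snippet of type $\mathbb{R}(\partial S,\partial S)$, an inessential bigon snippet in the sense of Lemma~\ref{Bad snippets in complementary regions} and Corollary~\ref{classification of bad snippets}, which by construction cuts off precisely such a disc. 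In either reading the main --- and essentially the only real --- obstacle is the structural fact about $\partial S$ recorded in the first paragraph; the rest is immediate.
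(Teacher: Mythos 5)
Your proof is correct and follows essentially the same case analysis as the paper's: you split on whether $\alpha$ is a curve or a proper arc, and on whether the single region $R$ meets $\partial S$ (simply connected rectangle/disc versus peripheral annulus). You are somewhat more explicit than the paper in spelling out why a proper arc of snippet length one must land in a peripheral annulus, but the underlying argument is identical.
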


\begin{proof}
As $\len(\alpha)=1$, the arc or curve $\alpha$ consists of a single snippet $a \subset R$ for some region $R \in \mathcal{R}$. If $\partial R \cap \partial S = \emptyset$, then $a=\alpha$ must be an inessential curve. If $R$ is a peripheral region, either $\partial a = \emptyset$ and hence $a=\alpha$ is an inessential or peripheral curve, or $\partial a \subset \partial S$ as $\alpha$ is proper. The latter implies that $a$ is of type $\mathbb{R}(\partial S, \partial S)$. Hence, $\alpha$ is an inessential arc.
\end{proof}

\begin{lem}\label{No arcs of length 2}
Suppose that $\alpha \subset S$ is a properly immersed, snippet-decomposed arc in $S$. Then $\len(\alpha)\neq 2$. If $\len(\alpha)\geq 3$, then the snippets containing $\alpha(0)$ and $\alpha(1)$ are in efficient position.
\end{lem}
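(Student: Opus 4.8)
The plan is to treat the statement as two claims --- that a proper arc has snippet length different from $2$, and that when $\len(\alpha)\ge 3$ the snippets containing $\alpha(0)$ and $\alpha(1)$ are in efficient position --- both of which follow quickly from one structural observation about the tiling $\mathcal R$ together with the definition of a proper arc. First I would record the ingredients. Because $\alpha$ is proper, $\alpha(0)$ and $\alpha(1)$ lie on $\partial S$, whereas every point at which $\alpha$ crosses $\partial\mathcal R$ in its interior --- in particular every transition point between two consecutive snippets --- lies in $\partial\mathcal R\setminus\partial S$. Since the tie neighbourhood $N$ is disjoint from $\partial S$, no branch or switch rectangle meets $\partial S$, so the snippet of $\alpha$ containing $\alpha(0)$, and likewise the one containing $\alpha(1)$, lies in a complementary region whose boundary meets $\partial S$; as $N$ is disjoint from $\partial S$ a collar of each component of $\partial S$ lies inside a single complementary region, so such a region contains a whole boundary component of $S$ and, $\tau$ being large, is a peripheral annulus. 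Hence, provided $\len(\alpha)\ge 2$, the first and last snippets of $\alpha$ each lie in a peripheral annulus and have exactly one endpoint on $\partial S$.

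For $\len(\alpha)\ne 2$: if $\len(\alpha)=2$ the two snippets are consecutive, so the two peripheral annuli containing them share the (unique, interior) transition point $p$, which lies in the interior of an edge $e$ of $\partial\mathcal R$ with $e\cap\partial S=\emptyset$. But every such edge is adjacent to at least one branch or switch rectangle: the interface edges making up $\partial N$ separate a tie rectangle from a complementary region, and the remaining edges of $\partial\mathcal R$ lie in the interior of $N$ and separate two tie rectangles. So the two regions meeting along $e$ cannot both be complementary regions, a contradiction; hence $\len(\alpha)\ne 2$. For $\len(\alpha)\ge 3$: the first snippet $a$ has $a(0)=\alpha(0)\in\partial S$ while $a(1)$ is an interior transition point of $\alpha$, so $a(1)\notin\partial S$; thus $a$ lies in a peripheral annulus with exactly one endpoint on $\partial S$, which is exactly the case [Start or end in peripheral annulus] of Definition \ref{Def Efficient position}, so $a$ is in efficient position, and symmetrically for the last snippet.

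The argument is elementary; the only point that needs care --- and the closest thing to an obstacle --- is the claim that no two complementary regions are adjacent across an edge of $\partial\mathcal R$ disjoint from $\partial S$, and the related claim that a complementary region meeting $\partial S$ is a peripheral annulus rather than, say, a polygon with an arc of $\partial S$ on its boundary. Both follow from the way $N(\tau)$ sits in $S$ --- disjoint from $\partial S$, with $\partial N$ a collection of circles separating $N$ from $\overline{S-N}$ --- so I would make sure these facts about the tiling $\mathcal R$ are in place before invoking them.
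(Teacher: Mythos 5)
Your proposal is correct and follows essentially the same route as the paper's proof: endpoints lie on $\partial S$, hence in peripheral complementary regions; no two complementary regions are adjacent across an edge of $\partial\mathcal{R}\setminus\partial S$ since the tie neighbourhood separates them, ruling out $\len(\alpha)=2$; and for $\len(\alpha)\geq 3$ the first and last snippets have exactly one endpoint on $\partial S$, which is the [Start or end in peripheral annulus] case of efficiency. You spell out a little more explicitly why the endpoint-containing region is a peripheral annulus and why no edge separates two complementary regions, but the underlying argument and its structure coincide with the paper's.
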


\begin{proof}
For any properly immersed arc $\alpha \subset S$, $\alpha(0)$ and $\alpha(1)$ lie in $\partial S$. As any component of $\partial S$ is contained in the boundary of a peripheral complementary region $R \in \mathcal{R}_\textrm{comp}$, this implies that the first and last snippet of $\alpha$ must lie in peripheral complementary regions. As $\alpha$ is transverse to $\partial \mathcal{R}$, no two adjacent snippets of $\alpha$ can lie inside the same region $R \in \mathcal{R}$. As any two complementary regions of $N$ are separated by the tie neighbourhood, this implies that $\len(\alpha)=1$ or $\len(\alpha)\geq 3$.
If $\len(\alpha)\geq 3$, then the snippets containing $\alpha(0)$ and $\alpha(1)$ must both intersect $\partial N$ and $\partial S$, and hence are in efficient position.
\end{proof}

\begin{lem}\label{Bad snippet types in long enough curves and arcs}
Suppose that $\alpha \subset S$ is a properly immersed, snippet-decomposed arc or curve in $S$. If $\len(\alpha)>1$, then any bad snippet of $\alpha$ is of one of the following two kinds:
\begin{itemize}
\item a bigon snippet, that is of type $\mathbb{B}(h,h)$, $\mathbb{B}(t,t)$, $\mathbb{S}(h,h,0)$, $\mathbb{S}(t,t,0)$, $\mathbb{S}(v,v,0)$, $\mathbb{S}(t,v,1)$, $\mathbb{S}(t,t,2)$, $\mathbb{R}(h,h)$, or $\mathbb{R}(v,v)$.
\item a trigon snippet, that is of type $\mathbb{B}(h,t)$, $\mathbb{S}(h,t,1)$, $\mathbb{S}(h,v,2)$, $\mathbb{S}(h,t,3)$, or $\mathbb{R}(h,v)$.
\end{itemize}
\end{lem}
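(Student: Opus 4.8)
The plan is to read off the conclusion from the complete classification of bad snippets already established, using the hypothesis $\len(\alpha)>1$ only to discard the three classes of bad snippets that are not bigon or trigon snippets. By Corollary \ref{classification of bad snippets}, every bad snippet $a$ of $\alpha$ is either a peripheral curve, a \emph{trivial snippet} (type $\mathbb{B}(\mathring R,\mathring R)$, $\mathbb{S}(\mathring R,\mathring R,0)$, or $\mathbb{R}(\mathring R,\mathring R)$), an \emph{inessential bigon snippet} (type $\mathbb{R}(\partial S,\partial S)$), a \emph{bigon snippet}, or a \emph{trigon snippet}; moreover the lists of types of bigon and trigon snippets given in Corollary \ref{classification of bad snippets} coincide with the two lists in the present statement. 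Hence it suffices to show that, when $\len(\alpha)>1$, no snippet of $\alpha$ can be a peripheral curve, a trivial snippet, or an inessential bigon snippet.

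First I would eliminate peripheral curves and trivial snippets. Each of these has empty boundary, so as a snippet it is necessarily of the form $a\colon(S^1,\emptyset)\to(R,\partial R)$. Since the snippet decomposition of $\alpha$ is obtained by cutting the domain of $\alpha$ at the (transverse) intersection points of $\alpha$ with $\partial\mathcal{R}$, and since consecutive snippets are glued precisely along such cut points, a snippet with empty boundary can occur only if $\alpha$ meets $\partial\mathcal{R}$ nowhere; in that case $\alpha$ consists of this single snippet and $\len(\alpha)=1$, contrary to hypothesis. (Equivalently: if $\len(\alpha)\geq 2$, then every snippet of $\alpha$ is the restriction of $\alpha$ to a proper subinterval of its domain, hence has nonempty boundary.)

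Finally I would eliminate inessential bigon snippets. Such a snippet $a$ satisfies $\partial a\subset\partial S$. Because $\alpha$ is a \emph{proper} arc or curve, its preimage of $\partial S$ is exactly the boundary of its domain: $\alpha^{-1}(\partial S)=\partial I=\{0,1\}$ when $\alpha$ is an arc, and $\alpha^{-1}(\partial S)=\emptyset$ when $\alpha$ is a curve. In the curve case this already rules out $\partial a\subset\partial S$. In the arc case, both points of $\partial a$ must then be endpoints of $\alpha$; writing $a$ as the restriction of $\alpha$ to a subinterval $[t_j,t_{j+1}]$ of $[0,1]$, the two distinct values $\alpha(t_j),\alpha(t_{j+1})\in\partial S$ force $t_j=0$ and $t_{j+1}=1$, so $a$ is all of $\alpha$ and again $\len(\alpha)=1$, a contradiction (this is also immediate from Lemma \ref{No arcs of length 2}). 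Combining the three exclusions with Corollary \ref{classification of bad snippets} leaves exactly the bigon and trigon snippets of the stated types, which completes the proof. There is no serious obstacle here; the only point requiring care is the last step, where one must invoke properness of $\alpha$ — rather than mere immersedness — to see that $\alpha$ meets $\partial S$ only at its endpoints.
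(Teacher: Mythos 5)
Your proof is correct and takes the same route as the paper, which states only ``This follows from the classification of bad snippets in Corollary \ref{classification of bad snippets}''; you have simply made explicit the elimination of the three excluded kinds (peripheral curves and trivial snippets via empty boundary forcing $\len(\alpha)=1$, inessential bigon snippets via properness forcing $\partial a\subset\partial S$ to coincide with the endpoints of $\alpha$). One tiny phrasing nit: in the last step you should speak of the two distinct \emph{parameter} values $t_j,t_{j+1}$ lying in $\alpha^{-1}(\partial S)=\{0,1\}$ rather than of ``two distinct values $\alpha(t_j),\alpha(t_{j+1})$,'' since the image points need not be distinct — but this does not affect the argument.
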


\begin{proof}
This follows from the classification of bad snippets in Corollary \ref{classification of bad snippets}.
\end{proof}

\begin{lem}\label{Efficient position for curves implies essential and non-peripheral}
Suppose that $N=N(\tau)$ is a tie neighbourhood of a large train track $\tau \subset S$. If $\alpha$ is an immersed curve which is in efficient position with respect to $N$, then $\alpha$ is essential and nonperipheral in $S$.
\end{lem}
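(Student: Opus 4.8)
The plan is to argue by contradiction: assume $\alpha$ is in efficient position with respect to $N$ yet is inessential or peripheral, and derive the existence of a bad snippet of $\alpha$, contradicting the hypothesis via Lemma \ref{Bad snippets in the tie neighbourhood} and Lemma \ref{Bad snippets in complementary regions}. Since being in efficient position is a condition on individual snippets --- hence local --- it is inherited by covers. If $\alpha$ is inessential I pass to the universal cover $\widetilde S$: the chosen lift $\widetilde\alpha$ is a closed immersed loop bounding a singular disc $\widetilde F\colon (D^2,\partial D^2)\to(\widetilde S,\widetilde\alpha)$. If instead $\alpha$ is peripheral, freely homotopic to $\partial_0^{\,k}$ for some boundary component $\partial_0$ and some $k\neq 0$, I pass to the annular cover dual to $\langle\partial_0\rangle$; there the lift $\widehat\alpha$ and the relevant lift of $\partial_0$ cobound an immersed annulus. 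In either case the tie neighbourhood, the tiling $\mathcal R$, and the snippet decomposition all lift, and $\widetilde\alpha$ is in efficient position with respect to the lifted tie neighbourhood.

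I would then put the bounding surface in good position, choosing $\widetilde F$ transverse to the lifted one-skeleton $\widetilde{\partial\mathcal R}$ and with $\widetilde F^{-1}(\widetilde{\partial\mathcal R})$ of minimal complexity; minimality removes interior circle components (an innermost one maps into a single tile and can be pushed across, using that complementary discs are discs and peripheral annuli are $\pi_1$--injective in $S$). The graph $\widetilde F^{-1}(\widetilde{\partial\mathcal R})$ then cuts $D^2$ (respectively the annulus) into faces, each carried by $\widetilde F$ into a single tile, and --- since $\alpha$ crosses $\partial\mathcal R$ transversely, so the boundary of the bounding surface is smooth --- additivity of the index gives $\sum_T\ind(T)=\chi(D^2)=1$ in the inessential case and $\sum_T\ind(T)=0$ in the peripheral case. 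The point to be careful about is that efficient position does \emph{not} by itself force every face to have non-positive index, so one cannot simply read off a positive-index face. Instead I would examine an \emph{outermost} face $T$: one meeting $\partial D^2$ in a single arc $\beta$ and the interior graph in a single arc $\gamma$. By minimality $\widetilde F|_{\overline T}$ is an embedding, $\widetilde F(\beta)$ is a genuine snippet $a$ of $\widetilde\alpha$, and $\widetilde F(T)$ is precisely one of the two regions into which $a$ splits its tile; hence, by efficiency of $a$ and Lemmas \ref{Bad snippets in the tie neighbourhood}--\ref{Bad snippets in complementary regions}, $\ind(\widetilde F(T))\le 0$, and the opposite region is non-positive as well. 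Carrying out this outermost analysis systematically over the whole dual tree of faces, and playing it against the index identity above, is what I expect to pin down a snippet that must in fact be bad --- for the peripheral case supplemented by the winding-number data of Definition \ref{Page: winding number}, since there the total index is only $0$ and one loses a strict inequality: the face abutting the boundary lift of $\partial_0$ sits in a peripheral annulus where efficiency forces $|\wind|\ge 2$ on neighbouring snippets, and summing winding numbers around the annulus supplies the missing strictness. Should $\widetilde\alpha$ instead turn out to lie in a single tile, Corollary \ref{classification of bad snippets} says it is a trivial or peripheral snippet, which is bad, so again the hypothesis fails.

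The hard part will be precisely the globalisation just indicated: efficiency is only a per-snippet constraint, so one must leverage the index identity $\sum_T\ind(T)=\chi$ against the fact that each region cut off by a single snippet is non-positive, and this requires careful innermost/outermost bookkeeping on a minimal bounding surface (together with winding numbers in the peripheral case) rather than a one-line Gauss--Bonnet comparison. Conceptually this is the combinatorial shadow of the classical statement that a closed curve quasi-transverse to a measured foliation all of whose singularities have negative index is essential and non-peripheral; the argument above is the translation of that principle into the snippet formalism.
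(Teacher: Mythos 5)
Your outline --- pass to a cover, tile the bounding surface by $\widetilde{\mathcal R}$, and set the index identity against snippet-wise efficiency --- is the right strategy and is the same one the paper uses. But the paragraph in which you flag the difficulty is precisely where your proof is missing: you observe that efficiency is only a per-snippet constraint, assert that an unspecified ``outermost analysis over the dual tree'' will eventually locate a bad snippet, and stop. That step is the mathematical content of the lemma, and the singular-disc set-up you chose makes it strictly harder than it needs to be, since a face of $D^2$ need not embed, so its corner count in the domain need not correspond to a region in a single tile of $\widetilde S$ cut off by a single snippet, and minimality-of-complexity arguments for immersed discs amount to reproving a Hass--Scott type statement rather than quoting one.

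The paper closes exactly this gap in two moves. First it reduces to embedded configurations: if $\widetilde\alpha$ is embedded it bounds an embedded disc of index $1$; if not, \cite[Lemma 1.1]{HassScott} supplies an embedded monogon of index $3/4$. Second, once embedded, the danger you raise evaporates: a subtile $T$ of $D\cap\widetilde R$ abutting $m$ distinct snippet arcs has at least $2m$ outward corners at their endpoints, so $\ind(T)\le 1-m/2\le 0$ automatically once $m\ge 2$; hence a positive-index subtile is cut off by a \emph{single} snippet, and Lemma \ref{Bad snippets in the tie neighbourhood} or Lemma \ref{Bad snippets in complementary regions} then certifies that snippet is bad. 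For the peripheral case the paper also stays in $\widetilde S$: it forms the bi-infinite strip between $\widetilde\alpha$ and a lift $\widetilde\beta$ of the boundary component, cuts out an index-$0$ rectangle $A$ using a transversal $\delta$ and its translate under the deck transformation, rules out $\widetilde\alpha$ meeting the peripheral region $\widetilde R$ (that would yield an embedded bigon of index $\ge 1/2$ and again force a bad snippet), and then shows $A$ must contain a tile of index $\le\ind(R)<0$, contradicting $\ind(A)=0$. Your annular-cover route with a closing winding-number count is a genuinely different plan, but as written the sentence ``summing winding numbers around the annulus supplies the missing strictness'' is an expectation rather than an argument; in both branches the proposal is a sketch of a strategy, not a proof.
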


\begin{proof}
Suppose that $\alpha$ is a snippet-decomposed curve that is in efficient position with respect to $N$. Let $\widetilde{\alpha} \subset \widetilde{S}$ be a preimage of $\alpha$ in the universal cover of $S$.

First, assume that $\alpha$ is inessential.  As $\alpha$ represents a trivial conjugacy class in $\pi_1(S)$, we know that $\widetilde{\alpha}$ is a closed curve in $\widetilde{S}$. 
Let us first assume that $\widetilde{\alpha}$ is embedded. Thus, $\widetilde{\alpha}$ bounds an embedded disk $D \subset \widetilde{S}$. We recall that $\mathcal{R}$ provides a tiling of $S$ by rectangles, polygons, and peripheral annuli. Hence, $\widetilde{\mathcal{R}}$ provides a tiling of $\widetilde{S}$. The induced tiling of $D$ consists of lifts of tiles in $\mathcal{R}$ and lifts of subsets of tiles in $\mathcal{R}$ cut off by snippets of $\alpha$. We know that $\ind(D)=1$. Therefore, at least one of the tiles of $D$, in the following called $T$, must have positive index. Since tiles $R \in \mathcal{R}$ have non-positive index, they lift to non-compact tiles or tiles of non-positive index. Thus, $T$ must be a subtile of some $\widetilde{R} \in \widetilde{\mathcal{R}}$ that is cut off by the lift of a snippet $a \subset \alpha$. Since $T$ is embedded and has positive index, we know that $a$ must be embedded and cuts off a region of positive index inside the tile $R \in \mathcal{R}$. This contradicts our assumption on $\alpha$ being in efficient position.

Let us now assume that $\widetilde{\alpha}$ is not embedded. Since $\widetilde{S}$ is simply connected, we know that one of the regions bounded by $\widetilde{\alpha}$ must be an embedded monogon following \cite[Lemma 1.1.]{HassScott}. Since the index of this monogon is greater than or equal to $3/4$, a contradiction is derived as in the previous paragraph.

Secondly, let us assume that $\alpha$ is peripheral. If $\widetilde{\alpha}$ is not embedded, \cite[Lemma 1.1.]{HassScott} implies that $\widetilde{\alpha}$ bounds an embedded monogon. We derive a contradiction as in the previous cases. Therefore, let us assume that $\widetilde{\alpha}$ is embedded. Thus, together with a lift $\widetilde{\beta}$ of a component $\beta$ of $\partial S$, the arc $\widetilde{\alpha}$ bounds a bi-infinite strip. Let $\delta \subset \widetilde{S}$ be an embedded arc connecting $\widetilde{\alpha}$ with $\widetilde{\beta}$. Without loss of generality, we may assume that $\delta$ is contained inside the strip bounded by $\widetilde{\beta}$ and $\widetilde{\alpha}$. After applying a small isotopy, we can further assume that $\delta$ meets $\widetilde{\alpha}$, $\widetilde{\beta}$, as well as $\widetilde{\partial \mathcal{R}}$ perpendicularly. Then $\delta$ and its image under the covering transformation corresponding to $\alpha$ cut an embedded rectangle $A \subset \widetilde{S}$ out of the bi-infinite strip bounded by $\widetilde{\alpha}$ and $\widetilde{\beta}$. As $\widetilde{\alpha}$ and $\widetilde{\beta}$ do not have any corners, $A$ has exactly four outward-pointing corners. Thus, we know that $\ind(A)=0$. Let $R \in \mathcal{R}_\textrm{com}$ be the peripheral complementary region of $N$ whose boundary contains $\beta$. If $\widetilde{\alpha} \subset \widetilde{R}$, then $\alpha$ is a snippet-decomposed curve of length one, thus is not in efficient position. 
If $\widetilde{\alpha} \cap \partial \widetilde{R} \neq \emptyset$, then $\widetilde{\alpha}$ and $\partial \widetilde{R}$ co-bound an embedded bigon lying outside of $\widetilde{R}$ of index greater than or equal to $1/2$. 
Hence, this bigon must have at least one positive-index tile, which contradicts the assumption that all tiles cut off by snippets of $\widetilde{\alpha}$ have non-positive index. Thus, we can assume that $\widetilde{\alpha} \cap \widetilde{R} = \emptyset$. Therefore, $A$ must contain a tile of index smaller or equal to the index of $R$. 
Snippets of $\widetilde{\alpha}$ cut off non-positive tiles. By additivity of the index, the index of tiles of $A$ cut off by the arc $\delta$ and the index of tiles of $A$ cut off by the image of $\delta$ under the covering transformation add up to the index of the entire tile (potentially cut off by $\widetilde{\alpha}$ and $\widetilde{\beta}$), which must be non-positive. Since $R$ has strictly negative index, this implies that $A$ must have strictly negative index as well. This yields a contradiction.
\end{proof}

\begin{lem}\label{Efficient position for arcs implies essential}
Suppose that $N=N(\tau)$ is a tie neighbourhood of a large train track $\tau \subset S$. If $\alpha$ is a properly immersed arc which is in efficient position with respect to $N$, then $\alpha$ is essential in $S$.
\end{lem}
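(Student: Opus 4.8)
The plan is to argue by contradiction, mirroring the peripheral case of the proof of Lemma~\ref{Efficient position for curves implies essential and non-peripheral}. Suppose $\alpha$ is \emph{in}essential, so that $\alpha$ is homotopic, relative to its endpoints, into $\partial S$; in particular $\alpha(0)$ and $\alpha(1)$ lie on a single boundary component $\beta$ of $S$. Note first that a length-one arc is a single snippet of type $\mathbb{R}(\partial S,\partial S)$, which is bad, so efficient position forces $\len(\alpha)\geq 2$, hence $\len(\alpha)\geq 3$ by Lemma~\ref{No arcs of length 2}, and the snippets carrying $\alpha(0)$ and $\alpha(1)$ lie in peripheral annuli and meet $\partial S$ perpendicularly. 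Lifting the boundary homotopy to the universal cover $\widetilde S$, both endpoints $p_0,p_1$ of a lift $\widetilde\alpha$ lie on one lift $\widetilde\beta$ of $\beta$, and $\widetilde\alpha$ is homotopic rel endpoints to the subarc $\widetilde\gamma\subset\widetilde\beta$ joining $p_0$ to $p_1$. Since $\alpha$ is proper, $\widetilde\alpha\cap\partial\widetilde S=\{p_0,p_1\}$, and by the previous remark $\widetilde\alpha$ crosses $\widetilde\beta$ perpendicularly there.

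Next I would split into two cases according to whether $\widetilde\alpha$ is embedded, exactly as in the cited lemma. If $\widetilde\alpha$ is \emph{not} embedded, then by \cite[Lemma~1.1.]{HassScott} it bounds an embedded monogon $M\subset\widetilde S$ with $\ind(M)\geq 3/4>0$. Restricting the tiling of $\widetilde S$ by lifts of the elements of $\mathcal{R}$ to $M$, each tile that appears is either a full lift of some $R\in\mathcal{R}$ or a proper subtile of such a lift cut off by a snippet of $\widetilde\alpha$. Lifts of branch rectangles, switch rectangles, and disc complementary regions are compact with non-positive index, while lifts of peripheral annuli are non-compact and so cannot lie inside the compact surface $M$; hence, by additivity of the index, some subtile cut off by a snippet of $\alpha$ has positive index. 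This contradicts Lemmas~\ref{Bad snippets in the tie neighbourhood} and~\ref{Bad snippets in complementary regions}, since every snippet of $\alpha$ is in efficient position and thus cuts off no region of positive index.

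If instead $\widetilde\alpha$ is embedded, then $\widetilde\alpha\cup\widetilde\gamma$ is an embedded circle in $\widetilde S$ meeting itself only at the two perpendicular corners $p_0,p_1$, and by Jordan--Schoenflies it bounds an embedded disc $D\subset\widetilde S$. Because $\partial S$ constitutes a single smooth side of the peripheral annulus containing it, $\widetilde\gamma$ contains no point of $\widetilde{\partial^2\mathcal{R}}$, so the only corners of $D$ are $p_0$ and $p_1$; whatever their signs, $\ind(D)=\chi(D)-\tfrac14|\partial^2 D^-|+\tfrac14|\partial^2 D^+|\in\{1/2,1,3/2\}$, hence $\ind(D)>0$. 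Running the same restriction-of-the-tiling and index-additivity argument on $D$ as in the non-embedded case again produces a snippet of $\alpha$ that is in efficient position yet cuts off a region of positive index, a contradiction. Therefore $\alpha$ is essential.

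The main obstacle I anticipate is the bookkeeping in the index-additivity step rather than any deep difficulty: one must check carefully that the decomposition of $M$ (respectively $D$) really consists only of full lifts of disc tiles, all of non-positive index, together with subtiles cut off by snippets of $\alpha$, and that the subtiles coming from the snippets that meet $\partial S$ near $p_0$ and $p_1$ — which live in peripheral annuli — still have non-positive index; this is precisely where the hypothesis that complementary regions of a large train track have negative index is used. A minor secondary point is confirming the corner count for $D$, i.e.\ that the two perpendicular corners at $p_0,p_1$ cannot drag $\ind(D)$ down to zero, which the case analysis above settles.
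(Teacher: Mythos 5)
Your proof is correct and follows the paper's approach closely: argue by contradiction, lift to the universal cover, invoke Hass--Scott for the non-embedded case, and use additivity of index over the induced tiling of the positive-index region bounded by $\widetilde{\alpha}$ (together with a subarc of $\partial\widetilde{S}$ in the embedded case) to exhibit a positive-index subtile cut off by a snippet, contradicting efficient position. One small correction to your final caveat: the non-positivity of the index of the exceptional tile meeting $\widetilde{\gamma}$ follows from counting its outward-pointing corners (at least four, namely $p_0,p_1$ and the two points where the lifted first and last snippets exit the lifted peripheral annulus, which exist because $\len(\alpha)\geq 3$), not from the hypothesis that complementary regions have negative index.
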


\begin{proof}
Suppose that $\alpha$ is a properly immersed, snippet-decomposed arc in efficient position. Let $\widetilde{\alpha} \subset \widetilde{S}$ be one preimage of $\alpha$ in the universal cover of $S$. Suppose that $\alpha$ is inessential. If $\widetilde{\alpha}$ is not embedded, then $\widetilde{\alpha}$ bounds an embedded monogon following \cite[Lemma 1.1.]{HassScott}. As in the proof of the previous lemma, this contradicts our assumption that $\alpha$ is in efficient position.
Hence, let us assume that $\widetilde{\alpha}$ is embedded. Suppose that $R \in \mathcal{R}_\textrm{com}$ is the peripheral complementary region that contains the boundary component which $\alpha$ is homotopic into. Since $\len(\alpha)>2$, we know that $\widetilde{\alpha}$ bounds an embedded region of index at least $1/2$ with a subset $\widetilde{\partial R}$. This implies that there must be a positive-index subtile of $\widetilde{\mathcal{R}}$ cut off by a lift of a snippet $a \subset \alpha$. Again, this contradicts our assumption that all snippets of $\alpha$ are in efficient position and thus cut off only non-positive regions.
\end{proof}

We recall that the goal of this thesis is to show the converse to the previous two lemmas, that is, that essential and non-peripheral arcs and curves in $S$ can be homotoped into efficient position. We do this by constructing an explicit algorithm that homotopes essential and non-peripheral arcs and curves into efficient position and runs in polynomial-time in the length of its input.

\section{Algorithms and Python notation for snippet-decomposed arcs and curves}

We end this chapter by introducing algorithms and giving some further notation for snippet-decomposed arcs and curves. This notation is based on standard conventions within the Python programming language. For a reference, we refer the reader to \cite{Python}.

\subsection{Conventions on algorithms}
In this thesis, an algorithm consists of
\begin{itemize}
\item the name of the algorithm,
\item a brief comment on the purpose of the algorithm,
\item a specification of the input and output of the algorithm, and
\item a block of pseudo-code.
\end{itemize}
For an example we refer the reader to Algorithm \ref{TrigArc} on page \pageref{TrigArc}.
Following \cite[1]{IntroToAlgo}, we say that an algorithm is \emph{correct} if, for every input instance, it halts with the correct output. 

\begin{rem}\label{Standard conventions algor}
By convention, an algorithm should verify that the input meets the specified requirements for input instances of the algorithm. For all algorithms presented in this thesis, the input consists of a surface $S$, a tie neighbourhood $N=N(\tau)\subset S$ of a large train track $\tau$ in $S$, and a snippet-decomposed arc or curve $\alpha$. From a computer science point of view, it is most convenient to assume that the tie neighbourhood is given via the collection of branch and switch rectangles with corresponding identifications. Likewise, the surface is assumed to be given via its polygonal decomposition, that is, as a collection of rectangles, polygons, and peripheral annuli with the corresponding gluings. Then, verifying the requirements for input instances are routine checks and can be done in $\mathit{O}(|\chi(S)|\cdot \len(\alpha))$ time. Thus, these checks will be omitted from the pseudocode of the algorithm. 

In the course of this thesis, algorithms often perform ``look-ups'' in data structures of size $\mathit{O}(|\chi(S)|)$ (see for example page \pageref{Required time for trigon homotopies}, Remark \ref{Required time for trigon homotopies}). Examples of such data structures include the list of all sides of regions in $\mathcal{R}$ or lists of tuples of sides that yield bad snippets. In general, the length of the arc or curve is a greater contributor to the running time of the algorithms than $\mathit{O}(|\chi(S)|)$. Therefore, we do not aim to give best-possible bounds on the number of operations required to perform such ``look-up'' operations. Instead, we simply assume that they can be done in $\mathit{O}(|\chi(S)|)$ time without worrying about the exact computational models.
\end{rem}

\subsection{Python notation}

Let $\alpha$ be a snippet-decomposed arc or curve of length $k=\len(\alpha)$. Recall that we parametrize $S^1$ as $[0,1]/_{\sim}$. We always consider snippet-decomposed arcs and curves to be oriented. We say that the \textit{first snippet} of $\alpha$ is the snippet that begins with $\alpha(0)$ and that the \textit{last snippet} of $\alpha$ is the snippet that ends with $\alpha(1)$. Suppose that $0 \leq i < j \leq k$. We set $\alpha[i:j]$ to be the subarc of $\alpha$ that begins with the $(i+1)$-th snippet of $\alpha$ and ends with the $j$-th snippet of $\alpha$.
For $ 0 \leq i \leq k$ we set $\alpha[i:i]$ to be the empty subarc. We abbreviate $\alpha[i:i+1]$ by $\alpha[i]$, and note that this gives us the $(i+1)$-th snippet of $\alpha$.
By convention, negative indices count from the end of $\alpha$, that is $\alpha[-j:-i]=\alpha[k-j:k-i]$.

Suppose that $\beta, \gamma \subset S$ are two adjacent subarcs of $\alpha$ such that $\beta(1)=\gamma(0)$. Then, we denote their \textit{concatenation} by $\beta \cdot \gamma$. Hence, $\alpha = \alpha[0 : i] \cdot \alpha[i : k]$ for  any $ 0 \leq i \leq k$. To address all snippets of $\alpha$ from its beginning up to the $i$-th snippet, we use the notation $\alpha[:i]=\alpha[0:i]$. Similarly, $\alpha[i:]=\alpha[i:k]$ denotes the subarc of $\alpha$ starting with the $(i+1)$-th snippet and ending with the last snippet of $\alpha$. If $\alpha$ is a curve, we consider indexing to be \textit{circular}. That is, for $0 \leq i,j < \len(\alpha)$, we set $\alpha[i:\len(\alpha)+j]=\alpha[i:] \cdot \alpha[:j]$.

Recall that snippets are parametrized by the unit interval. Let $0 \leq \epsilon \leq \delta \leq 1$. As $\alpha[i]=\alpha[i:i+1]$, we set $\alpha[i+\epsilon:i+\delta]=\alpha[i]\big|[\epsilon,\delta]$ in an abuse of notation.\label{Page: alpha subsnippet} We remark that with this convention, $\alpha[i]=\alpha[i:i+\epsilon] \cdot \alpha[i+\epsilon:i+\delta] \cdot \alpha[i+\delta:i+1]$.

\begin{defn}\label{Page: alpha trim}
Suppose that $\alpha \subset S$ is a snippet-decomposed arc or curve in $S$. If $\alpha$ is an arc, we say that the \textit{inside} of $\alpha$ consists of the snippet-decomposed arc $\alpha_\textrm{trim}=[1:\len(\alpha)-1]$. Thus, if $\alpha$ is an arc and $\len(\alpha) < 3$, the inside of $\alpha$ is empty. If $\alpha$ is a curve, we set $\alpha_\textrm{trim}=\alpha$.
\end{defn}

\newpage
\chapter{Local homotopies and trigon arcs}

In this chapter, we present the most important algorithm of this thesis, the routine \texttt{TrigArc}. Almost every further algorithm is built upon it, in particular, the algorithm we use for the proof of Theorem \ref{Polynomial time algorithm}. As input, the algorithm \texttt{TrigArc} takes an arc $\alpha$ which contains exactly one bad snippet in its inside. This bad snippet is required to be a trigon snippet. The output is an arc $\alpha'$ homotopic, relative its endpoints, to $\alpha$ such that $\alpha'$ does not contain any bad snippets in its inside. In other words, $\alpha'[1:-1]$ is in efficient position. This is achieved by applying a series of local homotopies depending on the type of the initial trigon.

The outline of this chapter is a follows: First, we introduce the notion of a ``turning direction'' for certain snippets and define \textit{horizontal} and \emph{vertical duals}. Both concepts are important throughout the remainder of this thesis. Secondly, we define two further notions of length for arcs and curves, the \emph{corner length} and the \emph{reduced corner length}, and study how they are related to the snippet length of an arc or curve. Thirdly, we introduce a family of local homotopies and discuss the effects they have on arcs or curves with unique trigons in their insides. Here, we put a special emphasis on the various notions of length and their changes under the trigon homotopies. We then present the algorithm \texttt{TrigArc} and use our prior observations to give bounds on its running time and the length of its output arc. We close this chapter by defining and analysing the algorithm \texttt{TrigCurve}, which homotopes snippet-decomposed curves with a single bad snippet of trigon type into efficient position.

Throughout this chapter, we fix a surface $S = S_{g,b}$ satisfying $\xi(S)=3g-3+b \geq 1$. We further fix a large train track $\tau \subset S$ and a tie neighbourhood $N=N(\tau)$ of $\tau$ in $S$. We recall that we do not distinguish between snippets and their strong snippet homotopy classes unless otherwise stated. For any snippet $a \subset R \in \mathcal{R}$, we always assume that $a$ has minimal self-intersection, intersects $\partial R$ perpendicularly and misses $\partial^2 \mathcal{R}$. We further recall that arcs and curves in $S$ are assumed to be self-transverse and transverse to $\partial \mathcal{R}$. Moreover, if $\alpha$ is an arc, we assume that $\alpha(0)$ and $\alpha(1)$ lie in $\partial \mathcal{R}$. Thus, arcs and curves in $S$ admit canonical decompositions into snippets.

\section{Right/left turning snippets and horizontal/vertical duals}

Recall that snippets are parametrized and are therefore canonically oriented. Hence, we can talk about regions cut off by the snippet on its right- or left-hand side.

\begin{defn}
Fix a region $R \in \mathcal{R}=\mathcal{R}_\textrm{tie} \cup \mathcal{R}_\textrm{comp}$. Suppose that $a \subset R$ is an embedded snippet that cuts off a region $T$ of $R$ of positive or non-negative index, where $R \in \mathcal{R}_\textrm{tie}$ or $R \in \mathcal{R}_\textrm{comp}$ respectively. We further suppose that the boundary of this region contains at least one point of $\partial^2 \mathcal{R}$. We say that the snippet $a$ is \textit{turning right} if it cuts off the region $T$ on its right-hand side. Similarly, we say that $a$ is \textit{turning left} if it cuts off the region $T$ on its left-hand side (see Figures \ref{Right turning trigon}-\ref{Left turning trigon}).
\end{defn}

\begin{figure}[htbp] 
  \begin{minipage}[b]{0.49\linewidth}
    \centering
\begin{tikzpicture}[scale=0.35]
\draw [very thick] (-3,1.5) rectangle (4,-3);
\draw [very thick](3.5,0) -- (4.5,0);
\draw [very thick](3.5,-1.5) -- (4.5,-1.5);
\draw [densely dashed, blue, very thick] plot[smooth, tension=.7] coordinates {(0.18,1.48)  (0.96,-1.83) (4,-2.5)};
\draw [blue, fill](0.26,0.32) -- (0.105,0.03) -- (0.51,0.07) -- cycle;
\end{tikzpicture}
    \caption{A right-turning trigon inside a switch rectangle.}
    \label{Right turning trigon}
  \end{minipage} 
\begin{minipage}[b]{0.49\linewidth}
\centering
\begin{tikzpicture}[scale=0.25]
\draw [very thick](-6,-3) node (v1) {} -- (-5,-5) node (v2) {};
\draw [very thick] plot[smooth, tension=.7] coordinates {(v1) (-2,-1) (-1,0.5)};
\draw [very thick] plot[smooth, tension=.7] coordinates {(v2) (1,-4) (7,-5)};
\draw [very thick](7,-5) -- (8,-3) node (v3) {};
\draw [very thick] plot[smooth, tension=.7] coordinates {(v3) (4,-1) (3,0.5)};
\draw [very thick, densely dashed, blue] plot[smooth, tension=.7] coordinates {(-5.6,-3.75) (-1.55,-2.35) (0.6,-2.6) (1,-4)};
\draw [blue, fill](-3.26,-2.42) -- (-3.7,-3.02) -- (-2.98,-3.1) -- cycle;
\end{tikzpicture}
 \caption{A left-turning trigon inside a complementary region.} 
 \label{Left turning trigon}
\end{minipage}
\end{figure}

It follows from the additivity of the index and the definition of a large train track that a snippet cannot be turning left and right at the same time.

\begin{defn}
Suppose that $a \subset R$ is a dual snippet inside a complementary region $R \in \mathcal{R}_\textrm{comp}$. Suppose further that $a$ is embedded and cuts off a region of $R$ of index zero. We say that $a$ is a \textit{horizontal dual} if $\partial a \subset \partial_v N$. Similarly, we say that $a$ is a \textit{vertical dual} if $\partial a \subset \partial_h N$ (see Figures \ref{Right horizontal dual}-\ref{Left vertical dual}).
\end{defn}

\begin{figure}[htbp] 
\begin{minipage}[b]{0.49\linewidth}
\centering
\begin{tikzpicture}[scale=0.25]
\draw [very thick](-6,-3) node (v1) {} -- (-5,-5) node (v2) {};
\draw [very thick] plot[smooth, tension=.7] coordinates {(v1) (-2,-1) (-1,0.5)};
\draw [very thick] plot[smooth, tension=.7] coordinates {(v2) (1,-4) (7,-5)};
\draw [very thick](7,-5) -- (8,-3) node (v3) {};
\draw [very thick] plot[smooth, tension=.7] coordinates {(v3) (4,-1) (3,0.5)};
\draw [very thick, densely dashed, blue] plot[smooth, tension=.7] coordinates {(-5.46,-4) (-1.55,-2.35) (2.84,-2.38) (7.5,-3.94)};
\node at (-6.25,-4.25) {\tiny{$v$}};
\draw [blue, fill](1.2,-1.82) -- (1.12,-2.44) -- (1.9,-2.22) -- cycle;
\node at (-3.86,-1.22) {\tiny{$h$}};
\end{tikzpicture}
 \caption{A right horizontal dual.} 
 \label{Right horizontal dual}
\end{minipage}
  \begin{minipage}[b]{0.49\linewidth}
    \centering
\begin{tikzpicture}[scale=0.25]
\draw [very thick](-6,-3) node (v1) {} -- (-5,-5) node (v2) {};
\draw [very thick] plot[smooth, tension=.7] coordinates {(v1) (-2,-1) (-1,0.5)};
\draw [very thick] plot[smooth, tension=.7] coordinates {(v2) (1,-4) (7,-5)};
\draw [very thick](7,-5) -- (8,-3) node (v3) {};
\draw [very thick] plot[smooth, tension=.7] coordinates {(v3) (4,-1) (3,0.5)};
\draw [very thick, densely dashed, blue] plot[smooth, tension=.7] coordinates {(-2.28,-1.32) (-1.55,-2.35) (-1.16,-3.02) (-0.78,-4.1)};
\node at (-6.25,-4.25) {\tiny{$v$}};
\draw [blue, fill](-1.78,-1.96) -- (-1.9,-2.66) -- (-1.14,-2.24) -- cycle;
\node at (-3.86,-1) {\tiny{$h$}};
\end{tikzpicture}
    \caption{A left vertical dual.} 
    \label{Left vertical dual}
  \end{minipage} 
\end{figure}

Combining these two concepts, we obtain \emph{right- or left-turning horizontal or vertical duals}. In the following, we simply call them \emph{right or left horizontal or vertical duals}.

\section{Corner length and reduced corner length}

\begin{defn} \label{maximal side length}
Suppose that $C$ is a component of $\partial_h N$. We set 
\begin{align*}
s(C)= (|C\cap \partial^2 \mathcal{R}|-2) \cdot 2 + 1.
\end{align*}
In other words, $s(c)$ is a weighted count of the components of $C-\partial ^2 \mathcal{R}$, where a component is attributed weight one or three if it is contained in the boundary of a branch or switch rectangle respectively. We further set
\begin{align*}
s=s_N= \max \left\{ s(c) \bigm |
C \textrm{ a component of } \partial_h N \right\},
\end{align*}
and call $s$ the \emph{maximal side length of $N$}.

\end{defn}

\begin{rem} \label{s is O(Euler)}
As $S$ is compact, $\tau$ has a finite number of branches and switches. Therefore, the tie neighbourhood $N=N(\tau)$ is compact and $\partial_h N$ has a finite number of components. Hence, the maximal side length of $N$ is well-defined. We remark that no horizontal side of $R \in \mathcal{R}_\textrm{comp}$ is homeomorphic to a circle. This follows from the fact that complementary regions of tie neighbourhoods of large train tracks must have at least two outwards pointing corners to satisfy the requirements on index. As branch and switch rectangles alternate in the tie neighbourhood and the first and last component of $C-\partial^2 \mathcal{R}$ belongs to the horizontal boundaries of branch rectangles, $s(C)$ is indeed a weighted count of the boundaries of branch and switch rectangles contained in $C$.
We further remark that $s= \mathit{O}(|\chi(S)|)$ by \cite[Corollary~1.1.3]{Penner}. This will be of importance when we wish to obtain complexity bounds for our algorithms.
\end{rem}

\begin{lem}
Suppose that $N=N(\tau)$ is a tie neighbourhood of a large train track $\tau \subset S$. Then $s_N \geq 5$.
\end{lem}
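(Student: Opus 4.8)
The plan is to reduce the inequality to the single fact that $\tau$ has at least one switch, and then to read off an oversized horizontal boundary component directly from the combinatorics of the tie neighbourhood.

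First I would unwind the definition of $s(C)$. For a component $C$ of $\partial_h N$, the quantity $s(C) = (|C \cap \partial^2 \mathcal{R}| - 2)\cdot 2 + 1$ is, by Remark \ref{s is O(Euler)}, exactly the weighted count of the pieces of $C$ (the components of $C - \partial^2 \mathcal{R}$), where a piece lying in a branch rectangle counts $1$ and a piece lying in a switch rectangle counts $3$; moreover those pieces alternate between branch and switch rectangles, the two outermost pieces are branch-rectangle pieces, and $C$ is an arc rather than a circle. So if some component $C$ contains a switch-rectangle piece $\sigma$, then $\sigma$ is not an outermost piece, hence lies in the interior of the arc $C$ and is flanked there by two distinct branch-rectangle pieces $\beta_1$ and $\beta_2$; consequently $s(C) \geq 1 + 3 + 1 = 5$, the three summands being the weights of $\beta_1$, $\sigma$, $\beta_2$. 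Since both horizontal sides of every switch rectangle lie in $\partial_h N$, it therefore suffices to prove that $\tau$ has a switch.

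To prove that $\tau$ has a switch I would argue by contradiction using the index. If $\tau$ had no switches it would be a compact $1$-manifold, so a disjoint union of circles, and $N=N(\tau)$ would be a disjoint union of annuli; then $\partial_v N = \emptyset$, so every complementary region $R \in \mathcal{R}_\textrm{comp}$ would have $\partial R$ a union of circles, would have no corners, and hence would satisfy $\ind(R) = \chi(R)$. As $\tau$ is large, each such $R$ is a disc or a peripheral annulus, forcing $\ind(R) = \chi(R) \geq 0$. On the other hand, additivity of the index and $\ind(N)=0$ give
\[
\chi(S) = \ind(S) = \ind(N) + \sum_{R \in \mathcal{R}_\textrm{comp}} \ind(R) = \sum_{R \in \mathcal{R}_\textrm{comp}} \ind(R),
\]
and $\xi(S) = 3g-3+b \geq 1$ forces $\chi(S) = 2 - 2g - b < 0$, so $\mathcal{R}_\textrm{comp} \neq \emptyset$ and at least one summand is strictly negative, contradicting $\ind(R) \geq 0$. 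Hence $\tau$ has a switch, which would complete the proof. I expect the only real obstacle to be the bookkeeping in the first step: verifying that a switch-rectangle horizontal side genuinely sits in the interior of its (arc) component and is flanked by two distinct branch-rectangle pieces, and carefully matching the corner count $|C \cap \partial^2 \mathcal{R}|$ with the weighted count of pieces; the index computation is routine once additivity and $\ind(N)=0$ are in hand.
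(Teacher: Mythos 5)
Your proof is correct, and its combinatorial core is exactly the paper's: a component $C$ of $\partial_h N$ containing a horizontal side of a switch rectangle (weight three) must also contain the two horizontal branch-rectangle sides flanking it (weight one each), giving $s(C) \geq 1 + 3 + 1 = 5$. Where you add something genuine is in justifying that $\tau$ has a switch at all. The paper dispatches this in one clause --- ``as train tracks are non-trivial by assumption, $N$ contains at least one switch rectangle'' --- which quietly presumes that a non-trivial pretrack cannot be a disjoint union of smooth circles without vertices. You instead derive the existence of a switch from the standing hypotheses: if $\tau$ had no switches then $N$ would be a union of annuli with $\partial_v N = \emptyset$, every complementary region would be cornerless and (by largeness) a disc or peripheral annulus, hence of non-negative index, and additivity of the index together with $\ind(N)=0$ would force $\chi(S) \geq 0$, contradicting $\xi(S) \geq 1$. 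This is slightly longer but removes a definitional ambiguity the paper leaves implicit, and both routes arrive at the same bound. One small remark: once you know each complementary region is cornerless of non-negative index, you could also conclude directly from the definition of a train track (every complementary region has strictly negative index) rather than passing through $\chi(S)$; either way the contradiction is immediate.
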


\begin{proof}
As train tracks are non-trivial by assumption, the tie neighbourhood $N$ contains at least one switch rectangle. Each component of the horizontal boundary of this switch rectangle must be contained in a component $C$ of $\partial_h R_C$ for some complementary region $R_C \in \mathcal{R}_\textrm{comp}$. As no horizontal boundary side is homeomorphic to a circle, branch and switch rectangles alternate in the tie neighbourhood and the first and last component of $C-\partial^2 \mathcal{R}$ must be contained in the horizontal boundaries of branch rectangles, this implies that $|C-\partial^2 \mathcal{R}|\geq 4$. Hence, $s(C) \geq 5$ and the claim follows. 
\end{proof}

\begin{defn}\label{Page: corner length}
Suppose that $a \subset R$ is a snippet inside a complementary region $R \in \mathcal{R}_\textrm{comp}$.
If $a$ is non-peripheral, embedded, and cuts off a region $T$ of $R$ of non-negative index, then $\partial T = a \cup b$ for some set $b \subset (\partial \mathcal{R}-\partial S)$. By $|b|_v$ (respectively $|b|_B$ or $|b|_S$) we denote the number of components of $b- \partial^2 \mathcal{R}$ whose closure is a component of $\partial_v N$ (respectively of the horizontal boundary of a branch or switch rectangle of $N$). We set
\begin{align*}
\len_\textrm{corn}(a)=|b|_v + |b|_B + 3 \cdot |b|_S.
\end{align*}
If $a \subset R$ is peripheral, not embedded, or does not cut off a region of $R$ of non-negative index, we set
\begin{align*}
\len_\textrm{corn}(b)=2s_N.
\end{align*}
For any snippet $a \subset R \in \mathcal{R}_\textrm{comp} $, we call $\len_\textrm{corn}(a)$ the \emph{corner length of the snippet $a$}.
\end{defn}
For some examples of snippets and their respective corner length we refer the reader to Figure \ref{corner length examples}.

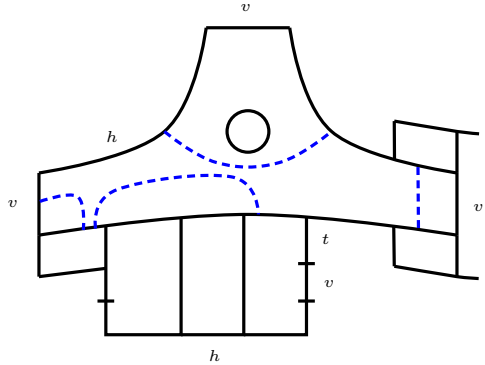
\begin{figure}[htbp] 
\begin{minipage}[b]{0.99\linewidth}
    \centering
\begin{tikzpicture}[scale=0.55]
\draw [very thick](-4,-1.5) {} -- (-4,-3) {};
\draw [very thick] plot[smooth, tension=.7] coordinates { (-4,-3) (1,-2.5) (6,-3)};
\draw [very thick](6,-1.5) node (v3) {} -- (6,-3) node (v1) {};
\draw [very thick] plot[smooth, tension=.7] coordinates {(-4,-1.5) (-1,-0.5) (0,2)};
\draw [very thick] plot[smooth, tension=.7] coordinates {(v3) (3,-0.5) (2,2)};
\draw [very thick] plot[smooth, tension=.7] coordinates {(-4,-4)  (-2.4,-3.8)};
\node at (-4.625,-2.25) {{\tiny{$v$}}};
\node at (-2.25,-0.625) {{\tiny{$h$}}};
\draw [very thick](-2.4,-2.78) -- (-2.4,-5.4) -- (-0.6,-5.4) -- (-0.6,-2.58);
\draw [very thick](-0.6,-5.4)-- (0.9,-5.4)  -- (0.9,-2.5);
\draw [very thick](0.9,-5.4) -- (2.4,-5.4) -- (2.4,-2.56);
\draw [very thick](-2.6,-4.588) -- (-2.2,-4.588);
\draw [very thick](2.2,-3.686) -- (2.6,-3.686);
\draw [very thick](2.2,-4.588) -- (2.6,-4.588);
\node at (2.94,-4.16) {{\tiny{$v$}}};
\node at (2.86,-3.1) {{\tiny{$t$}}};
\node at (0.2,-5.9) {{\tiny{$h$}}};
\draw [very thick](0,2) -- (2,2);
\node at (0.947,2.496) {{\tiny{$v$}}};
\node at (6.5175,-2.3319) {{\tiny{$v$}}};
\draw [very thick](-4,-3) -- (-4,-4);
\draw [very thick] (1,-0.5) ellipse (0.5 and 0.5);
\draw [very thick](6,-1.5) -- (6,-0.5) node (v4) {};
\draw [very thick](4.5057,-1.191) -- (4.5,-0.25) node (v2) {};
\draw [very thick](4.4876,-2.8023) -- (4.5,-3.75) node (v5) {};
\draw [very thick, densely dashed, blue] plot[smooth, tension=.7] coordinates {(-3.9787,-2.1881) (-3.0787,-2.0712) (-2.9384,-2.8777)};
\draw [very thick, densely dashed, blue] plot[smooth, tension=.7] coordinates {(-2.6463,-2.8426) (-2.1437,-1.9076) (0.5912,-1.5921) (1.25,-2.5)};
\draw [very thick, densely dashed, blue] plot[smooth, tension=.7] coordinates {(5.0674,-1.3583) (5.0791,-2.8893)};
\draw [very thick, densely dashed, blue] plot[smooth, tension=.7] coordinates {(-1,-0.5) (0.25,-1.25) (1.75,-1.25) (3,-0.5)};
\draw [very thick] plot[smooth, tension=.4] coordinates {(v2) (v4) (6.5288,-0.5571)};
\draw [very thick](6,-3) -- (6,-4) node (v6) {};
\draw [very thick] plot[smooth, tension=.4] coordinates { (4.5,-3.75) (6,-4)(6.5192,-4.0457)};
\end{tikzpicture}
    \caption{These snippets have corner length zero, one, four, and $2s_N$ respectively.}
    \label{corner length examples}
  \end{minipage}
\end{figure}

\begin{rem}\label{Bounds on corner length}
Following an index argument, any embedded snippet $a \subset R \in \mathcal{R}_\textrm{comp}$ cutting off a region $T$ of $R$ of non-negative index cuts off exactly one such region of $R$. If $\len_\textrm{corn}(a)=0$, then $a$ must be a bigon snippet, trigon snippet, or an inessential curve. This stems from the fact that every simply connected region of non-positive index has at least four outward facing corners, of which at least two must belong to $\partial^2 \mathcal{R}$. Hence, its boundary contains at least one component of $\partial \mathcal{R}- \partial^2 \mathcal{R}$. For any trigon or bigon snippet $a \subset R$, $\len_\textrm{corn}(a)$ is bounded by $s_N -1$. Suppose that $a \subset R$ is a horizontal dual. Then $a$ is parallel to a horizontal boundary side $C$ of $\partial_h R$ and $\len_\textrm{corn}(a)=s(C) \leq s_N $. Similarly, the corner length of any vertical dual is bounded by $2s_N -1$. Thus, $\len_\textrm{corn}(a) \leq 2s_N$ for any snippet $a \subset R \in \mathcal{R}_\textrm{comp}$.
\end{rem}

\begin{defn}
Suppose that $a \subset R$ is a snippet inside a branch or switch rectangle $R \in \mathcal{R}_\textrm{tie}$. We set $\len_\textrm{corn}(a)=1$ or $\len_\textrm{corn}(a)=3$ respectively and call $\len_\textrm{corn}(a)$ the \emph{corner length of the snippet $a$}. The \textit{corner length} $\len_\textrm{corn}(\alpha)$ of any snippet-decomposed arc or curve $\alpha \subset S$ is the sum over the corner lengths of all its snippets. In other words,
\begin{align*}
\len_\textrm{corn}(\alpha)=\sum_{\substack{a \; \textrm{snippet} \\  \textrm{of} \; \alpha }} \len_\textrm{corn}(a).
\end{align*}
\end{defn}

\begin{lem}\label{Snippet vs corner length}
Suppose that $\alpha \subset S$ is a snippet-decomposed arc or curve. Then
\begin{align*}
\len(\alpha) \leq \len_\textrm{corn}(\alpha)+ m_\alpha \leq 2s_N \cdot \len(\alpha) + m_\alpha,
\end{align*}
where $m_\alpha$ is the number of bad snippets of $\alpha$.
\end{lem}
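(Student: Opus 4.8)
The plan is to establish the two inequalities separately, in each case arguing snippet by snippet and then invoking the additivity of $\len_\textrm{corn}$ over the snippets of $\alpha$. Two elementary facts about a single snippet $a$ do all the work: that $0 \le \len_\textrm{corn}(a) \le 2 s_N$ always, and that $\len_\textrm{corn}(a) \ge 1$ whenever $a$ is in efficient position.

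For the right-hand inequality I would check that $\len_\textrm{corn}(a) \le 2 s_N$ for every snippet $a$ of $\alpha$. If $a$ lies in a branch or switch rectangle then $\len_\textrm{corn}(a) \in \{1, 3\}$, and $3 \le 2 s_N$ since $s_N \ge 5$; if $a$ lies in a complementary region the bound $\len_\textrm{corn}(a) \le 2 s_N$ is exactly what is recorded in Remark~\ref{Bounds on corner length}. Summing over the $\len(\alpha)$ snippets of $\alpha$ gives $\len_\textrm{corn}(\alpha) \le 2 s_N \cdot \len(\alpha)$, and adding $m_\alpha$ to both sides yields $\len_\textrm{corn}(\alpha)+m_\alpha \le 2s_N\cdot\len(\alpha)+m_\alpha$.

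For the left-hand inequality the point is that every snippet $a$ of $\alpha$ that is in efficient position has $\len_\textrm{corn}(a) \ge 1$. For $a$ in a tie rectangle this is immediate from $\len_\textrm{corn}(a) \in \{1,3\}$. For $a$ in a complementary region, Remark~\ref{Bounds on corner length} tells us that $\len_\textrm{corn}(a) = 0$ forces $a$ to be a bigon snippet, a trigon snippet, or an inessential curve; by Definition~\ref{Def Efficient position} (see also Corollary~\ref{classification of bad snippets}) each of these is a bad snippet, so an efficient $a$ must have $\len_\textrm{corn}(a) \ge 1$. Since $\alpha$ has exactly $\len(\alpha) - m_\alpha$ snippets in efficient position and all corner lengths are non-negative,
\[
\len_\textrm{corn}(\alpha) \;=\; \sum_{a\text{ a snippet of }\alpha}\len_\textrm{corn}(a) \;\ge\; \sum_{a\text{ efficient}}\len_\textrm{corn}(a) \;\ge\; \len(\alpha)-m_\alpha ,
\]
and rearranging gives $\len(\alpha) \le \len_\textrm{corn}(\alpha) + m_\alpha$.

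The argument is pure bookkeeping, so there is no substantial obstacle; the only step needing a moment's care is the implication ``$a$ in efficient position $\Rightarrow \len_\textrm{corn}(a)\ge 1$'' for snippets in complementary regions, which is just the contrapositive of the classification of corner-length-zero snippets in Remark~\ref{Bounds on corner length} together with the observation that bigon, trigon, and inessential-curve snippets are all bad.
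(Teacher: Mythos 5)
Your proof is correct and is the natural spelling-out of the paper's one-line argument, which simply invokes the definition of corner length and Remark~\ref{Bounds on corner length}. Both hinge on the same two facts: every snippet has $\len_\textrm{corn}(a) \le 2s_N$ (using $s_N \ge 4$ so that $3 \le 2s_N$ covers the tie-rectangle case), and every snippet in efficient position has $\len_\textrm{corn}(a) \ge 1$ (by the contrapositive of the corner-length-zero classification in Remark~\ref{Bounds on corner length}).
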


\begin{proof}
Since $s_N \geq 4$, both bounds follow from the definition of corner length and Remark \ref{Bounds on corner length}.
\end{proof}

Throughout the various algorithms presented in this thesis, one type of subarc is occurring again and again. Its presence turns out to be a valuable indicator for the growth of the arc or curve under local homotopies.

\begin{defn}
Suppose that $\alpha \subset S$ is an arc in efficient position which contains exactly three snippets. Suppose further that $\alpha[0]$ and $\alpha[2]$ are both right or left vertical duals and $\alpha[1]$ is a dual snippet inside a branch rectangle. We then call $\alpha$ a \textit{right or left blocker} respectively.
\end{defn}

For an example of a blocker we refer the reader to Figure \ref{Example of blocker}.

\begin{figure}[htbp] 
\begin{minipage}[b]{0.99\linewidth}
    \centering
\begin{tikzpicture}[scale=0.55]
\draw [very thick](-4,-1.5) {} -- (-4,-3) {};
\draw [very thick] plot[smooth, tension=.7] coordinates { (-4,-3) (1,-2.5) (6,-3)};
\draw [very thick](6,-1.5) node (v3) {} -- (6,-3) node (v1) {};
\draw [very thick] plot[smooth, tension=.7] coordinates {(-4,-1.5) (-1,-0.5) (0,2)};
\draw [very thick] plot[smooth, tension=.7] coordinates {(v3) (3,-0.5) (2,2)};
\draw [very thick] plot[smooth, tension=.7] coordinates {(-4,-4)  (-2.4,-3.8)};
\node at (-7.5771,0.3297) {{\tiny{$v$}}};
\node at (-1.4691,-0.2399) {{\tiny{$h$}}};
\draw [very thick](-2.4,-2.78) -- (-2.4,-5.4) -- (-0.6,-5.4) -- (-0.6,-2.58);
\draw [very thick](-0.6,-5.4)-- (0.9,-5.4)  -- (0.9,-2.5);
\draw [very thick](-2.6,-4.588) -- (-2.2,-4.588);
\node at (1.3937,-3.916) {{\tiny{$t$}}};
\node at (0.2,-5.9) {{\tiny{$h$}}};
\draw [very thick](0,2) -- (2,2);
\node at (0.947,2.496) {{\tiny{$v$}}};
\node at (6.5175,-2.3319) {{\tiny{$v$}}};
\draw [very thick](-4,-1.5) -- (-4,-0.5) node (v5) {} -- (-2.4876,-0.1221) -- (-2.4831,-1.1652);
\draw [very thick](-4,-0.5) -- (-5.5,-0.5) node (v8) {} -- (-5.5,-4) -- (-4,-4) node (v2) {};
\draw [thick, dotted](-5.5,-0.5)
 -- (-8,-0.5) -- (-8,1) node (v9) {};
\draw [thick, dotted] plot[smooth, tension=.7] coordinates {(v9) (-6,1.5) (-4,3)};
\draw [very thick](-4,-3) -- (-4,-4);
\draw [very thick, densely dashed, blue] plot[smooth, tension=.4] coordinates {(-4.92,2.2063) (-3.292,-0.3172) (-2.7132,-1.9091) (-0.0902,-1.665) (0.3259,-2.5281) };
\node at (-3.5474,-2.2808) {{\tiny{$v$}}};
\node at (-6.0952,-0.1654) {{\tiny{$h$}}};
\draw [very thick, fill, blue](-4,0.8) -- (-4.0402,0.5886) -- (-3.7518,0.8046) -- cycle;
\end{tikzpicture}
    \caption{A left blocker.}
    \label{Example of blocker}
  \end{minipage}
\end{figure}

\begin{lem}\label{Length of blocker}
Suppose that $\alpha \subset S$ is a right or left blocker. Then $\len_\textrm{corn}(\alpha) \geq 7$.
\end{lem}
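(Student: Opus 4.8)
The plan is to split $\len_\textrm{corn}(\alpha)$ over the three snippets of the blocker and then, using the blocker hypothesis, to force one of the two vertical duals to sweep a full horizontal edge of a switch rectangle. First I would write $\len_\textrm{corn}(\alpha)=\len_\textrm{corn}(\alpha[0])+\len_\textrm{corn}(\alpha[1])+\len_\textrm{corn}(\alpha[2])$ and record that $\len_\textrm{corn}(\alpha[1])=1$, since $\alpha[1]$ is a snippet of a branch rectangle. For $i\in\{0,2\}$, the snippet $\alpha[i]$ is a vertical dual in a complementary region $R_i$: by definition it is embedded and cuts off a region $T_i$ of index $0$, with $\partial T_i=\alpha[i]\cup b_i$ and $b_i\subset\partial\mathcal{R}-\partial S$, so that $\len_\textrm{corn}(\alpha[i])=|b_i|_v+|b_i|_B+3|b_i|_S$. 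Since $T_i$ is a disc with $\chi(T_i)=1$ and $\ind(T_i)=0$, and $\alpha[i]$ contributes two outward right-angle corners, an Euler-characteristic count shows that $b_i$ carries exactly two corners of $R_i$; these bound a full side $V_i$ of $R_i$. Because $\partial\alpha[i]\subset\partial_h N$, the two arms of $b_i$ adjacent to $\partial\alpha[i]$ lie inside horizontal sides of $R_i$, and since the sides of a complementary region alternate between horizontal sides and components of $\partial_v N$, the side $V_i$ is a component of $\partial_v N$. Thus $|b_i|_v\ge 1$, so $\len_\textrm{corn}(\alpha[i])\ge 1$; this already gives $\len_\textrm{corn}(\alpha)\ge 3$, and the remaining task is to push one of the two duals up to $\len_\textrm{corn}\ge 5$.

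For this improvement I would use that $\alpha$ is, say, a left blocker. The middle snippet $\alpha[1]$ is a tie of a branch rectangle $R_b$; since $\alpha$ runs smoothly straight through $R_b$ and $\alpha[0],\alpha[2]$ turn in the same direction, the arms of $b_0$ and $b_2$ leaving $\alpha[1](0)$ and $\alpha[1](1)$ both run towards the same end of $R_b$, the one glued to a switch rectangle $R_s$. Using the combinatorial model of a switch rectangle — one vertical side (the large side) carried by a single branch tie, the other vertical side subdivided as $t,v,t$ — I would check that this end of $R_b$ meets a corner of $R_s$ from at least one of the two sides (from above, for $\alpha[2]$, or from below, for $\alpha[0]$). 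For that side, say the one belonging to $\alpha[0]$, the near arm of $b_0$ then turns, at the corresponding corner of $R_b$, onto the full horizontal edge of $R_s$, a switch edge of weight $3$; moreover, since this arm continues along a horizontal side of $R_0$ until it reaches the corner where that side meets $\partial_v N$, and branch and switch edges alternate along such a side with the two extreme edges belonging to branch rectangles (cf.\ Remark~\ref{s is O(Euler)}), the arm also traverses a full branch edge of weight $1$. Hence $\len_\textrm{corn}(\alpha[0])\ge 1+3+1=5$, and therefore $\len_\textrm{corn}(\alpha)\ge 5+1+1=7$.

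I expect the main obstacle to be the combinatorial bookkeeping of the previous paragraph: proving that in every configuration at least one of the two vertical duals of a blocker must sweep a full switch-rectangle edge, i.e.\ ruling out the degenerate case in which both near arms merely hug a component of $\partial_v N$ without crossing any full horizontal edge (degenerate vertical duals of corner length $1$ do exist in general; see Figure~\ref{corner length examples}). This is exactly where the full strength of the blocker hypothesis — two equally oriented vertical duals joined through a branch-rectangle tie, rather than an arbitrary such triple — is needed, and it comes down to a careful analysis of how the tie $\alpha[1]$ sits inside the vertical side of the adjacent switch rectangle and of which of that rectangle's vertical sides is the large one. I would also verify that nothing changes when $R_0$ or $R_2$ is a peripheral annulus rather than a disc, which holds since a vertical dual cuts off a disc in either case.
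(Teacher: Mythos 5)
Your proposal follows essentially the same route as the paper's proof: split $\len_\textrm{corn}(\alpha)$ over the three snippets, note $\len_\textrm{corn}(\alpha[1]) = 1$ and $\len_\textrm{corn}(\alpha[i]) \geq 1$ for the two duals via the index-zero argument, and then use the $t,v,t$ structure of the switch rectangle's small end to force one of the two near arms of $b_0, b_2$ to traverse a full switch horizontal edge plus a full branch horizontal edge, giving that dual corner length $\geq 5$. The paper packages this by saying ``assume $\len_\textrm{corn}(\alpha[0]) = 1$, then $\len_\textrm{corn}(\alpha[2]) \geq 5$''; your direct analysis of which of the two corners of $R_b$'s near end coincides with a corner of $R_s$ is the same idea and covers the cases somewhat more uniformly. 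The parenthetical ``(from above, for $\alpha[2]$, or from below, for $\alpha[0]$)'' appears swapped relative to your earlier conventions (the top corner is the one relevant to the dual lying above $R_b$), but this is cosmetic; and the combinatorial fact you flag as the ``main obstacle'' --- that on a small end exactly one of the two branch-rectangle corners falls on the interior $t/v$ interface of the switch, while on a large end neither does, so at least one near arm must always continue past $R_b$ into a full switch edge --- is exactly what the paper's proof relies on, and it does hold.
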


\begin{proof}
As $\alpha$ is in efficient position, we know that $\len_\textrm{corn}(\alpha[0])$ and $\len_\textrm{corn}(\alpha[2])$ are greater than zero. We further know that $\len_\textrm{corn}(\alpha[1])=1$ as $\alpha[1]$ is a snippet inside a branch rectangle. Let us assume that $\len_\textrm{corn}(\alpha[0])=1$. Then $\alpha[2]$ cuts off a region of index zero whose boundary contains at least one component of the horizontal boundary of the switch rectangle that is adjacent to the branch rectangle containing $\alpha[1]$ (see Figure \ref{Example of blocker}). As branch and switch rectangles alternate, this implies that the boundary of the region cut off by $\alpha[2]$ contains also at least one component of $\partial_h R$ for a branch rectangle $R \in \mathcal{R}_\textrm{tie}$ as well as one component of $\partial_v N$. Hence, $\len_\textrm{corn}(\alpha[2]) \geq 3+1+1 = 5$ and the claim of the lemma follows. 
\end{proof}

\begin{lem}\label{Overlap of blockers}
Suppose that $\alpha \subset S$ is an arc or curve containing two blockers $\beta, \gamma \subset \alpha$. If $\beta\neq \gamma$, then $\beta$ and $\gamma$ overlap in at most one snippet. This is only possible if both are right or left blockers. Furthermore, no two blockers can intersect in exactly one endpoint.
\end{lem}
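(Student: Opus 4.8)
The plan is to read off everything from where the two blockers sit in the snippet decomposition of $\alpha$, using three elementary facts. First, by the definition of a blocker, if $\delta=\alpha[i{:}i{+}3]$ is a blocker then $\delta[0]$ and $\delta[2]$ are vertical duals and hence lie in complementary regions, whereas $\delta[1]$ lies in a branch rectangle; thus the three consecutive snippets of any blocker have region pattern (complementary region, tie rectangle, complementary region). Second, no two consecutive snippets of $\alpha$ can both lie in complementary regions: consecutive snippets of $\alpha$ lie in distinct regions since $\alpha$ is transverse to $\partial\mathcal{R}$, and any two complementary regions of $N$ are separated by the tie neighbourhood --- this is the observation already used in the proof of Lemma~\ref{No arcs of length 2}. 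Third, a single snippet cannot be at once a right and a left vertical dual, by additivity of the index together with the defining property of a large train track, as remarked after the definition of right- and left-turning snippets.

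I would then write $\beta=\alpha[p{:}p{+}3]$ and $\gamma=\alpha[q{:}q{+}3]$, with circular indexing if $\alpha$ is a curve, and, since $\beta\neq\gamma$, after possibly interchanging $\beta$ and $\gamma$ I may assume the offset $d:=q-p$ satisfies $1\le d$ (and $d\le\lfloor\len(\alpha)/2\rfloor$ in the curve case). The proof splits into four cases according to $d$. If $d\ge 4$, the two blockers occupy disjoint blocks of snippet positions, so they neither overlap nor share an endpoint. If $d=3$, then $\gamma=\alpha[p{+}3{:}p{+}6]$, so $\beta$ ends and $\gamma$ begins at the common point $\alpha(p{+}3)$, which is the only way two distinct blockers could meet in exactly one endpoint; but then the last snippet of $\beta$, at position $p{+}2$, and the first snippet of $\gamma$, at position $p{+}3$, are consecutive snippets of $\alpha$ both lying in complementary regions, contradicting the second fact --- so this case is impossible, which establishes that no two blockers intersect in exactly one endpoint. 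If $d=1$, the snippet at position $p{+}1$ would be simultaneously the middle snippet of $\beta$, lying in a branch rectangle, and the first snippet of $\gamma$, a vertical dual lying in a complementary region, which is absurd. The only remaining case is $d=2$: here a direct check shows that $\beta$ and $\gamma$ have exactly the snippet at position $p{+}2$ in common; this snippet is the third snippet of $\beta$, hence a vertical dual whose turning direction is that of $\beta$, and it is also the first snippet of $\gamma$, hence a vertical dual whose turning direction is that of $\gamma$, so by the third fact $\beta$ and $\gamma$ must be both right blockers or both left blockers.

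I expect no serious obstacle here; the content is bookkeeping with the region pattern of a blocker. The one point that needs care is the curve case with very short $\alpha$: circular indexing can in principle make two distinct blockers share two \emph{non-consecutive} snippets --- this happens only when $\len(\alpha)=4$, with the blockers at positions $p$ and $p{+}2$ --- so one must either invoke the standing assumption that $\len(\alpha)$ comfortably exceeds the length of a blocker in every situation where this lemma is applied, or check directly that such degenerate length-four curves do not arise. For arcs the indexing is linear and this subtlety is absent. I would record the reduction to $d\le\lfloor\len(\alpha)/2\rfloor$ and the harmlessness of wraparound once, at the start of the case analysis.
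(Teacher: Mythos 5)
Your proof follows essentially the same approach as the paper's: it rests on the region pattern (complementary region, tie rectangle, complementary region) of a blocker, the fact that complementary regions are separated by the tie neighbourhood, and the fact that a vertical dual cannot turn both right and left. Your organization as a case analysis on the offset $d$ is merely a more explicit bookkeeping of the same ideas, and each case ($d\ge 4$, $d=3$, $d=2$, $d=1$) correctly matches one of the paper's three assertions.

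The one place you add something is the flag about short curves, and the concern is real. For a curve with $\len(\alpha)=4$ whose snippets alternate (vertical dual, tie, vertical dual, tie), the region pattern alone does not rule out two distinct blockers at positions $p$ and $p+2$ sharing \emph{both} of their vertical-dual snippets, which would contradict the lemma's first claim as stated. The paper's proof does not address this either; its opening sentence, that ``two different blockers can overlap along at most one of their vertical duals,'' is exactly the assertion that fails in this configuration. In the thesis this is harmless, since the lemma is only invoked downstream for arcs (in the proof of Lemma \ref{SnippetSequenceIOnly}, and via that in Lemma \ref{reduced corner length vs snippet length}), where the linear indexing makes the wraparound impossible, as you note. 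Still, you are right that a clean treatment should either restrict the statement, add a length hypothesis, or argue geometrically that the degenerate length-four configuration cannot occur; as written, you have flagged the issue but not discharged it, and the paper does not discharge it either.
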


\begin{proof}
As the second snippet of any blocker lies inside the tie neighbourhood and the first and last snippet of any blocker lies inside a complementary region, two different blockers can overlap along at most one of their vertical duals. Since any vertical dual cuts off a region of index zero on exactly one side, any two blockers of the same curve that share one vertical dual must be turning into the same direction.
As the first and last snippets of blockers lie in complementary regions, the arc or curve $\alpha$ is transverse to $\partial \mathcal{R}$, and any two complementary regions of $N$ are separated from each other by the tie neighbourhood, no two blockers can be directly adjacent to each other in $\alpha$. Hence, no two blockers intersect in exactly one endpoint.
\end{proof}

\begin{defn}\label{Page: reduced corner length and reduced corner length}
Suppose that $\alpha \subset S$ is an arc or curve. By $\len_\textrm{block}(\alpha)$ we denote the number of blockers contained in $\alpha$. We set
\begin{align*}
\len_\textrm{red}(\alpha)=\len_\textrm{corn}(\alpha)-2 \cdot \len_\textrm{block}(\alpha)
\end{align*}
and call $\len_\textrm{red}(\alpha)$ the \emph{reduced corner length of $\alpha$}.
\end{defn}

\begin{lem}\label{SnippetSequenceIOnly}
Suppose that $\alpha$ is an arc such that each snippet of $\alpha$ is contained in at least one blocker. Then $\len_\textrm{red}(\alpha) \geq \len(\alpha)$.
\end{lem}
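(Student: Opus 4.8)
The plan is to show that the hypothesis — every snippet of $\alpha$ lies in some blocker — forces $\alpha$ to have a completely rigid shape: reading along $\alpha$, its snippets alternate between complementary regions and branch rectangles, beginning and ending in complementary regions. First I would observe that, since the three snippets of any blocker lie (in order) in a complementary region, a branch rectangle, and a complementary region, every snippet of $\alpha$ lies either in a complementary region or in a branch rectangle, and none lies in a switch rectangle. Next I would argue that if $\alpha[j]$ lies in a branch rectangle, then in any blocker $\alpha[p:p+3]$ containing it $\alpha[j]$ must be the \emph{middle} snippet (the other two lie in complementary regions); hence that blocker is forced to be $\alpha[j-1:j+2]$, and in particular $1\le j\le\len(\alpha)-2$, so the first and last snippets of $\alpha$ are not in branch rectangles and therefore lie in complementary regions. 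Combining this with transversality (consecutive snippets lie in distinct regions) and with the fact, used already in the proof of Lemma \ref{Overlap of blockers}, that two complementary regions of $N$ are never adjacent, a straightforward induction gives the alternation: $\len(\alpha)=2m+1$ with exactly $m$ branch-rectangle snippets at odd positions and $m+1$ complementary-region snippets at even positions, and $m\ge1$.

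With the structure in hand, the rest is bookkeeping. I would first count blockers: the branch-rectangle snippet $\alpha[2i-1]$ is the middle of the blocker $B_i:=\alpha[2i-2:2i+1]$, distinct blockers have distinct middles, and every blocker arises this way, so $\len_\textrm{block}(\alpha)=m$. Writing $c_j=\len_\textrm{corn}(\alpha[2j])$ for the complementary-region snippets — each satisfying $c_j\ge1$ by Remark \ref{Bounds on corner length}, since a dual snippet in efficient position is neither a bigon nor a trigon snippet nor an inessential curve — and recalling that branch-rectangle snippets have corner length $1$, one gets $\len_\textrm{corn}(\alpha)=m+\sum_{j=0}^{m}c_j$. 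Lemma \ref{Length of blocker} applied to $B_i$ reads $c_{i-1}+1+c_i=\len_\textrm{corn}(B_i)\ge7$, i.e.\ $c_{i-1}+c_i\ge6$; summing over $i=1,\dots,m$ gives $2\sum_{j=0}^{m}c_j-c_0-c_m\ge6m$, and since $c_0,c_m\ge1$ this yields $\sum_{j=0}^{m}c_j\ge3m+1$. Substituting into $\len_\textrm{red}(\alpha)=\len_\textrm{corn}(\alpha)-2\len_\textrm{block}(\alpha)=\sum_{j=0}^{m}c_j-m$ gives $\len_\textrm{red}(\alpha)\ge2m+1=\len(\alpha)$, as required.

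The hard part will be the first paragraph: converting ``every snippet is in a blocker'' into the exact alternating pattern, and in particular pinning down that a branch-rectangle snippet determines its own blocker and that the two endpoint snippets of $\alpha$ land in complementary regions. Once that combinatorial skeleton is fixed, the inequality is a short and essentially tight computation; the only degenerate case worth checking separately is $m=1$ (a single blocker), where the bound $c_0+c_1\ge6$ already gives the conclusion with room to spare.
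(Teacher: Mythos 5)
Your proof is correct and reaches the same conclusion by a genuinely different route than the paper. You first establish the rigid alternating structure explicitly --- snippets at even positions in complementary regions, at odd positions in branch rectangles, length $2m+1$, exactly $m$ blockers --- and then obtain the bound by a direct summation: applying Lemma \ref{Length of blocker} to each blocker $B_i=\alpha[2i-2:2i+1]$ gives $c_{i-1}+c_i\ge 6$, summing these $m$ inequalities and using $c_0,c_m\ge 1$ gives $\sum c_j\ge 3m+1$, from which $\len_\textrm{red}(\alpha)=\sum c_j - m\ge 2m+1=\len(\alpha)$. The paper instead argues by induction on $\len(\alpha)$, peeling off the first four snippets (two blockers' worth minus the shared dual) at each step and using the same Lemma \ref{Length of blocker} to conclude $\len_\textrm{corn}(\alpha[0:4])\ge 8$. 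The two proofs are equivalent in content --- both hinge on exactly the blocker lower bound $7$ --- but yours is more transparent about where the slack comes from (the per-blocker deficit $c_{i-1}+c_i-2\ge 4$ against the $+2$ contribution to $\len(\alpha)$), while the paper's induction is shorter on the page but quietly needs the auxiliary remark that a lone dual snippet already satisfies $\len_\textrm{red}(a)=\len_\textrm{corn}(a)\ge\len(a)$ to handle the tail when $\alpha[4:]$ has length $1$ (the $\len(\alpha)=5$ case, where the inductive hypothesis of the lemma does not strictly apply). Your careful first paragraph --- pinning down that a branch-rectangle snippet forces its own blocker $\alpha[j-1:j+2]$, and that the endpoint snippets therefore lie in complementary regions --- is a useful expansion of what the paper compresses into the single sentence ``$\alpha$ consists of blockers only.''
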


\begin{proof}
By Lemma \ref{Overlap of blockers}, if every snippet of $\alpha$ is contained in at least one blocker, $\alpha$ consists of blockers only. These overlap along their vertical duals. As every blocker consists of three snippets, $\len(\alpha)$ must be odd. 

We prove the desired statement by induction on the snippet length of $\alpha$. For $\len(\alpha)=3$, the claim follows from Lemma \ref{Length of blocker}.
Now suppose that $\len(\alpha)>3$. Then $\alpha[0:5]$ consists of two blockers, hence $\alpha[4:]$ contains two fewer blockers than $\alpha$. We remind ourselves that for any dual snippet $a \subset \alpha$ we know that $\len_\textrm{red}(a)=\len_\textrm{corn}(a)\geq \len(a)$. As $\alpha$ must be in efficient position, induction and Lemma \ref{Length of blocker} give us
\begin{align*}
\len_\textrm{red}(\alpha)&= \len_\textrm{corn}(\alpha[0:4]) + \len_\textrm{red}(\alpha[4:])-4\\
&\geq 8 + (\len(\alpha[4:])) -4\\
& \geq \len(\alpha[:4]) + 4 + \len(\alpha[4:]) -4\\
& \geq \len(\alpha)
\end{align*}
and the claim follows.
\end{proof}

\begin{lem}\label{reduced corner length vs snippet length}
Suppose that $\alpha$ is an arc. Then $\len_\textrm{red}(\alpha) \geq \len(\alpha) - m_\alpha$, where $m_\alpha$ denotes the number of bad snippets of $\alpha$.
\end{lem}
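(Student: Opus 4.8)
The plan is to split the snippets of $\alpha$ into those lying in a blocker and those lying in no blocker, and to bound the contribution of each group to $\len_\textrm{red}(\alpha)$ separately. First I would use Lemma \ref{Overlap of blockers} to organize the blocker snippets: since two distinct blockers overlap in at most one snippet, which is then a common vertical dual, the snippets of $\alpha$ contained in some blocker decompose into maximal \emph{blocker chains}. Each such chain is a subarc $C=\alpha[p:q]$ in efficient position, built from $k$ blockers glued consecutively along vertical duals, so that $\len(C)=2k+1$ and the only blockers contained in $C$ are these $k$; moreover distinct chains are snippet-disjoint, because a snippet lying in blockers from two chains would connect those chains. Writing the chains as $C_1,\dots,C_r$ with $k_i$ blockers each, we then have $\len_\textrm{block}(\alpha)=\sum_i k_i$ and, splitting the sum defining the corner length along this partition of the snippets,
\begin{align*}
\len_\textrm{red}(\alpha) \; = \; \sum_{i=1}^{r}\bigl(\len_\textrm{corn}(C_i)-2k_i\bigr) \; + \sum_{a}\len_\textrm{corn}(a),
\end{align*}
where the last sum runs over the snippets $a$ of $\alpha$ that lie in no blocker.

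Next I would handle the two sums. For each chain $C_i$, Lemma \ref{SnippetSequenceIOnly} applies, since every snippet of $C_i$ lies in one of its $k_i$ blockers; it gives $\len_\textrm{corn}(C_i)-2k_i=\len_\textrm{red}(C_i)\geq \len(C_i)=2k_i+1$, so the chains contribute their full snippet count with no error term. For the remaining snippets I would use that $\len_\textrm{corn}(a)$ is a non-negative integer, so $\len_\textrm{corn}(a)\geq 1$ unless $\len_\textrm{corn}(a)=0$. By Remark \ref{Bounds on corner length}, a snippet with vanishing corner length is a bigon snippet, a trigon snippet, or an inessential curve, hence a bad snippet by Corollary \ref{classification of bad snippets}. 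Thus there are at most $m_\alpha$ snippets of $\alpha$ of corner length zero, and
\begin{align*}
\sum_{a}\len_\textrm{corn}(a) \; \geq \; \#\{\,a : a \textrm{ lies in no blocker}\,\} - m_\alpha .
\end{align*}
Since the number of snippets lying in some blocker, namely $\sum_i\len(C_i)=\sum_i(2k_i+1)$, plus the number lying in no blocker equals $\len(\alpha)$, combining the two estimates yields $\len_\textrm{red}(\alpha)\geq \len(\alpha)-m_\alpha$.

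I expect the only genuinely delicate point to be the combinatorial bookkeeping in the first step: verifying that maximal blocker chains really are subarcs, that they are pairwise snippet-disjoint, and that the number of blockers inside a chain equals the number of blockers used to build it, so that nothing is double-counted when the sum for $\len_\textrm{corn}(\alpha)$ is split and when Lemma \ref{SnippetSequenceIOnly} is invoked chain by chain. Once this structure is in place, the rest is arithmetic, together with the input from Remark \ref{Bounds on corner length} identifying corner-length-zero snippets as bad snippets.
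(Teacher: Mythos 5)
Your proof is correct and takes essentially the same route as the paper's: the paper also splits $\alpha$ into maximal subarcs consisting either entirely of blocker snippets or entirely of non-blocker snippets, applies Lemma \ref{SnippetSequenceIOnly} to the former, and bounds the corner length of the latter against the snippet count minus the number of bad snippets. The only cosmetic difference is that you unfold the non-blocker estimate at the level of individual snippets via Remark \ref{Bounds on corner length}, whereas the paper packages that same observation as a citation to Lemma \ref{Snippet vs corner length}.
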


\begin{proof}
We begin by splitting the arc $\alpha$ into maximal subarcs such that each subarc either consists of snippets that are all contained in at least one blocker or that no snippet is contained in any blocker. Then, the reduced corner length of $\alpha$ equals the sum of the reduced corner lengths of these subarcs. Furthermore, the reduced corner length of subarcs not containing any blockers is equal to their corner length. Lemma \ref{Snippet vs corner length} implies that $\len_\textrm{corn}(\alpha) \geq \len(\alpha)-m_\alpha$. Hence, it remains to prove that the reduced corner length provides an upper bound for subarcs of the first type, which follows from Lemma \ref{SnippetSequenceIOnly}.
\end{proof}

\section{Local homotopies}

Suppose that $\alpha \subset S$ is a snippet-decomposed arc or curve. As seen in the previous chapter, arcs or curves consisting of more than one snippet may contain bad snippets of up to fourteen different types. They all have one characteristic in common: they cut off a region of positive index. In the following, we define a family of local homotopies that can be applied to (neighbourhoods of) such bad snippets. In the course of this thesis, we will see that those are sufficient to achieve efficient position for arcs and curves.

\begin{defn}
Suppose that $\alpha \subset S$ is a snippet-decomposed arc or curve consisting of at least two snippets, of which one, $a=\alpha[k]$, is a bad snippet. If $\alpha$ is an arc, we furthermore suppose that $0 < k< \len(\alpha)-1$. As $a$ is a bad snippet meeting $\partial \mathcal{R}- \partial S$, it lies inside a region $R \in \mathcal{R}$ and cuts off a trigon or bigon. Let $N(T)$ be a small regular neighbourhood of $T$ in $S$. We assume that $\alpha$ meets $\partial N(T)$ perpendicularly. Then a regular neighbourhood of $a \subset \alpha$, in the following called $N(a)$, divides $N(T)$ into two regions, exactly one of which, $A$, contains $T$. Replacing the subarc $N(a)\cap \partial A$ of $\alpha$ by the arc $\partial A -  N(a)$ and smoothing out the resulting two corners yields a smooth arc or curve $\alpha'=:\Hom (\alpha, k)$ homotopic to $\alpha$. We refer to this as \textit{applying a local homotopy to $\alpha$ at $a=\alpha[k]$}. If $T\subset R$ is a trigon or a bigon, we refer to this as \textit{applying a local trigon or bigon homotopy to $\alpha$ at $\alpha[k]$} respectively.
\end{defn}

For examples of local homotopies we refer the reader to Figures \ref{Branch trigon no marked point}-\ref{Homotopy of type R(h,v) picture} and Figures \ref{Homotopies of type B(S,t,t,0)}-\ref{Homotopy of type B(B,h,h) picture}.

\begin{rem}\label{local trigon homotopy fixes boundary}
Suppose that $\alpha[k]$ is a bad snippet of a snippet-decomposed arc or curve $\alpha$. Applying a local homotopy to $\alpha$ at $\alpha[k]$ only alters a small neighbourhood of the snippet $\alpha[k]$ in $\alpha$. Without loss of generality, we may always assume that this neighbourhood is contained in the subarc $\alpha[k-1/3:k+4/3]$. For a reminder on the latter notation we refer the reader to page \pageref{Page: alpha subsnippet}. If $\len(\alpha)>2$, this implies that the boundary of the subarc $\alpha[k-1:k+2]$ is fixed, and any local homotopy affects at most these three snippets.
\end{rem}

In Section \ref{Section:Trigon homotopies}, we study the effects of trigon homotopies on the number and types of snippets of the underlying arc or curve. We begin with a series of general observations and then draw our conclusions for each of the five trigon types separately.

\section{Trigon homotopies}\label{Section:Trigon homotopies}

We recall that $\partial \mathcal{R}$ is a trivalent graph whose edges meet at angles $\pi/2$ or $\pi$.

\begin{lem}\label{First ever observations trigons}
Suppose that $\alpha \subset S$ is a snippet-decomposed arc or curve satisfying $\len(\alpha)>2$. Suppose further that $\alpha_\textrm{trim}$ contains a trigon snippet $\alpha[k] \subset R \in \mathcal{R}$. That is, $\alpha[k]$ cuts off a trigon $T \subset R$ and $\partial T$ contains a unique corner $x$ of $R$. Set $b=\partial T - \alpha[k]$. Let $E$ be the set of (half-)edges of the trivalent graph $\partial^2 \mathcal{R}$ that meet $b$ but are not contained in $b$. Then the following statements hold.
\begin{enumerate}
\item $b-x$ consists of two components $b_1$ and $b_2$, where $b_1 \cap\partial^2 \mathcal{R} = \emptyset$.
\item Either $|E|=1$ or all (half-)edges of $E$ meet $b_2$ at angle $\pi/2$.
\item $\len(\Hom(\alpha,k))= \len(\alpha)-2+|\partial T \cap \partial^2 \mathcal{R}|$.
\end{enumerate}
\end{lem}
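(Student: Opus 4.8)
The plan is to analyse the local picture near the trigon $T$, which is a disc with exactly three outward-pointing corners (one corner $x$ of $R$, plus the two corners created where $\alpha[k]$ meets $\partial R$), and to track what the local homotopy does to the intersection pattern of $\alpha$ with $\partial\mathcal{R}$. First I would establish (1): since $T$ is a trigon, $\partial T$ contains the single corner $x$ of $R$, so $b = \partial T - \alpha[k]$ is an arc running along $\partial R$ through $x$; removing $x$ splits it into two subarcs $b_1, b_2$, and $b_1$ is the one lying on the side of $\partial R$ that contains no further point of $\partial^2\mathcal{R}$ — this is exactly the condition that $T$ be a trigon rather than a bigon or disc, so $b_1 \cap \partial^2\mathcal{R} = \emptyset$ (the side $b_1$ is an honest side of $R$ minus nothing, or an initial stretch of one). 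For (2), I would invoke the structure of $\partial\mathcal{R}$ as a trivalent graph whose edges meet at angles $\pi$ or $\pi/2$: the $(half)$-edges in $E$ are precisely the edges of $\partial^2\mathcal{R}$ emanating transversally from the interior of $b$. Any such edge meets $b$ at the endpoint of some side of $R$, i.e.\ at a point of $\partial^2\mathcal{R}$ lying in the interior of $b$. Since $b_1$ has no such points, all these edges sit along $b_2$, and by the perpendicularity built into the tie neighbourhood they leave $b_2$ at angle $\pi/2$ — unless $b$ is short enough that the only transversal edge is the single one at $x$ itself, giving $|E| = 1$.

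For (3), the key computation: the local homotopy replaces the subarc $N(a) \cap \partial A$ of $\alpha$ (which consists of $\alpha[k]$ together with two little stubs of the adjacent snippets $\alpha[k-1]$ and $\alpha[k+1]$) by the arc $\partial A - N(a)$, which runs parallel to $b$. The new arc crosses $\partial\mathcal{R}$ exactly at the points where $b$ meets the transverse edges of $E$, i.e.\ at the points of $\partial T \cap \partial^2\mathcal{R}$ lying in the interior of $b$ — but note $\partial T \cap \partial^2\mathcal{R}$ also contains $x$, and $x$ is crossed by the new arc as well (it is in the interior of $b$, since the endpoints of $b$ are the two corners where $\alpha[k]$ met $\partial R$, which lie in side-interiors and hence are not in $\partial^2\mathcal{R}$). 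Each such crossing chops the portion of $\alpha'$ near the old $\alpha[k]$ into one more snippet. So if $\partial T \cap \partial^2\mathcal{R}$ has $m$ points, the single snippet $\alpha[k]$ is replaced by $m+1$ snippets after the homotopy; the snippets $\alpha[k-1]$ and $\alpha[k+1]$ are modified but not split (the homotopy only nibbles at their ends), and all other snippets are untouched. Hence $\len(\Hom(\alpha,k)) = \len(\alpha) - 1 + (m+1) = \len(\alpha) - 2 + |\partial T \cap \partial^2\mathcal{R}|$.

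The main obstacle I anticipate is making the crossing count in (3) fully rigorous — in particular verifying that the new arc $\partial A - N(a)$ really does cross $\partial\mathcal{R}$ transversally exactly once at each point of $(\partial T \cap \partial^2\mathcal{R})$ and nowhere else, and that these crossings genuinely produce distinct new snippets rather than, say, immediately cancelling or merging with $\alpha[k\pm 1]$. This needs the observation from (2) that every transverse edge meets $b$ at a right angle (so the pushed-off copy of $b$ inherits a genuine transverse crossing there), together with the fact that the endpoints of $b$ are interior to sides of $R$ and thus avoid $\partial^2\mathcal{R}$, so no crossings are lost or gained at the ends. A secondary subtlety is the boundary case where $R$ is a switch rectangle and $b_2$ runs through the two extra corners on its special vertical side; but those extra corners are themselves elements of $\partial^2\mathcal{R}$ lying in the interior of $b$, so they are already counted in $|\partial T \cap \partial^2\mathcal{R}|$ and the formula is unaffected. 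I would close by remarking that all three parts are purely local statements about a neighbourhood of $T$, so no global hypotheses beyond $\len(\alpha) > 2$ (ensuring the modified snippets $\alpha[k\pm1]$ actually exist) are needed.
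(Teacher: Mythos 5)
Your strategy — analyze the local picture near $T$ and count crossings of the pushed-off arc with $\partial\mathcal{R}$ — matches the paper's, but the count in (3) is internally inconsistent. You claim the single snippet $\alpha[k]$ is "replaced by $m+1$ snippets" (where $m = |\partial T \cap \partial^2\mathcal{R}|$) while $\alpha[k\pm1]$ are "modified but not split"; taken together these give $\len(\alpha') = (\len(\alpha)-1) + (m+1) = \len(\alpha)+m$, and the final equality $\len(\alpha) - 1 + (m+1) = \len(\alpha) - 2 + m$ you write is arithmetically false. The source of the error is that the first and last of the $m+1$ pieces into which the pushed-off arc is cut by $\partial\mathcal{R}$ lie in the same regions as $\alpha[k-1]$ and $\alpha[k+1]$ and merge with those snippets, so only $m-1$ new snippets appear in place of $\alpha[k]$; equivalently, the three-snippet subarc $\alpha[k-1:k+2]$ becomes an $(m+1)$-snippet subarc and $\len(\alpha') = (\len(\alpha)-3) + (m+1) = \len(\alpha) - 2 + m$. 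The paper counts precisely this way.

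For (2) you establish perpendicularity only at interior corner points of $b_2$. But when $|E| > 1$ the (half-)edge $e$ at $x$ is also in $E$, and a priori $e$ could meet $b_2$ at angle $\pi$ (that is, be a smooth continuation of $b_2$) rather than $\pi/2$. The paper rules this out by a short case analysis: when $b_2$ carries interior corner points it must be a subarc of a vertical side of a switch rectangle or of a horizontal side of a complementary region, and in each case $e$ is a smooth continuation of $b_1$, hence perpendicular to $b_2$. You should also tighten the justification for (1): the correct reason one of $b_1, b_2$ avoids $\partial^2\mathcal{R}$ is that the two sides of $R$ meeting at $x$ alternate type, and one of those types (horizontal sides of branch or switch rectangles, vertical sides of complementary regions) never contains corner points in its interior; "this is exactly the condition that $T$ be a trigon" is not a justification, since being a trigon concerns corners of $R$ itself, not arbitrary points of $\partial^2\mathcal{R}$ on $\partial T$.
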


\begin{proof}
We prove the statements of the lemma in order.
First, recall that horizontal sides of branch and switch rectangles and vertical sides of complementary regions do not contain any points of $\partial^2 \mathcal{R}$ in their interiors. As vertical and horizontal boundary sides of any region alternate, one of the components $b_1$ and $b_2$ of $b-x$ must have empty intersection with $\partial^2 \mathcal{R}$. In the following, we assume that $b_1 \cap\partial^2 \mathcal{R} = \emptyset$.

Secondly, if $|E|>1$, that is, if $b_2$ has non-empty intersection with $\partial^2 \mathcal{R}$, $b_2$ must be a subarc of a vertical side of a switch rectangle or a subarc of a horizontal side of a complementary region. Set $e$ to be the edge of $E$ at the corner $x$ of the trigon. If $b_2$ is a subarc of a vertical side of a switch rectangle, then $e$ and $b_1$ are subarcs of $\partial_h N$ (see Figure \ref{Switch rectangle edge}). Thus, they meet at an angle of $\pi$ at $x$. As $b_1$ and $b_2$ meet at an angle of $\pi/2$ at $x$, this implies that $e$ and $b_2$ meet at an angle of $\pi/2$ at $x$ as well. If $b_2$ is a subarc of a horizontal side of a complementary region, then $e$ and $b_2$ are subarcs of a vertical boundary side of a switch rectangle (see Figure \ref{Complementary region edge}). Hence, they meet at an angle of $\pi$ at $x$ and $e$ and $b_2$ must therefore meet at an angle of $\pi/2$ at $x$. As $b_2$ is a smooth subarc, all edges of $E-e$ must be perpendicular to $b_2$, which proves the claim.

\begin{figure}[htbp]
\begin{minipage}[b]{0.49\linewidth}
\centering
\begin{tikzpicture}[scale=0.45]
\draw [very thick] (-1.5,1.5) rectangle (4,-3);
\node at (1.5,2.25) {\tiny{$h$}};
\node at (-2.25,-0.75) {\tiny{$t$}};
\node at (1.5,-3.5) {\tiny{$h$}};
\node at (4.75,-0.75) {\tiny{$v$}};
\draw [very thick, densely dashed, blue] plot[smooth, tension=.7] coordinates {(0.375,-3) (1,-1.125) (4,-0.75)};
\draw [very thick] (4,1.5) rectangle (6.5,0);
\draw [very thick] (4,-1.5) rectangle (6.5,-3);
\end{tikzpicture}
 \caption{A trigon inside a switch rectangle whose boundary contains more than one point of $\partial^2 \mathcal{R}$.} 
 \label{Switch rectangle edge}
    \vspace{2ex}
\end{minipage}
\begin{minipage}[b]{0.49\linewidth}
\centering
\begin{tikzpicture}[scale=0.45]
\draw [very thick](-4,-1.5) {} -- (-4,-3) {};
\draw [very thick] plot[smooth, tension=.7] coordinates { (-4,-3) (1,-2.5) (6,-3)};
\draw [very thick](6,-1.5) node (v3) {} -- (6,-3) node (v1) {};
\draw [very thick] plot[smooth, tension=.7] coordinates {(-4,-1.5) (-1,-0.5) (0,1)};
\draw [very thick] plot[smooth, tension=.7] coordinates {(v3) (3,-0.5) (2,1)};
\draw [very thick] plot[smooth, tension=.7] coordinates {(-4,-4)  (-2.4,-3.8)};
\node at (-2.25,-0.55) {{\tiny{$h$}}};
\draw [very thick](-2.4,-2.78) -- (-2.4,-5.4) -- (-0.6,-5.4) -- (-0.6,-2.58);
\draw [very thick](-0.6,-5.4)-- (0.9,-5.4)  -- (0.9,-2.5);
\draw [very thick](0.9,-5.4) -- (2.4,-5.4) -- (2.4,-2.56);
\draw [very thick](-2.6,-4.588) -- (-2.2,-4.588);
\draw [very thick](2.2,-3.686) -- (2.6,-3.686);
\draw [very thick](2.2,-4.588) -- (2.6,-4.588);
\node at (2.94,-4.16) {{\tiny{$v$}}};
\node at (2.86,-3.1) {{\tiny{$t$}}};
\node at (0.2,-5.9) {{\tiny{$h$}}};
\node at (6.5175,-2.3319) {{\tiny{$v$}}};
\draw [very thick](-4,-3) -- (-4,-4);
\draw [very thick, densely dashed, blue] plot[smooth, tension=.7] coordinates {(-4,-2.25) (0.8,-1.6) (1.7642,-2.5387)};
\draw [very thick] (-4,-0.5) rectangle (-5.5,-4);
\end{tikzpicture}
 \caption{A trigon inside a complementary region whose boundary contains more than one point of $\partial^2 \mathcal{R}$.} 
 \label{Complementary region edge}
    \vspace{2ex}
\end{minipage}
\end{figure}

Thirdly, let $\beta$ be the image of the subarc $\alpha[k-1/3,k+4/3]$ under the local homotopy applied to $\alpha[k]$ (see Figures \ref{General Trigon Example 1}-\ref{General Trigon Example 2}). For each point in $\partial T \cap \partial^2 \mathcal{R}$, $\beta$ intersects an edge of $\partial \mathcal{R}$ once. Thus, the arc $\alpha[k-1:k+2]$ is replaced by $\len(\alpha[k-1:k+2])-1+ |\partial T \cap \partial^2 \mathcal{R}|-1$ many snippets. This implies that $\len(\alpha')= \len(\alpha)-1+|\partial T \cap \partial^2 \mathcal{R}|-1=\len(\alpha)-2+|\partial T \cap \partial^2 \mathcal{R}|$.
\begin{figure}[htbp] 
\begin{minipage}[b]{0.49\linewidth}
    \centering
\begin{tikzpicture}[scale=0.7]
\draw [very thick](-3.5,2) -- (-3.5,0) -- (2,0);
\draw [thick, densely dotted] (-3.5,0) -- (-4.5,0);
\draw [very thick, densely dashed, blue] plot[smooth, tension=.7] coordinates {(-4.5,1) (0.5,0.75) (1.6,-1.2)};
\draw [very thick](-3.5,2) node (v1) {} -- (-1,2);
\draw [very thick, densely dashed, cyan] plot[smooth, tension=.4] coordinates {(-4.5,0.75) (-3.9944,0.592) (-3.5782,-0.6238) (0.8,-0.6) (1.2,-1.2)};
\draw [thick, densely dotted] (-3.5,0) -- (-3.5,-1);
\end{tikzpicture}
    \caption{A trigon that contains a unique point of $\partial ^2 \mathcal{R}$.}
    \label{General Trigon Example 1}
  \end{minipage}
\begin{minipage}[b]{0.49\linewidth}
    \centering
\begin{tikzpicture}[scale=0.7]
\draw [very thick](-3.5,2) -- (-3.5,0) -- (2,0);
\draw [very thick](-2.5,0) -- (-2.5,-1);
\draw [very thick](-1.5,0) -- (-1.5,-1);
\draw [very thick](-0.5,0) -- (-0.5,-1);
\draw [very thick](0.5,0) -- (0.5,-1);
\draw [very thick] (-3.5,0) -- (-3.5,-1);
\draw [very thick, densely dashed, blue] plot[smooth, tension=.7] coordinates {(-4.5,1) (0.5,0.75) (1.6,-1.2)};
\draw [very thick](-3.5,2) node (v1) {} -- (-1,2);
\draw [very thick, densely dashed, cyan] plot[smooth, tension=.4] coordinates {(-4.5,0.75) (-3.9944,0.592) (-3.5782,-0.6238) (0.8,-0.6) (1.2,-1.2)};
\draw [thick, densely dotted] (-3.5,0) -- (-3.5,-1);
\end{tikzpicture}
    \caption{A trigon that contains more than one point of $\partial ^2 \mathcal{R}$.}
    \label{General Trigon Example 2}
  \end{minipage}
\end{figure}
\end{proof}

\begin{defn}
Suppose that $\alpha \subset S$ is a snippet-decomposed arc or curve satisfying $\len(\alpha) \geq 2$. Suppose further that $\alpha_\textrm{trim}$ contains exactly one bad snippet $\alpha[k]$. If $\alpha[k]$ is a trigon or bigon snippet, we say that $\alpha$ is an \emph{almost efficient arc or curve of trigon or bigon type} respectively.
\end{defn}

Suppose that $\alpha \subset S$ is an almost efficient arc or curve of trigon type. The following lemma shows that applying a local homotopy at the unique trigon snippet yields an arc or curve $\alpha'$ which is in efficient position in its inside or is again an almost efficient arc or curve of trigon type. In the latter case, the trigon snippet of $\alpha'_\textrm{trim}$ turns the same way as the trigon snippet of $\alpha_\textrm{trim}$. In addition, the lemma determines the position of the trigon snippet of $\alpha'$ as well as gives restrictions on the types of snippets that the homotopy gives rise to.

\begin{lem} \label{General observations trigon homotopies}
Suppose that $\alpha \subset S$ is an almost efficient arc or curve of trigon type. Set $0 \leq k \leq \len(\alpha)-1$ such that $\alpha[k]$ is the unique bad snippet of $\alpha_\textrm{trim}$. We further set $\alpha'=\Hom(\alpha,k)$ as well as $\beta=\alpha[k-1:k+2]$ and $\beta'=\Hom(\beta,1)$. Then the following statements hold:
\begin{enumerate}
\item The subarc $\beta'[1:-1]$ is in efficient position. If $\alpha[k]$ is turning right, then all snippets of $\beta[1:-1]$ cut off a region of index zero on their left-hand side.
\item If $\len(\alpha)>2$, then either $\beta[0]$ is weakly snippet homotopic to $\beta'[0]$ or $\beta[-1]$ is weakly snippet homotopic to $\beta'[-1]$.
\item Suppose that $\alpha[k]$ is turning right and that $\beta[0]$ is embedded and cuts off a simply connected region on its right-hand side that is not a bigon. If $\beta[0]$ and $\beta'[0]$ are not weakly snippet homotopic, then the region cut off by $\beta'[0]$ on its right-hand side has one outward-pointing corner less than the region cut off by $\beta[0]$ on its right-hand side.
\item Suppose that $\beta[0]$ lies inside a peripheral complementary region $R \in \mathcal{R}_\textrm{comp}$. If $\beta[0]$ is not weakly snippet homotopic to $\beta'[0]$, then \[\wind(\beta[0]) = \wind(\beta'[0]) \pm 1.\] Else, $\wind(\beta[0]) =\wind(\beta'[0])$.
\item $\alpha'_\textrm{trim}$ contains at most one bad snippet, which must be a trigon turning into the same direction as $\alpha[k]$.
\item Suppose that $\alpha'_\textrm{trim}$ is not in efficient position and that $\alpha[k]$ is turning right. If $\alpha'_\textrm{trim}$ contains a trigon snippet of type $\mathbb{R}(h,v)$, then $\alpha'_\textrm{trim}$ contains one right dual less than $\alpha_\textrm{trim}$. 
\end{enumerate}
\end{lem}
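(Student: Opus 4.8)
The plan is to read all six assertions off the local picture created by the homotopy. By Remark~\ref{local trigon homotopy fixes boundary} we have $\alpha'=\alpha[0{:}k-1]\cdot\beta'\cdot\alpha[k+2{:}\len(\alpha)]$ with $\beta$ and $\beta'$ sharing their endpoints, so every claim is a claim about $\beta$ and its replacement $\beta'$. Write $T\subset R$ for the trigon cut off by $\alpha[k]$, let $x$ be its unique corner of $R$, and $b=\partial T-\alpha[k]=b_1\cup\{x\}\cup b_2$ with $b_1\cap\partial^2\mathcal R=\emptyset$ as in Lemma~\ref{First ever observations trigons}; by that lemma $\len(\beta')=1+|\partial T\cap\partial^2\mathcal R|$, and geometrically $\beta'$ is $\beta$ with $\alpha[k]$ replaced by an arc running just outside $R$, parallel to $b$, rounding $x$ once. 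Write $a\approx b$ for ``weakly snippet homotopic''. For the orientation-independent statements~(1),~(2),~(5) I reverse the orientation of $\alpha$ when convenient so that $b_1$ meets $\beta[0]=\alpha[k-1]$; the rest is a finite check over the five types of $\alpha[k]$ (Corollary~\ref{classification of bad snippets}) and the local shapes at the corners of $b$, for which I describe the common mechanism.

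\emph{Statement (1).} Each $\beta'[j]$ with $1\le j\le\len(\beta')-2$ lies in the region $R_j\in\mathcal R$ abutting $R$ across one side $e_j$ of $b$ and cuts off, on the side facing $e_j$, a disc $Q_j$ bounded by $\beta'[j]$, by $e_j$, and by two short arcs at the endpoints of $e_j$ (when $x$ is such an endpoint, $\beta'[j]$ additionally rounds a corner of $R_j$ there). Its four corners --- the two feet of $\beta'[j]$ and the two corners of $R_j$ at the ends of $e_j$ --- are all outward-pointing: one uses here that the endpoint of $e_j$ at $x$, when present, is a genuine corner of $R_j$ rather than an interior point of a side, which is exactly part~(2) of Lemma~\ref{First ever observations trigons} (it forces the edge of $\partial\mathcal R$ emanating from $x$ into $R_j$ to be perpendicular to $b_2$, so $R_j$ has a right angle at $x$). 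Thus $\ind(Q_j)=0$ ($\chi=1$, four outward corners), so $\beta'[j]$ is a horizontal or vertical dual, in particular in efficient position. Finally the push carries $T$, and with it each $Q_j$, to one fixed side of $\beta'$ (compare Figures~\ref{General Trigon Example 1}--\ref{General Trigon Example 2}); matched against the parametrisation of $\alpha'$ this puts every $Q_j$ to the left of $\beta'[j]$ when $\alpha[k]$ turns right.

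\emph{Statements (2)--(4).} The snippet $\beta'[0]$ is $\beta[0]$ extended along $b_1$ inside the region $P$ containing $\beta[0]$; as $b_1$ misses $\partial^2\mathcal R$, the only corner this extension can round is $x$, and it rounds it precisely when $P$ has the angle $\pi/2$ at $x$ --- if $P$ has the angle $\pi$ at $x$ the foot merely slides within the side of $P$ containing it, so $\beta'[0]\approx\beta[0]$. Exactly one of the two regions abutting $R$ at $x$ has angle $\pi$ there (their angles sum to $3\pi/2$, hence split as $\pi/2+\pi$), so at least one of $\beta[0]\approx\beta'[0]$ and $\beta[-1]\approx\beta'[-1]$ holds; for $\len(\alpha)>2$ both extremes are genuine snippets, which is~(2). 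If $\beta'[0]\not\approx\beta[0]$, then $\beta'[0]$ is $\beta[0]$ with the single outward-pointing corner $x$ of $P$ rounded; by additivity of the index (exactly as for the $Q_j$) this replaces the region cut off by $\beta[0]$ on its right-hand side by one with one fewer outward-pointing corner~(3), and, when $P$ is a peripheral annulus, changes by $\pm1$ the number of outward corners that the relevant lift passes, i.e.\ $\wind(\beta'[0])=\wind(\beta[0])\pm1$; the case $\beta'[0]\approx\beta[0]$ gives $\wind(\beta'[0])=\wind(\beta[0])$ at once, which is~(4).

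\emph{Statements (5)--(6).} By~(1),~(2) and Lemma~\ref{Weakly snippet homotopic implies same type}, at least one of $\beta'[0],\beta'[-1]$ is in efficient position (and if $\alpha$ is an arc with this snippet its first or last, so is it, by Lemma~\ref{No arcs of length 2}); hence $\alpha'_\textrm{trim}$ has at most one bad snippet, namely whichever of $\beta'[0],\beta'[-1]$ was obtained by rounding a corner of its region. By~(3) (and Lemma~\ref{Bad snippets in the tie neighbourhood} when that region is a rectangle) this snippet cuts off a region of index $\tfrac14$ larger than that cut off by the corresponding extreme snippet of $\beta$, which is positive only when it equals $\tfrac14$; so the new bad snippet is a trigon, and since rounding the corner leaves the positive-index region on the side to which $T$ was pushed --- which is the side of $\alpha[k]$ carrying $T$ --- it turns the same way as $\alpha[k]$. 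This is~(5). For~(6), with $\alpha[k]$ turning right: by~(5) the surviving bad trigon $a'$ turns right, so the extreme snippet $a\in\{\beta[0],\beta[-1]\}$ it came from cut off its index-$0$ region on the right, i.e.\ was a right dual, now lost; the other extreme snippet keeps its type (being weakly homotopic to its predecessor), and by~(1) every interior $\beta'[j]$ is a \emph{left} dual, so no right dual is gained, whence $\alpha'_\textrm{trim}$ contains one right dual fewer than $\alpha_\textrm{trim}$. The one genuinely delicate point is this orientation bookkeeping in~(1) and~(5)--(6): one must verify uniformly --- over all five types of $\alpha[k]$, the local shapes at $x$, and $R$ being a branch rectangle, a switch rectangle, or a complementary region --- that the push carries $T$ to the turning side of $\alpha[k]$ and that the corner rounded by the surviving extreme snippet sits on that same side of it; this is precisely what the figures are there to certify. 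The index computations, the weak-homotopy statements and the length count are then immediate from Lemma~\ref{First ever observations trigons} and additivity of the index.
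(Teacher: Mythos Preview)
Your argument is correct and follows the same geometric mechanism as the paper: analysing the thin strips $Q_j$ created along $b$ to get~(1), the angle split $\pi+\pi/2$ at $x$ for~(2)--(4), and then reading off~(5)--(6) from these. One terminological slip is worth flagging: in~(1) you conclude that each interior $\beta'[j]$ ``is a horizontal or vertical dual'', and in~(6) you reuse this as ``every interior $\beta'[j]$ is a left dual''. That is not always true---when $\alpha[k]$ is of type $\mathbb{R}(h,v)$ the regions $R_j$ abutting $R$ across $b_2$ are branch and switch rectangles, and the $\beta'[j]$ there are \emph{carried}, not duals (compare Lemma~\ref{Homotopy of type R(h,v)}). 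The conclusion you actually need, and the one the paper states, is only that each $\beta'[j]$ is in efficient position and cuts off an index-zero region on its left; your index computation for $Q_j$ gives exactly this, and since carried snippets are not right duals either, your deduction of~(6) survives unchanged. So the argument is sound, just overstated at that one point.
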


\begin{proof}
We prove the statements of the lemma in order. Without loss of generality, we assume that $\alpha[k] \subset R \in \mathcal{R}$ is a right-turning trigon snippet. By $T$ we denote the trigon cut off by $\alpha[k]$. As discussed before, $b=\partial T - \alpha[k]$ contains a unique corner $x$ of $R$ and $b-x$ consists of two components $b_1$, $b_2$, where we assume that $b_1 \cap \partial^2 R = \emptyset$. Let $E$ be the set of (half-)edges of the trivalent graph $\partial^2 \mathcal{R}$ that meet $b$ but are not contained in $b$.

First, we want to show that $\beta'[1:-1]$ is in efficient position. If $|E|=1$, then $\beta'[1:-1]$ is empty. Else, its snippets are parallel to $b_2 \subset \partial R$ (see page \pageref{General Trigon Example 2}, Figure \ref{General Trigon Example 2}). As $\alpha[k]$ is cutting off a trigon of $R$ on its right-hand side, the snippets of $\beta[1:-1]$ cut off rectangles on their left-hand sides.

Secondly, assume that $\len(\alpha)>2$. Thus, we have that $\alpha[k-1] \neq \alpha[k+1]$. Without loss of generality, we may assume that $\beta[0]$ is the snippet that intersects $\alpha[k]$ in $\overline{b_1}$. Let $e \in E$ be the edge of $\partial \mathcal{R}$ that is adjacent to the corner $x$. If $|E|>1$, then $e$ meets $b_1$ at an angle of $\pi$ (see Figure \ref{General Trigon Example 2}). Thus, $x$ is not a corner of the region containing the snippet $\beta[0]$ and $\beta[0]$ is weakly snippet homotopic to $\beta'[0]$. However, as $e$ meets $b_1$ at an angle of $\pi/2$, $\beta[-1]$ and $\beta'[-1]$ are not weakly snippet homotopic.
If $|E|=1$, then either $e$ meets $b_1$ at an angle of $\pi$ or $e$ meets $b_2$ at an angle of $\pi$. Hence, $x$ is a corner of either the region containing $\beta[0]$ or the region containing $\beta[-1]$, which determines whether $\beta[0]$ and $\beta'[0]$, or $\beta[-1]$ and $\beta'[-1]$, are weakly snippet homotopic.

Thirdly, assume that $\beta[0]$ is embedded and cuts off a simply connected region on its right-hand side that is not a bigon. Suppose further that $\beta[0]$ and $\beta'[0]$ are not weakly snippet homotopic. Then $\beta'[0]$ cuts off a region on its right-hand side that has one outward-pointing corner less than the region cut off by $\beta[0]$, namely the corner $x$. Thus, if $\beta[0]$ is in efficient position, then $\beta'[0]$ is in efficient position or cuts off a trigon on its-right hand side. 

Fourthly, assume that $\beta[0]$ lies inside a peripheral complementary region $R \in \mathcal{R}_\textrm{com}$. If $\beta[0]$ is not weakly snippet homotopic, then $\beta[0](1)$ is moved ``across'' the corner $x$ under the homotopy. This implies that $\wind(\beta[0]) = \wind(\beta'[0]) \pm 1$. Else, that is if $\beta[0]$ and $\beta'[0]$ are weakly snippet homotopic, then their winding numbers coincide by definition of the winding number.

To prove the fifth claim we first assume that $\len(\alpha)>2$ and that $\beta[-1]$ and $\beta'[-1]$ are weakly snippet homotopic. Thus, if $\beta[0] \subset \alpha_\textrm{trim}$, we know that $\beta[0]$ is in efficient position, that $\beta'[0] \subset \alpha'_\textrm{trim}$ and that $\beta'[0]$ is in efficient position or cuts off a trigon on its-right hand side. If $\beta[-1] \subset \alpha_\textrm{trim}$, then Lemma \ref{Weakly snippet homotopic implies same type} implies that $\beta'[-1]$ is in efficient position as $\beta[-1]$ and $\beta'[-1]$ are weakly snippet homotopic and $\beta[-1]$ is in efficient position. Since all snippets of $\beta'[1:-1]$ are in efficient position, this implies that $\alpha'_\textrm{trim}$ is in efficient position or contains exactly one bad snippet which must be a right-turning trigon snippet.
If $\len(\alpha)=2$, then $\alpha[k-1]$ is in efficient position and $\alpha'[k-1]$ is in efficient position or a right-turning trigon snippet. Since all other snippets of $\alpha'_\textrm{trim}$ are parallel to $\partial \mathcal{R}$, they must be in efficient position. Thus, $\alpha'$ is in efficient position or contains a unique right-turning trigon snippet $\alpha'[k-1]$.

The sixth claim follows from statements $3$ and $4$ and the fact that each corner of a region contributes $-1/4$ towards the index of that region. So, if $\alpha'_\textrm{trim}$ contains a right-turning trigon snippet inside a complementary region, then $\alpha[k-1]$ or $\alpha[k+1]$ must lie inside a complementary region and cut off a right dual.
\end{proof}

\begin{rem}
We note that we obtain the equivalent statements of Lemma \ref{General observations trigon homotopies} for left-turning trigons by replacing every occurrence of the word ``right'' by the word ``left''.
\end{rem}

\begin{rem}\label{Required time for trigon homotopies}
We recall that any properly immersed arc or curve $\alpha \subset S$ is uniquely determined by its cutting sequence and the winding number of the respective snippets. For the remainder of this thesis we assume that arithmetic on winding numbers can be done in constant time. We claim that this implies that trigon homotopies can be computed in $\mathit{O}(|\chi(S)|)$ time: Suppose that $\alpha \subset S$ is a snippet-decomposed arc or curve that contains a snippet $\alpha[k]$ of trigon type in its inside. Let $T$ be the trigon cut off by the snippet $\alpha[k]$. Lemma \ref{First ever observations trigons} implies that the snippets of the subarc $\alpha[k-1:k+2]$ are replaced by $(|\partial T \cap \partial^2 \mathcal{R}|+1) \leq s $ many snippets. Remark \ref{local trigon homotopy fixes boundary} implies that all other snippets of $\alpha$ remain unchanged. Following the conventions agreed upon in Remark \ref{Standard conventions algor}, this implies that the cutting sequence can be adjusted in $\mathit{O}(|\chi(S)|)$ time. Lemma \ref{General observations trigon homotopies} implies that the winding number of the first and last snippet of $\alpha[k-1:k+2]$ change by at most one. Furthermore, if a snippet of $\Hom(\alpha[k-1:k+2],1)[1:-1]$ lies inside a peripheral annulus region, it is a horizontal or vertical dual so the modulus of its winding number equals two. As $\chi(S)\neq 0$ and $s=\mathit{O}(|\chi(S)|)$, this implies that the winding numbers of $\Hom(\alpha,k)$ can be computed in $\mathit{O}(|\chi(S)|)$ time. Thus, $\Hom(\alpha,k)$ can be computed in $\mathit{O}(|\chi(S)|)$ time.
\end{rem}

For each of the five trigon types we are now going to draw our conclusion from the previous lemma, putting special emphasis on the occurring types of snippets and their length. We remark that this is a mere application of the results in Lemma \ref{General observations trigon homotopies}. However, stating the results for the single trigon types separately help us to understand the implications that repeated applications of local homotopies have on an almost efficient arc or curve of trigon type.
For the remainder of this thesis, we employ the following notation \label{Page: carried and dual}
\begin{itemize}
\item $\carr(\alpha)$ denotes the number of carried snippets of $\alpha$.
\item $\dual_R(\alpha)$ denotes the number of right duals of $\alpha$.
\item $\dual_L(\alpha)$ denotes the number of left duals of $\alpha$.
\end{itemize}
We note that all these quantities are bounded from above by $\len(\alpha)$ and $\len_\textrm{corn}(\alpha)$.

\subsection{Trigons of type $\mathbb{B}(h,t)$}

\begin{lem} \label{Homotopy of type B(h,t,0)}
Suppose that $\alpha \subset S$ is an almost efficient arc or curve of trigon type. Set $0 \leq k \leq \len(\alpha)-1$ such that $\alpha[k]$ is the unique bad snippet of $\alpha_\textrm{trim}$. Set $\alpha'=\Hom(\alpha,k)$. If $\alpha[k]$ is a trigon snippet of type $\mathbb{B}(h,t)$, then the following statements hold.
\begin{enumerate}
\item $\len(\alpha')= \len(\alpha)-1$.
\item $\alpha'_\textrm{trim}$ contains at most one bad snippet. This bad snippet is a trigon snippet inside a switch rectangle or complementary region and turns the same way as $\alpha[k]$.
\item If $\alpha'_\textrm{trim}$ contains a trigon snippet inside a switch rectangle, then $\carr(\alpha'_\textrm{trim})=\carr(\alpha_\textrm{trim})-1$ and $\alpha'_\textrm{trim}$ contains the same number of right and left duals as $\alpha_\textrm{trim}$.
\item If $\alpha'_\textrm{trim}$ contains a right-turning trigon of type $\mathbb{R}(h,v)$, then $\dual_R(\alpha'_\textrm{trim})=\dual_R(\alpha_\textrm{trim})-1$ and $\carr(\alpha'_\textrm{trim})=\carr(\alpha_\textrm{trim})$.
\item If $\alpha'_\textrm{trim}$ is not in efficient position, then $\len_\textrm{red}(\alpha'_\textrm{trim})\leq \len_\textrm{red}(\alpha_\textrm{trim})$.
\item If $\alpha'_\textrm{trim}$ is in efficient position, then $\len_\textrm{red}(\alpha'_\textrm{trim})\leq \len_\textrm{red}(\alpha_\textrm{trim})+2s$.
\end{enumerate}
\end{lem}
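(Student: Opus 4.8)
The plan is to obtain statements 1 and 2 by specialising Lemmas \ref{First ever observations trigons} and \ref{General observations trigon homotopies} to a branch rectangle, to extract statements 3 and 4 by tracking which of the two neighbouring snippets is genuinely altered, and to prove the length estimates 5 and 6 by a bookkeeping of corner length and blockers over the small window the homotopy changes. First I would set notation as in Lemma \ref{General observations trigon homotopies}: write $\alpha[k] \subset R$ with $R \in \mathcal{R}_\textrm{tie}$ a branch rectangle, let $T$ be the trigon cut off by $\alpha[k]$ with unique corner $x$ of $R$ in its boundary, and put $\beta = \alpha[k-1:k+2]$, $\beta' = \Hom(\beta,1)$. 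The one structural fact I need is that the horizontal and vertical sides of a branch rectangle contain no points of $\partial^2 \mathcal{R}$ in their interiors, so $\partial T \cap \partial^2\mathcal{R} = \{x\}$; thus $|\partial T \cap \partial^2\mathcal{R}| = 1$ and statement 1 is immediate from Lemma \ref{First ever observations trigons}(3), while the same fact forces $|E|=1$ in the notation of Lemma \ref{First ever observations trigons}, so the middle snippets $\beta'[1:-1]$ are empty. For statement 2, Lemma \ref{General observations trigon homotopies}(5) already gives that $\alpha'_\textrm{trim}$ contains at most one bad snippet, a trigon turning the same way as $\alpha[k]$; since $\beta'[1:-1]$ is empty this snippet is one of $\beta'[0],\beta'[-1]$, and by Lemma \ref{General observations trigon homotopies}(2) one of the two is weakly snippet homotopic to its predecessor, hence in efficient position by Lemma \ref{Weakly snippet homotopic implies same type}. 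The genuinely altered neighbour — the one whose region has $x$ as an honest corner — is then the only candidate for badness; its region is adjacent to the branch rectangle $R$ across one of the two sides of $T$, hence is a switch rectangle (across a tie) or a complementary region (across $\partial_h R$), never another branch rectangle, which pins the possible bad types down to $\mathbb{S}(h,t,\cdot)$, $\mathbb{S}(h,v,\cdot)$ and $\mathbb{R}(h,v)$.

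Next I would prove statements 3 and 4 by cases on the location of the altered neighbour. If it lies in a switch rectangle, then before the homotopy it is an efficient snippet of $\mathcal{R}_\textrm{tie}$ sharing an endpoint with $\alpha[k]$ on a tie, hence not a tie of $N$ and therefore carried; after the homotopy it becomes a trigon (or again carried, in the efficient case of statement 6), while every other tie-rectangle snippet of $\alpha$ is unchanged, so $\carr$ drops by exactly one whenever the trigon persists. The other neighbour then sits in a complementary region and is only weakly snippet homotopic, so weak homotopy slides its endpoint across the point $x$, which in this case is not a corner of that complementary region; hence the region it cuts off keeps its number of corners, its index and its turning direction, and $\dual_R,\dual_L$ are untouched — this is statement 3. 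For statement 4, the equality $\dual_R(\alpha'_\textrm{trim}) = \dual_R(\alpha_\textrm{trim}) - 1$ for a right-turning $\alpha[k]$ is exactly Lemma \ref{General observations trigon homotopies}(6) together with its left-turning mirror, and $\carr$ is unchanged because the only tie-rectangle snippet that moves is the weakly homotopic neighbour, which is carried before and after.

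Finally, for statements 5 and 6 I would compare $\len_\textrm{corn}$ and $\len_\textrm{block}$ before and after, restricting to the window $\alpha[k-1:k+2]$, which by Remark \ref{local trigon homotopy fixes boundary} is all the homotopy touches. The branch-rectangle snippet $\alpha[k]$ (corner length one) vanishes; an altered neighbour in a switch rectangle keeps corner length three; and for an altered or weakly homotopic neighbour in a complementary region a short index/side-counting argument shows its corner length changes only by the completion, or the merging, of a single full side at $x$. Summing these shows that when a trigon persists the net change in corner length is non-positive. For the blocker count I would use that, by definition, a blocker is three consecutive snippets flanked by vertical duals and that blockers of $\alpha$ overlap as in Lemma \ref{Overlap of blockers}: blockers disjoint from the window survive verbatim, and in the switch-rectangle case the one blocker that can meet the window survives because its boundary snippet remains a vertical dual, so no blocker is lost and $\len_\textrm{red}$ does not increase; in the complementary-region case a blocker can be destroyed, but the dual-to-trigon transition at $x$ drops the corner length by enough (using that a blocker-end snippet has corner length at least $5$, as in the proof of Lemma \ref{Length of blocker}) to absorb the resulting $+2$. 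When $\alpha'_\textrm{trim}$ is in efficient position (statement 6) there is no trigon to control and a complementary-region neighbour may grow into a vertical dual of corner length up to $2s$ while a blocker is lost, but the crude estimate $\len_\textrm{red}(\alpha'_\textrm{trim})\le\len_\textrm{corn}(\alpha'_\textrm{trim})$ together with $\len_\textrm{corn}(a)\le 2s_N$ from Remark \ref{Bounds on corner length} absorbs everything into the stated additive $2s$.

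I expect the real difficulty to be the non-increase asserted in statement 5 when a blocker meets the window: there one must balance the $+2$ cost of destroying a blocker against the precise corner-length drop produced as the endpoint of the altered neighbour crosses the corner $x$, and making this exact requires the delicate geometric analysis of which neighbour is altered, how the cut-off regions and their boundary words change, and how the branch/switch weights in the definition of corner length shift — the very interplay of index and length that the corner-length and reduced-corner-length notions were introduced to tame.
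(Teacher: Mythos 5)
Your proposal is correct and follows essentially the same route as the paper: parts 1 and 2 come from Lemmas \ref{First ever observations trigons} and \ref{General observations trigon homotopies} specialised to a branch rectangle; parts 3 and 4 come from tracking which of the two neighbours has the genuine (non-weak-homotopic) change; parts 5 and 6 come from local bookkeeping of $\len_\textrm{corn}$ and $\len_\textrm{block}$ over the window $\alpha[k-1:k+2]$, distinguishing according to whether the unmatched neighbour lives in a switch rectangle or a complementary region. Your final paragraph accurately identifies the genuine difficulty, which is the exact balance of index against weighted side-counts in the definition of $\len_\textrm{corn}$.

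One detail is slightly off. In the complementary-region case where a blocker is destroyed, the $+2$ is absorbed not because ``a blocker-end snippet has corner length at least $5$'': that estimate from Lemma \ref{Length of blocker} is not what is used. What actually happens is that the trigon snippet $\alpha[k]$ contributes $-1$ and the right vertical dual that turns into the $\mathbb{R}(h,v)$ trigon loses exactly one \emph{full} $\partial_v N$ component from $b$ (the segment between the two corners of $R_0$ adjacent to the new trigon becomes a proper sub-arc), so its corner length drops by at least one more; the net $-2$ cancels the $+2$ from the lost blocker. Relatedly, when you say the weakly-homotopic neighbour's corner length ``changes only by the completion, or the merging, of a single full side,'' you should note that the side that becomes complete is always the horizontal side of a \emph{branch} rectangle (weight one) — the partially-covered switch-rectangle side that also appears near $x$ never contributes, because $\len_\textrm{corn}$ only counts components of $b - \partial^2\mathcal{R}$ whose closure is an entire side. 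Without that observation the estimate could naively be $+3$, which would overshoot statement 5.
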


\begin{proof}
We prove the statements of the lemma in order.
First, suppose that $T$ is the trigon cut off by $\alpha[k]$. As $\alpha[k]$ is of type $\mathbb{B}(h,t)$, we know that $|\partial T \cap \partial^2 \mathcal{R}|=1$. Hence, Lemma \ref{First ever observations trigons} implies that $\len(\alpha')=\len(\alpha)-2+1=\len(\alpha)-1$, giving $\textit{1}$.

For the remainder of this proof we assume that $\alpha[k-1]$ and $\alpha[k+1]$ lie inside a switch rectangle and complementary region respectively. Thus, $\alpha'[k-1]$ and $\alpha'[k]$ lie inside a switch rectangle and complementary region respectively. Hence, if one of these is a trigon snippet, it must be of type $\mathbb{S}(h,t,1)$, $\mathbb{S}(h,v,2)$, $\mathbb{S}(h,t,3)$ or $\mathbb{R}(h,v)$. Combining this analysis with Lemma \ref{General observations trigon homotopies} gives \textit{2}.

Thirdly, if $\alpha'_\textrm{trim}$ contains a bad snippet of type $\mathbb{S}(h,t,1)$, $\mathbb{S}(h,v,2)$, or $\mathbb{S}(h,t,3)$, then this must be the snippet $\alpha'[k-1]$. Hence, $\alpha[k-1] \subset \alpha_\textrm{trim}$ and $\alpha[k-1]$ must be carried. As no other carried snippets of $\alpha$ are affected by the homotopy, we see that $\carr(\alpha'_\textrm{trim})=\carr(\alpha_\textrm{trim})-1$. Since $\alpha[k+1]$ and $\alpha'[k]$ must be weakly snippet homotopic in this case, $\alpha'_\textrm{trim}$ contains the same number of right and left duals as $\alpha_\textrm{trim}$, giving \textit{3}.

Fourthly, if $\alpha'_\textrm{trim}$ contains a right-turning trigon of type $\mathbb{R}(h,v)$, this must be the snippet $\alpha'[k]$. Hence, $\alpha[k+1]$ lies in $\alpha_\textrm{trim}$ and must be a right dual. As no duals of $\alpha$ but $\alpha[k+1]$ are affected by the homotopy, this implies that \[\dual_R(\alpha'_\textrm{trim})=\dual_R(\alpha_\textrm{trim})-1.\] Since $\alpha[k-1]$ and $\alpha'[k-1]$ must be weakly snippet homotopic in this case, we know that $\carr(\alpha'_\textrm{trim})=\carr(\alpha_\textrm{trim})$, giving \textit{4}.

\begin{figure}[htbp]
\begin{minipage}[b]{0.99\linewidth}
    \centering
\begin{tikzpicture}[scale=0.35]
\draw [very thick] (-3,1.5) rectangle (4,-3);
\node at (-3.75,-0.75) {\tiny{$t$}};
\node at (0.5,-3.75) {\tiny{$h$}};
\draw [very thick] (4,-3) rectangle (11,1.5);
\draw [very thick](10.5,-1.5) -- (11.5,-1.5);
\draw [very thick, densely dashed, blue] plot[smooth, tension=.7] coordinates {(-1,3.5) (0.7268,-0.5327) (8.0494,-1.305) };
\node at (-0.5493,-1.6153) {\tiny{$\alpha[k]$}};
\draw [very thick](10.5,0) -- (11.5,0);
\node at (-2.853,2.5984) {\tiny{$\alpha[k+1]$}};
\node at (6.9936,-2.1505) {\tiny{$\alpha[k-1]$}};
\draw [very thick, dashed, cyan] plot[smooth, tension=.4] coordinates {(-0.5101,3.4581) (0.0455,1.9173) (4.3848,1.6971)   (4.6889,-0.7451) (8.0116,-0.9756) };
\end{tikzpicture}
    \caption{A homotopy of type $\mathbb{B}(h,t)$ applied to a trigon snippet that intersects the large end of a switch rectangle.} 
    \label{Branch trigon no marked point}
    \vspace{3 ex}
  \end{minipage}
  
  \begin{minipage}[b]{0.49\linewidth}
    \centering
\begin{tikzpicture}[scale=0.3]
\draw [very thick] (-3,1.5) rectangle (4,-3);
\node at (12,-4.5) {\tiny{$t$}};
\draw [very thick] (4,1.5) rectangle (11,-10);
\draw [very thick](3.5,-6) -- (4.5,-6);
\draw [very thick, densely dashed, blue] plot[smooth, tension=.7] coordinates {(-1,3) (0.5,-1.5) (8.5,-2.5) };
\node at (-1.31,-1.795) {\tiny{$\alpha[k]$}};
\draw [very thick, dashed, cyan] plot[smooth, tension=.4] coordinates {(-0.55,2.985) (-0.275,2.315)  (4.3,1.85)  (4.81,-1.55) (8.5,-2) };
\node at (7.925,-4.185) {\tiny{$\alpha[k-1]$}};
\node at (-3.62,2.53) {\tiny{$\alpha[k+1]$}};
\node at (7.235,2.23) {\tiny{$h$}};
\end{tikzpicture}
    \caption{A homotopy of type $\mathbb{B}(h,t)$ applied to a trigon snippet that intersects a small end of a switch rectangle such that the trigon does not contain any corners of the adjacent complementary region in its boundary.} 
    \label{Branch trigon with no marked point but marked tie}
  \end{minipage}
\begin{minipage}[b]{0.49\linewidth}
    \centering
\begin{tikzpicture}[scale=0.3]
\draw [very thick] (-3,1.5) rectangle (4,-3);
\node at (12,3) {\tiny{$t$}};
\draw [very thick] (4,-3) rectangle (11,8.5);
\draw [very thick](3.5,4.5) -- (4.5,4.5);
\draw [very thick, densely dashed, blue] plot[smooth, tension=.7] coordinates {(-1,3.5) (0.77,-0.075) (8.1,-0.665) };
\node at (-0.725,-1.51) {\tiny{$\alpha[k]$}};
\node at (7.045,9.225) {\tiny{$h$}};
\draw [very thick, dashed, cyan] plot[smooth, tension=.4] coordinates {(-0.655,3.44) (-0.32,2.2) (4.38,1.985) (4.77,-0.11) (8.105,-0.18)};
\node at (-3.36,3.06) {\tiny{$\alpha[k+1]$}};
\node at (7.605,-2.14) {\tiny{$\alpha[k-1]$}};
\end{tikzpicture}
    \caption{A homotopy of type $\mathbb{B}(h,t)$ applied to a trigon snippet that intersects a small end of a switch rectangle such that the trigon contains a corner of the adjacent complementary region in its boundary.}
    \label{Branch trigon with marked point}
  \end{minipage}
\end{figure}

We prove the remaining two statements at the same time.
Since $\alpha[k-1]$ and hence $\Hom(\alpha[k-1:k+2],1)[0]$ lie inside a switch rectangle, we know that $\len_\textrm{corn}(\alpha'[k-1])=\len_\textrm{corn}(\alpha[k-1])$. We further know that $\len_\textrm{corn}(\alpha[k])=1$. 

If $\alpha[k+1] \not\subset \alpha_\textrm{trim}$, this implies that $\len_\textrm{corn}(\alpha'_\textrm{trim}) \leq \len_\textrm{corn}(\alpha_\textrm{trim})-1$. As no blockers of $\alpha_\textrm{trim}$ are affected by the homotopy in this case, we see that $\len_\textrm{red}(\alpha'_\textrm{trim})\leq \len_\textrm{red}(\alpha_\textrm{trim})-1$.

For the remainder of this proof we assume that $\alpha[k+1] \subset \alpha_\textrm{trim}$. We have to distinguish two cases: either $\alpha[k+1]$ is weakly snippet homotopic to $\alpha'[k]$ or it is not.
If $\alpha[k+1]$ is weakly snippet homotopic to $\alpha'[k]$, then either \[\len_\textrm{corn}(\alpha'[k]) = \len_\textrm{corn}(\alpha[k+1])=2s\] or \[\len_\textrm{corn}(\alpha'[k]) \leq \len_\textrm{corn}(\alpha[k+1])+1.\] The last inequality follows from the fact that the boundary of the region cut off by $\alpha'[k]$ contains at most one horizontal side of a branch rectangle more than the boundary of the region cut off by $\alpha[k+1]$. Since $\alpha[k+1]$ is weakly snippet homotopic to $\alpha'[k]$, the number of blockers of $\alpha_\textrm{trim}$ and $\alpha'_\textrm{trim}$ coincides, so $\len_\textrm{red}(\alpha'_\textrm{trim})\leq \len_\textrm{red}(\alpha_\textrm{trim})$.

If $\alpha[k+1]$ is not weakly snippet homotopic to $\alpha'[k]$, then we have to distinguish the following two cases: either $\len_\textrm{corn}(\alpha'[k])=2s$ or $\len_\textrm{corn}(\alpha'[k])<2s$. 
In the first case we know that $\alpha'_\textrm{trim}$ must be in efficient position. As $\alpha[k+1]$ is in efficient position, we further know that $\len_\textrm{corn}(\alpha[k+1])\geq 1$. This implies that \[\len_\textrm{corn}(\alpha'[k])\leq \len_\textrm{corn}(\alpha[k+1])+2s-1.\] Thus, we see that $\len_\textrm{corn}(\alpha'_\textrm{trim}) \leq \len_\textrm{corn}(\alpha_\textrm{trim})+2s-2.$ As the homotopy decreases the number of blockers by at most one, this implies that \[\len_\textrm{red}(\alpha'_\textrm{trim})\leq \len_\textrm{red}(\alpha_\textrm{trim})+2s.\]

In the second case, that is if $\len_\textrm{corn}(\alpha'[k])<2s$, an index-argument shows that $\len_\textrm{corn}(\alpha[k+1])=2s$. Thus, we see that $\len_\textrm{corn}(\alpha'_\textrm{trim}) \leq \len_\textrm{corn}(\alpha_\textrm{trim})-2.$ As the homotopy decreases the number of blockers again by at most one, this implies that $\len_\textrm{red}(\alpha'_\textrm{trim})\leq \len_\textrm{red}(\alpha_\textrm{trim})$, which concludes the proof of the lemma.
\end{proof}

\subsection{Trigons of type $\mathbb{S}(h,t,1)$}

\begin{lem} \label{Homotopy of type S(h,t,0)}
Suppose that $\alpha \subset S$ is an almost efficient arc or curve of trigon type. Set $0 \leq k \leq \len(\alpha)-1$ such that $\alpha[k]$ is the unique bad snippet of $\alpha_\textrm{trim}$. Set $\alpha'=\Hom(\alpha,k)$. If $\alpha[k]$ is a trigon snippet of type $\mathbb{S}(h,t,1)$, then the following statements hold.
\begin{enumerate}
\item $\len(\alpha')= \len(\alpha)-1$.
\item $\alpha'_\textrm{trim}$ contains at most one bad snippet. This snippet is of type $\mathbb{B}(h,t)$ and turns the same way as $\alpha[k]$.
\item If $\alpha'_\textrm{trim}$ is not in efficient position, then $\carr(\alpha'_\textrm{trim})=\carr(\alpha_\textrm{trim})-1$ and $\alpha'_\textrm{trim}$ contains the same number of right and left duals as $\alpha_\textrm{trim}$.
\item $\len_\textrm{red}(\alpha'_\textrm{trim})\leq \len_\textrm{red}(\alpha_\textrm{trim})$.
\end{enumerate}
\end{lem}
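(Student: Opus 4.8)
The approach is to imitate the proof of Lemma \ref{Homotopy of type B(h,t,0)}: read the four numbered claims off Lemma \ref{First ever observations trigons} and Lemma \ref{General observations trigon homotopies} after locating the two neighbours of the bad snippet, and then close claim (4) with a corner-length count. Fix notation: let $T$ be the trigon cut off by $\alpha[k]$ inside the switch rectangle $R$, and let $x$ be its unique corner of $R$. Since $\alpha[k]$ has type $\mathbb{S}(h,t,1)$, the point $x$ is the only point of $\partial^2\mathcal R$ on $\partial T$, so $|\partial T\cap\partial^2\mathcal R|=1$; Lemma \ref{First ever observations trigons}(3) then gives $\len(\alpha')=\len(\alpha)-2+1=\len(\alpha)-1$, which is (1), and the set $E$ of that lemma is a singleton, so the middle piece $\beta'[1:-1]$ of Lemma \ref{General observations trigon homotopies} is empty and $\beta'=\beta'[0]\cdot\beta'[-1]$.

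Next I would locate the neighbours. The two sides of $R$ meeting at $x$ are a horizontal side carrying $\alpha[k](0)$, whose other side is a complementary region, and a tie segment of $\partial_v R-\partial_v N$ carrying $\alpha[k](1)$, whose other side is a branch rectangle; so, relabelling if necessary, $\alpha[k-1]$ lies in that branch rectangle $R_b$ and $\alpha[k+1]$ in the complementary region. Because a horizontal side of a switch rectangle is collinear with the horizontal side of the adjacent branch rectangle, the third edge of $\partial\mathcal R$ at $x$ meets the horizontal side at angle $\pi$, so the argument in the proof of Lemma \ref{General observations trigon homotopies}(2) shows that $x$ is a corner of $R_b$. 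Hence $\alpha[k-1]$ is the neighbour that is not weakly snippet homotopic to its image, while $\alpha[k+1]$ slides only weakly across $x$ — its endpoint moving between two adjacent sub-components of one horizontal boundary side of its complementary region — and therefore stays in efficient position of the same handedness by Lemma \ref{Weakly snippet homotopic implies same type}. Thus the only candidate bad snippet of $\alpha'$ is the image of $\alpha[k-1]$; since $\alpha[k-1]$ has an endpoint on $\partial_v R_b$ it is not a tie, so it is carried, and its image runs from a vertical side of $R_b$ to the horizontal side of $R_b$ at $x$, hence cuts off a trigon with unique corner $x$ (using Lemma \ref{Positive index snippet in tie neighbourhood}), i.e.\ is of type $\mathbb{B}(h,t)$ and turns the same way as $\alpha[k]$ by Lemma \ref{General observations trigon homotopies}(5). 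This proves (2). Because the first and last snippets of an arc lie in peripheral complementary regions, $\alpha[k-1]$ lies in $\alpha_\textrm{trim}$, so whenever $\alpha'_\textrm{trim}$ is not in efficient position it contains this trigon: then the carried snippet $\alpha[k-1]$ of $\alpha_\textrm{trim}$ is destroyed and no other carried snippet is touched, so $\carr(\alpha'_\textrm{trim})=\carr(\alpha_\textrm{trim})-1$, while the only dual that changes is $\alpha[k+1]$, which keeps its handedness, and the new trigon is not a dual, so the right and left dual counts are unchanged. This is (3).

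For (4) I would compute corner lengths. The homotopy deletes $\alpha[k]$, of corner length $3$, and replaces $\alpha[k-1]$ by the $\mathbb{B}(h,t)$ trigon, also of corner length $1$, while $\beta'[1:-1]$ is empty; hence $\len_\textrm{corn}(\alpha'_\textrm{trim})-\len_\textrm{corn}(\alpha_\textrm{trim})$ equals $-3$ plus the change in corner length of $\alpha[k+1]$, and $\len_\textrm{block}$ can change only through blockers using $\alpha[k+1]$ or its image. If $\alpha[k+1]$ stays strongly snippet homotopic, or slides across $x$ onto a shorter dual, then $\len_\textrm{corn}$ does not increase and $\len_\textrm{block}$ is unchanged, so $\len_\textrm{red}$ does not increase. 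The crux — the step I expect to be the real obstacle — is the remaining case, in which the endpoint of $\alpha[k+1]$ is dragged across $x$ onto a longer dual; this is precisely the growth of the curve flagged in the introduction. There the plan is to argue that the longer dual cuts off an index-zero region on the side across $x$ and is therefore forced to be a vertical dual completing a new blocker, so that each unit of corner-length increase is more than matched by the corresponding increase of $2\len_\textrm{block}$ (via Lemma \ref{Length of blocker}), with Lemma \ref{Overlap of blockers} ensuring that the new blocker is counted only once; hence $\len_\textrm{red}(\alpha'_\textrm{trim})\le\len_\textrm{red}(\alpha_\textrm{trim})$. Keeping this blocker/corner-length ledger honest across the single corner $x$ that appears is the only genuine difficulty; no geometric input is needed beyond Lemmas \ref{First ever observations trigons} and \ref{General observations trigon homotopies}.
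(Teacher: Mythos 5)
Your arguments for parts (1)--(3) match the paper's: you correctly pin down $|\partial T\cap\partial^2\mathcal R|=1$, identify that the unique corner $x$ is a corner of the branch rectangle $R_b$ but not of the complementary region $R'$, deduce that the weak snippet homotopy type changes only on the $R_b$ side so that the new bad snippet is a $\mathbb{B}(h,t)$ trigon of the same handedness, and account for the carried and dual counts. (One small slip: the trigon cut off by the image of $\alpha[k-1]$ sits at the corner of $R_b$ diagonally opposite along $e$, not at $x$ itself, but this does not affect the type or handedness, which are governed by Lemma \ref{General observations trigon homotopies}.)

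Part (4) is where the proposal breaks down. The paper's argument is purely a corner-length count: since $\alpha[k+1]$ and $\alpha'[k]$ are weakly snippet homotopic and the endpoint crosses only the single corner $x$, the boundary of the region cut off by $\alpha'[k]$ differs from that of $\alpha[k+1]$ by exactly one sub-side of $\partial\mathcal R$ near $x$ --- either it loses one branch-rectangle sub-side (contribution $-1$) or gains one switch-rectangle sub-side (contribution $+3$). In particular $\len_\textrm{corn}(\alpha'[k])\le\len_\textrm{corn}(\alpha[k+1])+3$, which exactly cancels the $-3$ from deleting $\alpha[k]$, and since $\alpha'[k]$ keeps the same dual type and handedness while $\alpha[k-1]$, $\alpha[k]$, $\alpha'[k-1]$ are never part of a blocker, $\len_\textrm{block}$ is unchanged. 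You never establish this $+3$ bound; you instead flag the ``longer dual'' case as the genuine difficulty and attempt to absorb an unbounded corner-length increase by manufacturing a new blocker. That route fails: no new blocker is created. The only potentially-dual snippet affected is $\alpha[k+1]$, whose image $\alpha'[k]$ is weakly snippet homotopic to it and therefore is a vertical dual exactly when $\alpha[k+1]$ was one and of the same handedness; all neighbouring snippets $\alpha[k+2],\alpha[k+3],\dots$ are untouched, and on the other side $\alpha'[k-1]$ is a trigon, hence never in a blocker. So $\len_\textrm{block}(\alpha'_\textrm{trim})=\len_\textrm{block}(\alpha_\textrm{trim})$, and the blocker-ledger argument cannot offset anything. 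Even if a new blocker did appear it would only contribute $-2$, which without the $+3$ bound would not suffice. The missing idea is precisely the local accounting of what crossing the single corner $x$ does to the corner length --- the same observation that drives Lemma \ref{First ever observations trigons}(3) --- and once that is in place, (4) is immediate with no blocker analysis at all.
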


\begin{proof}
We prove the statements of the lemma in order.
First, suppose that $T$ is the trigon cut off by $\alpha[k]$. As $\alpha[k]$ is of type $\mathbb{S}(h,t,1)$, we know that $|\partial T \cap \partial^2 \mathcal{R}|=1$. Hence, Lemma \ref{First ever observations trigons} implies that $\len(\alpha')=\len(\alpha)-2+1=\len(\alpha)-1$, giving $\textit{1}$.

For the remainder of this proof we assume that $\alpha[k-1]$ lies inside a branch rectangle. Denote by $R \in \mathcal{R}_\textrm{com}$ the complementary region that contains the snippet $\alpha[k+1]$. As $\alpha[k]$ is of type $\mathbb{S}(h,t,1)$, the corner of $\partial^2 \mathcal{R}$ that lies in the boundary of the trigon $T$ is not a corner of the complementary region $R$ (see Figures \ref{Homotopy of type S(h,t,0) tie with marked point}-\ref{Homotopy of type S(h,t,0) tie without marked point}). Thus, $\alpha[k+1]$ and $\alpha'[k]$ are weakly snippet homotopic. Lemma \ref{General observations trigon homotopies} then implies that if $\alpha'_\textrm{trim}$ contains a bad snippet, this must be the snippet $\alpha'[k-1]$, which lies inside a branch rectangle. This gives \textit{2}.

\begin{figure}[htbp] 
  \begin{minipage}[b]{0.49\linewidth}
    \centering
\begin{tikzpicture}[scale=0.3]
\draw [very thick] (-3,1.5) rectangle (4,-3);
\node at (-3.75,-0.75) {\tiny{$t$}};
\node at (0.5,2.525) {\tiny{$h$}};
\draw [very thick] (4,1.5) rectangle (11,-10);
\draw [very thick](3.5,-6) -- (4.5,-6);
\draw [very thick, densely dashed, blue] plot[smooth, tension=.7] coordinates {(8.75,3) (7.16,-0.585) (-0.495,-1.205) };
\node at (8.5,-3) {\tiny{$\alpha[k]$}};
\draw [very thick, dashed, cyan] plot[smooth, tension=.4] coordinates {(8.395,3.01) (8.08,2.07) (3.885,1.89) (3.47,-0.7) (-0.505,-0.87)};
\node at (0.6,-2.045) {\tiny{$\alpha[k-1]$}};
\node at (11.2,2.525){\tiny{$\alpha[k+1]$}};
\end{tikzpicture}
    \caption{A homotopy of type $\mathbb{S}(h,t,1)$ applied to a trigon snippet that intersects a small end of a switch rectangle.} 
    \label{Homotopy of type S(h,t,0) tie with marked point}
  \end{minipage}
\begin{minipage}[b]{0.49\linewidth}
    \centering
\begin{tikzpicture}[scale=0.35]
\draw [very thick, densely dashed, blue] plot[smooth, tension=.7] coordinates {(1.885,2.505) (0.125,-0.545) (-8.035,-1.04) };
\node at (1,-1.565) {\tiny{$\alpha[k]$}};
\draw [very thick] (-3,1.5) rectangle (4,-3);
\node at (-6.53,2.46) {\tiny{$h$}};
\draw [very thick](3.5,0) -- (4.5,0);
\draw [very thick](3.5,-1.5) -- (4.5,-1.5);
\node at (-11.005,-0.62) {\tiny{$t$}};
\draw [very thick] (-3,-3) rectangle (-10,1.5);
\draw [very thick, dashed, cyan] plot[smooth, tension=.4] coordinates {(1.48,2.53) (1.13,1.98) (-3.15,1.865) (-3.655,-0.565) (-8.015,-0.695)};
\node at (-6.935,-2.265) {\tiny{$\alpha[k-1]$}};
\node at (3.9,2.46){\tiny{$\alpha[k+1]$}};
\end{tikzpicture}
    \caption{A homotopy of type $\mathbb{S}(h,t,1)$ applied to a trigon snippet that intersects the large end of a switch rectangle.}
    \label{Homotopy of type S(h,t,0) tie without marked point}
  \end{minipage}
\end{figure}

If $\alpha'[k-1] \subset \alpha'_\textrm{trim}$, then we know that $\alpha[k-1]$ lies in $\alpha_\textrm{trim}$ and must therefore be carried. As no other carried snippets of $\alpha_\textrm{trim}$ are affected by the homotopy, we see that $\carr(\alpha'_\textrm{trim})=\carr(\alpha_\textrm{trim})-1$. Furthermore, since $\alpha[k+1]$ and $\alpha'[k]$ are weakly snippet homotopic and no other duals of $\alpha_\textrm{trim}$ are affected by the homotopy, $\alpha'_\textrm{trim}$ contains the same number of right and left duals as $\alpha_\textrm{trim}$, giving \textit{3}.

Lastly, we note that $\len_\textrm{corn}(\alpha[k-1])=\len_\textrm{corn}(\alpha'[k-1])$ since both snippets lie inside a branch rectangle. As $\alpha[k]$ lies inside a switch rectangle, we know that $\len_\textrm{corn}(\alpha[k])=3$. Unless $\alpha[k+1]$ and $\alpha'[k]$ both have corner length $2s$, the boundary of the region cut off by $\alpha'[k]$ of $R$ either contains one side of a branch rectangle less or one side of a switch rectangle more than the boundary of the region cut off by $\alpha[k+1]$ of $R$. Thus, we see that $\len_\textrm{corn}(\alpha'[k])\leq \len_\textrm{corn}(\alpha[k+1])+3$, which implies that $\len_\textrm{corn}(\alpha'_\textrm{trim})\leq \len_\textrm{red}(\alpha_\textrm{corn})$. Since $\len_\textrm{block}(\alpha'_\textrm{trim})=\len_\textrm{block}(\alpha_\textrm{trim})$, statement \textit{4} follows.
\end{proof}

\subsection{Trigons of type $\mathbb{S}(h,v,2)$}

\begin{lem} \label{Homotopy of type T(S,h,v)}
Suppose that $\alpha \subset S$ is an almost efficient arc or curve of trigon type. Set $0 \leq k \leq \len(\alpha)-1$ such that $\alpha[k]$ is the unique bad snippet of $\alpha_\textrm{trim}$. Set $\alpha'=\Hom(\alpha,k)$. If $\alpha[k]$ is a trigon snippet of type $\mathbb{S}(h,v,2)$, then the following statements hold.
\begin{enumerate}
\item $\len(\alpha')= \len(\alpha)$.
\item $\alpha'_\textrm{trim}$ contains at most one bad snippet. This snippet is of type $\mathbb{R}(h,v)$ and turns the same way as $\alpha[k]$.
\item $\alpha'_\textrm{trim}$ contains a right-turning trigon of type $\mathbb{R}(h,v)$ if and only if \[\dual_R(\alpha'_\textrm{trim})=\dual_R(\alpha_\textrm{trim})-1.\]
\item If $\alpha'_\textrm{trim}$ is not in efficient position, then $\len_\textrm{red}(\alpha'_\textrm{trim})\leq \len_\textrm{red}(\alpha_\textrm{trim})$ and $\carr(\alpha'_\textrm{trim})=\carr(\alpha_\textrm{trim})$.
\item If $\alpha'_\textrm{trim}$ is in efficient position, then $\len_\textrm{red}(\alpha'_\textrm{trim})\leq \len_\textrm{red}(\alpha_\textrm{trim})+2s$.
\end{enumerate}
\end{lem}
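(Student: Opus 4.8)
The plan is to follow the template of Lemma~\ref{Homotopy of type B(h,t,0)} and Lemma~\ref{Homotopy of type S(h,t,0)}, proving the five statements in order. Statement~1 is immediate from Lemma~\ref{First ever observations trigons}(3): since $\alpha[k]$ has type $\mathbb{S}(h,v,2)$, the trigon $T$ it cuts off satisfies $|\partial T\cap\partial^2\mathcal{R}|=\w(\alpha[k])=2$, so $\len(\alpha')=\len(\alpha)-2+2=\len(\alpha)$.

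First I would fix the local picture. Write $R$ for the switch rectangle containing $\alpha[k]$; the two endpoints of $\alpha[k]$ lie on $\partial_h R$ and on the $\partial_v N$-segment of the split vertical side of $R$, so both neighbours $\alpha[k-1],\alpha[k+1]$ of $\alpha[k]$ lie in complementary regions. Adopting the convention of Lemma~\ref{First ever observations trigons}, the component $b_2$ of $b-x$ runs from the corner $x$ of $R$ along the split side through exactly one of its interior dash points, across the tie shared with an adjacent branch rectangle $R_b$, and into the $\partial_v N$-segment; hence $\Hom(\alpha[k-1:k+2],1)$ has three snippets $\beta'[0],\beta'[1],\beta'[-1]$, with $\beta'[1]=\alpha'[k]$ a tie of $R_b$, which by Lemma~\ref{General observations trigon homotopies}(1) cuts off an index-zero region on the side opposite the turning direction of $\alpha[k]$. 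The point $x$ is a \emph{smooth} point of the boundary of the complementary region on the $\partial_h R$-side (there the horizontal boundaries of $R$ and $R_b$ are collinear), so the snippet $\beta[0]$ at that endpoint is weakly snippet homotopic to $\beta'[0]$ and stays in efficient position by Lemma~\ref{Weakly snippet homotopic implies same type}. Thus the only snippet of $\alpha'_\textrm{trim}$ that can be bad is $\beta'[-1]=\alpha'[k+1]$: its endpoint at the $\partial_v N$-side has been pushed across the dash, which \emph{is} a corner of the complementary region on that side, so by Lemma~\ref{General observations trigon homotopies}(2),(5) (and the $\beta[-1]$-analogue of the corner-count in its proof) the region $\beta'[-1]$ cuts off loses exactly that corner, whence $\beta'[-1]$ is either in efficient position or a trigon with one endpoint on a horizontal and one on a vertical side of its complementary region, i.e.\ of type $\mathbb{R}(h,v)$ (the only trigon type in a complementary region, by Lemma~\ref{classification of snippets in complementary}), turning the same way as $\alpha[k]$. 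This gives statement~2. For statement~3, observe that a snippet degenerating from a dual to an $\mathbb{R}(h,v)$ trigon has its index-zero region promoted to index $1/4$, so it was a dual cutting off its index-zero region on the side it now turns; hence a right-turning $\mathbb{R}(h,v)$ trigon of $\alpha'_\textrm{trim}$ appears precisely when $\alpha[k+1]$ was a right dual that changed, and as no other carried snippet, right dual or left dual is touched by the homotopy, this happens if and only if $\dual_R(\alpha'_\textrm{trim})=\dual_R(\alpha_\textrm{trim})-1$.

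Statements~4 and 5 are the corner-length bookkeeping, carried out as in the two preceding lemmas. The tie-neighbourhood snippet is replaced from a switch-rectangle snippet of corner length $3$ by a branch-rectangle tie of corner length $1$, contributing $-2$; $\beta[0]$ and $\beta'[0]$ have equal corner length; and for the changed pair one argues: if $\alpha'_\textrm{trim}$ is not in efficient position then $\beta[-1]$ was a horizontal dual parallel to a horizontal side $C$ of its complementary region, of corner length $s(C)$, while $\beta'[-1]$ is the $\mathbb{R}(h,v)$ trigon parallel to $C$ with its first branch-rectangle component cut off, of corner length $s(C)-1$; hence $\len_\textrm{corn}(\alpha'_\textrm{trim})\le\len_\textrm{corn}(\alpha_\textrm{trim})-3$, and since neither a horizontal dual nor a switch-rectangle trigon lies in any blocker, $\len_\textrm{block}$ is unchanged and $\len_\textrm{red}(\alpha'_\textrm{trim})\le\len_\textrm{red}(\alpha_\textrm{trim})$. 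If $\alpha'_\textrm{trim}$ is in efficient position then $\beta'[-1]$ is again a dual, of corner length $\le 2s$, so $\len_\textrm{corn}(\alpha'_\textrm{trim})\le\len_\textrm{corn}(\alpha_\textrm{trim})+2s-3$, and since the homotopy alters only one snippet that could be a vertical dual it destroys at most one blocker, giving $\len_\textrm{red}(\alpha'_\textrm{trim})\le\len_\textrm{red}(\alpha_\textrm{trim})+2s$. That $\carr(\alpha'_\textrm{trim})=\carr(\alpha_\textrm{trim})$ is clear, since the old and new tie-neighbourhood snippets are a switch-rectangle trigon and a branch-rectangle tie, neither carried, and all other affected snippets lie in complementary regions. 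The boundary cases $k=1$ and $k=\len(\alpha)-2$, where $\beta[0]$ or $\beta[-1]$ leaves $\alpha_\textrm{trim}$, are handled as in the earlier proofs by noting that discarding an efficient boundary snippet only decreases the (reduced) corner length, and the sub-case where $\beta[-1]$ lies in a peripheral annulus is handled identically using the winding-number parts of Lemma~\ref{General observations trigon homotopies} in place of the corner count. The main obstacle is precisely this reduced-corner-length accounting in the non-efficient case of statement~4: one must verify that when the horizontal dual on the $\partial_v N$-side collapses to an $\mathbb{R}(h,v)$ trigon, the corner length it sheds, together with the $2$ freed by replacing the switch-rectangle trigon snippet by a branch tie, is never outweighed by an increase — this rests on the exact shape of the index-zero region cut off by that horizontal dual and on the fact that the inserted middle snippet is a branch-rectangle tie rather than a long complementary-region dual.
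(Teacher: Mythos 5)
Your proof tracks the paper's argument very closely: statement 1 falls out of Lemma~\ref{First ever observations trigons}(3); statements 2 and 3 come from observing that the corner $x$ of the switch rectangle is \emph{not} a corner of the complementary region on the $\partial_h$-side, whereas the dash \emph{is} a corner of the complementary region on the $\partial_v N$-side, so only that latter end-snippet can acquire positive index, and only by a horizontal dual losing one corner; and statements 4--5 are done by the same corner-length and blocker accounting as the paper. The geometry (branch tie in the middle, weak snippet homotopy on the $\partial_h$-side), the invocation of Lemma~\ref{General observations trigon homotopies}, and the bounds all mirror the paper.

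One intermediate claim is not quite right, though it does not affect the conclusions: ``\,$\beta[0]$ and $\beta'[0]$ have equal corner length.'' Weak snippet homotopy preserves efficient position and the sides of the complementary region containing the endpoints, but not the corner length. Here the near endpoint of $\beta[0]$ moves from the edge $\partial_h R$ onto the adjacent edge $\partial_h R_b$, crossing the trivalent vertex $x$. Although $x$ is not a corner of that complementary region, it \emph{is} a point of $\partial^2\mathcal{R}$, so the count of full edges in the boundary of the non-negative-index region changes: if $\beta[0]$ is a right vertical dual the corner length drops by $1$; if a left vertical dual it rises by $3$ (the partial switch-boundary piece becomes full); if $\len_\textrm{corn}(\beta[0])=2s$ it stays $2s$. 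Thus in the non-efficient case of statement~4 the correct tally is
\[
\len_\textrm{corn}(\alpha'_\textrm{trim}) - \len_\textrm{corn}(\alpha_\textrm{trim}) \;\le\; (-2) + (-1) + 3 \;=\; 0,
\]
not $-3$ as you assert; and in statement~5 the tally is $\le (-2) + (2s-1) + 3 = 2s$. Both are exactly the bounds the lemma claims, so nothing breaks, but the equality of corner lengths should not be asserted. It is worth noting that the paper's own proof elides the $\beta[0]$ contribution entirely (it only tracks $\alpha[k]$ and $\alpha[k-1]$), so this is a shared looseness in the accounting rather than a defect unique to your write-up. You also do not explicitly treat the $\len(\alpha)=2$ case for curves (where $\alpha[k-1]=\alpha[k+1]$ and the homotopy affects both endpoints of the same snippet); the paper dispatches it with the parity observation that after the homotopy both endpoints lie on $\partial_h N$ and the index shift is at most $1/4$, so no $\mathbb{R}(h,v)$-trigon can arise — your ``boundary cases'' sentence should be extended to cover this.
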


\begin{proof}
We prove the statements of the lemma in order.
First, suppose that $T$ is the trigon cut off by $\alpha[k]$. As $\alpha[k]$ is of type $\mathbb{S}(h,v,2)$, we know that $|\partial T \cap \partial^2 \mathcal{R}|=2$. Hence, Lemma \ref{First ever observations trigons} implies that $\len(\alpha')=\len(\alpha)-2+2=\len(\alpha)$, giving $\textit{1}$.

For the remainder of this proof we assume that $\alpha[k-1]$ lies inside a complementary region $R \in \mathcal{R}_\textrm{com}$ and intersects $\alpha[k]$ in $\partial_v N$ (see Figure \ref{Homotopy of type T(S,h,v) picture}). Let $R \in \mathcal{R}_\textrm{com}$ be the complementary region that contains the snippet $\alpha[k+1]$. We note that it is possible that $R=R'$. This holds especially true if $\len(\alpha)=2$. Since $\alpha[k]$ is of type $\mathbb{S}
(h,v,2)$, the boundary of $T$ contains two corners of $\partial^2 \mathcal{R}$. One of them is neither a corner of $R$ nor $R'$, whereas the other is a corner of $R$. Thus, if $\alpha'_\textrm{trim}$ contains a bad snippet, this is the snippet $\alpha'[k-1]$. Since $\alpha'[k-1]$ lies inside a complementary region, Lemma \ref{General observations trigon homotopies} implies that $\alpha'[k-1]$ is of type $\mathbb{R}(h,v)$ and turns the same way as $\alpha[k]$, giving \textit{2}.

\begin{figure}[htbp] 
  \begin{minipage}[b]{0.39\linewidth}
    \centering
\begin{tikzpicture}[scale=0.25]
\draw [very thick] (-5,2) rectangle (4,-3);
\node at (-1.335,2.71) {\tiny{$h$}};
\node at (-6.05,-0.445) {\tiny{$t$}};
\draw [very thick] (4,2) rectangle (11,-12);
\draw [very thick, densely dashed, blue] plot[smooth, tension=.7] coordinates {(9.18,3.91) (7.23,-3.905) (-2.065,-5.25) };
\node at (8.105,-5.775) {\tiny{$\alpha[k]$}};
\draw [very thick] (4,-12) rectangle (-5,-8);
\draw [very thick, dashed, cyan] plot[smooth, tension=.4] coordinates {(8.595,3.88) (8.215,2.55) (3.68,2.25) (3.24,-3.88) (-2.14,-4.4)};
\node at (-0.25,-6.78) {\tiny{$\alpha[k-1]$}};
\node at (12.485,3.895){\tiny{$\alpha[k+1]$}};
\end{tikzpicture}
    \caption{A homotopy of type $\mathbb{S}(h,v,2)$.} 
    \label{Homotopy of type T(S,h,v) picture}
  \end{minipage}
\begin{minipage}[b]{0.59\linewidth}
    \centering
\begin{tikzpicture}[scale=0.55]
\draw [very thick](-4,-1.5) {} -- (-4,-3) {};
\draw [very thick] plot[smooth, tension=.7] coordinates { (-4,-3) (1,-2.5) (6,-3)};
\draw [very thick](6,-1.5) node (v3) {} -- (6,-3);
\draw [very thick] plot[smooth, tension=.7] coordinates {(-4,-1.5) (-1.5,0) (-1,2)};
\draw [very thick] plot[smooth, tension=.7] coordinates {(v3) (3.5,0) (3,2)};
\draw [very thick] (-4,-4) node (v1) {} rectangle (-6,-0.5);
\draw [very thick] plot[smooth, tension=.7] coordinates {(v1) (-2.1,-3.7) (-1,-3.6)};
\draw [very thick] plot[smooth, tension=.7] coordinates {(-4,-0.5) (-3,-0.1) (-2.1,0.6)};
\draw [very thick](-2.1,0.6) -- (-1.4,0.1);
\draw [very thick, densely dashed, blue] plot[smooth, tension=.7] coordinates {(-5.295,-5.025) (-4.275,-2.43) (1.285,-1.345)  (5.985,-2.165) };
\node at (-6.5,-2) {{\tiny{$t$}}};
\node at (-5,0) {{\tiny{$h$}}};
\node at (-5.12,-1.925) {\tiny{$\alpha[k]$}};
\node at (1.09,-0.37) {\tiny{$\alpha[k-1]$}};
\draw [very thick, densely dashed, cyan] plot[smooth, tension=.7] coordinates {(-5.03,-5.035) (-4.91,-4.605) (-3.625,-4.305) (-3.155,-2.37) (1.59,-1.655) (5.99,-2.46)};
\end{tikzpicture}
    \caption{A homotopy of type $\mathbb{S}(h,v,2)$ creating a trigon of type $\mathbb{R}(h,v)$.}
    \label{Homotopy of type T(S,h,v) causing trigon}
  \end{minipage}
\end{figure}

Let us suppose that $\alpha[k]$ turns right.
We already know that $\alpha'_\textrm{trim}$ is not in efficient position if and only if $\alpha[k-1] \subset \alpha_\textrm{trim}$ and $\alpha'[k-1]$ is a trigon of type $\mathbb{R}(h,v)$ (see Figure \ref{Homotopy of type T(S,h,v) causing trigon}). Lemma \ref{General observations trigon homotopies} implies that this is the case if and only if $\alpha[k-1] \subset \alpha_\textrm{trim}$ and $\alpha'_\textrm{trim}$ contains one right horizontal dual less than $\alpha_\textrm{trim}$, giving \textit{3}. We remark that this is the dual $\alpha[k-1]$.

No carried snippets are affected by the homotopy. Thus, we know that $\carr(\alpha'_\textrm{trim})=\carr(\alpha_\textrm{trim})$. By definition of the corner length, we further know that $\len_\textrm{corn}(\alpha[k])=3$ and $\len_\textrm{corn}(\alpha'[k])=1$. If $\alpha'[k-1]$ is a trigon snippet and $\len(\alpha)>2$, we see that $\len_\textrm{corn}(\alpha'[k-1])=\len_\textrm{corn}(\alpha[k-1])-1$. We note that $\len(\alpha)=2$ implies that $\alpha'[k-1]$ is in efficient position. This follows from the observations that both its endpoints lie on the horizontal boundary and the index is changed by at most $1/4$.
Thus, we see that $\len_\textrm{corn}(\alpha'_\textrm{trim})\leq \len_\textrm{red}(\alpha_\textrm{corn})$ if $\alpha'[k-1]$ is a trigon snippet. Since blockers do not contain horizontal duals, the number of blockers of $\alpha_\textrm{trim}$ and $\alpha'_\textrm{trim}$ coincides, giving \textit{4}.

If $\alpha'_\textrm{trim}$ is in efficient position, then either $\alpha'[k-1] \not\subset \alpha'_\textrm{trim}$ or $\alpha'[k-1] \subset \alpha'_\textrm{trim}$ and it is in efficient position. In the first case, the number of blockers of $\alpha_\textrm{trim}$ and $\alpha'_\textrm{trim}$ coincides and $\len_\textrm{red}(\alpha'_\textrm{trim})\leq \len_\textrm{red}(\alpha_\textrm{corn})+1$. In the second case, we see that \[\len_\textrm{corn}(\alpha'[k-1]) \leq 2s \leq 2s + \len_\textrm{corn}(\alpha[k-1])-1.\] This implies that $\len_\textrm{corn}(\alpha'_\textrm{trim})\leq \len_\textrm{corn}(\alpha_\textrm{trim})+2s$. The number of blockers can only grow under the homotopy, which gives \textit{5}.
\end{proof}

\subsection{Trigons of type $\mathbb{S}(h,t,3)$}

\begin{lem} \label{Homotopy of type T(S,h,t,2)}
Suppose that $\alpha \subset S$ is an almost efficient arc or curve of trigon type. Set $0 \leq k \leq \len(\alpha)-1$ such that $\alpha[k]$ is the unique bad snippet of $\alpha_\textrm{trim}$. Set $\alpha'=\Hom(\alpha,k)$. If $\alpha[k]$ is a trigon snippet of type $\mathbb{S}(h,t,3)$, then the following statements hold.
\begin{enumerate}
\item $\len(\alpha')= \len(\alpha)+1$.
\item $\alpha'_\textrm{trim}$ contains at most one bad snippet. This snippet is of type $\mathbb{B}(h,t)$ and turns the same way as $\alpha[k]$.
\item If $\alpha'_\textrm{trim}$ is not in efficient position, then $\carr(\alpha'_\textrm{trim})=\carr(\alpha_\textrm{trim})-1$.
\item If $\alpha[k]$ is turning right, then $\dual_R(\alpha'_\textrm{trim})=\dual_R(\alpha_\textrm{trim})$.
\item $\len_\textrm{red}(\alpha'_\textrm{trim})\leq \len_\textrm{red}(\alpha_\textrm{trim})$.
\end{enumerate}
\end{lem}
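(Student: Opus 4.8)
The plan is to mirror the proof of Lemma~\ref{Homotopy of type S(h,t,0)}: the two statements are almost identical, the only difference being that a trigon of type $\mathbb{S}(h,t,3)$ has weight $3$ rather than $1$, so $|\partial T\cap\partial^2\mathcal R|=3$ and Lemma~\ref{First ever observations trigons} gives $\len(\alpha')=\len(\alpha)-2+3=\len(\alpha)+1$, which is statement~1.

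Next I would fix the local picture. Weight $3$ forces the boundary arc $b=\partial T-\alpha[k]$ of the trigon to run along one full $\partial_vN$-segment in the small side of the switch rectangle together with the two $t$-pieces flanking it, so that $b$ meets the two corners of that $\partial_vN$-segment and exactly one further corner $x$ of the switch rectangle. After possibly reversing the orientation of $\alpha$ (and invoking the left/right symmetry of Lemma~\ref{General observations trigon homotopies}) we may assume $\alpha[k-1]$ lies in a branch rectangle $R_B$ — at the $t$-endpoint of $\alpha[k]$ — and $\alpha[k+1]$ lies in a complementary region $R_D$ — at the horizontal endpoint. Since $\alpha[k-1]$ is in efficient position and has an endpoint on $\partial_vR_B$, it cannot be a tie of $N$, hence it is carried. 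Now I would apply Lemma~\ref{General observations trigon homotopies}: the image $\beta'=\Hom(\alpha[k-1:k+2],1)$ splits as $\beta'[0]\cdot\beta'[1]\cdot\beta'[2]\cdot\beta'[3]$, where $\beta'[0]\subset R_B$, $\beta'[1]$ is a vertical dual lying in the complementary region at the $\partial_vN$-cusp, $\beta'[2]$ is a tie of $N$ in the adjacent branch rectangle, and $\beta'[3]\subset R_D$ is weakly snippet homotopic to $\alpha[k+1]$; moreover $\beta'[1],\beta'[2]$ are in efficient position (part~(1)) and $\beta'[3]$ is in efficient position (Lemma~\ref{Weakly snippet homotopic implies same type}). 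Because $\alpha[k-1]$ was carried — going from one vertical side of $R_B$ to the other — and its image $\beta'[0]$ keeps the unmoved endpoint on $\partial_vR_B$ while the moved endpoint is forced onto $\partial_hR_B$ (where $R_B$ abuts the $\partial_vN$-cusp region), $\beta'[0]$ runs from $\partial_vR_B$ to $\partial_hR_B$ and so, if bad, is a trigon of type $\mathbb{B}(h,t)$ turning the same way as $\alpha[k]$; part~(5) of Lemma~\ref{General observations trigon homotopies} rules out any other bad snippet of $\alpha'_\textrm{trim}$. This is statement~2. Statement~3 is then the count ``one carried snippet $\alpha[k-1]$ is destroyed and replaced by the non-carried $\beta'[0]$; the new $\beta'[1]$ (a dual) and $\beta'[2]$ (a tie) are not carried; and $\beta'[3]$ inherits the carrying status of $\alpha[k+1]$''. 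For statement~4, when $\alpha[k]$ turns right, part~(1) of Lemma~\ref{General observations trigon homotopies} makes $\beta'[1]$ and $\beta'[2]$ cut off their index-zero regions on the left, so $\beta'[1]$ is a left, not right, dual and $\beta'[2]$ (lying in a branch rectangle) is not counted among right duals at all, while $\alpha[k-1]$ and $\beta'[0]$ are not right duals and $\beta'[3]$, being weakly snippet homotopic to $\alpha[k+1]$, has the same right-dual status.

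The crux is statement~5. Tallying corner lengths: $\alpha[k]$ (corner length $3$) is removed; $\alpha[k-1]\to\beta'[0]$ contributes $0$, both living in branch rectangles; $\beta'[1]$ is a vertical dual of corner length $1$, since its cut-off region has a single $\partial_vN$-side (the cusp segment) in its interior; $\beta'[2]$ contributes $1$; and $\alpha[k+1]\to\beta'[3]$ changes the corner length by some $\Delta$. Thus $\len_\textrm{corn}(\alpha'_\textrm{trim})=\len_\textrm{corn}(\alpha_\textrm{trim})-1+\Delta$. I would split into two cases. If $\alpha[k+1]$ is not combed toward $x$, then dragging its endpoint around $x$ does not enlarge the region it cuts off, so $\Delta\le 1$, $\len_\textrm{corn}$ does not increase, and no blocker is destroyed — a blocker here could only involve $\alpha[k-1]$, which is carried and hence never part of a blocker, or $\alpha[k+1]$, whose blocker, if any, survives unchanged since $\beta'[3]$ is weakly snippet homotopic to $\alpha[k+1]$ and the neighbouring snippets are untouched — so $\len_\textrm{red}$ does not increase. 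If instead $\alpha[k+1]$ is combed toward $x$, its region is dragged around the corner and $\Delta>1$; here I would verify that $\beta'[1]\cdot\beta'[2]\cdot\beta'[3]$ is a new left blocker, so that $\len_\textrm{block}$ increases and the extra $-2$ in the reduced corner length absorbs the surplus, using Lemma~\ref{Overlap of blockers} to make sure this blocker is not already being counted. In both cases $\len_\textrm{red}(\alpha'_\textrm{trim})\le\len_\textrm{red}(\alpha_\textrm{trim})$.

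The main obstacle is exactly this dichotomy in statement~5: one must show that the corner length of the image of $\alpha[k+1]$ can only grow (beyond the harmless $+1$) when a new blocker is simultaneously created, and that the growth is then bounded by twice the number of such blockers. This is the ``length is paid for by index'' trade-off that is the entire reason one works with the reduced rather than the plain corner length, and it is the same mechanism that appears, in mirrored form, in Lemma~\ref{Homotopy of type B(h,t,0)}.
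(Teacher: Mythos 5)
Your proposal is correct and follows essentially the same route as the paper's proof: reduce to Lemma~\ref{First ever observations trigons} for statement~1, identify the four pieces of $\Hom(\alpha[k-1:k+2],1)$ (a carried/trigon piece in $R_B$, a vertical dual of corner length~$1$ in the cusp region, a tie of corner length~$1$, and a snippet weakly snippet homotopic to $\alpha[k+1]$), and then tally corner lengths and blockers, splitting on whether $\alpha[k+1]$ cuts off its index-zero region toward or away from the corner $x$. The one place the paper is sharper than your sketch is the exact value of your $\Delta$: it shows $\Delta\in\{-1,0,+3\}$ (not merely $\Delta\le 1$ or $\Delta>1$), with $\Delta=+3$ occurring precisely when $\alpha[k+1]$ is a left vertical dual, so that the $+2$ increase in corner length is exactly cancelled by the $-2$ from the single new blocker $\alpha'[k]\cdot\alpha'[k+1]\cdot\alpha'[k+2]$ -- this pins down the ``growth bounded by twice the number of new blockers'' step that you flag as the crux.
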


\begin{proof}
We prove the statements of the lemma in order.
First, suppose that $T$ is the trigon cut off by $\alpha[k]$. As $\alpha[k]$ is of type $\mathbb{S}(h,t,3)$, we know that $|\partial T \cap \partial^2 \mathcal{R}|=3$. Hence, Lemma \ref{First ever observations trigons} implies that $\len(\alpha')=\len(\alpha)-2+3=\len(\alpha)+1$, giving $\textit{1}$.

For the remainder of this proof we assume that $\alpha[k-1]$ lies inside a branch rectangle. Denote by $R \in \mathcal{R}_\textrm{comp}$ the complementary region that contains the snippet $\alpha[k+1]$. As $\alpha[k]$ is of type $\mathbb{S}(h,t,3)$, the corner of $\partial^2 \mathcal{R}$ that lies in the boundary of the trigon $T$ is not a corner of the complementary region $R$ (see Figures \ref{Homotopy of type T(S,h,t,2) picture}). Thus, $\alpha[k+1]$ and $\alpha'[k+2]$ are weakly snippet homotopic. Lemma \ref{General observations trigon homotopies} then implies that if $\alpha'_\textrm{trim}$ contains a bad snippet, this must be the snippet $\alpha'[k-1]$, which lies inside a branch rectangle. This gives \textit{2}.

\begin{figure}[htbp] 
  \begin{minipage}[b]{0.99\linewidth}
    \centering
\begin{tikzpicture}[scale=0.25]
\draw [very thick] (-5,2) rectangle (4,-2.5);
\node at (-1.75,2.5) {\tiny{$h$}};
\node at (12,-4.5) {\tiny{$t$}};
\draw [very thick] (4,2) rectangle (11,-12);
\draw [very thick, densely dashed, blue] plot[smooth, tension=.7] coordinates {(9.355,3.81) (7.44,-7.71) (-1.645,-9.985) };
\node at (6.565,-4.135) {\tiny{$\alpha[k]$}};
\draw [very thick] (4,-12) rectangle (-5,-7.5);
\draw [very thick, dashed, cyan] plot[smooth, tension=.4] coordinates {(8.645,3.765) (8.31,2.84) (3.61,2.08) (3.17,-8.555) (-1.61,-9.49)};
\node at (-0.675,-11.12) {\tiny{$\alpha[k-1]$}};
\node at (12.515,3.795) {\tiny{$\alpha[k+1]$}};
\end{tikzpicture}
    \caption{A homotopy of type $\mathbb{S}(h,t,3)$.} 
    \label{Homotopy of type T(S,h,t,2) picture}
  \end{minipage}
\end{figure}
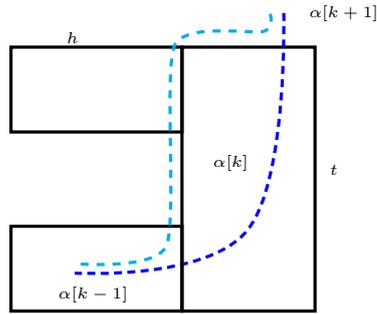

If $\alpha'[k-1] \subset \alpha'_\textrm{trim}$, then we know that $\alpha[k-1]$ lies in $\alpha_\textrm{trim}$ and must therefore be carried. As no other carried snippets of $\alpha_\textrm{trim}$ are affected by the homotopy, we see that $\carr(\alpha'_\textrm{trim})=\carr(\alpha_\textrm{trim})-1$, giving \textit{3}.

Suppose that $\alpha[k]$ turns right. We recall that $\alpha[k+1]$ and $\alpha'[k+2]$ are weakly snippet homotopic and that $\alpha'[k]$ must be a left vertical dual by Lemma \ref{General observations trigon homotopies}. As no further duals of $\alpha_\textrm{trim}$ are created or affected by the homotopy, $\alpha'_\textrm{trim}$ contains the same number of right duals as $\alpha_\textrm{trim}$, giving \textit{4}.

Lastly, we note that $\len_\textrm{corn}(\alpha[k-1])=\len_\textrm{corn}(\alpha'[k-1])$ since both snippets lie inside a branch rectangle. As $\alpha[k]$ lies inside a switch rectangle, we know that $\len_\textrm{corn}(\alpha[k])=3$. We further know that $\len_\textrm{corn}(\alpha'[k:k+2])=2$. Unless $\alpha[k+1]$ and $\alpha'[k]$ both have corner length $2s$, the boundary of the region cut off by $\alpha'[k]$ of $R$ either contains one side of a branch rectangle less or one side of a switch rectangle more than the boundary of the region cut off by $\alpha[k+1]$ of $R$. Thus, we see that $\len_\textrm{corn}(\alpha'[k+2])\leq \len_\textrm{corn}(\alpha[k+1])+3$. This implies that \[\len_\textrm{corn}(\alpha'_\textrm{trim})\leq \len_\textrm{red}(\alpha_\textrm{corn})+2.\] However, we note that $\len_\textrm{corn}(\alpha'[k+2])= \len_\textrm{corn}(\alpha[k+1])+3$ if and only if $\alpha'[k+2]$ is a left vertical dual. In this case, $\alpha'_\textrm{trim}$ contains one blocker more than $\alpha_\textrm{trim}$, which implies that $\len_\textrm{block}(\alpha'_\textrm{trim})=\len_\textrm{block}(\alpha_\textrm{trim})$.
Else, we know that $\len_\textrm{corn}(\alpha'[k])\leq \len_\textrm{corn}(\alpha[k+1])$. As the homotopy does not decrease the number of blockers, this implies that $\len_\textrm{block}(\alpha'_\textrm{trim})=\len_\textrm{block}(\alpha_\textrm{trim})$, giving \textit{5}.
\end{proof}

\subsection{Trigons of type $\mathbb{R}(h,v)$}

\begin{lem} \label{Homotopy of type R(h,v)}
Suppose that $\alpha \subset S$ is an almost efficient arc or curve of trigon type. Set $0 \leq k \leq \len(\alpha)-1$ such that $\alpha[k]$ is the unique bad snippet of $\alpha_\textrm{trim}$. Set $\alpha'=\Hom(\alpha,k)$. If $\alpha[k]$ is a trigon snippet of type $\mathbb{R}(h,v)$, then the following statements hold.
\begin{enumerate}
\item $\len(\alpha') \leq \len(\alpha)+s_N-2$.
\item $\alpha'_\textrm{trim}$ contains at most one bad snippet. This snippet is of type $\mathbb{B}(h,t)$, $\mathbb{S}(h,t,1)$, or $\mathbb{S}(h,t,3)$ and turns the same way as $\alpha[k]$.
\item If $\alpha'_\textrm{trim}$ is not in efficient position, then $\carr(\alpha'_\textrm{trim})\leq \carr(\alpha_\textrm{trim})+s_N-1$.
\item The numbers of right and left duals of $\alpha_\textrm{trim}$ and $\alpha'_\textrm{trim}$ coincide.
\item $\len_\textrm{red}(\alpha'_\textrm{trim})\leq \len_\textrm{red}(\alpha_\textrm{trim})$.
\end{enumerate}
\end{lem}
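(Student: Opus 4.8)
The plan is to follow the template of the earlier trigon lemmas, in particular Lemma~\ref{Homotopy of type B(h,t,0)} and Lemma~\ref{Homotopy of type T(S,h,t,2)}, deriving the five statements from Lemma~\ref{First ever observations trigons}, Lemma~\ref{General observations trigon homotopies} and Lemma~\ref{Weakly snippet homotopic implies same type} together with corner-length bookkeeping. Write $T$ for the trigon cut off by $\alpha[k]$ and $x$ for its unique corner of the containing complementary region $R$. By Lemma~\ref{First ever observations trigons}, $b=\partial T-\alpha[k]$ decomposes as $b_1 \cup \{x\} \cup b_2$, and since $\alpha[k]$ is of type $\mathbb{R}(h,v)$ one takes $b_1$ to be a subarc of a vertical side $v$ of $R$ (so $b_1 \cap \partial^2 \mathcal{R}=\emptyset$) and $b_2$ a subarc of a horizontal side $h'$ of $R$. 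As $\partial_v R \subset \partial_v N$, the snippet of $\alpha$ meeting $\overline{b_1}$ lies in the switch rectangle on the far side of $v$; write $c_1$ for this snippet and $c_2$ for the snippet meeting $\overline{b_2}$, which lies in the branch or switch rectangle along $h'$ containing the endpoint of $\alpha[k]$ on $h'$. Set $\beta=\alpha[k-1:k+2]$ and $\beta'=\Hom(\beta,1)$. Statement 1 is then immediate: $\len(\alpha')=\len(\alpha)-2+|\partial T \cap \partial^2\mathcal{R}|$, and since $\beta$ is replaced by $|\partial T \cap \partial^2\mathcal{R}|+1 \le s_N$ snippets (Remark~\ref{Required time for trigon homotopies}), we get $\len(\alpha') \le \len(\alpha)+s_N-3 \le \len(\alpha)+s_N-2$.

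For statement 2, Lemma~\ref{General observations trigon homotopies} already yields that $\alpha'_\textrm{trim}$ contains at most one bad snippet, that it is a trigon turning the same way as $\alpha[k]$, and that one of $c_1,c_2$ is weakly snippet homotopic to its image, so the new bad snippet lives in the region of the other one. The remaining task, which I expect to be \emph{the main obstacle}, is to determine its type. One distinguishes whether the moved endpoint is pushed across a corner of a branch rectangle or of a switch rectangle. In the first case the affected snippet was a tie of that branch rectangle (it has one endpoint on $h'$, hence on $\partial_h N$), so after its $h'$-endpoint is pushed across the corner it has endpoints on an $h$-side and a tie, that is, type $\mathbb{B}(h,t)$. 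In the second case the affected snippet was carried or a tie of the switch rectangle, and pushing one of its endpoints across a corner of the switch rectangle lands that endpoint on one of the $t$-labelled portions of a vertical side, never on the $\partial_v N$-portion, so the new trigon is of type $\mathbb{S}(h,t,w)$; inspecting how much of the small side of the switch rectangle the trigon's boundary sweeps past shows $w \in \{1,3\}$. This rules out $\mathbb{S}(h,v,2)$, the bigon types and the disk type, and establishes statement 2. This step is routine but delicate: it requires a careful inspection of the local train-track combinatorics near $x$ and of the labelling of switch-rectangle boundaries, and is best organised around figures of the kind accompanying the earlier trigon lemmas.

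Statement 4 is the easiest. The homotopy modifies only $c_1$, $\alpha[k]$, $c_2$ and inserts the snippets $\beta'[1:-1]$, all of which lie in the tie neighbourhood. Since right and left duals lie in complementary regions while $c_1,c_2$ lie in tie-neighbourhood rectangles and $\alpha[k]$ is bad (hence not a dual), no right or left dual is created or destroyed, so the counts for $\alpha_\textrm{trim}$ and $\alpha'_\textrm{trim}$ coincide. For statement 3, suppose $\alpha'_\textrm{trim}$ is not in efficient position; then its bad snippet is the ``moved'' one among $\beta'[0],\beta'[-1]$ and is in particular not carried, while the other of these two is weakly snippet homotopic to its preimage and hence carried exactly when its preimage was. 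Since $c_2$ is never carried (it has an endpoint on $\partial_h N$), the only net new carried snippets are the at most $|\partial T \cap \partial^2\mathcal{R}|-1 \le s_N-1$ snippets $\beta'[1:-1]$, each of which is parallel to $b_2$ and hence transverse to the ties of its rectangle. This gives $\carr(\alpha'_\textrm{trim}) \le \carr(\alpha_\textrm{trim})+s_N-1$.

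Statement 5 is a corner-length and blocker computation. By Definition~\ref{Page: corner length} one has $\len_\textrm{corn}(\alpha[k])=1+|b_2|_B+3|b_2|_S$, where $|b_2|_B,|b_2|_S$ count the branch- and switch-rectangle edges of $b_2$. The snippets $\beta'[1:-1]$ occupy all but the last of the rectangles met by $b_2$, each contributing corner length $1$ or $3$ according to its type, while $\beta'[0]$ keeps corner length $3$ (it stays in a switch rectangle) and $\beta'[-1]$ keeps the corner length of $c_2$ (it stays in the same rectangle). Hence $\len_\textrm{corn}(\alpha'_\textrm{trim})-\len_\textrm{corn}(\alpha_\textrm{trim})$ equals the total corner length of the rectangles met by $b_2$ other than the last one, minus $\len_\textrm{corn}(\alpha[k])$, which works out to $-2$ if that last rectangle is a branch rectangle and $-4$ if it is a switch rectangle. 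For the blocker count: $c_1$ lies in a switch rectangle so cannot be a blocker's middle snippet, $\alpha[k]$ is bad, and $c_2$ lies in a tie-neighbourhood rectangle (so it cannot be an end snippet of a blocker) while being adjacent to the bad snippet $\alpha[k]$ (so it cannot be a middle snippet either); thus none of $c_1,\alpha[k],c_2$ lies in a blocker and no blocker of $\alpha_\textrm{trim}$ is destroyed. The inserted snippets $\beta'[1:-1]$ lie in the tie neighbourhood and have neighbours lying in the tie neighbourhood, so they can neither be end snippets of a blocker nor serve as the middle snippet of a new one; hence $\len_\textrm{block}(\alpha'_\textrm{trim})=\len_\textrm{block}(\alpha_\textrm{trim})$. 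Combining, $\len_\textrm{red}(\alpha'_\textrm{trim}) \le \len_\textrm{red}(\alpha_\textrm{trim})-2 \le \len_\textrm{red}(\alpha_\textrm{trim})$.
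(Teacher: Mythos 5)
Your treatment of statements 1, 3 and 4 follows the paper's proof. Your argument for statement 2 is a valid variant: rather than counting how many points of $\partial^2\mathcal{R}$ lie in the boundary of the region cut off on the right (which is what the paper does), you trace where the moved endpoint of $c_2$ lands — on a $t$-labelled piece of a vertical side, never on $\partial_v N$ — and that rules out $\mathbb{S}(h,v,2)$ directly. (Once you know the new endpoint lands on a $t$-side, the classification of bad snippets in a switch rectangle already forces the weight to be $1$ or $3$, so the extra step ``inspecting how much of the small side the trigon's boundary sweeps past'' is unnecessary.)

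Statement 5, however, contains a genuine computational error. You write $\len_\textrm{corn}(\alpha[k]) = 1 + |b_2|_B + 3|b_2|_S$, taking $b_1$ to contribute $1$ to $|b|_v$ and, given the final answer of $-2$ or $-4$, evidently including the terminal (partial) piece of $b_2$ in $|b_2|_B + |b_2|_S$. But Definition~\ref{Page: corner length} counts only those components of $b-\partial^2\mathcal{R}$ whose \emph{closure} is a \emph{full} component of $\partial_v N$ or a full horizontal side of a tie rectangle. Since $\alpha[k](0)$ lies in the interior of the $\partial_v N$-component and $\alpha[k](1)$ lies in the interior of the last horizontal edge, the closures of $b_1$ and of the final piece of $b_2$ are proper subarcs and contribute nothing. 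Hence $\len_\textrm{corn}(\alpha[k])$ is exactly the number of full branch edges plus three times the number of full switch edges in $b_2$, which is precisely $\len_\textrm{corn}(\beta'[1:-1])$; the corner length is therefore \emph{unchanged}, not decreased by $2$ or $4$. (Remark~\ref{Bounds on corner length}, which allows trigon snippets of corner length $0$, independently shows your formula cannot be right.) Your blocker bookkeeping is fine, so the final inequality $\len_\textrm{red}(\alpha'_\textrm{trim}) \leq \len_\textrm{red}(\alpha_\textrm{trim})$ still holds — with equality — but the claimed strict decrease does not.
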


\begin{proof}
We prove the statements of the lemma in order.
First, suppose that $T$ is the trigon cut off by $\alpha[k]$. As $\alpha[k]$ is of type $\mathbb{R}(h,v)$, we know that $|\partial T \cap \partial^2 \mathcal{R}|\leq s$. Hence, Lemma \ref{First ever observations trigons} implies that $\len(\alpha')\leq \len(\alpha)-2+s$, giving $\textit{1}$.

For the remainder of this proof we assume that $\alpha[k-1]$ lies inside a switch rectangle $R \in \mathcal{R}_\textrm{tie}$ and intersects $\alpha[k]$ in $\partial_v N$ (see Figure \ref{Homotopy of type R(h,v) picture}). We further assume that $\alpha[k]$ turns right. We remark that $\len(\alpha)>2$ as $\alpha[k-1]$ is in efficient position. Set $\beta=\alpha[k-1:k+2]$ and $\beta'=\Hom(\beta,1)$.
As the corner of $T$ that is also a point of $\partial^2 \mathcal{R}$ is no outward-pointing corner of $R$ we see that $\beta[0]$ and $\beta'[0]$ are weakly snippet homotopic. Lemma \ref{First ever observations trigons} implies the boundary of the region cut off by $\beta[-1]$ on its right-hand side contains exactly one more point of $\partial^2 \mathcal{R}$ than the boundary of the region cut off by $\beta'[-1]$ on its right-hand side. Since $\beta'[-1]$ lies inside a branch or switch rectangle, Lemma \ref{General observations trigon homotopies} therefore implies that $\beta'[-1]$ can only be of type $\mathbb{B}(h,t)$, $\mathbb{S}(h,t,1)$, or $\mathbb{S}(h,t,3)$ and if so, turns the same way as $\alpha[k]$, giving \textit{2}.

\begin{figure}[htbp] 
  \begin{minipage}[b]{0.99\linewidth}
    \centering
\begin{tikzpicture}[scale=0.55]
\draw [very thick](-4,-1.5) {} -- (-4,-4) {};
\draw [very thick] plot[smooth, tension=.7] coordinates { (-4,-4) (1,-3) (6,-4)};
\draw [very thick](6,-1.5) node (v3) {} -- (6,-4);
\draw [very thick] plot[smooth, tension=.7] coordinates {(-4,-1.5) (-1.5,0) (-1,1.5)};
\draw [very thick] plot[smooth, tension=.7] coordinates {(v3) (3.5,0) (3,1.5)};
\draw [very thick, blue, densely dashed] plot[smooth, tension=.7] coordinates {(-5.445,-2.895)  (3.165,-1.84) (3.54,-4.58)};
\draw [very thick] (-4,-0.25) rectangle (-6.83,-5.625);
\draw [very thick] plot[smooth, tension=.7] coordinates {(-4,-5.625) (-2.25,-5.075)};
\draw [very thick] (2,-5) -- (3.9,-5.375) -- (4.35,-3.55);
\node at (-5.4,-6.1) {{\tiny{$h$}}};
\node at (-7.41,-3) {{\tiny{$t$}}};
\draw [very thick, dashed, cyan] plot[smooth, tension=.6] coordinates {(-5.475,-3.22) (-4.3,-3.195) (-3.995,-4.51) (0,-3.6) (3.195,-3.745) (3.15,-4.525)};
\node at (-0.07,-1.305) {\tiny{$\alpha[k]$}};
\node at (-5.5,-2.16)  {\tiny{$\alpha[k-1]$}};
\node at (5.26,-4.74) {\tiny{$\alpha[k+1]$}};
\end{tikzpicture}
    \caption{A homotopy of type $\mathbb{R}(h,v)$.} 
    \label{Homotopy of type R(h,v) picture}
  \end{minipage}
\end{figure}
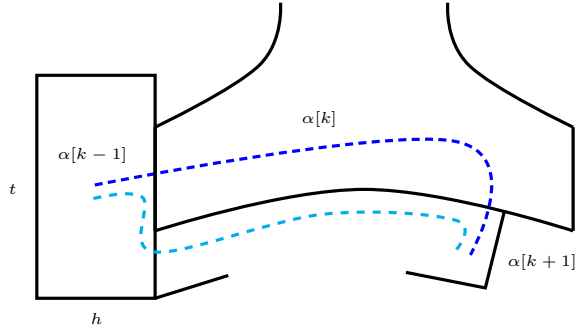

If $\beta[0] \subset \alpha_\textrm{trim}$, then $\beta[0]$ as well as $\beta'[0]$ are carried. Lemma \ref{General observations trigon homotopies} implies that all snippets of $\beta'[1:-1]$ are carried. As $\len(\beta'[1:-1])\leq s-1$ and $\beta'[-1]$ is not carried if it lies in $\alpha'_\textrm{trim}$, this implies that $\carr(\alpha'_\textrm{trim})\leq \carr(\alpha_\textrm{trim})+s_N-1$, giving \textit{3}.

Since no duals of $\alpha$ are affected by or created under the homotopy, statement \textit{4} follows.

Lastly, we note that $\len_\textrm{corn}(\beta[0])=\len_\textrm{corn}(\beta'[0])$ since both snippets lie inside a switch rectangle. By definition of the corner length of the snippet $\beta[1]$ we know that $\len_\textrm{corn}(\beta[1])=\len_\textrm{corn}(\beta'[1:-1])$. We further know that \[\len_\textrm{corn}(\beta[-1])=\len_\textrm{corn}(\beta'[-1])\] and that no duals of $\alpha$ are affected by or created under the homotopy, so statement \textit{5} follows.
\end{proof}

\subsection{Trigon homotopies - a summary}

We can summarise our findings of trigon homotopies in the following lemma:

\begin{lem}\label{Omnibus Trigons}
Suppose that $\alpha \subset S$ is an almost efficient arc or curve of trigon type. Set $0 \leq k \leq \len(\alpha)-1$ such that $\alpha[k]$ is the unique bad snippet of $\alpha_\textrm{trim}$. Set $\alpha'=\Hom(\alpha,k)$. If $\alpha[k]$ turns right, then the following statements hold.
\begin{enumerate}
\item $\alpha'_\textrm{trim}$ contains at most one bad snippet. This is a right-turning trigon.
\item If $\alpha'_\textrm{trim}$ contains a bad snippet, then $\len_\textrm{red}(\alpha'_\textrm{trim}) \leq \len_\textrm{red}(\alpha_\textrm{trim})$ and one of the following statements holds.
\begin{itemize}
\item $\dual_R(\alpha_\textrm{trim})=\dual_R(\alpha'_\textrm{trim})$ and $\carr(\alpha'_\textrm{trim}) < \carr(\alpha_\textrm{trim})$.
\item $\dual_R(\alpha'_\textrm{trim})< \dual_R(\alpha_\textrm{trim})$ and $\carr(\alpha'_\textrm{trim}) \leq \carr(\alpha_\textrm{trim})$.
\item $\alpha[k]$ is a trigon of type $\mathbb{R}(h,v)$. This implies that \\ $\dual_R(\alpha_\textrm{trim})=\dual_R(\alpha'_\textrm{trim})$ and $\carr(\alpha'_\textrm{trim}) \leq \carr(\alpha_\textrm{trim})+s -1$.
\end{itemize}
\item If $\alpha'_\textrm{trim}$ contains a trigon of type $\mathbb{R}(h,v)$, then $\dual_R(\alpha'_\textrm{trim})< \dual_R(\alpha_\textrm{trim})$.
\item If $\alpha'_\textrm{trim}$ is in efficient position, then $\len_\textrm{red}(\alpha'_\textrm{trim}) \leq \len_\textrm{red}(\alpha_\textrm{trim})+2s$.
\end{enumerate} 
\end{lem}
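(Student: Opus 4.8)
The plan is to deduce this lemma entirely from the five type-specific results — Lemmas \ref{Homotopy of type B(h,t,0)}, \ref{Homotopy of type S(h,t,0)}, \ref{Homotopy of type T(S,h,v)}, \ref{Homotopy of type T(S,h,t,2)} and \ref{Homotopy of type R(h,v)} — by a case split on the type of the unique bad snippet $\alpha[k]$. First I would invoke Lemma \ref{Bad snippet types in long enough curves and arcs} (together with Corollary \ref{classification of bad snippets}): since $\alpha$ is an almost efficient arc or curve of trigon type, its unique bad snippet $\alpha[k]$ is a trigon snippet, hence of exactly one of the types $\mathbb{B}(h,t)$, $\mathbb{S}(h,t,1)$, $\mathbb{S}(h,v,2)$, $\mathbb{S}(h,t,3)$, $\mathbb{R}(h,v)$. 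Each of the five cited lemmas treats precisely one of these types under the same hypotheses, so the present lemma will follow by transcribing the relevant items and specialising ``turns the same way as $\alpha[k]$'' to ``turns right'' by hypothesis.

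Statement (1) is then immediate: the second item of each of the five lemmas asserts that $\alpha'_\textrm{trim}$ has at most one bad snippet and that it is a trigon snippet turning the same way as $\alpha[k]$, hence a right-turning trigon. For statement (2), the bound $\len_\textrm{red}(\alpha'_\textrm{trim}) \leq \len_\textrm{red}(\alpha_\textrm{trim})$ in the non-efficient case is the last item of Lemmas \ref{Homotopy of type S(h,t,0)}, \ref{Homotopy of type T(S,h,t,2)}, \ref{Homotopy of type R(h,v)} outright, and of Lemmas \ref{Homotopy of type B(h,t,0)}, \ref{Homotopy of type T(S,h,v)} conditioned on $\alpha'_\textrm{trim}$ not being in efficient position. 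The trichotomy would then be filled in type by type: for $\mathbb{S}(h,t,1)$ and $\mathbb{S}(h,t,3)$ the relevant lemma gives $\dual_R$ unchanged and $\carr$ strictly decreasing (first bullet — for $\mathbb{S}(h,t,3)$ this uses that $\alpha[k]$ turns right, via item (4) of Lemma \ref{Homotopy of type T(S,h,t,2)}); for $\mathbb{S}(h,v,2)$ it gives $\carr$ unchanged and, since the residual trigon is of type $\mathbb{R}(h,v)$, $\dual_R$ strictly decreasing by item (3) of Lemma \ref{Homotopy of type T(S,h,v)} (second bullet); for $\mathbb{R}(h,v)$ items (3)--(4) of Lemma \ref{Homotopy of type R(h,v)} are precisely the third bullet; and for $\mathbb{B}(h,t)$ the residual trigon lies either inside a switch rectangle — item (3) of Lemma \ref{Homotopy of type B(h,t,0)}, first bullet — or is of type $\mathbb{R}(h,v)$ — item (4) of that lemma, second bullet — and by its item (2) no other possibility occurs.

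For statement (3), I would note from the second items of the five lemmas that a residual trigon of type $\mathbb{R}(h,v)$ can appear only when $\alpha[k]$ is itself of type $\mathbb{B}(h,t)$ or $\mathbb{S}(h,v,2)$, and in both of those cases the relevant lemma (item (4) of Lemma \ref{Homotopy of type B(h,t,0)}; item (3) of Lemma \ref{Homotopy of type T(S,h,v)}) already records that $\dual_R$ drops by exactly one. Statement (4) is item (6) of Lemma \ref{Homotopy of type B(h,t,0)} and item (5) of Lemma \ref{Homotopy of type T(S,h,v)} in those two cases, and for the remaining three types the unconditional bound $\len_\textrm{red}(\alpha'_\textrm{trim}) \leq \len_\textrm{red}(\alpha_\textrm{trim})$ already implies it since $s = s_N \geq 5 > 0$. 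There is no real obstacle: the lemma is a bookkeeping repackaging of the five case lemmas. The only points needing care are confirming that the five cases are exhaustive and matching the quantifiers — in particular, using the ``if and only if'' in item (3) of Lemma \ref{Homotopy of type T(S,h,v)} so that the hypothesis ``$\alpha'_\textrm{trim}$ contains a trigon of type $\mathbb{R}(h,v)$'' really does force $\dual_R$ to drop, and remembering that the degenerate case $\len(\alpha)=2$, which can occur only for curves, is already subsumed in the statements of the five lemmas.
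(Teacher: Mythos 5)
Your proof is correct and is exactly the approach the paper takes: the paper's own proof of Lemma \ref{Omnibus Trigons} is the single sentence that it is ``an immediate consequence of'' Lemmas \ref{Homotopy of type B(h,t,0)}, \ref{Homotopy of type S(h,t,0)}, \ref{Homotopy of type T(S,h,v)}, \ref{Homotopy of type T(S,h,t,2)} and \ref{Homotopy of type R(h,v)}, and your case-by-case unpacking supplies precisely the bookkeeping that sentence elides.
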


\begin{proof}
This is an immediate consequence of the Lemmas \ref{Homotopy of type B(h,t,0)}, \ref{Homotopy of type S(h,t,0)}, \ref{Homotopy of type T(S,h,v)}, \ref{Homotopy of type T(S,h,t,2)} and \ref{Homotopy of type R(h,v)}.
\end{proof}

\begin{rem}
We note that the statement of Lemma \ref{Omnibus Trigons} also holds for left-turning trigons if we replace all occurrences of right-turning trigons and duals by left turning trigons and duals.
\end{rem}

To make the content of Lemmas \ref{Homotopy of type B(h,t,0)}, \ref{Homotopy of type S(h,t,0)}, \ref{Homotopy of type T(S,h,v)}, \ref{Homotopy of type T(S,h,t,2)}, \ref{Homotopy of type R(h,v)} and \ref{Omnibus Trigons} more accessible, the directed graph in Figure \ref{Trigon homotopy graph} visualizes the connections between the different types of trigons. We note that we abbreviated $\carr(\alpha_\textrm{trim})$ by $c$, $\carr(\alpha'_\textrm{trim})$ by $c'$, $\dual_R(\alpha_\textrm{trim})$ by $\DR$, and $\dual_R(\alpha'_\textrm{trim})$ by $\DR'$. This graph displays the changes in the number of duals of $\alpha_\textrm{trim}$ in the case of a right-turning trigon snippet. Next to its five vertices corresponding to the five different types of trigon snippets, there should be a vertex corresponding to arcs that are in efficient position. For clarity, this has been omitted. Directed edges between a source and a target vertex indicate that under a trigon homotopy, trigon snippets of the source type might turn into trigon snippets of the target type. Situations in which neither the number of carried snippets nor the number of right duals decreases have been highlighted in red. We notice that these only occur for trigon snippets of type $\mathbb{R}(h,v)$ and that one only reaches the vertex labelled $\mathbb{R}(h,v)$ if one travels along an edge that indicates a decrease in right duals as highlighted in the second to last point of Lemma \ref{Omnibus Trigons}.

\begin{figure}[htbp] 
  \begin{minipage}[b]{0.99\linewidth}
    \centering
\begin{tikzpicture}[scale=0.55]
\draw [very thick] (-2,1) rectangle node{$\mathbb{B}(h,t)$} (2,-2);
\draw [very thick] (-5,-9) rectangle node{$\mathbb{S}(h,t,3)$} (3,-12);
\draw [very thick] (-10,0) rectangle node{$\mathbb{S}(h,t,1)$} (-14,-4);
\draw [very thick] (8,-1) rectangle node{$\mathbb{S}(h,v,2)$} (12,-4);
\draw [very thick] (-7,11) rectangle node{$\mathbb{R}(h,v)$} (3,8) node (v1) {};
\draw [very thick, ->](-7,8) --  node[sloped, below, red] {\small{$c' \leq c +s-1$}} node[sloped, above, red]  {\small{$\DR'=\DR$}} (-12,0);
\draw [very thick, ->](-3.5,8) -- node[sloped, near start, below, red] {\small{$c' \leq c +s -1$}} node[sloped, near start, above, red]  {\small{$\DR'=\DR$}}(-3.5,-9);
\draw [very thick, ->](-10,-1) -- node[sloped, below] {\small{$c'=c-1$}} (-2,0);
\draw [very thick, ->](-2,-2) -- node[sloped, below]  {\small{$c'=c-1$}} (-10,-3);
\draw [very thick, ->](-1,-2) -- node[sloped, below] {\small{$c'=c-1$}} (-1,-9);
\draw [very thick, ->](1,-9) -- node[sloped, below] {\small{$c'=c-1$}} (1,-2);
\draw [very thick, ->](2,-1) -- node[sloped, below]  {\small{$c'=c-1$}} (8,-2);
\draw [very thick, <-](3,8) -- node[sloped, below] {\small{$\DR'=\DR-1$}} (9,-1);
\draw [very thick, ->](-1,8) -- node[sloped, below, red] {\small{$c' \leq c +s-1$}} node[sloped, above, red]  {\small{$\DR'=\DR$}} (-1,1);
\draw [very thick, ->](1,1) -- node[sloped, below]  {\small{$\DR'=\DR-1$}} (1,8);
\end{tikzpicture}
 \caption{The trigon homotopy graph for right-turning trigons.}
 \label{Trigon homotopy graph}
    \vspace{6.2 ex}
  \end{minipage}
\end{figure}
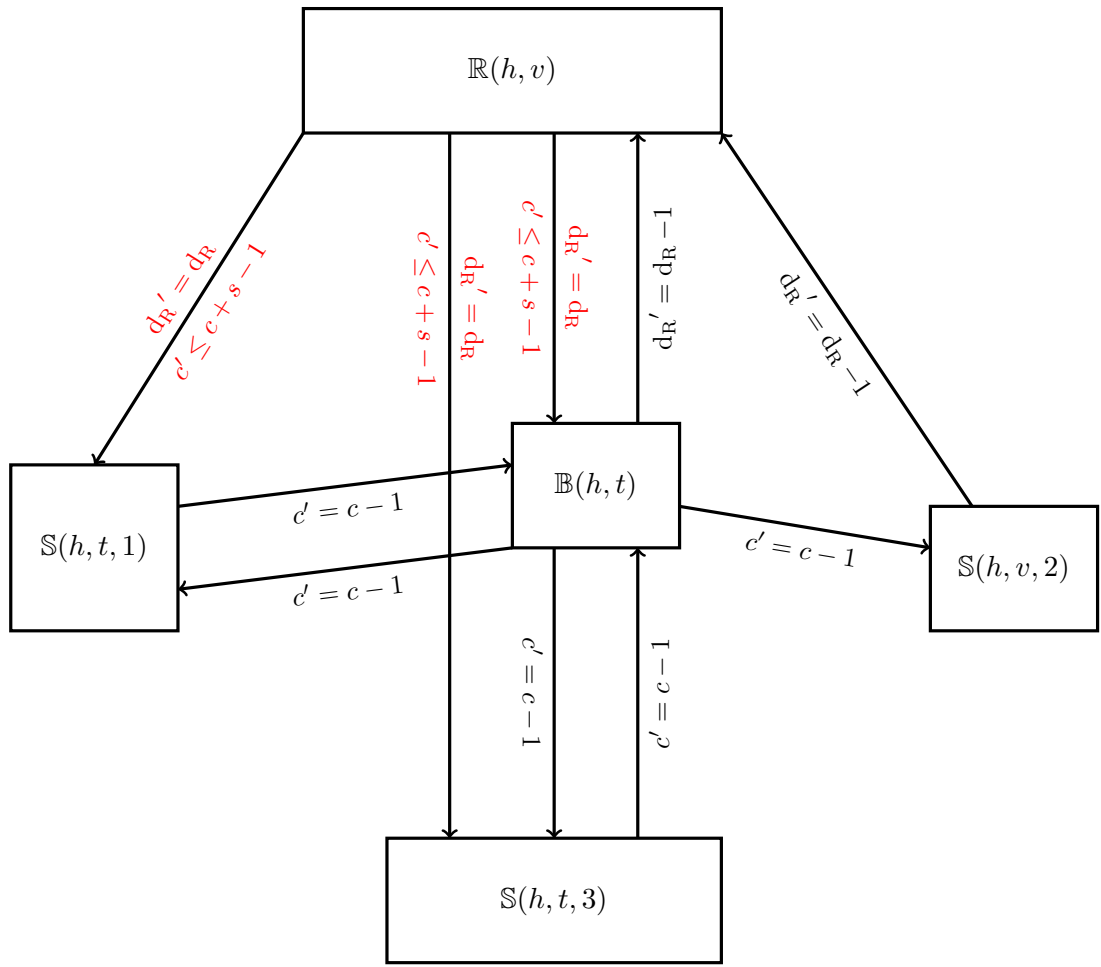


\section{The algorithm \texttt{TrigArc}}

Building on the previous sections, we now present the algorithm \texttt{TrigArc}. This algorithm
\begin{itemize}
\item takes as input a surface $S$ of positive complexity, a tie neighbourhood $N \subset S$ of a large train track in $S$, as well as an arc $\alpha \subset S$ which is almost efficient of trigon type or does not contain any bad snippets in its inside, and
\item outputs an arc $\alpha'$ that is homotopic to $\alpha$ and does not contain any bad snippets in $\alpha'[1:-1]$.
\end{itemize}
Lemma \ref{Omnibus Trigons} shows that applying one trigon homotopy might not be sufficient to achieve efficient position for almost efficient arcs. The goal of this section is to prove that the process of repeatedly applying trigon homotopies terminates and yields an arc that is in efficient position in its inside. The formal statement of \texttt{TrigArc} is given in Algorithm \ref{TrigArc}.


\begin{algorithm}
    \SetKwInOut{Input}{Input}
    \SetKwInOut{Output}{Output}
	\DontPrintSemicolon
    \Input{A surface $S$ of positive complexity, a tie neighbourhood $N \subset S$ of a large train track in $S$, and an arc $\alpha \subset S$ which is almost efficient of trigon type or does not contain any bad snippets in its inside.}
    \Output{An arc $\alpha'$ homotopic, relative its endpoints, to $\alpha$ such that $\alpha'_\textrm{trim}$ is in efficient position.}
    \BlankLine
    $\alpha'= \alpha$\;
    \While{there is some $k$ satisfying $0<k<\len(\alpha')-1$ such that $\alpha'[k]$ is a bad snippet}
    {$\alpha'=\Hom(\alpha',k)$}
    \Return $\alpha'$    
    \caption{\texttt{TrigArc} - Homotoping the inside of arcs
with at most one bad snippet into efficient position, where the bad snippet is of trigon type.}
    \label{TrigArc}
\end{algorithm}

\begin{lem} \label{TrigArc funtionally correct}
The algorithm \texttt{TrigArc} is correct. On an input $(S,N,\alpha)$, the algorithm halts in $\mathit{O}(\chi(S)^2 (\len(\alpha_\textrm{trim})+1))$ time. For $\alpha'=\texttt{TrigArc}(S,N,\alpha)$, we have that
\[
\len_\textrm{red}(\alpha'_\textrm{trim}) \leq \len_\textrm{red}(\alpha_\textrm{trim})+2s.\]
\end{lem}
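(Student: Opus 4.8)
The plan is to establish three things: (i) that the \texttt{while}-loop terminates, (ii) that the output is correct, i.e. $\alpha'_\textrm{trim}$ is in efficient position, and (iii) the running-time and reduced-corner-length bounds. Correctness of the output (given termination) is immediate: if the loop exits, there is no $k$ with $0<k<\len(\alpha')-1$ such that $\alpha'[k]$ is bad, which is precisely the statement that $\alpha'_\textrm{trim}$ is in efficient position. That $\alpha'$ is homotopic to $\alpha$ rel endpoints follows because each iteration replaces $\alpha'$ by $\Hom(\alpha',k)$, and applying a local homotopy at an interior bad snippet is, by definition, a homotopy rel endpoints (see Remark \ref{local trigon homotopy fixes boundary}). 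The reduced-corner-length bound will be harvested from the termination argument: Lemma \ref{Omnibus Trigons}(2) tells us that $\len_\textrm{red}$ never increases while a bad snippet remains, and Lemma \ref{Omnibus Trigons}(4) tells us that on the \emph{final} step, where $\alpha'_\textrm{trim}$ becomes efficient, $\len_\textrm{red}$ can increase by at most $2s$; chaining these gives $\len_\textrm{red}(\alpha'_\textrm{trim}) \le \len_\textrm{red}(\alpha_\textrm{trim})+2s$.

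The heart of the argument is termination, and I would run it as follows. By Lemma \ref{Omnibus Trigons}(1), if the input (or any intermediate arc) is almost efficient of trigon type with a right-turning trigon, then after one homotopy the arc is either efficient in its inside or again almost efficient of trigon type with a \emph{right}-turning trigon; similarly for left (by the remark following Lemma \ref{Omnibus Trigons}). So the turning direction is an invariant of the loop, and the hypothesis of Lemma \ref{Omnibus Trigons} is maintained at every iteration. Now I want a strictly decreasing quantity. Consider the lexicographically ordered pair $\big(\dual_R(\alpha'_\textrm{trim}),\ \carr(\alpha'_\textrm{trim})\big)$ (for a right-turning trigon; use $\dual_L$ for left). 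Lemma \ref{Omnibus Trigons}(2) gives three cases for a single homotopy that keeps a bad snippet: either $\dual_R$ is unchanged and $\carr$ strictly decreases; or $\dual_R$ strictly decreases; or $\alpha[k]$ is of type $\mathbb{R}(h,v)$, in which case $\dual_R$ is unchanged but $\carr$ can increase by up to $s-1$. The first two cases strictly decrease the lexicographic pair. The troublesome third case is exactly the ``exceptional'' homotopy flagged in the introduction, and here I use Lemma \ref{Omnibus Trigons}(3): one can only \emph{arrive} at a trigon of type $\mathbb{R}(h,v)$ via a homotopy that strictly decreased $\dual_R$.

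So the structure of any run of the loop is: a sequence of steps, each either (a) strictly decreasing $\dual_R$, or (b) keeping $\dual_R$ fixed and strictly decreasing $\carr$; and every maximal block of type-(b) steps that ends by producing (and then resolving) an $\mathbb{R}(h,v)$ trigon is immediately preceded by a type-(a) step. Since $\dual_R$ is a non-negative integer bounded by $\len(\alpha_\textrm{trim})$, there are at most $\len(\alpha_\textrm{trim})$ type-(a) steps. Between two consecutive type-(a) steps, $\dual_R$ is constant and $\carr$ strictly decreases at each step except possibly the single $\mathbb{R}(h,v)$ resolution — and since $\carr$ is bounded by the current length, which itself is controlled via $\len_\textrm{corn}$ and hence $\len_\textrm{red}$ (which never increases while bad snippets persist, plus the $\len(\beta[1:-1])\le s-1$ growth at an $\mathbb{R}(h,v)$ step), the number of steps in each such block is $O(s\cdot\len_\textrm{corn}(\alpha_\textrm{trim}))$. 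Multiplying, the total number of iterations is $O(s\cdot \len(\alpha_\textrm{trim})\cdot \len_\textrm{corn}(\alpha_\textrm{trim}))$; using $s=O(|\chi(S)|)$ (Remark \ref{s is O(Euler)}) and $\len_\textrm{corn}(\alpha_\textrm{trim})=O(s\cdot\len(\alpha_\textrm{trim}))$ (Lemma \ref{Snippet vs corner length}), this is $O(\chi(S)^2\cdot(\len(\alpha_\textrm{trim})+1)^2)$ iterations in the crude count — I would need to tighten the bookkeeping (e.g. tracking $\len_\textrm{red}$ rather than $\len_\textrm{corn}$, and noting that $\carr$ is bounded throughout by a fixed $O(\len(\alpha_\textrm{trim}))$ rather than growing) to bring the iteration count down so that, multiplied by the $O(|\chi(S)|)$ cost of computing each $\Hom(\alpha',k)$ (Remark \ref{Required time for trigon homotopies}) and the $O(|\chi(S)|)$ cost of locating a bad snippet, the total is $O(\chi(S)^2(\len(\alpha_\textrm{trim})+1))$.

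The main obstacle, as the author flags in the introduction, is precisely controlling the $\mathbb{R}(h,v)$ homotopies: they are the only ones that can lengthen the arc (and increase $\carr$), so a naive potential function fails, and one must exploit the ``you can only enter $\mathbb{R}(h,v)$ by spending a right dual'' structure (Lemma \ref{Omnibus Trigons}(3)) to cap their number by $\dual_R(\alpha_\textrm{trim})\le\len(\alpha_\textrm{trim})$. The subtlety in the time bound is then to argue that the arc's length — and hence the per-iteration cost and the $\carr$ ceiling — stays $O(\len(\alpha_\textrm{trim}))$ throughout: each $\mathbb{R}(h,v)$ step adds at most $s-2$ snippets and there are at most $\len(\alpha_\textrm{trim})$ of them, while all other trigon homotopies change $\len$ by at most $+1$ and the number of \emph{those} between $\mathbb{R}(h,v)$ steps is controlled by the strictly decreasing $\carr$. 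Assembling these estimates carefully is what turns the quadratic-looking crude bound into the claimed $O(\chi(S)^2(\len(\alpha_\textrm{trim})+1))$; the reduced-corner-length conclusion then drops out for free from Lemma \ref{Omnibus Trigons}(2) and (4) as described above.
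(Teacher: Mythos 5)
Your correctness argument and the reduced-corner-length bound are fine and match the paper: a local homotopy at an interior snippet fixes endpoints (Remark~\ref{local trigon homotopy fixes boundary}); on exit there is no interior bad snippet, so $\alpha'_\textrm{trim}$ is efficient; and chaining Lemma~\ref{Omnibus Trigons}(2) over all non-terminal iterations with Lemma~\ref{Omnibus Trigons}(4) on the final one gives $\len_\textrm{red}(\alpha'_\textrm{trim})\le\len_\textrm{red}(\alpha_\textrm{trim})+2s$. Your termination argument (lexicographic $(\dual_R,\carr)$ decrease, interrupted at most $\dual_R(\alpha_\textrm{trim})+1$ times by $\mathbb{R}(h,v)$ steps, each of which can only follow a $\dual_R$-decreasing step by Lemma~\ref{Omnibus Trigons}(3), or be the initial step) is also valid.

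However, the quantitative iteration bound --- and hence the time bound --- has a genuine gap, which you yourself flag but do not close. Your estimate takes the form (number of ``blocks'' between $\dual_R$-drops) $\times$ (max steps per block), and as you observe this is quadratic in $\len(\alpha_\textrm{trim})$. The suggested fix in your last paragraph --- that $\carr$ is ``bounded throughout by a fixed $O(\len(\alpha_\textrm{trim}))$'' --- is not true: each $\mathbb{R}(h,v)$ step can add up to $s-1$ carried snippets, and there can be up to $\len(\alpha_\textrm{trim})+1$ of them, so $\carr$ can grow to $\Theta(s\cdot\len(\alpha_\textrm{trim}))$. The paper instead uses an \emph{amortized} count: since there are at most $\dual_R(\alpha_\textrm{trim})+1\le\len(\alpha_\textrm{trim})+1$ many $\mathbb{R}(h,v)$ steps, the \emph{total} amount added to $\carr$ over the whole run is at most $(\len(\alpha_\textrm{trim})+1)(s-1)$. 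Every non-$\mathbb{R}(h,v)$, non-terminal step decreases $\carr$ or $\dual_R$ by $1$, and since $\carr\ge 0$ at all times the number of $\carr$-decreasing steps is bounded by the initial $\carr(\alpha_\textrm{trim})$ plus the total additions, i.e.\ by $\carr(\alpha_\textrm{trim})+(\dual_R(\alpha_\textrm{trim})+1)(s-1)\le(\len(\alpha_\textrm{trim})+1)s$. Adding the at most $\dual_R(\alpha_\textrm{trim})$ many $\dual_R$-decreasing steps, the $\le\dual_R(\alpha_\textrm{trim})+1$ many $\mathbb{R}(h,v)$ steps, and one terminal step, the total iteration count is $\le(\len(\alpha_\textrm{trim})+1)(s+2)=O(|\chi(S)|(\len(\alpha_\textrm{trim})+1))$. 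It is this global budget argument (charge the eventual $\carr$-decreases against the bounded supply of $\mathbb{R}(h,v)$ steps), rather than a per-block bound, that delivers the linear-in-$\len(\alpha_\textrm{trim})$ iteration count. One further small point: the paper also notes that after the first iteration only $O(s)$ snippets can have changed, so locating the new bad snippet costs $O(|\chi(S)|)$ per iteration and $O(|\chi(S)|\cdot\len(\alpha_\textrm{trim}))$ only once, which is what keeps the overall time at $O(\chi(S)^2(\len(\alpha_\textrm{trim})+1))$.
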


\begin{proof}
By assumption on the input of the algorithm \texttt{TrigArc}, we are given an arc $\alpha$ whose inside either is in efficient position and, therefore, meets the requirements on the output of \texttt{TrigArc}, or contains a unique bad snippet $\alpha[k]$. In the latter case we further have that $\alpha[k]$ is a trigon snippet. Without loss of generality, we may assume that $\alpha[k]$ is turning right. By Lemma \ref{Omnibus Trigons}, applying a local homotopy to $\alpha$ at $\alpha[k]$ yields an arc whose inside contains at most one bad snippet of trigon type that is again turning right. By induction, repeatedly applying a local homotopy to the unique right-turning trigon snippet of the arc yields again an arc with at most one right-turning trigon snippet in its inside. We claim that after having applied at most $(\len(\alpha_\textrm{trim})+1) \cdot (s+2)$ many homotopies, the inside of the arc will be in efficient position.

This can be seen as follows: by Lemma \ref{Omnibus Trigons}, one of the following two statements is true when applying a local trigon homotopy to an arc $\alpha$:
\begin{itemize}
\item the number of carried snippets or right duals of $\alpha_\textrm{trim}$ decreases.
\item the trigon homotopy is of type $\mathbb{R}(h,v)$, the number of carried snippets of $\alpha_\textrm{trim}$ increases by at most $s-1$, and the number of right duals of $\alpha_\textrm{trim}$ does not change.
\end{itemize}

Moreover, Lemma \ref{Omnibus Trigons} implies that $\alpha_\textrm{trim}$ contains a trigon of type $\mathbb{R}(h,v)$ only if $\alpha$ is the original input arc or $\alpha$ is the image under a local homotopy of an arc whose inside has one right dual more than $\alpha_\textrm{trim}$. Thus, there can be at most $\dual_R(\alpha_\textrm{trim})+1$ many homotopies of type $\mathbb{R}(h,v)$. Therefore, under the process of applying trigon homotopies repeatedly, the total number of carried snippets is bounded by 
\begin{align*}
& \carr(\alpha_\textrm{trim})+\big( \dual_R(\alpha_\textrm{trim})+1\big)\cdot (s-1) \\
\leq & \len(\alpha_\textrm{trim})+ \big( \len(\alpha_\textrm{trim})+1\big)\cdot (s-1)\\
\leq & (\len(\alpha_\textrm{trim})+1) \cdot s,
\end{align*}
and any local homotopy which is not of type $\mathbb{R}(h,v)$ reduces this number or the number of right duals by at least one.

In summary, if we take into account one final homotopy which might be required once there are no carried snippets or right duals left, the total number of local homotopies that can be applied to an arc $\alpha$ before efficient position in the inside must be achieved is bounded by
\begin{align*}
& \big( \dual_R(\alpha_\textrm{trim})+1\big) +  \dual_R(\alpha_\textrm{trim}) + (\len(\alpha_\textrm{trim})+1) \cdot s + 1 \\
\leq & \big( \len(\alpha_\textrm{trim})+1 \big) + \len(\alpha_\textrm{trim}) + (\len(\alpha_\textrm{trim})+1) \cdot s + 1 \\
\leq & (\len(\alpha_\textrm{trim})+1) \cdot (s+2).
\end{align*}
Hence, the algorithm exits the while-loop after at most $(\len(\alpha_\textrm{trim})+1) \cdot (s+2)$ many iterations and the output arc $\alpha'$ must be in efficient position in its inside by Lemma \ref{Omnibus Trigons}. This implies correctness of the algorithm \texttt{TrigArc}.

We now analyse the running time of the algorithm \texttt{TrigArc} by going through its pseudocode line by line.
If we want to determine whether there is indeed a bad snippet in the inside of the arc $\alpha$ in the very first iteration of the while-loop, we potentially have to check each snippet of $\alpha_\textrm{trim}$ and determine if it is in efficient position. This takes $\mathit{O}(|\chi(S)|\len(\alpha_\textrm{trim}))$ time. However, in further iterations of the while-loop, we only have to check at most $s$ many snippets. This follows from the fact that all snippets but the subarc $\alpha[k-1:k+2]$ remain unchanged when applying a local homotopy to $\alpha$ at $\alpha[k]$ and that $\len(\Hom(\alpha'[k-1:k+2],k)) \leq s$ by Lemma \ref{Omnibus Trigons}. Thus, for all but one iteration of the while-loop, checking the while-condition takes $\mathit{O}(|\chi(S)|)$ time.
Following Remark \ref{Required time for trigon homotopies}, $\Hom(\alpha,k)$ can be computed in $\mathit{O}(|\chi(S)|)$ time. Therefore, the homotopy in an iteration of the while-loop can be computed in $\mathit{O}(|\chi(S)|)$ time as well.
Thus, the algorithm \texttt{TrigArc} executes at most one iteration of the while-loop in time $\mathit{O}(|\chi(S)|\len(\alpha_\textrm{trim}))$ and at most $(\len(\alpha_\textrm{trim})+1) \cdot (s+1)$ many iterations of the while loop in $\mathit{O}(|\chi(S)|)$ time.
Hence, the algorithm \texttt{TrigArc} terminates in $\mathit{O}(\chi(S)^2 (\len(\alpha_\textrm{trim})+1))$ time.

By Lemma \ref{Omnibus Trigons}, the reduced corner length of the arc does not increase under a local trigon homotopy unless one obtains efficient position for the inside of the arc. For $\alpha'=\texttt{TrigArc}(S,N,\alpha)$, this implies that $\len_\textrm{red}(\alpha'_\textrm{trim}) \leq \len_\textrm{red}(\alpha_\textrm{trim})+2s$.
\end{proof}

\begin{cor} \label{TrigArc in reduced corner length}
For any input $(S,N,\alpha)$, the algorithm $\texttt{TrigArc}$ halts in \\ $\mathit{O}(\chi(S)^2 \cdot (\len_\textrm{red}(\alpha_\textrm{trim})+1))$ time.
\end{cor}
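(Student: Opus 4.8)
The plan is to derive the stated bound directly from Lemma \ref{TrigArc funtionally correct}, which already gives that \texttt{TrigArc} halts in $\mathit{O}(\chi(S)^2(\len(\alpha_\textrm{trim})+1))$ time on input $(S,N,\alpha)$; the only work remaining is to replace the snippet length $\len(\alpha_\textrm{trim})$ by the reduced corner length $\len_\textrm{red}(\alpha_\textrm{trim})$ at the cost of an additive constant inside the $\mathit{O}$.

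First I would observe that, by the hypothesis on the input of \texttt{TrigArc}, the inside $\alpha_\textrm{trim}$ either contains no bad snippets or contains exactly one bad snippet, which is of trigon type. In particular the number $m_{\alpha_\textrm{trim}}$ of bad snippets of $\alpha_\textrm{trim}$ satisfies $m_{\alpha_\textrm{trim}} \leq 1$. I would also dispatch the degenerate case separately: if $\len(\alpha) < 3$, then $\alpha_\textrm{trim}$ is the empty arc by Definition \ref{Page: alpha trim}, both $\len(\alpha_\textrm{trim})$ and $\len_\textrm{red}(\alpha_\textrm{trim})$ are zero, and the claimed bound $\mathit{O}(\chi(S)^2)$ holds trivially. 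Hence I may assume that $\alpha_\textrm{trim}$ is a genuine snippet-decomposed arc, so that Lemma \ref{reduced corner length vs snippet length} applies to it.

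Next I would apply Lemma \ref{reduced corner length vs snippet length} to the arc $\alpha_\textrm{trim}$, obtaining $\len_\textrm{red}(\alpha_\textrm{trim}) \geq \len(\alpha_\textrm{trim}) - m_{\alpha_\textrm{trim}} \geq \len(\alpha_\textrm{trim}) - 1$. Rearranging yields $\len(\alpha_\textrm{trim}) + 1 \leq \len_\textrm{red}(\alpha_\textrm{trim}) + 2$, so $\len(\alpha_\textrm{trim}) + 1 = \mathit{O}(\len_\textrm{red}(\alpha_\textrm{trim}) + 1)$. Substituting this estimate into the running-time bound from Lemma \ref{TrigArc funtionally correct} gives $\mathit{O}(\chi(S)^2 (\len(\alpha_\textrm{trim})+1)) = \mathit{O}(\chi(S)^2 (\len_\textrm{red}(\alpha_\textrm{trim})+1))$, which is precisely the assertion of the corollary.

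There is essentially no obstacle here: the statement is a bookkeeping consequence of two facts already established in the preceding sections, namely the running-time analysis of \texttt{TrigArc} and the comparison between reduced corner length and snippet length. The only points requiring a moment's care are the empty-inside edge case and confirming that Lemma \ref{reduced corner length vs snippet length} is applicable, which it is, since $\alpha_\textrm{trim}$ is by construction a snippet-decomposed arc whenever $\len(\alpha) \geq 3$ and the input $\alpha$ to \texttt{TrigArc} carries at most one bad snippet in its inside.
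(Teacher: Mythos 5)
Your proposal is correct and follows essentially the same route as the paper's own proof: both invoke Lemma \ref{TrigArc funtionally correct} for the $\mathit{O}(\chi(S)^2(\len(\alpha_\textrm{trim})+1))$ bound, then use Lemma \ref{reduced corner length vs snippet length} together with the fact that $\alpha_\textrm{trim}$ has at most one bad snippet to conclude $\len(\alpha_\textrm{trim})+1 \leq \len_\textrm{red}(\alpha_\textrm{trim})+2$. Your explicit handling of the empty-inside edge case is a small, harmless refinement the paper leaves implicit.
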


\begin{proof}
This follows from Lemma \ref{TrigArc funtionally correct} and \ref{reduced corner length vs snippet length} if we keep in mind that $\alpha_\textrm{trim}$ contains at most one bad snippet, so $\len(\alpha_\textrm{trim})+1 \leq \len_\textrm{red}(\alpha_\textrm{trim}) +2$. Since $\mathit{O}(\chi(S)^2 \cdot (\len_\textrm{red}(\alpha_\textrm{trim})+2))=\mathit{O}(\chi(S)^2 \cdot (\len_\textrm{red}(\alpha_\textrm{trim})+1))$, the desired bound follows.
\end{proof}

Suppose that $S$ is a surface of positive complexity and $N \subset S$ is a tie neighbourhood of a large train track in $S$.
The following two corollaries will be of use later on:

\begin{cor}\label{TrigArc terminates as soon as changes weak snippet type of beginning or end}
Suppose that $\alpha \subset S$ is an almost efficient arc of trigon type. Set $0 <k<\len(\alpha)-1$ such that $\alpha[k]$ is the unique bad snippet of $\alpha_\textrm{trim}$. If $\Hom(\alpha,k)[0]$ or $\Hom(\alpha,k)[-1]$ is not weakly snippet homotopic to $\alpha[0]$ or $\alpha[-1]$ respectively, then $\Hom(\alpha,k)_\textrm{trim}$ is in efficient position. Hence, the algorithm $\texttt{TrigArc}$ terminates immediately once the weak snippet homotopy type of the first or last snippet of the given arc has changed.
\end{cor}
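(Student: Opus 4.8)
The plan is to use that a local homotopy changes only three consecutive snippets, reduce by orientation reversal to the case that it is the \emph{first} snippet that changes its weak snippet homotopy type, and then identify the only snippet of $\Hom(\alpha,k)$ that the homotopy could leave bad.

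By Remark~\ref{local trigon homotopy fixes boundary}, applying the local homotopy at $\alpha[k]$ alters only the snippets $\alpha[k-1]$, $\alpha[k]$, $\alpha[k+1]$ (recall that an arc has length at least $3$, so $\len(\alpha)>2$). Hence $\alpha[0]$ is changed only if $k-1=0$, that is $k=1$ (using $k>0$), and symmetrically $\alpha[-1]$ is changed only if $k=\len(\alpha)-2$. Reversing the orientation of $\alpha$ interchanges $\alpha[0]$ with $\alpha[-1]$, sends $\alpha_\textrm{trim}$ to the reverse of $\alpha_\textrm{trim}$, and commutes with $\Hom(\cdot,\cdot)$, while the properties ``in efficient position'', ``bad snippet'', ``trigon snippet'', ``weakly snippet homotopic'' and ``almost efficient of trigon type'' are all invariant under it. So it suffices to treat the case where $\Hom(\alpha,k)[0]$ is not weakly snippet homotopic to $\alpha[0]$; then $k=1$.

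Set $\beta=\alpha[k-1:k+2]=\alpha[0:3]$ and $\beta'=\Hom(\beta,1)$, so that $\beta'$ is precisely the block of snippets of $\alpha':=\Hom(\alpha,k)$ that replaces $\beta$, with $\beta[0]=\alpha[0]$ and $\beta'[0]=\alpha'[0]$. Since $\len(\alpha)>2$, part~(2) of Lemma~\ref{General observations trigon homotopies} applies; as $\beta[0]$ and $\beta'[0]$ are not weakly snippet homotopic by hypothesis, we conclude that $\beta[-1]$ and $\beta'[-1]$ are weakly snippet homotopic. I would then check that every snippet of $\alpha'_\textrm{trim}$ other than $\beta'[0]$ is in efficient position: the snippets of $\beta'[1:-1]$ are in efficient position by part~(1) of Lemma~\ref{General observations trigon homotopies}; the snippet $\beta'[-1]$ is weakly snippet homotopic to $\beta[-1]=\alpha[k+1]$, and $\alpha[k+1]$ is in efficient position (it is either an interior snippet of $\alpha_\textrm{trim}$ different from the unique bad snippet $\alpha[k]$, or, when $\len(\alpha)=3$, the last snippet of $\alpha$, which is in efficient position by Lemma~\ref{No arcs of length 2}), so $\beta'[-1]$ is in efficient position by Lemma~\ref{Weakly snippet homotopic implies same type}; and every remaining snippet of $\alpha'$ is an unchanged snippet $\alpha[i]$ with $1\le i\le \len(\alpha)-2$ and $i\neq k$, hence in efficient position. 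Finally, $\beta'[0]=\alpha'[0]$ is the first snippet of $\alpha'$ (as $k=1$) and therefore does not belong to $\alpha'_\textrm{trim}$. Consequently all snippets of $\alpha'_\textrm{trim}$ are in efficient position, which is the first assertion. (When $\len(\alpha)=3$ this is immediate, since then $\alpha'=\beta'$ and $\alpha'_\textrm{trim}=\beta'[1:-1]$.)

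The last sentence of the statement follows by unwinding the \texttt{while}-loop of Algorithm~\ref{TrigArc}: in the iteration in which the weak snippet homotopy type of the first or last snippet of the current arc first changes, the above shows that the inside of the resulting arc is in efficient position, so the loop guard fails and the algorithm returns at that point. I expect no conceptual difficulty here; the main care is in the index bookkeeping for the snippets of $\alpha'$, in the boundary cases $\len(\alpha)=3$ (where $\beta$ exhausts $\alpha$) and small $\len(\alpha')$, and in verifying that the orientation-reversal reduction is legitimate for all of the notions involved.
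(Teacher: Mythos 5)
Your proposal is correct and takes essentially the same route as the paper: use the locality of the homotopy (only $\alpha[k-1],\alpha[k],\alpha[k+1]$ change) to conclude $k=1$ or $k=\len(\alpha)-2$, then invoke parts (1) and (2) of Lemma~\ref{General observations trigon homotopies} together with Lemma~\ref{Weakly snippet homotopic implies same type} to see that every snippet of $\Hom(\alpha,k)_\textrm{trim}$ is in efficient position; your orientation-reversal reduction to one side is just a stylistic variant of the paper's symmetric two-case treatment. One small point of care: your parenthetical appeal to Lemma~\ref{No arcs of length 2} for the case $\len(\alpha)=3$ is not strictly available, since the arcs fed to \texttt{TrigArc} need not be proper (their endpoints lie on $\partial\mathcal{R}$, not necessarily on $\partial S$), so $\alpha[-1]$ is not guaranteed to be efficient --- but this is harmless, because, as you observe at the end, when $\len(\alpha)=3$ one has $\alpha'_\textrm{trim}=\beta'[1:-1]$ and the efficiency of $\beta'[-1]$ is never needed.
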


\begin{proof}
We recall that any local homotopy applied to $\alpha[k]$ affects the subarc $\alpha[k-1:k+2]$ only. By Lemma \ref{General observations trigon homotopies} we further know that the homotopy alters the weak snippet homotopy type of either $\alpha[k-1]$ or $\alpha[k+1]$ and that all snippets in $\Hom(\alpha[k-1:k+2],1)[1:-1]$ are in efficient position. Hence, if $\Hom(\alpha,k)[0]$ is not weakly snippet homotopic to $\alpha[0]$, we know that $k=1$ and that $\Hom(\alpha,1)[1:-1]$ must be in efficient position. If $\Hom(\alpha,k)[-1]$ is not weakly snippet homotopic to $\alpha[-1]$, we know that $k=\len(\alpha)-2$ and that again $\Hom(\alpha,1)[1:-1]$ must be in efficient position. Thus, we see that $\Hom(\alpha,k)_\textrm{trim}$ is in efficient position in both cases. As the algorithm \texttt{TrigArc} terminates as soon as the inside of the underlying arc is in efficient position, the second claim of the lemma follows.
\end{proof}

\begin{cor}\label{Cor for bigons of weight 2}
Suppose that $\alpha \subset S$ is a snippet-decomposed arc. Suppose further that $\len(\alpha)>2$, that $\alpha[0]$ is a left vertical dual and, that $\alpha[1]$ and $\alpha[-1]$ are right-turning trigon snippets of type $\mathbb{B}(h,t)$. Then $\alpha'=\texttt{TrigArc}(S,N,\alpha)$ contains at most one bad snippet, $\alpha'[-1]$, which is a snippet of type $\mathbb{B}(h,t)$ or $\mathbb{B}(h,h)$. If $\alpha'[-1]$ is a bigon snippet, then $\alpha'[0]$ and $\alpha'[-2]$ are left vertical duals.
\end{cor}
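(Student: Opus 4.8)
The plan is to run \texttt{TrigArc} by hand on the given input and to exploit the fact that the sequence of local homotopies it performs is highly constrained by the left vertical dual sitting at the front of $\alpha$. Under the standing assumption that makes \texttt{TrigArc} applicable — that $\alpha$ is almost efficient of trigon type, with $\alpha[1]$ the unique bad snippet of its inside — Lemma \ref{TrigArc funtionally correct} already tells us the output satisfies that $\alpha'_\textrm{trim}$ is in efficient position; hence the only snippets of $\alpha'$ that can possibly be bad are $\alpha'[0]$ and $\alpha'[-1]$, and everything reduces to controlling these two. I would also record the preliminary fact that $\len(\alpha)\ge 4$: a branch rectangle is never glued to another branch rectangle, so the two $\mathbb{B}(h,t)$ trigons $\alpha[1]$ and $\alpha[-1]$, which live in branch rectangles, cannot be neighbours in $\alpha$.

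Next I would trace the homotopies. Since $\alpha[1]$ is a $\mathbb{B}(h,t)$ trigon it lies in a branch rectangle, and since $\alpha[0]$ is a vertical dual inside a complementary region it meets $\alpha[1]$ along a component of $\partial_h N$; hence $\alpha[2]$ lies in the switch rectangle glued to the corresponding tie, and the corner of the trigon cut off by $\alpha[1]$ is the branch-rectangle corner on that tie. As $\alpha[0]$ is a \emph{left} vertical dual it cuts off an index-zero region on its left and so ``blocks'' $\alpha[1]$ on the $\alpha[0]$ side; combining Lemma \ref{Homotopy of type B(h,t,0)} with Lemma \ref{General observations trigon homotopies} then shows that $\Hom(\alpha,1)$ keeps $\alpha[0]$ weakly snippet homotopic to itself and produces at most one new bad snippet, which sits in the switch rectangle, turns right, and is of type $\mathbb{S}(h,t,1)$ — the weight being forced down to one by the blocking vertical dual, which I would confirm by an index/weight count. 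By Corollary \ref{TrigArc terminates as soon as changes weak snippet type of beginning or end} this is consistent with \texttt{TrigArc} not yet terminating, and symmetrically Lemma \ref{Homotopy of type S(h,t,0)} sends such an $\mathbb{S}(h,t,1)$ trigon back to a front $\mathbb{B}(h,t)$ trigon, again without disturbing the left vertical dual. Thus \texttt{TrigArc} proceeds through a string $\mathbb{B}(h,t)\to\mathbb{S}(h,t,1)\to\mathbb{B}(h,t)\to\cdots$ of homotopies, each lowering $\carr(\alpha_\textrm{trim})$ and $\len(\alpha)$ by one, with the first snippet a left vertical dual throughout.

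Finally I would examine termination. Either the front trigon resolves into efficient position while still separated from the last snippet, in which case $\alpha'[-1]=\alpha[-1]$ is still a $\mathbb{B}(h,t)$ trigon and $\alpha'[0]$ is still a left vertical dual; or the length has dropped to where the front trigon becomes adjacent to $\alpha[-1]$, in which case the front trigon must be of type $\mathbb{S}(h,t,1)$ (it cannot be a branch-rectangle trigon next to the branch-rectangle snippet $\alpha[-1]$), and the next homotopy merges it with $\alpha[-1]$. Here I would apply Lemma \ref{Homotopy of type S(h,t,0)} in its mirrored form, with the branch rectangle now on the $\alpha[-1]$ side: the resulting last snippet $\alpha'[-1]$ lies in that branch rectangle and is either in efficient position or a right-turning bad snippet, and by tracking where its two endpoints land — one being the fixed endpoint $\alpha(1)$, which lies on $\partial_h N$ because $\alpha[-2]$ was a switch-rectangle snippet, the other being moved across the relevant corner — the only possibilities are $\mathbb{B}(h,t)$ and $\mathbb{B}(h,h)$. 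If $\alpha'[-1]$ is the bigon $\mathbb{B}(h,h)$, both of its endpoints lie on horizontal boundaries of the branch rectangle, so $\alpha'[-2]$ lies in a complementary region, and a blocker-type index argument — the $\mathbb{B}(h,h)$ bigon forces its complementary neighbour to cut off an index-zero region on the side dictated by the orientation — shows $\alpha'[-2]$, like $\alpha'[0]$, is a left vertical dual.

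The main obstacle is the bookkeeping concentrated in the last two steps: confirming that the blocking left vertical dual really does confine the front trigon to the types $\mathbb{B}(h,t)$ and $\mathbb{S}(h,t,1)$ (ruling out $\mathbb{S}(h,v,2)$, $\mathbb{S}(h,t,3)$, and especially the length-increasing $\mathbb{R}(h,v)$), verifying that the snippet length strictly decreases at every step so that the process terminates before anything unexpected can happen, and, when the homotopies do reach the end of the arc, pinning down the exact type of $\alpha'[-1]$ and the status of $\alpha'[-2]$ from the local picture in the branch rectangle. Each of these is a finite local check that follows from the five trigon-homotopy lemmas of this section together with Corollary \ref{TrigArc terminates as soon as changes weak snippet type of beginning or end}, but it genuinely requires drawing the branch/switch-rectangle configurations and making several index and weight computations.
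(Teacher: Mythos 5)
There is a genuine gap: the claimed homotopy pattern $\mathbb{B}(h,t)\to\mathbb{S}(h,t,1)\to\mathbb{B}(h,t)\to\cdots$ is not forced by the hypotheses, and your subsequent termination and length accounting rest on it. When a $\mathbb{B}(h,t)$ homotopy is applied at $\alpha[1]$ the new trigon, if it lands in a switch rectangle, is the image of the carried snippet $\alpha[2]$; its weight is $1$ plus the number of ``extra'' corners of $\partial_v R_s$ swept out on the side of $\alpha[2](1)$, which can be $0$, $1$, or $2$ depending on whether $\alpha[2]$ exits through a tie, through $\partial_v N$, or across the whole small end. So $\mathbb{S}(h,v,2)$ and $\mathbb{S}(h,t,3)$ are both perfectly possible, not just $\mathbb{S}(h,t,1)$. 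The left vertical dual $\alpha[0]$ constrains nothing about $\alpha[2](1)$, so there is no ``index/weight count'' that forces weight one. Once $\mathbb{S}(h,v,2)$ occurs, Lemma \ref{Homotopy of type T(S,h,v)} sends the trigon into a complementary region as $\mathbb{R}(h,v)$, at which point Lemma \ref{Homotopy of type R(h,v)} allows both $\carr(\alpha_\textrm{trim})$ and $\len(\alpha)$ to \emph{increase} (by as much as $s-1$). And $\mathbb{S}(h,t,3)$ already increases $\len$ by one (Lemma \ref{Homotopy of type T(S,h,t,2)}). Thus neither the strict monotone decrease of $\len(\alpha)$ you use for termination, nor the claim that the trigon stays pinned at position $1$ next to the vertical dual, actually holds.

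The paper sidesteps this entirely by not tracking the intermediate types at all. Corollary \ref{TrigArc terminates as soon as changes weak snippet type of beginning or end} guarantees that \texttt{TrigArc} halts the moment the weak snippet homotopy type of $\alpha[0]$ or $\alpha[-1]$ changes, so one only needs to split into three cases — neither changed, $\alpha[0]$ changed, $\alpha[-1]$ changed — and in each apply Lemma \ref{General observations trigon homotopies} once. If $\alpha[0]$ changes it gains one outward corner on its left (index $-1/4$) and is still efficient; $\alpha'[-1]=\alpha[-1]$ stays a $\mathbb{B}(h,t)$ trigon. If $\alpha[-1]$ changes, the trigon on its right loses a corner and becomes a bigon; the location of $\alpha[-1](1)$ in $\partial_h N$ then forces $\mathbb{B}(h,h)$. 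Finally, the statement about $\alpha'[-2]$ being a left vertical dual is obtained in the paper by the short induction via Lemma \ref{General observations trigon homotopies} part~1: every snippet of $\beta'[1:-1]$ produced by a trigon homotopy cuts off an index-zero region on its left, so the snippet immediately preceding the current trigon always does; when the process ends with the bigon $\alpha'[-1]$, its predecessor $\alpha'[-2]$ is such a snippet with both endpoints on $\partial_h N$, hence a left vertical dual. Your last step, the ``blocker-type index argument,'' should be replaced by exactly this induction — the claim is about the snippet that precedes the running trigon at every stage, not something one reads off from the final local picture in the branch rectangle. I would encourage you to rebuild the proof around the termination corollary rather than a hand simulation of \texttt{TrigArc}.
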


\begin{proof}
For clarity, we set $\texttt{TrigArc}(\alpha):=\texttt{TrigArc}(S,N,\alpha)$ for the remainder of this proof.
By Corollary \ref{TrigArc terminates as soon as changes weak snippet type of beginning or end} we know that either both or exactly one of the pairs $\{\texttt{TrigArc}(\alpha)[0],\alpha[0]\}$ and $\{\texttt{TrigArc}(\alpha)[-1],\alpha[-1]\}$ are weakly snippet homotopic. 

If both are weakly snippet homotopic, then $\texttt{TrigArc}(\alpha)[:-1]$ is in efficient position and $\texttt{TrigArc}(\alpha)[-1]=\alpha[-1]$ is a right-turning trigon snippet inside a branch rectangle. 

If $\texttt{TrigArc}(\alpha)[0]$ and $\alpha[0]$ are not weakly snippet homotopic, then $\texttt{TrigArc}(\alpha)[0]$ cuts off a region of index $-1/4$ on its left-hand side by Lemma \ref{General observations trigon homotopies}. Thus, it is in efficient position following an index-argument. As $\texttt{TrigArc}(\alpha)[-1]$ and $\alpha[-1]$ are weakly snippet homotopic in this case, $\texttt{TrigArc}(\alpha)$ contains one bad snippet, which is the trigon snippet $\texttt{TrigArc}(\alpha)[-1]=\alpha[-1]$.

If $\texttt{TrigArc}(\alpha)[0]$ and $\alpha[0]$ are weakly snippet homotopic but $\texttt{TrigArc}(\alpha)[-1]$ and $\alpha[-1]$ are not, then $\texttt{TrigArc}(\alpha)[:-1]$ is in efficient position and $\texttt{TrigArc}(\alpha)[-1]$ is a bigon snippet of type $\mathbb{B}(h,h)$. The last claim follows from the fact that $\alpha[-1]$ is embedded and cuts off a trigon on its right-hand side. Thus, Lemma \ref{General observations trigon homotopies} implies that $\texttt{TrigArc}(\alpha)[-1]$ is embedded and cuts off a bigon on its right-hand side. As $\alpha[-1](1)=\texttt{TrigArc}(\alpha)[-1](1) \subset \partial_h N$, the snippet $\texttt{TrigArc}(\alpha)[-1]$ is a bigon snippet of type $\mathbb{B}(h,h)$. We recall that $\alpha[1]$ is adjacent to the vertical dual $\alpha[0]$. Hence, as long as the homotopy does not change the weak snippet homotopy type of $\alpha[0]$, the subarc $\Hom(\alpha[0:2],1)[:-1]$ consists of snippets that cut off regions of index zero on their left-hand side. So, if $\Hom(\alpha[0:2],1)[1:-1]$ is empty, then the ``new'' trigon $\Hom(\alpha[0:2],1)[-1]$ is adjacent to a snippet that is weakly snippet homotopic to $\alpha[0]$, thus is a left vertical dual. Else, it is adjacent to the snippet $\Hom(\alpha[0:2],1)[-2]$, which cuts off a region of index zero on its left-hand side. By induction, if $\texttt{TrigArc}(\alpha)[-1]$ and $\alpha[-1]$ are not weakly snippet homotopic, then $\texttt{TrigArc}(\alpha)[-2]$ cuts off a region of index zero on its left-hand side. As $\texttt{TrigArc}(\alpha)[-1](0) \subset \partial_h N$, $\texttt{TrigArc}(\alpha)[-2]$ is a left vertical dual.
\end{proof}

\section{The algorithm \texttt{TrigCurve}}

In the last section we saw that the process of repeatedly applying trigon homotopies to an almost efficient arc $\alpha$ of trigon type eventually yields a homotopic arc $\alpha'$ whose inside is in efficient position. In this chapter, we are applying a similar line of reasoning to see that the repeated application of local homotopies to the unique trigon snippet of an almost efficient curve of trigon type yields efficient position or shortens the curve until it consists of a single snippet only. For a formal statement of the corresponding algorithm \texttt{TrigCurve} we refer the reader to Algorithm \ref{TrigCurve}.


\begin{algorithm} 
    \SetKwInOut{Input}{Input}
    \SetKwInOut{Output}{Output}
	\DontPrintSemicolon
    \Input{A surface $S$ of positive complexity, a tie neighbourhood $N \subset S$ of a large train track in $S$, and a curve $\alpha \subset S$ with at most one bad snippet, which must be a trigon snippet.}
    \Output{A curve $\alpha'$ homotopic to $\alpha$ such that $\alpha'$ is in efficient position or $\len(\alpha')=1$.}
    \BlankLine
    $\alpha'= \alpha$\;
    \While{$\len(\alpha')>1$ and there is some $k$ satisfying $0\leq k \leq \len(\alpha')-1$ such that $\alpha[k]$ is a bad snippet}
    {$\alpha'=\Hom(\alpha',k)$}
    \Return $\alpha'$ 
    \caption{\texttt{TrigCurve} - Homotoping a curve with at most one bad snippet into efficient position or shortening it to consist of a single snippet. The bad snippet of the input curve is required to be a trigon snippet.}
    \label{TrigCurve}
\end{algorithm}

\begin{lem} \label{TrigCurve funtionally correct}
The algorithm \texttt{TrigCurve} is correct. On an input $(S,N,\alpha)$, the algorithm halts in $\mathit{O}(\chi(S)^2 \cdot \len(\alpha))$ time.
\end{lem}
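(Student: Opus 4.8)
\textbf{Proof proposal for Lemma \ref{TrigCurve funtionally correct}.}

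The plan is to mimic closely the correctness and running-time argument for \texttt{TrigArc} given in Lemma \ref{TrigArc funtionally correct}, adapting it to the curve setting. First I would argue correctness. By hypothesis the input curve $\alpha$ contains at most one bad snippet, and if it contains one it is a trigon snippet; without loss of generality it is right-turning. Applying $\Hom(\alpha,k)$ and invoking Lemma \ref{Omnibus Trigons}, the result again contains at most one bad snippet, which is a right-turning trigon. By induction this invariant persists through every iteration of the while-loop, so at every stage either the curve is in efficient position, or $\len(\alpha')=1$ (so we are in the "start or end in peripheral annulus" / inessential-curve situation and the loop exits by the length condition), or there is a unique right-turning trigon snippet on which to act. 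Hence whenever the loop terminates the output satisfies the required property.

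Next I would bound the number of iterations, exactly as in \texttt{TrigArc}. Lemma \ref{Omnibus Trigons} gives the dichotomy: a trigon homotopy either decreases $\carr(\alpha'_\textrm{trim})$ or $\dual_R(\alpha'_\textrm{trim})$, or it is of type $\mathbb{R}(h,v)$, in which case $\dual_R$ is unchanged and $\carr$ grows by at most $s-1$. Moreover a trigon of type $\mathbb{R}(h,v)$ only arises immediately after a homotopy that strictly decreased $\dual_R$ (or is present in the input), so there are at most $\dual_R(\alpha)+1$ many $\mathbb{R}(h,v)$-homotopies, and the total number of carried snippets ever created is bounded by $\carr(\alpha)+(\dual_R(\alpha)+1)(s-1) \leq (\len(\alpha)+1)\cdot s$. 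Since each non-$\mathbb{R}(h,v)$ homotopy decreases $\carr$ or $\dual_R$ by at least one, and both quantities are bounded by $\len(\alpha)$, the total iteration count is $\mathit{O}((\len(\alpha)+1)\cdot (s+2)) = \mathit{O}(\len(\alpha)\cdot s)$, remembering $s = \mathit{O}(|\chi(S)|)$ by Remark \ref{s is O(Euler)}. One subtlety particular to the curve case is that indexing is circular, so "there is some $k$ with $0 \leq k \leq \len(\alpha')-1$" ranges over all snippets; but since $\alpha'$ still has at most one bad snippet at every stage, the dichotomy of Lemma \ref{Omnibus Trigons} still applies verbatim to that unique snippet. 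I would also note that, unlike the arc case, the loop can also exit because $\len(\alpha')$ drops to $1$; this only helps, and is consistent with the claimed output (then $\alpha$ was inessential or peripheral, cf. Lemma \ref{arcs or curves of length one}).

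For the running time I would go through the pseudocode line by line as in the \texttt{TrigArc} proof: the very first evaluation of the while-condition may need to inspect every snippet of $\alpha$ to find a bad one, costing $\mathit{O}(|\chi(S)|\len(\alpha))$; every subsequent evaluation need only re-examine the $\mathit{O}(s)$ snippets touched by the previous homotopy (all others are unchanged by Remark \ref{local trigon homotopy fixes boundary}, and $\len(\Hom(\alpha'[k-1:k+2],1)) \leq s$), costing $\mathit{O}(|\chi(S)|)$; and each call $\Hom(\alpha',k)$ costs $\mathit{O}(|\chi(S)|)$ by Remark \ref{Required time for trigon homotopies}. Combining one expensive iteration of cost $\mathit{O}(|\chi(S)|\len(\alpha))$ with $\mathit{O}(\len(\alpha)\cdot s)=\mathit{O}(|\chi(S)|\len(\alpha))$ further iterations of cost $\mathit{O}(|\chi(S)|)$ each gives a total of $\mathit{O}(\chi(S)^2 \cdot \len(\alpha))$.

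I expect the main (minor) obstacle to be the bookkeeping around circular indexing and the new termination mode $\len(\alpha')=1$: I must check that Lemma \ref{Omnibus Trigons} — stated for almost efficient \emph{arcs or curves} of trigon type with $\alpha_\textrm{trim}=\alpha$ in the curve case — genuinely continues to apply to the intermediate curves produced by the loop, in particular that a homotopy cannot simultaneously create a second bad snippet and shrink the curve to a single snippet in a way that breaks the invariant. This follows from the "at most one bad snippet" clause of Lemma \ref{Omnibus Trigons} together with the observation that a curve of snippet length one has empty $\alpha_\textrm{trim}$ only in the degenerate sense; I would spell this out briefly. The quantitative heart of the argument — the iteration bound — is identical to \texttt{TrigArc} and requires no new ideas.
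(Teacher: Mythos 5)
Your proposal is correct and follows essentially the same route as the paper: invoke Lemma \ref{Omnibus Trigons} to preserve the "at most one right-turning trigon" invariant, reuse the $(\len(\alpha)+1)(s+2)$ iteration bound from the proof of Lemma \ref{TrigArc funtionally correct}, and combine one $\mathit{O}(|\chi(S)|\len(\alpha))$ initial scan with $\mathit{O}(|\chi(S)|)$-cost iterations to get $\mathit{O}(\chi(S)^2 \len(\alpha))$. The paper's own proof is terser, essentially just citing the \texttt{TrigArc} argument, so your extra care about circular indexing and the $\len(\alpha')=1$ exit condition is consistent with, not divergent from, what is done there.
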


\begin{proof}
By assumption on the input of the algorithm \texttt{TrigCurve}, we are given a surface $S$ of positive complexity, a tie neighbourhood $N \subset S$ of a large train track in $S$, and a curve $\alpha$ that contains at most one bad snippet $\alpha[k]$. This is a trigon snippet.
We note that $\len(\alpha_\textrm{trim})=\len(\alpha)$ as $\alpha=\alpha_\textrm{trim}$.
Arguing as in the case of the algorithm \texttt{TrigArc}, Lemma \ref{Omnibus Trigons} tells us that the algorithm exits the while-loop after at most $(\len(\alpha)+1)\cdot(s+2)$ many iterations. Thus, checking the validity of the while-loop condition and applying the trigon homotopy in each iteration of the while-loop sums up to at most $\mathit{O}(\chi(S)^2 \cdot (\len(\alpha)+1))$ operation being required before efficient position is achieved or the curve is shortened to consist of a single snippet. As $\len(\alpha) \geq 1$, also $2 \cdot \len(\alpha) \geq (\len(\alpha)+1)$, so the algorithm \texttt{TrigCurve} halts in $\mathit{O}(\chi(S)^2 \cdot \len(\alpha))$ time.
\end{proof}

\begin{cor}\label{Trig Curve Corollary}
For any input $(S,N,\alpha)$, the algorithm \texttt{TrigCurve} terminates in \\ $\mathit{O}(\chi(S)^2 \cdot (\len_\textrm{red}(\alpha) + 1))$ time.
\end{cor}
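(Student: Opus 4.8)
The plan is to derive Corollary~\ref{Trig Curve Corollary} from Lemma~\ref{TrigCurve funtionally correct} in exactly the same way that Corollary~\ref{TrigArc in reduced corner length} was derived from Lemma~\ref{TrigArc funtionally correct}. By Lemma~\ref{TrigCurve funtionally correct}, the algorithm \texttt{TrigCurve} terminates in $\mathit{O}(\chi(S)^2 \cdot \len(\alpha))$ time, so it suffices to show that $\len(\alpha) = \mathit{O}(\len_\textrm{red}(\alpha) + 1)$, and then combine the two bounds.

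First I would invoke Lemma~\ref{reduced corner length vs snippet length}, which gives $\len_\textrm{red}(\alpha) \geq \len(\alpha) - m_\alpha$, where $m_\alpha$ is the number of bad snippets of $\alpha$. Strictly speaking Lemma~\ref{reduced corner length vs snippet length} is stated for arcs; for a curve $\alpha$ one either appeals to the obvious analogue (the proof carries over verbatim since $\alpha_\textrm{trim}=\alpha$ and the blocker-overlap analysis of Lemma~\ref{Overlap of blockers} holds for curves too), or one cuts $\alpha$ at a point and treats it as an arc. In our situation the input curve $\alpha$ to \texttt{TrigCurve} is required to have at most one bad snippet, so $m_\alpha \leq 1$. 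Hence $\len(\alpha) \leq \len_\textrm{red}(\alpha) + m_\alpha \leq \len_\textrm{red}(\alpha) + 1$, giving $\len(\alpha) + 1 \leq \len_\textrm{red}(\alpha) + 2$.

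Plugging this into the running-time bound of Lemma~\ref{TrigCurve funtionally correct}, the algorithm terminates in
\begin{align*}
\mathit{O}(\chi(S)^2 \cdot \len(\alpha))
&\subseteq \mathit{O}(\chi(S)^2 \cdot (\len(\alpha)+1)) \\
&\subseteq \mathit{O}(\chi(S)^2 \cdot (\len_\textrm{red}(\alpha) + 2)) \\
&= \mathit{O}(\chi(S)^2 \cdot (\len_\textrm{red}(\alpha) + 1))
\end{align*}
time, where the last equality is the usual absorption of additive constants into $\mathit{O}$-notation. This is exactly the claimed bound, which completes the proof.

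This corollary is essentially a bookkeeping restatement, so I do not expect a genuine obstacle; the only point needing a sentence of care is the transfer of Lemma~\ref{reduced corner length vs snippet length} from arcs to curves. Since that lemma's proof splits $\alpha$ into maximal blocker and non-blocker subarcs and applies Lemma~\ref{Snippet vs corner length} and Lemma~\ref{SnippetSequenceIOnly}, all of which are insensitive to whether the underlying object is an arc or a curve (and indeed Lemma~\ref{Snippet vs corner length}, Lemma~\ref{Overlap of blockers}, and the definition of $\len_\textrm{red}$ are all phrased for ``arc or curve''), the same inequality $\len_\textrm{red}(\alpha) \geq \len(\alpha) - m_\alpha$ holds for curves. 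With $m_\alpha \leq 1$ this is all that is needed.
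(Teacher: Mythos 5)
Your proof is correct and takes essentially the same approach as the paper, which also derives the bound from the running-time estimate of \texttt{TrigCurve} combined with Lemma \ref{reduced corner length vs snippet length} and the fact that $m_\alpha \leq 1$. Your extra remark that Lemma \ref{reduced corner length vs snippet length} is stated for arcs but transfers to curves is a point the paper silently glosses over, and your justification for the transfer is sound.
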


\begin{proof}
Again, this follows from Lemma \ref{TrigArc funtionally correct} and \ref{reduced corner length vs snippet length} if we keep in mind that $\alpha$ contains at most one bad snippet.
\end{proof}

\newpage

\chapter{Local bigon homotopies}

In the previous chapter, we studied one family of bad snippets in detail: trigon snippets. There is one other important family of bad snippets: bigon snippets. These are the focus of this chapter. 
We will see that we have to alter our approach from the trigon snippet case: Suppose that we are given an almost efficient arc or curve of bigon type. Applying one local homotopy might yield another almost efficient arc or curve of bigon type. However, it might also yield an arc or curve that contains up to two trigon snippets. Therefore, we can not simply apply bigon homotopies repeatedly, and if we wish to make use of the algorithm \texttt{TrigArc} we have to be careful to select appropriate almost efficient subarcs.

These issues can be bypassed as follows: suppose that $\alpha$ is an almost efficient arc of bigon type, with $\alpha[k]$ being the unique bad snippet in $\alpha_\textrm{trim}$. As in the case of trigon snippets we will see that $\Hom(\alpha[k-1:k+2],1)_\textrm{trim}$ is in efficient position. Thus, if $k=-2$, that is, if the bigon snippet $\alpha[k]$ is the second to last snippet of the arc, then applying one local homotopy yields an arc that contains at most one bad snippet in its inside. A careful analysis will show that this snippet has to be a trigon snippet. This allows us to employ the algorithm \texttt{TrigArc} to achieve efficient position for the inside of the arc.

The layout of this chapter is as follows: First, we discuss the effects that local homotopies have on almost efficient arcs and curves of bigon type. Again, we put special emphasis on the various notions of length and their changes under the bigon homotopies. Secondly, we present the algorithm \texttt{BigArc} which yields efficient position for the inside of an arc $\alpha$ containing a unique bad snippet, $\alpha[-2]$, in its inside. We close this chapter by giving bounds on the running time of \texttt{BigArc} and the snippet length of its output. We remark that most of the very detailed observations made about bigon homotopies are not required to obtain the bounds on the running time and length of the output of \texttt{BigArc}. However, they will be essential when working with curves containing a single bad snippet (see Chapter \ref{Chapter last snippet}).

\section{Local homotopies for bigon arcs}

Throughout this section, we fix a surface $S = S_{g,b}$ satisfying $\xi(S)=3g-3+b \geq 1$. We further fix a large train track $\tau \subset S$ and a tie neighbourhood $N=N(\tau) \subset S$. As usual, we do not distinguish between snippets and their strong snippet homotopy classes unless otherwise stated. For any snippet $a \subset R \in \mathcal{R}$, we always assume that $a$ has minimal self-intersection, intersects the one-skeleton of $\mathcal{R}$ perpendicularly and misses the corners of $\mathcal{R}$. We further recall that arcs and curves in $S$ are assumed to be self-transverse and transverse to $\partial \mathcal{R}$. Moreover, if $\alpha$ is an arc, we assume that $\alpha(0)$ and $\alpha(1)$ lie in $\partial \mathcal{R}$. Thus, arcs and curves in $S$ admit canonical decompositions into snippets.

\begin{lem}\label{Easy Bigon bounds}
Suppose that $\alpha\subset S$ is a snippet-decomposed arc or curve satisfying $\len(\alpha)\geq 2$. Suppose that $\alpha[k] \subset \alpha_\textrm{trim}$ is a bad snippet of bigon type $\mathbb{B}(t,t)$, $\mathbb{S}(h,h,0)$, $\mathbb{S}(t,t,0)$, $\mathbb{S}(v,v,0)$, or $\mathbb{R}(v,v)$. If $\len(\alpha)=2$, then $\len(\Hom(\alpha,k))=1$. Else, that is if $\len(\alpha)\geq 3$, we have that $\len(\Hom(\alpha,k))=\len(\alpha)-2$.
Furthermore, if $\alpha[k-1]$ and $\alpha[k+1]$ are in efficient position, then one of the following three statements holds.
\begin{enumerate}
\item $\Hom(\alpha[k-1:k+2],1)$ is a bigon snippet or a trigon snippet and \[\len_\textrm{red}(\Hom(\alpha,k))\leq \len_\textrm{red}(\alpha)-2.\]
\item $\Hom(\alpha,k)$ is in efficient position.
\item $\Hom(\alpha,k)$ is a peripheral curve consisting of a single snippet only.
\end{enumerate}
\end{lem}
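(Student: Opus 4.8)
The plan is to mimic the structure of the trigon analysis (Lemma~\ref{First ever observations trigons} and Lemma~\ref{General observations trigon homotopies}) but account for the extra corner a bigon contributes. Write $a=\alpha[k]\subset R\in\mathcal{R}$ for the unique bad snippet, and let $T$ be the bigon it cuts off, so $\ind(T)=1/2$ and $\partial T=a\cup b$ with $b=\partial T-a$. Since $a$ is a bigon snippet, its two endpoints $a(0)$ and $a(1)$ lie on the same side of $R$; in each of the listed types ($\mathbb{B}(t,t)$, $\mathbb{S}(h,h,0)$, $\mathbb{S}(t,t,0)$, $\mathbb{S}(v,v,0)$, $\mathbb{R}(v,v)$) the arc $b$ is contained in a single side of $R$ and contains exactly the two outward-pointing corners at $a(0),a(1)$, hence $|\partial T\cap\partial^2\mathcal{R}|=0$. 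By the same counting argument as in part~(3) of Lemma~\ref{First ever observations trigons}, the subarc $\alpha[k-1:k+2]$ is replaced by $\len(\alpha[k-1:k+2])-1+|\partial T\cap\partial^2\mathcal{R}|-1=\len(\alpha[k-1:k+2])-2$ snippets when $\len(\alpha)\geq 3$; this gives $\len(\Hom(\alpha,k))=\len(\alpha)-2$. If $\len(\alpha)=2$, then $\alpha$ has no inside, so the hypothesis $\alpha[k]\subset\alpha_\textrm{trim}$ cannot literally hold unless we read it loosely; in that degenerate case $\alpha$ is a curve of length two ($\alpha=\alpha_\textrm{trim}$) with one bigon snippet, and the homotopy collapses $\alpha$ to a single snippet, so $\len(\Hom(\alpha,k))=1$. (For an \emph{arc} of length two, Lemma~\ref{No arcs of length 2} rules this situation out.)

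Next I would analyse the three outcomes, assuming now $\alpha[k-1]$ and $\alpha[k+1]$ are in efficient position. Since $|\partial T\cap\partial^2\mathcal{R}|=0$, no new interior snippets are created; the homotopy simply ``pushes $a$ across $T$'', so $\Hom(\alpha[k-1:k+2],1)$ is a single snippet $c$, obtained by concatenating $\alpha[k-1]$, (a reversal of) $b$, and $\alpha[k+1]$ and smoothing. If $\len(\alpha)\geq 3$ so that $\alpha[k-1]\neq\alpha[k+1]$, then $c$ lies in a region $R'\in\mathcal{R}$ adjacent to $R$. One shows $c$ is embedded: because $b$ carries two outward-pointing corners and $\alpha[k\pm1]$ are embedded and in efficient position, an index computation as in Lemma~\ref{General observations trigon homotopies} forces $c$ to cut off on the relevant side a region whose index differs from that cut off by $\alpha[k-1]$ (or $\alpha[k+1]$) by at most $2\cdot(1/4)=1/2$. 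So $c$ is either in efficient position (outcome~2, possibly with $\Hom(\alpha,k)$ a length-one peripheral curve, outcome~3, if $R'$ is a peripheral annulus and $c$ winds insufficiently) or it cuts off a region of index $1/4$ or $1/2$ on one side, i.e.\ it is a trigon or bigon snippet (outcome~1). If $\len(\alpha)=2$, then $\alpha[k-1]=\alpha[k+1]$ and $\Hom(\alpha,k)$ is a closed curve consisting of a single snippet, landing in outcome~2 or~3.

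For the length bound in outcome~1 I would argue: the two snippets $\alpha[k-1]$ and $\alpha[k+1]$ are removed and replaced by one snippet $c$, and $\alpha[k]$ itself is removed. Using the definition of $\len_\textrm{corn}$ on each region type and the fact that $b$ lies in a single side — so passing from $\alpha[k\pm1]$ to $c$ changes the corner-count along $b$ by at most the corner length of that one side, while also deleting $\len_\textrm{corn}(\alpha[k])\in\{1,3\}$ — one gets $\len_\textrm{corn}(\Hom(\alpha,k))\leq\len_\textrm{corn}(\alpha)-2$ after checking each of the five bigon types separately (the $\mathbb{S}(\cdot)$ and $\mathbb{B}(t,t)$ cases, where $\len_\textrm{corn}(\alpha[k])=3$ or $1$ and $b$ contributes nothing new; the $\mathbb{R}(v,v)$ case, where $b$ lies on a vertical side with empty intersection with $\partial^2\mathcal{R}$). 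Since the homotopy can only \emph{decrease} the number of blockers (no new vertical-dual/branch-dual pairs are created when $c$ is a trigon or bigon, and deleting two snippets cannot create a blocker not already ``there''), we have $\len_\textrm{block}(\Hom(\alpha,k))\leq\len_\textrm{block}(\alpha)$, hence $\len_\textrm{red}(\Hom(\alpha,k))=\len_\textrm{corn}-2\len_\textrm{block}\leq\len_\textrm{red}(\alpha)-2$.

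The main obstacle I anticipate is the case-by-case verification for outcome~1, and in particular controlling the blocker count: a bigon homotopy could in principle destroy a blocker that straddled $\alpha[k-1]$ or $\alpha[k+1]$, which would only help the inequality, but I need to be sure it never manufactures a blocker elsewhere — this requires the same kind of transversality/separation reasoning as Lemma~\ref{Overlap of blockers}, namely that the only snippets touched are $\alpha[k-1:k+2]$ and a blocker's first and last snippets lie in complementary regions separated by the tie neighbourhood. The other delicate point is the $\mathbb{R}(v,v)$ subcase, where one must confirm via the winding-number conventions of Definition~\ref{Page: winding number} that collapsing the bigon either genuinely lands $c$ in efficient position or yields a peripheral curve of length one, rather than some intermediate bad configuration.
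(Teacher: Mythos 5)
There is a genuine error in your blocker argument, and it is not a repairable slip: you have the direction of the inequality backwards. You establish $\len_\textrm{corn}(\Hom(\alpha,k)) \leq \len_\textrm{corn}(\alpha) - 2$ and $\len_\textrm{block}(\Hom(\alpha,k)) \leq \len_\textrm{block}(\alpha)$, and conclude $\len_\textrm{red}(\Hom(\alpha,k)) \leq \len_\textrm{red}(\alpha) - 2$. But $\len_\textrm{red} = \len_\textrm{corn} - 2\len_\textrm{block}$, so a \emph{decrease} in the blocker count \emph{increases} the reduced corner length: if $\len_\textrm{corn}$ drops by $2$ but a blocker straddling $\alpha[k\pm1]$ is destroyed, $\len_\textrm{red}$ stays the same. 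Destroying two blockers would make $\len_\textrm{red}$ go \emph{up}. The useful direction is an \emph{upper bound} on how many blockers can be destroyed, paired with a stronger drop in corner length to compensate. The paper does exactly this: for the types landing in the tie neighbourhood ($\mathbb{S}(t,t,0)$, $\mathbb{B}(t,t)$, $\mathbb{R}(v,v)$) it shows the blocker count is \emph{unchanged} (no dual snippets are touched) and $\len_\textrm{corn}$ drops by $\geq 2$; for $\mathbb{S}(h,h,0)$ and $\mathbb{S}(v,v,0)$ it splits into subcases where $\len_\textrm{corn}$ drops by $\geq 4$ and at most one blocker is destroyed, or $\len_\textrm{corn}$ drops by $\geq 6$ and at most two blockers are destroyed. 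Your uniform ``$\len_\textrm{corn}$ drops by $2$'' is too weak to absorb any blocker loss, so the argument does not close.

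A second, smaller gap: for the $\len(\alpha)=2$ case with $\alpha[k]$ of type $\mathbb{S}(h,h,0)$ or $\mathbb{S}(v,v,0)$ and $\alpha[k-1]=\alpha[k+1]$ in efficient position, you need outcome (3) specifically — a \emph{peripheral} curve, not an inessential one. You say this lands in ``outcome 2 or 3,'' but a one-snippet closed curve is never in efficient position, so outcome 2 is impossible, and you must rule out the inessential case. The paper does this via the winding number: since the doubled snippet starts and ends on the same side of the peripheral annulus $R$ and is in efficient position, $\wind(\alpha[k-1])$ is a non-trivial multiple of $|\partial^2 R|$, which is preserved under the homotopy and witnesses that $\Hom(\alpha,k)$ is peripheral. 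Finally, your invocation of Lemma~\ref{First ever observations trigons}(3) to get the snippet count is a transfer of a trigon statement to a bigon setting; the count happens to be right because $|\partial T \cap \partial^2\mathcal{R}|=0$ collapses $\alpha[k-1:k+2]$ to a single snippet, but that observation should be made directly rather than by appeal to a lemma proved under a different hypothesis.
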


\begin{proof}
We prove the statements of the lemma in order. As $\alpha[k]$ is a bigon, $\alpha[k](0)$ and $\alpha[k](1)$ lie on the same component of $\partial_h R$ or $\partial_v R$ of some rectangle $R \in \mathcal{R}_\textrm{tie}$. By assumption on the specific bigon snippet types, the boundary of the bigon does not contain any points of $\partial^2 \mathcal{R}$ (see Figures \ref{Homotopy of type B(S,v,v,0) picture}-\ref{Homotopy of type B(S,h,h,0) picture}). Hence, $\alpha[k-1]$ and $\alpha[k+1]$ lie inside the same region $R' \in \mathcal{R}$ and the arc $\alpha[k-1:k+1]$ is replaced by a single snippet $\alpha'[k-1]$.

\begin{figure}[htbp] 
    \begin{minipage}[b]{0.49\linewidth}
    \centering
\begin{tikzpicture}[scale=0.35]
\draw [very thick] (-3,2) rectangle (2,-5.5);
\node at (-0.5,3) {\tiny{$h$}};
\node at (-9,-1.75) {\tiny{$t$}};
\draw [very thick](1.5,-3) -- (2.5,-3);
\node at (3,-1.75) {\tiny{$v$}};
\draw [very thick] (-3,-5.5) rectangle (-8,2);
\draw [very thick, blue, densely dashed] plot[smooth, tension=.7] coordinates {(-6,-0.5)  (-1.5,-1) (-1.5,-4)  (-6,-4.5)};
\draw [very thick, cyan, densely dashed] plot[smooth, tension=.4] coordinates {(-6,-0.75)  (-3.5,-1) (-3.5,-4)  (-6,-4.25)};
\draw [very thick] (2,2) rectangle (5,-0.5);
\draw [very thick, blue, densely dashed] plot[smooth, tension=.7] coordinates {(4,1.5) (0,1.25) (0,0.25) (4,0)};
\draw [very thick, cyan, densely dashed] plot[smooth, tension=.4] coordinates {(4,1.25) (2.375,1.125) (2.375,0.375) (4,0.25)};
\end{tikzpicture}
    \caption{Homotopies of type $\mathbb{S}(t,t,0)$.} 
    \label{Homotopies of type B(S,t,t,0)}
  \end{minipage}
  \begin{minipage}[b]{0.49\linewidth}
    \centering
\begin{tikzpicture}[scale=0.35]
\draw [very thick] (-3,2) rectangle (2,-5.5);
\node at (-0.5,3) {\tiny{$h$}};
\node at (-9,-1.75) {\tiny{$t$}};
\draw [very thick](1.5,-3) -- (2.5,-3);
\node at (3,-1.75) {\tiny{$v$}};
\draw [very thick] (-3,-5.5) rectangle (-8,2);
\draw [very thick] (2,2) rectangle (5,-0.5);
\draw [very thick, blue, densely dashed] plot[smooth, tension=.7] coordinates {(-0.5,1.75) (3.625,1.5) (3.625,0) (-0.5,-0.25)};
\draw [very thick, blue, densely dashed] plot[smooth, tension=.7] coordinates {(-1,-1.5) (-6,-2) (-6,-4.5) (-1,-5)};
\draw [very thick, cyan, densely dashed] plot[smooth, tension=.4] coordinates {(-1,-1.75) (-2.625,-1.875) (-2.625,-4.625) (-1,-4.75)};
\draw [very thick, cyan, densely dashed] plot[smooth, tension=.4] coordinates {(-0.5,1.5) (1.625,1.375) (1.625,0.125) (-0.5,0)};
\end{tikzpicture}
    \caption{Homotopies of type $\mathbb{B}(t,t)$.} 
    \label{Homotopies of type B(B,t,t)}
    \vspace{12pt}
  \end{minipage}
  \begin{minipage}[b]{0.49\linewidth}
    \centering
\begin{tikzpicture}[scale=0.4]
\node at (5,-9) {\tiny{$v$}};
\node at (1.5,-13) {\tiny{$h$}};
\node at (-3,-9) {\tiny{$t$}};
\draw [very thick] (-1.5,-6) rectangle (4,-12) node (v3) {};
\draw [very thick] plot[smooth, tension=.7] coordinates {(4,-7.5) (7.5,-7) (8.5,-6.5)};
\draw [very thick] plot[smooth, tension=.7] coordinates {(4,-6) (5.75,-5.75) (6.25,-5.5)};
\draw [very thick] plot[smooth, tension=.7] coordinates {(4,-10.5) (7.5,-11) (8.5,-11.5)};
\draw [very thick] plot[smooth, tension=.7] coordinates {(v3) (5.5,-12.25) (6.25,-12.5)};
\draw [very thick](6.25,-5.5) -- (6.25,-7.25);
\draw [very thick](6.25,-12.5) -- (6.25,-10.75);
\draw [very thick, densely dashed, blue] plot[smooth, tension=.7] coordinates {(0.5,-8) (6.5,-8.25) (6.5,-9.75) (0.5,-10)};
\draw [very thick, densely dashed, cyan] plot[smooth, tension=.4] coordinates {(0.5,-8.5) (3.5,-8.5) (3.5,-9.5) (0.5,-9.5)};
\end{tikzpicture}
    \caption{A homotopy of type $\mathbb{R}(v,v)$.} 
    \label{Homotopy of type B(R,v,v) picture}
    \vspace{2 ex}
  \end{minipage}
\begin{minipage}[b]{0.49\linewidth}
    \centering
\begin{tikzpicture}[scale=0.4]
\node at (3,-9) {\tiny{$v$}};
\node at (1.5,-13) {\tiny{$h$}};
\node at (-3,-9) {\tiny{$t$}};
\draw [very thick] (-1.5,-6) rectangle (4,-12) node (v3) {};
\draw [very thick] plot[smooth, tension=.7] coordinates {(4,-7.5) (7.5,-7) (8.5,-6.5)};
\draw [very thick] plot[smooth, tension=.7] coordinates {(4,-6) (5.75,-5.75) (6.25,-5.5)};
\draw [very thick] plot[smooth, tension=.7] coordinates {(4,-10.5) (7.5,-11) (8.5,-11.5)};
\draw [very thick] plot[smooth, tension=.7] coordinates {(v3) (5.5,-12.25) (6.25,-12.5)};
\draw [very thick](6.25,-5.5) -- (6.25,-7.25);
\draw [very thick](6.25,-12.5) -- (6.25,-10.75);
\draw [very thick, densely dashed, blue] plot[smooth, tension=.7] coordinates {(8,-7.25) (4.5,-8) (0.75,-8.25)     (0.75,-10) (4.5,-10.25) (8,-10.75)};
\draw [very thick, densely dashed, cyan] plot[smooth, tension=.4] coordinates {(8,-7.75) (4.5,-8.5) (4.5,-9.75) (8,-10.25)};
\end{tikzpicture}
    \caption{A homotopy of type $\mathbb{S}(v,v,0)$.}
    \label{Homotopy of type B(S,v,v,0) picture}
    \vspace{2 ex}
  \end{minipage}
\begin{minipage}[b]{0.99\linewidth}
    \centering
\begin{tikzpicture}[scale=0.25]
\draw [very thick] (-3.5,1.5) rectangle (4,-3);
\node at (-6.5,6.5) {\tiny{$h$}};
\draw [very thick](-6.5,1.5) node (v1) {} -- (7,1.5) node (v2) {};
\draw [very thick] plot[smooth, tension=.7] coordinates {(v1) (-7,3) (-8,4.5)};
\draw [very thick] plot[smooth, tension=.7] coordinates {(v2) (7.5,3) (8.5,4.5)};
\draw [very thick] plot[smooth, tension=.7] coordinates {(-8,4.5) (-5,6) (-4,7.5)};
\draw [very thick] plot[smooth, tension=.7] coordinates {(8.5,4.5) (5.5,6) (4.5,7.5)};
\draw [blue, densely dashed, very thick] plot[smooth, tension=.7] coordinates {(-3.25,5.5) (-2,3) (-1,-1.5)   (1.5,-1.5) (2.5,3) (4,5.5)};
\draw [cyan, densely dashed, very thick] plot[smooth, tension=.4] coordinates {(-2.5,5.5) (-1.5,3.5) (-1,2) (1.5,2) (2,3.5) (3.5,5.5)};
\node at (-8,2.5) {\tiny{$v$}};
\node at (5.24,-0.86) {\tiny{$t$}};
\draw [very thick](-3,0) -- (-4,0);
\draw [very thick](-3,-1.5) -- (-4,-1.5);
\end{tikzpicture}
    \caption{A homotopy of type $\mathbb{S}(h,h,0)$.}
    \label{Homotopy of type B(S,h,h,0) picture}
    \vspace{2 ex}
  \end{minipage}
\end{figure}

To prove the remaining claims of the lemma, we split the five possible types of bigon snippets into two groups and consider them separately.

Firstly, let us assume that $\alpha[k]$ is a bigon snippet of type $\mathbb{S}(t,t,0)$, $\mathbb{B}(t,t)$ or $\mathbb{R}(v,v)$. If $\alpha[k-1]$ and $\alpha[k+1]$ are in efficient position, we claim that $\Hom(\alpha[k-1:k+2],1)$ is a bigon snippet and satisfies $\len_\textrm{red}(\Hom(\alpha,k))\leq \len_\textrm{red}(\alpha)-2$. This can be seen as follows: since $\alpha[k-1]$ and $\alpha[k+1]$ are efficient snippets inside a branch or switch rectangle, we know that $\alpha[k-1]\neq \alpha[k+1]$ and that both snippets must be carried and have corner length one or three. Thus, $\alpha[k-1](0)$ and $\alpha[k+1](1)$ lie on the same side of a branch or switch rectangle. This implies that $\Hom(\alpha[k-1:k+2],1)$ is a bigon snippet. As the corner length of snippets inside a branch or switch rectangle is independent of their form, we further see that $\len_\textrm{corn}(\Hom(\alpha[k-1:k+2],1)=\len_\textrm{corn}(\alpha[k-1])$. As $\len_\textrm{corn}(\alpha[k+1])\geq 1$ and $\len_\textrm{corn}(\alpha[k])=0$ only if $\len_\textrm{corn}(\alpha[k+1])=3$, this implies that \[\len_\textrm{corn}(\Hom(\alpha[k-1:k+2],1)\leq \len_\textrm{corn}(\alpha[k-1:k+2])-2.\] Since no dual snippets are affected by the homotopy, the number of blockers of $\alpha$ and $\Hom(\alpha,k)$ coincides. Thus, we see that $\len_\textrm{red}(\Hom(\alpha,k))\leq \len_\textrm{red}(\alpha)-2$, giving \textit{1}.

Secondly, let us assume that $\alpha[k]$ is a bigon snippet of type $\mathbb{S}(h,h,0)$ or $\mathbb{S}(v,v,0)$. If $\alpha[k-1]=\alpha[k+1]$, then $\Hom(\alpha,k)$ consists of a single snippet that satisfies $\wind(\Hom(\alpha,k)) =  \wind(\alpha[k-1])$. Thus, if $\alpha[k-1] \subset R \in \mathcal{R}_\textrm{comp}$ is in efficient position, it has a winding number which is a non-trivial multiple of $|\partial^2 R|$. As $\wind(\Hom(\alpha,k)) =  \wind(\alpha[k-1])$, this implies that $\Hom(\alpha,k)$ is a peripheral curve consisting of a single snippet only, giving \textit{3}.

For the remainder of this proof we assume that $\alpha[k-1]\neq \alpha[k+1]$ and that both of these snippets are in efficient position. We distinguish the following two cases: either at least one of the snippets $\alpha[k-1]$ and $\alpha[k+1]$ has corner length $2s$, or both have a corner length that is strictly smaller than $2s$. We remark that in the latter case, efficient position implies that both snippets must be horizontal or vertical duals.

Let us first assume that at least one of the snippets $\alpha[k-1]$ and $\alpha[k+1]$ has corner length $2s$. Without loss of generality, we assume that this is the snippet $\alpha[k-1]$. Then $\len_\textrm{corn}(\Hom(\alpha[k-1:k+2],1)) \leq 2s \leq \len_\textrm{corn}(\alpha[k-1])$. We note that $\len_\textrm{corn}(\alpha[k])=3$ and $\len_\textrm{corn}(\alpha[k+1])\geq 1$. Thus, we see that \[\len_\textrm{corn}(\Hom(\alpha[k-1:k+2],1)) \leq \len_\textrm{corn}(\alpha[k-1:k+2])-4.\] As snippets of corner length $2s$ cannot be part of any blocker, the homotopy reduces the number of blockers of the underlying arc or curve by at most one. This implies that $\len_\textrm{red}(\Hom(\alpha,k))\leq \len_\textrm{red}(\alpha)-2$, giving \textit{1} and \textit{2} if $\Hom(\alpha[k-1:k+2],1)$ is in efficient position.

Let us now assume that the corner length of $\alpha[k-1]$ and $\alpha[k+1]$ is strictly smaller than $2s$. As remarked previously, efficient position implies that both of these snippets cut off regions of index zero on one of their sides. If they cut off a region of index zero on the same side, an index-argument shows that $\Hom(\alpha[k-1:k+2],1)$ is in efficient position, giving \textit{2}. Therefore, let us assume that they cut off the regions of index zero on different sides. In this case $\Hom(\alpha[k-1:k+2],1)$ is a bigon of type $\mathbb{R}(h,h)$ or $\mathbb{R}(v,v)$. Without loss of generality, we may further assume that $\len_\textrm{corn}(\alpha[k-1])\leq \len_\textrm{corn}(\alpha[k+1])$. Then the bigon cut off by $\Hom(\alpha[k-1:k+2],1)$ is contained inside the region of index zero cut off by $\alpha[k+1]$. However, the boundary of the bigon contains two components of $\partial \mathcal{R}- \partial^2 \mathcal{R}$ less than the boundary of the region of index zero cut off by $\alpha[k+1]$. Hence, we see that \[\len_\textrm{corn}(\Hom(\alpha[k-1:k+2],1))\leq \len_\textrm{corn}(\alpha[k+1])-2.\] Since $\len_\textrm{corn}(\alpha[k])=3$ and $\len_\textrm{corn}(\alpha[k+1])\geq 1$, this implies that \[\len_\textrm{corn}(\Hom(\alpha[k-1:k+2],1)) \leq \len_\textrm{corn}(\alpha[k-1:k+2])-6.\] As the homotopy reduces the number of blockers by at most two, we can again conclude that $\len_\textrm{red}(\Hom(\alpha,k))\leq \len_\textrm{red}(\alpha)-2$, which completes the proof of the lemma.
\end{proof}

As in the case of trigon homotopies, we see that homotopies for bigon snippets of type $\mathbb{B}(t,t)$, $\mathbb{S}(h,h,0)$, $\mathbb{S}(t,t,0)$, $\mathbb{S}(v,v,0)$, and $\mathbb{R}(v,v)$ can be computed in $\mathit{O}(|\chi(S)|)$ time:

\begin{lem} \label{Required time for easy bigon homotopies}
Suppose that $\alpha\subset S$ is a snippet-decomposed arc or curve satisfying $\len(\alpha)\geq 2$. Suppose that $\alpha[k] \subset \alpha_\textrm{trim}$ is a bad snippet of bigon type $\mathbb{B}(t,t)$, $\mathbb{S}(h,h,0)$, $\mathbb{S}(t,t,0)$, $\mathbb{S}(v,v,0)$, or $\mathbb{R}(v,v)$. Then $\Hom(\alpha,k)$ can be computed in $\mathit{O}(|\chi(S)|)$ time.
\end{lem}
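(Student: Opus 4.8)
The plan is to follow the same template used for trigon homotopies in Remark \ref{Required time for trigon homotopies}, but in the much simpler setting that the bigon snippet types under consideration cut off a bigon whose boundary contains no point of $\partial^2 \mathcal{R}$. The key structural fact, already established in the proof of Lemma \ref{Easy Bigon bounds}, is that for a bigon snippet $\alpha[k]$ of type $\mathbb{B}(t,t)$, $\mathbb{S}(h,h,0)$, $\mathbb{S}(t,t,0)$, $\mathbb{S}(v,v,0)$, or $\mathbb{R}(v,v)$, the snippets $\alpha[k-1]$ and $\alpha[k+1]$ lie inside the same region $R' \in \mathcal{R}$, and the subarc $\alpha[k-1:k+2]$ gets replaced by the single snippet $\Hom(\alpha[k-1:k+2],1)$. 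By Remark \ref{local trigon homotopy fixes boundary} (whose argument applies verbatim to any local homotopy, not just trigon homotopies), all snippets of $\alpha$ outside $\alpha[k-1:k+2]$ remain unchanged.

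First I would recall that, following the conventions of Remark \ref{Standard conventions algor}, a snippet-decomposed arc or curve is stored as its cutting sequence together with the winding numbers of the snippets, and that arithmetic on winding numbers is assumed to take constant time. To compute $\Hom(\alpha,k)$ one must update the cutting sequence and the winding numbers. For the cutting sequence: three consecutive snippet records ($\alpha[k-1]$, $\alpha[k]$, $\alpha[k+1]$) are deleted and a single new record for the snippet $\Hom(\alpha[k-1:k+2],1)$ inside $R'$ is inserted; identifying $R'$, the relevant sides of $R'$ containing the endpoints of the new snippet, and the type of the new snippet are all ``look-ups'' in data structures of size $\mathit{O}(|\chi(S)|)$, so this takes $\mathit{O}(|\chi(S)|)$ time. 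For the winding numbers: if $R' \in \mathcal{R}_\textrm{tie}$ the winding number is not recorded at all; if $R' \in \mathcal{R}_\textrm{comp}$ is a peripheral annulus, then, since $\Hom(\alpha[k-1:k+2],1)$ is obtained by concatenating $\alpha[k-1]$ and $\alpha[k+1]$ across a bigon whose boundary contains no corner, its winding number is determined by those of $\alpha[k-1]$ and $\alpha[k+1]$ (indeed, in the cases $\mathbb{S}(h,h,0)$ and $\mathbb{S}(v,v,0)$ analysed in Lemma \ref{Easy Bigon bounds}, when $\alpha[k-1]=\alpha[k+1]$ one simply inherits $\wind(\alpha[k-1])$, and in general one adds or subtracts a bounded quantity), hence can be computed in constant time. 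All other snippets keep their winding numbers unchanged.

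Putting these two updates together, and using $\chi(S)\neq 0$ so that $\mathit{O}(|\chi(S)|)$ absorbs additive constants, $\Hom(\alpha,k)$ is computed in $\mathit{O}(|\chi(S)|)$ time, which is the claim. I do not expect any genuine obstacle here: the statement is essentially the bigon analogue of the trigon time bound, and the only point requiring mild care is the bookkeeping for the winding number when $R'$ is a peripheral annulus, which is handled by the structural analysis already carried out in the proof of Lemma \ref{Easy Bigon bounds}.
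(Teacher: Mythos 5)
Your proof is correct and follows essentially the same route as the paper's: Lemma \ref{Easy Bigon bounds} gives the single-snippet replacement, the cutting-sequence update is an $\mathit{O}(|\chi(S)|)$ look-up, and the winding number of the new snippet is constant-time arithmetic on the old ones. The only imprecision is your parenthetical claim that ``in general one adds or subtracts a bounded quantity''\,---\,the paper's formula for $\mathbb{S}(h,h,0)$ and $\mathbb{S}(v,v,0)$ with $\len(\alpha)>2$ is $\wind(a)=\wind(\alpha[k-1])+\wind(\alpha[k+1])$ when both adjacent snippets avoid $\partial S$, and $\wind(a)=0$ otherwise; neither summand is bounded, but since this is still a single constant-time arithmetic operation on stored data the time bound is unaffected.
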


\begin{proof}
By assumption, $\alpha$ is a snippet-decomposed arc or curve containing a bigon snippet $\alpha[k]$ of the required type.
We recall that $\alpha$ is determined by its cutting sequence and the winding numbers of the respective snippets. Lemma \ref{Easy Bigon bounds} implies that the subarc $\alpha[k-1:k+2]$ is replaced by the single snippet $a=\Hom(\alpha,k)[k-1]$ under the homotopy. Thus, the cutting sequence of $\Hom(\alpha,k)$ can be computed in $\mathit{O}(|\chi(S)|)$ time.

Assuming that arithmetic on winding numbers can be done in constant time, we claim that the winding number of $a$ can be calculated in $\mathit{O}(|\chi(S)|)$ time given the winding numbers of $\alpha$. For snippets of type $\mathbb{S}(t,t,0)$, $\mathbb{B}(t,t)$ or $\mathbb{R}(v,v)$ this is immediate since $a$ lies inside the tie neighbourhood. 

Thus, suppose that $\alpha[k]$ is of type $\mathbb{S}(h,h,0)$ or $\mathbb{S}(v,v,0)$.
If $\len(\alpha)=2$ and $\alpha[k-1]$ lies inside a peripheral annulus, then $\Hom(\alpha,k)$ consists of a single snippet only and $\wind(\Hom(\alpha,k)) =  \wind(\alpha[k-1])$. Suppose that $\len(\alpha)>2$ and that $\alpha[k-1]$ and thus $\alpha[k+1]$ lie inside a peripheral annulus. We can distinguish the following two cases: either $\alpha[k-1]$ and $\alpha[k+1]$ have empty intersection with $\partial S$, or at least one of their boundary points lies in $\partial S$. In the first case we see that
\begin{align*}
\wind(a) = \wind(\alpha[k-1])+\wind(\alpha[k+1]).
\end{align*}
In the second case, at least one of the boundary points of $a$ lies in $\partial S$. Thus, $\wind(a) = 0$. This implies that the winding numbers of $\Hom(\alpha,k)$ can be computed in $\mathit{O}(|\chi(S)|)$ time in all cases.
\end{proof}

\begin{lem}\label{Lemma B(S,t,v,1)}
Suppose that $\alpha \subset S$ is a snippet-decomposed arc or curve satisfying $\len(\alpha)\geq 2$. Suppose that $\alpha[k] \subset \alpha_\textrm{trim}$ is a bad snippet of bigon type $\mathbb{S}(t,v,1)$. Then $\len(\Hom(\alpha,k))=\len(\alpha)-1$. Furthermore, if $\alpha[k-1]$ is in efficient position, then $\Hom(\alpha,k)[k-1]$ is in efficient position or a trigon snippet of type $\mathbb{B}(h,t)$ or $\mathbb{R}(h,v)$. If $\alpha[k-1]$ and $\alpha[k+1]$ are in efficient position, then one of the following two statements holds.
\begin{enumerate}
\item $\Hom(\alpha[k-1:k+2],1)$ consists of one trigon snippet of type $\mathbb{B}(h,t)$ and one trigon snippet of type $\mathbb{R}(h,v)$. These are turning into the same direction. Further, $\len_\textrm{red}(\Hom(\alpha,k))\leq \len_\textrm{red}(\alpha)-4$.
\item $\Hom(\alpha[k-1:k+2],1)$ consist of one trigon snippet of type $\mathbb{B}(h,t)$ and one snippet in efficient position and $\len_\textrm{red}(\Hom(\alpha,k))\leq \len_\textrm{red}(\alpha)+ 2s-5$.
\end{enumerate}
\end{lem}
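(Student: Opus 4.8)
The plan is to mimic the structure of the proofs of the trigon homotopy lemmas (in particular Lemma \ref{Homotopy of type T(S,h,v)} and Lemma \ref{Homotopy of type B(h,t,0)}), by first pinning down the combinatorial picture of a bigon snippet of type $\mathbb{S}(t,v,1)$ and then tracking what the local homotopy does to the adjacent snippets. First I would recall that $\alpha[k]$ cuts off a bigon $T$ whose boundary $b=\partial T - \alpha[k]$ lies on a single vertical side of the switch rectangle $R$, and that type $\mathbb{S}(t,v,1)$ means $\w(a)=1$, so exactly one of the two ``extra'' corners of the switch side (those indicated by the horizontal dashes, i.e.\ a point of $\partial_v N \cap R$) lies in $\partial T$. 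Hence $|\partial T \cap \partial^2 \mathcal{R}|=1$. Applying the general count of Lemma \ref{First ever observations trigons} — or rather its obvious bigon analogue, since the argument about $\beta$ crossing one edge of $\partial\mathcal{R}$ per point of $\partial T\cap\partial^2\mathcal{R}$ is identical — gives $\len(\Hom(\alpha,k)) = \len(\alpha)-2+1 = \len(\alpha)-1$, establishing the first claim. (The degenerate case $\len(\alpha)=2$ needs to be checked separately but is routine, as in Lemma \ref{Easy Bigon bounds}.)

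Next I would analyse the single corner $x\in\partial^2\mathcal{R}$ in $\partial T$. Since $x\in \partial_v N$, exactly one of the two edges of $\partial\mathcal{R}$ emanating from the sides $b_1$, $b_2$ of $b-x$ is the one ``inherited'' from the tie neighbourhood identification: on one side $x$ is a genuine outward corner of the adjacent complementary region, on the other side it is not. Concretely, write $\beta=\alpha[k-1:k+2]$ and $\beta'=\Hom(\beta,1)$; one of $\beta[0]$, $\beta[-1]$ lies in a complementary region and one lies in a branch rectangle (or both in complementary regions if the switch side is a large end — this case split needs to be handled, but both sub-cases reduce to the same bookkeeping). Pushing $\alpha[k]$ across $T$ drags one endpoint ``across'' $x$. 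On the branch-rectangle side this turns the efficient carried snippet into a snippet that now cuts off a region with one extra outward corner, i.e.\ — by Lemma \ref{First ever observations trigons}(2) and an index argument exactly as in the trigon proofs — into an efficient snippet or a trigon of type $\mathbb{B}(h,t)$; on the complementary-region side the analogous move (by Lemma \ref{General observations trigon homotopies}(3)–(4), which applies verbatim to the endpoint that crosses a corner) turns it into an efficient snippet or a trigon of type $\mathbb{R}(h,v)$, and one checks that these two ``new'' trigons, when both present, turn the same way because they are forced by the same corner $x$ on opposite sides of $\alpha[k]$. This gives the claim about $\Hom(\alpha,k)[k-1]$ and the dichotomy between cases \textit{1} and \textit{2}.

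Finally I would compute the reduced corner length. On the branch-rectangle side $\len_\textrm{corn}$ is unchanged (corner length of rectangle snippets is form-independent), and $\len_\textrm{corn}(\alpha[k])=3$ is removed. On the complementary side, in case \textit{1} the new $\mathbb{R}(h,v)$ trigon cuts off a region with exactly one fewer component of $\partial\mathcal{R}-\partial^2\mathcal{R}$ in its boundary than $\alpha[k+1]$ did (only the side $b_1$ is lost, which contributes at least $1$, possibly $3$), and moreover this move destroys a horizontal dual, hence possibly a blocker, so the $-2\len_\textrm{block}$ term moves in the favourable direction; assembling: $\len_\textrm{corn}$ drops by at least $3+1=4$ and $\len_\textrm{block}$ drops by at most... — here I must be careful, since blockers can only be destroyed not created when the horizontal dual vanishes, so $-2\len_\textrm{block}$ can only increase $\len_\textrm{red}$ by... no: fewer blockers means the subtracted quantity $2\len_\textrm{block}$ is smaller, so $\len_\textrm{red}$ could go \emph{up}. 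This is exactly the subtlety, and the correct bound $\len_\textrm{red}(\Hom(\alpha,k))\le\len_\textrm{red}(\alpha)-4$ must come from observing (cf.\ Lemma \ref{Length of blocker}, $\len_\textrm{corn}$ of a blocker $\ge 7$) that a destroyed blocker was carrying corner length $\ge 7$, far more than the $2$ it contributes to $2\len_\textrm{block}$, so net the reduced length still drops; I would make this precise by the same accounting used in Lemma \ref{Easy Bigon bounds}'s final paragraph. In case \textit{2} the complementary-side snippet ends up efficient, so it may be a vertical dual of corner length as large as $2s$, and the only available estimate is $\len_\textrm{corn}(\alpha[k+1])\ge 1$ against $\len_\textrm{corn}(\Hom(\alpha[k-1:k+2],1)\text{'s efficient piece})\le 2s$, combined with the loss of $\len_\textrm{corn}(\alpha[k])=3$ and the $\mathbb{B}(h,t)$ trigon keeping corner length fixed; this yields $\len_\textrm{red}(\Hom(\alpha,k))\le\len_\textrm{red}(\alpha)+2s-5$ after allowing blockers to be created on the efficient side. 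The main obstacle, as in the analogous trigon lemmas, is this last step: disentangling the interaction between the corner-length drop from removing $\alpha[k]$, the corner-length change of the snippet that slides across $x$, and the possible gain or loss of blockers, so as to land exactly on the constants $-4$ and $2s-5$; the rest is a direct case analysis following the template already established.
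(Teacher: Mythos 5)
Your overall framework tracks the paper's proof — the length count via the single corner $x\in\partial^2\mathcal{R}$, the local index argument for the two adjacent snippets becoming trigons of type $\mathbb{B}(h,t)$ and $\mathbb{R}(h,v)$, and the case split on whether the complementary-region side stays efficient. But the final bookkeeping, which you yourself flag as ``the main obstacle,'' is where the proposal genuinely breaks down.

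In Case \textit{1} you argue that blockers might be destroyed and then try to compensate by invoking Lemma \ref{Length of blocker} ($\len_\textrm{corn}\geq 7$). This reasoning is both unnecessary and incorrect. It is unnecessary because for a bigon of type $\mathbb{S}(t,v,1)$, the only snippets modified by the homotopy are a carried snippet in a branch rectangle (namely the one adjacent along the tie $t$) and a horizontal dual in a complementary region (the one adjacent along $\partial_v N$), together with $\alpha[k]$ itself inside a switch rectangle. None of these is a vertical dual, and none is a tie snippet inside a branch rectangle, so none can be part of a blocker; hence $\len_\textrm{block}$ is \emph{unchanged}, and $\len_\textrm{red}$ drops by exactly as much as $\len_\textrm{corn}$ does. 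It is incorrect as stated because ``destroying'' a blocker does not remove $7$ units of corner length from the curve — the vertical duals and branch dual of a former blocker persist as snippets with their corner lengths intact; only their classification as forming a blocker-triple changes, which affects $2\len_\textrm{block}$ alone. So the compensation argument you reach for would not close the gap even where it applied.

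A related minor issue: you hedge with ``(or both in complementary regions if the switch side is a large end — this case split needs to be handled, but both sub-cases reduce to the same bookkeeping).'' For a bigon of type $\mathbb{S}(t,v,1)$ both endpoints lie on the small-end vertical side of the switch rectangle, one on a tie segment $t$ and one on the $\partial_v N$ segment $v$ — so the two neighbouring snippets are \emph{always} one in a branch rectangle and one in a complementary region, never both in complementary regions. Noticing this is precisely what makes the blocker analysis collapse to ``nothing to track.'' You also leave the constants $-4$ and $2s-5$ unjustified; once one knows $\len_\textrm{block}$ cannot decrease, they follow from $\len_\textrm{corn}(\alpha[k])=3$, $\len_\textrm{corn}(\Hom(\alpha,k)[k])=\len_\textrm{corn}(\alpha[k+1])-1$ in case \textit{1}, and $\len_\textrm{corn}(\alpha[k+1])\geq 1$ versus the $2s$ upper bound in case \textit{2}.
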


\begin{proof}
We first observe that $\len(\alpha) \geq 3$ as $\alpha[k-1]$ and $\alpha[k+1]$ are contained in a branch rectangle and a complementary region of $N$ respectively. As $\alpha[k]$ is of weight one, the bigon homotopy replaces the subarc $\alpha[k-1:k+2]$ by a subarc whose inside intersects $\partial \mathcal{R}$ exactly one (see Figure \ref{Homotopy of type B(S,t,v,1) picture}). Thus, we see that $\len(\Hom(\alpha[k-1:k+2],1))=2$. This implies that $\len(\Hom(\alpha,k))=\len(\alpha)-1$.

\begin{figure}[htbp] 
  \begin{minipage}[b]{0.49\linewidth}
    \centering
\begin{tikzpicture}[scale=0.35]
\draw [very thick] (-2,3) rectangle (4,-6);
\node at (1,4) {\tiny{$h$}};
\node at (10,1.5) {\tiny{$t$}};
\node at (3,-1.5) {\tiny{$v$}};
\draw [very thick] (4,3) rectangle (9,0);
\draw [very thick] (4,-3) rectangle (9,-6);
\draw [very thick, densely dashed, blue] plot[smooth, tension=.7] coordinates {(7,2.5)  (1,1.75) (1,-1.75)  (7,-2.5)};
\draw [very thick, densely dashed, cyan] plot[smooth, tension=.4] coordinates {(7,2.25) (4.5,1.75)  (4.5,-2) (7,-2.25)};
\end{tikzpicture}
    \caption{A homotopy of type $\mathbb{S}(t,v,1)$.} 
    \label{Homotopy of type B(S,t,v,1) picture}
  \end{minipage}
    \begin{minipage}[b]{0.49\linewidth}
    \centering
\begin{tikzpicture}[scale=0.35]
\draw [very thick] (-2,3) rectangle (4,-6);
\node at (1,4) {\tiny{$h$}};
\node at (10,1.5) {\tiny{$t$}};
\node at (2.5,-1.5) {\tiny{$v$}};
\draw [very thick] (4,3) rectangle (9,0);
\draw [very thick] (4,-3) rectangle (9,-6);
\draw [very thick, densely dashed, blue] plot[smooth, tension=.7] coordinates {(7,2.5)  (0,1.5) (0,-4.5)  (7,-5.5)};
\draw [very thick, densely dashed, cyan] plot[smooth, tension=.4] coordinates {(7,2) (4.75,1.5)  (4.75,-4.5) (7,-5)};
\end{tikzpicture}
    \caption{A homotopy of type $\mathbb{S}(t,t,2)$.} 
    \label{Homotopy of type B(S,t,t,2) picture}
  \end{minipage}
\end{figure}

Without loss of generality, we may assume that $\alpha[k]$ is turning right. If $\alpha[k-1]$ is in efficient position and embedded, it cuts off a region of non-positive index on either side. Under the homotopy, the index of this region increases by at most $1/4$. Hence, $\Hom(\alpha,k)[k-1]$ is a right-turning trigon snippet or in efficient position. If $\alpha[k-1]$ is in efficient position but not embedded, its winding number changes by at most one under the homotopy. As non-embedded snippets have a winding number whose modulus is strictly greater than two, the snippet $\Hom(\alpha,k)[k-1]$ will have a winding number whose modulus is greater than or equal to two. Thus, $\Hom(\alpha,k)[k-1]$ is in efficient position. We recall that $\alpha[k-1]$ lies inside a branch rectangle or complementary region. Thus, if $\alpha[k-1]$ is in efficient position, then $\Hom(\alpha,k)[k-1]$ is in efficient position or a trigon snippet of type $\mathbb{B}(h,t)$ or $\mathbb{R}(h,v)$.

Let us now assume that $\alpha[k-1]$ and $\alpha[k+1]$ are both in efficient position. Without loss of generality, we may assume that $\alpha[k-1]$ is the snippet that lies inside a branch rectangle. Following  previous arguments, $\Hom(\alpha,k)[k-1]$ is a trigon snippet inside the branch rectangle, and therefore of type $\mathbb{B}(h,t)$. The previous paragraph implies that $\Hom(\alpha,k)[k]$ is a trigon snippet or in efficient position. We note that $\Hom(\alpha,k)[k-1]$ and $\Hom(\alpha,k)[k]$ turn the same way.

Let us first assume that $\Hom(\alpha,k)[k]$ is a trigon snippet. Then $\alpha[k+1]$ is a horizontal dual and $\len_\textrm{corn}(\Hom(\alpha,k)[k]) = \len_\textrm{corn}(\alpha[k+1])-1$. We know that $\len_\textrm{corn}(\Hom(\alpha,k)[k-1]) = \len_\textrm{corn}(\alpha[k-1])$ and that $\len_\textrm{corn}(\alpha[k])=3$. Since no vertical duals of $\alpha$ are affected by the homotopy, this implies that \[\len_\textrm{red}(\Hom(\alpha,k))\leq \len_\textrm{red}(\alpha)-4.\] This gives \textit{1}.
If $\Hom(\alpha,k)[k]$ is in efficient position, its length is bounded by $2s$. As $\len_\textrm{corn}(\alpha[k+1])\geq 1$ and the number of blockers of the arc or curve does not decrease under the homotopy, we conclude that $\len_\textrm{red}(\Hom(\alpha,k))\leq \len_\textrm{red}(\alpha)+2s-5$, obtaining \textit{2}.
\end{proof}

\begin{rem}\label{Required time for bigon S(t,v,1) homotopy}
As in the case of the previously discussed types of local homotopies, we see that $\Hom(\alpha,k)$ can be computed in $\mathit{O}(|\chi(S)|)$ time if $\alpha[k]$ is a snippet of type $\mathbb{S}(t,v,1)$. This follows from the following two observations: firstly, Lemma \ref{Lemma B(S,t,v,1)} states that $\len(\Hom(\alpha[k-1:k+2],1))=2$. Thus, the cutting sequence can be adjusted in constant time. Secondly, at most one of the snippets of $\Hom(\alpha[k-1:k+2],1)$ can lie inside a peripheral annulus. Let us assume that this is the snippet $\Hom(\alpha,k)[k]$. Then $\wind(\Hom(\alpha,k)[k])=\wind(\Hom(\alpha[k+1])) \pm 1$, so assuming that arithmetic on winding numbers can be done in constant time, this implies that $\Hom(\alpha,k)$ can be computed in $\mathit{O}(|\chi(S)|)$.
\end{rem}

\begin{lem}\label{Lemma B(S,t,t,2)}
Suppose that $\alpha \subset S$ is a snippet-decomposed arc or curve satisfying $\len(\alpha)\geq 2$. Suppose that $\alpha[k] \subset \alpha_\textrm{trim}$ is a bad snippet of bigon type $\mathbb{S}(t,t,2)$. Then $\len(\Hom(\alpha,k))=\len(\alpha)$. Moreover, the following statements hold.
\begin{enumerate}
\item If $\alpha[k]$ turns right, then $\Hom(\alpha,k)[k]$ is a left vertical dual.
\item If $\len(\alpha)=2$, then $\Hom(\alpha,k)[k-1]$ is a bigon snippet of type $\mathbb{B}(h,h)$.
\item If $\len(\alpha)>2$, $\alpha[k]$ turns right, and $\alpha[k-1]$ is in efficient position, then $\Hom(\alpha,k)[k-1]$ is a right-turning trigon snippet inside a branch rectangle.
\item If $\len(\alpha)>2$ and $\alpha[k-1]$ and $\alpha[k+1]$ are in efficient position, then \[\len_\textrm{red}(\Hom(\alpha,k))\leq \len_\textrm{red}(\alpha)-2.\]
\end{enumerate}
\end{lem}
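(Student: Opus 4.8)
The plan is to analyse the local homotopy at the weight-two bigon snippet $\alpha[k]$ of type $\mathbb{S}(t,t,2)$ directly, exactly in the style of the proofs of Lemma~\ref{Easy Bigon bounds} and Lemma~\ref{Lemma B(S,t,v,1)}, while importing the orientation bookkeeping from Lemma~\ref{General observations trigon homotopies}. First I would fix notation. Write $T\subset R$ for the bigon cut off by $\alpha[k]$, with $R\in\mathcal{R}_\textrm{tie}$ a switch rectangle, and $b=\partial T-\alpha[k]$. By the definition of type $\mathbb{S}(t,t,2)$ the points $\alpha[k](0)$ and $\alpha[k](1)$ lie in the two distinct components of $\partial_v R-\partial_v N$, so $b$ is the subarc of the dashed vertical side of $R$ joining them; it consists of a piece of each of those two $t$-components, the intervening component $v'$ of $\partial_v N$, and the two endpoints $x_1,x_2\in\partial^2 \mathcal{R}$ of $v'$, which are corners of the branch rectangles glued to $R$ along those two $t$-pieces. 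In particular $|\partial T\cap\partial^2 \mathcal{R}|=2$. Write $B_1,B_2$ for the branch rectangles containing $\alpha[k-1],\alpha[k+1]$, and $C\in\mathcal{R}_\textrm{comp}$ for the complementary region meeting $R$ along $v'$; note $B_1=B_2$ is possible, and is forced when $\len(\alpha)=2$ (in which case $\alpha$ must be a curve, since $\alpha_\textrm{trim}$ is empty for a length-two arc, and $\alpha[k-1]=\alpha[k+1]$).

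I would then record that $\len(\Hom(\alpha,k))=\len(\alpha)$: pushing $\alpha[k]$ across $T$ replaces $\alpha[k-1:k+2]$ (respectively the whole two-snippet curve) by a subarc whose interior crosses $\partial \mathcal{R}$ exactly twice, once near each $x_i$, hence by three (respectively two) snippets — the same count as in the third statement of Lemma~\ref{First ever observations trigons} with $|\partial T\cap\partial^2 \mathcal{R}|=2$. The three new snippets are $\Hom(\alpha,k)[k-1]\subset B_1$ (the old $\alpha[k-1]$ with its $t$-endpoint dragged around $x_1$), $\Hom(\alpha,k)[k]\subset C$, and $\Hom(\alpha,k)[k+1]\subset B_2$; when $\len(\alpha)=2$ the first and last coincide. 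For statement~1: $\Hom(\alpha,k)[k]$, together with $v'$, bounds a disc with exactly four outward-pointing corners — the two corners of $C$ at the ends of $v'$ and the two perpendicular meetings of the new snippet with $\partial_h N$ — hence of index $0$; since its endpoints lie in $\partial_h N$ it is a vertical dual, and by the orientation argument behind the first statement of Lemma~\ref{General observations trigon homotopies} (the enclosed region containing $T$ lies on the left of the pushed arc when $\alpha[k]$ turns right) it is a \emph{left} vertical dual. For statement~3, with $\len(\alpha)>2$ and $\alpha[k-1]$ in efficient position, $\alpha[k-1]$ is carried in $B_1$ (it cannot be a tie, having an endpoint on $\partial_v B_1$), hence embedded and transverse to the ties, cutting off index-$0$ sub-rectangles on both sides; dragging its $t$-endpoint around the corner $x_1$ makes $\Hom(\alpha,k)[k-1]$ join a vertical side to a horizontal side of $B_1$, so it is a trigon snippet of type $\mathbb{B}(h,t)$, and, just as in the third statement of Lemma~\ref{General observations trigon homotopies}, the trigon it cuts off lies on its right-hand side when $\alpha[k]$ turns right. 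For statement~2, with $\len(\alpha)=2$, the snippet $\alpha[k-1]=\alpha[k+1]$ lies in the simply connected rectangle $B=B_1=B_2$ and joins its two opposite vertical sides, hence is embedded and carried; both of its endpoints get dragged around $x_1,x_2$, so $\Hom(\alpha,k)[k-1]$ joins $\partial_h B$ to $\partial_h B$ and cuts off a disc of index $1-2/4=1/2$, i.e.\ a bigon snippet of type $\mathbb{B}(h,h)$.

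For statement~4 I would compute corner lengths and count blockers. Since $\alpha[k-1],\alpha[k+1]$ lie in branch rectangles and $\alpha[k]$ in a switch rectangle, $\len_\textrm{corn}(\alpha[k-1:k+2])=1+3+1=5$; after the homotopy the two outer new snippets lie in branch rectangles (corner length $1$ each), and the new middle snippet is a vertical dual passing alongside the single $\partial_v N$-component $v'$, hence of corner length $1$; so $\len_\textrm{corn}(\Hom(\alpha,k))\le\len_\textrm{corn}(\alpha)-2$. No blocker of $\alpha$ can involve $\alpha[k-1]$, $\alpha[k]$, or $\alpha[k+1]$: $\alpha[k]$ is a bad snippet, while $\alpha[k-1]$ and $\alpha[k+1]$, being carried snippets in branch rectangles, could only occur as the middle snippet of a blocker, which would require the adjacent snippet $\alpha[k]$ to be a vertical dual — it is not — and they cannot be end snippets of a blocker, as those lie in complementary regions. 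Hence every blocker of $\alpha$ survives unchanged into $\Hom(\alpha,k)$, so $\len_\textrm{block}(\Hom(\alpha,k))\ge\len_\textrm{block}(\alpha)$, and therefore
\[
\len_\textrm{red}(\Hom(\alpha,k))=\len_\textrm{corn}(\Hom(\alpha,k))-2\len_\textrm{block}(\Hom(\alpha,k))\le\len_\textrm{corn}(\alpha)-2-2\len_\textrm{block}(\alpha)=\len_\textrm{red}(\alpha)-2.
\]
The part I expect to be most delicate is the orientation bookkeeping behind statements~1 and~3: a single push across $T$ must simultaneously produce a \emph{left} vertical dual in $C$ and a \emph{right}-turning trigon in $B_1$ (and $B_2$) when $\alpha[k]$ turns right. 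This is precisely the left/right asymmetry already present in the trigon analysis, where the enclosed region lies on the left of the interior new snippets but on the outer (right) side of the snippets inheriting a corner of the ambient rectangle; making it rigorous amounts to tracking how the parametrisation orientation of $\alpha$ is transported across the homotopy near $x_1$ and $x_2$, and introduces no idea beyond Lemma~\ref{General observations trigon homotopies}. Everything else reduces to the index and corner-count computations above.
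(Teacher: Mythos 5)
Your proof is correct and follows essentially the same route as the paper's: count the snippet change from the weight-two bigon, identify the new middle snippet as a vertical dual with the opposite handedness to $\alpha[k]$, show the outer new snippet(s) become a right-turning $\mathbb{B}(h,t)$ trigon (or a $\mathbb{B}(h,h)$ bigon in the length-two case) by tracking the index increase across the corners $x_1,x_2$, and compute the corner length drop while verifying the blocker count does not decrease. You spell out the corner bookkeeping near $x_1,x_2$ and the blocker analysis in somewhat more detail than the paper does, but no new ideas are introduced.
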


\begin{proof}
We prove the statements of the lemma in order.
As $\alpha[k]$ has weight two, the subarc $\alpha[k-1/3,k+4/3]$ is replaced by a subarc intersecting $\partial \mathcal{R}$ exactly twice. This implies that $\len(\Hom(\alpha,k))=\len(\alpha)$. Further, the snippet lying between these two intersection points is parallel to a component of $\partial_v N$, thus cuts off a region of index zero on the side where the bigon cut off by $\alpha[k]$ lies. Hence, if $\alpha[k]$ turns right, then $\Hom(\alpha,k)[k]$ is a left vertical dual, giving \textit{1}.

If $\len(\alpha)=2$, then $\alpha[k-1]$ is a carried snippet inside a branch rectangle (see Figure \ref{B(S,t,t,2) short}). The homotopy increases the index of the region cut off by $\alpha[k-1]$ by $1/2$, so $\Hom(\alpha,k)[k-1]$ is a bigon snippet of type $\mathbb{B}(h,h)$, giving \textit{2}.

\begin{figure}[htbp] 
    \begin{minipage}[b]{0.99\linewidth}
    \centering
\begin{tikzpicture}[scale=0.35]
\draw [very thick] (-2,3) rectangle (4,-6) node (v1) {};
\node at (1,4) {\tiny{$h$}};
\node at (-3.5,-1.5) {\tiny{$t$}};
\node at (3,-1.5) {\tiny{$v$}};
\draw [very thick] plot[smooth, tension=.7] coordinates {(4,3) (12.5,1.75) (12.5,-4.75) (v1)};
\draw [very thick] plot[smooth, tension=.7] coordinates {(4,-4) (10.5,-3.25) (10.5,0.25) (4,1)};
\draw [very thick] (7.5,-1.5) ellipse (0.5 and 0.5);
\draw [very thick, densely dashed, blue] plot[smooth cycle, tension=.7] coordinates {(4,2.5) (-0.5,1) (-0.5,-4) (4,-5.5) (11.5,-4) (11.5,1)};
\draw [very thick, densely dashed, cyan] plot[smooth cycle, tension=.5] coordinates { (4.5,1) (4.5,-4)  (7,-4.5) (11.5,-3.25) (11.5,0.25) (7,1.5)};
\end{tikzpicture}
    \caption{A homotopy of type $\mathbb{S}(t,t,2)$ applied to a curve of snippet length two.} 
    \label{B(S,t,t,2) short}
      \end{minipage}
\end{figure}
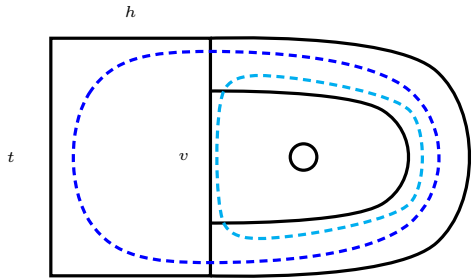

If $\len(\alpha)>2$ and $\alpha[k-1]$ is in efficient position, then $\alpha[k-1]$ must be a carried snippet inside a branch rectangle. Thus, if $\alpha[k]$ turns right, the index of the region cut off by $\alpha[k-1]$ on its right-hand side increases by $1/4$. This implies that $\Hom(\alpha,k)[k-1]$ is a right-turning trigon of type $\mathbb{B}(h,t)$, giving \textit{3}.

To prove the last claim let us assume that $\len(\alpha)>2$ and that $\alpha[k-1]$ and $\alpha[k+1]$ are in efficient position. Then
\begin{align*}
\len_\textrm{corn}(\alpha[k-1])=& \len_\textrm{corn}(\alpha[k+1])=\len_\textrm{corn}(\Hom(\alpha,k)[k-1])) \\=&\len_\textrm{corn}(\Hom(\alpha,k)[k+1]))=1.
\end{align*}
Moreover, we know that $\len_\textrm{corn}(\Hom(\alpha,k)[k])=\len_\textrm{corn}(\alpha)[k]-2$.
Thus, if $\len(\alpha)>2$ and $\alpha[k-1]$ and $\alpha[k+1]$ are in efficient position, we see that \[\len_\textrm{corn}(\Hom(\alpha,k))\leq \len_\textrm{corn}(\alpha)-2.\] As no vertical duals are affected by the homotopy, this gives \textit{4}.
\end{proof}

\begin{rem}\label{Required time for S(t,t,2) bigon homotopies}
Again, we see that $\Hom(\alpha,k)$ can be computed in $\mathit{O}(|\chi(S)|)$ time if $\alpha[k]$ is a snippet of type $\mathbb{S}(t,t,2)$. Firstly, Lemma \ref{Lemma B(S,t,t,2)} states that \[\len(\Hom(\alpha[k-1:k+2],1))=3.\] Thus, the cutting sequence can be adjusted in constant time. Secondly, the only snippet of $\Hom(\alpha[k-1:k+2],1)$ that can lie inside a peripheral annulus is the snippet $\Hom(\alpha[k-1:k+2],1)[1]$. Lemma \ref{Lemma B(S,t,t,2)} implies that this snippet is a vertical dual snippet. Thus, the modulus of its winding number is two, which implies that the winding numbers of $\Hom(\alpha,k)$ can be computed in constant time as well.
\end{rem}

\begin{lem}\label{Lemma B(R,h,h)}
Suppose that $\alpha \subset S$ is a snippet-decomposed arc or curve satisfying $\len(\alpha)\geq 2$. Suppose that $\alpha[k] \subset \alpha_\textrm{trim}$ is a bad snippet of bigon type $\mathbb{R}(h,h)$. Then the following statements hold.
\begin{enumerate}
\item $\len(\Hom(\alpha,k))\leq \len(\alpha)+s-2$.
\item If $\len(\alpha)=2$, then $\Hom(\alpha,k)$ is an inessential curve of length one or a curve containing a unique bad snippet of bigon type. In the latter case we know that $\len_\textrm{red}(\Hom(\alpha,k))\leq \len_\textrm{red}(\alpha)$ and that the bigon snippet does not intersect $\partial_h N$.
\item If $\len(\alpha)>2$, then $\Hom(\alpha[k-1:k+2],1)_\textrm{trim}$ consists of carried snippets only.
\item Assume that $\len(\alpha)>2$, that $\alpha[k]$ turns right, that $\alpha[k-1]$ is in efficient position, and that $\len(\Hom(\alpha[k-1:k+2],1))> 1$. Then $\Hom(\alpha,k)[k-1]$ is a right-turning trigon of type $\mathbb{B}(h,t)$, $\mathbb{S}(h,t,1)$ or $\mathbb{S}(h,t,3)$.
\end{enumerate}
Moreover, if $\len(\alpha)>2$ and $\alpha[k-1]$ and $\alpha[k+1]$ are both in efficient position, then one of the following two statements holds.
\begin{enumerate}[I.]
\item $\len(\Hom(\alpha[k-1:k+2],1))= 1$ and $\Hom(\alpha,k)[k-1]$ is a bigon snippet of type $\mathbb{B}(h,h)$ or $\mathbb{S}(h,h,0)$. Furthermore, it holds that \[\len_\mathrm{red}(\Hom(\alpha,k))\leq \len_\mathrm{red}(\alpha)-1.\]
\item $\len(\Hom(\alpha[k-1:k+2],1))> 1$ and $\Hom(\alpha[k-1:k+2],1)$ contains exactly two bad snippets which are trigon snippets turning into the same direction. Furthermore, it holds that $\len_\mathrm{red}(\Hom(\alpha,k))\leq \len_\mathrm{red}(\alpha)$.
\end{enumerate}
\end{lem}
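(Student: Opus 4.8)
The plan is to analyze the bigon snippet $\alpha[k]$ of type $\mathbb{R}(h,v)$, which lies in a complementary region $R$, cuts off a bigon $T$ whose boundary $b = \partial T - \alpha[k]$ contains two corners of $R$, and such that the portion of $b$ between these corners may contain several points of $\partial^2\mathcal{R}$ (since horizontal sides of complementary regions can be long). The local homotopy pushes $\alpha[k-1:k+2]$ across $T$, so the replacement subarc runs parallel to $b$ and hence intersects $\partial\mathcal{R}$ once for each point of $\partial^2\mathcal{R}$ in $b$, minus the contributions from the two corners of $R$ themselves. First I would establish the length bound (1): since $|\partial T \cap \partial^2\mathcal{R}| \leq s$ by the definition of maximal side length (Remark \ref{s is O(Euler)}), and the two endpoints of the bigon on $\partial_h R$ do not count, the arc $\alpha[k-1:k+1]$ is replaced by at most $s-1$ snippets plus the adjustments at the ends, giving $\len(\Hom(\alpha,k)) \leq \len(\alpha) + s - 2$. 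This is the analogue of the $\mathbb{R}(h,v)$ trigon case (Lemma \ref{Homotopy of type R(h,v)}), and I would mimic that proof using Lemma \ref{First ever observations trigons} adapted to bigons.

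Next I would handle the short case (2), $\len(\alpha)=2$: here $\alpha$ consists of $\alpha[0]=\alpha[k]$ and one more snippet, and since $\alpha$ is a curve (an arc of length two is impossible by Lemma \ref{No arcs of length 2}), $\Hom(\alpha,k)$ is again a curve. The homotopy across the bigon either collapses the curve to a single inessential snippet, or leaves a curve whose unique bad snippet is again a bigon — and since the bigon $T$ was bounded by $\partial_h R$ on the side we pushed across, the new bad snippet's boundary data is on $\partial_v$ type sides, so it misses $\partial_h N$. The reduced corner length estimate $\len_\textrm{red}(\Hom(\alpha,k)) \leq \len_\textrm{red}(\alpha)$ follows because the corner length of the new bigon is bounded by the corner length of the region originally cut off plus controlled terms, exactly as in Lemma \ref{Lemma B(R,v,v)}.

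For (3) and (4) and the two alternatives I.\ and II., I would argue as in the trigon homotopy lemmas: the snippets of $\Hom(\alpha[k-1:k+2],1)_\textrm{trim}$ lie parallel to the long horizontal side $b_2$, and since they separate off rectangular (index-zero) regions on the side facing into the tie neighbourhood, they are carried, giving (3). For (4), when $\alpha[k-1]$ is in efficient position and turns-compatible with a right-turning $\alpha[k]$, the corner $x$ of $R$ at the end of $b$ is pushed across $\beta[0]$, so by the index argument of Lemma \ref{General observations trigon homotopies}(3) the snippet $\Hom(\alpha,k)[k-1]$ is in efficient position or cuts off a trigon; since it lies inside a branch or switch rectangle it must be of type $\mathbb{B}(h,t)$, $\mathbb{S}(h,t,1)$, or $\mathbb{S}(h,t,3)$. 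The dichotomy I./II.\ then comes from whether $\len(\Hom(\alpha[k-1:k+2],1))=1$: if both $\alpha[k-1]$ and $\alpha[k+1]$ are carried snippets inside the \emph{same} rectangle (which forces the bigon $T$ to contain no interior corner, i.e.\ $|\partial T \cap \partial^2\mathcal{R}|$ small), the arc collapses to a single snippet which gains index $1/2$ and becomes a $\mathbb{B}(h,h)$ or $\mathbb{S}(h,h,0)$ bigon with $\len_\textrm{red}$ dropping by at least one; otherwise, the endpoints land on different sides and we get two trigons, one at each corner of $R$, necessarily turning the same way because they bound the pushed region on the same side. The corner-length bookkeeping for $\len_\textrm{red}$ parallels the computations in Lemmas \ref{Lemma B(S,t,v,1)} and \ref{Homotopy of type R(h,v)}.

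The main obstacle I expect is the precise corner-length accounting in alternative II., establishing $\len_\textrm{red}(\Hom(\alpha,k)) \leq \len_\textrm{red}(\alpha)$: one must track how the two new trigons partition the corners and sides of the region formerly cut off by $\alpha[k]$ together with those cut off by $\alpha[k-1]$ and $\alpha[k+1]$, while simultaneously controlling the change in the blocker count $\len_\textrm{block}$ — vertical duals newly created along $b_2$ could in principle form blockers, so I would need to check (as in the proof of Lemma \ref{Homotopy of type T(S,h,t,2)}) that each new blocker is accompanied by an offsetting gain of $2$ in corner length, so that the $-2\len_\textrm{block}$ term in $\len_\textrm{red}$ does not cause a net increase. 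The case $\len(\alpha)=2$ in (2) also requires care because $\alpha_\textrm{trim}=\alpha$ for curves, so the ``inside'' conventions collapse and one must argue directly about the single remaining snippet.
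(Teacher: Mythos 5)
Your approach follows the paper's, dividing into the narrow case ($\alpha[k](0)$ and $\alpha[k](1)$ in the same component of $\partial\mathcal{R}-\partial^2\mathcal{R}$) and the wide case, and using corner-length bookkeeping. However there are two genuine gaps.

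First, in statement (4), you assert that since $\Hom(\alpha,k)[k-1]$ lies inside a branch or switch rectangle and cuts off a trigon, it must be of type $\mathbb{B}(h,t)$, $\mathbb{S}(h,t,1)$, or $\mathbb{S}(h,t,3)$. But $\mathbb{S}(h,v,2)$ is also a trigon type inside a switch rectangle, so this does not follow merely from the ambient region. Excluding it requires an extra argument: $\alpha[k-1]$ is a tie, and the boundary of the region it cuts off contains $2$ or $4$ points of $\partial^2\mathcal{R}$; the boundary of the new trigon contains exactly one fewer such point, hence $1$ or $3$, whereas a trigon of type $\mathbb{S}(h,v,2)$ has exactly $2$. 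This parity argument is what the paper uses, and your proposal omits it.

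Second, you flag the blocker bookkeeping in alternative II as the ``main obstacle'' and say you ``would need to check'' that new blockers are offset by gains in corner length, but you do not resolve it. The concern in fact does not arise for an $\mathbb{R}(h,h)$ bigon: the subarc $b\subset\partial_h R\subset\partial_h N$, so pushing across the bigon puts the replacement snippets entirely inside the tie neighbourhood. They are all carried. No vertical duals are created, and the snippets $\alpha[k-1],\alpha[k],\alpha[k+1]$ themselves cannot be parts of blockers (they are ties and a bad snippet, not vertical duals in complementary regions). So $\len_\textrm{block}$ is unchanged and the reduced corner length estimate reduces to the corner length computation. Relatedly, your geometric description is off: you say $b=\partial T-\alpha[k]$ ``contains two corners of $R$,'' but for an $\mathbb{R}(h,h)$ bigon, $T$ has index $1/2$ with its only two outward corners at $\partial a$, and $b$ contains no corners of $R$ — only (possibly several) points of $\partial^2\mathcal{R}$ that are corners of the adjacent branch and switch rectangles. (You also write ``$\mathbb{R}(h,v)$'' for the bigon type at the outset; presumably a typo for $\mathbb{R}(h,h)$.) These confusions do not derail the length bound in part (1), but they are the source of the unfounded worry about new vertical duals in alternative II.
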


\begin{proof}
We prove the statements of the lemma in order. Suppose that $B \subset R \in \mathcal{R}_\textrm{comp}$ is the bigon cut off by the snippet $\alpha[k]$. Then the homotopy increases the number of snippets by $|\partial B \cap \partial^2 \mathcal{R}|-1 \leq s-2$. Thus, we see that $\len(\Hom(\alpha,k))\leq \len(\alpha)+s-2$, giving \textit{1}.

Let us now assume that $\len(\alpha)=2$. Then either $\alpha[k](0)$ and $\alpha[k](1)$ lie on the same or on two different horizontal sides of a branch or switch rectangle (see Figure \ref{Short R(h,h) picture}). In the first case, $\Hom(\alpha,k)$ is an inessential curve of snippet length one. For the second case, that is if $\alpha[k-1]$ is a dual snippet, let us assume that $\alpha[k]$ is turning right. Then the region cut off by $\alpha[k-1]$ on the right-hand side looses two outward-pointing corners under the homotopy. Thus, $\Hom(\alpha,k)[k-1]$ is a bigon snippet whose boundary points lie inside the tie neighbourhood and \[\len_\textrm{corn}(\Hom(\alpha,k)[k-1])=\len_\textrm{corn}(\alpha[k-1]).\] Further, by definition of the corner length of the snippet $\alpha[k]$, we have that \[\len_\textrm{corn}(\Hom(\alpha,k))-\len_\textrm{corn}(\Hom(\alpha,k)[k-1])= \len_\textrm{corn}(\alpha[k]).\] Since no vertical duals are affected by the homotopy, this gives \textit{2}.

\begin{figure}[htbp] 
  \begin{minipage}[b]{0.99\linewidth}
    \centering
\begin{tikzpicture}[scale=0.25]
\draw [very thick] (-4,6) rectangle (0,-6);
\draw [very thick](0,3) -- (2,3);
\draw [very thick](0,6) -- (2,6);
\draw [very thick] (0,-2) rectangle (4,-6);
\draw [very thick] (4,-2) rectangle (8,-6) node (v1) {};
\draw [very thick] (4,-9) rectangle (4,-9);
\draw [thick, dotted] plot[smooth, tension=.7] coordinates {(8,-2) (11.5,-1.625) (16,0) (19,-1.5) (19,-6.5) (16,-8) (11.5,-6.375) (v1)};
\draw [thick, dotted] plot[smooth, tension=.7] coordinates {(8,-3) (11.4996,-2.8304) (17,-2) (17,-6) (11.523,-5.1619) (8,-5)};
\draw [thick, dotted] plot[smooth, tension=.7] coordinates {(2,3) (6,4) (9,6) (11,11)};
\draw [thick, dotted](11,11) -- (14,11);
\draw [thick, dotted] plot[smooth, tension=.7] coordinates {(14,11) (16,8) (19,7)};
\draw [very thick] (15,-4) ellipse (0.5 and 0.5);
\draw [very thick, densely dashed, blue] plot[smooth cycle, tension=.7] coordinates {(1,-2) (1.25,0) (2.75,0) (3,-2) (2.75,-4) (1.25,-4)};
\draw [very thick, densely dashed, blue](5,-2) -- (5,-6);
\draw [very thick, densely dashed, blue] plot[smooth, tension=.7] coordinates {(5,-6) (6,-7) (10,-7.5) (19.5,-9) (19.5,1) (10,-0.5) (6,-1) (5,-2)};
\node at (-5,0) {\tiny{$t$}};
\node at (-2,-7) {\tiny{$h$}};
\node at (-1,0) {\tiny{$v$}};
\draw [very thick](8,-2) -- (8.5,-2);
\draw [very thick](8,-3) -- (8.5,-3);
\draw [very thick](8,-5) -- (8.5,-5);
\draw [very thick](8,-6) -- (8.5,-6);
\end{tikzpicture}
    \caption{Homotopies of type $\mathbb{R}(h,h)$ for a curve of snippet length two.} 
    \label{Short R(h,h) picture}
  \end{minipage}
\end{figure}
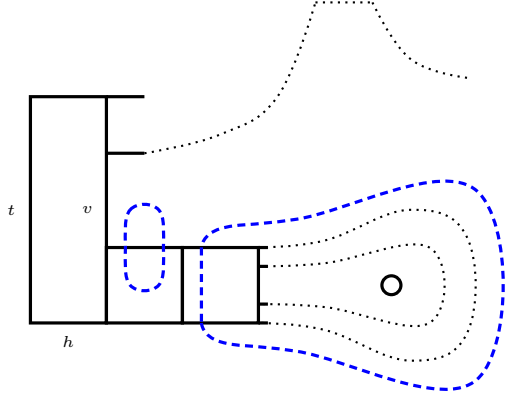

For the remainder of this proof we assume that $\len(\alpha)>2$. We distinguish two cases: either $\alpha[k](0)$ and $\alpha[k](1)$ lie in the same component of $\partial \mathcal{R}- \partial^2 \mathcal{R}$, or they do not (see Figure \ref{Homotopy of type B(R,h,h) picture}).

\begin{figure}[htbp] 
  \begin{minipage}[b]{0.99\linewidth}
    \centering
\begin{tikzpicture}[scale=0.35]
\draw [very thick] (-6,1) rectangle (-3.5,-3) node (v3) {};
\node at (4,1.75) {\tiny{$h$}};
\draw [very thick](-6,1) node (v1) {} -- (9.5,1) node (v2) {};
\draw [very thick] plot[smooth, tension=.7] coordinates {(v1) (-6.5,2) (-7,2.5)};
\draw [very thick] plot[smooth, tension=.7] coordinates {(v2) (10,2) (10.5,2.5)};
\draw [very thick] plot[smooth, tension=.7] coordinates {(-7,2.5) (-4.5,3.5) (-3,5)};
\draw [very thick] plot[smooth, tension=.7] coordinates {(10.5,2.5) (8,3.5) (6.5,5)};
\node at (-7.5,1.5) {\tiny{$v$}};
\draw [very thick] (4.5,1) rectangle (9.5,-3);
\draw [very thick, densely dashed, blue] plot[smooth, tension=.7] coordinates {(-5,-1.5)  (-4,2.5) (2,2.5)  (3,-1.5)};
\draw [very thick, densely dashed, cyan] plot[smooth, tension=.4] coordinates {(-4.75,-1.5) (-4.25,0.5)   (2.25,0.5) (2.75,-1.5)};
\draw [very thick,] (v3) rectangle (-1,1) node (v4) {};
\draw [very thick,] (v4) rectangle (1.5,0);
\draw [very thick,](-1.5,-1.5) -- (-0.5,-1.5);
\node at (-7.5,-1){\tiny{$t$}};
\draw [very thick,] (1.5,1) rectangle (4.5,-3);
\draw [very thick,](1,-1.5) -- (2,-1.5);
\draw [very thick, densely dashed, blue] plot[smooth, tension=.7] coordinates {(5.5,-1.5) (6,2.5) (8.5,2.5) (9,-1.5)};
\draw [very thick, densely dashed, cyan] plot[smooth, tension=.4] coordinates {(6,-1.5) (6.25,0.5) (8.25,0.5) (8.5,-1.5)};
\end{tikzpicture}
    \caption{Two different kinds of trigon snippets of type $\mathbb{R}(h,h)$.} 
    \label{Homotopy of type B(R,h,h) picture}
  \end{minipage}
\end{figure}
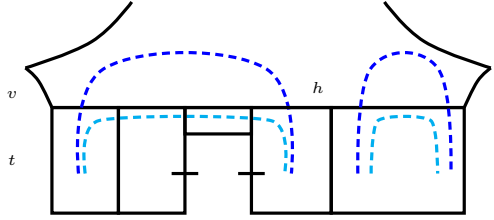

If $\alpha[k](0)$ and $\alpha[k](1)$ lie in the same component of $\partial \mathcal{R}- \partial^2 \mathcal{R}$, then $\Hom(\alpha[k-1:k+2],1)$ consists of a single snippet only, giving \textit{3}. If, in addition, $\alpha[k-1]$ and $\alpha[k+1]$ are both in efficient position, they must be ties of the same branch or switch rectangle. Thus, $(\Hom(\alpha[k-1:k+2],1))$ is a bigon snippet of type $\mathbb{B}(h,h)$ or $\mathbb{S}(h,h,0)$. Since 
\begin{align*}
& \len_\textrm{corn}(\Hom(\alpha[k-1:k+2],1)) = \len_\textrm{corn}(\alpha[k-1]) \\  \leq & \len_\textrm{corn}(\alpha[k-1:k+2] - \len_\textrm{corn}(\alpha[k+1],
\end{align*}
and no blockers are affected by the homotopy, we see that \[\len_\textrm{red}(\Hom(\alpha,k))\leq \len_\textrm{red}(\alpha)-1\] or $\len_\textrm{red}(\Hom(\alpha,k))\leq \len_\textrm{red}(\alpha)-3$ if $\Hom(\alpha,k)[k-1]$ is a snippet of type $\mathbb{B}(h,h)$ or $\mathbb{S}(h,h,0)$ respectively. This gives \textit{I}.

If $\alpha[k](0)$ and $\alpha[k](1)$ lie in different components of $\partial \mathcal{R}- \partial^2 \mathcal{R}$, then $\Hom(\alpha[k-1:k+2],1)_\textrm{trim}$ is empty or consists of snippets that are all parallel to $\partial_h N$. Thus, they are all carried, giving \textit{3}. If $\alpha[k-1]$ is in efficient position, it must be a tie of a branch or switch rectangle. Assuming that $\alpha[k]$ is turning right, the index of the region cut off by $\alpha[k-1]$ on its right-hand side increases by $1/4$. This implies that $\Hom(\alpha,k)[k-1]$ is a right-turning trigon snippet inside a branch or switch rectangle. We note that this trigon snippet cannot be of type $\mathbb{S}(h,v,2)$ since the boundary of the trigon contains exactly one point of $\partial ^2 \mathcal{R}$ less than the boundary of the region cut off by $\alpha[k-1]$. As $\alpha[k-1]$ is a tie inside a branch or switch rectangle, the boundary of the region cut off by $\alpha[k-1]$ contains two or four points of $\partial ^2 \mathcal{R}$. This gives \textit{4}.

We further note that $\len_\textrm{corn}(\alpha[k-1])=\len_\textrm{corn}(\Hom(\alpha,k)[k-1])$. Recalling the definition of the corner length of the snippet $\alpha[k]$, we see that \[\len_\textrm{corn}(\alpha[k-1:k+2],1)=\len_\textrm{corn}(\alpha[k-1:k+2]).\] As no dual verticals of $\alpha$ are affected by the homotopy, this gives \textit{II}.
\end{proof}

\begin{rem}\label{Required time for R(h,h) bigon homotopies}
Again, we see that $\Hom(\alpha,k)$ can be computed in $\mathit{O}(|\chi(S)|)$ time if $\alpha[k]$ is a snippet of type $\mathbb{R}(h,h)$. Lemma \ref{Lemma B(R,h,h)} implies that \[\len(\Hom(\alpha[k-1:k+2],1))\leq s\] and that $\Hom(\alpha[k-1:k+2],1) \subset N$. Thus, the cutting sequence can be adjusted in $\mathit{O}(|\chi(S)|)$ time, whereas the winding numbers remain unchanged.
\end{rem}

\begin{lem}\label{Lemma B(h,h,0)}
Suppose that $\alpha \subset S$ is a snippet-decomposed arc or curve satisfying $\len(\alpha)\geq 2$. Suppose that $\alpha[k] \subset \alpha_\textrm{trim}$ is a bad snippet of bigon type $\mathbb{B}(h,h)$. If $\len(\alpha)=2$, then $\len(\Hom(\alpha,k))=1$. Else, that is if $\len(\alpha)\geq 3$, we have that $\len(\Hom(\alpha,k))=\len(\alpha)-2$. If $\alpha[k-1]$ and $\alpha[k+1]$ are in efficient position, then one of the following three statements holds.
\begin{enumerate}
\item $\Hom(\alpha[k-1:k+2],1)$ is of type $\mathbb{R}(h,v)$, $\mathbb{R}(h,h)$, or $\mathbb{R}(v,v)$ and \[\len_\textrm{red}(\Hom(\alpha,k))\leq \len_\textrm{red}(\alpha)+1.\]
\item $\Hom(\alpha,k)$ is in efficient position.
\item $\Hom(\alpha,k)$ is a peripheral curve consisting of a single snippet only.
\end{enumerate}
\end{lem}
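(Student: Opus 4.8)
\textbf{Proof plan for Lemma \ref{Lemma B(h,h,0)}.}

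The plan is to follow the same template already used for the bigon types in Lemmas \ref{Easy Bigon bounds}--\ref{Lemma B(R,h,h)}: first pin down how the local homotopy changes the snippet decomposition, then analyse the single replacement snippet $\Hom(\alpha[k-1:k+2],1)[k-1]$ under the hypothesis that its two neighbours are in efficient position, and finally track the reduced corner length. Since $\alpha[k]$ is of type $\mathbb{B}(h,h)$, it is a bigon snippet inside a branch rectangle $R\in\mathcal{R}_\textrm{tie}$ with $\alpha[k](0)$ and $\alpha[k](1)$ on the same horizontal side of $R$; crucially, the boundary of this bigon contains \emph{no} points of $\partial^2\mathcal{R}$ (just as for types $\mathbb{S}(h,h,0)$, $\mathbb{B}(t,t)$, etc.\ in Lemma \ref{Easy Bigon bounds}). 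Hence $\alpha[k-1]$ and $\alpha[k+1]$ lie in the same region $R'\in\mathcal{R}$ — here $R'$ must be a complementary region, since $\alpha[k]$ meets a horizontal side of $R$ — and the subarc $\alpha[k-1:k+2]$ collapses to a single snippet. This gives $\len(\Hom(\alpha,k))=1$ if $\len(\alpha)=2$ and $\len(\Hom(\alpha,k))=\len(\alpha)-2$ otherwise, exactly as claimed; this step is essentially identical to the corresponding paragraph in the proof of Lemma \ref{Easy Bigon bounds}.

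Next I would analyse $a:=\Hom(\alpha[k-1:k+2],1)[k-1]\subset R'$ assuming $\alpha[k-1]$ and $\alpha[k+1]$ are in efficient position. If $\alpha[k-1]=\alpha[k+1]$, then $a$ is a single snippet in a complementary region with $\wind(a)=\wind(\alpha[k-1])$; if $R'$ is a peripheral annulus this is a non-trivial multiple of $|\partial^2 R'|$, so $a=\Hom(\alpha,k)$ is a peripheral curve of length one, giving conclusion \textit{3}. Otherwise $\alpha[k-1]\neq\alpha[k+1]$ are two efficient dual snippets in $R'$, each cutting off a region of index zero on one side, and their concatenation across the eliminated bigon yields $a$. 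As in Lemma \ref{Lemma B(R,h,h)}, case \textit{I}, one considers whether the two index-zero regions lie on the same side or opposite sides of the respective snippets: if the same side, an index argument shows $a$ is in efficient position, giving \textit{2}; if opposite sides, $a$ cuts off a genuine positive-index region, and because the two endpoints of $a$ lie (respectively) on $\partial_h N$ or $\partial_v N$ inherited from $\alpha[k-1]$ and $\alpha[k+1]$, the snippet $a$ must be a bigon of type $\mathbb{R}(h,h)$, $\mathbb{R}(v,v)$, or $\mathbb{R}(h,v)$ — one pairs off the possible endpoint locations exactly as in the classification (Lemma \ref{classification of snippets in complementary}). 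One must also dispose of the degenerate sub-case where $a$ happens to become a non-embedded snippet (a peripheral or inessential curve), which, as in the previous bigon lemmas, is ruled out via the winding-number estimate: the winding number of $a$ is $\wind(\alpha[k-1])$ or $\wind(\alpha[k-1])+\wind(\alpha[k+1])$ depending on whether the endpoints meet $\partial S$, and efficient position forces $|\wind|\geq 2$ for the passing-through case, so $a$ stays embedded (or, if it degenerates, we are back in conclusion \textit{3}).

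Finally I would bound $\len_\textrm{red}$. Since $\len_\textrm{corn}(\alpha[k])=1$ (the snippet $\alpha[k]$ lies in a branch rectangle) and $\len_\textrm{corn}(a)\leq \len_\textrm{corn}(\alpha[k-1])+\len_\textrm{corn}(\alpha[k+1])+1$ — the boundary of the region cut off by $a$ consists of $b_{k-1}\cup b_{k+1}$ together with at most one extra common side — one gets $\len_\textrm{corn}(\Hom(\alpha,k))\leq\len_\textrm{corn}(\alpha)-1+1=\len_\textrm{corn}(\alpha)$ in the worst case; more carefully, $\alpha[k](0),\alpha[k](1)$ lying on a horizontal side of a branch rectangle means $\alpha[k-1:k+2]$ has corner length $\len_\textrm{corn}(\alpha[k-1])+1+\len_\textrm{corn}(\alpha[k+1])$, and $a$ picks up at most the union of those two boundaries plus possibly one shared component, so $\len_\textrm{corn}(a)\le\len_\textrm{corn}(\alpha[k-1])+\len_\textrm{corn}(\alpha[k+1])+1$, yielding $\len_\textrm{corn}(\Hom(\alpha,k))\le\len_\textrm{corn}(\alpha)$; combined with the fact that the homotopy can destroy at most one blocker (the eliminated snippets are not vertical duals, so in fact no blocker is destroyed, but a safe bound of $-1$ suffices), one obtains $\len_\textrm{red}(\Hom(\alpha,k))\leq\len_\textrm{red}(\alpha)+1$, which is conclusion \textit{1}. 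The main obstacle I anticipate is the bookkeeping in this last step: unlike the $\mathbb{B}(t,t)$ or $\mathbb{S}(t,t,2)$ cases where the replacement snippet lives in the tie neighbourhood and corner length is insensitive to shape, here $a$ lives in a complementary region, so I must argue carefully that the positive-index region $a$ cuts off does not acquire more than one new boundary component of $\partial\mathcal{R}-\partial^2\mathcal{R}$ beyond those already contributed by $\alpha[k-1]$ and $\alpha[k+1]$ — this is where the constraint that $\alpha[k]$ meets a single horizontal side of a branch rectangle (so exactly one branch-rectangle horizontal side is "absorbed") does the work, and getting the additive constant right to land on $+1$ rather than something larger is the delicate point.
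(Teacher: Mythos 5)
Your plan matches the paper's proof in its opening moves (the $\mathbb{B}(h,h)$ bigon has no corners of $\partial^2\mathcal{R}$ in its boundary, so $\alpha[k-1:k+2]$ collapses to a single snippet in the same complementary region, and the $\alpha[k-1]=\alpha[k+1]$ degeneration gives conclusion \textit{3}), but there is a genuine gap in the middle and the length bookkeeping at the end does not close.

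\textbf{The gap.} You assert that when $\alpha[k-1]\neq\alpha[k+1]$ the two snippets are ``each cutting off a region of index zero on one side.'' That is not a consequence of efficient position. A dual snippet in a complementary region can separate the region into two pieces that \emph{both} have strictly negative index; such a snippet cuts off no index-zero region and, by the definition on page \pageref{Page: corner length}, has corner length exactly $2s_N$. (For example, a snippet running across a $10$-cornered disc with $5$ corners on each side.) The paper's proof deals with this by the same case split it uses for $\mathbb{S}(h,h,0)$ in Lemma~\ref{Easy Bigon bounds}: either at least one of $\alpha[k-1]$, $\alpha[k+1]$ has corner length $2s$ --- in which case $\len_\textrm{corn}$ already drops by at least $2$ (the new snippet has corner length $\leq 2s$ while $\len_\textrm{corn}(\alpha[k])+\len_\textrm{corn}(\alpha[k\pm 1])\geq 2$ is removed) and at most one blocker is destroyed --- or both have corner length $<2s$, which forces them to be vertical duals, and only then does your ``same side / opposite sides'' index argument apply. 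Your proposal never treats the first branch, and the index-zero reasoning simply does not run there.

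\textbf{The bookkeeping.} Even in the branch you do treat, your bound $\len_\textrm{corn}(a)\leq\len_\textrm{corn}(\alpha[k-1])+\len_\textrm{corn}(\alpha[k+1])+1$ is too weak. With $\len_\textrm{corn}(\alpha[k])=1$ it only yields $\len_\textrm{corn}(\Hom(\alpha,k))\leq\len_\textrm{corn}(\alpha)$. Meanwhile, your parenthetical claim that ``the eliminated snippets are not vertical duals'' is false: both $\alpha[k-1]$ and $\alpha[k+1]$ have an endpoint on $\partial_h N$ shared with $\alpha[k]$, and in the $<2s$ branch they are precisely vertical duals, so each can belong to a blocker (reaching into $\alpha[k-3:k]$ resp.\ $\alpha[k+1:k+4]$) which is destroyed. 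Thus up to two blockers may vanish, and your numbers give $\len_\textrm{red}(\Hom(\alpha,k))\leq\len_\textrm{red}(\alpha)+4$, not $+1$. The paper avoids this by a sharper corner-length estimate: in the ``opposite sides'' case the bigon cut off by the new snippet lies inside the index-zero region of the larger of $\alpha[k-1]$, $\alpha[k+1]$ and has strictly fewer boundary components, whence $\len_\textrm{corn}(a)\leq\max(\len_\textrm{corn}(\alpha[k-1]),\len_\textrm{corn}(\alpha[k+1]))-1$, so $\len_\textrm{corn}$ drops by at least $3$ across $\alpha[k-1:k+2]$. Together with the loss of at most two blockers this gives exactly $+1$. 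This estimate --- and the $2s$ case split it is paired with --- is the load-bearing content of the proof, and both are missing from your proposal.
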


\begin{proof}
We prove the statements of the lemma in order.
Since $\alpha[k]$ is a bad snippet of type $\mathbb{B}(h,h)$, we know that $\alpha[k-1]$ and $\alpha[k+1]$ are contained in the same complementary region $R \in \mathcal{R}_\textrm{comp}$ (see Figure \ref{Homotopy of type B(B,h,h) picture}). This implies that the arc $\alpha[k-1:k+1]$ is replaced by a single snippet $\alpha'[k-1]$. If $\alpha[k-1]=\alpha[k+1]$ and this snippet is in efficient position, then $|\wind(\alpha[k-1])| = l \cdot n$ where $n=|\partial^2 R|$ and $l \in \mathbb{Z}-\{0\}$. As $\wind(\Hom(\alpha,k))=\wind(\alpha[k-1])$, we see that $\Hom(\alpha,k)$ is a peripheral curve consisting of a single snippet only, giving \textit{3}.

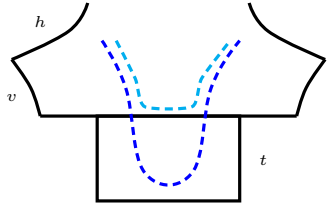
\begin{figure}[htbp] 
  \begin{minipage}[b]{0.99\linewidth}
    \centering
\begin{tikzpicture}[scale=0.25]
\draw [very thick] (-3.5,1.5) rectangle (4,-3);
\node at (-6.5,6.5) {\tiny{$h$}};
\draw [very thick](-6.5,1.5) node (v1) {} -- (7,1.5) node (v2) {};
\draw [very thick] plot[smooth, tension=.7] coordinates {(v1) (-7,3) (-8,4.5)};
\draw [very thick] plot[smooth, tension=.7] coordinates {(v2) (7.5,3) (8.5,4.5)};
\draw [very thick] plot[smooth, tension=.7] coordinates {(-8,4.5) (-5,6) (-4,7.5)};
\draw [very thick] plot[smooth, tension=.7] coordinates {(8.5,4.5) (5.5,6) (4.5,7.5)};
\draw [blue, densely dashed, very thick] plot[smooth, tension=.7] coordinates {(-3.25,5.5) (-2,3) (-1,-1.5)   (1.5,-1.5) (2.5,3) (4,5.5)};
\draw [cyan, densely dashed, very thick] plot[smooth, tension=.4] coordinates {(-2.5,5.5) (-1.5,3.5) (-1,2) (1.5,2) (2,3.5) (3.5,5.5)};
\node at (-8,2.5) {\tiny{$v$}};
\node at (5.24,-0.86) {\tiny{$t$}};
\end{tikzpicture}
    \caption{A homotopy of type $\mathbb{B}(h,h)$.} 
    \label{Homotopy of type B(B,h,h) picture}
  \end{minipage}
\end{figure}

For the remainder of this proof we assume that $\alpha[k-1]\neq \alpha[k+1]$ and that $\alpha[k-1]$ and $\alpha[k+1]$ are in efficient position. 
As in the case of bigon snippets of type $\mathbb{S}(h,h,0)$ in the proof of Lemma \ref{Easy Bigon bounds}, we distinguish the following two cases: either at least one of the snippets $\alpha[k-1]$ and $\alpha[k+1]$ has corner length $2s$, or both have a corner length that is strictly smaller than $2s$. We recall that in the latter case, efficient position implies that both snippets must be horizontal duals.

Let us first assume that at least one of the snippets $\alpha[k-1]$ and $\alpha[k+1]$ has corner length $2s$. Without loss of generality, we assume that this is the snippet $\alpha[k-1]$. Then \[\len_\textrm{corn}(\Hom(\alpha[k-1:k+2],1)) \leq 2s \leq \len_\textrm{corn}(\alpha[k-1]).\] We note that $\len_\textrm{corn}(\alpha[k])=1$ and $\len_\textrm{corn}(\alpha[k+1])\geq 1$. Thus, we see that \[\len_\textrm{corn}(\Hom(\alpha[k-1:k+2],1)) \leq \len_\textrm{corn}(\alpha[k-1:k+2])-2.\] As snippets of corner length $2s$ cannot be part of any blocker, the homotopy reduces the number of blockers of the underlying arc or curve by at most one. This implies that $\len_\textrm{red}(\Hom(\alpha,k))\leq \len_\textrm{red}(\alpha)$, giving \textit{1} and \textit{2}.

Let us now assume that the corner length of $\alpha[k-1]$ and $\alpha[k+1]$ is strictly smaller than $2s$. As remarked previously, efficient position implies that both of these snippets cut off regions of index zero on one of their sides. If they cut off a region of index zero on the same side, an index-argument shows that $\Hom(\alpha[k-1:k+2],1)$ is in efficient position. Therefore, let us assume that they cut off the regions of index zero on different sides. Then $\Hom(\alpha[k-1:k+2],1)$ is a bigon of type $\mathbb{R}(h,h)$. Without loss of generality, we may assume that $\len_\textrm{corn}(\alpha[k-1])\leq \len_\textrm{corn}(\alpha[k+1])$. This implies that \[\len_\textrm{corn}(\Hom(\alpha[k-1:k+2],1))\leq \len_\textrm{corn}(\alpha[k+1])-1.\] As $\len_\textrm{corn}(\alpha[k])=1$ and $\len_\textrm{corn}(\alpha[k+1])\geq 1$, we see that \[\len_\textrm{corn}(\Hom(\alpha[k-1:k+2],1)) \leq \len_\textrm{corn}(\alpha[k-1:k+2])-3.\] Since the homotopy decreases the number of blockers by at most two, we conclude that $\len_\textrm{red}(\Hom(\alpha,k))\leq \len_\textrm{red}(\alpha)+1$, giving \textit{1}.
\end{proof}

\begin{rem}\label{Required time for local homotopies}
Arguing as in the case of snippets of type $\mathbb{S}(h,h,0)$, we see that $\Hom(\alpha,k)$ can be computed in $\mathit{O}(|\chi(S)|)$ time if $\alpha[k]$ is a snippet of type $\mathbb{B}(h,h)$. 

Combining this with Lemma \ref{Required time for easy bigon homotopies} and Remarks \ref{Required time for trigon homotopies}, \ref{Required time for bigon S(t,v,1) homotopy}, \ref{Required time for S(t,t,2) bigon homotopies}, and \ref{Required time for R(h,h) bigon homotopies}, we conclude the following: suppose that $\alpha$ is a snippet-decomposed arc or curve with a bad snippet $\alpha[k] \subset \alpha_\textrm{trim}$ of arbitrary type. Then $\Hom(\alpha,k)$ can be computed in $\mathit{O}(|\chi(S)|)$.
\end{rem}

From the previous six lemmas about local bigon homotopies we deduce the following.

\begin{cor} \label{Bigon homotopies at end of arc}
Suppose that $\alpha \subset S$ is a snippet-decomposed arc satisfying $\len(\alpha) \geq 3$. We further assume that $\alpha_\textrm{trim}$ contains exactly one bad snippet, $\alpha[-2]$, which is a bigon snippet of arbitrary type. Then the following statements hold.
\begin{enumerate}
\item $\Hom(\alpha,-2)_\textrm{trim}$ contains at most one bad snippet, which must be of trigon type.
\item $\len(\Hom(\alpha,-2)_\textrm{trim})\leq \len(\alpha_\textrm{trim})+s-2$.
\item $\len_\textrm{red}(\Hom(\alpha,-2)_\textrm{trim})\leq \len_\textrm{red}(\alpha_\textrm{trim})+2s$.
\end{enumerate}
\end{cor}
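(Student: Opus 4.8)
The plan is a case analysis over the nine possible bigon types of $\alpha[-2]$, feeding each into the matching local-bigon lemma — Lemma \ref{Easy Bigon bounds} for $\mathbb{B}(t,t),\mathbb{S}(h,h,0),\mathbb{S}(t,t,0),\mathbb{S}(v,v,0),\mathbb{R}(v,v)$, Lemma \ref{Lemma B(S,t,v,1)} for $\mathbb{S}(t,v,1)$, Lemma \ref{Lemma B(S,t,t,2)} for $\mathbb{S}(t,t,2)$, Lemma \ref{Lemma B(R,h,h)} for $\mathbb{R}(h,h)$, and Lemma \ref{Lemma B(h,h,0)} for $\mathbb{B}(h,h)$. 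Writing $k=\len(\alpha)-2$, the first step is to check the hypothesis shared by all of these, that $\alpha[k-1]$ and $\alpha[k+1]$ are in efficient position. Since $\alpha$ is an arc with $\len(\alpha)\geq 3$, the last snippet $\alpha[k+1]=\alpha[-1]$ is in efficient position by Lemma \ref{No arcs of length 2}; and $\alpha[k-1]=\alpha[\len(\alpha)-3]$ is in efficient position because for $\len(\alpha)\geq 4$ it lies in $\alpha_\textrm{trim}$, whose unique bad snippet is $\alpha[-2]$, while for $\len(\alpha)=3$ it equals $\alpha[0]$, again efficient by Lemma \ref{No arcs of length 2}.

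The second step records what the penultimate position forces. The homotopy changes only the subarc $\alpha[k-1:k+2]$, so every other snippet of $\Hom(\alpha,-2)$ is unchanged and still efficient, and all bad snippets of $\Hom(\alpha,-2)$ lie inside $\Hom(\alpha[k-1:k+2],1)$; moreover its last snippet contains the fixed endpoint $\alpha(1)\in\partial S$, hence is in efficient position by Lemma \ref{No arcs of length 2} provided $\len(\Hom(\alpha,-2))\geq 3$ (if $\len(\Hom(\alpha,-2))\leq 2$ the trimmed output is empty and the statement is trivial). As the cited lemmas guarantee that any bad snippet produced by a bigon homotopy is a trigon snippet, and — once the last snippet is known to be efficient — at most one of them survives, statement (1) follows. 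The same $\partial S$-endpoint observation selects the favourable branch of the multi-branch lemmas: $\alpha[-1]$ is neither a horizontal nor a vertical dual, and it has corner length exactly $2s_N$ by the final clause of Definition \ref{Page: corner length} (it cuts off no region of non-negative index disjoint from $\partial S$). Hence the subcases of Lemmas \ref{Easy Bigon bounds} and \ref{Lemma B(h,h,0)} in which both neighbours have corner length below $2s_N$ do not occur, and the two-trigon branch of Lemma \ref{Lemma B(S,t,v,1)} (which needs $\alpha[k+1]$ to be a horizontal dual) does not occur; in the surviving branches the lemmas provide explicit bounds $\len_\textrm{red}(\Hom(\alpha,-2))\leq \len_\textrm{red}(\alpha)+2s$ and $\len(\Hom(\alpha,-2))\leq \len(\alpha)+s-2$.

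The third step passes from these whole-arc bounds to the trimmed arc. The key identity is that for every arc $\beta$ with $\len(\beta)\geq 2$,
\[
\len_\textrm{red}(\beta_\textrm{trim}) \,=\, \len_\textrm{red}(\beta) - 4s_N ,
\]
which holds because the first and last snippets of an arc each carry a point of $\partial S$ and so have corner length $2s_N$, while no blocker can contain a snippet with a boundary point on $\partial S$ (a blocker's outer snippets are vertical duals), so $\len_\textrm{block}(\beta)=\len_\textrm{block}(\beta_\textrm{trim})$. Applying this to $\alpha$ and to $\Hom(\alpha,-2)$ turns statement (3) into the whole-arc bound just obtained; statement (2) reduces, via $\len(\beta_\textrm{trim})=\len(\beta)-2$, to the length bound $\len(\Hom(\alpha,-2))\leq\len(\alpha)+s-2$, since $\len(\alpha)+s-2-2=\len(\alpha_\textrm{trim})+s-2$. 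The short degenerate instances ($\len(\alpha)=3$, or $\Hom(\alpha,-2)$ of length $1$ or $2$) are checked off by hand, the trimmed arc being empty.

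The step I expect to be the main obstacle is the second one. It is bookkeeping rather than mathematics, but one must go through each bigon type and confirm that the corresponding local-bigon lemma is really being used in its favourable branch: these lemmas are stated for a bad snippet anywhere in the trimmed arc or curve, so their conclusions include weak cases — a second trigon on the complementary-region side, or ``$\Hom$ is in efficient position'' with no accompanying reduced-length bound — which would be fatal here and must be excluded using the presence of $\partial S$ next to $\alpha[-2]$. It is worth noting, as a shortcut, that several bigon types cannot occur at the penultimate position at all, since the region adjacent to $\alpha[-2]$ on the $\alpha(1)$ side would then be forced to be a non-peripheral rectangle or a second complementary region; for those types the statement is vacuous.
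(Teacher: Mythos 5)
Your argument has a gap at its foundation: you assume throughout that $\alpha(0)$ and $\alpha(1)$ lie on $\partial S$, and everything downstream leans on this — the claim that $\alpha[-1]$ is efficient by Lemma~\ref{No arcs of length 2}, the claim that $\len_\textrm{corn}(\alpha[-1])=2s_N$, the identity $\len_\textrm{red}(\beta_\textrm{trim})=\len_\textrm{red}(\beta)-4s_N$, and the shortcut excluding bigon types such as $\mathbb{S}(t,t,2)$ and $\mathbb{R}(h,h)$. But the corollary is stated for a general snippet-decomposed arc, whose endpoints need only lie in $\partial\mathcal{R}$, not in $\partial S$, and this generality is load-bearing: the corollary drives Lemma~\ref{Functional correct BigArc}, and \texttt{ReduceToTwoBadSnippets} feeds \texttt{BigArc} the prefixes $\alpha'[:-\len(\alpha)+k]$, whose final snippet sits on an interior tile and whose first snippet is the first snippet of the original input, which for curves is likewise interior. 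For such an arc, the first or last snippet can be a tie of a branch rectangle (corner length $1$), a vertical dual whose removal destroys a blocker, or even a bad snippet; Lemma~\ref{No arcs of length 2} does not apply, your identity fails, and all nine bigon types can occur at position $\alpha[-2]$. So the conversion in your third step, and the favourable-branch selection in your second step, are both built on a premise the statement does not grant.

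The paper avoids this by using only the hypothesis that $\alpha[k+1]\not\subset\alpha_\textrm{trim}$. For statement~(1), whatever the homotopy produces at the final position of $\Hom(\alpha,-2)$ is discarded when trimming, so the ``second trigon'' in the dangerous branches of Lemmas~\ref{Lemma B(S,t,v,1)}, \ref{Lemma B(S,t,t,2)}, \ref{Lemma B(R,h,h)} does not need to be ruled out — it lands at index $-1$ and is cut away, while the lemmas' one-sided conclusions about $\Hom(\alpha,k)[k-1]$ only require $\alpha[k-1]$ (which is in $\alpha_\textrm{trim}$, hence efficient) to be controlled. For statement~(3), the paper compares $\Hom(\alpha,-2)_\textrm{trim}$ and $\alpha_\textrm{trim}$ directly, observing that the homotopy destroys at most one blocker of $\alpha_\textrm{trim}$ (it touches only $\alpha[-3]$, $\alpha[-2]$, $\alpha[-1]$, and $\alpha[-2]$ is bad so never in a blocker), then splits on whether $\Hom(\alpha[-3:],-2)$ has length one. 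Your whole-arc-then-trim strategy is an attractive reorganisation, but to make it correct for this corollary you would need to replace the $\partial S$-based identity with one valid for arbitrary first/last snippets, at which point you are essentially redoing the paper's blocker-counting anyway.
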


\begin{proof}
The first two statements follow directly from Lemmas \ref{Easy Bigon bounds}, \ref{Lemma B(S,t,v,1)}, \ref{Lemma B(S,t,t,2)}, \ref{Lemma B(R,h,h)} and \ref{Lemma B(h,h,0)} when setting $k=-2$ and observing that $\alpha[k+1] \not\subset \alpha_\textrm{trim}$.

To prove that $\len_\textrm{red}(\Hom(\alpha,-2)_\textrm{trim})\leq \len_\textrm{red}(\alpha_\textrm{trim})+2s$, we note that $\Hom(\alpha,-2)_\textrm{trim}$ contains at most one blocker less than $\alpha_\textrm{trim}$. This follows from the fact that $\alpha[k+1]$ is not contained in $\alpha_\textrm{trim}$. If $\alpha[-3] \subset \alpha_\textrm{trim}$, we further know that $\len_\textrm{corn}(\alpha[-3])\geq 1$. If $\len(\Hom(\alpha[-3:],-2))=1$, this implies that 
\begin{align*}
& \len_\textrm{red} ( \Hom(\alpha,-2)_\textrm{trim}) =  \len_\textrm{red}(\alpha[1:-3]) \\ \leq & \len_\textrm{red}(\alpha_\textrm{trim}) -1 +2  \leq  \len_\textrm{red}(\alpha_\textrm{trim})+1.
\end{align*}
If $\len(\Hom(\alpha[-3:],-2))\neq 1$, $\alpha[-2]$ must be a bigon of type $\mathbb{S}(t,v,1)$, $\mathbb{S}(t,t,2)$, or $\mathbb{R}(h,h)$. Thus, no vertical duals of $\alpha$ are affected by the homotopy. As \[\len_\textrm{corn}(\Hom(\alpha[-3:],-2)) \leq \len_\textrm{corn}(\alpha[-3:])+2s,\] we see that $\len_\textrm{red}(\Hom(\alpha,-2)_\textrm{trim})\leq \len_\textrm{red}(\alpha_\textrm{trim})+2s$ in this case.
\end{proof}

\section{The algorithm \texttt{BigArc}}

Building on the results from the previous section, we now present the algorithm \texttt{BigArc}. This algorithm 
\begin{itemize}
\item takes as input a surface $S$ of positive complexity, a tie neighbourhood $N \subset S$ of a large train track in $S$, as well as an arc $\alpha \subset S$ whose inside contains at most one bad snippet, which must be its penultimate snippet, and
\item outputs an arc $\alpha'$ that is homotopic to $\alpha$ and in efficient position in $\alpha'[1:-1]$.
\end{itemize}

The formal statement of \texttt{BigArc} is given in Algorithm \ref{BigArc}.
Corollary \ref{Bigon homotopies at end of arc} shows that if $\alpha$ is an arc with a unique bad snippet, $\alpha[-2]$, of bigon type, then applying one local bigon homotopy is sufficient to obtain an arc that is a valid input for the algorithm \texttt{TrigArc}. Thus, efficient position for the inside of $\alpha$ is achieved by employing the algorithm \texttt{TrigArc}.

\begin{algorithm} 
    \SetKwInOut{Input}{Input}
    \SetKwInOut{Output}{Output}
	\DontPrintSemicolon
    \Input{A surface $S$ of positive complexity, a tie neighbourhood $N \subset S$ of a large train track in $S$, and an arc $\alpha \subset S$ such that $\alpha[1:-1]$ contains at most one bad snippet, which must be the snippet $\alpha[-2]$.}
    \Output{An arc $\alpha''$ which is homotopic, relative its endpoints, to $\alpha$ and is in efficient position in its inside.}
    \BlankLine
    $\alpha'= \alpha$\;
    \If {$\len(\alpha')>2$ and $\alpha'[-2]$ is a bigon}
    {$\alpha'=\Hom(\alpha',-2)$}
    \Return $\texttt{TrigArc}(S,N,\alpha')$
    \caption{\texttt{BigArc} - Achieving efficient position for the inside of an arc that contains at most one bad snippet, which must be its penultimate snippet.}
    \label{BigArc}
\end{algorithm}

\begin{lem}\label{Functional correct BigArc}
The algorithm \texttt{BigArc} is correct. On an input $(S,N,\alpha)$, the algorithm halts in $\mathit{O}(\chi(S)^2(\len(\alpha_\textrm{trim})+s-1))$ time. For $\alpha''=\texttt{BigArc}(S,N,\alpha)$, we have that \[\len_\textrm{red}(\alpha''_\textrm{trim}) \leq \len_\textrm{red}(\alpha_\textrm{trim})+4s.\]
\end{lem}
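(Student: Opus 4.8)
The plan is to verify the three assertions of the lemma in turn, in each case chaining together Corollary \ref{Bigon homotopies at end of arc} and Lemma \ref{TrigArc funtionally correct}; essentially no new work is needed beyond bookkeeping.

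For \textbf{correctness}, the key point is that the arc eventually handed to \texttt{TrigArc} inside \texttt{BigArc} is always a legitimate input for \texttt{TrigArc}, i.e.\ it is either in efficient position in its inside or almost efficient of trigon type. There are two cases. If the \texttt{if}-branch is not entered, then either $\len(\alpha) \le 2$, in which case $\len(\alpha)=1$ by Lemma \ref{No arcs of length 2} so $\alpha_\textrm{trim}$ is empty and vacuously in efficient position; or $\len(\alpha) \ge 3$ and $\alpha[-2]$ is not a bigon, in which case the hypothesis that $\alpha_\textrm{trim}$ has at most one bad snippet, located at $\alpha[-2]$, together with Lemma \ref{Bad snippet types in long enough curves and arcs} forces that snippet (if present) to be a trigon snippet, so $\alpha$ is almost efficient of trigon type. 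If the \texttt{if}-branch is entered, then $\len(\alpha)\ge 3$ and $\alpha[-2]$ is a bigon, so Corollary \ref{Bigon homotopies at end of arc}(1) says $\Hom(\alpha,-2)_\textrm{trim}$ contains at most one bad snippet, of trigon type; as it sits in the inside, $\Hom(\alpha,-2)$ is a valid \texttt{TrigArc} input. In either case Lemma \ref{TrigArc funtionally correct} gives an output homotopic, rel endpoints, to its input and in efficient position in its inside; since the single local homotopy is a homotopy rel endpoints (Remark \ref{local trigon homotopy fixes boundary}), the returned $\alpha''$ is homotopic rel endpoints to $\alpha$ and meets the output specification.

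For the \textbf{running time}, I would read off the pseudocode line by line. Evaluating the \texttt{if}-condition means reading $\len(\alpha')$ and looking up the type of $\alpha'[-2]$, which is $\mathit{O}(|\chi(S)|)$ by the conventions of Remark \ref{Standard conventions algor}; if the condition holds, computing $\Hom(\alpha',-2)$ costs $\mathit{O}(|\chi(S)|)$ by Remark \ref{Required time for local homotopies}. The call to \texttt{TrigArc} dominates: Lemma \ref{TrigArc funtionally correct} bounds it by $\mathit{O}(\chi(S)^2(\len(\alpha'_\textrm{trim})+1))$, and $\len(\alpha'_\textrm{trim}) \le \len(\alpha_\textrm{trim}) + s - 2$ either trivially (no homotopy applied, $\alpha'=\alpha$) or by Corollary \ref{Bigon homotopies at end of arc}(2). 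Since $|\chi(S)|\ge 1$ and $s\ge 5$, this absorbs the $\mathit{O}(|\chi(S)|)$ preprocessing and the whole routine runs in $\mathit{O}(\chi(S)^2(\len(\alpha_\textrm{trim})+s-1))$ time.

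Finally, for the \textbf{reduced corner length}, I track $\len_\textrm{red}$ through the at most two steps. If no homotopy is applied then $\alpha''=\texttt{TrigArc}(S,N,\alpha)$ and Lemma \ref{TrigArc funtionally correct} gives $\len_\textrm{red}(\alpha''_\textrm{trim}) \le \len_\textrm{red}(\alpha_\textrm{trim})+2s \le \len_\textrm{red}(\alpha_\textrm{trim})+4s$. If the homotopy is applied, Corollary \ref{Bigon homotopies at end of arc}(3) gives $\len_\textrm{red}(\Hom(\alpha,-2)_\textrm{trim}) \le \len_\textrm{red}(\alpha_\textrm{trim})+2s$, and a further application of Lemma \ref{TrigArc funtionally correct} to $\alpha'=\Hom(\alpha,-2)$ adds at most another $2s$, giving $\len_\textrm{red}(\alpha''_\textrm{trim}) \le \len_\textrm{red}(\alpha_\textrm{trim})+4s$. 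There is no real obstacle here; the one subtlety worth stating carefully is the boundary bookkeeping that makes \texttt{TrigArc} applicable after the bigon move, namely that the surviving bad snippet of $\Hom(\alpha,-2)$ lies strictly inside the arc and is of trigon type — which is precisely what Corollary \ref{Bigon homotopies at end of arc} packages, using that $\alpha[k+1]\not\subset\alpha_\textrm{trim}$ when $k=-2$.
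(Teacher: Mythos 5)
Your proof is correct and, aside from being more fully spelled out, takes the same route as the paper's: the paper's own argument is essentially the one-liner ``correctness follows from Corollary \ref{Bigon homotopies at end of arc} and Lemma \ref{TrigArc funtionally correct}'', then $\mathit{O}(|\chi(S)|)$ for the single bigon move via Remark \ref{Required time for local homotopies}, $\len(\alpha'_\textrm{trim}) \le \len(\alpha_\textrm{trim}) + s - 2$ by the bigon lemma, feed into the \texttt{TrigArc} bound, and the reduced corner length bound is $2s + 2s$. Your chaining of Corollary \ref{Bigon homotopies at end of arc} with Lemma \ref{TrigArc funtionally correct} is exactly this.

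One small imprecision worth flagging: you justify ``$\len(\alpha) \le 2 \Rightarrow \len(\alpha)=1$'' by citing Lemma \ref{No arcs of length 2}, but that lemma (and likewise Lemma \ref{Bad snippet types in long enough curves and arcs}, which you also invoke) is stated for \emph{properly immersed} arcs, whereas the arcs fed to \texttt{BigArc} need not be properly immersed --- in \texttt{ReduceToTwoBadSnippets} they are subarcs of $\alpha'$, whose endpoints typically lie on $\partial\mathcal{R}$ rather than on $\partial S$, so $\len(\alpha)=2$ is perfectly possible. Fortunately this does not affect your argument: for $\len(\alpha)\le 2$ the inside $\alpha_\textrm{trim}$ is empty directly by Definition \ref{Page: alpha trim}, so $\alpha$ is already a legal \texttt{TrigArc} input; and for $\len(\alpha)\ge 3$ the interior snippet $\alpha[-2]$ has both endpoints on sides shared with $\alpha[-3]$ and $\alpha[-1]$, hence in $\partial\mathcal{R}-\partial S$, so by Corollary \ref{classification of bad snippets} it can only be a bigon or trigon snippet, independently of whether $\alpha$ is proper. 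With those citations corrected, the proof is sound.
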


\begin{proof}
The correctness of this algorithm follows from Corollary \ref{Bigon homotopies at end of arc} and Lemma \ref{TrigArc funtionally correct}. Remark \ref{Required time for local homotopies} implies that examining the last three snippets of the input arc and replacing them, if required, by $\Hom(\alpha[-3:],1)$, takes $\mathit{O}(|\chi(S)|)$ time. Further, any local bigon homotopy increases the snippet count of the underlying arc by at most $s-2$. Thus, Lemma \ref{TrigArc funtionally correct} implies that the algorithm halts within a further $\mathit{O}(\chi(S)^2 (\len(\alpha_\textrm{trim})+s-1))$ operations and that the reduced corner length of the output is bounded by $\len_\textrm{red}(\alpha_\textrm{trim})+2s+2s$.
\end{proof}

\begin{cor}\label{BigArc running time in reduced corner length}
For any input $(S,N,\alpha)$, the algorithm \texttt{BigArc} halts in \[\mathit{O}(|\chi(S)|^3 \cdot (\len_\textrm{red}(\alpha_\textrm{trim})+1))\] time.
\end{cor}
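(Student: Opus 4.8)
The plan is to read the bound off directly from Lemma~\ref{Functional correct BigArc} by converting the snippet length $\len(\alpha_\textrm{trim})$ and the quantity $s$ appearing there into the reduced corner length and the Euler characteristic; this is the same bookkeeping that turned Lemma~\ref{TrigArc funtionally correct} into Corollary~\ref{TrigArc in reduced corner length}.

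First I would observe that, by the input hypotheses of \texttt{BigArc}, the inside $\alpha_\textrm{trim}$ of $\alpha$ contains at most one bad snippet --- namely $\alpha[-2]$, when $\len(\alpha)\geq 3$, and none otherwise. Since $\alpha_\textrm{trim}$ is itself an arc, Lemma~\ref{reduced corner length vs snippet length} applies to it and gives
\begin{align*}
\len(\alpha_\textrm{trim}) \;\leq\; \len_\textrm{red}(\alpha_\textrm{trim}) + m_{\alpha_\textrm{trim}} \;\leq\; \len_\textrm{red}(\alpha_\textrm{trim}) + 1 .
\end{align*}

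Next I would substitute this into the running-time estimate $\mathit{O}(\chi(S)^2(\len(\alpha_\textrm{trim})+s-1))$ from Lemma~\ref{Functional correct BigArc}. Using the inequality above together with $s = \mathit{O}(|\chi(S)|)$ from Remark~\ref{s is O(Euler)} (so also $s-1 = \mathit{O}(|\chi(S)|)$), we obtain
\begin{align*}
\len(\alpha_\textrm{trim}) + s - 1 \;=\; \mathit{O}\bigl(\len_\textrm{red}(\alpha_\textrm{trim}) + |\chi(S)|\bigr) \;=\; \mathit{O}\bigl(|\chi(S)|\cdot(\len_\textrm{red}(\alpha_\textrm{trim})+1)\bigr),
\end{align*}
whence the total running time is $\mathit{O}\bigl(\chi(S)^2 \cdot |\chi(S)| \cdot (\len_\textrm{red}(\alpha_\textrm{trim})+1)\bigr) = \mathit{O}\bigl(|\chi(S)|^3\cdot(\len_\textrm{red}(\alpha_\textrm{trim})+1)\bigr)$, which is the claim.

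The proof involves no genuine obstacle; it is entirely $\mathit{O}$-manipulation. The only points deserving a moment's care are that Lemma~\ref{reduced corner length vs snippet length} must be invoked for the arc $\alpha_\textrm{trim}$ rather than for $\alpha$, that the ``$+1$'' in the output bound is exactly what is needed to accommodate the single possible bad snippet in $\alpha_\textrm{trim}$, and that the stray additive term $s-1$ in Lemma~\ref{Functional correct BigArc} is absorbed by promoting one power of $|\chi(S)|$ rather than by inadvertently inflating the $\len_\textrm{red}$ factor.
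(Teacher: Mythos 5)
Your proof is correct and follows exactly the same route as the paper's: invoke Lemma~\ref{reduced corner length vs snippet length} for $\alpha_\textrm{trim}$ (which has at most one bad snippet) to bound $\len(\alpha_\textrm{trim})$ by $\len_\textrm{red}(\alpha_\textrm{trim})+1$, then absorb the additive $s = \mathit{O}(|\chi(S)|)$ term by promoting $\chi(S)^2$ to $|\chi(S)|^3$. The paper's own argument is just a more compressed statement of the same substitution.
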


\begin{proof}
As $\alpha_\textrm{trim}$ contains at most one bad snippet, Lemma \ref{reduced corner length vs snippet length} implies that
\begin{align*}
\len(\alpha_\textrm{trim})+s-1 \leq \len_\textrm{red}(\alpha_\textrm{trim})+s,
\end{align*}
which is sufficient to prove the claim.
\end{proof}


\chapter{Obtaining efficient position for all snippets but one}

Building on the results from the previous two chapters, we now present the algorithm \texttt{ReduceToTwoBadSnippets} that ``almost'' yields efficient position for properly immersed arcs and curves.  For a formal statement of this algorithm we point the reader to Algorithm \ref{ReduceToTwoBadSnippets}. Suppose that $\alpha$ is a snippet-decomposed arc or curve. On input $\alpha$, $\texttt{ReduceToTwoBadSnippets}$ outputs an arc or curve $\alpha'$ homotopic to $\alpha$ such that $\alpha'[1:-1]$ is in efficient position. If $\alpha$ is a properly immersed, essential arc, this implies that $\alpha'$ is in efficient position. If $\alpha$ is a curve,  then $\alpha'$ can have up to two bad snippets: its very first and very last snippet. To reduce the number of bad snippets of the curve $\alpha'$ further, some additional work is needed. To decrease the number of bad snippets to be less than two, a simple modification of our previous discussions is sufficient. We present the corresponding algorithm \texttt{ReduceToOneBadSnippet} in this chapter. For a formal statement of this algorithm we point the reader to Algorithm \ref{ReduceToOneBadSnippet}. However, eliminating the last bad snippet turns out to be quite complex: When applying local homotopies as before, the number of bad snippets alternates between one and two. Moreover, these snippets might change their turning direction, so monitoring the number of right or left duals does not work. These issues will be resolved by applying local homotopies in a specific order, ensuring that the reduced corner length of the underlying curve decreases sufficiently often. As this discussion is rather delicate, it is postponed to the subsequent chapter.

Throughout this chapter, we assume $S = S_{g,b}$ to be a surface satisfying $\xi(S)=3g-3+b \geq 1$. We further assume that $\tau \subset S$ is a large train track and that $N=N(\tau) \subset S$ is a tie neighbourhood of $\tau$ in $S$. As usual, we do not distinguish between snippets and their strong snippet homotopy classes unless otherwise stated. For any snippet $a \subset R \in \mathcal{R}$, we always assume that $a$ has minimal self-intersection, intersects the one-skeleton of $\mathcal{R}$ perpendicularly and misses the corners of $\mathcal{R}$. We further recall that arcs and curves in $S$ are assumed to be self-transverse and transverse to $\partial \mathcal{R}$. Moreover, if $\alpha$ is an arc, we assume that $\alpha(0)$ and $\alpha(1)$ lie in $\partial \mathcal{R}$. Thus, arcs and curves in $S$ admit canonical decompositions into snippets.

\section{Reducing the number of bad snippets to two}

\begin{algorithm} 
    \SetKwInOut{Input}{Input}
    \SetKwInOut{Output}{Output}
	\DontPrintSemicolon
    \Input{A surface $S$ of positive complexity, a tie neighbourhood $N \subset S$ of a large train track in $S$, and an arc or curve $\alpha \subset S$.}
    \Output{An arc or curve $\alpha''$ which is homotopic to $\alpha$ (relative its endpoints if applicable) such that $\alpha''[1:-1]$ is in efficient position.}
    \BlankLine
    $\alpha'= \alpha$\;
    \If{$\len(\alpha) \leq 2$}{\Return $\alpha$}
    \For{$k$ in $[3, \dots, \len(\alpha)-1]$}
    {
    $\alpha'= \texttt{BigArc} (S,N,\alpha'[:-\len(\alpha)+k])\cdot \alpha[-\len(\alpha)+k:]$\;
    }
    \Return $ \texttt{BigArc} (S,N,\alpha') $
    \caption{\texttt{ReduceToTwoBadSnippets} - Achieving efficient position for all but the first and last snippet of a given arc or curve.}
    \label{ReduceToTwoBadSnippets}
\end{algorithm}

\begin{lem}\label{Lemma ReduceToTwoBadSnippets}
The algorithm \texttt{ReduceToTwoBadSnippets} is correct. On an input $ (S,N,\alpha)$, the algorithm halts in $\mathit{O}(\chi(S)^4 \cdot \len(\alpha)^2)$ time. For \[\alpha''=\texttt{ReduceToTwoBadSnippets}(S,N,\alpha),\] we have that $\len_\textrm{red}(\alpha''_\textrm{trim}) \leq (\len(\alpha)-1)\cdot 6s$.
\end{lem}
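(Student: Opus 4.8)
The plan is to analyse Algorithm \ref{ReduceToTwoBadSnippets} directly, establishing correctness, the running time, and the bound on the reduced corner length by a simultaneous induction on the loop variable $k$. First I would dispose of the trivial case: if $\len(\alpha)\leq 2$ the algorithm returns $\alpha$, which already satisfies $\alpha[1:-1]$ being empty (hence vacuously in efficient position) by Lemma \ref{No arcs of length 2} and Lemma \ref{arcs or curves of length one}. So assume $\len(\alpha)\geq 3$. The key structural observation is that after the iteration of the \texttt{for}-loop with index $k$, the arc or curve $\alpha'$ satisfies the invariant that $\alpha'[:k]$ is an arc whose inside $\alpha'[1:k-1]$ is in efficient position, while $\alpha'[k:]$ is the untouched tail $\alpha[-\len(\alpha)+k:]$ of the original input. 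This invariant is maintained because the call $\texttt{BigArc}(S,N,\alpha'[:-\len(\alpha)+k])$ is a valid call: by the invariant from the previous iteration, $\alpha'[:k-1]$ has its inside in efficient position, and adjoining the single snippet $\alpha[k-1]$ produces an arc whose inside contains at most one bad snippet, namely the penultimate one $\alpha[k-1]$ — precisely the input specification of \texttt{BigArc}. (When $\len(\alpha')\leq 2$ the \texttt{BigArc} call returns its input unchanged, and the invariant holds trivially.) By Lemma \ref{Functional correct BigArc}, the output of that call has its inside in efficient position, so concatenating with the unchanged tail and smoothing the junction yields the invariant for the next iteration, after checking that the new junction snippet does not re-introduce a bad snippet in the relevant range — this follows because the tail snippet $\alpha[k]$ and the preceding snippet are unaffected in their weak snippet homotopy type by the smoothing. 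After the loop completes (at $k=\len(\alpha)-1$), the final line $\texttt{BigArc}(S,N,\alpha')$ handles the last snippet $\alpha'[-2]$, yielding an arc or curve $\alpha''$ with $\alpha''[1:-1]$ in efficient position. This establishes correctness.

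For the reduced corner length bound, I would track $\len_\textrm{red}$ across the iterations. Write $\alpha^{(k)}$ for the value of $\alpha'$ after the iteration with index $k$, and $\alpha^{(2)}=\alpha$. Each \texttt{BigArc} call, by Lemma \ref{Functional correct BigArc}, increases the reduced corner length of the arc it processes by at most $4s$. Concatenating with the unchanged tail $\alpha[k:]$ adds at most $\len_\textrm{corn}(\alpha[k:])$ to the corner length, but — and this is the point to be careful about — it does not decrease the blocker count of the already-processed part, and may create at most one new blocker at the junction, so the total reduced corner length of $\alpha^{(k)}$ is bounded by $\len_\textrm{red}(\alpha^{(k-1)}_\textrm{trim}) + 4s + \len_\textrm{corn}(\alpha[k-1]) + O(s)$, where $\len_\textrm{corn}(\alpha[k-1]) \leq 2s$ by Remark \ref{Bounds on corner length}. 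Iterating over the $\len(\alpha)-2$ loop iterations plus the final \texttt{BigArc} call, this telescopes to a bound of the form $\len_\textrm{red}(\alpha''_\textrm{trim}) \leq (\len(\alpha)-1)\cdot 6s$, where the constant $6s$ absorbs the $4s$ from \texttt{BigArc}, the $2s$ from adjoining each original snippet, and the lower-order blocker corrections — these all fit comfortably since $s\geq 5$ and the original contributions $\len_\textrm{corn}(\alpha)$ are themselves bounded by $2s\cdot\len(\alpha)$ by Lemma \ref{Snippet vs corner length}. I would present the telescoping estimate as a single displayed chain of inequalities.

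For the running time, the dominant cost is the $\len(\alpha)-2$ calls to \texttt{BigArc} inside the loop, plus one final call. By Corollary \ref{BigArc running time in reduced corner length}, the $j$-th call runs in $\mathit{O}(|\chi(S)|^3 \cdot (\len_\textrm{red}(\alpha^{(j)}_\textrm{trim})+1))$ time. By the reduced-corner-length bound just established, each intermediate $\len_\textrm{red}$ is $O(s\cdot\len(\alpha)) = O(|\chi(S)|\cdot\len(\alpha))$ using $s=O(|\chi(S)|)$ from Remark \ref{s is O(Euler)}. Hence each \texttt{BigArc} call costs $\mathit{O}(|\chi(S)|^4\cdot\len(\alpha))$, and summing over the $O(\len(\alpha))$ calls gives the claimed $\mathit{O}(\chi(S)^4\cdot\len(\alpha)^2)$. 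The concatenation and junction-smoothing steps cost only $\mathit{O}(|\chi(S)|\cdot\len(\alpha))$ each and are absorbed.

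\textbf{Main obstacle.} The delicate point is not the arithmetic but verifying that the loop invariant is genuinely preserved under concatenation-and-smoothing: one must confirm that gluing the \texttt{BigArc}-output to the raw tail $\alpha[k:]$ does not create a new bad snippet other than the designated penultimate one in the next iteration's input, and more subtly that the blocker count of the already-efficient portion is not disturbed in a way that breaks the telescoping bound. This requires invoking that local homotopies and the smoothing operation only affect a bounded neighbourhood of the junction (cf. Remark \ref{local trigon homotopy fixes boundary} and Remark \ref{Required time for local homotopies}) and that a snippet's membership in a blocker depends only on its strong snippet homotopy class and those of its two neighbours — so that adjoining one fresh snippet changes the blocker count by at most a bounded amount. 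I would isolate this as a short lemma or explicit remark before running the induction, since the rest of the argument is bookkeeping on top of Lemma \ref{Functional correct BigArc} and Corollary \ref{BigArc running time in reduced corner length}.
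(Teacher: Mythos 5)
Your proposal matches the paper's argument in all essentials: the same loop invariant, the same telescoping bound on $\len_\textrm{red}$ via repeated use of Lemma \ref{Functional correct BigArc}, and the same running-time sum over the $\len(\alpha)-2$ calls to \texttt{BigArc} using Corollary \ref{BigArc running time in reduced corner length} together with $s=O(|\chi(S)|)$. Two small points worth fixing before this could stand as a proof: the invariant should be phrased with the negative index $\alpha'[:-\len(\alpha)+k]$ rather than $\alpha'[:k]$, because \texttt{BigArc} can lengthen the prefix while the tail $\alpha[-\len(\alpha)+k:]$ keeps its original length, so the two agree only through the negative-index bookkeeping the pseudocode actually uses. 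Your ``main obstacle'' is not really one: \texttt{BigArc} leaves the first and last snippet of its input unchanged (Remark \ref{local trigon homotopy fixes boundary}), so there is no junction to smooth, the tail snippets are literally unchanged, and adding one snippet to a trim can only increase its blocker count, so the reduced corner length grows by at most that snippet's corner length $\leq 2s$ per step; this makes the spurious $+O(s)$ in your recurrence unnecessary and lets the recurrence close cleanly to the $6s$-per-iteration bound the paper records.
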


\begin{proof}
We prove the correctness of the algorithm by going through the pseudocode line by line. By assumption on the input of the algorithm \texttt{ReduceToTwoBadSnippets}, we are given an arc or curve $\alpha$. If $\len(\alpha) \leq 2$, then $\alpha[1:-1]$ is empty. Hence, $\alpha$ is a valid output for \texttt{ReduceToTwoBadSnippets}. 

Let us now assume that $\len(\alpha)>2$. The statement of the for-loop is executed $\len(\alpha)-3$ many times. In the first iteration, that is for $k=3$, we apply \texttt{BigArc} to the subarc $\alpha[0:3]=\alpha'[0:3]$, whose inside consists of the single snippet $\alpha[1]$. Thus, $\alpha'[0:3]$ is a valid input for \texttt{BigArc}. By Lemma \ref{Functional correct BigArc}, the  arc $\texttt{BigArc}(S,N,\alpha'[0:3])$ is homotopic, relative its endpoints, to $\alpha[0:3]$ and is in efficient position in its inside. Therefore, the arc or curve $\alpha'=\texttt{BigArc}(S,N,\alpha'[0:3]) \cdot \alpha[3:]$ constitutes an improvement compared to the input arc or curve $\alpha$ as follows: even though the first snippet of $\alpha'$ might still be bad, we can now be sure that all the bad snippets of $\alpha'[1:]$ are contained in the subarc $\alpha'[-\len(\alpha)+2:]$. For $\alpha$, this is not true. There, we only know that all the bad snippets of $\alpha[1:]$ are contained in the subarc $\alpha[-\len(\alpha)+1:]$. Thus, after one iteration of the for-loop, we have shortened the subarc that is guaranteed to hold all bad snippets of $\alpha'$ but $\alpha'[0]$ by one snippet. In particular, we see that $\alpha'[:-\len(\alpha)+4]$ now contains at most one bad snippet in its inside, which is the snippet $\alpha'[-\len(\alpha)+2]= \texttt{BigArc}(S,N,\alpha[0:3])[-1]$. This implies that $\alpha'[:-\len(\alpha)+4]$ is a valid input for \texttt{BigArc} in the next iteration of the for-loop. By induction, all further arcs $\alpha'[:-\len(\alpha)+k]$ for $k$ in $[3, \dots, \len(\alpha)-1]$ are valid inputs for \texttt{BigArc}. We remark that we exit the for-loop once $\alpha'$ has at most three bad snippets: $\alpha'[0]$, $\alpha'[-2]$, and $\alpha'[-1]$. Thus, $\alpha'$ at this stage is a valid input for \texttt{BigArc}, which concludes the proof of the correctness of the algorithm \texttt{ReduceToTwoBadSnippets}. We remark that we excluded the last iteration of \texttt{BigArc} from the for-loop as $\alpha'[:0]$ equals, by Python convention, the empty subarc.

To analyse the running time of the algorithm \texttt{ReduceToTwoBadSnippets} we note that it takes at most $\mathit{O}(|\chi(S)|)$ time to advance to the for-loop in the pseudocode.
To analyse the number of operations required to execute the for-loop and the final call of $\texttt{BigArc}$, we recall that \[\len_\textrm{red}(\alpha_\textrm{trim}) \leq \len_\textrm{corn}(\alpha_\textrm{trim}) \leq 2s \cdot \len(\alpha_\textrm{trim}).\] Corollary \ref{BigArc running time in reduced corner length} implies that the first iteration of the for-loop is executed within $\mathit{O}(|\chi(S)|^3 \cdot (2s+1))$ time, while increasing the reduced corner length of $\alpha'[:-\len(\alpha)+3]_\textrm{trim}$ by at most $4s$. Assuming the maximum corner length of $2s$ for the last snippet of $\alpha'[:-\len(\alpha)+3]$, we see that \[\len_\textrm{red}(\alpha'[:-\len(\alpha)+4]_\textrm{trim}) \leq 2s+6s.\] From this we conclude that the next iteration of the for-loop is executed within $\mathit{O}(|\chi(S)|^3 \cdot (8s+1))$ time and yields a subarc that satisfies \[\len_\textrm{red}(\alpha'[:-\len(\alpha)+5]_\textrm{trim}) \leq 2s + 2 \cdot(6s).\] By induction, the last iteration of the for-loop terminates within \[\mathit{O}(|\chi(S)|^3 \cdot (2s+(\len(\alpha)-3-1)\cdot 6s))\] time and yields an arc $\alpha'[:-1]= \alpha'[:-\len(\alpha)+\len(\alpha)-1]$ that satisfies \[\len_\textrm{red}(\alpha'[:-1]_\textrm{trim}) \leq 2s + (\len(\alpha)-3)\cdot 6s.\] One final application of \texttt{BigArc} then returns an arc or curve $\alpha''$ of the desired form within $\mathit{O}(|\chi(S)| \cdot (2s+(\len(\alpha)-3)\cdot 6s))$ time, and we see that 
\[\len_\textrm{red}(\alpha''[1:-1]) \leq 2s + (\len(\alpha)-2)\cdot 6s.\]

Summarizing, this gives us a total of $\len(\alpha)-2$ many applications of \texttt{BigArc}, each being computed in $\mathit{O}(|\chi(S)|^3 \cdot (2s+(\len(\alpha)-3)\cdot 6s))$ time. Thus, the algorithm \texttt{ReduceToTwoBadSnippets} halts within
\begin{align*}
\mathit{O}(|\chi(S)|^3 \cdot (2s+(\len(\alpha)-3)\cdot 6s)) \cdot (\len(\alpha) -2) 
\end{align*}
time, which is bounded by $\mathit{O}(\chi(S)^4 \cdot \len(\alpha)^2)$ as $\len(\alpha)\geq 1$. 

Set $\alpha''=\texttt{ReduceToTwoBadSnippets}(S,N,\alpha)$. If $\alpha$ is an arc, then \[\alpha''_\textrm{trim}=\alpha''[1:-1],\] so
$\len_\textrm{red}(\alpha''_\textrm{trim}) \leq 2s + (\len(\alpha)-2)\cdot 6s$.
If $\alpha$ is a curve, then $\alpha''_\textrm{trim}=\alpha''$, so 
\begin{align*}
\len_\textrm{red}(\alpha''_\textrm{trim}) &\leq  \len_\textrm{red}(\alpha''[1:-1])+4s \\ & \leq 2s + (\len(\alpha)-2)\cdot 6s +4s \\ & \leq (\len(\alpha)-1)\cdot 6s.
\end{align*}
\end{proof}

\begin{cor}\label{Efficient position for arcs}
Suppose that $S = S_{g,b}$ is a surface satisfying $\xi(S)=3g-3+b \geq 1$. Suppose that $\tau \subset S$ is a large train track and $N=N(\tau)$ is a tie neighbourhood of $\tau$ in $S$. Suppose further that $\alpha \subset S$ is a properly immersed arc. Then $\texttt{ReduceToTwoBadSnippets}(S,N,\alpha)$ is in efficient position with respect to $N$ or consists of a single snippet. Moreover, $\texttt{ReduceToTwoBadSnippets}(S,N,\alpha)$ can be computed in polynomial time.
\end{cor}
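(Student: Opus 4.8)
The proof of Corollary \ref{Efficient position for arcs} will be a short deduction from the work already in place, combining the correctness statement of \texttt{ReduceToTwoBadSnippets} (Lemma \ref{Lemma ReduceToTwoBadSnippets}) with the structural lemmas on short arcs from Chapter 2. The plan is to unpack what ``$\alpha''[1:-1]$ is in efficient position'' means for a \emph{proper} arc, using the fact that the first and last snippets of a proper arc of length at least three are automatically in efficient position.

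First I would set $\alpha'' = \texttt{ReduceToTwoBadSnippets}(S,N,\alpha)$ and split into cases according to $\len(\alpha'')$. By Lemma \ref{No arcs of length 2} we know $\len(\alpha'') \neq 2$, so either $\len(\alpha'') = 1$, in which case we are done (this is the ``single snippet'' alternative in the statement), or $\len(\alpha'') \geq 3$. In the latter case Lemma \ref{Lemma ReduceToTwoBadSnippets} guarantees that $\alpha''[1:-1]$ is in efficient position, i.e. every snippet of $\alpha''$ strictly between the first and last is good. It remains to argue that the first snippet $\alpha''[0]$ and the last snippet $\alpha''[-1]$ are also in efficient position. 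Since homotopy preserves properness (the algorithm homotopes relative endpoints), $\alpha''$ is again a properly immersed arc, so $\alpha''(0), \alpha''(1) \in \partial S$; Lemma \ref{No arcs of length 2} then tells us precisely that when $\len(\alpha'') \geq 3$ the snippets containing $\alpha''(0)$ and $\alpha''(1)$ — namely $\alpha''[0]$ and $\alpha''[-1]$ — are in efficient position. Hence all snippets of $\alpha''$ are in efficient position, so $\alpha''$ is in efficient position with respect to $N$ by the definition of efficient position for arcs.

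For the complexity claim, I would simply cite the time bound $\mathit{O}(\chi(S)^4 \cdot \len(\alpha)^2)$ from Lemma \ref{Lemma ReduceToTwoBadSnippets}, which is polynomial in the size of the input (the input includes $\alpha$ via its snippet decomposition, of length $\len(\alpha)$, and $S$ and $N$ whose description has size $\mathit{O}(|\chi(S)|)$ by Remark \ref{Standard conventions algor} and Remark \ref{s is O(Euler)}). No further argument is needed here.

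I do not expect any real obstacle: the content is entirely contained in Lemma \ref{Lemma ReduceToTwoBadSnippets} and Lemma \ref{No arcs of length 2}. The only point requiring a moment's care is making sure that the ``single snippet'' escape hatch is correctly matched to the $\len(\alpha'') = 1$ case and that the excluded value $\len(\alpha'') = 2$ cannot occur for a proper arc — both of which are immediate from Lemma \ref{No arcs of length 2}. One should also note in passing that $\texttt{ReduceToTwoBadSnippets}$ applied to an arc produces an arc (not a curve), so that $\alpha''_\textrm{trim} = \alpha''[1:-1]$ and the phrase ``$\alpha''[1:-1]$ is in efficient position'' is exactly the hypothesis we need.
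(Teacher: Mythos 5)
Your proof is correct and follows the same route as the paper: both invoke Lemma \ref{No arcs of length 2} to rule out length two and to get that the first and last snippets of a proper arc of length at least three are automatically efficient, and both then conclude via Lemma \ref{Lemma ReduceToTwoBadSnippets}. Your version simply spells out the case split on $\len(\alpha'')$ and notes $\alpha''_\textrm{trim}=\alpha''[1:-1]$, both of which the paper leaves implicit.
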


\begin{proof}
Lemma \ref{No arcs of length 2} implies that there are no properly immersed arcs $\alpha \subset S$ whose snippet decomposition consists of exactly two snippets. As the first and last snippet of every arc $\alpha$ satisfying $\len(\alpha) \geq 3$ are in efficient position, the claim follows from Lemma \ref{Lemma ReduceToTwoBadSnippets}.
\end{proof}

\section{Reducing the number of bad snippets from two to one}

Suppose that $S$ is a surface of positive complexity, that $\tau \subset S$ is a large train track, and that $N=N(\tau)$ is a tie neighbourhood of $\tau$ in $S$. Suppose further that $\alpha \subset S$ is a properly immersed curve. In the last section we saw that $\texttt{ReduceToTwoBadSnippets}(S,N,\alpha)$ might still contain up to two bad snippets. In this section we explain how to obtain a curve that contains at most one bad snippet.

\subsection{The idea behind \texttt{ReduceToOneBadSnippet}}

Suppose that $\alpha$ is a properly immersed curve in $S$. If the curve \[\alpha'=\texttt{ReduceToTwoBadSnippets}(S,N,\alpha)\] contains two bad snippets, they will be the very first and very last snippet of $\alpha'$. Thus, if we cut $\alpha'$ open between its first and last snippet, we obtain an arc $\alpha''$ that is in efficient position in its inside. We then glue a copy of $\alpha[0]$ to the end of $\alpha''$. The resulting arc $\alpha'''=\alpha''\cdot \alpha[0]$ contains a unique bad snippet in its inside. As this is the penultimate snippet, the algorithm \texttt{BigArc} yields a homotopic arc which is in efficient position in its inside. By Remark \ref{local trigon homotopy fixes boundary} we know that the first two-thirds of $\alpha'''[0]$ and the last two-thirds of $\alpha'''[-1]$ remain unchanged when applying the algorithm \texttt{BigArc} to $\alpha'''$. Thus, we can remove the first and last half of the first and last snippet of $\texttt{BigArc}(S,N,\alpha''')$ respectively and glue the remaining arc along its boundary points to obtain a smooth curve that is homotopic to $\alpha$. This curve now contains at most one bad snippet. To formalize this process, we introduce some further notation.

\subsection{Subsets of snippets and gluings of arcs}
Suppose that $\alpha \subset S$ is a snippet-decomposed arc. Recall that for $0 \leq k < \len(\alpha)$ and $0 \leq \epsilon \leq \delta \leq 1$, we set \[\alpha[k]=\alpha[k:k+\epsilon] \cdot \alpha[k+\epsilon:k+\delta] \cdot \alpha[k+\delta:k+1].\]
Suppose that $\alpha[\epsilon:\delta]$ and $\alpha[|\alpha|-1+\epsilon:|\alpha|-1+\delta]$ are identical as parametrized arcs for some $0 \leq \epsilon < \delta \leq 1$. Let $\epsilon < \zeta < \delta$. By $\alpha[\zeta:|\alpha|-1+\zeta]/_{\sim}$ we denote the snippet-decomposed curve that we obtain by identifying the boundary points of $\alpha[\zeta:|\alpha|-1+\zeta]$ (see Figure \ref{Gluing of arc}). After a suitable reparametrisation, this is a smooth, properly immersed, snippet-decomposed curve in $S$.

\begin{figure}[htbp] 
  \begin{minipage}[b]{0.99\linewidth}
    \centering
\begin{tikzpicture}[scale=0.45]
\draw [very thick] (-4,4) rectangle (4,-2);
\draw [very thick, densely dashed, blue] plot[smooth, tension=.4] coordinates {(-1.9156,-1.9719) (-2.2807,2.5309) (-5.3795,2.9107) (-4.5,6) (4.5,5.5) (4.7853,-0.6321) (-0.7028,-0.8349) (-1.1687,4)};
\draw [very thick, densely dashed, cyan] plot[smooth cycle, tension=.4] coordinates {(-1.8291,3.0278)  (-4.7241,3.3863) (-3.9573,5.2666) (4,5) (4.5872,-1.1368) (-1.1321,-1.2028)};
\draw [very thick, red](-2.4,2) -- (-0.8,2);
\draw [very thick, red](-2.4,0) -- (-0.8,0);
\end{tikzpicture}
    \caption{Gluing of an arc whose first and last snippet have identical middle thirds. For clarity, small push-offs have been performed in the figure.} 
    \label{Gluing of arc}
  \end{minipage}
\end{figure}
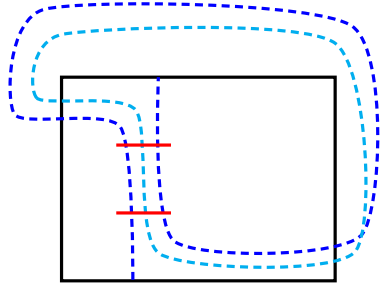

\subsection{The algorithm \texttt{ReduceToOneBadSnippet}}
As described earlier, the algorithm \texttt{ReduceToOneBadSnippet} reduces the number of bad snippets of a curve from two to at most one. For a formal statement of \texttt{ReduceToOneBadSnippet} we refer the reader to Algorithm \ref{ReduceToOneBadSnippet}.

\begin{algorithm} 
    \SetKwInOut{Input}{Input}
    \SetKwInOut{Output}{Output}
	\DontPrintSemicolon
    \Input{A surface $S$ of positive complexity, a tie neighbourhood $N \subset S$ of a large train track in $S$, and a curve $\alpha\subset S$ which contains at most two bad snippets $\alpha[0]$ and $\alpha[-1]$.}
    \Output{A curve $\alpha''$ that is homotopic to $\alpha$ such that $\alpha''$ contains at most one bad snippet.}
    \BlankLine
    \If{$\len(\alpha)<2$}{\Return $\alpha$}
    $\alpha'= \texttt{BigArc}(S,N,\alpha \cdot \alpha[0])$\; 
    \Return $\alpha'[1/2:\len(\alpha')-1/2]/_{\sim}$
    \caption{\texttt{ReduceToOneBadSnippet} - Reducing the number of bad snippets of a given curve to at most one.}
    \label{ReduceToOneBadSnippet}
\end{algorithm}

\begin{lem}\label{Lemma ReduceToOneBadSnippet}
The algorithm \texttt{ReduceToOneBadSnippet} is correct. On an input $(S,N,\alpha)$, the algorithm terminates in $\mathit{O}(\chi(S)^2(\len(\alpha)+s))$ time. For \[\alpha''=\texttt{ReduceToOneBadSnippet}(S,N,\alpha),\] we have that $\len_\textrm{red}(\alpha'') \leq \len_\textrm{red}(\alpha)+7s$.
\end{lem}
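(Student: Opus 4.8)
The plan is to follow the three explicit steps of \texttt{ReduceToOneBadSnippet} and to combine the properties of \texttt{BigArc} already established in Lemma~\ref{Functional correct BigArc} with the locality of local homotopies (Remark~\ref{local trigon homotopy fixes boundary} together with its bigon analogues inside \texttt{BigArc}) and the corner-length bound for a single snippet (Remark~\ref{Bounds on corner length}). Throughout, $\alpha'$ denotes $\texttt{BigArc}(S,N,\alpha\cdot\alpha[0])$ and $\alpha''$ the returned curve $\alpha'[1/2:\len(\alpha')-1/2]/_{\sim}$.

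For correctness I would first dispose of the case $\len(\alpha)<2$, where the returned curve is a single snippet and hence trivially has at most one bad snippet. For $\len(\alpha)\ge 2$ the key points are: (i) $\alpha\cdot\alpha[0]$ has length $\len(\alpha)+1\ge 3$ and its inside consists of the snippets $\alpha[1],\dots,\alpha[\len(\alpha)-1]$, among which the only one that can be bad is the original $\alpha[-1]$, which is exactly the penultimate snippet of $\alpha\cdot\alpha[0]$; hence $\alpha\cdot\alpha[0]$ is a legitimate input for \texttt{BigArc} and, by Lemma~\ref{Functional correct BigArc}, $\alpha'$ is homotopic to $\alpha\cdot\alpha[0]$ relative to its endpoints with $\alpha'[1:-1]$ in efficient position; (ii) the first and last snippets of $\alpha\cdot\alpha[0]$ are two copies of $\alpha[0]$, and --- extending Remark~\ref{local trigon homotopy fixes boundary} through the single bigon homotopy and then through every \texttt{TrigArc} homotopy --- no homotopy performed inside \texttt{BigArc} ever touches the middle third of the arc's first or last snippet, so $\alpha'[0]$ and $\alpha'[\len(\alpha')-1]$ still coincide with $\alpha[0]$ on the parameter interval $[1/3,2/3]$; therefore $\alpha''$ is a well-defined smooth, properly immersed curve whose snippets are those of $\alpha'[1:-1]$ together with a single ``merged'' snippet lying in one region of $\mathcal{R}$, so that $\alpha''$ has at most one bad snippet; (iii) the homotopy from $\alpha\cdot\alpha[0]$ to $\alpha'$ fixes neighbourhoods of the two points identified under $/_{\sim}$, hence descends to a homotopy from $\alpha''$ to $(\alpha\cdot\alpha[0])[1/2:\len(\alpha)+1/2]/_{\sim}$, which is simply $\alpha$ re-cut at an interior point of $\alpha[0]$. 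Combining (i)--(iii) gives correctness. For the running time, constructing $\alpha\cdot\alpha[0]$ and forming the final gluing each cost $\mathit{O}(\chi(S)^2(\len(\alpha)+s))$ (by Lemma~\ref{Functional correct BigArc} the length of $\alpha'$ is of that order), and the call to \texttt{BigArc} on $\alpha\cdot\alpha[0]$, whose inside has length $\len(\alpha)-1$, also runs in $\mathit{O}(\chi(S)^2(\len(\alpha)+s))$ time by that lemma; summing yields the stated bound.

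For the reduced corner length I would chain three estimates. First, $\len_\textrm{red}(\alpha'')\le \len_\textrm{red}(\alpha'_\textrm{trim})+2s$: the merged snippet has corner length at most $2s$ by Remark~\ref{Bounds on corner length}, and every blocker of $\alpha'[1:-1]$ survives as a blocker of $\alpha''$, so $\len_\textrm{block}$ does not decrease. Second, $\len_\textrm{red}(\alpha'_\textrm{trim})\le \len_\textrm{red}((\alpha\cdot\alpha[0])_\textrm{trim})+4s$ by Lemma~\ref{Functional correct BigArc}. Third, $(\alpha\cdot\alpha[0])_\textrm{trim}$ is obtained from the cyclic snippet sequence of $\alpha$ by deleting the single snippet $\alpha[0]$, which does not raise the corner length and, by Lemma~\ref{Overlap of blockers} (a given snippet lies in at most two blockers), destroys at most two blockers, so $\len_\textrm{red}((\alpha\cdot\alpha[0])_\textrm{trim})\le \len_\textrm{red}(\alpha)+4$. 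Combining the three inequalities gives $\len_\textrm{red}(\alpha'')\le \len_\textrm{red}(\alpha)+6s+4\le \len_\textrm{red}(\alpha)+7s$, using $s=s_N\ge 5$.

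I expect the main obstacle to be item (ii)/(iii) above: verifying rigorously that the ``cut open, append a copy of $\alpha[0]$, run \texttt{BigArc}, delete the two half-snippets, re-glue'' operation produces a genuinely \emph{smooth} curve in the correct homotopy class. This rests entirely on the global assertion that no homotopy performed anywhere inside \texttt{BigArc} ever disturbs the middle third of the arc's first or last snippet --- which must be tracked through the single bigon homotopy and then through all of \texttt{TrigArc}'s trigon homotopies --- and on checking that the middle-thirds agreement makes the quotient $/_{\sim}$ of the excerpt's ``gluings of arcs'' construction applicable. By comparison, the length and reduced-corner-length bookkeeping is routine, the only subtlety being the use of Lemma~\ref{Overlap of blockers} to bound the blocker count under deletion of a snippet and introduction of the merged snippet.
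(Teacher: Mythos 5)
Your proposal is correct and follows the same route as the paper's own proof: form $\alpha\cdot\alpha[0]$, observe that it is a legitimate \texttt{BigArc} input with unique bad snippet in penultimate position, invoke the fact that local homotopies fix the first and last two-thirds of the arc to justify the re-gluing, and chain the three reduced-corner-length estimates (\texttt{BigArc} adds at most $4s$, deleting one snippet adds at most $4$, and the merged snippet contributes at most $2s$) to get $\len_\textrm{red}(\alpha'')\leq\len_\textrm{red}(\alpha)+7s$. The only cosmetic difference is that you justify the ``at most two blockers per snippet'' step via Lemma~\ref{Overlap of blockers}, where the paper states the corresponding inequality $\len_\textrm{red}(\alpha[1:])\leq\len_\textrm{red}(\alpha)+4$ without comment.
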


\begin{proof}
By our assumptions on the input $\alpha$, the arc $\alpha'=\alpha \cdot \alpha[0]$ is a valid input for the algorithm \texttt{BigArc} with a single bad snippet, $\alpha'[\len(\alpha)-1]$, as its penultimate snippet. We remark that $\alpha'[0]=\alpha'[\len(\alpha')-1]$. Remark \ref{local trigon homotopy fixes boundary} implies that the subarcs $\alpha'[:2/3]$ and $\alpha'[-2/3:]$ remain fixed under the algorithm \texttt{BigArc}. Set $\alpha'=\texttt{BigArc}(S,N,\alpha \cdot \alpha[0])$. Lemma \ref{Functional correct BigArc} implies that $\alpha'[0]$ and $\alpha'[-1]$ are the only snippets of $\alpha'$ that can be bad. We further know that their middle thirds coincide. Therefore, $\alpha'[1/2:\len(\alpha')-1/2]/_{\sim}$ is a well-defined, properly immersed, snippet-decomposed curve that contains at most one bad snippet. This proves the correctness of the algorithm \texttt{ReduceToOneBadSnippet}.

The bound on its running time follows from Lemma \ref{Functional correct BigArc}: since $\len(\alpha'_\textrm{trim})=\len(\alpha)-1$, we know that the algorithm \texttt{BigArc} on input $(S,N,\alpha \cdot \alpha[0])$ terminates within $\mathit{O}(\chi(S)^2 \cdot (\len(\alpha)+s-2))$ time. We further know that \[\len_\textrm{red}(\texttt{BigArc}(S,N,\alpha \cdot \alpha[0])[1:-1]) \leq \len_\textrm{red}(\alpha[1:])+4s.\] As $\len_\textrm{red}(\alpha[1:])\leq \len_\textrm{red}(\alpha)+4  \leq \len_\textrm{red}(\alpha)+s$, this implies that \begin{align*}
& \len_\textrm{red}(\texttt{ReduceToOneBadSnippet}(\alpha)) \\ \leq & \len_\textrm{red}(\texttt{BigArc}(S,N,\alpha \cdot \alpha[0])[1:-1]) + \len_\textrm{corn}(\texttt{BigArc}(S,N,\alpha \cdot \alpha[0])[0]) \\
\leq &\len_\textrm{red}(\alpha[1:])+4s +2s \\
\leq & \len_\textrm{red}(\alpha)+s+4s+2s \\
\leq & \len_\textrm{red}(\alpha)+7s.
\end{align*}
\end{proof}

\begin{cor}\label{ReduceToOneBadSnippet Reduced length}
\sloppy
For any input $(S,N,\alpha)$ of the required form, the algorithm \texttt{ReduceToOneBadSnippet} terminates in $\mathit{O}(|\chi(S)|^3 \cdot (\len_\textrm{red}(\alpha)+1))$ time.
\end{cor}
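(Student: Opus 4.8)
The plan is to deduce the corollary by a purely arithmetic rearrangement of the running-time estimate already proved in Lemma \ref{Lemma ReduceToOneBadSnippet}, which bounds the running time of \texttt{ReduceToOneBadSnippet} on input $(S,N,\alpha)$ by $\mathit{O}(\chi(S)^2(\len(\alpha)+s))$. To turn this into the desired bound $\mathit{O}(|\chi(S)|^3(\len_\textrm{red}(\alpha)+1))$ I only need two conversions: replacing $\len(\alpha)$ by $\len_\textrm{red}(\alpha)$ (which requires controlling how far apart these can be, hence uses the hypothesis that $\alpha$ has few bad snippets), and replacing the factor $s$ by $\mathit{O}(|\chi(S)|)$.

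First I would invoke the standing hypothesis on the input of \texttt{ReduceToOneBadSnippet}: the curve $\alpha$ contains at most two bad snippets. Since $\alpha=\alpha_\textrm{trim}$ for curves, Lemma \ref{reduced corner length vs snippet length} (used for curves in the liberal sense already adopted in the proofs of Corollaries \ref{TrigArc in reduced corner length}, \ref{Trig Curve Corollary}, and \ref{BigArc running time in reduced corner length}, where $m_\alpha$ denotes the number of bad snippets) gives $\len(\alpha) \leq \len_\textrm{red}(\alpha)+m_\alpha \leq \len_\textrm{red}(\alpha)+2$. Hence $\len(\alpha)+s \leq \len_\textrm{red}(\alpha)+s+2$.

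Second, by Remark \ref{s is O(Euler)} we have $s=s_N=\mathit{O}(|\chi(S)|)$, and since $S$ has positive complexity, $|\chi(S)|\geq 1$, so $\chi(S)^2\leq |\chi(S)|^3$. Substituting into the bound from Lemma \ref{Lemma ReduceToOneBadSnippet},
\[
\chi(S)^2\bigl(\len(\alpha)+s\bigr)\;=\;\mathit{O}\bigl(\chi(S)^2\len_\textrm{red}(\alpha)+\chi(S)^2\,|\chi(S)|\bigr)\;=\;\mathit{O}\bigl(|\chi(S)|^3(\len_\textrm{red}(\alpha)+1)\bigr),
\]
where the additive constant $2$ from the bad snippets is absorbed into the $\len_\textrm{red}(\alpha)+1$ term and the $\mathit{O}(|\chi(S)|)$ coming from $s$ contributes the $|\chi(S)|^3$ factor. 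This is exactly the asserted running time.

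There is no genuine obstacle in this argument; it is the same reduction pattern used for the earlier ``reduced corner length'' corollaries. The only points deserving a sentence of justification are (i) that Lemma \ref{reduced corner length vs snippet length}, though stated for arcs, applies to the curve $\alpha$ here because blockers and the decomposition into blocker/non-blocker subarcs make sense circularly, and (ii) that the ``at most two bad snippets'' hypothesis is what makes step one legitimate: without a bound on $m_\alpha$ one could not control $\len(\alpha)$ by $\len_\textrm{red}(\alpha)$, and the conversion would fail.
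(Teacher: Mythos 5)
Your proof is correct and takes essentially the same route as the paper: both start from the $\mathit{O}(\chi(S)^2(\len(\alpha)+s))$ bound of Lemma \ref{Lemma ReduceToOneBadSnippet}, use Lemma \ref{reduced corner length vs snippet length} with the at-most-two-bad-snippets hypothesis to get $\len(\alpha)\leq\len_\textrm{red}(\alpha)+2$, and absorb $s=\mathit{O}(|\chi(S)|)$ into the leading power of $|\chi(S)|$. Your additional remark that the lemma is stated for arcs but applied to a curve here mirrors the paper's own (silent) usage in Corollaries \ref{TrigArc in reduced corner length}, \ref{Trig Curve Corollary}, and \ref{BigArc running time in reduced corner length}, so it is a fair observation but not a deviation in approach.
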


\begin{proof}
As $\alpha$ contains at most two bad snippets, Lemma \ref{reduced corner length vs snippet length} implies that
\begin{align*}
\len(\alpha)+s \leq \len_\textrm{red}(\alpha)+s+2 \leq \len_\textrm{red}(\alpha) + 2s,
\end{align*}
which is sufficient to prove the claim.
\end{proof}

\newpage

\chapter{Efficient position for curves with a single bad snippet} \label{Chapter last snippet}

In this chapter, we focus on the last major piece still missing for the proof for Theorem \ref{Polynomial time algorithm}.
Throughout this chapter, we assume that $S = S_{g,b}$ is a surface satisfying $\xi(S)=3g-3+b \geq 1$. We further assume that $\tau \subset S$ is a large train track and that $N=N(\tau) \subset S$ is a tie neighbourhood of $\tau$ in $S$.
Suppose that $\alpha \subset S$ is a properly immersed, almost efficient curve. If $\alpha$ contains a trigon snippet, we already know that we can achieve efficient position or shorten the curve to consist of a single snippet in $\mathit{O}(\chi(S)^2 \cdot \len(\alpha))$ time. However, if the bad snippet of $\alpha$ is not a trigon snippet, we have to undertake a delicate case analysis taking into account the various bigon snippet types.

In the following, we define a collection of algorithms whose input is an almost efficient curve $\alpha$ of a specified bigon type. Their output will be a curve $\alpha'$ homotopic to $\alpha$ such that one of the following three statements holds.
\begin{itemize}
\item $\alpha'$ is in efficient position or has snippet length one.
\item $\alpha'$ contains a unique bad snippet of trigon type.
\item $\alpha'$ contains a unique bad snippet of bigon type and satisfies \[\len_\textrm{red}(\alpha')<\len_\textrm{red}(\alpha).\]
\end{itemize}
Thus, we can obtain efficient position for an almost efficient curve $\alpha$ or shorten it to consist of a single snippet only by applying at most $\len_\textrm{red}(\alpha)+1$ many of these algorithms.

Lemma \ref{Easy Bigon bounds} implies that for bigon snippets of type $\mathbb{B}(t,t)$, $\mathbb{S}(h,h,0)$, $\mathbb{S}(t,t,0)$, $\mathbb{S}(v,v,0)$, or $\mathbb{R}(v,v)$, these algorithms only have to execute one bigon homotopy. However, the remaining four types of bigon snippets require some subtle case analyses. Applying a single bigon homotopy of one of these types might yield 
\begin{itemize}
\item a curve containing two trigon snippets (see page \pageref{Lemma B(S,t,t,2)}, Lemma \ref{Lemma B(S,t,t,2)}) or
\item a curve with a unique bigon snippet that has a greater reduced corner length (see page \pageref{Lemma B(h,h,0)}, Lemma \ref{Lemma B(h,h,0)}).
\end{itemize}
Thus, in the case of these four bigon snippet types, the algorithm will have to perform more than a single bigon homotopy to yield the desired results.
We begin our discussion with the more accessible cases of bigon snippets of type $\mathbb{S}(t,v,1)$ and $\mathbb{S}(t,t,2)$. Afterwards, we address the delicate cases of bigon snippets of type $\mathbb{R}(h,h)$ and $\mathbb{B}(h,h)$.
We conclude this chapter by presenting the algorithm \texttt{SingleBadSnippet} which takes as input an almost efficient curve and outputs a homotopic curve that consists of a single snippet only or is in efficient position.

\section{\texttt{WeightOneBigon}-algorithm for almost efficient curves}

We begin by studying almost efficient curves that contain a snippet of type $\mathbb{S}(t,v,1)$. The corresponding algorithm \texttt{WeightOneBigon} 
\begin{itemize}
\item takes as input a surface $S$ of positive complexity, a tie neighbourhood $N \subset S$ of a large train track in $S$, as well as an almost efficient curve $\alpha$ with a bad snippet of type $\mathbb{S}(t,v,1)$, and
\item outputs a curve homotopic to $\alpha$ which either contains a unique bad snippet of trigon type or contains a unique bad snippet of bigon type and has a shorter reduced corner length than $\alpha$.
\end{itemize}
For a formal statement of the algorithm \texttt{WeightOneBigon} we refer the reader to Algorithm \ref{WeightOneBigon_Curve}.

\begin{algorithm} 
    \SetKwInOut{Input}{Input}
    \SetKwInOut{Output}{Output}
	\DontPrintSemicolon
    \Input{A surface $S$ of positive complexity, a tie neighbourhood $N \subset S$ of a large train track in $S$, and an almost efficient curve $\alpha \subset S$ that contains a bad snippet of type $\mathbb{S}(t,v,1)$.}
    \Output{A curve $\alpha'$ homotopic to $\alpha$ such that $\alpha'$ contains at most one bad snippet. If this is a trigon, then $\len_\textrm{red}(\alpha')\leq \len_\textrm{red}(\alpha)+ 2s-5$. If this is a bigon, then $\len_\textrm{red}(\alpha')\leq \len_\textrm{red}(\alpha)-4$.}
    \BlankLine
    Set $k$ such that $\alpha[k]$ is bad\;
    $\alpha'=\Hom(\alpha,k)$\;
    \If {$\alpha'[k-1]$ is a trigon of type $\mathbb{R}(h,v)$}
    {
    \Return $\Hom(\alpha',k-1)$
    }
    \If {$\alpha'[k]$ is a trigon of type $\mathbb{R}(h,v)$}
    {
    \Return $\Hom(\alpha',k)$
    }
    \Return $\alpha'$
    \caption{\texttt{WeightOneBigon} - Homotoping an almost efficient curve with a bad snippet of type $\mathbb{S}(t,v,1)$ into an input for $\texttt{TrigCurve}$ or into an almost efficient curve of shorter reduced corner length.}
    \label{WeightOneBigon_Curve}
\end{algorithm}

\begin{lem} \label{WeightOneBigon funtionally correct}
The algorithm \texttt{WeightOneBigon} is correct. On an input $(S,N,\alpha)$, the algorithm halts in $\mathit{O}(|\chi(S)|(\len_\textrm{red}(\alpha)+1))$ time.
\end{lem}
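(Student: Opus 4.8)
The plan is to verify correctness by tracing the pseudocode and invoking Lemma~\ref{Lemma B(S,t,v,1)}, and then to bound the running time using Remark~\ref{Required time for bigon S(t,v,1) homotopy} together with Remark~\ref{Required time for trigon homotopies}. First I would establish correctness. By hypothesis, the input curve $\alpha$ is almost efficient with its unique bad snippet $\alpha[k]$ of type $\mathbb{S}(t,v,1)$, so $\alpha[k-1]$ and $\alpha[k+1]$ are both in efficient position. Applying Lemma~\ref{Lemma B(S,t,v,1)} to $\alpha' = \Hom(\alpha,k)$, the subarc $\Hom(\alpha[k-1:k+2],1)$ consists either of one trigon snippet of type $\mathbb{B}(h,t)$ together with one snippet in efficient position (in which case $\len_\textrm{red}(\alpha')\le \len_\textrm{red}(\alpha)+2s-5$), or of one trigon of type $\mathbb{B}(h,t)$ and one of type $\mathbb{R}(h,v)$, both turning the same way (in which case $\len_\textrm{red}(\alpha')\le \len_\textrm{red}(\alpha)-4$). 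In the first case, after identifying which of $\alpha'[k-1]$ and $\alpha'[k]$ is the trigon (it is of type $\mathbb{B}(h,t)$, hence not of type $\mathbb{R}(h,v)$), both \texttt{If}-conditions fail and the algorithm returns $\alpha'$, which has a unique bad snippet of trigon type and the stated reduced-corner-length bound. In the second case, exactly one of $\alpha'[k-1]$, $\alpha'[k]$ is the $\mathbb{R}(h,v)$ trigon; the algorithm applies one further homotopy $\Hom(\alpha',k-1)$ or $\Hom(\alpha',k)$ at that snippet.

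The remaining point for correctness is to analyse this extra homotopy applied to the $\mathbb{R}(h,v)$ trigon. Here I would argue that after this homotopy the curve has at most one bad snippet: the $\mathbb{B}(h,t)$ trigon and the $\mathbb{R}(h,v)$ trigon are adjacent and turn the same way, and by the trigon-homotopy analysis (Lemma~\ref{Homotopy of type R(h,v)}, or more generally Lemma~\ref{Omnibus Trigons}) a homotopy applied at the $\mathbb{R}(h,v)$ snippet affects only its three-snippet neighbourhood, produces a $\Hom(\cdot,1)[1:-1]$ that is in efficient position, and can alter the weak snippet homotopy type of at most one of its two neighbours. Since the neighbour on the ``outside'' is already in efficient position and the neighbour on the ``inside'' is the $\mathbb{B}(h,t)$ trigon, the result has at most one bad snippet, which is a trigon. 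I should double-check, using the ``turning the same direction'' statement, that the two trigons do not both survive as bad snippets and that no new bigon is created; this is the one spot that needs care, and it is essentially the content of the graph in Figure~\ref{Trigon homotopy graph} restricted to the edge into and out of $\mathbb{R}(h,v)$. The reduced-corner-length bound $\len_\textrm{red}(\alpha')\le\len_\textrm{red}(\alpha)-4$ is inherited from Lemma~\ref{Lemma B(S,t,v,1)} since an $\mathbb{R}(h,v)$ homotopy does not increase the reduced corner length (Lemma~\ref{Omnibus Trigons}, part on $\len_\textrm{red}$), so the bound survives the extra step.

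For the running time, the algorithm performs a constant number of operations: it locates the bad snippet (an $\mathit{O}(|\chi(S)|\cdot\len(\alpha))$ search in the worst naive reading, but since $\alpha$ is almost efficient we may assume the position of the bad snippet is known or found in $\mathit{O}(|\chi(S)|\len(\alpha))$ time), computes $\Hom(\alpha,k)$ in $\mathit{O}(|\chi(S)|)$ time by Remark~\ref{Required time for bigon S(t,v,1) homotopy}, checks the type of two specific snippets in $\mathit{O}(|\chi(S)|)$ time each, and applies at most one further trigon homotopy, computable in $\mathit{O}(|\chi(S)|)$ time by Remark~\ref{Required time for trigon homotopies}. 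Thus the running time is $\mathit{O}(|\chi(S)|\cdot\len(\alpha))$, and using Lemma~\ref{reduced corner length vs snippet length} together with the fact that $\alpha$ has exactly one bad snippet, so $\len(\alpha)\le \len_\textrm{red}(\alpha)+1$, this is $\mathit{O}(|\chi(S)|(\len_\textrm{red}(\alpha)+1))$ as claimed. The main obstacle I anticipate is the correctness bookkeeping in the two-trigon case: one must be careful that applying the homotopy at the $\mathbb{R}(h,v)$ snippet does not merely push the problem back and forth between the two adjacent trigons, and the ``same turning direction'' hypothesis from Lemma~\ref{Lemma B(S,t,v,1)}, combined with the observation that the outer neighbour is already efficient, is exactly what rules this out.
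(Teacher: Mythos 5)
Your overall structure is right — the one-trigon case, the running time analysis, and the bound $\len_\textrm{red}(\alpha')\le\len_\textrm{red}(\alpha)-4$ in the two-trigon case all match the paper's argument. But there is a genuine gap in your treatment of the two-trigon case, and it sits exactly where you flagged the need for care. You claim that after the extra $\mathbb{R}(h,v)$ homotopy ``no new bigon is created'' and ``the result has at most one bad snippet, which is a trigon.'' Both claims are wrong. Your argument observes that the homotopy alters the weak snippet homotopy type of at most one of the two neighbours of the $\mathbb{R}(h,v)$ snippet, but you never decide \emph{which} neighbour is altered, and you tacitly assume the $\mathbb{B}(h,t)$ trigon is left untouched. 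The proof of Lemma~\ref{Homotopy of type R(h,v)} shows the opposite: the neighbour attached to the $\mathbb{R}(h,v)$ trigon along $\partial_v N$ (the switch-rectangle side) is the one whose weak homotopy type is preserved, while the neighbour attached along $\partial_h N$ is the one that changes. In the two-trigon configuration coming out of Lemma~\ref{Lemma B(S,t,v,1)}, the $\mathbb{B}(h,t)$ trigon is precisely the $\partial_h N$-neighbour, so its weak homotopy type \emph{does} change. By Lemma~\ref{General observations trigon homotopies} (statement~3), the region it cuts off loses one outward-pointing corner, so its index rises from $1/4$ to $1/2$: the trigon becomes a bigon, of type $\mathbb{B}(t,t)$. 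This is the mechanism that makes the algorithm work — the paper's proof argues exactly this, with Figure~\ref{Weight1Picture} as illustration.

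The error happens not to invalidate the algorithm's output specification, because your $-4$ bound is stronger than the $+2s-5$ one the trigon case would require, so the spec is (accidentally) satisfied. But the mislabelling is not harmless bookkeeping: the distinction between ``output is a bigon with strictly smaller reduced corner length'' versus ``output is a trigon'' is what \texttt{SingleBadSnippet} uses to decide whether to loop again or to hand off to \texttt{TrigCurve}, and what drives its termination argument. Your reasoning would leave the reader unable to verify that distinction. To fix the proof, replace the ``no new bigon'' assertion with the direct index argument: identify the $\mathbb{B}(h,t)$ trigon as the $\partial_h N$-neighbour of the $\mathbb{R}(h,v)$ snippet, invoke Lemma~\ref{General observations trigon homotopies} statement~3 to see that one outward corner is lost, and conclude the output has a unique bad snippet of type $\mathbb{B}(t,t)$ with $\len_\textrm{red}\le\len_\textrm{red}(\alpha)-4$.
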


\begin{proof}
By assumption on the input of the algorithm \texttt{WeightOneBigon}, we are given an almost efficient curve $\alpha$ with a unique bad snippet $\alpha[k]$ of type $\mathbb{S}(t,v,1)$. As $\alpha[k]$ is a bigon of type $\mathbb{S}(t,v,1)$ we know that $\len(\alpha)>2$. As all other snippets of $\alpha$ are in efficient position, we know that $\alpha[k-1]$ and $\alpha[k+1]$ are in efficient position, too. Set $\alpha'=\Hom(\alpha,k)$. Lemma \ref{Lemma B(S,t,v,1)} implies that $\Hom(\alpha[k-1:k+2],1)$ consists of two snippets $\alpha'[k-1]$ and $\alpha'[k]$. Either both of these are trigon snippets, one of type $\mathbb{B}(h,t)$ and one of type $\mathbb{R}(h,v)$, or exactly one of them is a bad snippet, which then must be of type $\mathbb{B}(h,t)$.

Let us first assume that exactly one of the snippets $\alpha'[k-1]$ and $\alpha'[k]$ is a bad snippet. As all other snippets of $\alpha'$ are in efficient position, \texttt{WeightOneBigon} then returns a curve containing a unique trigon. Lemma \ref{Lemma B(S,t,v,1)} implies that \[\len_\textrm{red}(\alpha')\leq \len_\textrm{red}(\alpha)+ 2s-5.\] Thus, the algorithm terminates with a valid output.

Secondly, let us assume that both snippets $\alpha'[k-1]$ and $\alpha'[k]$ are bad. Lemma \ref{Lemma B(S,t,v,1)} implies that the curve $\alpha'$ satisfies $\len_\textrm{red}(\alpha')\leq \len_\textrm{red}(\alpha)-4$. Lemma \ref{Homotopy of type R(h,v)} implies that applying one local homotopy to the trigon of type $\mathbb{R}(h,v)$ does not increase the reduced corner length of the underlying curve. Suppose that $\alpha'[k-1]$ is the trigon of type $\mathbb{B}(h,t)$. Suppose further that $\alpha[k]$ turns right. Hence, $\alpha'[k-1]$ and $\alpha'[k]$ are both right-turning trigons. Applying one local homotopy of type $\mathbb{R}(h,v)$ to $\alpha$ at $\alpha'[k]$ increases the index of the region cut off by the trigon $\alpha'[k-1]$ by $1/4$ (see Figure \ref{Weight1Picture}). This follows from Lemma \ref{General observations trigon homotopies} and Lemma \ref{Homotopy of type R(h,v)}. Thus, we obtain an almost efficient curve $\alpha''$ with a bad snippet of type $\mathbb{B}(t,t)$ that satisfies \[\len_\textrm{red}(\alpha'') \leq \len_\textrm{red}(\alpha')\leq \len_\textrm{red}(\alpha)-4.\] This finishes the proof of the correctness of \texttt{WeightOneBigon}.

\begin{figure}[htbp] 
  \begin{minipage}[b]{0.99\linewidth}
    \centering
\begin{tikzpicture}[scale=0.45]
\draw [very thick] (-2,3) rectangle (4,-6);
\node at (1,4) {\tiny{$h$}};
\node at (10,1.5) {\tiny{$t$}};
\node at (3,-1.5) {\tiny{$v$}};
\draw [very thick] (4,3) rectangle (9,0);
\draw [very thick] (4,-3) rectangle (9,-6);
\draw [very thick, densely dashed, blue] plot[smooth, tension=.4] coordinates {(9,2.5)  (1,2) (1,-1.75)  (10.1891,-1.6821)(16.091,2.8492)};
\draw [very thick, densely dashed, cyan] plot[smooth, tension=.4] coordinates {(9.0208,2.2081) (4.5994,1.8084)  (4.7295,-1.6248) (10.4168,-1.2471)(15.7691,3.0611)};
\draw [thick, dotted] plot[smooth, tension=.7] coordinates {(9,0) (11,0.5) (14,2.5)};
\draw [thick, dotted](14,2.5) -- (16,1.5);
\draw [thick, dotted] plot[smooth, tension=.7] coordinates {(16,1.5) (15.591,-0.1652) (16.9193,-1.4337)};
\draw [very thick, densely dashed, green] plot[smooth, tension=.4] coordinates {(9.0045,1.8296) (5.4604,1.6664) (5.4076,0.4993) (9.0253,0.4577) (10.7119,0.8355) (13.9381,2.896)(14.724,2.6219)(15.4521,3.2669) };
\end{tikzpicture}
    \caption{A homotopy of type $\mathbb{S}(t,v,1)$ followed by a homotopy of type $\mathbb{R}(h,v)$.} 
    \label{Weight1Picture}
  \end{minipage}
\end{figure}

We now determine the running time of the algorithm \texttt{WeightOneBigon} on an input $(S,N,\alpha)$. First, recall that Lemma \ref{reduced corner length vs snippet length} implies that $\len(\alpha) \leq \len_\textrm{red}(\alpha)+1$. Therefore, it takes $\mathit{O}(|\chi(S)| (\len_\textrm{red}(\alpha)+1))$ many operations to determine the unique bad snippet, and further $\mathit{O}(|\chi(S)|)$ many operations to execute the remaining steps of the algorithm and replace the respective subarcs. Thus, the algorithm \texttt{WeightOneBigon} halts in $\mathit{O}(|\chi(S)|(\len_\textrm{red}(\alpha)+1))$ time.
\end{proof}

\section{\texttt{WeightTwoBigon}-algorithm for almost efficient curves}
Next, we consider almost efficient curves that contain a snippet of type $\mathbb{S}(t,t,2)$. The corresponding algorithm \texttt{WeightTwoBigon} 
\begin{itemize}
\item takes as input a surface $S$ of positive complexity, a tie neighbourhood $N \subset S$ of a large train track in $S$, as well as an almost efficient curve $\alpha$ with a bad snippet of type $\mathbb{S}(t,t,2)$, and
\item outputs a curve homotopic to $\alpha$ which is in efficient position, or consists of a single snippet only, or contains a unique bad snippet of trigon type.
\end{itemize}
For a formal statement of the algorithm \texttt{WeightTwoBigon} we refer the reader to Algorithm \ref{WeightTwoBigon_Curve}.

\begin{algorithm} 
    \SetKwInOut{Input}{Input}
    \SetKwInOut{Output}{Output}
	\DontPrintSemicolon
    \Input{A surface $S$ of positive complexity, a tie neighbourhood $N \subset S$ of a large train track in $S$, and an almost efficient curve $\alpha \subset S$ that contains a bad snippet of type $\mathbb{S}(t,t,2)$.}
    \Output{A curve $\alpha'''$ homotopic to $\alpha$ such that $\alpha'''$ contains at most one bad snippet. If there is a bad snippet, then this is a trigon snippet and $\len_\textrm{red}(\alpha''')\leq \len_\textrm{red}(\alpha)+ 2s$.}
    \BlankLine
    Set $k$ such that $\alpha[k]$ is bad\;
    \If{$\len(\alpha)=2$}
    {
    $\alpha'=\Hom(\alpha,k)$\;
    \Return $\Hom(\alpha',k-1)$
    }
    $\alpha'=\Hom(\alpha,k)$\;
    $\alpha''=\texttt{TrigArc}(S,N,\alpha'[k:]\cdot\alpha'[:k])$\;
    \If{$\alpha''[-1]$ is a bigon}
    {\Return $\Hom(\alpha''/_{\sim},-1)$}
    \Return $\alpha''/_{\sim}$
    \caption{\texttt{WeightTwoBigon} - Homotoping an almost efficient curve with a bigon of type $\mathbb{S}(t,t,2)$ into an input for $\texttt{TrigCurve}$.}
    \label{WeightTwoBigon_Curve}
\end{algorithm}

\begin{lem} \label{WeightTwoBigon funtionally correct}
The algorithm \texttt{WeightTwoBigon} is correct. On an input $(S,N,\alpha)$, the algorithm halts in $\mathit{O}(\chi(S)^2 \cdot (\len_\textrm{red}(\alpha)+1))$ time.
\end{lem}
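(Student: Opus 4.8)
\textbf{Proof plan for Lemma \ref{WeightTwoBigon funtionally correct}.}

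The plan is to verify correctness by tracing through the pseudocode branch by branch, invoking the structural analysis of $\mathbb{S}(t,t,2)$-homotopies already established in Lemma \ref{Lemma B(S,t,t,2)}, and then to bound the running time using the reduced-corner-length estimates from Lemma \ref{TrigArc funtionally correct}, Corollary \ref{TrigArc in reduced corner length}, and Lemma \ref{reduced corner length vs snippet length}. First I would dispose of the short case $\len(\alpha)=2$: here $\alpha[k]$ is the unique bad snippet, and by part \textit{2} of Lemma \ref{Lemma B(S,t,t,2)}, $\Hom(\alpha,k)[k-1]$ is a bigon snippet of type $\mathbb{B}(h,h)$. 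Applying Lemma \ref{Lemma B(h,h,0)} (with $\len=2$) to this snippet yields either an inessential curve of length one, a curve in efficient position, or a peripheral curve of length one; in each case the output of the second $\Hom$ call contains at most one bad snippet, and since any remaining bad snippet must be a trigon after the $\mathbb{B}(h,h)$-step is ruled out, the output is valid. (One should double-check that part \textit{1} of Lemma \ref{Lemma B(h,h,0)}, which allows the result to be a bigon of type $\mathbb{R}(\cdot,\cdot)$, does not occur here — this uses that the relevant neighbouring snippets of the length-two curve are ties of a single rectangle, so the ``different side'' subcase producing a bigon cannot arise; if it can, a further short argument bounding $\len_\textrm{red}$ is needed.)

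For the main case $\len(\alpha)>2$, I would argue as follows. Since all snippets of $\alpha$ except $\alpha[k]$ are in efficient position, both $\alpha[k-1]$ and $\alpha[k+1]$ are in efficient position. Set $\alpha'=\Hom(\alpha,k)$. By part \textit{1} of Lemma \ref{Lemma B(S,t,t,2)}, $\alpha'[k]$ is a vertical dual (say $\alpha[k]$ turns right, so $\alpha'[k]$ is a left vertical dual), and by part \textit{3}, if $\alpha[k-1]$ is in efficient position then $\alpha'[k-1]$ is a right-turning trigon snippet of type $\mathbb{B}(h,t)$ inside a branch rectangle. Thus $\alpha'$ contains at most one bad snippet, which is a trigon, and after a cyclic rotation $\alpha'[k:]\cdot\alpha'[:k]$ becomes an \emph{arc} whose inside contains at most one bad trigon snippet: a valid input for \texttt{TrigArc}. (Here one must be careful that the rotation places the trigon $\alpha'[k-1]$ in the \emph{inside} of the rotated arc and not at an endpoint — this holds because $\alpha'[k]$, a vertical dual, is rotated to become the first snippet of the arc, so the trigon sits at the penultimate position, safely inside.) Apply $\texttt{TrigArc}$; by Lemma \ref{TrigArc funtionally correct} the result $\alpha''$ is an arc whose inside is in efficient position, so $\alpha''$ has at most two bad snippets, $\alpha''[0]$ and $\alpha''[-1]$. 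Now $\alpha''[0]$ is weakly snippet homotopic to the vertical dual $\alpha'[k]$ — by Corollary \ref{TrigArc terminates as soon as changes weak snippet type of beginning or end}, \texttt{TrigArc} halts the moment the weak type of the first snippet would change, so either it never changes (and $\alpha''[0]$ is still a left vertical dual, hence in efficient position) or it changes only at the very end together with the trigon being eliminated. Either way $\alpha''[0]$ is in efficient position, so $\alpha''/_{\sim}$ is a well-defined curve with at most one bad snippet, namely $\alpha''[-1]$. By the structure of \texttt{TrigArc} applied to an input whose inside has a single $\mathbb{B}(h,t)$ trigon adjacent to a vertical dual, Corollary \ref{Cor for bigons of weight 2} applies (after matching up the hypotheses: $\alpha'[k]$ is the left vertical dual $\alpha''[0]$, and $\alpha''[-1]$ is the trigon), telling us that $\alpha''[-1]$ is a trigon of type $\mathbb{B}(h,t)$ or a bigon of type $\mathbb{B}(h,h)$, and if it is the bigon then $\alpha''[0]$ and $\alpha''[-2]$ are left vertical duals. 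In the trigon case we return $\alpha''/_{\sim}$, which then contains a unique bad trigon snippet, as required. In the bigon case we return $\Hom(\alpha''/_{\sim},-1)$: since $\alpha''[-2]$ is a left vertical dual and (in the glued curve) the snippet following $\alpha''[-1]$ is $\alpha''[0]$, another left vertical dual, Lemma \ref{Lemma B(h,h,0)} applies with both neighbours being duals cutting off index-zero regions; an index argument (the ``same side'' subcase of that lemma, which holds because both neighbours are left duals) shows the result is in efficient position, or of length one, or — in the remaining subcase — a curve with a unique bigon of type $\mathbb{R}(\cdot,\cdot)$ whose reduced corner length has not increased. I would need to confirm that the ``different sides'' subcase of Lemma \ref{Lemma B(h,h,0)} cannot occur here precisely because $\alpha''[-2]$ and $\alpha''[0]$ both turn left; this is the one spot requiring a careful local picture.

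For the length bound on the output: part \textit{4} of Lemma \ref{Lemma B(S,t,t,2)} gives $\len_\textrm{red}(\alpha')\leq\len_\textrm{red}(\alpha)-2$. Then Lemma \ref{TrigArc funtionally correct} gives $\len_\textrm{red}(\alpha''_\textrm{trim})\leq\len_\textrm{red}(\alpha'_\textrm{trim})+2s\leq\len_\textrm{red}(\alpha)+2s-2$ (accounting for the trimming of the first/last snippet when passing to a curve; each such snippet contributes at most $2s$, but after gluing, $\alpha''[0]$ is a vertical dual of corner length at most $2s-1$, so the gluing adds at most that — the bookkeeping should still land at $\len_\textrm{red}(\alpha''/_{\sim})\leq\len_\textrm{red}(\alpha)+2s$). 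The final optional $\Hom(\cdot,-1)$ does not increase $\len_\textrm{red}$ by part \textit{1} of Lemma \ref{Lemma B(h,h,0)} in the worst case it is $+1$, but this is absorbed since the trigon-case bound was $\leq\len_\textrm{red}(\alpha)+2s$ and the bigon-case entry into $\Hom$ already had slack — I would verify the arithmetic gives $\len_\textrm{red}(\alpha''')\leq\len_\textrm{red}(\alpha)+2s$ as claimed. For the running time: identifying the bad snippet costs $\mathit{O}(|\chi(S)|\len(\alpha))$, i.e.\ $\mathit{O}(|\chi(S)|(\len_\textrm{red}(\alpha)+1))$ by Lemma \ref{reduced corner length vs snippet length}; each $\Hom$ call costs $\mathit{O}(|\chi(S)|)$ by Remark \ref{Required time for local homotopies}; and the \texttt{TrigArc} call costs $\mathit{O}(\chi(S)^2(\len_\textrm{red}(\alpha'_\textrm{trim})+1))=\mathit{O}(\chi(S)^2(\len_\textrm{red}(\alpha)+1))$ by Corollary \ref{TrigArc in reduced corner length}. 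Summing gives $\mathit{O}(\chi(S)^2\cdot(\len_\textrm{red}(\alpha)+1))$. The main obstacle I anticipate is not the running-time count but the careful verification, in the length-two case and in the final $\mathbb{B}(h,h)$ branch, that the ``bad'' subcases of Lemmas \ref{Lemma B(h,h,0)} and \ref{Lemma B(R,h,h)} producing an $\mathbb{R}(\cdot,\cdot)$-bigon are either excluded by the left-turning vertical-dual hypotheses or else still satisfy the $\len_\textrm{red}$ inequality — getting the turning directions and the local index changes exactly right is the delicate part.
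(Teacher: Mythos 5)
Your plan takes the same route as the paper: in the short case apply one more local homotopy, and for $\len(\alpha)>2$ rotate the curve so the vertical dual $\alpha'[k]$ sits at the start of an arc, apply \texttt{TrigArc}, and invoke Corollary~\ref{Cor for bigons of weight 2} to pin down the type of the surviving bad snippet. That is indeed how the paper proves the lemma, and your time and length bookkeeping uses the same lemmas. However, your description of the post-homotopy curve $\alpha'$ in the case $\len(\alpha)>2$ contains a concrete error that breaks the verification of the hypotheses of Corollary~\ref{Cor for bigons of weight 2}.

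Specifically, you write that ``$\alpha'$ contains at most one bad snippet, which is a trigon'' and then, in the parenthetical, that the rotation places $\alpha'[k-1]$ ``at the penultimate position, safely inside.'' Both claims are wrong. The $\mathbb{S}(t,t,2)$-homotopy replaces $\alpha[k-1:k+2]$ by a three-snippet arc whose middle snippet $\alpha'[k]$ is a vertical dual, but whose two outer snippets $\alpha'[k-1]$ \emph{and} $\alpha'[k+1]$ are \emph{both} right-turning trigons in branch rectangles (the paper's Lemma~\ref{Lemma B(S,t,t,2)} states part of this explicitly for $\alpha'[k-1]$; the analogous statement for $\alpha'[k+1]$ holds by the symmetry of the local picture and is required here). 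In the rotated arc $\alpha'[k:]\cdot\alpha'[:k]$, the first snippet is $\alpha'[k]$ and the \emph{last} snippet is $\alpha'[k-1]$, so $\alpha'[k-1]$ is trimmed away; the trigon that sits in the inside is $\alpha'[k+1]$, the second snippet. Corollary~\ref{Cor for bigons of weight 2} needs exactly this set-up on the \emph{input} to \texttt{TrigArc}: $\alpha[0]$ a left vertical dual and $\alpha[1]$, $\alpha[-1]$ both right-turning $\mathbb{B}(h,t)$ trigons. Your ``matching up the hypotheses'' conflates the input arc with the output $\alpha''$ and never checks that $\alpha'[k+1]$ is a trigon, so as written the corollary is not properly invoked. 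Fixing this---acknowledging both trigons, placing $\alpha'[k-1]$ at the last position, and verifying the corollary's hypotheses on the pre-\texttt{TrigArc} arc---recovers the paper's argument. Your hedges in the $\len(\alpha)=2$ case are also dispensed with cleanly: when $\len(\alpha')=2$, the two neighbours $\alpha'[k-2]$ and $\alpha'[k]$ of the $\mathbb{B}(h,h)$ bigon coincide with the vertical dual, so case~3 of Lemma~\ref{Lemma B(h,h,0)} applies and the output is a peripheral curve of snippet length one; the ``different sides'' subcase you worried about cannot arise.
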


\begin{proof}
By assumption on the input of the algorithm \texttt{WeightOneBigon}, we are given an almost efficient curve $\alpha$ with a unique bad snippet $\alpha[k]$ of type $\mathbb{S}(t,t,2)$. Without loss of generality, we assume that $\alpha[k]$ turns right. As all other snippets of $\alpha$ are in efficient position, we know that $\alpha[k-1]$ and $\alpha[k+1]$ are in efficient position. Set $\alpha'=\Hom(\alpha,k)$.

We have to distinguish two cases: either $\len(\alpha)=2$ or $\len(\alpha)>2$. If $\len(\alpha)=2$, then Lemma \ref{Lemma B(S,t,t,2)} implies that $\alpha'[k]$ is a left vertical dual and $\alpha'[k-1]$ is a bigon of type $\mathbb{B}(h,h)$ (see page \pageref{B(S,t,t,2) short}, Figure \ref{B(S,t,t,2) short}). Thus, $\Hom(\alpha',k-1)$ is a peripheral curve of snippet length one and therefore a valid output for \texttt{WeightTwoBigon}. We remark that \texttt{WeightTwoBigon} terminates in $\mathit{O}(|\chi(S)|)$ time in this case.

If $\len(\alpha)>2$, we know that $\alpha'[k]$ is a left vertical dual and that $\alpha'[k-1]$ and $\alpha'[k+1]$ are right-turning trigons. Thus, the arc that starts with the snippet $\alpha'[k]$ and ends with the snippet $\alpha'[k-1]$, that is, the arc $\alpha'[k:]\cdot\alpha'[:k]$, has a unique bad snippet of trigon type in its inside. Corollary \ref{Cor for bigons of weight 2} implies that $\alpha''=\texttt{TrigArc}(S,N,\alpha'[k:]\cdot\alpha'[:k])$ contains a unique bad snippet $\alpha''[-1]$, which is a trigon snippet or a bigon snippet. If $\alpha''[-1]$ is a bigon snippet, the adjacent two snippets must be left vertical duals, so an index-argument shows that $\Hom(\alpha''/_{\sim},-1)$ is in efficient position. If $\alpha''[-1]$ is a trigon snippet, then $\alpha''/_{\sim}$ is a smooth curve with a unique bad snippet of trigon type. 

To analyse the running time and length of the output curve when $\len(\alpha)>2$, we note that Lemma \ref{Lemma B(S,t,t,2)} implies that $\len_\textrm{red}(\alpha') \leq \len_\textrm{red}(\alpha)-2$. It requires at most $\mathit{O}(|\chi(S)|)$ many operations to replace $\alpha$ by $\alpha'$. Since $\alpha'[k-1]$ is a trigon, the reduced corner length of $\alpha'$ equals the reduced corner length of the arc $\alpha'[k:]\cdot\alpha'[:k]$. Lemma \ref{TrigArc in reduced corner length} implies that we obtain the arc $\alpha''$ within another $\mathit{O}(\chi(S)^2 \cdot (\len_\textrm{red}(\alpha)+1))$ many operations. The arc $\alpha''$ satisfies \[\len_\textrm{red}(\alpha'') \leq \len_\textrm{red}(\alpha'[k:]\cdot\alpha'[:k])+2s \leq \len_\textrm{red}(\alpha')+2s \leq \len_\textrm{red}(\alpha)+2s.\] As $\alpha''[-1]$ is a bad snippet, $\len_\textrm{red}(\alpha'')=\len_\textrm{red}(\alpha''/_{\sim})$. Thus, if the algorithm returns $\alpha''/_{\sim}$, that is, when $\alpha''/_{\sim}$ contains a unique trigon, then the reduced corner length of the output curve is bounded by $\len_\textrm{red}(\alpha)+2s$. Else, the algorithm returns a curve in efficient position within a further $\mathit{O}(|\chi(S)|)$ operations.

Summarizing, if $\len(\alpha)>2$, the algorithm \texttt{WeightTwoBigon} terminates within $\mathit{O}(\chi(S)^2 \cdot (\len_\textrm{red}(\alpha)+1))$ many operations, which concludes the proof of the lemma.
\end{proof}

\section{\texttt{AllButHorBigInCompOrBranch}-algorithm for almost efficient curves}

To simplify later discussions, we combine the algorithms \texttt{WeightOneBigon} and \texttt{WeightTwoBigon} with the trivial algorithms corresponding to the homotopies of type $\mathbb{B}(t,t)$, $\mathbb{S}(h,h,0)$, $\mathbb{S}(t,t,0)$, $\mathbb{S}(v,v,0)$, and $\mathbb{R}(v,v)$ into a single algorithm called \texttt{AllButHorBigInCompOrBranch}.

\begin{algorithm} \label{AllButHorBigInCompOrBranch}
    \SetKwInOut{Input}{Input}
    \SetKwInOut{Output}{Output}
	\DontPrintSemicolon
    \Input{A surface $S$ of positive complexity, a tie neighbourhood $N \subset S$ of a large train track in $S$, and an almost efficient curve $\alpha \subset S$ that contains a bad snippet of type $\mathbb{B}(t,t)$, $\mathbb{S}(h,h,0)$, $\mathbb{S}(t,t,0)$, $\mathbb{S}(v,v,0)$, $\mathbb{R}(v,v)$, $\mathbb{S}(t,t,2)$, or $\mathbb{S}(t,v,1)$.}
    \Output{A curve $\alpha'$ homotopic to $\alpha$ such that $\alpha'$ contains at most one bad snippet. If this unique bad snippet is a bigon snippet, then $\len_\textrm{red}(\alpha')\leq \len_\textrm{red}(\alpha)-2$. If it is a trigon snippet, then $\len_\textrm{red}(\alpha')\leq\len_\textrm{red}(\alpha)+2s$.}
    \BlankLine
    Set $k$ such that $\alpha[k]$ is bad\;
    \If {$\alpha[k]$ is of type $\mathbb{S}(t,v,1)$}
    {
    \Return $\texttt{WeightOneBigon}(S,N,\alpha)$
    }
    \If {$\alpha[k]$ is of type $\mathbb{S}(t,t,2)$}
    {
    \Return $\texttt{WeightTwoBigon}(S,N,\alpha)$
    }
    \Return $\Hom(\alpha,k)$
    \caption{\texttt{AllButHorBigInCompOrBranch} - Homotoping an almost efficient curve of bigon type that does not contain a bigon snippet of type $\mathbb{R}(h,h)$ or $\mathbb{B}(h,h)$ into an input for $\texttt{TrigCurve}$ or into an almost efficient curve of shorter reduced corner length.}
\end{algorithm}

\begin{lem} \label{AllButHorBigInCompOrBranch funtionally correct}
The algorithm \texttt{AllButHorBigInCompOrBranch} is correct. On an input $(S,N,\alpha)$, the algorithm halts in $\mathit{O}(\chi(S)^2 \cdot (\len_\textrm{red}(\alpha)+1))$ time.
\end{lem}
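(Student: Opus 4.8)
The plan is to verify correctness and the running-time bound by going through the pseudocode of \texttt{AllButHorBigInCompOrBranch} case by case, according to the type of the unique bad snippet $\alpha[k]$, and in each case quoting the corresponding lemma already established in the excerpt. First I would record the trivial input checks: since $\alpha$ is almost efficient, $\alpha[k]$ is its unique bad snippet, and by hypothesis it is of type $\mathbb{B}(t,t)$, $\mathbb{S}(h,h,0)$, $\mathbb{S}(t,t,0)$, $\mathbb{S}(v,v,0)$, $\mathbb{R}(v,v)$, $\mathbb{S}(t,t,2)$, or $\mathbb{S}(t,v,1)$; so the only branching in the pseudocode is whether $\alpha[k]$ is of type $\mathbb{S}(t,v,1)$, of type $\mathbb{S}(t,t,2)$, or of one of the remaining five ``easy'' types.

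\textbf{Correctness.} If $\alpha[k]$ is of type $\mathbb{S}(t,v,1)$, the algorithm returns $\texttt{WeightOneBigon}(S,N,\alpha)$; correctness and the length bounds then follow directly from Lemma \ref{WeightOneBigon funtionally correct}, whose output either contains a unique bad trigon snippet with $\len_\textrm{red}\leq \len_\textrm{red}(\alpha)+2s-5\leq\len_\textrm{red}(\alpha)+2s$, or contains a unique bad bigon snippet with $\len_\textrm{red}\leq\len_\textrm{red}(\alpha)-4\leq\len_\textrm{red}(\alpha)-2$. If $\alpha[k]$ is of type $\mathbb{S}(t,t,2)$, the algorithm returns $\texttt{WeightTwoBigon}(S,N,\alpha)$; by Lemma \ref{WeightTwoBigon funtionally correct} this is a curve with at most one bad snippet, which, if present, is a trigon snippet with $\len_\textrm{red}\leq\len_\textrm{red}(\alpha)+2s$. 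In the remaining five ``easy'' cases the algorithm returns $\Hom(\alpha,k)$; here I would invoke Lemma \ref{Easy Bigon bounds} (noting that $\alpha_\textrm{trim}=\alpha$ for curves, so $\alpha[k-1]$ and $\alpha[k+1]$ are automatically in efficient position), which shows that $\Hom(\alpha,k)$ is either a peripheral curve of snippet length one, or is in efficient position, or contains a unique bad snippet which is a trigon or bigon snippet with $\len_\textrm{red}(\Hom(\alpha,k))\leq\len_\textrm{red}(\alpha)-2$. In every subcase the output therefore meets the specification: at most one bad snippet, with $\len_\textrm{red}$ dropping by at least $2$ if that snippet is a bigon, and rising by at most $2s$ if it is a trigon.

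\textbf{Running time.} Determining the index $k$ of the bad snippet requires scanning the cutting sequence, which by Lemma \ref{reduced corner length vs snippet length} has length $\len(\alpha)\leq\len_\textrm{red}(\alpha)+1$, so this step costs $\mathit{O}(|\chi(S)|(\len_\textrm{red}(\alpha)+1))$ (the $|\chi(S)|$ factor absorbing the ``look-up'' conventions of Remark \ref{Standard conventions algor}). Checking the type of $\alpha[k]$ is an $\mathit{O}(|\chi(S)|)$ look-up. The three branches then cost, respectively, $\mathit{O}(|\chi(S)|(\len_\textrm{red}(\alpha)+1))$ by Lemma \ref{WeightOneBigon funtionally correct}; $\mathit{O}(\chi(S)^2(\len_\textrm{red}(\alpha)+1))$ by Lemma \ref{WeightTwoBigon funtionally correct}; and $\mathit{O}(|\chi(S)|)$ for a single homotopy by Remark \ref{Required time for local homotopies}. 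Summing, the dominating term is $\mathit{O}(\chi(S)^2(\len_\textrm{red}(\alpha)+1))$, as claimed.

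\textbf{Main obstacle.} There is essentially no mathematical obstacle: the lemma is a bookkeeping consolidation, and the only care needed is in confirming that the hypotheses of the cited lemmas are met — in particular that, for a curve $\alpha$, ``almost efficient'' already guarantees the neighbours $\alpha[k\pm 1]$ of the bad snippet are in efficient position (since $\alpha_\textrm{trim}=\alpha$), so that the stronger conclusions of Lemmas \ref{Easy Bigon bounds}, \ref{WeightOneBigon funtionally correct}, and \ref{WeightTwoBigon funtionally correct} apply. The only subtlety worth flagging is the uniform phrasing of the length bound: one must check that the various lemma-specific estimates ($\len_\textrm{red}(\alpha)-4$, $\len_\textrm{red}(\alpha)+2s-5$, $\len_\textrm{red}(\alpha)+2s$, $\len_\textrm{red}(\alpha)-2$) all fit inside the two claimed bounds $\len_\textrm{red}(\alpha)-2$ (bigon case) and $\len_\textrm{red}(\alpha)+2s$ (trigon case), which is immediate once one uses $s_N\geq 5$ so that $2s-5\leq 2s$ and $-4\leq -2$.
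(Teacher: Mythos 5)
Your proposal is correct and takes essentially the same approach as the paper: a three-way case split by the type of the unique bad snippet, citing Lemma \ref{WeightOneBigon funtionally correct}, Lemma \ref{WeightTwoBigon funtionally correct}, and Lemma \ref{Easy Bigon bounds} respectively, plus the $\mathit{O}(|\chi(S)|(\len_\textrm{red}(\alpha)+1))$ cost of locating the bad snippet. Your version is somewhat more explicit than the paper's about why the lemma-specific bounds fit under the uniform bounds in the output specification, but the argument is the same.
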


\begin{proof}
By assumption on the input of the algorithm \texttt{AllButHorBigInCompOrBranch}, we are given a surface $S$ of positive complexity, a tie neighbourhood $N$ of a large train track in $S$, and an almost efficient curve $\alpha$ with a unique bad snippet $\alpha[k]$ of type $\mathbb{B}(t,t)$, $\mathbb{S}(h,h,0)$, $\mathbb{S}(t,t,0)$, $\mathbb{S}(v,v,0)$, $\mathbb{R}(v,v)$, $\mathbb{S}(t,t,2)$, or $\mathbb{S}(t,v,1)$.
It requires at most $\mathit{O}(|\chi(S)|(\len_\textrm{red}(\alpha)+1))$ many operations to determine the index of this bad snippet. If this snippet is of type $\mathbb{S}(t,v,1)$ or $\mathbb{S}(t,t,2)$, the correctness of the algorithm and the bounds on the running time follow from Lemma \ref{WeightOneBigon funtionally correct} and Lemma \ref{WeightTwoBigon funtionally correct}. If the bad snippet is of type $\mathbb{B}(t,t)$, $\mathbb{S}(h,h,0)$, $\mathbb{S}(t,t,0)$, $\mathbb{S}(v,v,0)$, or $\mathbb{R}(v,v)$, Lemma \ref{Easy Bigon bounds} implies that a single homotopy is sufficient to obtain a curve of the desired shape that satisfies the required bounds on the reduced corner length.
\end{proof}

\section{\texttt{TwoTrigons}-algorithm for almost efficient curves}

It remains to understand how to handle curves that contain a unique bad snippet of type $\mathbb{R}(h,h)$ or $\mathbb{B}(h,h)$. As seen in the previous chapter, applying a single homotopy might not only increase the number of bad snippets, but can also increase the reduced corner length of the underlying curve. To solve these issues, a subtle case analysis is required. To reduce the number of cases appearing at one time, we first discuss how to proceed in the special case of curves that have exactly two bad snippets which are adjacent trigon snippets in the tie neighbourhood. The corresponding algorithm \texttt{TwoTrigons} 
\begin{itemize}
\item takes as input a surface $S$ of positive complexity, a tie neighbourhood $N \subset S$ of a large train track in $S$, as well as a curve with exactly two bad snippets, which are adjacent trigon snippets in the tie neighbourhood and turn into the same direction, and
\item outputs a curve that is homotopic to the input and contains at most one bad snippet. If this is a bigon snippet, then the reduced corner length has decreased by at least two.
\end{itemize}
For a formal statement of the algorithm \texttt{TwoTrigons} we refer the reader to Algorithm \ref{TwoTrigons}.

\begin{algorithm} 
    \SetKwInOut{Input}{Input}
    \SetKwInOut{Output}{Output}
	\DontPrintSemicolon
    \Input{A surface $S$ of positive complexity, a tie neighbourhood $N \subset S$ of a large train track in $S$, and a curve $\alpha \subset S$ that contains exactly two bad snippets $\alpha[0]$ and $\alpha[1]$. We require that $\alpha[0]$ is of type $\mathbb{S}(h,t,3)$ or $\mathbb{S}(h,t,1)$ and that $\alpha[1]$ is of type $\mathbb{B}(h,t)$. The two trigon snippets must turn into the same direction.}
    \Output{A curve $\alpha'$ homotopic to $\alpha$ such that $\alpha'$ contains at most one bad snippet. If this unique bad snippet is a bigon snippet, then $\len_\textrm{red}(\alpha')\leq \len_\textrm{red}(\alpha)-2$. If it is a trigon snippet, then $\len_\textrm{red}(\alpha')\leq\len_\textrm{red}(\alpha)+2s+1$.}
    \BlankLine
    \If{$\alpha[0]$ is of type $\mathbb{S}(h,t,3)$}{
    $\alpha'=\Hom(\alpha',1)$\;
    \If {$\alpha'[1]$ is in efficient position}
    {\Return $\alpha'$}
    $\alpha'=\Hom(\alpha',1)$\;
    \If{$\alpha'[0]$ is a bigon of type $\mathbb{S}(t,t,0)$}
    {\Return $\Hom(\alpha',0)$
    }
    $\alpha'=(\alpha'[0]\cdot \texttt{TrigArc}(S,N,\alpha'[1:]))/_\sim $\;
    \If{$\alpha'[-1]$ is a trigon of type $\mathbb{R}(h,v)$}
    {\Return $\Hom((\Hom(\alpha',-1)),0)$
    }
    \If{$\alpha'[1]$ is in efficient position}
    {\Return $\alpha'$}
    }
    \Return $\Hom(\Hom(\alpha',1),0)$
    \caption{\texttt{TwoTrigons} - Homotoping a curve with exactly two bad snippets which must be adjacent trigon snippets in the tie neighbourhood that turn into the same direction into an input for $\texttt{TrigCurve}$ or into a curve of shorter reduced corner length containing a single bigon.}
    \label{TwoTrigons}
\end{algorithm}

\begin{lem} \label{TwoTrigons funtionally correct}
The algorithm \texttt{TwoTrigons} is correct. On an input $(S,N,\alpha)$, the algorithm halts in $\mathit{O}(\chi(S)^2 \cdot (\len_\textrm{red}(\alpha)+1))$ time.
\end{lem}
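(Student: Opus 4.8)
The plan is to verify correctness by tracing the pseudocode branch by branch and checking that each branch produces a curve with the claimed properties, then to bound the running time by counting the local homotopies and \texttt{TrigArc} calls. The input curve $\alpha$ has exactly two bad snippets $\alpha[0]$ and $\alpha[1]$, both trigon snippets inside the tie neighbourhood that turn the same way; we may assume without loss of generality that both turn right. There are two top-level cases depending on whether $\alpha[0]$ is of type $\mathbb{S}(h,t,3)$ or of type $\mathbb{S}(h,t,1)$.

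First I would dispose of the simpler case, $\alpha[0]$ of type $\mathbb{S}(h,t,1)$. Here the algorithm jumps straight to \texttt{Return} $\Hom(\Hom(\alpha',1),0)$. Applying a homotopy at the snippet $\alpha[1]$ of type $\mathbb{B}(h,t)$: by Lemma \ref{Homotopy of type B(h,t,0)}, since the snippet adjacent to $\alpha[1]$ on the side away from $\alpha[0]$ is in efficient position and lies inside a switch rectangle, the only possible new bad snippet is $\alpha[0]$, now turned into an $\mathbb{S}(h,t,1)$-snippet again, or efficient position. A second homotopy at this snippet, of type $\mathbb{S}(h,t,1)$, reduces via Lemma \ref{Homotopy of type S(h,t,0)} to a snippet of type $\mathbb{B}(h,t)$ inside a branch rectangle. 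The point is that after these two homotopies the two bad snippets have ``collided'' and I must check — using the weak-homotopy statements of Lemma \ref{General observations trigon homotopies} — that the net result is at most one bad snippet; combined with the length bookkeeping in the cited lemmas (the $\mathbb{S}(h,t,1)$-homotopy does not increase $\len_\textrm{red}$, the $\mathbb{B}(h,t)$-homotopy increases $\len_\textrm{red}$ by at most $2s$) this gives the claimed bounds. The second case, $\alpha[0]$ of type $\mathbb{S}(h,t,3)$, is where the real work lies: the algorithm applies a homotopy at $\alpha[1]$, then potentially again, then — if no early return triggered — cuts the curve open after the (single remaining) bad snippet, runs \texttt{TrigArc} on the complementary arc via Corollary \ref{Cor for bigons of weight 2}, reglues, and handles a possible $\mathbb{R}(h,v)$-trigon or $\mathbb{S}(t,t,0)$-bigon at the seam. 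For each early-return branch I would verify the asserted snippet type and length bound directly from Lemmas \ref{Homotopy of type B(h,t,0)}, \ref{Homotopy of type S(h,t,0)}, \ref{Homotopy of type T(S,h,v)}, \ref{Homotopy of type T(S,h,t,2)}, \ref{Homotopy of type R(h,v)}, \ref{Homotopy of type S(h,t,0)}, and \ref{Easy Bigon bounds}, keeping careful track of which of the two original trigons survives each homotopy and of its turning direction (which, by Lemma \ref{Omnibus Trigons} and its left-turning analogue, is preserved).

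For the \texttt{TrigArc} branch I would argue as follows: after the homotopy at $\alpha[1]$ the curve has the form required by Corollary \ref{Cor for bigons of weight 2} (a left vertical dual followed by two right-turning $\mathbb{B}(h,t)$-trigons, or its mirror), so \texttt{TrigArc} applied to the open arc yields an arc whose only possible bad snippet is the last one, of type $\mathbb{B}(h,t)$ or $\mathbb{B}(h,h)$, with the two neighbouring snippets being left vertical duals in the bigon case. Reglueing and, if necessary, one more homotopy at the seam (an index argument, exactly as in the proof of Corollary \ref{Cor for bigons of weight 2} and of Lemma \ref{WeightTwoBigon funtionally correct}, shows $\Hom(\alpha',-1)$ or $\Hom(\Hom(\alpha',-1),0)$ is efficient there) produces a curve with at most one bad snippet. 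The length bound $\len_\textrm{red}(\alpha')\le\len_\textrm{red}(\alpha)+2s+1$ in the trigon-output case comes from: the $\mathbb{S}(h,t,3)$-homotopy (Lemma \ref{Homotopy of type T(S,h,t,2)}) does not increase $\len_\textrm{red}$, \texttt{TrigArc} increases it by at most $2s$ (Lemma \ref{TrigArc funtionally correct}), and the terminal bigon-creating homotopy costs at most one more; the $+1$ slack absorbs the corner-length change at the glueing seam. In the bigon-output cases I would check that one of the homotopies genuinely strips at least two units of $\len_\textrm{corn}$ without reviving a blocker, so $\len_\textrm{red}$ drops by at least two, as in the proofs of Lemmas \ref{Easy Bigon bounds} and \ref{Lemma B(S,t,t,2)}.

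The running time bound is then routine: by Lemma \ref{reduced corner length vs snippet length}, $\len(\alpha)\le\len_\textrm{red}(\alpha)+2$, so locating the bad snippets costs $\mathit{O}(|\chi(S)|(\len_\textrm{red}(\alpha)+1))$; each individual local homotopy costs $\mathit{O}(|\chi(S)|)$ by Remark \ref{Required time for local homotopies}; the algorithm performs a bounded number ($\le 5$) of such homotopies plus at most one call to \texttt{TrigArc}, which by Corollary \ref{TrigArc in reduced corner length} runs in $\mathit{O}(\chi(S)^2(\len_\textrm{red}(\alpha)+1))$ time (the reduced corner length of the open arc fed to \texttt{TrigArc} differs from $\len_\textrm{red}(\alpha)$ by $\mathit{O}(s)=\mathit{O}(|\chi(S)|)$, which is absorbed). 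Summing gives $\mathit{O}(\chi(S)^2\cdot(\len_\textrm{red}(\alpha)+1))$. The main obstacle I anticipate is the case bookkeeping in the $\mathbb{S}(h,t,3)$ branch: one must be confident that every branch of the pseudocode is reached only by curves of exactly the type that the subsequently invoked lemma requires, and that across the cut-reglue step the winding numbers and weak-homotopy classes of the two affected end-snippets behave as Lemma \ref{General observations trigon homotopies} predicts, so that no hidden extra bad snippet is created. This is a matter of drawing the relevant local pictures carefully (as in Figures \ref{Homotopy of type T(S,h,t,2) picture} and \ref{Weight1Picture}) rather than of any new idea.
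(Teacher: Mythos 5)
Your high-level plan — trace the branches, bound the local homotopies, and invoke \texttt{TrigArc} once — follows the paper, but the specific lemmas you invoke do not support the claims you draw from them, and some of the structural claims are wrong.

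The central gap is your use of Corollary \ref{Cor for bigons of weight 2} for the \texttt{TrigArc} step. That corollary requires an arc whose first snippet is a left vertical dual and whose second and last snippets are right-turning $\mathbb{B}(h,t)$-trigons; this is the shape arising in \texttt{WeightTwoBigon}, where one cuts the curve at the vertical dual produced by an $\mathbb{S}(t,t,2)$-homotopy. In \texttt{TwoTrigons}, however, the arc fed to \texttt{TrigArc} is $\alpha'[1:]$ after two local homotopies at position $1$; its first snippet is a carried snippet produced by the $\mathbb{R}(h,v)$-homotopy, not a vertical dual, and its last snippet is the efficient snippet of the original curve that sits just before $\alpha[0]$. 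So the hypotheses of Corollary \ref{Cor for bigons of weight 2} are not met, and the conclusion you extract from it — "the only possible bad snippet is the last one, of type $\mathbb{B}(h,t)$ or $\mathbb{B}(h,h)$" — is false. The paper instead uses the argument underlying Corollary \ref{TrigArc terminates as soon as changes weak snippet type of beginning or end}: \texttt{TrigArc} changes the weak snippet homotopy type of at most one endpoint, so the output arc has at most one bad snippet, which may sit at \emph{either} end. The case where it sits at the \emph{start} (as a $\mathbb{B}(h,t)$-trigon in a branch rectangle) is precisely what drives control into the final fall-through line $\Hom(\Hom(\alpha',1),0)$, and your proposal does not account for this branch at all.

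Your sketch of the $\mathbb{S}(h,t,1)$ case is also off in the details: the snippet $\alpha[2]$ adjacent to $\alpha[1]$ on the side away from $\alpha[0]$ lies in a complementary region (since $\alpha[1]$ exits the branch rectangle through $\partial_h N$), not in a switch rectangle, and after the homotopy $\Hom(\alpha,1)$ the surviving bad snippet at position $0$ is a \emph{bigon} of type $\mathbb{S}(h,h,0)$, not a trigon of type $\mathbb{S}(h,t,1)$. So the second homotopy $\Hom(\cdot,0)$ in the final return is a bigon homotopy governed by Lemma \ref{Easy Bigon bounds}, which is how the paper obtains the drop $\len_\textrm{red}(\alpha'''')\le\len_\textrm{red}(\alpha)-2$; your appeal to Lemma \ref{Homotopy of type S(h,t,0)} at that step is therefore misplaced. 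The running-time paragraph is fine, but the correctness argument needs the case bookkeeping you flagged as the main obstacle actually carried out, and with the corollary you chose it does not close.
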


\begin{proof}
By assumption on the input of the algorithm \texttt{TwoTrigons}, we are given a curve $\alpha$ that contains exactly two bad snippets, $\alpha[0]$ and $\alpha[1]$. We further know that $\alpha[0]$ is a bad snippet of type $\mathbb{S}(h,t,3)$ or $\mathbb{S}(h,t,1)$ and $\alpha[1]$ is of type $\mathbb{B}(h,t)$. Both turn into the same direction. Without loss of generality, we assume that they turn right.
We remark that $\len(\alpha)\geq 3$ as $\alpha[0]$ and $\alpha[1]$ are contained in different types of neighbourhood rectangles.

We begin by proving the correctness of the algorithm \texttt{TwoTrigons}.
Let us first assume that $\alpha[0]$ is of type $\mathbb{S}(h,t,3)$. Set $\alpha'=\Hom(\alpha,1)$. Since $\alpha[0]$ is of type $\mathbb{S}(h,t,3)$ and $\alpha[0]$ and $\alpha[1]$ turn right, the boundary of the trigon cut off by $\alpha[1]$ contains a corner of the adjacent complementary region. Thus, Lemma \ref{General observations trigon homotopies} implies that $\alpha[1]$ is not weakly snippet homotopic to $\alpha'[1]$, but $\alpha[0]$ is weakly snippet homotopic to $\alpha'[0]$. We note that $\alpha'[0]$ then must be of type $\mathbb{S}(h,v,2)$. As $\alpha[1]$ was in efficient position, we know that $\alpha'[1]$ is in efficient position or a right-turning trigon snippet of type $\mathbb{R}(h,v)$. We note that applying a local trigon homotopy to $\alpha$ at $\alpha[1]$ does not increase the reduced corner length of the underlying curve unless $\alpha'[1]$ is in efficient position. If $\alpha'[1]$ is in efficient position, the reduced corner length increases by at most $2s$ following Lemma \ref{Homotopy of type B(h,t,0)}. Thus, the algorithm either returns a curve $\alpha'$ with a unique bad snippet of type $\mathbb{S}(h,v,1)$ satisfying $\len_\textrm{red}(\alpha')\leq \len_\textrm{red}(\alpha)+2s$ or proceeds by applying a local trigon homotopy to $\alpha$ at $\alpha'[1]$. 

In the following, we set $\alpha''=\Hom(\alpha',1)$. Lemma \ref{Homotopy of type R(h,v)} implies that \[\len_\textrm{red}(\alpha'')=\len_\textrm{red}(\alpha')\leq \len_\textrm{red}(\alpha).\] If $\len(\alpha')=2$, then $\alpha''$ contains a unique bad snippet $\alpha''[0]$ of type $\mathbb{S}(t,t,0)$ (see Figure \ref{TwoTrigonShort}). Applying one local homotopy to the snippet of type $\mathbb{S}(t,t,0)$ reduces the reduced corner length by at least two according to Lemma \ref{Easy Bigon bounds} and yields a curve containing at most one bad snippet.

\begin{figure}[htbp] 
  \begin{minipage}[b]{0.49\linewidth}
    \centering
\begin{tikzpicture}[scale=0.45]
\draw [very thick] (-4,4) rectangle (0,-2);
\draw [very thick] (0,4) rectangle (4,2);
\draw [very thick] plot[smooth, tension=.7] coordinates {(0,0) (2,0) (6.5,0.5) (6.5,-2.5) (2,-2) (0.0013,-1.9903)};
\draw [very thick](1.5,0) -- (1.5,-2);
\draw [very thick](3.0076,0.1748) -- (3.0126,-2.1513);
\draw [very thick] plot[smooth, tension=.7] coordinates {(3,-0.5) (5.5,-0.5) (5.5,-1.5) (3,-1.5)};
\draw [very thick, densely dashed, blue] plot[smooth cycle, tension=.7] coordinates {(-2,-3.5) (-2,2) (2.017,3.1916) (3,1.5) (7.5,1.5) (7.5,-3.5) (2,-3.5)};
\draw [very thick, densely dashed, cyan] plot[smooth cycle, tension=.7] coordinates {(-1.5,-3) (-1.5,1) (1.75,1.75) (3,1) (7,1) (7,-3) (2,-3)};
\draw [very thick, densely dashed, green] plot[smooth cycle, tension=.7] coordinates {(0,-0.25) (-1.5,-0.5) (-1.5,-1.5) (0,-1.75) (2,-1.75) (6.25,-2.25) (6.25,0.25) (2,-0.25)};
\node at (-5,1) {\tiny{$t$}};
\node at (-2,5) {\tiny{$h$}};
\draw [very thick] (4.7,-1) ellipse (0.2 and 0.2);
\end{tikzpicture}
    \caption{Applying the algorithm \texttt{TwoTrigons} to a curve of snippet length three.} 
    \label{TwoTrigonShort}
  \end{minipage}
  \begin{minipage}[b]{0.49\linewidth}
    \centering
\begin{tikzpicture}[scale=0.45]
\draw [very thick] (-4,4) rectangle (0,-2) node (v1) {};
\draw [very thick] (0,4) rectangle (4,2);
\node at (-5,1) {\tiny{$t$}};
\node at (-2,5) {\tiny{$h$}};
\draw [very thick, densely dashed, blue] plot[smooth, tension=.7] coordinates {(-2.6,-4.1) (-2,2) (2,3) (3.1,1.6) (6.3,1.3) (7,-2)};
\draw [very thick, densely dashed, cyan] plot[smooth, tension=.7] coordinates {(-2.1,-4.1) (-1.4,0.3) (2.9,1.1) (6,0.8) (6.5,-2)};
\draw [very thick, densely dashed, green] plot[smooth, tension=.7] coordinates {(-1.7,-4.1) (-1,-0.8) (3.0872,-0.2718) (5.5449,-0.5149) (6,-2)};
\draw [thick, dotted] plot[smooth, tension=.7] coordinates {(4,0) (9,0)};
\draw [very thick](0,0) -- (4,0);
\draw [very thick] (0,-2) -- (4,-2);
\end{tikzpicture}
    \caption{Applying two trigon homotopies on an input of \texttt{TwoTrigons} which has snippet length at least four.} 
    \label{TwoTrigonLong}
  \end{minipage}
  \begin{minipage}[b]{0.99\linewidth}
    \centering
\begin{tikzpicture}[scale=0.45]
\draw [very thick] (-4,4) rectangle (0,-2);
\draw [very thick] (0,4) rectangle (4,2);
\node at (-5,1) {\tiny{$t$}};
\node at (-2,5) {\tiny{$h$}};
\draw [very thick] (0,0) rectangle (4,-2) node (v1) {};
\draw [thick, dotted] plot[smooth, tension=.7] coordinates {(v1) (6.9,-2.3) (8,-3)};
\draw [thick, dotted] plot[smooth, tension=.7] coordinates {(4,0) (7,0) (8.5,0)};
\node at (7.7,-3.7) {\tiny{$v$}};
\draw [very thick, densely dashed, blue] plot[smooth, tension=.7] coordinates {(8.6,-5.1) (6.2,-3.8) (-2,-3.6) (-2.2,-0.8) (3,-0.4) (8.4,-0.4)};
\draw [very thick, densely dashed, cyan] plot[smooth, tension=.7] coordinates {(8.4,-0.8) (3,-0.8) (-1.4,-1) (-1.4,-1.7) (6.2,-1.9) (8.7,-3.2) (8.3,-4.1) (8.8,-5)};
\draw [very thick](6.5,-5.5) -- (9,-3) -- (10.5,-4.5) -- (8,-7) -- cycle;
\draw [very thick](6,-5) -- (7,-6);
\draw [very thick](6.5,-4.5) -- (7,-5);
\draw [very thick](8.5,-2.5) -- (9.5,-3.5);
\draw [very thick] plot[smooth, tension=.7] coordinates {(8.5,-3.5) (8,-3)};
\end{tikzpicture}
    \caption{A curve with two adjacent trigons of which one lies in a complementary region and one inside a switch rectangle.} 
    \label{TwoTrigonLongwithcomplement}
  \end{minipage}
\end{figure}

If $\len(\alpha')>2$, then $\alpha''[1:]$ is an arc that contains at most one bad snippet, which must be of trigon type (see Figure \ref{TwoTrigonLong}). We note that this trigon can be the snippet $\alpha''[1]$. Applying the algorithm $\texttt{TrigArc}$ to $\alpha''[1:]$ yields an arc which is in efficient position in its inside. As a single trigon homotopy changes at most one weak snippet homotopy type of the adjacent snippets, $\texttt{TrigArc}(S,N,\alpha''[1:])[0]$ or $\texttt{TrigArc}(S,N,\alpha''[1:])[-1]$ are in efficient position. This implies that either $\texttt{TrigArc}(S,N,\alpha''[1:])$ is in efficient position, or that $\texttt{TrigArc}(S,N,\alpha''[1:])[-1]$ is a trigon inside a complementary region, or that $\texttt{TrigArc}(\alpha''[1:])[0]$ is a trigon in a branch rectangle. We note that the reduced corner length of the arc $\alpha''[1:]$ increases under $\texttt{TrigArc}$ only if $\texttt{TrigArc}(S,N,\alpha''[1:])$ is in efficient position, and then by at most $2s$. This can be seen as follows: if $\alpha''[1]$ or $\alpha''[-1]$ are the unique bad snippet of the arc $\alpha''[1:]$, then \texttt{TrigArc} simply returns $\alpha''[1:]$ and thus does not increase the reduced corner length. Else, the arc $\alpha''[1:]$ can be artificially prolonged by one snippet at the beginning and one snippet at the end to yield an arc that contains a unique trigon snippet in its inside and contains the entire arc $\alpha''[1:]$ in its inside. Thus, applying \texttt{TrigArc} to this prolonged arc increases the reduced corner length only if efficient position is achieved on the inside or if the weak snippet homotopy type of the first or last snippet is altered. As the algorithm \texttt{TrigArc} applied to the arc $\alpha''[1:]$ would have terminated in the latter two cases already, this implies that the reduced corner length of $\texttt{TrigArc}(S,N,\alpha''[1:])$ has increased only if $\texttt{TrigArc}(S,N,\alpha''[1:])$ is in efficient position.

If $\texttt{TrigArc}(S,N,\alpha''[1:])$ is in efficient position, the curve \[\alpha''[0] \cdot \texttt{TrigArc}(S,N,\alpha''[1:])/_\sim\] contains a unique trigon snippet of type $\mathbb{S}(h,t,1)$. Its reduced corner length is bounded by $\len_\textrm{red}(\alpha'')+2s$, hence also by $\len_\textrm{red}(\alpha)+2s$. 

In the second case, that is, if $\texttt{TrigArc}(S,N,\alpha''[1:])[-1]$ is a trigon snippet inside a complementary region, applying on trigon homotopy to \[\texttt{TrigArc}(S,N,\alpha''[1:])[-1]\] yields a curve that contains a unique bigon of type $\mathbb{S}(t,t,0)$ (see Figure \ref{TwoTrigonLongwithcomplement}). Applying one further homotopy to the snippet $\mathbb{S}(t,t,0)$ reduces the reduced corner length by at least two and yields a curve containing at most one bad snippet. Thus, the algorithm outputs a curve with a unique bad snippet whose reduced corner length is less than or equal to $\len_\textrm{red}(\alpha)-2$ in this case.

Else, that is if $\texttt{TrigArc}(S,N,\alpha''[1:])[0]$ is a trigon snippet in a branch rectangle, the first snippet of the curve $\alpha'''= \alpha''[0] \cdot \texttt{TrigArc}(S,N,\alpha''[1:])/_\sim$ must be a right-turning snippet of type $\mathbb{S}(h,t,0)$, whereas its second snippet, the snippet $\texttt{TrigArc}(S,N,\alpha''[1:])[0]$ is a trigon snippet inside a branch rectangle. We note that the algorithm exists the first if-statement and the given curve is of the second possible input type for the algorithm \texttt{TwoTrigons}. We recall that $\len_\textrm{red}(\alpha''') \leq \len_\textrm{red}(\alpha)$. Then $\Hom(\alpha''',1)$ contains a unique bad snippet of type $\mathbb{S}(h,h,0)$ and satisfies \[\len_\textrm{red}(\Hom(\alpha''',1)) \leq \len_\textrm{red}(\alpha''').\] Lemma \ref{Easy Bigon bounds} implies that the curve $\alpha''''=\Hom(\Hom(\alpha''',1),0)$ contains at most one bad snippet. If it does, the curve satisfies \[\len_\textrm{red}(\alpha'''') \leq \len_\textrm{red}(\Hom(\alpha''',1))-2 \leq \len_\textrm{red}(\alpha''')-2 \leq \len_\textrm{red}(\alpha)-2.\] Thus, the correctness of the algorithm \texttt{TwoTrigons} follows.

Lastly, let us analyse the running time of the algorithm \texttt{TwoTrigons}. Checking whether $\alpha[0]$ is of type $\mathbb{S}(h,t,3)$ and, if applicable, replacing the subarc $\alpha[:3]$ by the subarc $\Hom(\alpha[:3],1)$ can be done within $\mathit{O}(|\chi(S)|)$ many operations. Similarly, evaluating any further if-statements in the pseudocode of the algorithm \texttt{TwoTrigons} and executing one or two local homotopies can be done within $\mathit{O}(|\chi(S)|)$ many operations. As $\len_\textrm{red}(\alpha''[1:])\leq \len_\textrm{red}(\alpha)$, executing the algorithm \texttt{TrigArc} on the input $(S,N,\alpha''[1:])$ requires at most $\mathit{O}(\chi(S)^2 \cdot (\len_\textrm{red}(\alpha)+1))$ many operations. Hence, the entire algorithm terminates within $\mathit{O}(\chi(S)^2 \cdot (\len_\textrm{red}(\alpha)+1))$ many operations.
\end{proof}

\section{\texttt{HorBigonInCompWide}-algorithm for almost efficient curves}

In the following we say that a snippet of type $\mathbb{R}(h,h)$ is \emph{wide} or \emph{narrow} if the boundary of the bigon contains at least one point or does not contain any points of $\partial ^2 \mathcal{R}$ respectively. For examples of wide and narrow bigon snippets of type $\mathbb{R}(h,h)$ we refer the reader to Figure \ref{Homotopy of type B(R,h,h) picture} on page \pageref{Homotopy of type B(R,h,h) picture}.

We now use the results from the previous section to define the algorithm \texttt{HorBigonInCompWide} that
\begin{itemize}
\item takes as input a surface $S$ of positive complexity, a tie neighbourhood $N \subset S$ of a large train track in $S$, as well as an almost efficient curve with a wide snippet of type $\mathbb{R}(h,h)$, and
\item outputs a curve that is homotopic to the input and contains at most one bad snippet. If this is a bigon snippet, then the reduced corner length has decreased by at least two.
\end{itemize}
For a formal statement of the algorithm \texttt{HorBigonInCompWide} we refer the reader to Algorithm \ref{HorBigonInCompWide}.

Strictly speaking it is not necessary to differentiate between wide and narrow horizontal bigons and to present an extra algorithm which deals with wide horizontal bigons. However, it reduces the number of case distinctions that appear simultaneously in the arguments.

To simplify notation, we introduce the notion of reverse snippets and reverse arcs or curves. We recall that snippets and snippet-decomposed arcs and curves carry an orientation induced by the standard parametrization of the interval $[0,1]$.

\begin{defn}
Suppose that $a \subset  R \in \mathcal{R}$ is a snippet. By $\rev(a)$ we denote the snippet $a$ with its orientation reversed. Similarly, for any snippet-decomposed arc or curve $\alpha \subset S$, we denote by $\rev(\alpha)$ the arc or curve $\alpha$ with its orientation reversed.
\end{defn}
Suppose that $\alpha\subset S$ is a snippet-decomposed arc or curve of length $\len(\alpha)=k$. Then $\rev(\alpha)[i]=\rev(\alpha[k-i-1])$.


\begin{algorithm}
    \SetKwInOut{Input}{Input}
    \SetKwInOut{Output}{Output}
	\DontPrintSemicolon
    \Input{A surface $S$ of positive complexity, a tie neighbourhood $N \subset S$ of a large train track in $S$, and an almost efficient curve $\alpha \subset S$ that contains a wide bigon of type $\mathbb{R}(h,h)$. We further require that $\len(\alpha)>2$.}
    \Output{A curve $\alpha''''$ homotopic to $\alpha$ such that $\alpha''''$ contains at most one bad snippet. If this unique bad snippet is a bigon snippet, then $\len_\textrm{red}(\alpha'''')\leq \len_\textrm{red}(\alpha)-2$. If it is a trigon snippet, then $\len_\textrm{red}(\alpha'''')\leq\len_\textrm{red}(\alpha)+2s+1$.}
    \BlankLine
    Set $k$ such that $\alpha[k]$ is bad\;
    $\alpha'=\Hom(\alpha,k)$\;
    $\alpha''=\texttt{TrigArc}(S,N,\alpha'[k:]\cdot \alpha'[:k-1])$\;
    \If {$\alpha''$ is in efficient position}
    {
    \Return $\alpha''\cdot \alpha'[k-1]/_\sim$
    }
    \If {$\alpha''[-1]$ is of type $\mathbb{R}(h,v)$}{
    $\alpha'''=\Hom(\alpha''\cdot \alpha'[k-1]/_\sim,-2)$\;
    \Return $\texttt{AllButHorBigInCompOrBranch}(S,N,\alpha''')$
    }
    \If {$\alpha'[k-1]$ is of type $\mathbb{S}(h,t,1)$ or $\mathbb{S}(h,t,3)$}{
    $\alpha'''=\alpha'[k-1] \cdot \alpha''/_\sim$\;
    \Return $\texttt{TwoTrigons}(S,N,\alpha''')$
    }
    $\alpha'''=\rev(\alpha''[0]) \cdot \rev(\alpha'[k-1]) \cdot \rev(\alpha''[1:]) / _\sim$\;
    \Return $\rev(\texttt{TwoTrigons}(S,N,\alpha'''))$
    \caption{\texttt{HorBigonInCompWide} - Homotoping an almost efficient curve with a wide bigon of type $\mathbb{R}(h,h)$ into an input for $\texttt{TrigCurve}$ or into an almost efficient curve of bigon type and shorter reduced corner length.}
    \label{HorBigonInCompWide}
\end{algorithm}

\begin{lem} \label{HorBigInComplWide funtionally correct}
The algorithm \texttt{HorBigInComplWide} is correct. On an input $(S,N,\alpha)$, the algorithm halts in $\mathit{O}(\chi(S)^2 \cdot (\len_{\textrm{red}}(\alpha)+1))$ time.
\end{lem}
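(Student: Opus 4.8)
\textbf{Proof plan for Lemma~\ref{HorBigInComplWide funtionally correct}.}

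The plan is to verify correctness by tracing the pseudocode of \texttt{HorBigonInCompWide} branch by branch, using the detailed analysis of $\mathbb{R}(h,h)$-homotopies in Lemma~\ref{Lemma B(R,h,h)} together with the properties of \texttt{TrigArc} (Lemma~\ref{TrigArc funtionally correct}, Corollary~\ref{TrigArc terminates as soon as changes weak snippet type of beginning or end}, Corollary~\ref{Cor for bigons of weight 2}) and the already-established algorithms \texttt{TwoTrigons} (Lemma~\ref{TwoTrigons funtionally correct}) and \texttt{AllButHorBigInCompOrBranch} (Lemma~\ref{AllButHorBigInCompOrBranch funtionally correct}). First I would fix the unique bad snippet $\alpha[k]$, assume without loss of generality that it turns right, and note that since $\alpha[k]$ is of type $\mathbb{R}(h,h)$ and \emph{wide}, its endpoints lie in different components of $\partial\mathcal{R}-\partial^2\mathcal{R}$; hence by Lemma~\ref{Lemma B(R,h,h)}, applying $\Hom(\alpha,k)$ produces $\alpha'$ whose affected subarc $\Hom(\alpha[k-1:k+2],1)$ has its inside consisting entirely of carried snippets (all parallel to $\partial_h N$), and—since $\alpha[k\pm1]$ were in efficient position—the two ``outer'' snippets $\alpha'[k-1]$ and $\alpha'[k+1]$ become right-turning trigon snippets inside branch or switch rectangles (of type $\mathbb{B}(h,t)$, $\mathbb{S}(h,t,1)$, or $\mathbb{S}(h,t,3)$, never $\mathbb{S}(h,v,2)$), with $\len_\textrm{red}(\alpha')\leq\len_\textrm{red}(\alpha)$. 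So after the first homotopy $\alpha'$ has exactly two bad snippets, both right-turning trigons in the tie neighbourhood, with a block of carried snippets between them.

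Next I would analyse the arc $\alpha'[k:]\cdot\alpha'[:k-1]$: this is the curve $\alpha'$ cut open so that one of its two trigons, $\alpha'[k-1]$, is excised and sits at neither end, leaving an arc whose inside contains the single trigon $\alpha'[k+1]$ (renumbered); running \texttt{TrigArc} on it yields $\alpha''$ whose inside is in efficient position, and by Corollary~\ref{TrigArc terminates as soon as changes weak snippet type of beginning or end} at least one of $\alpha''[0]$, $\alpha''[-1]$ is weakly snippet-homotopic to the corresponding end of the input, hence in efficient position. This forces a trichotomy: either $\alpha''$ is entirely in efficient position—in which case $\alpha''\cdot\alpha'[k-1]/_\sim$ is a curve whose only possible bad snippet is the trigon $\alpha'[k-1]$, and we are done (a trigon output with $\len_\textrm{red}\leq\len_\textrm{red}(\alpha)+2s$, say)—or $\alpha''[-1]$ is a trigon, necessarily of type $\mathbb{R}(h,v)$ in a complementary region, or $\alpha''[0]$ is a trigon in a branch or switch rectangle of type $\mathbb{S}(h,t,1)$ or $\mathbb{S}(h,t,3)$ adjacent to $\alpha'[k-1]$. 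In the $\mathbb{R}(h,v)$ case, one further homotopy $\Hom(\cdot,-2)$ turns the pair of adjacent trigons into a single bigon snippet (of type $\mathbb{S}(t,t,0)$ or similar) without increasing reduced corner length, and feeding this into \texttt{AllButHorBigInCompOrBranch} finishes by Lemma~\ref{AllButHorBigInCompOrBranch funtionally correct} (a bigon output drops $\len_\textrm{red}$ by at least $2$). In the remaining two cases, $\alpha'''$ (or its reverse, to normalise which trigon is at position $0$) is precisely a valid input for \texttt{TwoTrigons}: two bad snippets $\alpha'''[0]$ of type $\mathbb{S}(h,t,1)$ or $\mathbb{S}(h,t,3)$ and $\alpha'''[1]$ of type $\mathbb{B}(h,t)$, turning the same way, and Lemma~\ref{TwoTrigons funtionally correct} delivers the output with the claimed bounds; the use of $\rev$ is harmless because reversal preserves snippet types, efficient position, and all notions of length, and $\rev(\rev(\cdot))$ is the identity. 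Collecting the bounds across branches gives the stated output guarantees, matching the $2s+1$ slack inherited from \texttt{TwoTrigons}.

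For the running time, I would walk the pseudocode line by line: fixing the bad snippet costs $\mathit{O}(|\chi(S)|\cdot\len(\alpha))=\mathit{O}(|\chi(S)|(\len_\textrm{red}(\alpha)+1))$ by Lemma~\ref{reduced corner length vs snippet length}; the single homotopy $\Hom(\alpha,k)$ costs $\mathit{O}(|\chi(S)|)$ by Remark~\ref{Required time for local homotopies}; the call to \texttt{TrigArc} on an arc of reduced corner length $\leq\len_\textrm{red}(\alpha)$ (since the cut-open arc and $\alpha'$ have comparable reduced corner length, the excised trigon $\alpha'[k-1]$ contributing only a constant) costs $\mathit{O}(\chi(S)^2(\len_\textrm{red}(\alpha)+1))$ by Corollary~\ref{TrigArc in reduced corner length}; each of the finitely many subsequent if-tests and local homotopies costs $\mathit{O}(|\chi(S)|)$; and the single call to either \texttt{TwoTrigons} or \texttt{AllButHorBigInCompOrBranch} costs $\mathit{O}(\chi(S)^2(\len_\textrm{red}(\alpha)+1))$ by Lemmas~\ref{TwoTrigons funtionally correct} and \ref{AllButHorBigInCompOrBranch funtionally correct}, where I must check that the reduced corner length fed into those calls is still $\mathit{O}(\len_\textrm{red}(\alpha))$—which follows because each intermediate step changes $\len_\textrm{red}$ by at most an additive constant or $2s=\mathit{O}(|\chi(S)|)$, and there are only boundedly many such steps. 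Summing yields $\mathit{O}(\chi(S)^2(\len_\textrm{red}(\alpha)+1))$.

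\textbf{Main obstacle.} The delicate point is the bookkeeping on reduced corner length and snippet types through the chain of transformations—in particular verifying that the cut-open-and-reversed arcs really do satisfy the precise input hypotheses of \texttt{TwoTrigons} (two bad snippets, one of type $\mathbb{S}(h,t,1)$ or $\mathbb{S}(h,t,3)$, the other of type $\mathbb{B}(h,t)$, turning the same way, and in \emph{adjacent} positions $0$ and $1$ after the gluing $/_\sim$), and that the outputs of the various branches all meet the uniform bound $\len_\textrm{red}\leq\len_\textrm{red}(\alpha)+2s+1$ for trigon outputs and $\len_\textrm{red}\leq\len_\textrm{red}(\alpha)-2$ for bigon outputs. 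This requires careful tracking of how $\Hom$, $/_\sim$, and $\rev$ interact with the snippet indexing and with which snippets are ``outer'' versus ``inner'', exactly as in the proof of Lemma~\ref{TwoTrigons funtionally correct}, and a case-by-case index-argument (each corner contributing $-1/4$) to pin down the type of each newly created trigon.
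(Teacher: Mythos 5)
Your plan follows essentially the same route as the paper's proof: apply $\Hom$ once to the wide $\mathbb{R}(h,h)$ bigon (invoking Lemma~\ref{Lemma B(R,h,h)} statements~II--4 to get two same-direction trigons of type $\mathbb{B}(h,t)/\mathbb{S}(h,t,1)/\mathbb{S}(h,t,3)$ with non-increased reduced corner length), cut out one trigon to form an arc, run \texttt{TrigArc}, and split into the trichotomy ``$\alpha''$ efficient'' / ``$\alpha''[-1]$ an $\mathbb{R}(h,v)$-trigon'' / ``$\alpha''[0]$ a tie-neighbourhood trigon'', dispatching respectively to the glue, the $\Hom(\cdot,-2)$-then-\texttt{AllButHorBigInCompOrBranch} branch, and \texttt{TwoTrigons} (with $\rev$ to put the switch-rectangle trigon in position~0); the running-time account via Corollary~\ref{TrigArc in reduced corner length} and the subroutine bounds matches as well. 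Two small points you gloss over but the paper makes explicit: the second trigon sits at position $k-1+|\partial B\cap\partial^2\mathcal{R}|$ rather than $k+1$ in general (harmless once you cut the curve open at $\alpha'[k-1]$), and the reason $\alpha''[-1]$, if bad, must be $\mathbb{R}(h,v)$ is that the last snippet $\alpha'[k-2]$ of the cut-open arc lies in a complementary region and \texttt{TrigArc} can only alter its weak snippet homotopy type, so any trigon produced there is a complementary-region trigon.
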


\begin{proof}
By assumption on the input of the algorithm \texttt{HorBigonInCompWide}, we are given a surface $S$ of positive complexity, a tie neighbourhood $N \subset S$ of a large train track in $S$, as well as an almost efficient curve $\alpha$ with a wide horizontal bigon snippet $\alpha[k]$ of type $\mathbb{R}(h,h)$. We can assume that $\len(\alpha)>2$.
We remark that it requires at most $\mathit{O}(|\chi(S)|(\len_\textrm{red}(\alpha)+1))$ many operations to determine the index of this bad snippet.

We prove the correctness of the algorithm by going through its pseudocode line by line.
As $\len(\alpha)>2$, the curve $\alpha'=\Hom(\alpha,k)$ has two trigon snippets inside the tie neighbourhood which turn the same way. Lemma \ref{Lemma B(R,h,h)} implies that \[\len_{\rm\textrm{red}}(\alpha')\leq \len_\textrm{red}(\alpha).\] Since $\Hom(\alpha[k-1:k+2],1)[1:-1]$ is in efficient position, we further know that one of these trigon snippets is the snippet $\alpha'[k-1]$, whereas the other trigon snippet might be adjacent to $\alpha'[k-1]$ or separated from it by a number of carried snippets of $\alpha'$ (see page \pageref{Homotopy of type B(R,h,h) picture}, Figure \ref{Homotopy of type B(R,h,h) picture}). The previous section provides and algorithm to handle curves that contain two adjacent trigon snippets inside the tie neighbourhood. Thus, the next step in the algorithm aims to homotope the curve into such a form. As $\alpha'[k-1]$ is one of the two bad snippets of trigon type of $\alpha'$, the arc $\alpha'[k:] \cdot \alpha'[:k-1]$ contains exactly one bad snippet. This is of trigon type. We note that the inside of $\alpha'[k:] \cdot \alpha'[:k-1]$ might already be in efficient position. In any case, the arc $\alpha''=\texttt{TrigArc}(S,N,\alpha'[k:] \cdot \alpha'[:k-1])$ then is in efficient position in its inside. If the entire arc $\alpha''$ is in efficient position, then $\alpha'' \cdot \alpha'[k-1]/_\sim$ contains a unique bad snippet of trigon type. Else, either the snippet $\alpha''[0]$ or the snippet $\alpha''[-1]$ are a bad snippet of trigon type. We note that in this case, the reduced corner length of $\alpha''$ equals the reduced corner length of $\alpha'[k:] \cdot \alpha'[:k-1]$ following previously applied arguments as in the proof of Lemma \ref{TwoTrigons funtionally correct}.

If $\alpha''[-1]$ is a trigon, it must be a trigon of type $\mathbb{R}(h,v)$ (see page \pageref{TwoTrigonLongwithcomplement}, Figure \ref{TwoTrigonLongwithcomplement}). This follows from the fact that the snippet $\alpha'[k-2]$ intersects the trigon $\alpha'[k-1]$ along $\partial_h N$. Hence, $\alpha'[k-2]$ and therefore $\alpha''[-1]$ lie inside a complementary region. Applying one homotopy to the snippet $\alpha''[-1]$ of type $\mathbb{R}(h,v)$ does not increase the reduced corner length but yields a curve with a unique snippet of bigon type that lies inside the tie neighbourhood. This implies that \[\texttt{AllButHorBigInCompOrBranch}(S,N,\Hom(\alpha''\cdot \alpha'[k-1]/_\sim,-2)\] is a curve that contains at most one bad snippet. If this bad snippet is a bigon, its reduced corner length has decreased by at least two. If the curve contains a unique trigon, the reduced corner length has increased by at most $2s+1$.

If $\alpha''[0]$ is a trigon, it must be a trigon inside the tie neighbourhood which is turning into the same direction as $\alpha'[k-1]$. Hence, we can use the algorithm \texttt{TwoTrigons} to obtain the desired results. Due to our special set-up of the algorithm \texttt{TwoTrigons} we have to make sure to feed it a curve whose first snippet lies inside a switch rectangle. Hence, if $\alpha'[k-1]$ lies inside a switch rectangle, we can immediately apply the algorithm \texttt{TwoTrigons} and $\texttt{TwoTrigons}(S,N,\alpha'[k-1]\cdot \alpha'')$ yields a curve whose reduced corner length has decreased (respectively increased) by at least two (respectively at most $2s+1$) if it contains a bigon (respectively trigon). If, on the other hand, the snippet $\alpha'[k-1]$ lies inside a branch rectangle, $\alpha''[0]$ is a snippet inside a switch rectangle. Thus, the curve $\alpha'''=\rev(\alpha''[0]) \cdot \rev(\alpha'[k-1]) \cdot \rev(\alpha''[1:])/_\sim$ is a valid input for \texttt{TwoTrigons}, and $\rev(\texttt{TwoTrigons}(S,N,\alpha'''))$ is homotopic to $\alpha$ and of the required form. This concludes the proof of the correctness of the algorithm \texttt{HorBigInComplWide}.

To analyse the running time of the algorithm \texttt{HorBigInComplWide} on an input $(S,N,\alpha)$, we note that any evaluation of the if-statements and algorithms can be executed within $\mathit{O}(\chi(S)^2 \cdot (\len_\textrm{red}(\widetilde{\alpha})+1))$ many operations, where $\widetilde{\alpha}$ is the relevant version of the curve homotopic to $\alpha$. As $\alpha'[k-1]$ is a bad snippet, we know that \[\len_\textrm{red}(\alpha')=\len_\textrm{corn}(\alpha'[k-1])+\len_\textrm{red}(\alpha'[k:]\cdot \alpha'[:k-1]).\] Together with the fact that the reduced corner length of $\widetilde{\alpha}$ never exceeds the reduced corner length of $\alpha$ unless efficient position is achieved or a unique bad snippet of trigon type is created, this implies that the entire algorithm terminates within $\mathit{O}(\chi(S)^2 \cdot (\len_\textrm{red}(\alpha)+1))$ many operations.
\end{proof}

\section{\texttt{HorBigonInComp}-algorithm for almost efficient curves}

Building on the results from the previous section, we now present the algorithm \texttt{HorBigonInComp} that handles almost efficient curves with a wide or narrow horizontal bigon. For a formal statement of \texttt{HorBigonInComp} we refer the reader to Algorithm \ref{HorBigonInComp}.

\begin{algorithm} 
    \SetKwInOut{Input}{Input}
    \SetKwInOut{Output}{Output}
	\DontPrintSemicolon
    \Input{A surface $S$ of positive complexity, a tie neighbourhood $N \subset S$ of a large train track in $S$, and an almost efficient curve $\alpha \subset S$ that contains a snippet of type $\mathbb{R}(h,h)$.}
    \Output{A curve $\alpha'$ homotopic to $\alpha$ such that $\alpha'$ contains at most one bad snippet. If this unique bad snippet is a bigon snippet, then $\len_\textrm{red}(\alpha')<\len_\textrm{red}(\alpha)$. If it is a trigon snippet, then $\len_\textrm{red}(\alpha')\leq\len_\textrm{red}(\alpha)+2s+1$.}
    \BlankLine
    Set $k$ such that $\alpha[k]$ is bad\;
    \If {$\len(\alpha)=2$}
    {
    \Return $\texttt{AllButHorBigInCompOrBranch}(S,N,\Hom(\alpha,k))$
    }
    \If {$\Hom(\alpha,k)[k-1]$ is a bigon of type $\mathbb{B}(h,h)$ or $\mathbb{S}(h,h,0)$}
    {
    \Return $\Hom(\alpha,k)$
    }
    \Return $\texttt{HorBigInComplWide}(S,N,\alpha)$
    \caption{\texttt{HorBigonInComp} - Homotoping an almost efficient curve with a snippet of type $\mathbb{R}(h,h)$ into an input for $\texttt{TrigCurve}$ or into an almost efficient curve of bigon type and shorter reduced corner length.}
    \label{HorBigonInComp}
\end{algorithm}

\begin{lem} \label{HorBigInCompl funtionally correct}
The algorithm \texttt{HorBigInCompl} is correct. On an input $(S,N,\alpha)$, the algorithm halts in $\mathit{O}(\chi(S)^2 \cdot (\len_\textrm{red}(\alpha)+1))$ time.
\end{lem}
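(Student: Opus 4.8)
The plan is to prove correctness of \texttt{HorBigonInComp} by case analysis on the snippet length of $\alpha$ and the behaviour of a single bigon homotopy applied to the unique bad snippet $\alpha[k]$ of type $\mathbb{R}(h,h)$, mirroring the structure of the pseudocode. First I would invoke the standing conventions (Remark \ref{Standard conventions algor}) to note that identifying the bad snippet and checking its type takes $\mathit{O}(|\chi(S)|(\len_\textrm{red}(\alpha)+1))$ time, using Lemma \ref{reduced corner length vs snippet length} to bound $\len(\alpha)$ by $\len_\textrm{red}(\alpha)+1$. Then I would split into the three branches of the algorithm.

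For the base case $\len(\alpha)=2$: by Lemma \ref{Lemma B(R,h,h)}(2), $\Hom(\alpha,k)$ is either an inessential curve of snippet length one — in which case it is a valid output with at most one bad snippet — or it contains a unique bad snippet of bigon type that does not intersect $\partial_h N$, with $\len_\textrm{red}(\Hom(\alpha,k)) \leq \len_\textrm{red}(\alpha)$. Such a bigon must be of type $\mathbb{B}(t,t)$, $\mathbb{S}(h,h,0)$, $\mathbb{S}(t,t,0)$, $\mathbb{S}(v,v,0)$, $\mathbb{R}(v,v)$, $\mathbb{S}(t,t,2)$, or $\mathbb{S}(t,v,1)$ (the bigon snippet types whose boundary avoids $\partial_h N$), so $\Hom(\alpha,k)$ is a valid input for \texttt{AllButHorBigInCompOrBranch}; Lemma \ref{AllButHorBigInCompOrBranch funtionally correct} then gives correctness and the running time, where I must double-check that applying that algorithm to a length-one curve is handled gracefully or that the bigon snippet forces $\len > 2$ — since $\len_\textrm{red}$ only drops by two when the output bigon survives, this is consistent with the claimed bound $\len_\textrm{red}(\alpha') < \len_\textrm{red}(\alpha)$.

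For $\len(\alpha)>2$: by Lemma \ref{Lemma B(R,h,h)}, exactly one of alternatives \textit{I} and \textit{II} holds since $\alpha[k-1]$ and $\alpha[k+1]$ are in efficient position (they are the snippets of $\alpha_\textrm{trim}$ adjacent to the unique bad snippet). In alternative \textit{I}, $\Hom(\alpha,k)[k-1]$ is a bigon of type $\mathbb{B}(h,h)$ or $\mathbb{S}(h,h,0)$ and $\len_\textrm{red}(\Hom(\alpha,k)) \leq \len_\textrm{red}(\alpha)-1$, so the second if-branch returns a valid output (a curve with a unique bad snippet of bigon type and strictly smaller reduced corner length). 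In alternative \textit{II}, $\Hom(\alpha[k-1:k+2],1)$ contains exactly two trigon snippets turning the same way and $\len_\textrm{red}(\Hom(\alpha,k)) \leq \len_\textrm{red}(\alpha)$; this means $\alpha$ is a wide bigon of type $\mathbb{R}(h,h)$ (the narrow case produces a single snippet, hence lands in alternative \textit{I}), so it is a valid input for \texttt{HorBigInComplWide}, and Lemma \ref{HorBigInComplWide funtionally correct} closes this branch. The running time in every branch is $\mathit{O}(\chi(S)^2 \cdot (\len_\textrm{red}(\alpha)+1))$: each local homotopy costs $\mathit{O}(|\chi(S)|)$ by Remark \ref{Required time for local homotopies}, and each subroutine call respects that bound since the reduced corner length of the curve passed in never exceeds $\len_\textrm{red}(\alpha)$ by more than an additive constant before the call.

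The main obstacle I anticipate is bookkeeping the correspondence between the dichotomy \textit{I}/\textit{II} of Lemma \ref{Lemma B(R,h,h)} and the pseudocode's decision (whether $\Hom(\alpha,k)[k-1]$ is a bigon of type $\mathbb{B}(h,h)$ or $\mathbb{S}(h,h,0)$) — I need to argue that this type check is exactly equivalent to being in case \textit{I}, which in turn is equivalent to $\alpha[k](0)$ and $\alpha[k](1)$ lying on the same component of $\partial\mathcal{R}-\partial^2\mathcal{R}$, i.e.\ $\alpha[k]$ being a narrow bigon. The remaining subtlety is checking that the ``valid output'' conditions (at most one bad snippet; the reduced-corner-length inequalities distinguishing the bigon and trigon cases) are inherited correctly from the three subroutines, and that when \texttt{HorBigInComplWide} or \texttt{AllButHorBigInCompOrBranch} outputs a trigon the bound $\len_\textrm{red}(\alpha') \leq \len_\textrm{red}(\alpha)+2s+1$ is not exceeded — here I would trace through Lemmas \ref{AllButHorBigInCompOrBranch funtionally correct} and \ref{HorBigInComplWide funtionally correct} to confirm the additive constants add up, noting that \texttt{AllButHorBigInCompOrBranch} gives the weaker $+2s$ and \texttt{HorBigInComplWide} already gives $+2s+1$, both within the stated bound.
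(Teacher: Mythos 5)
Your proof follows the paper's own argument almost line by line: you split on $\len(\alpha)=2$ versus $\len(\alpha)>2$, identify the narrow/wide dichotomy with alternatives I/II of Lemma~\ref{Lemma B(R,h,h)}, and delegate to \texttt{AllButHorBigInCompOrBranch} or \texttt{HorBigInComplWide} exactly as the paper does, with the same running-time accounting. Two small points you flagged as needing a double-check are resolved as follows and are both straightforward: (a) the length-one output of $\Hom(\alpha,k)$ in the $\len(\alpha)=2$ case cannot occur here, because $\alpha[k-1]$ is assumed to be in efficient position, so it must be a tie of the adjacent branch or switch rectangle, which forces $\alpha[k](0)$ and $\alpha[k](1)$ onto two different horizontal sides; and (b) your parenthetical ``the bigon snippet types whose boundary avoids $\partial_h N$'' is slightly off, since $\mathbb{S}(h,h,0)$ and $\mathbb{R}(v,v)$ do not satisfy that description — the resulting bigon actually lies inside the tie neighbourhood with no endpoint on $\partial_h N$, hence is one of $\mathbb{B}(t,t)$, $\mathbb{S}(t,t,0)$, $\mathbb{S}(v,v,0)$, $\mathbb{S}(t,v,1)$, $\mathbb{S}(t,t,2)$, which is a subset of what \texttt{AllButHorBigInCompOrBranch} accepts, so your conclusion is still correct.
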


\begin{proof}
By assumption on the input of the algorithm \texttt{HorBigonInComp}, we are given a surface $S$ of positive complexity, a tie neighbourhood $N \subset S$ of a large train track in $S$, as well as an almost efficient curve $\alpha \subset S$ with a bad snippet $\alpha[k]$ of type $\mathbb{R}(h,h)$. We remark that it requires at most $\mathit{O}(|\chi(S)|(\len_\textrm{red}(\alpha)+1))$ many operations to determine the index of this bad snippet.

If $\len(\alpha)=2$, then $\alpha[k-1]$ must be a tie of branch or switch rectangle (see page \pageref{Short R(h,h) picture}, Figure \ref{Short R(h,h) picture}). We see that $\Hom(\alpha,k)[k-1]$ is a bigon inside the tie neighbourhood that does not intersect $\partial_h N$. Thus, it is a valid input for the algorithm \texttt{AllButHorBigInCompOrBranch}. Since $\len_\textrm{red}(\alpha)=\len_\textrm{red}(\Hom(\alpha,k))$, we know that \[\len_\textrm{red}(\texttt{AllButHorBigInCompOrBranch}(S,N,\Hom(\alpha,k))) \leq \len_\textrm{red}(\alpha)-2\] and that the output curve contains a bigon.

If $\len(\alpha)>2$ and $\alpha[k]$ is a narrow bigon, then $\alpha[k-1]$ and $\alpha[k+1]$ are both ties of the same branch or switch rectangle. Hence, $\Hom(\alpha,k)$ contains a unique bigon of type $\mathbb{B}(h,h)$ or $\mathbb{S}(h,h,0)$ and satisfies $\len_\textrm{red}(\Hom(\alpha,k))\leq \len_\textrm{red}(\alpha)-1$ by Lemma \ref{Lemma B(R,h,h)}.

\sloppy
Else, we have that $\len(\alpha)>2$ and $\alpha[k]$ is a wide bigon. Thus, $\texttt{HorBigInComplWide}(S,N,\alpha)$ is of the desired form following the results in the previous section.

As all algorithms employed terminate within $\mathit{O}(\chi(S)^2 \cdot (\len_\textrm{red}(\alpha)+1))$ time, the algorithm \texttt{HorBigInCompl} terminates within $\mathit{O}(\chi(S)^2 \cdot (\len_\textrm{red}(\alpha)+1))$ many operations.
\end{proof}

\section{\texttt{HorBigonInBranch}-algorithm for almost efficient curves}

In this section we see that the discussion about almost efficient curves containing a snippet of type $\mathbb{B}(h,h)$ naturally reduces to the case of almost efficient curve containing a snippet of type $\mathbb{R}(h,h)$. For a formal statement of the corresponding algorithm \texttt{HorBigonInBranch} we refer the reader to Algorithm \ref{HorBigonInBranch}.

\begin{algorithm} 
    \SetKwInOut{Input}{Input}
    \SetKwInOut{Output}{Output}
	\DontPrintSemicolon
    \Input{A surface $S$ of positive complexity, a tie neighbourhood $N \subset S$ of a large train track in $S$, and an almost efficient curve $\alpha \subset S$ that contains a snippet of type $\mathbb{B}(h,h)$.}
    \Output{A curve $\alpha''$ homotopic to $\alpha$ such that $\alpha''$ contains at most one bad snippet. If this bad snippet is a bigon snippet, then $\len_\textrm{red}(\alpha'')<\len_\textrm{red}(\alpha)$. If it is a trigon snippet, then $\len_\textrm{red}(\alpha'')\leq\len_\textrm{red}(\alpha)+2s+1$.}
    \BlankLine
    Set $k$ such that $\alpha[k]$ is bad\;
    \If {$\len(\alpha)=2$ or $\len_\mathrm{corn}(\alpha[k-1])=2s$ or $\len_\mathrm{corn}(\alpha[k+1])=2s$}
    {
    \Return $\Hom(\alpha,k)$
    }
    \If {$\alpha[k-1]$ and $\alpha[k+1]$ are vertical duals that turn the same way}
    {
    \Return $\Hom(\alpha,k)$
    }
    \If {$\len_\mathrm{corn}(\alpha[k-1])>1$ and $\len_\mathrm{corn}(\alpha[k+1])>1$}
    {
    \Return $\Hom(\alpha,k)$
    }
    \If {$\len_\mathrm{corn}(\alpha[k-1])=1$ and $\len_\mathrm{corn}(\alpha[k+1])=1$}
    {
    \If {$\alpha[k-3:k]$ or $\alpha[k+1:k+4]$ is not a blocker}
    {\Return $\Hom(\alpha,k)$}
    $\alpha'=\Hom(\Hom(\alpha,k),k-1)$\;
    \Return $\Hom(\alpha',k-2)$\;
    }
    \Return $\texttt{HorBigonInComp}(S,N,\Hom(\alpha,k))$
    \caption{\texttt{HorBigonInBranch} - Homotoping an almost efficient curve with a bigon snippet of type $\mathbb{B}(h,h)$ into an input for $\texttt{TrigCurve}$ or into a curve of shorter reduced corner length containing a single bigon snippet.}
    \label{HorBigonInBranch}
\end{algorithm}

\begin{lem} \label{HorBigInBranch funtionally correct}
The algorithm \texttt{HorBigInBranch} is correct. On an input $(S,N,\alpha)$, the algorithm halts in $\mathit{O}(\chi(S)^2 \cdot (\len_\textrm{red}(\alpha)+1))$ time.
\end{lem}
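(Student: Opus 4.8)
The plan is to prove Lemma \ref{HorBigInBranch funtionally correct} by carefully walking through the pseudocode of \texttt{HorBigonInBranch}, verifying in each branch of the case distinction that the output has the claimed form and that the running-time bound holds. The key observation is that a bigon snippet $\alpha[k]$ of type $\mathbb{B}(h,h)$ lies in a branch rectangle with $\alpha[k-1]$ and $\alpha[k+1]$ both in the same complementary region $R \in \mathcal{R}_\textrm{comp}$ (cf.\ Lemma \ref{Lemma B(h,h,0)}), so a single local bigon homotopy $\Hom(\alpha,k)$ replaces the subarc $\alpha[k-1:k+2]$ by a single snippet in $R$. By Lemma \ref{Lemma B(h,h,0)}, this snippet is either in efficient position, or is a peripheral curve of length one, or is a bigon of type $\mathbb{R}(h,v)$, $\mathbb{R}(h,h)$, or $\mathbb{R}(v,v)$, with the reduced corner length increasing by at most one. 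First I would dispose of the ``easy'' branches: when $\len(\alpha)=2$, or when one of $\alpha[k\pm1]$ has corner length $2s$, or when $\alpha[k\pm1]$ are vertical duals turning the same way — in all of these cases Lemma \ref{Lemma B(h,h,0)} (together with the index argument there) shows that $\Hom(\alpha,k)$ is already in efficient position or has a single bigon of type $\mathbb{R}(v,v)$ with $\len_\textrm{red}$ dropping, matching the output spec. Likewise, if both $\len_\textrm{corn}(\alpha[k\pm1])>1$, the resulting $\mathbb{R}(h,h)$ bigon is ``wide'' and one feeds it into \texttt{HorBigonInComp}; by Lemma \ref{HorBigInCompl funtionally correct} this gives the required output, using that $\len_\textrm{red}(\Hom(\alpha,k)) \leq \len_\textrm{red}(\alpha)+1$ and that \texttt{HorBigonInComp} decreases the reduced corner length of a bigon output.

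The crux is the branch where $\len_\textrm{corn}(\alpha[k-1])=\len_\textrm{corn}(\alpha[k+1])=1$, i.e.\ both neighbours are snippets cutting off index-zero regions of corner length one. Here the point is precisely the blocker phenomenon: if neither $\alpha[k-3:k]$ nor $\alpha[k+1:k+4]$ is a blocker, then applying $\Hom(\alpha,k)$ produces a narrow $\mathbb{R}(h,h)$ bigon (or an efficient snippet) and the loss of two components of $\partial\mathcal{R}-\partial^2\mathcal{R}$ from the boundary of the index-zero region, combined with the fact that no blocker is destroyed, keeps $\len_\textrm{red}$ from growing — so returning $\Hom(\alpha,k)$ (which then is a valid \texttt{HorBigonInComp}-type input handled implicitly, or is itself already fine) is correct. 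If, on the other hand, both $\alpha[k-3:k]$ and $\alpha[k+1:k+4]$ are blockers, then $\alpha[k-1]$ and $\alpha[k+1]$ are vertical duals adjacent to branch-rectangle snippets; applying $\Hom(\alpha,k)$ followed by two further homotopies $\Hom(\cdot,k-1)$ and $\Hom(\cdot,k-2)$ ``unwinds'' the two blockers. I would verify via an index argument that $\Hom(\alpha,k)[k-1]$ is a bigon of type $\mathbb{R}(v,v)$ or similar, that $\Hom(\Hom(\alpha,k),k-1)$ has its bad snippet shifted into the tie neighbourhood, and that the final $\Hom(\cdot,k-2)$ yields a curve with at most one bad snippet; the two destroyed blockers each free up two units of corner length (Lemma \ref{Length of blocker}), more than compensating for the $+1$ from Lemma \ref{Lemma B(h,h,0)}, so $\len_\textrm{red}$ strictly decreases. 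This is the main obstacle: tracking exactly how the blocker count and the corner lengths of the affected snippets change through the chain of three homotopies, and confirming no new bad snippet of trigon type escapes the local window $\alpha[k-3:k+4]$.

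For the running time, each local homotopy costs $\mathit{O}(|\chi(S)|)$ by Remark \ref{Required time for local homotopies}, and checking the various if-conditions (which snippet is bad, computing corner lengths of a constant number of snippets, testing the blocker conditions on $\alpha[k-3:k]$ and $\alpha[k+1:k+4]$) requires a look-up in data structures of size $\mathit{O}(|\chi(S)|)$, except for the initial pass to locate the bad snippet which costs $\mathit{O}(|\chi(S)|\,(\len_\textrm{red}(\alpha)+1))$ by Lemma \ref{reduced corner length vs snippet length}. The only recursive call is to \texttt{HorBigonInComp}, which by Lemma \ref{HorBigInCompl funtionally correct} halts in $\mathit{O}(\chi(S)^2 \cdot (\len_\textrm{red}(\Hom(\alpha,k))+1)) = \mathit{O}(\chi(S)^2 \cdot (\len_\textrm{red}(\alpha)+1))$ time, using $\len_\textrm{red}(\Hom(\alpha,k)) \leq \len_\textrm{red}(\alpha)+1$. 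Summing all contributions gives the claimed $\mathit{O}(\chi(S)^2 \cdot (\len_\textrm{red}(\alpha)+1))$ bound, which completes the proof.
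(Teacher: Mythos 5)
Your overall plan — walking through the pseudocode case by case — matches the paper's approach, but there is a genuine conceptual gap that would sink the argument in the crucial branch.

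You have the blocker contribution to the reduced corner length backwards. Since $\len_\textrm{red}(\alpha) = \len_\textrm{corn}(\alpha) - 2\len_\textrm{block}(\alpha)$, each blocker is a \emph{deduction}: destroying a blocker removes the $-2$ term and so makes $\len_\textrm{red}$ go \emph{up} by $2$, not down. When you write that in the ``both blockers'' subcase ``the two destroyed blockers each free up two units of corner length \dots\ so $\len_\textrm{red}$ strictly decreases,'' the sign is wrong, and with the correct sign your argument proves nothing. The paper's argument has to do real work here: it tracks the corner lengths of the specific snippets involved (using Lemma \ref{Length of blocker} to guarantee $\len_\textrm{corn}(\alpha[k\pm3]) \geq 5$, and using that the blocker neighbourhood $\alpha[k-3:k+4]$ loses $15$ units of corner length through the three homotopies), and shows that this drop of $15$ dominates the $+4$ penalty from destroying (at most) two blockers, yielding $\len_\textrm{red}(\alpha') \leq \len_\textrm{red}(\alpha) - 7$. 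Your proposal is missing precisely this bookkeeping, which is the point of the whole case.

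Two smaller issues: the guard in the pseudocode reads ``if $\alpha[k-3:k]$ or $\alpha[k+1:k+4]$ is not a blocker,'' i.e.\ ``at most one of the two is a blocker,'' not ``neither is.'' In the subcase where exactly one of the two is a blocker, your claim ``no blocker is destroyed'' does not apply; the paper handles it by noting $\len_\textrm{corn}$ drops by $3$, outweighing the $+2$ from one destroyed blocker. Finally, for a bigon of type $\mathbb{B}(h,h)$ the adjacent snippets enter the complementary region along $\partial_h N$, so when both are (in the paper's terminology) vertical duals turning opposite ways the resulting bigon after $\Hom(\alpha,k)$ is of type $\mathbb{R}(h,h)$, not $\mathbb{R}(v,v)$; this matters because the algorithm hands the result to \texttt{HorBigonInComp}, which is specifically an $\mathbb{R}(h,h)$ routine. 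The running-time analysis, by contrast, is essentially correct.
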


\begin{proof}
By assumption on the input of the algorithm \texttt{HorBigInBranch}, we are given a surface $S$ of positive complexity, a tie neighbourhood $N \subset S$ of a large train track in $S$, as well as an almost efficient curve $\alpha$ that contains a bigon snippet $\alpha[k]$ of type $\mathbb{B}(h,h)$. We remark that it requires at most $\mathit{O}(|\chi(S)|(\len_\textrm{red}(\alpha)+1))$ many operations to determine the index of this bad snippet.
 
The structure of the algorithm and of our proof is arranged according to the different types of the snippets $\alpha[k-1]$ and $\alpha[k+1]$. We recall that all snippets of $\alpha$ but $\alpha[k]$ are in efficient position. The following list provides a complete case distinction.
\begin{itemize}
\item $\len(\alpha)=2$.
\item $\len_\textrm{corn}(\alpha[k-1])$ or $\len_\textrm{corn}(\alpha[k+1])$ equal $2s$.
\item $\alpha[k-1]$ and $\alpha[k+1]$ are vertical duals. In this case they are
\begin{itemize}
\item vertical duals turning into the same direction (see Figure \ref{TwodualsSameway}) or
\item vertical duals turning into different direction (see Figures \ref{Bothcornerlengthgreaterthanone}-\ref{ExactlyOnehascornerlengthone}).
\end{itemize}
\end{itemize}
If $\alpha[k-1]$ and $\alpha[k+1]$ are vertical duals turning into different directions, we can further distinguish the following disjoint cases.
\begin{itemize}
\item $\len_\textrm{corn}(\alpha[k-1])>1$ and $\len_\textrm{corn}(\alpha[k+1])>1$ (see Figure \ref{Bothcornerlengthgreaterthanone}).
\item $\len_\textrm{corn}(\alpha[k-1])=1$ and $\len_\textrm{corn}(\alpha[k+1])=1$ (see Figure \ref{Bothcornerlengthone}).
\item Exactly one of the snippets $\alpha[k-1]$ and $\alpha[k+1]$ has a corner length greater than one (see Figure \ref{ExactlyOnehascornerlengthone}).
\end{itemize}

\begin{figure}[htbp] 
  \begin{minipage}[b]{0.49\linewidth}
    \centering
\begin{tikzpicture}[scale=0.45]
\draw [very thick](-4,-1.5) {} -- (-4,-3) {};
\draw [very thick] plot[smooth, tension=.7] coordinates { (-4,-3) (1,-2.5) (6,-3)};
\draw [very thick](6,-1.5) node (v3) {} -- (6,-3);
\draw [very thick] plot[smooth, tension=.7] coordinates {(-4,-1.5) (-1,-0.5) (-0.2,0.8)};
\draw [very thick] plot[smooth, tension=.7] coordinates {(v3) (3,-0.5) (2.2,0.8)};
\node at (-4.625,-2.25) {{\tiny{$v$}}};
\node at (1,-3.5) {{\tiny{$h$}}};
\node at (6.5175,-2.3319) {{\tiny{$v$}}};
\draw [very thick, densely dashed, blue] plot[smooth, tension=.7] coordinates {(-3.4175,-1.3542) (-3,-3.5) (-1.151,-3.5001) (-0.6504,-1.2708) (2.5,0)};
\draw [very thick] (-4,0) rectangle (-5.5,-4.5);
\draw [very thick](-4,-4.5) -- (-1.5,-4.5);
\draw [very thick] plot[smooth, tension=.7] coordinates {(-4,0) (-2.761,0.3436) (-2.1173,0.8793)};
\end{tikzpicture}
    \caption{A trigon snippet of type $\mathbb{B}(h,h)$ that is adjacent to two vertical duals that turn the same way.} 
    \vspace{24pt}
    \label{TwodualsSameway}
  \end{minipage}
  \begin{minipage}[b]{0.49\linewidth}
    \centering
\begin{tikzpicture}[scale=0.45]
\draw [very thick](-4,-1.5) {} -- (-4,-3) {};
\draw [very thick] plot[smooth, tension=.7] coordinates { (-4,-3) (1,-2.5) (6,-3)};
\draw [very thick](6,-1.5) node (v3) {} -- (6,-3);
\draw [very thick] plot[smooth, tension=.7] coordinates {(-4,-1.5) (-1,-0.5) (-0.2,0.8)};
\draw [very thick] plot[smooth, tension=.7] coordinates {(v3) (3,-0.5) (2.2,0.8)};
\node at (-4.625,-2.25) {{\tiny{$v$}}};
\node at (2.0665,-2.935) {{\tiny{$h$}}};
\node at (6.5175,-2.3319) {{\tiny{$v$}}};
\draw [very thick] (-4,0) rectangle (-5.5,-4.5);
\draw [very thick](-4,-4.5) -- (-2.5,-4.5);
\draw [very thick] plot[smooth, tension=.7] coordinates {(-4,0) (-2.761,0.3436) (-2.1173,0.8793)};
\draw [very thick](-2.9192,0.2912) -- (-2.6468,-1.1139);
\draw [very thick](-2.5105,-2.7784) -- (-2.5,-4.5);
\draw [very thick, densely dashed, blue] plot[smooth, tension=.7] coordinates {(-3.7905,-1.4515) (-3.2776,-2.3611)  (-0.4818,-2.0925) (-0.011,-3.4229) (0.7514,-3.3685) (0.78,-1.919) (-0.1635,-1.3753)  (-2.4315,-1.7905) (-3.0389,-1.2555)};
\draw [very thick, densely dashed, blue] plot[smooth, tension=.7] coordinates {(2.603,-0.1772) (2.2803,-1.2861) (4.3627,-2.0513) (4.6463,-3.4892) (5.5073,-3.4822) (5.5991,-1.8987)  (3.7449,-1.3593) (3.6499,-0.8172)};
\draw [very thick](-1,-2.6113) -- (-1,-3);
\draw [very thick](4.9724,-0.8706) -- (4.8902,-1.201);
\draw [very thick](3.4761,-0.3348) -- (3.2656,-0.6395);
\draw [very thick] (6,-0.5) rectangle (7.5,-4);
\draw [very thick](6,-4) -- (4,-4) -- (3.9948,-2.7264);
\end{tikzpicture}
    \caption{Examples of trigon snippets of type $\mathbb{B}(h,h)$ that are adjacent to two vertical duals which turn into different directions and both have corner length greater than one.} 
    \label{Bothcornerlengthgreaterthanone}
  \end{minipage}
  \begin{minipage}[b]{0.49\linewidth}
    \centering
\begin{tikzpicture}[scale=0.45]
\draw [very thick](-4,-1.5) {} -- (-4,-3) {};
\draw [very thick] plot[smooth, tension=.7] coordinates { (-4,-3) (1,-2.5) (6,-3)};
\draw [very thick](6,-1.5) node (v3) {} -- (6,-3);
\draw [very thick] plot[smooth, tension=.7] coordinates {(-4,-1.5) (-1,-0.5) (-0.2,0.8)};
\draw [very thick] plot[smooth, tension=.7] coordinates {(v3) (3,-0.5) (2.2,0.8)};
\node at (-4.625,-2.25) {{\tiny{$v$}}};
\node at (1,-3.5) {{\tiny{$h$}}};
\draw [very thick] (-4,0) rectangle (-5.5,-4.5);
\draw [very thick](-4,-4.5) -- (-2,-4.5);
\draw [very thick] plot[smooth, tension=.7] coordinates {(-4,0) (-2.761,0.3436) (-2.1173,0.8793)};
\draw [very thick](-1.9934,-2.7402) -- (-2,-4.5);
\draw [very thick] (-5.5,0) rectangle (-7,-4.5);
\draw [thick, dotted](-7,0) -- (-9,0) -- (-9,1.5);
\draw [thick, dotted] plot[smooth, tension=.7] coordinates {(-9,1.5) (-8,1.75) (-7.0339,3.0759)};
\draw [very thick, densely dashed, blue] plot[smooth, tension=.7] coordinates {(-7.7209,1.9687) (-4.014,0.9474)  (-3.5,-3.5) (-2.5,-3.5)  (-2.854,1.0563)  (-7.1283,2.8261)};
\draw [very thick](-2.3119,0.6362) -- (-1.2654,-0.6498);
\end{tikzpicture}
    \caption{A trigon snippet of type $\mathbb{B}(h,h)$ that is adjacent to two vertical duals which turn into different directions and which both have corner length one.} 
    \vspace{12pt}
    \label{Bothcornerlengthone}
  \end{minipage}
    \begin{minipage}[b]{0.49\linewidth}
    \centering
\begin{tikzpicture}[scale=0.45]
\draw [very thick](-4,-1.5) {} -- (-4,-3) {};
\draw [very thick] plot[smooth, tension=.7] coordinates { (-4,-3) (1,-2.5) (6,-3)};
\draw [very thick](6,-1.5) node (v3) {} -- (6,-3);
\draw [very thick] plot[smooth, tension=.7] coordinates {(-4,-1.5) (-1,-0.5) (-0.2,0.8)};
\draw [very thick] plot[smooth, tension=.7] coordinates {(v3) (3,-0.5) (2.2,0.8)};
\node at (-4.625,-2.25) {{\tiny{$v$}}};
\node at (1,-3.5) {{\tiny{$h$}}};
\draw [very thick, densely dashed, blue] plot[smooth, tension=.7] coordinates {(-3.4175,-1.3542) (-3,-3.5) (-1,-3.5) (-0.6504,-1.2708) (-0.9146,-0.459)};
\draw [very thick] (-4,0) rectangle (-5.5,-4.5);
\draw [very thick](-4,-4.5) -- (0,-4.5);
\draw [very thick] plot[smooth, tension=.7] coordinates {(-4,0) (-2.761,0.3436) (-2.1173,0.8793)};
\draw [very thick](-2.5814,0.4189) -- (-1.951,-0.9049);
\draw [very thick](0.0042,-2.538) -- (0,-4.5);
\end{tikzpicture}
    \caption{A trigon snippet of type $\mathbb{B}(h,h)$ that is adjacent to two vertical duals which turn into different directions and of which exactly one has corner length one.} 
    \label{ExactlyOnehascornerlengthone}
  \end{minipage}
\end{figure}

We note that the corner length of any vertical dual snippet must be greater than zero as vertical duals are in efficient position.
We go through each of these cases and the corresponding if-statements of the pseudocode step by step. We remark that each of the if-statements concludes with a return-command. Thus, earlier tested properties of the curve can be assumed to be false.

If $\len(\alpha)=2$, then $\alpha[k-1]$ is an efficient snippet inside a peripheral complementary region winding non-trivially around a boundary component. Hence, $\Hom(\alpha,k)$ is a peripheral curve of snippet length one.

If $\len_\textrm{corn}(\alpha[k-1])=2s$ or $\len_\textrm{corn}(\alpha[k+1])=2s$, then $\alpha[k-1]$ or $\alpha[k+1]$ are in efficient position but not part of any blockers of the curve $\alpha$. We recall that any snippet in efficient position has non-zero corner length. Lemma \ref{Lemma B(h,h,0)} implies that $\len(\Hom(\alpha[k-1:k+2],1))=1$. Thus, we see that \[\len_\textrm{corn}(\Hom(\alpha[k-1:k+2],1))\leq 2s < 2s+1+1\leq \len_\textrm{corn}(\alpha[k-1:k+2]).\] 
Unless $\len_\textrm{corn}(\alpha[k-1])=1$ or $\len_\textrm{corn}(\alpha[k+1])=1$, this implies that $\len_\textrm{red}(\Hom(\alpha,k))$ is strictly smaller than $\len_\textrm{red}(\alpha)$ as the number of blockers decreases by at most one under the homotopy. Thus, of whatever type the snippet $\Hom(\alpha[k-1:k+2],1)$ is in this case, the curve $\Hom(\alpha,k)$ is a valid output for the algorithm \texttt{HorBigonInBranch}. On the other hand, if $\len_\textrm{corn}(\alpha[k-1])=1$ or $\len_\textrm{corn}(\alpha[k+1])=1$, we see that $\Hom(\alpha[k-1:k+2],1)$ must be a trigon or in efficient position following an index argument, and \[\len_\textrm{red}(\Hom(\alpha,k)) \leq \len_\textrm{red}(\alpha).\] Thus, the curve $\Hom(\alpha,k)$ is a valid output for the algorithm \texttt{HorBigonInBranch} in this case, too.

If $\alpha[k-1]$ and $\alpha[k+1]$ are vertical duals turning into the same direction, and index argument shows that $\Hom(\alpha,k)$ is in efficient position (see Figure \ref{TwodualsSameway}).

From this point onwards we can assume that $\alpha[k-1]$ and $\alpha[k+1]$ are vertical duals that turn into different directions (see Figures \ref{Bothcornerlengthgreaterthanone}-\ref{ExactlyOnehascornerlengthone}). The points $\alpha[k-1](0)$ and $\alpha[k+1](1)$ lie on the same component of $\partial_h N$. Thus, $\Hom(\alpha,k)$ is an almost efficient curve containing a snippet of type $\mathbb{R}(h,h)$. Set $\beta=\Hom(\alpha[k-1:k+2,1])$. For the remainder of this proof we assume that $\len_\textrm{corn}(\alpha[k-1])\leq \len_\textrm{corn}(\alpha[k+1])$. This implies that \[\len_\textrm{corn}(\beta)\leq \len_\textrm{corn}(\alpha[k-1:k+2])-\len_\textrm{corn}(\alpha[k-1])-1-1.\] This can be seen as follows: Let us denote the boundary of the index-zero region cut off by $\alpha[k+1]$ by $B_{\alpha[k+1]}$ and the boundary of the region cut off by the bigon snippet $\beta$ by $B_\beta$. Then $(B_{\alpha[k+1]} \cap \partial \mathcal{R})-\partial_v N$ consists of two components, of which one contains the point $\alpha[k-1](0)$ as well as the set $B_\beta \cap \partial \mathcal{R}$. We see that \[\len_\textrm{corn}(\beta) \leq \len_\textrm{corn}(\alpha[k+1])-1.\] As $\len_\textrm{corn}(\alpha[k])=1$, this implies that \[\len_\textrm{corn}(\beta)\leq \len_\textrm{corn}(\alpha[k-1:k+2])-\len_\textrm{corn}(\alpha[k-1])-2.\]

If $\len_\textrm{corn}(\alpha[k+1])>1$, then one of the components of $(B_{\alpha[k+1]} \cap \partial \mathcal{R})-\partial_v N$ must contain a component of the horizontal boundary of a branch rectangle. If this is the component that does not contain $B_\beta \cap \partial \mathcal{R}$, then \[\len_\textrm{corn}(\beta) \leq \len_\textrm{corn}(\alpha[k+1])-2.\] This implies that \[\len_\textrm{corn}(\beta)\leq \len_\textrm{corn}(\alpha[k-1:k+2])-\len_\textrm{corn}(\alpha[k-1])-3.\] Similarly, if this component of the horizontal boundary of a branch rectangle lies in the component of $B_\beta \cap \partial \mathcal{R}$ that contains $\alpha[k-1](0)$, then $B_\beta \cap \partial \mathcal{R}$, which is confined by the points $\alpha[k+1](1)$ and $\alpha[k-1](0)$, contains one less component of the horizontal boundary of a branch rectangle, namely the one that intersects $\partial_v N$. This, too, implies that $\len_\textrm{corn}(\beta) \leq \len_\textrm{corn}(\alpha[k+1])-2$, thus \[\len_\textrm{corn}(\beta)\leq \len_\textrm{corn}(\alpha[k-1:k+2])-\len_\textrm{corn}(\alpha[k-1])-3.\]

If $\len_\textrm{corn}(\alpha[k-1])>1$ and $\len_\textrm{corn}(\alpha[k+1])>1$, this discussion implies that $\len_\textrm{corn}(\beta)< \len_\textrm{corn}(\alpha[k-1:k+2])-4$ (see Figure \ref{Bothcornerlengthgreaterthanone}). As the homotopy decreases the number of blockers by at most two, we see that $\len_\textrm{red}(\Hom(\alpha,k))< \len_\textrm{red}(\alpha)$.

If $\len_\textrm{corn}(\alpha[k-1])=1$ and $\len_\textrm{corn}(\alpha[k+1])>1$, the previous discussion implies that $\len_\textrm{corn}(\Hom(\alpha[k-1:k+2],1])\leq \len_\textrm{corn}(\alpha[k-1:k+2])-4$. Since at most two blockers are affected by the homotopy, we obtain that \[\len_\textrm{red}(\Hom(\alpha,k))\leq \len_\textrm{red}(\alpha),\] where $\Hom(\alpha,k)$ is a curve containing a wide horizontal bigon. Lemma \ref{HorBigInCompl funtionally correct} then implies that $\texttt{HorBigonInComp}(S,N,\Hom(\alpha,k))$ is a curve which contains at most one bad snippet. If this is a bigon, then the reduced corner length decreased by at least one compared to the reduced corner length of the original curve $\alpha$.

Lastly, let us assume that $\len_\textrm{corn}(\alpha[k-1])=1$ and $\len_\textrm{corn}(\alpha[k+1])=1$ (see Figure \ref{Bothcornerlengthone}). Then $\beta$ is a narrow bigon of type $\mathcal{R}(h,h)$ whose boundary points lie in the same component of the horizontal boundary of a branch rectangle. We note that \[\len_\textrm{corn}(\Hom(\alpha,k))=\len_\textrm{corn}(\alpha)-3.\] Hence, if at most one of the subarcs $\alpha[k-3:k]$ or $\alpha[k+1:k+4]$ was a blocker, then $\len_\textrm{red}(\Hom(\alpha,k))\leq \len_\textrm{red}(\alpha)-1$ and $\Hom(\alpha,k)$ is a valid output for \texttt{HorBigonInBranch}. Else, that is if the subarcs $\alpha[k-3:k]$ and $\alpha[k+1:k+4]$ are both blockers, then $\len_\textrm{red}(\Hom(\alpha,k))\leq \len_\textrm{red}(\alpha)+1$. As $\alpha[k-2]$ and $\alpha[k+2]$ are ties of the same branch rectangle, applying one further local homotopy does not reduce the number of blockers of the underlying curve but reduces the corner length by one. This implies that \[\len_\textrm{red}(\Hom(\Hom(\alpha,k),k-1)) \leq \len_\textrm{red}(\alpha).\]  Since $\alpha[k-1]$ and $\alpha[k+1]$ were part of some blocker, we know that $\alpha[k-3]$ and $\alpha[k+3]$ are not the same snippet as they are vertical duals turning in different directions. Moreover, Lemma \ref{Length of blocker} implies that $\len_\textrm{corn}(\alpha[k-3])\geq 5$ and $\len_\textrm{corn}(\alpha[k+3])\geq 5$. Set $\alpha'= \Hom(\Hom(\Hom(\alpha,k),k-1),k-2)$. Then \[\len_\textrm{red}(\alpha') = \len_\textrm{red}(\alpha'[k-2:] \cdot \alpha'[:k-3]) + \len_\textrm{corn}(\alpha'[k-3]).\] Reasoning as in the second to last paragraph we see that
\begin{align*}
\len_\textrm{corn}(\alpha'[k-3]) & \leq \len_\textrm{corn}(\Hom(\Hom(\alpha,k),k-1)[k-3:k])-(5+1+5) \\ & \leq \len_\textrm{corn}(\Hom(\alpha,k)[k-3:k+2])-1-11 \\ & \leq \len_\textrm{corn}(\alpha[k-3:k+4])-3-12.
\end{align*}
As $\alpha'[k-2:] \cdot \alpha'[:k-3]=\alpha[k+4:]\cdot\alpha[:k-3]$, we know that \[\len_\textrm{red}(\alpha'[k-2:] \cdot \alpha'[:k-3])=\len_\textrm{red}(\alpha[k+4:]\cdot\alpha[:k-3]).\] Therefore,
\begin{align*}
\len_\textrm{red}(\alpha') & \leq \len_\textrm{red}(\alpha[k+4:]\cdot\alpha[:k-3]) + \len_\textrm{corn}(\alpha[k-3:k+4])-15 \\
& \leq \len_\textrm{red}(\alpha[k+4:]\cdot\alpha[:k-3]) + \len_\textrm{red}(\alpha[k-3:k+4])+4 -15 \\
& \leq \len_\textrm{red}(\alpha)+4+4-15 \\
& \leq \len_\textrm{red}(\alpha)-7.
\end{align*}
Thus, we see that $\alpha'= \Hom(\Hom(\Hom(\alpha,k),k-1),k-2)$ contains a unique bad snippet of type $\mathbb{R}(h,h)$ and satisfies $\len_\textrm{red}(\alpha')\leq \len_\textrm{red}(\alpha)-7$. Therefore, it is a valid output for the algorithm \texttt{HorBigonInBranch}. This completes the proof of the correctness of the algorithm \texttt{HorBigonInBranch}.

To analyse the running time of the algorithm \texttt{HorBigonInBranch} on an input $(S,N,\alpha)$, we note that within $\mathit{O}(|\chi(S)|\cdot(\len_\textrm{red}(\alpha)+1))$ time all if-statements in the pseudocode can be checked and all single local homotopies can be executed. If $\len_\textrm{corn}(\alpha[k-1])=1$ and $\len_\textrm{corn}(\alpha[k+1])>1$, then $\len_\textrm{red}(\Hom(\alpha,k) \leq \len_\textrm{red}(\alpha)$. Therefore, $\texttt{HorBigonInComp}(S,N,\Hom(\alpha,k))$ terminates within $\mathit{O}(\chi(S)^2 \cdot (\len_\textrm{red}(\alpha)+1))$ many operation. Thus, on input $\alpha$, the entire algorithm \texttt{HorBigonInBranch} halts in $\mathit{O}(\chi(S)^2 \cdot (\len_\textrm{red}(\alpha)+1))$ time.
\end{proof}

\section{The algorithm \texttt{SingleBadSnippet}}

Combining the algorithms from the previous sections, we can finally present an algorithm that achieves efficient position for an almost efficient curve or shortens it until it consists of a single snippet.

\begin{algorithm} 
    \SetKwInOut{Input}{Input}
    \SetKwInOut{Output}{Output}
	\DontPrintSemicolon
    \Input{A surface $S$ of positive complexity, a tie neighbourhood $N \subset S$ of a large train track in $S$, and a curve $\alpha \subset S$ that contains at most one bad snippet.}
    \Output{A curve $\alpha'$ homotopic to $\alpha$ such that $\alpha'$ is in efficient position or consists of a single snippet.}
    \BlankLine
     $\alpha'= \alpha$\;
    \While{$\len(\alpha')>1$}
    {
    \If {There is a bad snippet $\alpha[k]$ of type $\mathbb{B}(t,t)$, $\mathbb{S}(h,h,0)$, $\mathbb{S}(t,t,0)$, $\mathbb{S}(v,v,0)$, $\mathbb{R}(v,v)$, $\mathbb{S}(t,t,2)$, or $\mathbb{S}(t,v,1)$}
    {$\alpha'=\texttt{AllButHorBigInCompOrBranch}(S,N,\alpha')$}
    \ElseIf {There is a bad snippet $\alpha[k]$ of type $\mathbb{R}(h,h)$}
    {$\alpha'=\texttt{HorBigInComp}(S,N,\alpha')$}
     \ElseIf {There is a bad snippet $\alpha[k]$ of type $\mathbb{B}(h,h)$}
    {$\alpha'=\texttt{HorBigInBranch}(S,N,\alpha')$}
    \If {There is a bad snippet $\alpha[k]$ that is a trigon}
    {\Return $\texttt{TrigCurve}(S,N,\alpha')$}
    \If {$\alpha'$ is in efficient position}
    {\Return $\alpha'$}
    }
    \Return $\alpha'$  
    \caption{\texttt{SingleBadSnippet} - Homotoping a curve with at most one bad snippet into efficient position or shortening it to consist of a single snippet.}
    \label{SingleBadSnippet}
\end{algorithm}

\begin{lem} \label{SingleBadSnippet funtionally correct}
The algorithm $\texttt{SingleBadSnippet}$ is correct. On an input $(S,N,\alpha)$, the algorithm halts in $\mathit{O}(|\chi(S)|^3 \cdot (\len_\textrm{red}(\alpha)^2+1))$ time.
\end{lem}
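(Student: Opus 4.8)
The plan is to prove correctness and the running-time bound by a line-by-line analysis of the pseudocode, mirroring the structure of the earlier correctness proofs (Lemmas \ref{Lemma ReduceToTwoBadSnippets}, \ref{Functional correct BigArc}, \ref{WeightOneBigon funtionally correct}, etc.). By assumption on the input, $\alpha$ contains at most one bad snippet. First I would handle the trivial exits: if $\len(\alpha)\leq 1$ the while-loop body is never entered and $\alpha$ is returned unchanged, which is a valid output; similarly if $\alpha$ is already in efficient position the final if-statement inside the loop returns $\alpha'$. Otherwise, by Lemma \ref{Bad snippet types in long enough curves and arcs}, the unique bad snippet of $\alpha'$ (if any) is either a bigon snippet or a trigon snippet. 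If it is a trigon snippet, the algorithm immediately calls $\texttt{TrigCurve}$, whose correctness (Lemma \ref{TrigCurve funtionally correct}) guarantees it outputs a homotopic curve that is in efficient position or has snippet length one — so in that branch we are done. If it is a bigon snippet, it is of one of the fourteen types enumerated in Corollary \ref{classification of bad snippets}; the three if/elseif branches of the while-loop cover exactly these cases via $\texttt{AllButHorBigInCompOrBranch}$ (types $\mathbb{B}(t,t)$, $\mathbb{S}(h,h,0)$, $\mathbb{S}(t,t,0)$, $\mathbb{S}(v,v,0)$, $\mathbb{R}(v,v)$, $\mathbb{S}(t,t,2)$, $\mathbb{S}(t,v,1)$), $\texttt{HorBigonInComp}$ (type $\mathbb{R}(h,h)$), and $\texttt{HorBigonInBranch}$ (type $\mathbb{B}(h,h)$). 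Note the bigon snippet of a curve $\alpha'$ with $\alpha'_\textrm{trim}=\alpha'$ lies in some region and is one of these nine bigon types — the ``$(h,v)$''-type snippets and $(\partial S,\partial S)$-type snippets are trigon snippets or inessential bigon snippets, the latter not occurring for $\len(\alpha')>1$; I should confirm this coverage is exhaustive by cross-referencing the classification.

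Next I would establish the loop invariant: after each pass through the while-loop, the curve $\alpha'$ is homotopic to the original $\alpha$ and contains at most one bad snippet, and if that bad snippet is a bigon snippet then $\len_\textrm{red}(\alpha')$ has strictly decreased compared to its value at the start of that pass. This invariant comes directly from the output specifications of the three helper algorithms as established in Lemmas \ref{AllButHorBigInCompOrBranch funtionally correct}, \ref{HorBigInCompl funtionally correct}, and \ref{HorBigInBranch funtionally correct}: each outputs a homotopic curve with at most one bad snippet, which is either a trigon snippet (in which case the next line of the loop calls $\texttt{TrigCurve}$ and the algorithm terminates) or a bigon snippet with $\len_\textrm{red}(\alpha')<\len_\textrm{red}(\alpha)$ (strictly smaller by at least one — indeed $\texttt{AllButHorBigInCompOrBranch}$ gives $-2$, $\texttt{HorBigonInComp}$ gives $<\len_\textrm{red}$, $\texttt{HorBigonInBranch}$ gives $<\len_\textrm{red}$). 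Since $\len_\textrm{red}$ is a nonnegative integer, the loop can execute at most $\len_\textrm{red}(\alpha)+1$ times before either the curve reaches efficient position, or its snippet length drops to one, or a trigon snippet is produced and $\texttt{TrigCurve}$ is invoked. This proves termination and correctness of the output: in every exit path the returned curve is homotopic to $\alpha$ and is in efficient position or has snippet length one.

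For the running time, each iteration of the while-loop invokes exactly one of $\texttt{AllButHorBigInCompOrBranch}$, $\texttt{HorBigonInComp}$, or $\texttt{HorBigonInBranch}$, each of which halts in $\mathit{O}(\chi(S)^2\cdot(\len_\textrm{red}(\alpha')+1))$ time by the cited lemmas, plus an $\mathit{O}(|\chi(S)|\cdot(\len_\textrm{red}(\alpha')+1))$ scan to locate and classify the bad snippet (using the convention from Remark \ref{Standard conventions algor} that look-ups in $\mathit{O}(|\chi(S)|)$-size data structures take $\mathit{O}(|\chi(S)|)$ time). I must be careful here: the reduced corner length of the working curve $\alpha'$ is only \emph{non-increasing across bigon-producing iterations}, but it may temporarily inflate by up to $2s+1 = \mathit{O}(|\chi(S)|)$ on the very last iteration if a trigon snippet is created. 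So at the $i$-th iteration $\len_\textrm{red}(\alpha') \leq \len_\textrm{red}(\alpha)+\mathit{O}(|\chi(S)|)$, hence each iteration costs $\mathit{O}(\chi(S)^2\cdot(\len_\textrm{red}(\alpha)+|\chi(S)|)) = \mathit{O}(|\chi(S)|^3\cdot(\len_\textrm{red}(\alpha)+1))$. Multiplying by the at-most $\len_\textrm{red}(\alpha)+1$ iterations and adding the final possible call to $\texttt{TrigCurve}$ — which, by Corollary \ref{Trig Curve Corollary}, on a curve of reduced corner length $\mathit{O}(\len_\textrm{red}(\alpha)+|\chi(S)|)$ halts in $\mathit{O}(\chi(S)^2\cdot(\len_\textrm{red}(\alpha)+|\chi(S)|+1)) = \mathit{O}(|\chi(S)|^3\cdot(\len_\textrm{red}(\alpha)+1))$ time — yields a total of $\mathit{O}(|\chi(S)|^3\cdot(\len_\textrm{red}(\alpha)+1)^2) = \mathit{O}(|\chi(S)|^3\cdot(\len_\textrm{red}(\alpha)^2+1))$ time, as claimed.

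The main obstacle I anticipate is not any single deep step but the bookkeeping around whether $\len_\textrm{red}$ genuinely strictly decreases on \emph{every} iteration that does not terminate the loop. The helper-algorithm output specifications only promise a decrease when the output bigon snippet persists; one must verify that the ``trigon'' alternative always does lead to immediate termination via the $\texttt{TrigCurve}$ call on the very next line (so the inflated reduced corner length is incurred at most once), and that no iteration can leave the curve with a bigon snippet of equal-or-larger reduced corner length and still continue. A secondary subtlety is making sure the case split in the three if/elseif branches is exhaustive and mutually exclusive for all bigon snippet types that can occur in a curve $\alpha'$ with $\len(\alpha')>1$ — this requires invoking Lemma \ref{Bad snippet types in long enough curves and arcs} and Corollary \ref{classification of bad snippets} and noting that inessential bigon snippets and $(h,v)$-type snippets cannot be the unique bigon snippet of such a curve.
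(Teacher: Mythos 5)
Your proposal is correct and takes essentially the same approach as the paper: go through the pseudocode, observe that each while-loop iteration either strictly decreases $\len_\textrm{red}$ (bigon case) or terminates via \texttt{TrigCurve} (trigon case), bound the number of iterations by $\len_\textrm{red}(\alpha)+1$, and multiply by the per-iteration cost from Lemmas \ref{AllButHorBigInCompOrBranch funtionally correct}, \ref{HorBigInCompl funtionally correct}, \ref{HorBigInBranch funtionally correct}, and Corollary \ref{Trig Curve Corollary}. Your discussion of where the $2s+1$ inflation can occur and why it is incurred only once is actually slightly more careful than the paper's own accounting, but it reaches the same bound.
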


\begin{proof}
By assumption on the input of the algorithm \texttt{SingleBadSnippet}, we are given a surface $S$ of positive complexity, a tie neighbourhood $N \subset S$ of a large train track in $S$, as well as an almost efficient curve $\alpha$. Let us first prove the correctness of the algorithm \texttt{SingleBadSnippet}.
If $\len(\alpha)>1$, then the unique bad snippet must be of bigon or trigon type following the classification result of Lemma \ref{Bad snippet types in long enough curves and arcs}. If it is a bigon snippet, then exactly one of the algorithms \texttt{AllButHorBigInCompOrBranch}, \texttt{HorBigInComp}, or \texttt{HorBigInBranch} will be executed once. The resulting curve either contains a single bigon snippet and has a smaller reduced corner length than $\alpha$, or contains a unique trigon snippet, is in efficient position, or of snippet length one. We furthermore know that the reduced corner length of the curve increases by at most $2s+1$ if the curve contains a trigon snippet. Thus, if the curve does not contain a bigon snippet, the algorithm terminates within $\mathit{O}(\chi(S)^2\cdot(\len_\textrm{red}(\alpha)+2s+2))$ further operations following Corollary \ref{Trig Curve Corollary}. Else, the algorithm enters the next iteration of the while-loop. Since the reduced corner length of the underlying curves is decreasing by at least one during every iteration, there can be at most $\len_\textrm{red}(\alpha)+1$ many iterations of the while-loop before the algorithm terminates and outputs a curve that is in efficient position or consists of a single snippet only. This proves the correctness of the algorithm $\texttt{SingleBadSnippet}$.

To analyse the running time of the algorithm $\texttt{SingleBadSnippet}$ on an input $(S,N,\alpha)$ we first remark that it requires at most $\mathit{O}(|\chi(S)|(\len_\textrm{red}(\alpha)+1))$ many operations to determine the index of the unique bad snippet of $\alpha$. We further recall that the reduced corner length of the underlying curve decreases by at least one after every iteration of the while-loop. Thus, the while-loop is executed at most $\len_\textrm{red}(\alpha)+1$ many times. The snippet length of the underlying curve when evaluation the if-statements is bounded by $\len_\textrm{red}(\alpha)+1$ as the curve has at most one bad snippet. Hence, searching for the single bad snippet takes $\mathit{O}(|\chi(S)|\cdot (\len_\textrm{red}(\alpha)+1))$ time. Executing a single bigon routine requires $\mathit{O}(\chi(S)^2\cdot (\len_\textrm{red}(\alpha)+1))$ time. Thus, every iteration of the while-loop requires at most $\mathit{O}(|\chi(S)|^3\cdot (\len_\textrm{red}(\alpha)+1))$ many operations. This implies that the entire algorithm $\texttt{SingleBadSnippet}$ halts in $\mathit{O}(|\chi(S)|^3\cdot (\len_\textrm{red}(\alpha)^2+1))$ time.
\end{proof}

\newpage

\chapter{Polynomial-time efficient position}

In this chapter we present the very last algorithm of this thesis, the algorithm \texttt{EfficientPosition}. As input, \texttt{EfficientPosition} takes a surface $S$ of positive complexity, a tie neighbourhood $N \subset S$ of a large train track in $S$, and an arc or curve $\alpha$, and outputs an arc or curve $\alpha'$ homotopic to $\alpha$ such that $\alpha'$ is in efficient position or satisfies $\len(\alpha')=1$. In the latter case, previous results imply that $\alpha$ must be inessential or peripheral. For a formal statement of \texttt{EfficientPosition} we refer the reader to Algorithm \ref{EfficientPosition}. We will see that \texttt{EfficientPosition} halts in polynomial time and thus is sufficient to provide a proof for Theorem \ref{Polynomial time algorithm}.

\begin{algorithm} 
    \SetKwInOut{Input}{Input}
    \SetKwInOut{Output}{Output}
	\DontPrintSemicolon
    \Input{A surface $S$ of positive complexity, a tie neighbourhood $N \subset S$ of a large train track in $S$, and an arc or curve $\alpha \subset S$.}
    \Output{An arc or curve $\alpha'$ homotopic, relative to endpoints if applicable, to $\alpha$ such that $\alpha'$ is in efficient position or consists of a single snippet only.
   }
    \BlankLine
    $\alpha'=\alpha$\;
    \If {$\alpha'$ is an arc}{
    \Return $\texttt{ReduceToTwoBadSnippets}(S,N,\alpha')$}
    \If {$\len(\alpha')>2$}{
    $\alpha'=\texttt{ReduceToTwoBadSnippets}(S,N,\alpha')$}
    \If {$\len(\alpha')>1$}{
    $\alpha'=\texttt{ReduceToOneBadSnippet}(S,N,\alpha')$\;
    $\alpha'=\texttt{SingleBadSnippet}(S,N,\alpha')$}
    \Return $\alpha'$
    \caption{\texttt{EfficientPosition} - Achieving efficient position for essential and non-peripheral arcs and curves or shortening inessential or peripheral arcs and curves so that they consist of a single snippet.}
    \label{EfficientPosition}
\end{algorithm}

\begin{lem} \label{EfficientPositionForCurves funtionally correct}
The algorithm $\texttt{EfficientPosition}$ is correct. On an input $(S,N,\alpha)$, the algorithm halts in $\mathit{O}(\chi(S)^4 \cdot \len(\alpha)^2)$ time.
\end{lem}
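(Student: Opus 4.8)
\textbf{Proof proposal for Lemma \ref{EfficientPositionForCurves funtionally correct}.}

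The plan is to verify correctness and the running-time bound by walking through the pseudocode of \texttt{EfficientPosition} branch by branch, invoking the correctness and complexity statements already established for its constituent subroutines. First I would dispatch the arc case: if $\alpha$ is an arc, the algorithm returns $\texttt{ReduceToTwoBadSnippets}(S,N,\alpha)$, and by Corollary \ref{Efficient position for arcs} this output is either in efficient position or consists of a single snippet, with Lemma \ref{Lemma ReduceToTwoBadSnippets} giving the $\mathit{O}(\chi(S)^4 \cdot \len(\alpha)^2)$ bound directly. For the curve case, if $\len(\alpha) \leq 2$ then by Lemma \ref{arcs or curves of length one} (and the observation that a curve of snippet length two can be shortened, or is already handled by the convention that no reduction is needed) the curve is already inessential or peripheral and no work is required beyond what the pseudocode specifies; if $\len(\alpha) > 2$, the algorithm successively applies \texttt{ReduceToTwoBadSnippets}, then \texttt{ReduceToOneBadSnippet}, then \texttt{SingleBadSnippet}.

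The correctness of the composition follows from the input/output contracts: Lemma \ref{Lemma ReduceToTwoBadSnippets} guarantees that after \texttt{ReduceToTwoBadSnippets} the curve $\alpha'$ has all snippets but possibly $\alpha'[0]$ and $\alpha'[-1]$ in efficient position, which is precisely the input requirement of \texttt{ReduceToOneBadSnippet}; Lemma \ref{Lemma ReduceToOneBadSnippet} then yields a curve with at most one bad snippet, which is the input requirement of \texttt{SingleBadSnippet}; and Lemma \ref{SingleBadSnippet funtionally correct} delivers a curve that is in efficient position or consists of a single snippet. So the output of \texttt{EfficientPosition} always meets the stated specification. For the running time I would track the reduced corner length through the pipeline: Lemma \ref{Lemma ReduceToTwoBadSnippets} gives $\len_\textrm{red}(\alpha''_\textrm{trim}) \leq (\len(\alpha)-1)\cdot 6s$; Lemma \ref{Lemma ReduceToOneBadSnippet} adds at most $7s$; and then Lemma \ref{SingleBadSnippet funtionally correct} halts in $\mathit{O}(|\chi(S)|^3 \cdot (\len_\textrm{red}(\alpha')^2+1))$ time. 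Since $s = \mathit{O}(|\chi(S)|)$ by Remark \ref{s is O(Euler)}, the reduced corner length entering \texttt{SingleBadSnippet} is $\mathit{O}(|\chi(S)| \cdot \len(\alpha))$, so its contribution is $\mathit{O}(|\chi(S)|^3 \cdot (|\chi(S)|^2 \len(\alpha)^2 + 1)) = \mathit{O}(|\chi(S)|^5 \len(\alpha)^2)$. The dominant term is $\mathit{O}(\chi(S)^4 \cdot \len(\alpha)^2)$ from \texttt{ReduceToTwoBadSnippets}, so I would need to double-check the bookkeeping and confirm the \texttt{SingleBadSnippet} cost is indeed subsumed; if the naive bound gives $|\chi(S)|^5$ rather than $|\chi(S)|^4$, I would re-examine whether the reduced corner length bound can be improved or whether the running-time estimate for \texttt{SingleBadSnippet} is pessimistic in terms of the $\chi(S)$-dependence.

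The main obstacle I anticipate is precisely this reconciliation of the polynomial degree in $\chi(S)$: making sure that the $\mathit{O}(\chi(S)^4 \cdot \len(\alpha)^2)$ claim is honest when \texttt{SingleBadSnippet} is fed a curve whose reduced corner length already carries a factor of $s = \mathit{O}(|\chi(S)|)$. Resolving this may require observing that in the final call the relevant length parameter is really $\len(\alpha')$ (the snippet length, at most one more than $\len_\textrm{red}$ in the presence of a single bad snippet) rather than the blown-up reduced corner length, or exploiting that \texttt{SingleBadSnippet} is only invoked on curves which have been through the earlier reduction and whose reduced corner length is therefore controlled more tightly than the worst-case bound suggests. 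Apart from that, the remaining verifications—that each subroutine receives a valid input, that homotopies compose to a single homotopy rel endpoints where applicable, and that the ``single snippet'' output correctly certifies inessentiality or peripherality via Lemmas \ref{Efficient position for curves implies essential and non-peripheral}, \ref{Efficient position for arcs implies essential}, and \ref{arcs or curves of length one}—are routine applications of the results already assembled in the preceding chapters.
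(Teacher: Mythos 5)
Your proof follows the same composition as the paper's own proof, and the bookkeeping concern you flag is genuine. Substituting $\len_\textrm{red}(\alpha'') \leq (\len(\alpha)+1)\cdot 6s$ into the stated bound $\mathit{O}(|\chi(S)|^3 \cdot (\len_\textrm{red}(\alpha'')^2 + 1))$ of Lemma~\ref{SingleBadSnippet funtionally correct}, with $s = \mathit{O}(|\chi(S)|)$ from Remark~\ref{s is O(Euler)}, yields $\mathit{O}(|\chi(S)|^5 \cdot \len(\alpha)^2)$, not $\mathit{O}(\chi(S)^4 \cdot \len(\alpha)^2)$. The paper's own proof commits exactly the arithmetic slip you anticipate: it passes from ``$\len_\textrm{red}(\alpha'') \leq (\len(\alpha)+1)\cdot 6s$'' directly to ``terminates within $\mathit{O}(|\chi(S)|^3 \cdot (\len(\alpha)^2+1))$ time'', silently dropping the $s^2 = \mathit{O}(\chi(S)^2)$ factor produced by squaring. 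So the lemmas as stated do not compose to give $\chi(S)^4$, and you were right to withhold judgement rather than declare the accounting complete.

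Of your two proposed repairs, the first does not help: the snippet length $\len(\alpha'')$ entering \texttt{SingleBadSnippet} can itself be $\mathit{O}(|\chi(S)|\cdot \len(\alpha))$ after the earlier reductions, so switching from reduced corner length to snippet length does not remove the factor of $s$. Your second suggestion is the right one. The bound in Lemma~\ref{SingleBadSnippet funtionally correct} is coarser than its own proof supports: each pass through the while-loop (the bad-snippet search plus one bigon routine) costs $\mathit{O}(\chi(S)^2 \cdot (\len_\textrm{red}+1))$, and the extra factor of $|\chi(S)|$ enters only once, from the single terminal \texttt{TrigCurve} call on a curve whose reduced corner length may have risen by $2s+1$. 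The sharper running-time bound for \texttt{SingleBadSnippet} is therefore $\mathit{O}\bigl(\chi(S)^2 \cdot (\len_\textrm{red}(\alpha)+1)^2 + |\chi(S)|^3 \cdot (\len_\textrm{red}(\alpha)+1)\bigr)$; plugging in $\len_\textrm{red}(\alpha'')=\mathit{O}(|\chi(S)|\cdot \len(\alpha))$ then yields $\mathit{O}(\chi(S)^4 \cdot \len(\alpha)^2)$, recovering the claimed bound. To make your proof complete, you would either prove and substitute this refined time bound in place of Lemma~\ref{SingleBadSnippet funtionally correct}, or inline the refinement in the final accounting.
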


\begin{proof}
The correctness of the algorithm follows from the correctness of its subroutines and Corollary \ref{Efficient position for arcs}. 

We now discuss the running time of the algorithm \texttt{EfficientPosition}. If the input $\alpha$ is an arc, then the bounds on the running time follow from Lemma \ref{Lemma ReduceToTwoBadSnippets}. 
Thus, suppose that the input $\alpha \subset S$ is a curve. Lemma \ref{Lemma ReduceToTwoBadSnippets} implies that the algorithm $\texttt{ReduceToTwoBadSnippets}$ terminates in $\mathit{O}\big(\chi(S)^4\cdot \len(\alpha)^2)$ time on input $(S,N,\alpha)$. 
\sloppy
We set $\alpha'=\texttt{ReduceToTwoBadSnippets}(S,N,\alpha)$. Lemma \ref{Lemma ReduceToTwoBadSnippets} further implies that \[\len_\textrm{red}(\alpha') \leq (\len(\alpha)-1)\cdot 6s.\] Corollary \ref{ReduceToOneBadSnippet Reduced length} states that the algorithm $\texttt{ReduceToOneBadSnippet}$ terminates within $\mathit{O}(|\chi(S)|^3 \cdot (\len_\textrm{red}(\alpha')+1))$ time on input $(S,N,\alpha')$. Thus, the algorithm $\texttt{ReduceToOneBadSnippet}$ terminates within $\mathit{O}((\chi(S))^4 \cdot (\len(\alpha)+1))$ time on input $(S,N,\alpha')$. 

Set $\alpha''=\texttt{ReduceToOneBadSnippet}(\alpha')$. Lemma \ref{Lemma ReduceToOneBadSnippet} implies that \[\len_\textrm{red}(\alpha'') \leq (\len(\alpha)-1)\cdot 6s +10s \leq (\len(\alpha)+1)\cdot 6s .\] Lastly, Lemma \ref{SingleBadSnippet funtionally correct} implies that the algorithm \texttt{SingleBadSnippet} terminates within $\mathit{O}(|\chi(S)|^3  \cdot (\len_\textrm{red}(\alpha'')^2+1))$ time on input $(S,N,\alpha'')$. As $\len_\textrm{red}(\alpha'') \leq (\len(\alpha)+1)\cdot 6s$, we see that the algorithm \texttt{SingleBadSnippet} terminates within $\mathit{O}(|\chi(S)|^3  \cdot (\len(\alpha)^2+1))$ time on input $(S,N,\alpha'')$. Adding the running times of the three main subroutines, this implies that the algorithm \texttt{EfficientPosition} terminates in $\mathit{O}(\chi(S)^4  \cdot (\len(\alpha)^2+1))$ time. Since $\len(\alpha)\geq 1$, we know that $\len(\alpha)^2+1 \leq 2 \len(\alpha)^2$. Thus, \texttt{EfficientPosition} terminates in $\mathit{O}(\chi(S)^4  \cdot \len(\alpha)^2)$ time.
\end{proof}


\begin{thm}\label{Polynomial time algorithm}
There is an algorithm that takes as input
\begin{itemize}
\item a surface $S=S_{g,b}$ satisfying $\xi(S)=3g-3+b \geq 1$,
\item a tie neighbourhood $N=N(\tau)$ of a large train track $\tau$ in $S$, and
\item a properly immersed arc or curve $\alpha$ given via its snippet decomposition with respect to $N$,
\end{itemize}
and outputs an arc or curve $\alpha'$ homotopic to $\alpha$, relative to endpoints in the case of arcs, such that
\begin{itemize}
\item $\alpha'$ is in efficient position with respect to $N$ or
\item $\alpha'$ has snippet length one.
\end{itemize}
This algorithm terminates in $\mathit{O}(\chi(S)^4  \cdot \len(\alpha)^2)$ time. Moreover, the output $\alpha'$ has snippet length one if and only if $\alpha$ is inessential or peripheral. If $\alpha$ is peripheral, the boundary component that $\alpha$ is homotopic to, as well as the corresponding power, can be read off from the snippet-decomposition of $\alpha'$.
\end{thm}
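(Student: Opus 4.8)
The plan is to assemble Theorem~\ref{Polynomial time algorithm} essentially as a corollary of the correctness and running-time analysis of the algorithm \texttt{EfficientPosition}, which has already been established in Lemma~\ref{EfficientPositionForCurves funtionally correct}. Concretely, I would take \texttt{EfficientPosition} as the algorithm whose existence the theorem asserts, and verify that its input/output specification matches the statement: it accepts a surface $S=S_{g,b}$ with $\xi(S)=3g-3+b\geq 1$, a tie neighbourhood $N=N(\tau)$ of a large train track $\tau$, and a properly immersed arc or curve $\alpha$ given by its snippet decomposition. By Corollary~\ref{Efficient position for arcs}, Lemma~\ref{Lemma ReduceToOneBadSnippet}, and Lemma~\ref{SingleBadSnippet funtionally correct}, the output $\alpha'$ is homotopic to $\alpha$ (relative to endpoints in the arc case) and is either in efficient position with respect to $N$ or has $\len(\alpha')=1$. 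The running-time bound $\mathit{O}(\chi(S)^4\cdot\len(\alpha)^2)$ is exactly the content of Lemma~\ref{EfficientPositionForCurves funtionally correct}.

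The remaining work is the ``moreover'' clause: the output has snippet length one if and only if $\alpha$ is inessential or peripheral, and in the peripheral case the boundary component and power can be read off. For the forward direction, I would argue by contrapositive: if $\alpha$ is essential and (in the curve case) non-peripheral, then by Lemma~\ref{Efficient position for arcs implies essential} and Lemma~\ref{Efficient position for curves implies essential and non-peripheral} a snippet-decomposed arc or curve in efficient position is automatically essential (and non-peripheral for curves) — but more to the point, by Lemma~\ref{arcs or curves of length one} a properly immersed arc or curve of snippet length one is necessarily inessential or peripheral, so the output cannot have length one. Hence for essential, non-peripheral input the algorithm must return something of snippet length $\geq 2$, which — since \texttt{EfficientPosition} only ever returns an arc/curve in efficient position or of snippet length one — must be in efficient position. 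For the converse direction, if $\alpha$ is inessential or peripheral, then the output $\alpha'$ is homotopic to $\alpha$, hence also inessential or peripheral; since $\alpha'$ is either in efficient position or has snippet length one, and a curve in efficient position is essential and non-peripheral by Lemma~\ref{Efficient position for curves implies essential and non-peripheral} (and an arc in efficient position is essential by Lemma~\ref{Efficient position for arcs implies essential}), the efficient-position alternative is impossible, forcing $\len(\alpha')=1$.

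For the final sentence, if $\alpha$ is peripheral then $\alpha'$ has snippet length one, so $\alpha'$ consists of a single snippet $a\subset R$. By the proof of Lemma~\ref{arcs or curves of length one}, $R$ must be a peripheral complementary region: if $\alpha'$ is a curve then $a$ is either inessential or homotopic to a $k$-th power of the boundary component of $S$ contained in $\partial R$, and by the definition of winding number (Definition~\ref{Page: winding number}, the case $\partial a=\emptyset$) we have $\wind(a)=k\cdot 2n$ where $R$ is a $(2n+1)$-sided peripheral annulus; if $\alpha'$ is an arc then $a$ is of type $\mathbb{R}(\partial S,\partial S)$ inside the peripheral region $R$. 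In either case the region $R$ determines the boundary component of $S$ that $\alpha$ is homotopic to, and the recorded winding number of $a$, divided by $2n$, gives the power $k$; both data are part of the snippet-decomposition of $\alpha'$, so they can indeed be read off. I do not expect any genuine obstacle here — the theorem is a bookkeeping assembly of results already proved — but the one point requiring slight care is making the ``if and only if'' airtight: one must invoke the dichotomy in the output specification of \texttt{EfficientPosition} (efficient position \emph{or} snippet length one) together with the fact that these two alternatives are mutually exclusive, which follows because efficient position forces essentiality/non-peripherality (Lemmas~\ref{Efficient position for arcs implies essential} and~\ref{Efficient position for curves implies essential and non-peripheral}) while snippet length one forces the opposite (Lemma~\ref{arcs or curves of length one}).
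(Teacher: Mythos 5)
Your proposal is correct and follows essentially the same approach as the paper's own proof: both take \texttt{EfficientPosition} as the desired algorithm, cite Lemma~\ref{EfficientPositionForCurves funtionally correct} for correctness and the running-time bound, and establish the ``if and only if'' dichotomy via Lemmas~\ref{Efficient position for curves implies essential and non-peripheral}, \ref{Efficient position for arcs implies essential}, and~\ref{arcs or curves of length one}, with the peripheral power read off from the winding number. Your writeup is merely more explicit than the paper's terse version about which lemma supplies which direction of the biconditional.
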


\begin{proof}
Lemma \ref{Efficient position for curves implies essential and non-peripheral} and Lemma \ref{Efficient position for arcs implies essential} imply that arcs and curves in efficient position are essential and non-peripheral. Further, Lemma \ref{arcs or curves of length one} implies that properly immersed arcs and curves of snippet length one are inessential or peripheral. Thus, the existence of the desired algorithm as well as the bounds on its running time follow from Lemma \ref{EfficientPositionForCurves funtionally correct}. 
We recall that the cutting sequence and winding number of arcs and curves of snippet length one are sufficient to determine whether they are inessential or peripheral, and, in the latter case, to determine which power of the boundary component they are homotopic to. This concludes the proof of the theorem.
\end{proof}

The mathematical content of Theorem \ref{Polynomial time algorithm} can be rephrased as follows.

\begin{cor}\label{Existence}
Suppose that $S=S_{g,b}$ is a surface satisfying $\xi(S)=3g-3+b \geq 1$. Suppose that $\tau \subset S$ is a large train track and $N=N(\tau)$ is a tie neighbourhood of $\tau$ in $S$. Let $\alpha \subset S$ be an essential, non-peripheral, properly immersed multiarc or multicurve given via its snippet decomposition. Then efficient position for $\alpha$ with respect to $N$ exists and can be obtained in $\mathit{O}(\chi(S)^4  \cdot \len(\alpha)^2)$ time.
\end{cor}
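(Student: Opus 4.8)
\textbf{Proof proposal for Corollary \ref{Existence}.}

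The plan is to derive this corollary directly from Theorem \ref{Polynomial time algorithm} by handling the extra generality of multiarcs and multicurves and by using the essential, non-peripheral hypothesis to rule out the length-one alternative in the conclusion of the theorem. First I would reduce from multiarcs and multicurves to single arcs and curves: a multiarc or multicurve $\alpha$ is, by definition, a disjoint union $\alpha = \alpha_1 \sqcup \dots \sqcup \alpha_m$ of properly immersed arcs and curves, and its snippet decomposition is the concatenation of the snippet decompositions of the components, so $\len(\alpha) = \sum_{i} \len(\alpha_i)$. Since the components are pairwise disjoint and the algorithm \texttt{EfficientPosition} from Theorem \ref{Polynomial time algorithm} homotopes each component within a regular neighbourhood of itself (local homotopies only ever alter a subarc $\alpha[k-1/3:k+4/3]$ of the component being processed, cf.\ Remark \ref{local trigon homotopy fixes boundary}), running \texttt{EfficientPosition} on each $\alpha_i$ in turn yields a multiarc or multicurve $\alpha'$ homotopic to $\alpha$, relative to endpoints on the arc components, with each $\alpha_i'$ in efficient position or of snippet length one. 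One does need to observe that the homotopies produced keep the components disjoint; this follows because each component is moved inside an arbitrarily small regular neighbourhood of its image, and the original components are disjoint, so no new intersections between distinct components are introduced.

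Next I would invoke the essential, non-peripheral hypothesis to eliminate the length-one outcome. By the ``moreover'' clause of Theorem \ref{Polynomial time algorithm}, the output component $\alpha_i'$ has snippet length one if and only if $\alpha_i$ is inessential or peripheral. Since every component of a multiarc or multicurve that is essential and non-peripheral is itself essential and non-peripheral (being essential and non-peripheral is a property of each individual arc or curve), none of the $\alpha_i$ are inessential or peripheral, hence no $\alpha_i'$ has snippet length one. Therefore each $\alpha_i'$ is in efficient position with respect to $N$, and so $\alpha'$ is in efficient position. This establishes existence of efficient position for $\alpha$.

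For the running time, summing the bound $\mathit{O}(\chi(S)^4 \cdot \len(\alpha_i)^2)$ from Theorem \ref{Polynomial time algorithm} over the components gives a total of $\mathit{O}\bigl(\chi(S)^4 \cdot \sum_i \len(\alpha_i)^2\bigr)$, and since $\sum_i \len(\alpha_i)^2 \leq \bigl(\sum_i \len(\alpha_i)\bigr)^2 = \len(\alpha)^2$, this is $\mathit{O}(\chi(S)^4 \cdot \len(\alpha)^2)$, as required. The only genuinely non-routine point — and hence the step I would expect to be the main obstacle, though it is a mild one — is justifying that the homotopies produced by \texttt{EfficientPosition} applied componentwise can be taken to preserve disjointness of the components, so that the output is a bona fide multiarc or multicurve and not merely a collection of individually efficient arcs and curves that might cross each other. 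As noted above, this is handled by the purely local nature of the homotopies (Remark \ref{local trigon homotopy fixes boundary}): each component is homotoped within a regular neighbourhood of itself that can be chosen thin enough to remain disjoint from the other components throughout. With that observation in place, the corollary is an immediate consequence of Theorem \ref{Polynomial time algorithm}.
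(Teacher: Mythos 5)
Your core approach — decompose the multiarc or multicurve into components, run \texttt{EfficientPosition} on each, use the essential/non-peripheral hypothesis to exclude the snippet-length-one alternative, and sum running times via $\sum_i \len(\alpha_i)^2 \leq \bigl(\sum_i \len(\alpha_i)\bigr)^2$ — is correct and is what the paper intends; the paper presents the corollary as an immediate rephrasing of Theorem~\ref{Polynomial time algorithm} and does not spell out the componentwise reduction.

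The disjointness argument, however, is wrong. You read Remark~\ref{local trigon homotopy fixes boundary} as saying that each component is moved within an arbitrarily small regular neighbourhood of its image, but that remark concerns the \emph{parametrization}: only the subarc $\alpha[k-1/3:k+4/3]$ of the map is altered. Geometrically, the image of that subarc sweeps across the entire trigon or bigon being eliminated, which can be a substantial piece of a complementary region (a wide bigon of type $\mathbb{R}(h,h)$, or a trigon of type $\mathbb{R}(h,v)$ whose side $b_2$ traverses a long horizontal boundary, etc.). A second component can easily run through such a region, in which case the homotopy introduces new intersections, and no ``thin regular neighbourhood of the original component'' contains the track of the homotopy. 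Whether this matters depends on what ``multiarc/multicurve'' means — the paper never defines the term, and since it works with properly immersed (hence possibly self-intersecting) curves throughout, it most plausibly means a finite union of properly immersed components rather than an embedded disjoint union, in which case disjointness is not part of the claim and your extra argument is simply unneeded. But as written, the justification you give for disjointness-preservation is false, and if disjointness were required it would be a genuinely open point that neither your argument nor the paper establishes. Delete or replace that sentence; the rest of the proposal stands.
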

As noted previously, this is the converse to Lemmas \ref{Efficient position for curves implies essential and non-peripheral} and \ref{Efficient position for arcs implies essential}.

\newpage


\newpage

%
\bibliographystyle{alpha}
\bibliography{literatur}

\begin{thebibliography}{CLRS09}

\bibitem[Bel15]{MarkThesis}
Mark~C. Bell.
\newblock {\em Recognising mapping classes}.
\newblock PhD thesis, University of Warwick, 2015.

\bibitem[Bel18]{Flipper}
Mark Bell.
\newblock flipper (computer software).
\newblock \url{pypi.python.org/pypi/flipper}, 2013--2018.
\newblock Version 0.13.5.

\bibitem[BW16]{BellWebb}
Mark~C. {Bell} and Richard C.~H. {Webb}.
\newblock {Polynomial-time algorithms for the curve graph}.
\newblock \url{https://arxiv.org/abs/1609.09392}, Sep 2016.

\bibitem[CLRS09]{IntroToAlgo}
Thomas~H. Cormen, Charles~E. Leiserson, Ronald~L. Rivest, and Clifford Stein.
\newblock {\em Introduction to Algorithms, Third Edition}.
\newblock The MIT Press, 3rd edition, 2009.

\bibitem[Deh87]{Dehn}
Max Dehn.
\newblock {\em Papers on group theory and topology}.
\newblock Springer-Verlag, New York, 1987.
\newblock Translated from the German and with introductions and an appendix by
  John Stillwell, With an appendix by Otto Schreier.

\bibitem[DL17]{DespreLazarus}
Vincent Despr\'{e} and Francis Lazarus.
\newblock Computing the geometric intersection number of curves.
\newblock In {\em 33rd {I}nternational {S}ymposium on {C}omputational
  {G}eometry}, volume~77 of {\em LIPIcs. Leibniz Int. Proc. Inform.}, pages
  Art. No. 35, 15. Schloss Dagstuhl. Leibniz-Zent. Inform., Wadern, 2017.

\bibitem[EN13]{Erickson}
Jeff Erickson and Amir Nayyeri.
\newblock Tracing compressed curves in triangulated surfaces.
\newblock {\em Discrete Comput. Geom.}, 49(4):823--863, 2013.

\bibitem[FM12]{Primer}
Benson Farb and Dan Margalit.
\newblock {\em A primer on mapping class groups}, volume~49 of {\em Princeton
  Mathematical Series}.
\newblock Princeton University Press, Princeton, NJ, 2012.

\bibitem[Gu{\'e}09]{gueritaud}
Fran{\c{c}}ois Gu{\'e}ritaud.
\newblock Deforming ideal solid tori.
\newblock \url{https://arxiv.org/abs/0911.3067v1}, Nov 2009.

\bibitem[Ham09]{Hamenstadt}
Ursula Hamenst\"{a}dt.
\newblock Geometry of the mapping class groups. {I}. {B}oundary amenability.
\newblock {\em Invent. Math.}, 175(3):545--609, 2009.

\bibitem[HS85]{HassScott}
Joel Hass and Peter Scott.
\newblock Intersections of curves on surfaces.
\newblock {\em Israel J. Math.}, 51(1-2):90--120, 1985.

\bibitem[Hub06]{Hubbard}
John~Hamal Hubbard.
\newblock {\em Teichm\"{u}ller theory and applications to geometry, topology,
  and dynamics. {V}ol. 1}.
\newblock Matrix Editions, Ithaca, NY, 2006.
\newblock Teichm\"{u}ller theory, With contributions by Adrien Douady, William
  Dunbar, Roland Roeder, Sylvain Bonnot, David Brown, Allen Hatcher, Chris
  Hruska and Sudeb Mitra, With forewords by William Thurston and Clifford
  Earle.

\bibitem[MM99]{MasurMinskyI}
Howard~A. Masur and Yair~N. Minsky.
\newblock Geometry of the complex of curves. {I}. {H}yperbolicity.
\newblock {\em Invent. Math.}, 138(1):103--149, 1999.

\bibitem[MMS12]{carrySaul}
Howard Masur, Lee Mosher, and Saul Schleimer.
\newblock On train-track splitting sequences.
\newblock {\em Duke Math. J.}, 161(9):1613--1656, 2012.

\bibitem[Mos96]{Mosher96laminationsand}
Lee Mosher.
\newblock Laminations and flows transverse to finite depth foliations.
\newblock \url{http://andromeda.rutgers.edu/~mosher/LamsAndFlows.ps.gz}, 1996.

\bibitem[Mos03]{Mosher_traintrack}
Lee Mosher.
\newblock Train track expansions of measured foliations.
\newblock \url{http://andromeda.rutgers.edu/~mosher/arationality_03_02_15.pdf},
  2003.

\bibitem[MS18]{SaulTieNeighbourhood}
Joseph {Maher} and Saul {Schleimer}.
\newblock {The compression body graph has infinite diameter}.
\newblock \url{https://arxiv.org/abs/1803.06065}, Mar 2018.

\bibitem[Pen06]{PennerCorrected}
R.~C. Penner.
\newblock Probing mapping class groups using arcs.
\newblock In {\em Problems on mapping class groups and related topics},
  volume~74 of {\em Proc. Sympos. Pure Math.}, pages 97--114. Amer. Math. Soc.,
  Providence, RI, 2006.

\bibitem[PH92]{Penner}
R.~C. Penner and J.~L. Harer.
\newblock {\em Combinatorics of train tracks}, volume 125 of {\em Annals of
  Mathematics Studies}.
\newblock Princeton University Press, Princeton, NJ, 1992.

\bibitem[Tak00]{Taka}
Itaru Takarajima.
\newblock A combinatorial representation of curves using train tracks.
\newblock {\em Topology Appl.}, 106(2):169--198, 2000.

\bibitem[Thu88]{ThurstonBulletin}
William~P. Thurston.
\newblock On the geometry and dynamics of diffeomorphisms of surfaces.
\newblock {\em Bull. Amer. Math. Soc. (N.S.)}, 19(2):417--431, 1988.

\bibitem[Thu97]{Thurston}
William~P. Thurston.
\newblock {\em Three-dimensional geometry and topology. {V}ol. 1}, volume~35 of
  {\em Princeton Mathematical Series}.
\newblock Princeton University Press, Princeton, NJ, 1997.
\newblock Edited by Silvio Levy.

\bibitem[Thu98]{ThurstonStretch}
William~P. Thurston.
\newblock Minimal stretch maps between hyperbolic surfaces.
\newblock \url{https://arxiv.org/abs/math/9801039}, 1998.

\bibitem[Wal13]{Python}
Gregory Walters.
\newblock {\em The Python Quick Syntax Reference}.
\newblock Apress, Berkely, CA, USA, 1st edition, 2013.

\end{thebibliography}

%
\end{document}